\newtheorem{theorem}{Theorem}
\newtheorem{lemma}[theorem]{Lemma}
\newtheorem{proposition}[theorem]{Proposition}
\newtheorem{corollary}[theorem]{Corollary}
\newtheorem{example}[theorem]{Example}
\theoremstyle{exercise}
\theoremstyle{definition}
\newtheorem{definition}[theorem]{Definition}
\theoremstyle{remark}
\newtheorem{remark}[theorem]{Remark}
\numberwithin{equation}{section}
\newcommand{\intav}[1]{\mathchoice {\mathop{\vrule width 6pt height 3 pt depth  -2.5pt
\kern -8pt \intop}\nolimits_{\kern -6pt#1}} {\mathop{\vrule width
5pt height 3  pt depth -2.6pt \kern -6pt \intop}\nolimits_{#1}}
{\mathop{\vrule width 5pt height 3 pt depth -2.6pt \kern -6pt
\intop}\nolimits_{#1}} {\mathop{\vrule width 5pt height 3 pt depth
-2.6pt \kern -6pt \intop}\nolimits_{#1}}}
\newcommand{\intavl}[1]{\mathchoice {\mathop{\vrule width 6pt height 3 pt depth  -2.5pt
\kern -8pt \intop}\limits_{\kern -6pt#1}} {\mathop{\vrule width 5pt
height 3  pt depth -2.6pt \kern -6pt \intop}\nolimits_{#1}}
{\mathop{\vrule width 5pt height 3 pt depth -2.6pt \kern -6pt
\intop}\nolimits_{#1}} {\mathop{\vrule width 5pt height 3 pt depth
-2.6pt \kern -6pt \intop}\nolimits_{#1}}}
\begin{document}

\title[L'action de $SL(2,\mathbb{R})$ sur les espaces de modules de surfaces plates]{Quelques contributions \`a la th\'eorie de l'action de $SL(2,\mathbb{R})$ sur les espaces de modules de surfaces plates}

\author[Carlos Matheus]{Carlos Matheus}
\address{Universit\'e Paris 13, Sorbonne Paris Cit\'e, LAGA, CNRS (UMR 7539), F-93430, 
Villetaneuse, France}
\email{matheus@impa.br}

\dedicatory{To Jean-Christophe Yoccoz (in memoriam)}

\selectlanguage{francais}



\date{2 juin 2017}



\maketitle

\tableofcontents

\newpage

\section*{Remerciements}

Je d\'edie ce m\'emoire \`a Jean-Christophe Yoccoz: son amiti\'e et bienveillance envers moi apr\`es mon arrive en France en 2007 ont marqu\'e ma vie (math\'ematique et personnelle) \`a jamais, et je lui serai toujours reconnaissant.

Je remercie Yves Benoist, Stefano Marmi et Anton Zorich d'avoir accept\'e de relire mon m\'emoire et d'\'ecrire les rapports \`a son sujet, et Julien Barral, Henry de Th\'elin et Giovanni Forni pour me faire l'honneur d'\^etre membres de jury d'examen de ce m\'emoire.

Je remercie aussi mes coauteurs pour avoir partager avec moi la joie de la d\'ecouverte de nouveaux th\'eor\`emes.

\`A Aline et Marie-In\`es, ``la reine et la princesse de mon ch\^ateau \`a Fontainebleau--Avon'', et \`a Dominique et V\'eronique pour toute leur amiti\'e.

\newpage 

\null

\newpage

\section*{Description du m\'emoire}

Ce m\'emoire est bas\'e sur certains de mes travaux autour de la \emph{dynamique de Teichm\"uller} ou, plus pr\'ecis\'ement, la dynamique de l'action de $SL(2,\mathbb{R})$ sur les espaces de modules de surfaces plates. 

Le chapitre \ref{s.introduction} sert \`a introduire plusieurs aspects classiques de la dynamique de Teichm\"uller. Son contenu est inspir\'e par les survols de Zorich \cite{Z} et Yoccoz \cite{Y}, ainsi que les notes \cite{FM} d'un minicours donn\'e par Forni et moi-m\^eme en 2011 au Banach Center (Bedlewo, Pologne). En particulier, ce chapitre est une introduction g\'en\'erale \`a tous les chapitres post\'erieurs, de fa\c con qu'on va toujours supposer une certaine familiarit\'e avec ce chapitre dans toutes les discussions dans d'autres chapitres. 

Apr\`es avoir lu le premier chapitre, le lecteur peut choisir librement quel ordre suivre pour la lecture de chapitre restants: en fait, les r\'esultats discut\'es dans les chapitres \ref{s.AMY} \`a \ref{s.FFM} sont compl\`etement ind\'ependants les uns des autres. 

Le chapitre \ref{s.AMY} traite de la r\'egularit\'e des mesures $SL(2,\mathbb{R})$-invariantes sur les espaces de modules de surfaces plates. En g\'en\'eral, une telle mesure est dite r\'eguli\`ere si la plupart des surfaces plates dans son support poss\`edent leurs connexions de selles plus courts parall\`eles entre eux. La propri\'et\'e de r\'egularit\'e a \'et\'e utilis\'ee par Eskin, Kontsevich et Zorich \cite{EKZ} pour justifier un argument sophistiqu\'e d'int\'egration par parties intervenant dans la d\'emonstration de leur c\'el\`ebre formule pour la somme des exposants de Lyapunov non-n\'egatifs du cocycle de Kontsevich-Zorich (la partie int\'eressante de la d\'eriv\'ee de l'action de $SL(2,\mathbb{R})$ sur les surfaces plates). Dans ce m\^eme article, Eskin-Kontsevich-Zorich \cite{EKZ} ont conjectur\'e que la propri\'et\'e de r\'egularit\'e est toujours valide, de fa\c con que leur formule pour la somme des exposants de Lyapunov pourrait \^etre appliqu\'ee sans exception \`a toutes les mesures de probabilit\'e $SL(2,\mathbb{R})$-invariantes ergodiques sur les espaces de modules de surfaces plates. Let but du chapitre \ref{s.AMY} est discuter notre article \cite{AMY} avec Avila et Yoccoz contenant une r\'eponse affirmative \`a la conjecture de r\'egularit\'e d'Eskin-Kontsevich-Zorich. 

Le chapitre \ref{s.MS} est consacr\'e \`a l'\'etude de la vitesse de m\'elange du flot de Teichm\"uller.  La question de la vitesse de decroissance des correlations pour les mesures de Masur-Veech a \'et\'e r\'esolu dans un c\'el\`ebre article de Avila, Gou\"ezel et Yoccoz \cite{AGY}: la vitesse de m\'elange du flot de Teichm\"uller par rapport \`a ces mesures est toujours exponentielle. Puis, Avila et Gou\"ezel \cite{AG} ont \'etendu le r\'esultat d'Avila-Gou\"ezel-Yoccoz \`a toutes les mesures de probabilit\'e $SL(2,\mathbb{R})$-invariantes ergodiques sur les espaces de modules de surfaces plates. Une question naturelle motiv\'ee par les r\'esultats d'Avila, Gou\"ezel et Yoccoz est savoir si la vitesse exponentielle de m\'elange de ces mesures est uniforme. Le r\'esultat principal du chapitre \ref{s.MS} est un th\'eor\`eme obtenu en collaboration avec Schmith\"usen \cite{MaSc} selon lequel il n'y a pas d'uniformit\'e sur la vitesse exponentielle de m\'elange de telles mesures lorsqu'on regarde des espaces de modules de surfaces plates de genre arbitrairement grand. 

Le chapitre \ref{s.MW} aborde le probl\`eme de classification des adh\'erences des $SL(2,\mathbb{R})$-orbites dans les espaces de modules de surfaces plates. De nombreuses applications de la dynamique de Teichm\"uller \`a l'\'etude des billards math\'ematiques d\'ependent d'une connaissance pr\'ecise des  fermetures de certaines $SL(2,\mathbb{R})$-orbites de surfaces plates, ce qui explique une partie de l'int\'er\^et en classifier ces objets. Les r\'esultats remarquables d'Eskin-Mirzakhani \cite{EsMi}, Eskin-Mirzakhani-Mohammadi \cite{EMM} et Filip \cite{Fi13a} disent que les fermetures des $SL(2,\mathbb{R})$-orbites des surfaces plates sont affines dans les coordonn\'ees de p\'eriodes, quasi-projectives dans les coordonn\'ees induites par les espaces de modules de courbes et leur totalit\'e est une collection d\'enombrable: en particulier, il est raisonnable d'essayer de les classifier. Les travaux de Calta \cite{Ca} et McMullen \cite{McM03} fournissent une classification tr\`es satisfaisante des adh\'erences de $SL(2,\mathbb{R})$-orbites de surfaces plates de genre deux. Par contre, la situation en genre sup\'erieur n'est toujours pas compl\`etement comprise malgr\'e les nombreux progr\`es partiels r\'ecents. N\'eamoins, cette situation semble s'am\'erioler un peu lorsqu'on se concentre sur les courbes de Teichm\"uller (i.e., les $SL(2,\mathbb{R})$-orbites ferm\'ees): par exemple, Bainbridge, Habegger et M\"oller \cite{BHM} ont prouv\'e la finitude des courbes de Teichm\"uller alg\'ebriquement primitives engendr\'ees par des surfaces plates de genre trois. Dans le chapitre \ref{s.MW}, on discutera un r\'esultat obtenu avec Wright \cite{MW} assurant la finitude des courbes de Teichm\"uller alg\'ebriquement primitives engendr\'ees par des surfaces plates de genre $g>2$ premier ayant une seule singularit\'e conique. 

Le chapitre \ref{s.MMY} examine les exposants de Lyapunov du cocycle de Kontsevich-Zorich (la partie int\'eressante de la d\'eriv\'ee de l'action de $SL(2,\mathbb{R})$ sur les surfaces plates). Les propri\'et\'es qualitatives et/ou quantitatives des exposants de Lyapunov du KZ cocycle jouent un r\^ole important dans de nombreuses applications de la dynamique de Teichm\"uller: par exemple, Avila et Forni \cite{AF} ont explor\'e le r\'esultat de Forni \cite{F02} de hyperbolicit\'e non-uniforme du cocycle de KZ par rapport aux mesures de Masur-Veech pour montrer que les transformations d'échange d'intervalles typiques (qui ne sont pas des rotations) sont faiblement m\'elangeantes. D'un point de vue qualitatif, les exposants de Lyapunov du KZ cocycle par rapport aux mesures de Masur-Veech sont bien compris gr\^ace \`a un c\'el\`ebre article d'Avila et Viana \cite{AV} assurant la simplicit\'e (i.e., multiplicit\'e un) de ces exposants (confirmant donc une conjecture de Kontsevich et Zorich). Par contre, ceci n'est plus vrai pour d'autres mesures: Forni et moi-m\^eme (voir \cite{FM} par exemple) avons deux exemples de mesures de probabilit\'es $SL(2,\mathbb{R})$-invariantes sur les espaces de modules de surfaces plates pour lequelles les exposants de Lyapunov du cocycle KZ sont loin d'\^etre simples. Du coup, il est int\'eressant de savoir en g\'en\'eral dans quelles conditions les exposants de Lyapunov du cocycle KZ sont simples. Le point de d\'epart du chapitre \ref{s.MMY} est un r\'esultat obtenu en collaboration avec Eskin \cite{EM} assurant que les exposants de Lyapunov du KZ cocycle sur les courbes de Teichm\"uller ($SL(2,\mathbb{R})$-orbites ferm\'ees) peuvent \^etre calcul\'es \`a l'aide de produits de matrices al\'eatoires. Ensuite, ce r\'esultat et les techniques d'Avila et Viana \cite{AV} sont exploit\'es pour fournir un crit\`ere efficace (bas\'e sur la th\'eorie de Galois) obtenu en collaboration avec M\"oller et Yoccoz \cite{MMY} pour la simplicit\'e des exposants de Lyapunov de KZ cocycle sur les courbes de Teichm\"uller arithm\'etiques. Enfin, ce crit\`ere de type Galois pour la simplicit\'e  est utilis\'e dans la discussion d'un contre-exemple d\^u \`a Delecroix et moi-m\^eme \cite{DeMa} \`a une conjecture de Forni.

Le chapitre \ref{s.FFM} est d\'edi\'e \`a la structure du groupe de matrices associ\'e au cocycle de Kontsevich-Zorich (i.e., la partie int\'eressante de la d\'eriv\'ee de l'action de $SL(2,\mathbb{R})$ dans les espaces de modules de surfaces plates). Les groupes de matrices engendr\'es par le cocycle KZ m\'eritent une attention particuli\`ere car ils ont un r\^ole cl\'e dans l'\'etude de l'action de $SL(2,\mathbb{R})$ sur les surfaces plates: par exemple, le c\'el\`ebre travail d'Eskin-Mirzakhani \cite{EsMi} sur la classification des mesures $SL(2,\mathbb{R})$-invariantes dans les espaces de modules de surfaces plates est   basé sur une analyse fine de ce cocycle. Un travail r\'ecent de Filip \cite{Fi14} fournit une liste de toutes les adh\'erences de Zariski possibles (modulo facteurs compacts et \`a indice fini pr\`es) pour les groupes de matrices associ\'es au cocycle KZ: en particulier, Filip a confirm\'e une conjecture de Forni, Zorich et moi-m\^eme \cite{FMZ} sur l'origine des exposants de Lyapunov nuls pour le cocycle KZ. Cependant, la liste de Filip est  produite \`a partir de consid\'erations de variations de structures de Hodge sur les vari\'et\'es quasi-projectives et, par cons\'equent, une question naturelle est de savoir quels groupes de la liste apparaissent effectivement dans le contexte du cocycle KZ. Le r\'esultat principal du chapitre \ref{s.FFM} est un exemple de Filip, Forni et moi-m\^eme \cite{FFM} montrant que l'un des groupes de matrices quaternioniques dans la liste Filip est r\'ealis\'e dans le cadre du cocycle KZ.

Enfin, les limites habituelles de l'espace et du temps m'ont forc\'e \`a laisser quelques aspects de mon travail en dehors de ce texte. Par exemple, les sujets suivants mentionn\'es dans la liste des 
publications de l'auteur ne seront pas discut\'es ici:

\begin{itemize}
\item les articles (MY) (avec Yoccoz), (FMZi), (FMZii) et (FMZiii) (avec Forni et Zorich), (MYZ) (avec Yoccoz et Zmiaikou), (MSch) (avec Schmith\"usen) sur la dynamique de Teichm\"uller, 
\item les articles (BMMWi) et (BMMWii) (avec Burns, Masur et Wilkinson) sur la vitesse de m\'elange du flot de Weil-Petersson sur les espaces de modules de surfaces hyperboliques,
\item les articles (MMP) (avec Moreira et Pujals), (M), (CMM) (avec Cerqueira et Moreira), (LM) (avec Lima) sur la dynamique en basse dimension, et 
\item les articles (ACM) (avec Arbieto et Corcho), (M07), (AM) (avec Arbieto), (LM09) (avec Linares), (CM09) (avec Corcho) et (AMP) (avec Angulo et Pilod) sur les EDPs dispersives. 
\end{itemize}

\newpage

\section*{Liste de publications de l'auteur}

\subsubsection*{Articles de recherche (apr\`es la th\`ese de doctorat)}

\begin{enumerate}
\item[(ACM)] \textit{Rough solutions for the periodic Schr\"odinger-Korteweg-deVries system},
avec A. Arbieto et Ad\'an Corcho. \textbf{Journal of
Differential Equations}, vol.230, p.295-336 (2006).
\item[(M07)] \textit{Global well-posedness of NLS-KdV systems for periodic functions},
\textbf{Electronic Journal of Differential Equations}, vol. 2007, p.
1-20 (2007).
\item[(AM)] \textit{On the periodic Schr\"odinger-Debye equation}, avec A. Arbieto, \textbf{Communications in Pure and
Applied Analysis}, vol. 7, p. 699-713 (2008).
\item[(LM09)] \textit{Well-posedness for the 1D Zakharov-Rubenchik system}, avec Felipe Linares, \textbf{Advances in Differential Equations}, vol. 14, n.3-4, 261-288 (2009).
\item[(CM09)] \textit{Sharp bilinear estimates and well-posedness for the 1D Schr\"odinger-Debye system}, avec Ad\'an Corcho,  \textbf{Differential and Integral Equations}, vol. 22, n.3-4, 357-391 (2009).
\item[(AMP)] \textit{Global well-posedness and non-linear stability of periodic traveling waves for a Schr\"odinger-Benjamin-Ono system},
avec Jaime Angulo et Didier Pilod, \textbf{Communications in Pure and Applied Analysis}, vol. 8, n.3, 815-844 (2009).
\item[(MY)] \textit{The action of the affine diffeomorphisms on the relative homology group of certain exceptionally symmetric origamis}, avec Jean-Christophe Yoccoz, \textbf{Journal of Modern Dynamics}, vol. 4, n.3, 453-486 (2010).
\item[(FMZi)] \textit{Square-tiled cyclic covers}, avec Giovanni Forni et Anton Zorich, \textbf{Journal of Modern Dynamics}, vol. 5, n.2, 285-318 (2011).
\item[(M11)] \textit{Other relevant examples}: annexe \`a l'article ``A geometric criterion for the non-uniform hyperbolicity of the Kontsevich-Zorich cocycle'' de G. Forni, \textbf{Journal of Modern Dynamics}, vol. 5, n.2, 355-395 (2011).
\item[(M)] \textit{Some quantitative versions of Ratner's mixing estimates}, \textbf{Bulletin Brazilian Mathematical Society}, vol. 44, 469-488 (2013).
\item[(AMY)] \textit{$SL(2,\mathbb{R})$-invariant probability measures on the moduli spaces of translation surfaces are regular}, avec Artur Avila et Jean-Christophe Yoccoz, \textbf{Geometric and Functional Analysis}, v. 23, p. 1705-1729 (2013).
\item[(MMP)] \textit{Axiom A versus Newhouse phenomena for Benedicks-Carleson toy models}, avec Carlos Gustavo Moreira et Enrique Pujals, \textbf{Annales Scientifiques de l'\'Ecole Normale Sup\'erieure}, vol. 46, n. 6, p. 857-878 (2013).
\item[(MS)] \textit{Explicit Teichm\"uller curves with complementary series}, avec Gabriela Weitze-Schmith\"usen, \textbf{Bulletin de la Soci\'et\'e Math\'ematique de France}, vol. 141, n. 4, 557-602 (2013). 
\item[(FMZii)] \textit{Lyapunov spectrum of invariant subbundles of the Hodge bundle}, avec Giovanni Forni et  Anton Zorich, \textbf{Ergodic Theory and Dynamical Systems}, vol. 34, n. 2, 353-408 (2014). 
\item[(FMZiii)] \textit{Zero Lyapunov exponents of the Hodge bundle}, avec Giovanni Forni et Anton Zorich,  \textbf{Commentarii Mathematici Helvetici}, vol. 89, n. 2, 489-535 (2014). 
\item[(MYZ)] \emph{Homology of origamis with symmetries}, avec Jean-Christophe Yoccoz et David Zmiaikou, \textbf{Annales de l'Institut Fourier},  vol. 64, 1131-1176 (2014). 
\item[(DM)] \emph{Un contre-exemple \`a la r\'eciproque du crit\`ere de Forni pour la positivit\'e des exposants  de  Lyapunov du cocycle de Kontsevich-Zorich}, avec Vincent Delecroix, \textbf{Mathematical Research Letters}, vol. 22, n. 6, 1667--1678 (2015)
\item[(EM)] \emph{A coding-free simplicity criterion for the Lyapunov exponents of Teichm\"uller curves}, avec Alex Eskin, \textbf{Geometriae Dedicata}, vol. 179, n. 1,  45-67 (2015). 
\item[(MW)] \emph{Hodge-Teichm\"uller planes and finiteness results for Teichm\"uller curves}, avec Alex Wright, \textbf{Duke Mathematical Journal}, vol. 164, 1041-1077 (2015). 
\item[(MSch)] \emph{Some examples of isotropic SL(2,R)-invariant subbundles of the Hodge bundle}, avec Gabriela Weitze-Schmith\"usen, \textbf{International Mathematics Research Notices}, vol. 2015, n. 18, 8657-8679 (2015). 
\item[(MMY)] \emph{A criterion for the simplicity of the Lyapunov spectrum of square-tiled surfaces}, avec  Martin M\"oller et Jean-Christophe Yoccoz, \textbf{Inventiones mathematicae}, vol. 202, n. 1, 333-425 (2015). 
\item[(FFM)] \emph{Quaternionic covers and monodromy of the Kontsevich-Zorich cocycle in orthogonal groups}, avec Giovanni Forni et Simion Filip, \`a para\^{\i}tre dans \textbf{Journal of the European Mathematical Society }(2015).
\item[(BMMWi)] \emph{Rates of mixing for the Weil-Petersson geodesic flow I: no rapid mixing in non-exceptional moduli spaces}, avec Keith Burns, Howard Masur et Amie Wilkinson. Pr\'epublication (2015) disponible sur arXiv:1312.6012. 
\item[(CMM)] \emph{Continuity of Hausdorff dimension across generic dynamical Lagrange and Markov spectra}, avec Aline Cerqueira et Carlos Gustavo Moreira. Pr\'epublication (2016) disponible sur arXiv:1602.04649. 
\item[(BMMWii)] \emph{Rates of mixing for the Weil-Petersson geodesic flow II: exponential mixing in exceptional moduli spaces}, avec Keith Burns, Howard Masur et Amie Wilkinson. Pr\'epublication (2016) disponible sur arXiv:1605.09037. 
\item[(AMYii)] \emph{Zorich conjecture for hyperelliptic Rauzy-Veech groups}, avec Artur Avila et Jean-Christophe Yoccoz. Pr\'epublication (2016) disponible sur arXiv:1606.01227. 
\item[(LM)] \emph{Symbolic dynamics for non-uniformly hyperbolic surface maps with discontinuities}, avec  Yuri Lima. Pr\'epublication (2016) disponible sur arXiv:1606.05863.
\end{enumerate}

\subsubsection*{Articles de survol}

\begin{itemize}
\item \textit{$C^1$ density of hyperbolicity for Benedicks-Carleson toy models},
avec Carlos Gustavo Moreira et Enrique Pujals, \textbf{Oberwolfach Reports}, vol. 6, p. 1819-1823 (2009).
\item \textit{On the neutral Oseledets bundle of Kontsevich-Zorich cocycle over certain cyclic covers},
avec Giovanni Forni et Anton Zorich, \textbf{Oberwolfach Reports}, vol. 8,  p. 1361-1427 (2011).
\item \textit{Le flot g\'eod\'esique de Teichm\"uller et la g\'eom\'etrie du fibr\'e de Hodge},
\textbf{Actes du S\'eminaire de Th\'eorie Spectrale et G\'eom\'etrie du Institut Fourier \`a Grenoble}, vol. 29, p. 73-95 (2010-2011).
\item \textit{Fractal geometry of non-uniformly hyperbolic horseshoes},
\textbf{Proceedings of the Ergodic Theory Workshops at University of North Carolina at Chapel Hill, 2011-2012}, 
Ed. by Idris Assani, p. 197-240 (2013).
\item \textit{A criterium for the simplicity of Lyapunov exponents of origamis}, avec Martin Möller et  Jean-Christophe Yoccoz, \textbf{Oberwolfach Reports}, vol. 10, p. 1975–2033 (2013).
\item \textit{Introduction to Teichm\"uller theory and its applications to dynamics of interval exchange transformations, flows on surfaces and billiards}, avec Giovanni Forni, \textbf{Journal of Modern Dynamics}, vol. 8, no. 3/4, p. 271-436 (2014).
\item \textit{Lecture notes on the dynamics of the Weil-Petersson flow}, notes d'un minicours donn\'e en novembre 2013 au CIRM, Marseille, \`a para\^{\i}tre dans \textbf{CIRM Jean-Morlet Chair Subseries}, Springer (2015).
\item \textit{Variations of Hodge structures, Lyapunov exponents and Kontsevich’s formula}, annexe \`a la monographie ``Gauss-Manin Connection in Disguise (Calabi-Yau Modular Forms)'' de Hossein Movasati, \`a para\^{\i}tre dans \textbf{Surveys of Modern Mathematics}, International Press, Boston (2016).
\item \textit{Les blogs: un outil dynamique de communication en math\'ematiques},  Gaz. Math. No. 148 (2016), 46--50. 
\end{itemize}

\subsubsection*{Livres}

\begin{itemize}
\item \textit{Aspectos erg\'odicos da teoria dos n\'umeros} (Ergodic aspects of Number Theory),
joint with A. Arbieto and Carlos Gustavo Moreira, \textbf{Publica\c c\~oes Matem\'aticas do IMPA - $26^o$ Col\'oquio Brasileiro de Matem\'atica} (2007).
\item \textit{The remarkable efectiveness of Ergodic Theory in Number Theory, Parts I and II},
(Part I together with A. Arbieto and Carlos Gustavo Moreira), \textbf{Ensaios Mat\'ematicos} vol. 17, p. 1-106 (2009).
\end{itemize}

\selectlanguage{english}

\newpage 


\begin{centering}
\rule{\textwidth}{1.6pt}\vspace*{-\baselineskip}\vspace*{2.5pt}
\rule{\textwidth}{0.4pt}

\section{Introduction}\label{s.introduction}

\rule{\textwidth}{0.4pt}\vspace*{-\baselineskip}\vspace{3.2pt}
\rule{\textwidth}{1.6pt}
\end{centering}\\

This section serves as a general-purpose introduction to all other sections of this memoir. In particular, we'll always assume familiarity with the content of this section in subsequent discussions. 

The basic references for this section are the survey texts of Zorich \cite{Z}, Yoccoz \cite{Y}, and Forni and the author \cite{FM}.  

\subsection{Abelian differentials and their moduli spaces}  

Let $\mathcal{L}_g$ be the set of Abelian differentials on a Riemann surface of genus $g\geq 1$, that is, the set of pairs $(\textrm{Riemann surface structure on } M, \omega)$ where $M$ is a compact topological surface of genus $g$ and $\omega\not\equiv 0$ is a non-trivial $1$-form which is holomorphic with respect to the underlying Riemann surface structure. 

The \emph{Teichm\"uller space} of Abelian differentials of genus $g\geq 1$ is the quotient $\mathcal{TH}_g:=\mathcal{L}_g/\textrm{Diff}^+_0(M)$ and the \emph{moduli space} of Abelian differentials of genus $g\geq 1$ is the quotient $\mathcal{H}_g:=\mathcal{L}_g/\Gamma_g$. Here $\textrm{Diff}^+_0(M)$ is the set of diffeomorphisms isotopic to the identity and $\Gamma_g:=\textrm{Diff}^+(M)/\textrm{Diff}^+_0(M)$ is the mapping class group (i.e., the set of isotopy classes of orientation-preserving diffeomorphisms), and both $\textrm{Diff}^+_0(M)$ and $\Gamma_g$ act on the set of Riemann surface structure in the usual manner\footnote{By precomposition with coordinate charts.}, while they act on Abelian differentials by pull-back.

Before equipping $\mathcal{TH}_g$ and $\mathcal{H}_g$ with nice structures, let us give a \emph{concrete} description of Abelian differentials in terms of \emph{translation structures}. 

\subsection{Translation structures} Let $(M,\omega)\in\mathcal{L}_g$ and denote by $\Sigma\subset M$ the set of singularities of $\omega$, or, equivalently, the \emph{divisor} of $\omega$, i.e., the finite set 
$$\Sigma:=\textrm{div}(\omega):=\{p\in M: \omega(p)=0\}$$ 

For each $p\in M-\Sigma$, let us select a small simply-connected neighborhood $U_p$ of $p$ such that $U_p\cap\Sigma=\emptyset$. In this context, the ``period'' map $\phi_p: U_p\to\mathbb{C}$, $\phi_p(x) := \int_p^x \omega$ given by integration along \emph{any} path inside $U_p$ joining $p$ and $x$ is well-defined: in fact, any holomorphic $1$-form $\omega$ is closed and, thus, the integral $\int_p^x \omega$ does not depend on the choice of the path inside $U_p$ connecting $p$ and $x$. Furthermore, since $p\notin\Sigma$ (i.e., $\omega(p)\neq 0$), we have that, after reducing $U_p$ if necessary, this ``period'' map $\phi_p$ is a biholomorphism. 

In other words, the collection $\{(U_p, \phi_p)\}_{p\in M-\Sigma}$ of all such ``period'' maps is an atlas of $M-\Sigma$ which is compatible with the Riemann surface structure. By definition, the local expression of Abelian differential $\omega$ in these coordinates is $(\phi_p)_*(\omega) = dz$ (on $\mathbb{C}$). Also, the local equality $\int_p^x \omega = \int_p^q \omega + \int_q^x\omega$ implies that all coordinate changes are $\phi_q\circ\phi_p^{-1}(z) = z+c$ where $c=\int_q^p\omega\in\mathbb{C}$ is a constant independent of $z$. Moreover, since $\textrm{div}(\omega)$ is finite, Riemann's theorem on removable  singularities implies that this atlas of ``period'' charts on $M-\Sigma$ can be extended to $M$ in such a way that the local expression of $\omega$ in a chart around a zero $p\in\Sigma$ of $\omega$ of order $k$ is the holomorphic $1$-form $z^k dz$. 

In the literature, a maximal atlas of compatible charts on the complement $M-\Sigma$ of a finite subset $\Sigma$ of a surface $M$ whose changes of coordinates are translations $z\mapsto z+c$ of the complex plane is called a \emph{translation structure} on $M$. In this language, the discussion in the previous paragraph says that $(M,\omega)$ determines a translation structure on $M$. On the other hand, it is clear that a translation structure on $M$ determines a Riemann surface structure\footnote{Since translations are particular cases of biholomorphisms.} and an Abelian\footnote{We define $\omega$ by locally pulling-back $dz$ via the charts: this gives a globally defined Abelian differential because the changes of coordinates are translations and, hence, $dz$ is invariant under changes of coordinates.} differential $\omega$ on $M$. 

In summary, we proved the following proposition. 

\begin{proposition} The set $\mathcal{L}_g$ of all non-trivial Abelian differentials on compact Riemann surfaces of genus $g\geq 1$ is canonically identified to the set of all translation structures on the compact surfaces of genus $g\geq 1$. 
\end{proposition}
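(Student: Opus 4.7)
The proposition asserts a bijection between $\mathcal{L}_g$ and the set of translation structures on closed surfaces of genus $g$, so the plan is to construct maps in both directions and verify they are mutually inverse.

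For the direction $\mathcal{L}_g \to \{\text{translation structures}\}$: given $(M,\omega)\in\mathcal{L}_g$ with singular set $\Sigma=\textrm{div}(\omega)$, I would verify in detail the construction already outlined in the text. Namely, for each $p\in M-\Sigma$ pick a simply connected open neighborhood $U_p$ disjoint from $\Sigma$, and define $\phi_p(x):=\int_p^x\omega$; well-definedness follows because $\omega$ is holomorphic hence closed, and $U_p$ is simply connected. Since $\omega(p)\neq 0$ and the derivative of $\phi_p$ is (the non-vanishing) $\omega$, the inverse function theorem gives a biholomorphism onto its image after possibly shrinking $U_p$. For the transition on $U_p\cap U_q$, additivity of the integral along concatenated paths yields $\phi_q\circ\phi_p^{-1}(z)=z+c$ with $c=\int_q^p\omega$, i.e.\ a translation. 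This defines a translation atlas on $M-\Sigma$.

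For the direction $\{\text{translation structures}\} \to \mathcal{L}_g$: a translation atlas on $M-\Sigma$ is in particular a holomorphic atlas (translations being biholomorphisms), so $M-\Sigma$ inherits a Riemann surface structure. Locally pulling back the form $dz$ from $\mathbb{C}$ gives a holomorphic $1$-form on each chart; since $d(z+c)=dz$, these local forms agree on overlaps and glue to a holomorphic $1$-form $\omega$ on $M-\Sigma$. The crucial point, and what I expect to be the only non-trivial technical step, is to extend both the Riemann surface structure and $\omega$ across the finitely many punctures $\Sigma$. For the Riemann surface structure one uses that the translation structure has finite angle (a cone angle $2\pi(k+1)$ near each puncture, corresponding locally to the branched covering $z\mapsto z^{k+1}$); together with Riemann's removable singularity theorem this gives a holomorphic chart near each $p\in\Sigma$ in which the form $dz$ on the translation side pulls back to $z^k\,dz$, a holomorphic $1$-form vanishing to order $k$ at $p$. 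Thus $\omega$ extends to a global element of $\mathcal{L}_g$.

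Finally, I would check that the two constructions are inverse to each other, which is essentially a tautology: starting from $(M,\omega)$, the flat atlas is by definition the one in which $\omega=dz$, so reconstructing the $1$-form as the pullback of $dz$ recovers $\omega$; conversely, starting from a translation structure, the period map based at a nonsingular point agrees with the original flat chart up to an additive constant, hence defines the same translation structure. Canonicity (equivariance under $\textrm{Diff}^+(M)$) is immediate from the fact that both constructions use only $\omega$ and path integration, which are diffeomorphism-natural. The main obstacle, as mentioned, is the extension across $\Sigma$: making the local normal form $z^k\,dz$ precise requires choosing appropriate coordinates on the translation-structure side, which is the content of the classical local classification of flat surfaces with conical singularities.
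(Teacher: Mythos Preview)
Your proposal is correct and follows essentially the same approach as the paper: period charts $\phi_p(x)=\int_p^x\omega$ give the translation atlas in one direction, pulling back $dz$ gives the Abelian differential in the other, and the extension across $\Sigma$ is handled via Riemann's removable singularity theorem together with the local normal form $z^k\,dz$. Your write-up is in fact more thorough than the paper's, which treats the proposition as a summary of the preceding discussion and leaves the mutual-inverse check and the extension on the translation-structure side implicit.
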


\subsection{Some examples of translation surfaces}

The notion of translation structures allows us to exhibit many concrete examples of Abelian differentials. 

\subsubsection{Abelian differentials on complex torus} We usually learn the concept of complex torii through translation structures. Indeed, a complex torus is the quotient $\mathbb{C}/\Lambda$ of the complex plane by a lattice $\Lambda=\mathbb{Z}w_1\oplus\mathbb{Z}w_2\subset\mathbb{C}$. These complex torii come equipped with Abelian differentials induced by $dz$ on $\mathbb{C}$ and they are usually depicted as a parallelogram of sides $w_1$ and $w_2$ whose parallel sides are identified via the translations $z\mapsto z+w_1$ and $z\mapsto z+w_2$: see Figure \ref{f.complex-torus}. 

\begin{figure}[htb!] 
\begin{center}
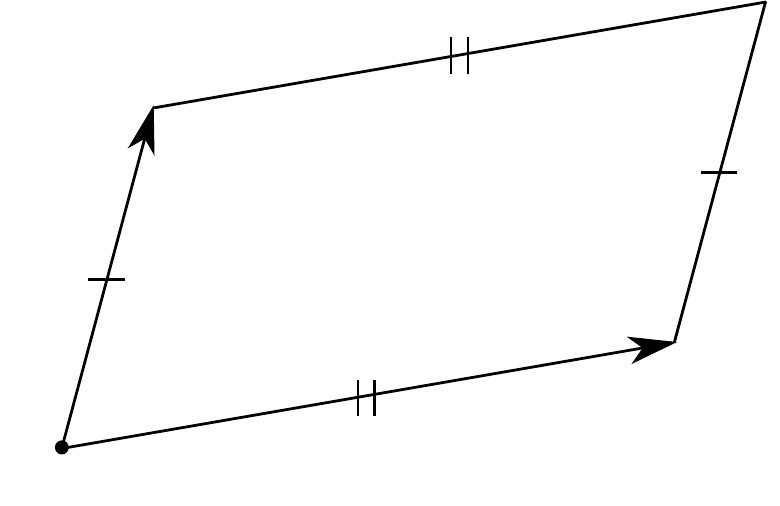
\caption{Complex torii are translation surfaces (of genus one).}\label{f.complex-torus}
\end{center}
\end{figure}

\subsubsection{Square-tiled surfaces} We can build more translation surfaces from certain coverings of the unit square torus $\mathbb{C}/(\mathbb{Z}\oplus\mathbb{Z}i)$ equipped with the Abelian differential induced by $dz$. 

More precisely, consider a finite collection $Sq$ of unit squares of the complex plane and let us glue by translations the leftmost, resp. bottomost, side of each square $Q\in Sq$ with the rightmost, resp. topmost, side of another (maybe the same) square $Q'\in Sq$. Here, we assume that, after performing the identifications, the resulting surface is connected. 

In this way, we obtain a translation surface, namely, a Riemann surface with an Abelian differetianl (equal to $dz$ on each square $Q\in Sq$). For obvious reasons, these translation surfaces are called \emph{square-tiled surfaces} and/or \emph{origamis}. 

In Figure \ref{f.L-shaped} we drew a $L$-shaped square-tiled surface built up from three unit squares by identification (via translations) of pairs of sides with the same markings. 

\begin{figure}[htb!] 
\begin{center}
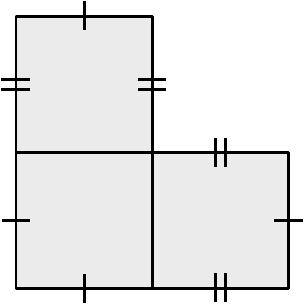
\caption{A $L$-shaped square-tiled surface.}\label{f.L-shaped}
\end{center}
\end{figure}

\begin{remark}\label{r.L-genus-computation} The translation surface $L$ in Figure \ref{f.L-shaped} has genus two. In fact, the corners of all squares are identified to a single point $p$. Moreover, this point is special when compared to any other point because we have a total angle of $6\pi$ by turning around $p$ (instead of a total angle of $2\pi$ around all other points). In other terms, a neighborhood of $p$ looks like $3$ copies of the flat complex plane stitched together, that is, the natural local coordinate around $p$ is $\zeta=z^3$. In particular, the Abelian differential $\omega$ associated to this translation surface $L$ has the form $\omega=d\zeta=3 z^2 dz$ near $p$, i.e., $\omega$ has single zero of order two on $L$. By Riemann-Hurwitz theorem, this means that $2=2g-2$ where $g$ is the genus of $L$, that is, $L$ has genus two\footnote{Alternatively, this fact can be derived from Poincar\'e-Hopf index theorem applied to the vector field given by the vertical direction at all points of $L-\{p\}$.}
\end{remark}

\subsubsection{Suspensions of interval exchange transformations} We find translation surfaces during the construction of natural extensions of one-dimensional dynamical systems called \emph{interval exchange transformations}. More concretely, recall that an interval exchange transformation (i.e.t.) of $d\geq 2$ intervals is a map $T:D_T\to D_{T^{-1}}$ where $D_T, D_{T^{-1}}\subset I$ are subsets of an open bounded interval $I$ with $\#(I-D_T) = \#(I-D_{T^{-1}})=d+1$ and the restriction of $T$ to each connected component of $I-D_T$ is a translation onto a connected component of $I-D_{T^{-1}}$: see Figure \ref{f.iets} for some examples. 

\begin{figure}[htb!] 
\includegraphics[scale=0.6]{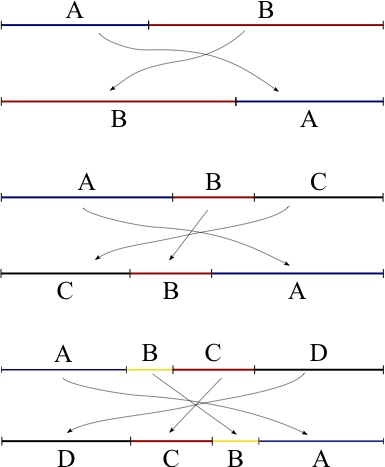}
\caption{Three examples of interval exchange transformations.}\label{f.iets}
\end{figure}

It is possible to \emph{suspend} (in several ways) any given i.e.t. $T$ to obtain \emph{translation flows}\footnote{A translation flow is obtained by moving (almost all) points of a translation surface in a fixed direction.} on translation surfaces such that $T$ is the first return map to certain transversals to such flows: for instance, Figure \ref{f.suspension-Masur} shows Masur's suspension construction applied to an i.e.t. of four intervals. 

\begin{figure}[htb!] 
\begin{center}
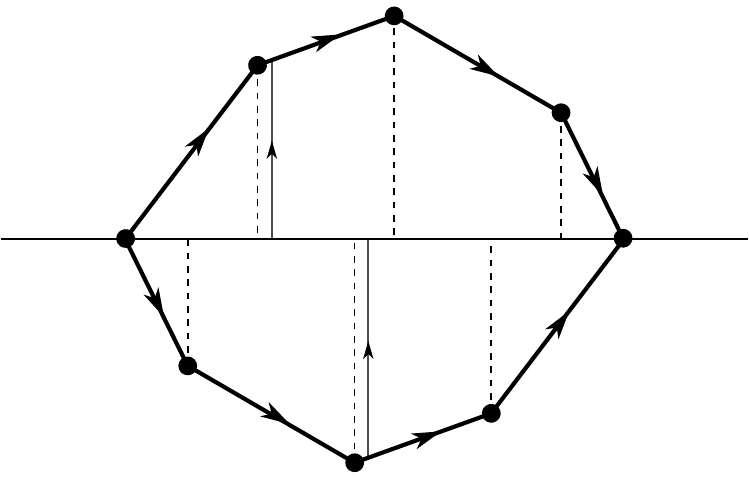
\caption{Masur's suspension of an i.e.t. of four intervals.}\label{f.suspension-Masur}
\end{center}
\end{figure}

Here, the idea of this procedure is that:

\begin{itemize}
\item the vectors $\zeta_1, \dots, \zeta_d$ have the form $\zeta_j=\lambda_j+\sqrt{-1}\,\tau_j\in\mathbb{C}$ where $\lambda_j$ are the lengths of the intervals permuted by $T$; 
\item the vectors $\zeta_j$, $1\leq j\leq d$, are organized in the plane to construct a polygon $P$ in such a way that we meet these vectors in the usual order (i.e., $\zeta_1$, $\zeta_2$, etc.) in the top part of $P$ and we meet these vectors in the order determined by $T$, i.e., using the combinatorial receipt -- a permutation $\pi$ of $d$ elements -- employed by $T$ to permute intervals, in the bottom part of $P$; 
\item gluing by translations the pairs of sides of $P$ with the same labels $\zeta_j$, we obtain a translation surface such that the unit-speed translation flow in the vertical direction has the i.e.t. $T$ as the first return map to $\mathbb{R}\times\{0\}$; 
\item finally, the \emph{suspension data} $\tau_1,\dots,\tau_d$ can be chosen ``arbitrarily'' as long as the planar figure $P$ is not degenerate, i.e., 
$$\sum\limits_{j<n}\tau_j>0 \quad \textrm{ and } \quad \sum\limits_{\pi(j)<n}\tau_j<0 \quad \forall \,\, 1\leq n\leq d$$
\end{itemize}

\begin{remark} There is no unique procedure for suspending i.e.t.'s: for example, Yoccoz's survey \cite{Y} discusses in details the so-called \emph{Veech's zippered rectangles construction}.
\end{remark}

\subsubsection{Billiards in rational polygons} Recall that a polygon is called \emph{rational} if all of its angles are rational multiples of $\pi$. Consider the billiard flow on a rational polygon $P$: the trajectory of a point in $P$ in a certain direction is a straight line until it hits the boundary $\partial P$ of the polygon; at this instant, we prolongate the trajectory by reflecting it accordingly to the usual (specular) law\footnote{I.e., the angle of reflection equals the angle of incidence.}. 

A classical \emph{unfolding} construction (due to Fox-Keshner and Katok-Zemlyakov) relates the dynamics of billiard flows on rational polygons to translation flows on translation surfaces. In a nutshell, the idea is the following: every time the billiard trajectory hits $\partial P$, we reflect the table instead of reflecting the trajectory so that the trajectory remains a straight line, see Figure \ref{f.unfolding-construction}. 

\begin{figure}[htb!] 
\includegraphics[scale=1]{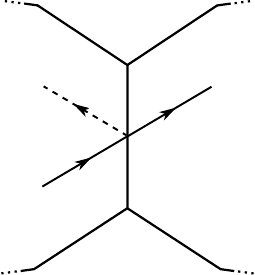}
\caption{Elementary step of the unfolding construction.}\label{f.unfolding-construction}
\end{figure} 

The group $G$ generated by the reflections about the sides of $P$ is \emph{finite} when $P$ is a \emph{rational} polygon, so that the natural surface obtained by iterating this unfolding procedure is a translation surface and the billiard flow becomes the tranlsation (straigth line) flow on this translation surface. 

In Figure \ref{f.triangle-unfolding} we drew the translation surface obtained by applying the unfolding construction to a $L$-shaped polygon and the triangle with angles $\pi/8$, $\pi/2$ and $3\pi/8$.

\begin{figure}[htb!] 
\begin{center}
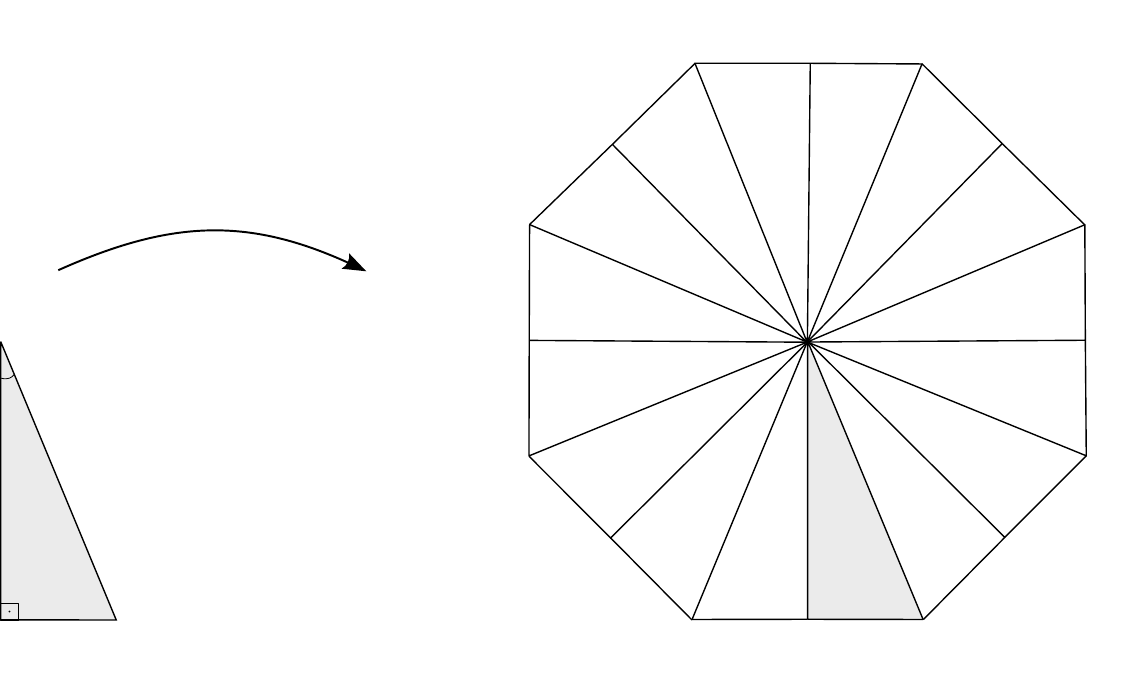
\caption{The triangle with angles $\pi/8$ and $\pi/2$ unfolds into a regular octagon.}\label{f.triangle-unfolding}
\end{center}
\end{figure}

In general, a rational polygonal $P$ of $k$ sides with angles $\pi m_i/n_i$, $1\leq i\leq N$ has a group of reflections $G$ of order $2N$ and it unfolds into a translation surface $X$ of genus $g$ given by the formula
$$2-2g = N(2-k+\sum\limits_{i=1}^N(1/n_i))$$

\subsection{Stratification of moduli spaces of translation surfaces} Once our understanding of Abelian differentials was improved thanks to the notion of translation surfaces, let us now come back to the discussion of Teichm\"uller and moduli spaces of Abelian differentials. 

Given a non-trivial Abelian differential $\omega$ on a Riemann surface $M$ of genus $g\geq 1$, we can form a list $\kappa=(k_1,\dots,k_{\sigma})$ recording the orders of the zeroes of $\omega$. Note that, by Riemann-Hurwitz theorem, this list satisfies the constraint $\sum\limits_{l=1}^{\sigma} k_l = 2g-2$. 

For each list $\kappa=(k_1,\dots,k_{\sigma})$ with $\sum\limits_{l=1}^{\sigma} k_l = 2g-2$, let $\mathcal{L}(\kappa)$ be the subset\footnote{It is possible to prove that $\mathcal{L}(\kappa)$ is non-empty whenever $\sum\limits_{l=1}^{\sigma} k_l=2g-2$.} of $\mathcal{L}_g$ consisting of all Abelian differentials whose list of orders of its zeroes coincide with $\kappa$. Since the actions of $\textrm{Diff}^+_0(M)$ and $\Gamma_g$ respect the orders of zeroes of Abelian differentials, we can take the quotients
$\mathcal{TH}(\kappa):=\mathcal{L}(\kappa)/\textrm{Diff}^+_0(M)$ and $\mathcal{H}(\kappa):=\mathcal{L}(\kappa)/\Gamma_g$.

By definition, we can write 
$$\mathcal{TH}_g:=\bigsqcup\limits_{\substack{\kappa=(k_1,\dots,k_{\sigma}) \\ k_1+\dots+ k_{\sigma}= 2g-2}} \mathcal{TH}(\kappa) \quad \textrm{ and } \quad 
\mathcal{H}_g:=\bigsqcup\limits_{\substack{\kappa=(k_1,\dots,k_{\sigma}) \\ k_1+\dots+ k_{\sigma}= 2g-2}} \mathcal{H}(\kappa)$$

In the next subsection, we will see that these decompositions of $\mathcal{TH}_g$ and $\mathcal{H}_g$ are \emph{stratifications}: the subsets $\mathcal{TH}(\kappa)$ and $\mathcal{H}(k)$ decompose  $\mathcal{TH}_g$ and $\mathcal{H}_g$ into finitely many disjoint \emph{manifolds}/\emph{orbifolds} of \emph{distinct} dimensions. For this reason, the subsets $\mathcal{TH}(\kappa)$ and $\mathcal{H}(\kappa)$ will be called \emph{strata} of the Teichm\"uller and moduli spaces of Abelian differentials (translation surfaces). 

\subsection{Period coordinates} 

Fix $\mathcal{TH}(\kappa)$ a stratum with $\kappa=(k_1,\dots, k_{\sigma})$ and $k_1+\dots+k_{\sigma} = 2g-2$. For every $\omega_0\in\mathcal{TH}(\kappa)$, one can construct an open\footnote{Here, we use the \emph{developing map} to put a natural topology on $\mathcal{TH}(\kappa)$. More concretely, given $\omega\in\mathcal{L}(\kappa)$, $p_0\in\textrm{div}(\omega)$, an universal cover $p:\widetilde{M}\to M$ and $P_1\in p^{-1}(p_0)$, we have a developing map $D_{\omega}:(\widetilde{M}, P_0)\to (\mathbb{C},0)$ determining \emph{completely} the translation structure $(M,\omega)$. The injective map $\omega\mapsto D_{\omega}$ gives a copy of $\mathcal{L}(\omega)$ inside the space $C^0(\widetilde{M},\mathbb{C})$ of complex-valued continuous functions of $\widetilde{M}$. In particular, the compact-open topology of $C^0(\widetilde{M}, \mathbb{C})$ induces natural topologies on $\mathcal{L}(\kappa)$ and $\mathcal{TH}(\kappa)$.} neighborhood $U_0\subset \mathcal{TH}(\kappa)$ such that, after naturally\footnote{Via the so-called \emph{Gauss-Manin connection}.} identifying $H_1(M,\textrm{div}(\omega),\mathbb{Z})$ and $H_1(M,\textrm{div}(\omega_0),\mathbb{Z})$ for all $\omega\in U_0$, the \emph{period map} $\Theta:U_0\to \textrm{Hom}(H_1(M,\textrm{div}(\omega_0),\mathbb{Z}),\mathbb{C})$ defined by the formula
$$\Theta(\omega):= \left(\gamma\mapsto\int_{\gamma}\omega\right)\in \textrm{Hom}(H_1(M,\textrm{div}(\omega),\mathbb{Z}), \mathbb{C})\simeq \textrm{Hom}(H_1(M,\textrm{div}(\omega_0),\mathbb{Z}), \mathbb{C})$$ 
is a \emph{local homeomorphism}. In other words, the period maps are local charts of an atlas of $\mathcal{TH}(\kappa)$. 

Recall that $\textrm{Hom}(H_1(M,\textrm{div}(\omega_0),\mathbb{Z}), \mathbb{C})\simeq H^1(M,\textrm{div}(\omega),\mathbb{C})$ is a vector space naturally isomorphic to $\mathbb{C}^{2g+\sigma-1}$: indeed, if $\{(\alpha_i,\beta_i)\}_{i=1}^g$ is a symplectic basis of $H_1(M,\mathbb{Z})$ and $\gamma_1,\dots, \gamma_{\sigma-1}$ are relative cycles connecting some fixed $p_0\in\textrm{div}(\omega_0)$ to all others $p_1,\dots, p_{\sigma-1}\in\textrm{div}(\omega_0)$, then 
$$H^1(M,\textrm{div}(\omega_0),\mathbb{C})\ni\omega\mapsto \left(\int_{\alpha_1}\omega,\int_{\beta_1}\omega,\dots,\int_{\alpha_g}\omega, \int_{\beta_g}\omega, \int_{\gamma_1}\omega, \dots, \int_{\gamma_{\sigma-1}\omega}\right)\in\mathbb{C}^{2g+\sigma-1}$$
is an isomorphism. Furthermore, by composition period maps with these isomorphisms, we see that all changes of coordinates are given by \emph{affine} transformations of $\mathbb{C}^{2g+\sigma-1}$ \emph{preserving} the Lebesgue measure. In particular, if we \emph{normalize} the Lebesgue measure so that the integral lattices $H^1(M,\textrm{div}(\omega),\mathbb{Z}\oplus\mathbb{Z}i)$ have covolume one in $H^1(M,\textrm{div}(\omega),\mathbb{C})$, then we obtain a well-defined (Lebesgue) measure $\lambda_{\kappa}$ on $\mathcal{TH}(\kappa)$. 

In summary, $\mathcal{TH}(\kappa)$ is an affine complex manifold of dimension $2g+\sigma-1$ equipped with a natural (Lebesgue) measure $\lambda_{\kappa}$ thanks to the period maps. Moreover, these structures are compatible with the action of the mapping class group $\Gamma_g$, so that $\mathcal{H}(\kappa)$ is an affine complex \emph{orbifold}\footnote{In general, $\mathcal{H}(\kappa)$ are \emph{not} manifolds: for example, the moduli space $\mathcal{H}(0)$ of flat torii is $GL^+(2,\mathbb{R})/SL(2,\mathbb{Z})$.} of dimension $2g+\sigma-1$ equipped with a natural (Lebesgue) measure $\mu_{\kappa}$. 

Geometrically, the role of period maps is easily visualized in terms of translation structures. For example, consider the polygon $Q$ depicted in Figure \ref{f.suspension-Masur} and denote by $(M,\omega_0)$ the translation surface obtained by gluing by translations the pairs of parallel sides of $Q$. Using an argument similar to Remark \ref{r.L-genus-computation}, one can show that $\omega_0\in\mathcal{TH}(2)$ and the cycles $\zeta_1$, $\zeta_2$, $\zeta_3$ and $\zeta_4$ on $M$ (i.e., the projections the sides of $Q$) form a basis of $H_1(M,\mathbb{Z})$. Hence, the period map 
$$\mathcal{TH}(2)\supset U_0\ni\omega\mapsto\left(\int_{\zeta_1}\omega, \int_{\zeta_2}\omega, \int_{\zeta_3}\omega, \int_{\zeta_4}\omega\right)\in V_0\subset \mathbb{C}^4$$ 
takes a small neighborhood $U_0$ of $\omega_0$ to a small neighborhood $V_0$ of $(\zeta_1,\zeta_2,\zeta_3,\zeta_4)\in\mathbb{C}^4$. Consequently, all $\omega\in U_0$ are described by small arbitrary perturbations (dashed red lines in Figure \ref{f.period-H2}) of the sides of the original polygon $Q$ (blue full lines in Figure \ref{f.period-H2}).  

\begin{figure}[htb!] 
\begin{center}
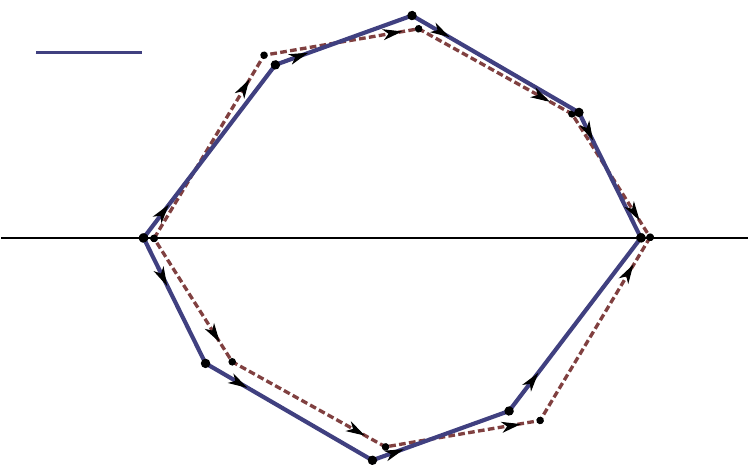
\caption{Period coordinate in $\mathcal{TH}(2)$.}\label{f.period-H2}
\end{center}
\end{figure}

\subsection{Connected components of strata} 

It might be tempting to conjecture that it is always possible to deform a given $\omega_0\in\mathcal{H}(\kappa)$ into another $\omega_1\in\mathcal{H}(\kappa)$. Nevertheless, Veech \cite{V90} discovered that $\mathcal{H}(4)$ has \emph{two} connected components: indeed, Veech distinguished these connected compoents using certain combinatorial invariants called \emph{extended Rauzy classes}\footnote{A slight modification of the notion of \emph{Rauzy classes} introduced by Rauzy \cite{R} in his study of i.e.t.'s.}.

The strategy of Veech was further pursued by Arnoux to show that $\mathcal{H}(6)$ has three connected components. However, it became clear that the classification of connected components of $\mathcal{H}(\kappa)$ via the analysis of extended Rauzy classes is a hard combinatorial problem\footnote{Rauzy classes are complicated objects: the cardinalities of the largest Rauzy classes associated to $\mathcal{H}_2$, $\mathcal{H}_3$, $\mathcal{H}_4$ and $\mathcal{H}_5$ are $15$, $2177$, $617401$ and $300296573$.}.

A complete classification of the connected components of $\mathcal{H}(\kappa)$ was obtained by Kontsevich-Zorich \cite{KZ} with the aid of algebro-geometrical invariants. Roughly speaking, they showed that the connected components can be \emph{hyperelliptic}, \emph{even spin} or \emph{odd spin}. Using these invariants of connected components, Kontsevich and Zorich proved the following result:

\begin{theorem} In genus $g=2$, both strata $\mathcal{H}(2)$ and $\mathcal{H}(1,1)$ are connected. In genus $g=3$, the strata $\mathcal{H}(4)$ and $\mathcal{H}(2,2)$ have both two connected components and all other strata are connected. In genus $g\geq 4$, we have that:
\begin{itemize}
\item the minimal stratum $\mathcal{H}(2g-2)$ has three connected components;
\item $\mathcal{H}(2l,2l)$, $l\geq 2$, has three connected components;
\item $\mathcal{H}(2l_1,\dots,2l_n)\neq\mathcal{H}(2l,2l)$, $l_i\geq 1$, has two connected components;
\item all other strata of $\mathcal{H}_g$ are connected.
\end{itemize}
\end{theorem}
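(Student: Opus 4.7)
The plan is to follow the Kontsevich--Zorich strategy based on two locally constant invariants defined on strata: \emph{hyperellipticity} and the \emph{spin parity} (the Arf invariant of the spin structure determined by $\omega$). The proof decomposes into three phases: (i) define the invariants and check they are locally constant in period coordinates; (ii) exhibit explicit models to produce a lower bound on the number of components; (iii) show that any two translation surfaces sharing the same invariants lie in the same connected component.

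For step (i), I would call $(M,\omega)\in\mathcal{H}(\kappa)$ \emph{hyperelliptic} if $M$ admits a holomorphic involution $\tau$ with quotient of genus $0$ and $\tau^*\omega=-\omega$; a transversality argument shows that this condition is both open and closed in $\mathcal{H}(2g-2)$ and $\mathcal{H}(g-1,g-1)$, the only strata where it can occur. For strata with all zero orders even, i.e.\ $\kappa=(2l_1,\ldots,2l_n)$, the divisor $\sum l_i [p_i]$ is a theta characteristic, and its parity $h^0 \bmod 2$ is a topological invariant of the induced spin structure, hence locally constant in period coordinates; this splits such strata into \emph{even} and \emph{odd} spin loci.

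For step (ii), I would build explicit polygonal models (e.g.\ surfaces glued from two isometric polygons, which inherit a hyperelliptic involution, and cyclic covers of the sphere of distinguishable parities) realizing each possible combination of invariants. Combined with step (i), this gives at least three components (hyperelliptic, even spin, odd spin) for $\mathcal{H}(2g-2)$ and $\mathcal{H}(2l,2l)$ with $g\geq 4$, two components (even/odd spin) for other strata $\mathcal{H}(2l_1,\ldots,2l_n)$ with all $l_i\geq 1$, and matches the low-genus counts in the statement.

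The main difficulty, and the hardest step, is step (iii): connectedness of the loci defined by fixing the invariants. The strategy is a double induction on the genus and on the number of zeros, carried out with two local surgeries whose effect on the invariants is controlled: \emph{breaking up a zero} (replacing a zero of order $k_1+k_2$ by two zeros of orders $k_1$ and $k_2$), which realizes $\mathcal{H}(\ldots,k_1,k_2,\ldots)$ as a fibration over a stratum with one fewer zero, and \emph{bubbling a handle}, which attaches a torus through a zero to raise the genus by one. The induction reduces the problem to the minimal strata $\mathcal{H}(2g-2)$, where one produces a finite list of explicit polygonal representatives in each invariant class and connects them through continuous deformations preserving the class. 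The delicate point, and the one requiring real work, is to track precisely how these surgeries act on the Arf invariant and on the hyperelliptic involution, so that the inductive reduction closes without producing spurious components; this combinatorial bookkeeping is the crux of the Kontsevich--Zorich argument.
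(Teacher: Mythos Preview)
The paper does not prove this theorem at all: it is stated in the introductory section as a background result of Kontsevich--Zorich, with a citation to \cite{KZ} and a brief remark recalling the definition of the Arf invariant, but no proof or sketch is given. So there is no argument in the paper to compare your proposal against.

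That said, your outline is an accurate high-level summary of the actual Kontsevich--Zorich strategy: the invariants (hyperellipticity, spin parity), the lower bound via explicit models, and the inductive upper bound via the \emph{breaking up a zero} and \emph{bubbling a handle} surgeries. As written it is a plan rather than a proof---you repeatedly say ``I would\ldots''---and the genuinely hard content (tracking the invariants through the surgeries, and the base cases of the induction, which in \cite{KZ} are handled via extended Rauzy classes rather than purely by polygonal deformations) is acknowledged but not carried out. For a theorem of this size that is appropriate; just be aware that the paper treats it as a black box, so if you need it for a self-contained document you should simply cite \cite{KZ} as the paper does.
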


\begin{remark} For later reference, let us recall the notion of \emph{parity of the spin structure} used by Kontsevich-Zorich in their definition of even spin and odd spin connected components. 

Let $(M,\omega)\in\mathcal{H}_g$ be a translation surface of genus $g\geq 1$. Given a simple smooth loop $\gamma$ in $M-\textrm{div}(\omega)$, denote by $\textrm{ind}(\gamma)$ be the index of the Gauss map of $\gamma$ and let $\phi(\gamma) = \textrm{ind}(\gamma)+1 \,\,(\textrm{mod }2)\in\mathbb{Z}/2\mathbb{Z}$. The quadratic form $\phi$ represents the symplectic intersection form $\{.,.\}$ on $H_1(M,\mathbb{Z}/2\mathbb{Z})$, i.e., $\phi(\alpha+\beta)=\phi(\alpha)+\phi(\beta)+\{\alpha,\beta\}$ for all $\alpha, \beta\in H_1(M,\mathbb{Z}/2\mathbb{Z})$. The \emph{Arf invariant} of $\phi$ is $$\Phi(M,\omega) = \sum\limits_{i=1}^g \phi(\alpha_i)\phi(\beta_i)\in\mathbb{Z}/2\mathbb{Z}$$
where $\{\alpha_i,\beta_i\}_{i=1}^g\subset H_1(M,\mathbb{Z}/2\mathbb{Z})$ is \emph{any}\footnote{It is possible to prove that the value $\Phi(M,\omega)\in\mathbb{Z}/2\mathbb{Z}$ independs of the choice.} choice of canonical symplectic basis. 

The quantity $\Phi(M,\omega)$ is the parity of the spin structure of $(M,\omega)$: by definition, $(M,\omega)$ has even, resp. odd, spin structure if $\Phi(M,\omega)=0$, resp. $1$.
\end{remark}

\subsection{$GL^+(2,\mathbb{R})$ action on $\mathcal{H}_g$} 

The correspondence between Abelian differentials and translation structures allows us to define an action of $GL^+(2,\mathbb{R})$ on $\mathcal{L}_g$. Indeed, given $(M,\omega)\in\mathcal{L}_g$, let us consider an atlas $\{\phi_{\alpha}\}_{\alpha\in I}$ of charts on $M-\textrm{div}(\omega)$ whose changes of coordinates are given by translations. A matrix $A\in GL^+(2,\mathbb{R})$ acts on $(M,\omega)$ by post-composition with the charts of this atlas, i.e., $A\cdot(M,\omega)$ is the translation surface associated to the new atlas $\{A\circ\phi_{\alpha}\}_{\alpha\in I}$. Note that this is well-defined because all changes of coordinates of this new atlas are given by translations: 
$$(A\circ\phi_{\beta})\circ(A\circ\phi_{\alpha})^{-1}(z) = A\circ(\phi_{\beta}\circ\phi_{\alpha}^{-1})(A^{-1}(z)) = A(A^{-1}(z)+c) = z+A(c)$$

Geometrically, the action of $A\in GL^+(2,\mathbb{R})$ on a translation surface $(M,\omega)$ presented by identifications by translations of pairs of parallel sides of a finite collection $\mathcal{P}$ of polygons in the plane is very simple: we apply the matrix $A$ to all polygons in $\mathcal{P}$ and we identify by translations the pairs of parallel sides as before; this operation is well-defined because the matrix $A$ respects (by linearity) the notion of parallelism in the plane. See Figure \ref{f.L-shear} for an illustration of the action of the matrix $T=\left(\begin{array}{cc} 1 & 1 \\ 0 & 1 \end{array}\right)$ on the $L$-shaped square-tiled surface from Figure \ref{f.L-shaped}. 

\begin{figure}[htb!] 
\begin{center}
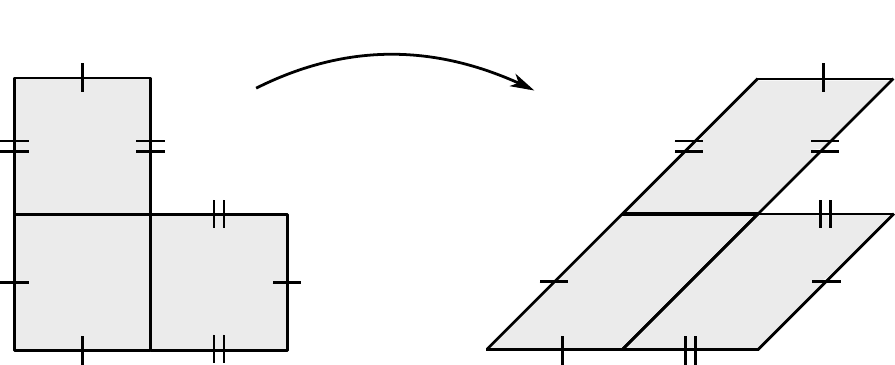
\caption{Action of a parabolic shear on a $L$-shaped origami.}\label{f.L-shear}
\end{center}
\end{figure}

This action of $GL^+(2,\mathbb{R})$ commutes with the actions of $\textrm{Diff}^+_0(M)$ and $\Gamma_g$ because $GL^+(2,\mathbb{R})$ acts by post-composition with translation charts while $\textrm{Diff}^+_0(M)$ and $\Gamma_g$ act by pre-composition with such charts. Therefore, the $GL^+(2,\mathbb{R})$-action on $\mathcal{L}_g$ descends to $\mathcal{TH}_g$ and $\mathcal{H}_g$, it respects the strata $\mathcal{TH}(\kappa)$ and $\mathcal{H}(\kappa)$, $\kappa=(k_1,\dots, k_{\sigma})$, $k_1+\dots+k_{\sigma} = 2g-2$, and the subgroup $SL(2,\mathbb{R})\subset GL^+(2,\mathbb{R})$ preserves the natural (Lebesgue) measures $\lambda_{\kappa}$ and $\mu_{\kappa}$. 

\subsection{$SL(2,\mathbb{R})$-action on $\mathcal{H}_g$}

It is not reasonable to study directly the dynamics of $GL^+(2,\mathbb{R})$ on the strata $\mathcal{H}(\kappa)$ partly because they are too large: for instance, every strata is a ``ruled space'' (foliated by the perforated complex lines $(\mathbb{C}\cdot\omega)-\{0\}$). 

For this reason, we shall restrict the action of $SL(2,\mathbb{R})$ to a fixed level\footnote{The sets $\mathcal{H}^{(a)}(\kappa)$ are ``hyperboloids'' inside $\mathcal{H}(\kappa)$: indeed, this follows from the fact that $A_{\kappa}(\omega) = \frac{i}{2}(\sum\limits_{n=1}^g (A_n \overline{B_n} - \overline{A_n} B_n)$ where $A_n=\int_{\alpha_n}\omega$ and $B_n=\int_{\beta_n}\omega$ are the periods of $\omega$ with respect to a canonical symplectic basis $\{\alpha_n, \beta_n\}_{n=1}^g$ of $H_1(M,\mathbb{R})$.} set $\mathcal{H}^{(a)}(\kappa):=A_{\kappa}^{-1}(\{a\})$, $a\in\mathbb{R}^+$, say $a=1$, of the total area function $A_{\kappa}:\mathcal{H}(\kappa)\to\mathbb{R}^+$ given by 
$$A_{\kappa}(\omega) := \frac{i}{2}\int\omega\wedge\overline{\omega}$$ 

In this way, we obtain an action of $SL(2,\mathbb{R})$ on a space $\mathcal{H}_{\kappa}^{(1)}$ supporting $SL(2,\mathbb{R})$-invariant \emph{probability} measures. In fact, a celebrated result obtained independently by Masur \cite{M82} and Veech \cite{V82} says that the disintegration of the $SL(2,\mathbb{R})$-invariant $\lambda_{\kappa}$ on $\mathcal{H}^{(1)}_{\kappa}$ has \emph{finite} mass, and, hence its normalization $\lambda^{(1)}_{\kappa}$ is a $SL(2,\mathbb{R})$-invariant probability measure on $\mathcal{H}^{(1)}(\kappa)$ (called \emph{Masur-Veech measure} in the literature). 

\subsection{Teichm\"uller flow and Kontsevich-Zorich cocycle} 

In this setting, the \emph{Teichm\"uller flow} is simply the action of the diagonal subgroup $g_t= \textrm{diag}(e^t, e^{-t})$, $t\in\mathbb{R}$, of $SL(2,\mathbb{R})$ on the strata $\mathcal{H}^{(1)}(\kappa)$ of the moduli space $\mathcal{H}_g^{(1)}$ of Abelian differentials of genus $g\geq 1$ with unit total area.

An important aspect of the Teichm\"uller flow is its role as a \emph{renormalization dynamics} for translation flows on translation surfaces. In particular, it is often the case that the dynamical features of this flow has profound consequences in the theory of interval exchange transformations, billiards in rational polygons and translation flows (see Section 6 of \cite{FM} and the references therein for more explanations). For example, Masur \cite{M82} and Veech \cite{V82} exploited the recurrence\footnote{Coming from Poincar\'e recurrence theorem.} of almost all orbits of the Teichm\"uller flow with the Masur-Veech probability measure to independently confirm a conjecture of Keane on the unique ergodicity of almost every interval exchange transformations. 

In this memoir, we will be mostly interested in the Teichm\"uller flow in itself (even though we will occasionally mention its applications to interval exchange transformations and translation flows). 

An important point in the analysis of the Teichm\"uller flow $g_t$ is the study of its derivative $Dg_t$ in period coordinates. In the sequel, we will introduce the so-called \emph{Kontsevich-Zorich} (KZ) \emph{cocycle} and we will see that the \emph{relevant} part of the $Dg_t$ is encoded by this cocycle. 

We start with the trivial bundle $\widehat{H_g^1}:=\mathcal{TH}_g^{(1)}\times H^1(M,\mathbb{R})$ and the trivial dynamical cocycle over the Teichm\"uller flow: 
$$\widehat{G_t^{KZ}}: \widehat{H_1^g} \to \widehat{H_1^g}, \quad \widehat{G_t^{KZ}}(\omega, c) := (g_t(\omega), c)$$ 
Now, we note that the mapping class group $\Gamma_g$ acts on \emph{both} factors of $\widehat{H_g^1}$, so that the quotients $H_g^1:=\widehat{H_g^1}/\Gamma_g$ and $G_t^{KZ}:= \widehat{G_t^{KZ}}/\Gamma_g$ are well-defined. In the literature, $H_g^1$ is called the real Hodge bundle over $\mathcal{H}_g^{(1)}$ and $G_t^{KZ}$ is called Kontsevich-Zorich cocycle\footnote{A similar definition can be performed over the action of $SL(2,\mathbb{R})$ and, by a slight abuse of notation, we shall also call  ``Kontsevich-Zorich cocycle'' the resulting object.}. 

\begin{remark}\label{r.KZ-lift} Strictly speaking, the KZ cocycle is not a \emph{linear} cocycle in the usual sense of Dynamical Systems because the real Hodge bundle is an \emph{orbifold} bundle. In fact, one might have \emph{ambiguities} in the definition of $G_t^{KZ}$ along $g_t$-orbits of translation surfaces $(M,\omega)$ with a non-trivial group $\textrm{Aut}(M,\omega)$ of automorphisms. In concrete terms, the fiber $H^1(M,\mathbb{R})/\textrm{Aut}(M,\omega)$ of $H_g^1$ over such $(M,\omega)$ might not be a vector space, so that the linear maps on $H^1(M,\mathbb{R})$ induced by $G_t^{KZ}$ is well-defined \emph{only} up to the cohomological action of $\textrm{Aut}(M,\omega)$. Fortunately, this ambiguity is not a serious problem as far as \emph{Lyapunov exponents} are concerned. Indeed, it is well-known that Lyapunov exponents are not affected under \emph{finite} covers, so that we can safely replace $G_t^{KZ}$ by its lift to a finite cover of $H_g^1$ obtained by taking a finite-index, torsion-free subgroup $\Gamma_g^0$ of $\Gamma_g$ (e.g., $\Gamma_g^0=\{\phi\in\Gamma_g: \phi_*=\textrm{id} \textrm{ on } H_1(M,\mathbb{Z}/3\mathbb{Z})\}$). 
\end{remark}

Contrary to its parent $\widehat{G_t^{KZ}}$, the KZ cocycle $G_t^{KZ}$ is \emph{far} from trivial: since $(\omega, c)$ is identified with $(\rho^*(\omega), \rho^*(c))$ for all $\rho\in\Gamma_g$ in the construction of $H_g^1$, the fibers of $\widehat{H_g^1}$ over $\omega$ and $\rho^*(\omega)$ are identified in a non-trivial way if $\rho\in\Gamma_g$ acts non-trivially on $H^1(M,\mathbb{R})$. Alternatively, if we fix a fundamental domain $\mathcal{D}$ of the action of $\Gamma_g$ on $\mathcal{TH}_g$, and we start with a generic  $\omega\in\textrm{int}(\mathcal{D})$ and a cohomology class $c\in H^1(M,\mathbb{R})$, then after running the Teichm\"uller flow for some long $t_0$ we eventually hit $\partial\mathcal{D}$ while pointing towards the exterior of $\mathcal{D}$. At this moment, since $\mathcal{D}$ is a fundamental domain, we have the option of applying an element $\rho\in\Gamma$ to replace $g_{t_0}(\omega)$ by a point $\rho^*(g_{t_0}(\omega))$ flowing towards $\textrm{int}(\mathcal{D})$ at the cost of replacing $c$ by $\rho^*(c)$: see Figure \ref{f.KZ-fundamental-domain}. 

\begin{figure}[htb!] 
\begin{center}
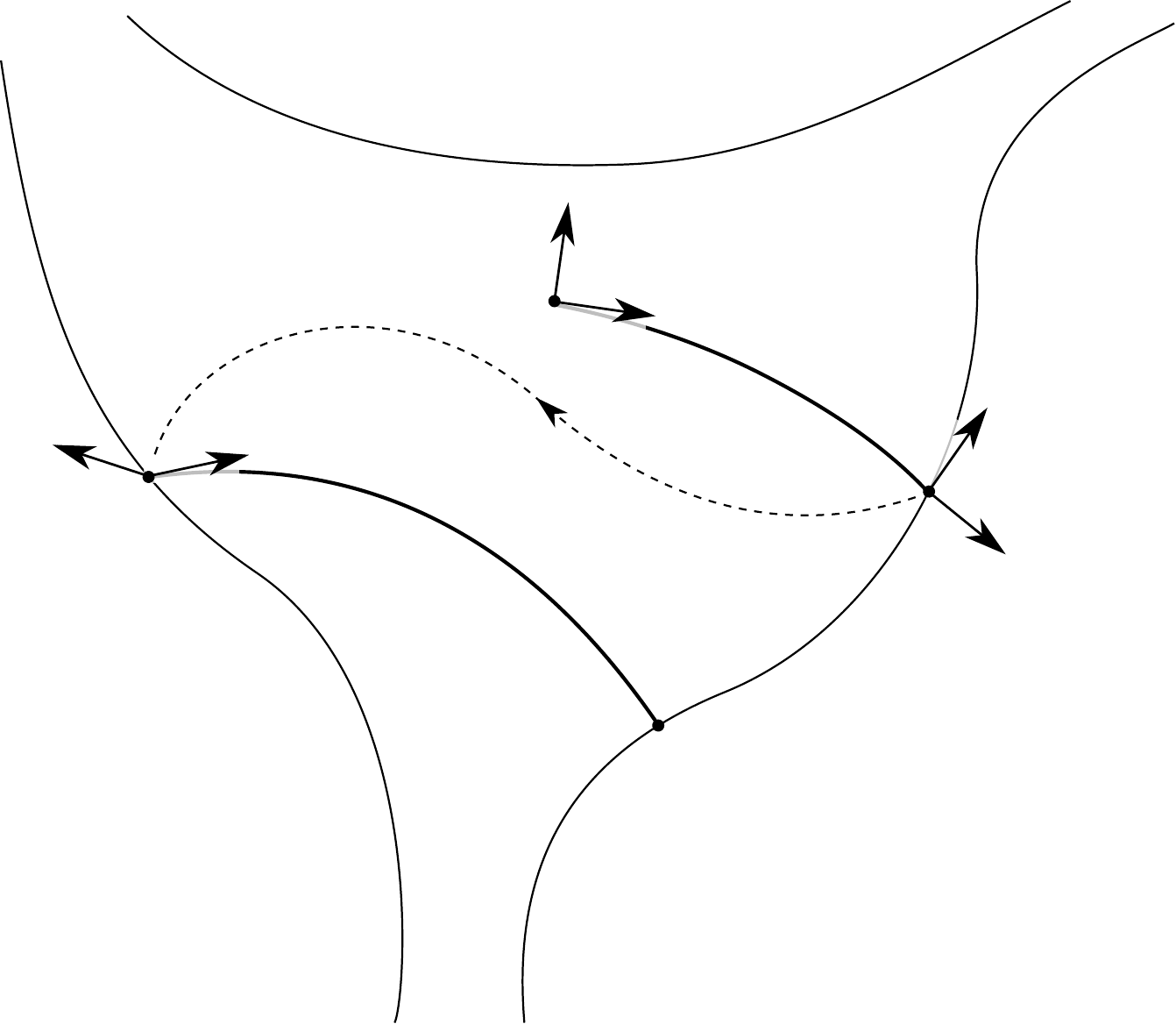
\caption{Kontsevich-Zorich cocycle on $H_1^g$.}\label{f.KZ-fundamental-domain}
\end{center}
\end{figure}

Also, $G_t^{KZ}$ is \emph{symplectic cocycle} because the action of $\Gamma_g$ on $H^1(M,\mathbb{R})$ preserves the symplectic intersection form $\{c,c'\}:=\int_M c\wedge c'$. This fact has the following consequence for the Lyapunov exponents of the KZ cocycle. Given $\mu$ an ergodic Teichm\"uller flow invariant probability measure on $\mathcal{H}_g^{(1)}$ and any choice\footnote{For example, we can take $\|.\|$ to be the so-called \emph{Hodge norm}, see \cite{F02}.} of norm $\|.\|$ with $\int \log\|G_{\pm t}^{KZ}\| d\mu<\infty$ for all $0\leq t\leq 1$, the multiplicative ergodic theorem of Oseledets guarantees the existence of real numbers (Lyapunov exponents) $\lambda_1^{\mu}>\dots>\lambda_k^{\mu}$ and a $G_t^{KZ}$-equivariant measurable decomposition $H^1(M,\mathbb{R}) = E_1(\omega)\oplus\dots\oplus E_k(\omega)$ at $\mu$-almost every $\omega$ such that 
$$\lim\limits_{t\to\pm\infty} \frac{1}{t}\log(\|G_t^{KZ}(\omega, v)\|/\|v\|) = \lambda_i^{\mu} \quad \forall \, \, v\in E_i(\omega)-\{0\}$$ 
In general, we will write the Lyapunov exponent $\lambda_i^{\mu}$ with multiplicity $\textrm{dim}E_i(\omega)$ in order to obtain a list of $2g=\textrm{dim}H^1(M,\mathbb{R})$ Lyapunov exponents
$$\lambda_1^{\mu}\geq\dots\geq\lambda_{2g}^{\mu}$$
In our setting, the symplecticity of $G_t^{KZ}$ implies that its Lyapunov exponents are symmetric\footnote{This reflects the fact that the eigenvalues of a symplectic matrix comes in pairs of the form $\theta$ and $1/\theta$.} around the origin:
$$\lambda_{2g-i+1}^{\mu} = -\lambda_{i}^{\mu} \quad \forall \, \, 1\leq i\leq g$$

By definition, $G_t^{KZ}$ acts on the \emph{tautological plane} $H_{st}^1(M,\omega):=\mathbb{R}.\textrm{Re}(\omega) \oplus \mathbb{R}.\textrm{Im}(\omega)\subset H^1(M,\mathbb{R})$ by the matrix $g_t=\textrm{diag}(e^t, e^{-t})$ (after identifying $e_1=(1,0)\simeq \textrm{Re}(\omega)$, $e_2=(0,1)\simeq \textrm{Im}(\omega)$ and $H_{st}^1(M,\mathbb{R})\simeq\mathbb{R}^2$). This means that $\pm 1$ are Lyapunov exponents of any Teichm\"uller invariant probability measure $\mu$. In fact, it is possible to prove that $1=\lambda_1^{\mu}>\lambda_2^{\mu}$: see \cite{F02}. 

Now, let us relate the KZ cocycle $G_t^{KZ}$ to the derivative $Dg_t$ of the Teichm\"uller flow. By writing $Dg_t$ in period coordinates and by writing $H^1(M,\textrm{div}(\omega),\mathbb{C}) = \mathbb{R}^2\times H^1(M,\textrm{div}(\omega),\mathbb{Z})$, we have that $Dg_t$ acts by the matrix $g_t=\textrm{diag}(e^t, e^{-t})$on the first factor $\mathbb{R}^2$ and by the natural generalization $\widetilde{G_t^{KZ}}$ of the KZ cocycle on the second factor $H^1(M,\textrm{div}(\omega),\mathbb{Z})$. In particular, the Lyapunov exponents of $Dg_t$ have the form $\pm 1 +\lambda$ where $\lambda$ are Lyapunov exponents of $\widetilde{G_t^{KZ}}$. 

Next, we observe that the ``relative part'' of $H^1(M,\textrm{div}(\omega), \mathbb{Z})$ does not contribute with interesting Lyapunov exponents. More precisely, the fact that two relative cycles in $H_1(M,\textrm{div}(\omega),\mathbb{Z})$ with the same boundaries always differ by an absolute cycle can be exploited to prove that $\widetilde{G_t^{KZ}}$ acts \emph{trivially} on the relative part, i.e., the kernel of the natural map $H^1(M,\textrm{div}(\omega),\mathbb{R})/H^1(M,\mathbb{R})$. Hence, the relative part provides $\sigma-1$ zero Lyapunov exponents of $\widetilde{G_t^{KZ}}$ and, \emph{a fortiori}, the interesting part is the restriction $G_t^{KZ}$ of $\widetilde{G_t^{KZ}}$ to $H^1(M,\mathbb{R})$. In summary, $G_t^{KZ}$ captures the most exciting part of $Dg_t$. 

The relationship between $G_t^{KZ}$ and $Dg_t$ described above allows us to recover the Lyapunov exponents of the Teichm\"uller flow from the Lyapunov exponents of the KZ cocycle: if $\mu$ is an ergodic $g_t$-invariant probability measure supported on $\mathcal{H}^{(1)}(\kappa)$, $\kappa=(k_1,\dots, k_{\sigma})$, $k_1+\dots+k_{\sigma}=2g-2$, then the Lyapunov exponents of $g_t$ with respect to $\mu$ are 
\begin{eqnarray*}
2\geq 1+\lambda_2^{\mu}\geq\dots\geq 1+\lambda_g^{\mu}\geq \overbrace{1=\dots=1}^{\sigma-1}\geq 
1-\lambda_g^{\mu}\geq\dots\geq 1-\lambda_2^{\mu}\geq 0 \\ 
\geq -1+\lambda_2^{\mu}\geq\dots\geq-1+\lambda_g^{\mu}\geq \overbrace{-1=\dots=-1}^{\sigma-1}\geq -1-\lambda_g^{\mu}\geq\dots\geq-1-\lambda_2^{\mu}\geq-2
\end{eqnarray*}
where $1>\lambda_2^{\mu}\geq\dots \geq \lambda_g^{\mu}$ are the non-negative exponents of $G_t^{KZ}$ with respect to $\mu$. 

\subsection{Teichm\"uller curves, Veech surfaces and affine homeomorphisms}\label{ss.Veech-surfaces} 

The Teichm\"uller flow and the KZ cocycle take a particularly explicit description in the case of \emph{Teichm\"uller curves}. 

By definition, a Teichm\"uller curve is a \emph{closed} $SL(2,\mathbb{R})$-orbit in $\mathcal{H}_g^{(1)}$. By a result of Smillie (see \cite{SW}), the $SL(2,\mathbb{R})$-orbit of a translation surface $X$ is a Teichm\"uller curve if and only if the stabilizer $SL(X)$ of $X$ in $SL(2,\mathbb{R})$ is a lattice. 

The group $SL(X)$ is called \emph{Veech group} of the translation surface $X$. We say that a translation surface $X$ whose Veech group is a lattice in $SL(2,\mathbb{R})$ is called \emph{Veech surface}. In this language, Smillie's result says that Teichm\"uller curves are precisely the $SL(2,\mathbb{R})$-orbits of Veech surfaces. 

The Teichm\"uller curve generated by a Veech surface $X$ is isomorphic to $SL(2,\mathbb{R})/SL(X)$, i.e., the unit cotangent of the finite-area hyperbolic surface $\mathbb{H}/SL(X)$. In particular, the Teichm\"uller flow $g_t$ on Teichm\"uller curves is simply the geodesic flow on certain finite-area hyperbolic surfaces. 

At first sight, it is not obvious that Veech surfaces exist. Nevertheless, a \emph{dense} set of Veech surfaces in any stratum $\mathcal{H}_{\kappa}^{(1)}$ can be constructed as follows. The set $\mathcal{S}$ of translation surfaces $(M,\omega)$ whose image under period maps belong to $\textrm{Hom}(M,\textrm{div}(\omega), \mathbb{Q}\oplus\mathbb{Q}i)$ is dense (because $\mathbb{Q}\oplus\mathbb{Q}i$ is dense in $\mathbb{C}$). It was shown by Gutkin and Judge \cite{GJ} that a translation surface $X$ belongs to $\mathcal{S}$ if and only if its Veech group $SL(X)$ is commensurable to $SL(2,\mathbb{Z})$ or, equivalently, $X$ is a \emph{square-tiled surface} (i.e., a translation surface obtained by finite cover of a flat square torus). Since $SL(2,\mathbb{Z})$ is a lattice of $SL(2,\mathbb{R})$, we have that any square-tiled surfaces is a Veech surface, so that $\mathcal{S}$ is the desired dense set of Veech surfaces. 

An alternative characterization of square-tiled surfaces is provided by the so-called \emph{trace field} of the corresponding Veech groups. More precisely, if $X$ is a Veech surface of genus $g\geq 1$, then its trace field $K(X):=\mathbb{Q}(\{\textrm{tr}(\gamma):\gamma\in SL(X)\})$ obtained by adjoining to $\mathbb{Q}$ all traces of elements in $SL(X)$ can be shown to be a finite extension of $\mathbb{Q}$ of degree $1\leq \textrm{deg}_{\mathbb{Q}}(K(X))\leq g$. In this setting, $X$ is a square-tiled surface if and only if its trace field is $\mathbb{Q}$. For this reason, the Teichm\"uller curves generated by square-tiled surfaces are called \emph{arithmetic Teichm\"uller curves}. 

\begin{remark} A square-tiled surface $X$ is combinatorially described by a pair of permutations $h$ and $v$ modulo simultaneous conjugations: after numbering the squares used to build up $X$ from $1$ to $N$, we define $h(i)$, resp. $v(i)$ as the square to the right, resp. on the top, of $i$. Since our choice of numbering is arbitrary, $(\phi h \phi^{-1}, \phi v \phi^{-1})$ and $(h,v)$ determine the same square-tiled surface. 

Moreover, all square-tiled surfaces in a given Teichm\"uller curve can be found by the following algorithm. We fix a pair of permutations $(h,v)$ associated to a square-tiled surface $X$ in our preferred Teichm\"uller curve. All square-tiled surfaces in the $SL(2,\mathbb{R})$-orbit of $X$ belong to the $SL(2,\mathbb{Z})$-orbit of $X$. Since $SL(2,\mathbb{Z})$ is generated by the parabolic matrices $T=\left(\begin{array}{cc} 1 & 1 \\ 0 & 1 \end{array}\right)$ and $S=\left(\begin{array}{cc}1 & 0 \\ 1 & 1\end{array}\right)$, we can algorithmatically compute $SL(2,\mathbb{Z})$-orbits of square-tiled surfaces by determining how the matrices $T$ and $S$ act on pairs $(h,v)$ of permutations. As it turns out, a direct inspection shows that $T(h,v) = (h,vh^{-1})$ and $S(h,v)=(hv^{-1},v)$. 
\end{remark}

The KZ cocycle over a Teichm\"uller curve is described by the cohomological action of \emph{affine homeomorphisms} of a Veech surface. 

More concretely, an affine homeomorphism of a translation surface $(M,\omega)$ is an orientation-preserving homeomorphism of $M$ preserving $\textrm{div}(\omega)$ whose local expressions in translation charts of $M-\textrm{div}(\omega)$ are affine transformations of the plane. 

Any affine homeomorphism $f$ has a well-defined linear part $Df\in SL(2,\mathbb{R})$ because the change of coordinates in $M-\textrm{div}(\omega)$ are translations. Therefore, we have a natural homomorphism 
$$D:\textrm{Aff}(M,\omega)\to SL(2,\mathbb{R})$$ 
from the group $\textrm{Aff}(M,\omega)$ of affine homeomorphisms to $SL(2,\mathbb{R})$. By definition, the kernel of $D$ is the group $\textrm{Aut}(M,\omega)$ of automorphisms of $(M,\omega)$. Also, it is not hard to check that the image of $D$ coincides with the Veech group of $(M,\omega)$. In particular, we have a short exact sequence 
$$1\to \textrm{Aut}(M,\omega)\to \textrm{Aff}(M,\omega)\to SL(M,\omega)\to 1$$ 

The stabilizer of the $SL(2,\mathbb{R})$-orbit of $(M,\omega)\in\mathcal{TH}_g$ in $\Gamma_g$ is precisely the group $\textrm{Aff}(M,\omega)$ of its affine homeomorphisms. In particular, the KZ cocycle over the $SL(2,\mathbb{R})$-orbit of $(M,\omega)$ is the quotient of the trivial cocycle 
$$g_t\times\textrm{id}: SL(2,\mathbb{R})(M,\omega)\times H^1(M,\mathbb{R})\to SL(2,\mathbb{R})(M,\omega)\times H^1(M,\mathbb{R})$$ 
by the natural action of $\textrm{Aff}(M,\omega)$ on both factors. 

This interpretation of the KZ cocycle in terms of affine homeomorphisms is useful to produce concrete matrices of this cocycle. For example, let us consider the $L$-shaped square-tiled surface $(M,\omega)$ from Figure \ref{f.L-shaped}. This translation surface decomposes into two horizontal \emph{cylinders}, i.e., two maximal collections of closed geodesics parallel to the horizontal direction: see Figure \ref{f.L-shaped-horizontal-cylinders}. 

\begin{figure}[htb!] 
\begin{center}
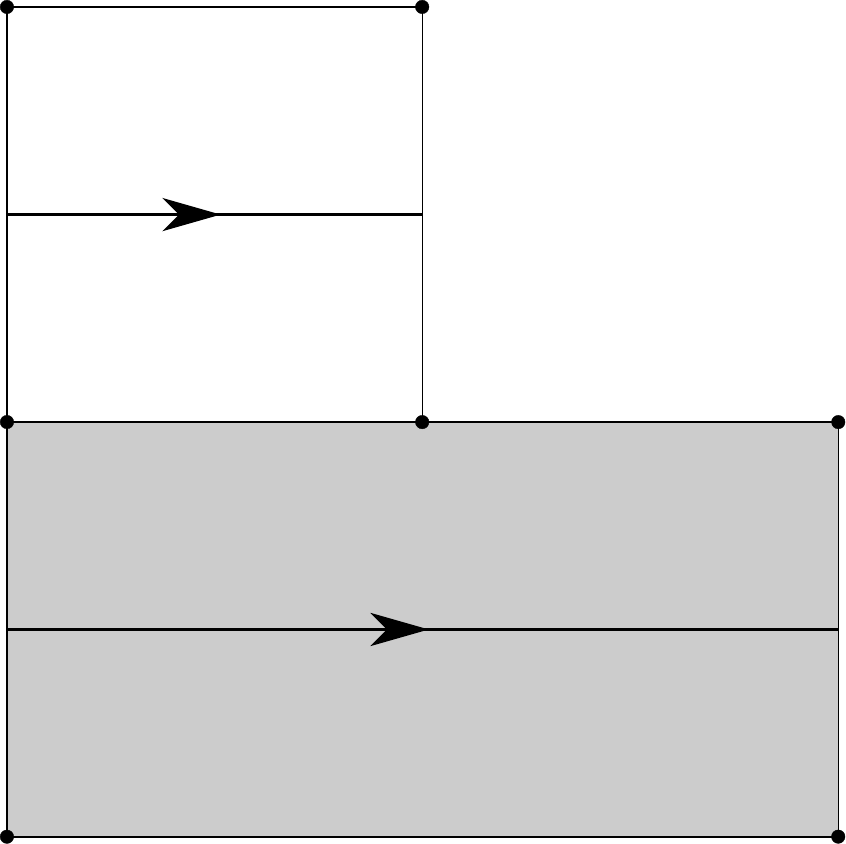
\caption{Horizontal cylinders of $(M,\omega)$ and their waist curves $\gamma_1$ and $\gamma_2$.}\label{f.L-shaped-horizontal-cylinders}
\end{center}
\end{figure}

This collection of horizontal cylinders can be used to define a special type of affine homeomorphism of $(M,\omega)$ called \emph{Dehn multitwist}. 

Suppose that $C$ is a maximal horizontal cylinders of height $h$ and widths $w$. By definition, we can cut and paste by translation the image of $C$ under any power $T_{w/h}^n$, $n\in\mathbb{N}$, of the parabolic matrix $T_{w/h}:=\left(\begin{array}{cc} 1 & w/h \\ 0 & 1 \end{array}\right)$ in order to recover $C$: in other words, $T_{w/h}^n$ stabilizes $C$. Also, $T_{w/h}^n$ fixes the waist curve of $C$ while adding $n$ times the waist curve of $C$ to any cycle crossing $C$ upwards. The matrices $T_{w/h}^n$ are a particular example of a Dehn multitwist. 

In the case of the $L$-shaped square-tiled surface $(M,\omega)$, we have two horizontal cylinders $C_1$ and $C_2$ whose waist curves $\gamma_1$ and $\gamma_2$ are depicted in Figure \ref{f.L-shaped-horizontal-cylinders}. Note that $C_1$ has width two, $C_2$ has width one, and both $C_i$, $i=1, 2$, have height one. Thus, the parabolic matrix $T_2=\left(\begin{array}{cc} 1 & 2 \\ 0 & 1 \end{array}\right)$ stabilize both $C_1$ and $C_2$, and, \emph{a fortiori}, $T_2$ defines an affine homeomorphism of $(M,\omega)$. Furthermore, our description of the effect of Dehn multitwists on the waist curves and cycles crossing cylinders says that $T_2$ acts on the basis $\{\sigma,\mu, \zeta,\nu\}$ of $H_1(M,\mathbb{R})$ in Figure \ref{f.L-cycle-basis} via: 
$$(T_2)_*(\sigma) = \sigma, \quad (T_2)_*(\mu) = \mu, \quad (T_2)_*(\zeta) = \zeta+2\mu, \quad (T_2)_*(\nu) = \nu + \sigma+\mu$$

\begin{figure}[htb!] 
\begin{center}
\input{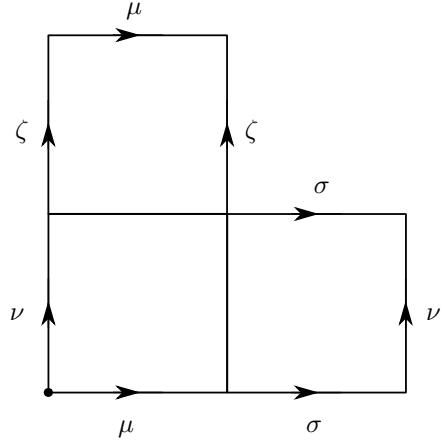}
\caption{A choice of basis of homology of a $L$-shaped origami.}\label{f.L-cycle-basis}
\end{center}
\end{figure}

Hence, the KZ cocycle matrix corresponding to the action of $T_2$ on the basis $\{\sigma,\mu, \zeta,\nu\}$ is

$$(T_2)_*=\left(\begin{array}{cccc} 1 & 0 & 0 & 1 \\ 0 & 1 & 2 & 1 \\ 0 & 0 & 1 & 0 \\ 0 & 0 & 0 & 1 \end{array}\right)$$

\begin{remark} Strictly speaking, we compute a matrix of the \emph{dual} of the KZ cocycle: indeed, this cocycle was defined in terms of the action on cohomology groups $H^1(M,\mathbb{R})$, but our calculations were in homology groups $H_1(M,\mathbb{R})$, i.e., the duals of $H^1(M,\mathbb{R})$ (by Poincar\'e duality). Of course, this is a minor detail that is usually not very important. 
\end{remark}

\newpage 

\null

\newpage


\begin{centering}
\rule{\textwidth}{1.6pt}\vspace*{-\baselineskip}\vspace*{2.5pt}
\rule{\textwidth}{0.4pt}

\section{Proof of the Eskin-Kontsevich-Zorich regularity conjecture}\label{s.AMY}

\rule{\textwidth}{0.4pt}\vspace*{-\baselineskip}\vspace{3.2pt}
\rule{\textwidth}{1.6pt}
\end{centering}\\

In 1980, the physicists J. Hardy and J. Weber conjectured that the diffusion rate of typical trajectories in $\mathbb{Z}^2$-periodic Ehrenfest wind-tree models of Lorenz gases is abnormal: more precisely, if  $\phi_t^{\theta}$ is the billiard flow in direction $\theta\in S^1$ in the billiard table $T(a,b)\subset\mathbb{R}^2$, $0<a,b<1$, obtained by putting rectangular obstacles of dimensions $a\times b$ at each $(m,n)\in\mathbb{Z}^2$, then Hardy-Weber conjecture predicts that 
$$\limsup\limits_{t\to\infty}\frac{\log d_{\mathbb{R}^2}(\phi_t^{\theta}(x), x)}{\log t} > \frac{1}{2}$$
for Lebesgue almost every $\theta\in S^1$ and $x\in T(a,b)$. 

In a recent work, Delecroix, Hubert and Leli\`evre \cite{DHL} confirmed this conjecture by proving the following stronger result: the rate of diffusion of a typical\footnote{I.e., for Lebesgue almost every $\theta$ and $x$.} trajectory $\phi_t^{\theta}(x)$ in $T(a,b)$ is 
$$\limsup\limits_{t\to\infty}\frac{\log d_{\mathbb{R}^2}(\phi_t^{\theta}(x), x)}{\log t} = \frac{2}{3}$$ 

\subsection{Eskin-Kontsevich-Zorich formula} 

Among several important ingredients used by Delecroix-Hubert-Leli\`evre \cite{DHL}, we find a remarkable formula of Eskin-Kontsevich-Zorich \cite{EKZ} for the sum of non-negative Lyapunov exponents of the KZ cocycle with respect to $SL(2,\mathbb{R})$-invariant probability measures. In fact, the diffusion rate in Delecroix-Hubert-Leli\`evre theorem is a Lyapunov exponent $\lambda$ of the KZ cocycle with respect to a certain $SL(2,\mathbb{R})$-invariant probability measure on the moduli space $\mathcal{H}_5$ of Abelian differentials of genus five, and its explicit value $\lambda=2/3$ was computed thanks to Eskin-Kontsevich-Zorich formula. 

In a nutshell, Eskin-Kontsevich-Zorich formula relates sums of Lyapunov of the KZ cocycle to the flat geometry of translation surfaces in the following way. Given an ergodic $SL(2,\mathbb{R})$-invariant probability measure $m$ on the moduli space $\mathcal{H}_g^{(1)}$ of Abelian differentials of genus $g\geq 1$ with total area one, Kontsevich \cite{K} and Forni \cite{F02} proved that the sum of the non-negative Lyapunov exponents of $m$ can be expressed in terms of the integral of the curvature of the determinant of the Hodge bundle with respect to $m$. In general, it is not always easy to work directly with the curvature $\Theta$ of the Hodge bundle and, for this reason, Eskin-Kontsevich-Zorich used the Riemann-Roch-Hirzebruch-Grothendieck theorem to convert the integral of $\Theta$ into the sum of a combinatorial term $\frac{1}{12}\sum\limits_{l=1}^{\sigma} \frac{k_l(k_l+2)}{k_l+1}$ depending on the orders $k_1,\dots, k_{\sigma}$ of the zeroes of $\omega\in\textrm{supp}(m)$ and a certain integral expression $I$ depending on the flat geometry of the translation surfaces in $\textrm{supp}(m)$. Finally, Eskin-Kontsevich-Zorich derive their formula by relating $I$ to the so-called \emph{Siegel-Veech constants} associated to counting problems of flat cylinders in translation surfaces in $\textrm{supp}(m)$. 

An important point in Eskin-Kontsevich-Zorich's proof of their formula is the fact that most arguments use only the $SL(2,\mathbb{R})$-invariance of $m$: indeed, there is just a single place in their paper (namely,  \cite[Section 9]{KZ}) where a certain \emph{regularity} assumption on $m$ is required in order to justify an integration by parts argument. 

The regularity condition on $m$ is defined in \cite{EKZ} as follows. Recall that a cylinder $C$ in a translation surface $(M,\omega)$ is a maximal collection of parallel closed geodesic in $(M,\omega)$ and the modulus $\textrm{mod}(C)$ of a cylinder is the quotient $\textrm{mod}(C)=h(C)/w(C)$, where $h(C)$ is the height of $C$ and $w(C)$ is the width of $C$. A $SL(2,\mathbb{R})$-invariant probability measure $m$ on $\mathcal{H}_g^{(1)}$ is \emph{regular} if there exists a constant $K>0$ such that 
$$\lim\limits_{\rho\to 0}\frac{m(\mathcal{H}_g^{cyl}(K,\rho))}{\rho^2} = 0 \quad (\textrm{i.e., } m(\mathcal{H}_g^{cyl}(K,\rho)) = o(\rho^2))$$ 
where $\mathcal{H}_g^{cyl}(K,\rho)$ is the set of Abelian differentials $(M,\omega)\in\mathcal{H}_g^{(1)}$ possessing two non-parallel cylinders $C_1$ and $C_2$ with moduli $\textrm{mod}(C_i)\geq K$ and widths $w(C_i)\leq\rho$ for $i=1,2$. 

\subsection{Statement of the Eskin-Kontsevich-Zorich regularity conjecture} 

By the time that Eskin-Kontsevich-Zorich wrote their paper \cite{EKZ}, the regularity of all \emph{known} examples of $SL(2,\mathbb{R})$-invariant probability measures on moduli spaces of translation surfaces was established by \emph{ad-hoc} methods: in particular, Eskin-Kontsevich-Zorich formula could be applied in many contexts. 

Nevertheless, it is natural to ask what is the exact range of applicability of Eskin-Kontsevich-Zorich formula. In this direction, Eskin-Kontsevich-Zorich \cite{EKZ} made the conjecture that \emph{all} $SL(2,\mathbb{R})$-invariant probability measures in moduli spaces of translation surfaces are regular. 

In our joint work \cite{AMY} with Avila and Yoccoz, we confirmed Eskin-Kontsevich-Zorich regularity conjecture by showing the following (slightly stronger) result. 

\begin{theorem}\label{t.AMY} Let $m$ be an ergodic $SL(2,\mathbb{R})$-invariant probability measure on a connected component $\mathcal{C}$ of a stratum of the moduli space of unit area translation surfaces of genus $g\geq 1$. 

Denote by $\mathcal{C}_{(2)}(\rho)$ the set of translation surfaces $(M,\omega)\in\mathcal{C}$ possessing two non-parallel saddle connections of lengths $\leq \rho$. Then, 
$$m(\mathcal{C}_{(2)}(\rho)) = o(\rho^2)$$  
\end{theorem}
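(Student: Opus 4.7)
My plan is to combine $SL(2,\mathbb{R})$-invariance of $m$ with a Siegel--Veech count for pairs of non-parallel saddle connections. Heuristically, requiring two linearly independent short holonomies $v_1, v_2$ is a codimension-four condition in period coordinates (each confined to a disc of area $\sim \rho^2$), so the transverse contribution to $m(\mathcal{C}_{(2)}(\rho))$ should be at most of order $\rho^4$, leaving ample room below the $o(\rho^2)$ target to absorb the degeneracy of near-parallel pairs.

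\emph{Smearing step.} Fix a non-negative $\phi \in C_c^\infty(SL(2,\mathbb{R}))$ with $\int \phi \, dg = 1$, supported near the identity. By invariance of $m$,
\[
m(\mathcal{C}_{(2)}(\rho)) = \int_{\mathcal{C}} \Big(\int \phi(g)\, \chi_{\mathcal{C}_{(2)}(\rho)}(g\omega)\, dg\Big)\, dm(\omega).
\]
If $g\omega \in \mathcal{C}_{(2)}(\rho)$ with $g \in \mathrm{supp}(\phi)$, then $\omega$ admits non-parallel saddle-connection holonomies $(v_1, v_2)$ with $|gv_i| \le \rho$ (hence $|v_i| \lesssim \rho$), so the inner integral is bounded by $\sum_{\gamma_1 \nparallel \gamma_2} J_\phi(v_1, v_2)$ with $J_\phi(v_1, v_2) := \int \phi(g)\, \chi_{\{|gv_i| \le \rho\}}\, dg$. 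A direct Haar computation in Iwasawa coordinates (using $SO(2)$-equivariance to rotate $v_1$ along $e_1$, then unfolding the constraint $w_1 \wedge w_2 = v_1 \wedge v_2$ on the orbit map $g \mapsto (gv_1, gv_2)$) gives $J_\phi(v_1, v_2) \lesssim \rho^2/|v_1 \wedge v_2|$, reducing the theorem to
\[
\int_{\mathcal{C}} \sum_{\substack{\gamma_1 \nparallel \gamma_2 \\ |v_i| \lesssim \rho}} \frac{1}{|v_1 \wedge v_2|}\, dm = o(1).
\]

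\emph{Transverse regime.} Fix a small $\eta > 0$ and split on $|v_1 \wedge v_2| \gtrless \rho^{2+\eta}$. On the transverse part, bound $|v_1 \wedge v_2|^{-1} \le \rho^{-2-\eta}$ and apply a quadratic Siegel--Veech estimate $\int N(\rho)^2\, dm = O(\rho^4)$ --- valid for every ergodic $SL(2,\mathbb{R})$-invariant probability by Eskin--Masur-type integrability of Siegel transforms --- to obtain a contribution of order $\rho^{2-\eta} = o(1)$, and hence a contribution of $O(\rho^{4-\eta}) = o(\rho^2)$ to $m(\mathcal{C}_{(2)}(\rho))$.

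\emph{Near-parallel regime (main obstacle).} The crux is the regime $|v_1 \wedge v_2| < \rho^{2+\eta}$, where $|v_1 \wedge v_2|^{-1}$ is too singular for any purely Lebesgue-based argument. My plan is a finer dyadic decomposition on the angular separation $|\sin\theta| = |v_1 \wedge v_2|/(|v_1||v_2|)$: for each dyadic scale $|\sin\theta| \sim 2^{-k}$, use $SO(2)$-invariance of $m$ to rotate $v_1$ to a fixed reference direction, reducing the $v_2$-count to that of saddle connections in a thin sector of angular width $2^{-k}$. One then applies a \emph{thin-sector} Siegel--Veech estimate whose constant is proportional to the sector's angular width, \emph{uniform} over all ergodic $SL(2,\mathbb{R})$-invariant measures, so that summing over the $O(\log(1/\rho))$ dyadic scales still closes the estimate to $o(1)$. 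Establishing this uniform thin-sector Siegel--Veech bound --- which does not follow immediately from Veech's general existence of Siegel--Veech constants, since the constant there is not quantified in direction --- is the main technical content I expect in \cite{AMY}.
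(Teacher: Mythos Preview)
Your smearing reduction is circular. Since $\phi$ is supported near the identity, the constraint $J_\phi(v_1,v_2)\neq 0$ already forces $|v_i|\lesssim\rho$, hence $|v_1\wedge v_2|\le|v_1||v_2|\lesssim\rho^2$, so your bound $J_\phi\lesssim\rho^2/|v_1\wedge v_2|$ is $\gtrsim 1$ throughout the support of the sum --- no better than the trivial $J_\phi\le\int\phi=1$. Concretely: if $I(\rho):=\int\sum|v_1\wedge v_2|^{-1}\,dm$ is the quantity you reduce to, then every $\omega\in\mathcal{C}_{(2)}(C\rho)$ contributes at least $(C\rho)^{-2}$ to the integrand, so $I(\rho)\gtrsim\rho^{-2}m(\mathcal{C}_{(2)}(C\rho))$, and the inequality $m(\mathcal{C}_{(2)}(\rho))\lesssim\rho^2 I(\rho)$ collapses to a tautology. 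Separately, the quadratic Siegel--Veech estimate $\int N(\rho)^2\,dm=O(\rho^4)$ does not follow from Eskin--Masur integrability (which gives only $L^{1+\varepsilon}$, not $L^2$, and no rate for the second moment); a non-parallel pair-count bound of this strength would already give $m(\mathcal{C}_{(2)}(\rho))=O(\rho^4)$ directly, rendering your entire setup superfluous. The thin-sector Siegel--Veech estimate you defer to the paper is not what is proved there either.

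The paper's argument is a \emph{flux} construction and uses only the linear Siegel--Veech bound $m(\{\textrm{sys}\le\rho\})=O(\rho^2)$. One fixes a transversal $X_0^*=\{M:\textrm{sys}(M)=\rho_0,\ \text{minimizing saddle-connection vertical}\}$ and shows, via Rokhlin disintegration and the structure of Haar measure on $SL(2,\mathbb{R})$, that on a full-measure piece of the sublevel set $m$ factors as $dt\times\cos(2\theta)\,d\theta\times m_0$ for a finite flux measure $m_0$ on $X_0^*$, with $\pi\, m_0(X_0^*)=\rho_0 F'(\rho_0)$ where $F(\rho):=m(\{\textrm{sys}\le\rho\})$ (Propositions~\ref{p.mY*}, \ref{p.slice}). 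Choosing $\rho_0$ so that $F'(\rho_0)/\rho_0$ is near its limsup value $2c(m)$, the surfaces reached by flowing from $X_0^*$ account for nearly all of $\{\textrm{sys}<\rho_0 e^{-T}\}$ (Proposition~\ref{p.AMY1}). Finally one shows that few of the flowed surfaces lie in $\mathcal{C}(A,\rho_0 e^{-T})$ by elementary $\mathbb{R}^2$-geometry (Lemmas~\ref{l.AMY.6.5}, \ref{l.AMY.6.7}): any such surface must come from an $M_0\in X_0^*$ with a second saddle-connection of bounded length making a very small angle with the vertical, and that set has arbitrarily small $m_0$-measure.
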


\begin{remark} Recall that a \emph{saddle connection} of a translation surface $(M,\omega)$ is a geodesic segment $\gamma\subset M$ such that $\partial\gamma\subset\textrm{div}(\omega)$ and $\textrm{int}(\gamma)\cap\textrm{div}(\omega)=\emptyset$. 

Since the boundary of cylinder $C$ is the union of (finitely many) saddle connections, the existence of a cylinder $C$ of width $w(C)$ implies the existence of a saddle connection of length $\leq w(C)$. In particular, this justifies our claim that Theorem \ref{t.AMY} is a slightly stronger conclusion than the statement predicted in Eskin-Kontsevich-Zorich regularity conjecture. 
\end{remark}

Intuitively, Theorem \ref{t.AMY} says that if $\rho>0$ is small, then $\mathcal{C}_{(2)}(\rho)$ occupies a small fraction of the set $\{M\in\mathcal{C}:\textrm{sys}(M)\leq\rho\}$
of translation surfaces $M\in\mathcal{C}$ whose \emph{systole} $\textrm{sys}(M)$ (i.e., the length of the shortest saddle connections of $M$) is at most $\rho$. In fact, our theorem asserts that $m(\mathcal{C}_{(2)}(\rho)) = o(\rho^2)$, while the following lemma of Veech \cite{V98} and Eskin-Masur \cite{EMas} ensures that the set $\{M\in\mathcal{C}:\textrm{sys}(M)\leq\rho\}$ has $m$-mass of order $\sim\rho^2$: 

\begin{lemma}\label{l.Eskin-Masur} Let $m$ be an ergodic $SL(2,\mathbb{R})$-invariant probability measure on a connected component $\mathcal{C}$ of a stratum of the moduli space of unit area translation surfaces of genus $g\geq 1$. Then, 
$$m(\{M\in\mathcal{C}: \textrm{sys}(M)\leq\rho\}) = O(\rho^2)$$
\end{lemma}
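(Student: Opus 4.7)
The plan is to reduce the estimate to the Siegel--Veech counting of short saddle connections combined with Markov's inequality. The first observation is that if $\textrm{sys}(M)\leq\rho$, then $M$ contains at least one saddle connection of length $\leq\rho$, so if $N_{sc}(M,\rho)$ denotes the number of saddle connections of $M$ of length at most $\rho$, then
\[
m(\{M\in\mathcal{C}:\textrm{sys}(M)\leq\rho\}) \;\leq\; m(\{M:N_{sc}(M,\rho)\geq 1\}) \;\leq\; \int_{\mathcal{C}} N_{sc}(M,\rho)\,dm(M).
\]
Thus it suffices to bound the right-hand side by a constant times $\rho^{2}$.

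The second step is to invoke the Siegel--Veech formula for $SL(2,\mathbb{R})$-invariant probability measures: for any compactly supported Borel function $f:\mathbb{R}^{2}\to\mathbb{R}$, the Siegel--Veech transform $\widehat{f}(M):=\sum_{v\in V_{sc}(M)} f(v)$ over the set $V_{sc}(M)$ of holonomy vectors of saddle connections of $M$ satisfies
\[
\int_{\mathcal{C}}\widehat{f}(M)\,dm(M) \;=\; c(m)\int_{\mathbb{R}^{2}} f(v)\,dv,
\]
where $c(m)<\infty$ is the Siegel--Veech constant of $m$. Applying this with $f$ the indicator of the disk of radius $\rho$ about the origin yields $\int_{\mathcal{C}} N_{sc}(M,\rho)\,dm=c(m)\pi\rho^{2}$, and combined with the first step we conclude $m(\{\textrm{sys}\leq\rho\})\leq c(m)\pi\rho^{2}=O(\rho^{2})$.

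The main obstacle is establishing the finiteness of $c(m)$ and the validity of the Siegel--Veech identity above for a general (not necessarily Masur--Veech) ergodic $SL(2,\mathbb{R})$-invariant probability measure. This is precisely the content of Eskin--Masur \cite{EMas}, and its proof uses the $SL(2,\mathbb{R})$-invariance of $m$ in an essential way. The strategy is to $SO(2)$-symmetrize the transform $\widehat{f}$ and then apply the Teichm\"uller flow $g_{t}$: after averaging, $\widehat{f}$ becomes a radial function on $\mathbb{R}^{2}$, so iterating $g_{t}$ on a disk of radius $\rho$ essentially reduces the estimate on discs of radius $e^{t}\rho$ to one on discs of bounded radius. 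Combining this self-similarity with a Chebyshev-type argument on compact pieces of the moduli space gives integrability and the formula. Granting the Eskin--Masur result, the two steps above complete the proof of Lemma \ref{l.Eskin-Masur}.
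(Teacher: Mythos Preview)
Your proof is correct and follows essentially the same approach as the paper: both bound $m(\{\textrm{sys}\leq\rho\})$ by the integral of the counting function $N_{sc}(\cdot,\rho)$, invoke the Siegel--Veech formula to evaluate that integral as $c(m)\pi\rho^{2}$, and cite Eskin--Masur \cite{EMas} for the non-trivial integrability input. The only cosmetic difference is that the paper applies the formula to a smooth approximation of the indicator of the disk (since it states the formula for $C_0^\infty$ test functions), whereas you apply it directly to the indicator; this is harmless.
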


\begin{proof}[Sketch of proof] The key idea in the proof of this lemma is the so-called \emph{Siegel-Veech formula}. 

By following Eskin-Masur \cite{EMas}, let us first discuss the general version of the Siegel-Veech formula (which has little to do with moduli spaces, but rather the action of $SL(2,\mathbb{R})$ on $\mathbb{R}^2$). 

Suppose that $SL(2,\mathbb{R})$ acts on a space $X$. Let us fix $\mu$ a $SL(2,\mathbb{R})$-invariant probability measure on $X$ and a function $V$ assigning a subset $V(x)\subset\mathbb{R}^2-\{(0,0)\}$ of non-zero vectors in $\mathbb{R}^2$ with weights/multiplicities to each $x\in X$. Later in the proof of this lemma, $X=\mathcal{C}$ and $V(x)$ is the discrete subset of holonomy vectors $\int_{\gamma}\omega$ of saddle connections $\gamma$ in $x=(M,\omega)\in X$. 

In general, the Siegel-Veech formula concerns functions $V$ with the following properties:
\begin{itemize}
\item $V$ is $SL(2,\mathbb{R})$-equivariant, i.e., $V(gx) = g(V(x))$ for all $x\in X$ and $g\in SL(2,\mathbb{R})$; 
\item there exists a constant $c(x)>0$ for each $x\in X$ such that $N_V(x,R):=\#(V(x)\cap B(0,R))$ is at most $c(x) R^2$ for all $R>0$ (where $B(0,R)\subset\mathbb{R}^2$ is the Euclidean ball of radius $R$ centered at the origin); moreover, $c(x)$ can be chosen uniformly on compact subsets of $X$; 
\item there are $R>0$ and $\varepsilon>0$ such that $N_V(x,\mathbb{R})\in L^{1+\varepsilon}(X,\mu)$.
\end{itemize}
The non-trivial fact that these conditions hold for the particular case $X=\mathcal{C}$ of the moduli space of unit area translation surfaces and $V$ is the function assigning the set of holonomies of saddle connections was proved by Eskin-Masur \cite{EMas}. 

Coming back to the general setting, let $f\in C^{\infty}_0(\mathbb{R}^2)$ be a real-valued function with  compact support. We define its \emph{Siegel-Veech transform} $\widehat{f}:X\rightarrow\mathbb{R}$ as
$$\widehat{f}(x)=\sum\limits_{v\in V(x)}f(v)$$

In this language, the Siegel-Veech formula asserts that 
$$\int_X \widehat{f}(x) \, d\mu(x) = c(\mu) \int_{\mathbb{R}^2} f(v) \, d\textrm{Leb}_{\mathbb{R}^2}(v)$$
where $c(\mu)=c_V(\mu)\geq 0$ is the so-called \emph{Siegel-Veech constant} of $\mu$ (with respect to $V$). At first sight, the Siegel-Veech formula looks tricky to prove because $X$,$\mu$ and $V$ are ``arbitrary''. Nevertheless, this formula becomes easy to derive if we notice that
$$ f\in C^{\infty}_0(\mathbb{R}^2)\mapsto \int_X \widehat{f}(x) \, d\mu(x)$$
is a non-negative linear functional on $C^{\infty}_0(\mathbb{R}^2)$, i.e., a measure on $\mathbb{R}^2$: indeed, this linear functional is well-defined because $\widehat{f}$ is finite, bounded on compact sets and $\widehat{f}\in L^{1+\varepsilon}(X,\mu)\subset L^1(X,\mu)$ by our assumptions on $V$. Furthermore, the $SL(2,\mathbb{R})$-equivariance of $V$ implies that this measure on $\mathbb{R}^2$ is $SL(2,\mathbb{R})$-invariant. Since the sole $SL(2,\mathbb{R})$-invariant measures on $\mathbb{R}^2$ are linear combinations of the Dirac measure at the origin $(0,0)\in\mathbb{R}^2$ and the Lebesgue measure $\textrm{Leb}_{\mathbb{R}^2}$, it follows that this measure has the form
$$\int_X \widehat{f}(x) \, d\mu(x) = a f(0,0)+ b\int f \, d\textrm{Leb}_{\mathbb{R}^2}$$

Finally, since $V(x)\subset\mathbb{R}^2-\{(0,0)\}$, it is possible to check that $a=0$, so that the Siegel-Veech formula holds (with $b=c_V(\mu)$).

Once we know the Siegel-Veech formula, we can deduce that $\mu(\mathcal{C}_1(\rho))=O(\rho^2)$ by applying this formula to a ``smooth version''$f_{\rho}$ of the characteristic function of the ball $B(0,\rho)\subset\mathbb{R}^2$:
$$m(\mathcal{C}_1(\rho))\leq \int \widehat{f_{\rho}} \, d\mu = c(\mu) \int f_{\rho} \, d\textrm{Leb}_{\mathbb{R}^2}=O(\rho^2).$$ 
This proves the lemma. 
\end{proof}

The remainder of this section is devoted to the proof of Eskin-Kontsevich-Zorich regularity conjecture (or, more precisely, Theorem \ref{t.AMY}). 

\subsection{Idea of the proof of Theorem \ref{t.AMY}} 

The basic idea behind the proof of Theorem \ref{t.AMY} is to use a conditional measure argument to reduce the global estimate on $m$ to an orbit by orbit estimates saying that the $SL(2,\mathbb{R})$-Haar measures of the intersections of $\mathcal{C}_{(2)}(\rho)$ with certain pieces of $SL(2,\mathbb{R})$-orbits are $o(\rho^2)$. 

More precisely, given $\rho>0$, let $X(\rho)=\{M\in\mathcal{C}:\textrm{sys}(M)=\rho\}$. Inside the $\rho$-level $X(\rho)$ of the systole function $\textrm{sys}$, we consider the subsets 
$$X_0^*(\rho):=\{M\in X(\rho): \textrm{ all non-vertical saddle-connections have length }>\rho\}$$
and 
$$X^*(\rho):=\bigcup\limits_{-\pi/2<\theta\leq\pi/2}R_{\theta}(X_0^*(\rho))$$
where $R_{\theta}\in SO(2,\mathbb{R})$ denotes the rotation by $\theta$. 

Starting from $X_0^*(\rho)$, we can access deeper levels of the systole function via the set 
$$Y^*(\rho)=\bigcup\limits_{|\theta|<\pi/4}\bigcup\limits_{0\leq t<\log\cot|\theta|}g_t R_\theta(X_0^*(\rho))$$
Indeed, the choice of $\theta$ and $t$ is guided by the fact that the vector $g_t R_{\theta} e_2$ is shorter than the (unit) vector $e_2=(0,1)\in\mathbb{R}^2$ for $0\leq t<\log\cot|\theta|$, $|\theta|<\pi/4$, so that the systole of $g_t R_{\theta} M_0$ is smaller than the systole of $M_0\in X_0^*(\rho)$. 

Furthermore, $Y^*(\rho)$ is an \emph{interesting} way to access $\{M\in\mathcal{C}:\textrm{sys}(M)\leq\rho\}$ because the sets $g_t R_{\theta}(X_0^*(\rho))$ for $|\theta|<\pi/4$ and $0\leq t<\log\cot|\theta|$ form a \emph{measurable partition} (in Rokhlin's sense) of $Y^*(\rho)$. In particular, by the $SL(2,\mathbb{R})$-invariance of $m$, we will be able to compute the $m$-measure of subsets of $Y^*(\rho)$ in terms of the Lebesgue measure $dt$ on $\mathbb{R}$, the Lebesgue measure $\cos 2\theta d\theta$ on the circle and a certain \emph{flux measure} $m_0=m_0^{\rho}$ on $X_0^*(\rho)$. 

Using this disintegration, we can \emph{transfer} mass from $X_0^*(\rho)$ to deep levels $\{M\in\mathcal{C}:\textrm{sys}(M)\leq\rho\exp(-T)\}$, $T>0$, as follows. First, we will show that, for $|\sin2\theta|<\exp(-2T)$, there is an open interval $J(T,\theta)$ of $t's$ (whose length is explicitly computable) such that $\textrm{sys}(g_t R_{\theta}(M_0))\leq\rho\exp(-T)$ for all $M_0\in X_0^*(\rho)$. Geometrically, the set $Y(\rho, T)$ of $g_tR_{\theta}M_0$ for $M_0\in X_0^*(\rho)$, $|\sin2\theta|<\exp(-2T)$, $t\in J(T,\theta)$ correspond to the pieces of hyperbolas below the threshold $\rho\exp(-T)$. 
%
%
%
%
Secondly, we use the disintegration results to show that the $m$-measure of $\{M\in\mathcal{C}:\textrm{sys}(g_t R_{\theta}(M))\leq\rho\exp(-T)\}$ is \emph{at least} 
$$m_0^{\rho}(X_0^*(\rho))\int_{|\sin2\theta|<\exp(-2T)} |J(T,\theta)| \cos2\theta d\theta=\frac{\pi}{2}(\exp(-T))^2 m_0^{\rho}(X_0^*(\rho))$$

At this point, the idea to derive Theorem \ref{t.AMY} is very simple. We will show that there is a (positive) constant $c(m)$ such that: 
\begin{itemize}
\item as $s\to 0$, the $m$-measure of $\{M\in\mathcal{C}:\textrm{sys}(g_t R_{\theta}(M))\leq s\}$ is $\frac{1}{2}(c(m)+o(1))s^2$, and
\item there exists a sequence $(\rho_n)_{n\in\mathbb{N}}$ with $\rho_n\to 0$ as $n\to\infty$ such that the densities $\pi m_0^{\rho_n}(X_0^*(\rho_n))$ are $(c(m)-o(1))\rho_n^2$.
\end{itemize}
Intuitively, this says that the flux through $X_0^*(\rho_n)$ is almost maximal.\footnote{At first sight, the factor of $1/2$ might seem strange, but, as we will show, in general, the flux through $X_0^*(\rho)$ equals $F'(\rho)/\rho$ where $F(\rho)=m(\{M\in\mathcal{C}: \textrm{sys}(M)\leq \rho\})$. In particular, by L'H\^opital rule, we \emph{expect} that 
$$\limsup\limits_{\rho\to0}F'(\rho)/\rho=c(m)$$ 
if $\lim\limits_{\rho\to0} F(\rho)/\rho^2=(1/2)c(m)$.}

In any case, putting these facts together, we deduce that 
\begin{eqnarray*}
\frac{1}{2}(c(m)+o(1))\rho_n^2\exp(-2T)&\geq& m(\{M\in\mathcal{C}:\textrm{sys}(M)\leq\rho_n\exp(-T)\}) \\ 
&\geq& m(Y(\rho_n, T))=\frac{\pi}{2}(\exp(-T))^2 m_0^{\rho_n}(X_0^*(\rho_n)) \\
&\geq& \frac{1}{2}(c(m)-o(1))(\rho_n\exp(-T))^2
\end{eqnarray*}

From this, we get that the set $Y(\rho_n, T)$ of translation surfaces with systole $\leq \rho_n\exp(-T)$ ``accessed'' from $X_0^*(\rho_n)$ occupies most of $\{M\in\mathcal{C}:\textrm{sys}(M)\leq \rho_n\exp(-T)\}$ in the sense that its complement has $m$-measure $o(1)(\rho_n\exp(-T))^2$ for all $T>0$.

Finally, once we know that most translation surfaces with systole $\leq \rho_n\exp(-T)$ ``come'' from $X_0^*(\rho)$, we complete the proof of Theorem \ref{t.AMY} by showing that the translation surfaces $M_0\in X_0^*(\rho_n)$ leading to translation surfaces $M=g_t R_{\theta} M_0\in Y(\rho_n, T)\cap \mathcal{C}_2(\rho_n\exp(-T))$ are (essentially) those $M_0$ with two non-parallel saddle-connections of lengths comparable to $\rho_n$ making a very small\footnote{Here, ``very small angle'' means that $\theta_0$ becomes close to zero for $T$ is sufficiently large (depending on $\rho_n$).} angle $\theta_0$. Then, since the $m_0^{\rho_n}$-density of the set of those $M_0$ is small, say $o(1)\rho_n^2$, for $\theta_0$ small, i.e., $T$ large, we can use again that $m$ disintegrates as $dt\times \cos2\theta d\theta\times m_0^{\rho_n}$ to conclude that the $m$-measure of $Y(\rho_n,T)\cap\mathcal{C}_2(\rho_n\exp(-T))$ is $o(1)(\rho_n\exp(-T))^2$ for $T$ large, as desired. 

Of course, there are plenty of details to check in this scheme and the next subsections serve to formalize the ideas above. 

\subsection{Reduction of Theorem \ref{t.AMY} to Propositions \ref{p.AMY1} and \ref{p.AMY2}}

Given a connected component $\mathcal{C}$ of a stratum of the moduli space of unit area translation surfaces of genus $g\geq 1$, let us denote by $\mathcal{C}(A,\rho)$ the subset of $(M,\omega)\in\mathcal{C}$ with a minimizing (i.e., length $\textrm{sys}(M)$) saddle-connection $\gamma$ of size $\rho$ and another saddle-connection $\delta$ of length $\leq A\cdot \textrm{sys}(M)$ which is not parallel to $\gamma$. 

\begin{lemma}\label{l.AMY} Suppose that, for each $A>1$, one has $m(\mathcal{C}(A,\rho))=o(\rho^2)$. Then, $m(\mathcal{C}_{(2)}(\rho))=o(\rho^2)$. 
\end{lemma}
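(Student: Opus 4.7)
The plan is to decompose $\mathcal{C}_{(2)}(\rho)$ according to the size of the systole, then apply the hypothesis with a suitable parameter $A$, and close the gap with Lemma \ref{l.Eskin-Masur}. The first step I would take is the elementary observation that for any $M\in\mathcal{C}_{(2)}(\rho)$, if $\gamma_1,\gamma_2$ are non-parallel saddle connections of lengths $\leq\rho$ and $\gamma_0$ is a systole-realizing saddle connection, then at least one of $\gamma_1,\gamma_2$ must be non-parallel to $\gamma_0$ (since $\gamma_1$ and $\gamma_2$ themselves are non-parallel). After relabelling, $(\gamma_0,\gamma_1)$ is a non-parallel pair with $|\gamma_0|=\textrm{sys}(M)$ and $|\gamma_1|\leq\rho$.

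Fixing $A>1$, I would then dichotomize according to whether $\textrm{sys}(M)\geq\rho/A$ or $\textrm{sys}(M)<\rho/A$. In the first case the pair $(\gamma_0,\gamma_1)$ satisfies $|\gamma_1|\leq\rho\leq A\cdot\textrm{sys}(M)$, so $M\in\mathcal{C}(A,\rho)$ (interpreting ``of size $\rho$'' as length $\leq\rho$). In the second case $M$ lies in the thin part $\{M\in\mathcal{C}:\textrm{sys}(M)\leq\rho/A\}$, whose $m$-measure is at most $C(\rho/A)^2$ by Lemma \ref{l.Eskin-Masur}, for a constant $C=C(m)$ independent of $A$ and $\rho$. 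This gives the inclusion
$$\mathcal{C}_{(2)}(\rho)\subseteq\mathcal{C}(A,\rho)\cup\{M\in\mathcal{C}:\textrm{sys}(M)\leq\rho/A\}$$
and hence, using the hypothesis for this fixed $A$,
$$m(\mathcal{C}_{(2)}(\rho))\leq m(\mathcal{C}(A,\rho))+C\rho^2/A^2=o(\rho^2)+C\rho^2/A^2\qquad(\rho\to 0).$$

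To finish, I would divide by $\rho^2$ and take $\limsup_{\rho\to 0}$, obtaining $\limsup_{\rho\to 0} m(\mathcal{C}_{(2)}(\rho))/\rho^2\leq C/A^2$; since $A>1$ was arbitrary, letting $A\to\infty$ yields the desired bound $m(\mathcal{C}_{(2)}(\rho))=o(\rho^2)$. I do not expect any serious obstacle here: the only point requiring a moment's thought is the combinatorial remark producing a non-parallel partner to a shortest saddle connection, and the rest is a one-parameter decomposition of the standard kind. The argument also makes transparent the role of the two hypotheses — Lemma \ref{l.Eskin-Masur} controls the deep thin part uniformly at the optimal rate $\rho^2$, while the assumption on $\mathcal{C}(A,\rho)$ disposes of the (non-deep) region where the second-shortest saddle connection competes with the systole.
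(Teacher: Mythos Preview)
Your proof is correct and follows essentially the same approach as the paper: both establish the inclusion $\mathcal{C}_{(2)}(\rho)\subset\mathcal{C}(A,\rho)\cup\{M:\textrm{sys}(M)\leq\rho/A\}$, bound the second set via Lemma~\ref{l.Eskin-Masur} by $C(m)\rho^2/A^2$, and absorb the first via the hypothesis. The only cosmetic difference is that the paper fixes $\eta>0$ and chooses $A=\sqrt{2C(m)/\eta}$ to make both terms at most $\frac{\eta}{2}\rho^2$, whereas you fix $A$, pass to the $\limsup$, and then let $A\to\infty$; these are equivalent packagings of the same $\varepsilon$--$A$ trade-off.
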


\begin{proof} By Lemma \ref{l.Eskin-Masur}, we know that 
\begin{equation}\label{e.Eskin-Masur}
m(\{M\in\mathcal{C}: \textrm{sys}(M)\leq s\})\leq C(m) s^2
\end{equation}
for some constant $C(m)>1$ and for all $s>0$. 

Given $0<\eta<1$, it follows from \eqref{e.Eskin-Masur} that  
$$m(\{M\in\mathcal{C}: \textrm{sys}(M)\leq\rho/A\})\leq\frac{\eta}{2}\rho^2$$
for $A:=A(\eta):=\sqrt{2C(m)/\eta}$ and for all $\rho>0$. 

On the other hand, our hypothesis imply the existence of $\rho_0=\rho_0(A(\eta))>0$ such that 
$$m(\mathcal{C}(A,\rho))\leq \frac{\eta}{2}\rho^2$$
for all $0<\rho<\rho_0$. 

Since $\mathcal{C}_{(2)}(\rho)\subset \{M\in\mathcal{C}: \textrm{sys}(M)\leq\rho/A\}\cup\mathcal{C}(A,\rho)$ for any $A>1$, we deduce from the previous two estimates that 
$$m(\mathcal{C}_{(2)}(\rho))\leq \eta\rho^2$$
for all $0<\rho<\rho_0(A(\eta))$. 

Because $0<\eta<1$ was arbitrary, the proof of the lemma is complete. 
\end{proof}

This lemma reduces the proof of Theorem \ref{t.AMY} to the following result:

\begin{theorem}\label{t.AMY'} For each fixed $A>1$, one has $m(\mathcal{C}(A,\rho))=o(\rho^2)$. 
\end{theorem}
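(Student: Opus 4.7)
I would follow the roadmap sketched in the subsection on the idea of the proof, formalizing it through a disintegration result and two propositions. The starting point is to show that the map $(M_0,\theta,t)\mapsto g_t R_\theta M_0$ gives a measurable partition of $Y^*(\rho)$ over $M_0\in X_0^*(\rho)$, $|\theta|<\pi/4$, $0\le t<\log\cot|\theta|$, and that the $SL(2,\mathbb{R})$-invariance of $m$ together with a Jacobian calculation (with the factor $\cos 2\theta$ coming from the area form on the unit hyperboloid in $\mathbb{R}^2$) yields a finite \emph{flux measure} $m_0^\rho$ on $X_0^*(\rho)$ with
$$m|_{Y^*(\rho)} = dt \otimes \cos 2\theta\,d\theta \otimes m_0^\rho.$$
An elementary computation then shows that, for $|\sin 2\theta|<e^{-2T}$, the set of $t$ with $\textrm{sys}(g_t R_\theta M_0)\le\rho e^{-T}$ is an interval $J(T,\theta)$ satisfying $\int_{|\sin 2\theta|<e^{-2T}}|J(T,\theta)|\cos 2\theta\,d\theta=\tfrac{\pi}{2}e^{-2T}$, so that combining with Lemma~\ref{l.Eskin-Masur} yields the universal upper bound $\pi m_0^\rho(X_0^*(\rho))\le 2C(m)\rho^2$.

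The argument then pivots on two results. Proposition~\ref{p.AMY1} would assert that this bound is essentially saturated along a subsequence: for some constant $c(m)>0$ and some $\rho_n\downarrow 0$, one has $\pi m_0^{\rho_n}(X_0^*(\rho_n))\ge (c(m)-o(1))\rho_n^2$, together with the matching asymptotic $m(\{\textrm{sys}\le s\})=\tfrac12(c(m)+o(1))s^2$. I would obtain the asymptotic from the Siegel-Veech formula (applied to a smoothed indicator of a small disk in $\mathbb{R}^2$, with $c(m)$ proportional to the Siegel-Veech constant of saddle-connection holonomies), and the subsequential flux lower bound from a L'H\^opital-type argument based on the infinitesimal identity $F'(\rho)=\pi m_0^\rho(X_0^*(\rho))/\rho$ with $F(\rho):=m(\{\textrm{sys}\le\rho\})$, by choosing $\rho_n$ at Lebesgue points of $F'$ where the ratio $F(\rho_n)/\rho_n^2$ also attains its limit. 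Proposition~\ref{p.AMY2} would be the geometric statement that, if $M=g_t R_\theta M_0\in\mathcal{C}(A,\rho e^{-T})$ for $M_0\in X_0^*(\rho)$, then $M_0$ already carries a second (non-vertical) saddle-connection whose direction is confined to a window of size $\theta_0(A,T)\to 0$ as $T\to\infty$; this is proved by applying $g_{-t}R_{-\theta}$ to the second short saddle-connection of $M$ and observing that, unless its direction is essentially vertical to begin with, the stretch factor of order $e^T$ prevents the output from being short.

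With these two ingredients the deduction is the short squeeze already spelled out in the sketch:
\begin{align*}
\tfrac12(c(m)+o(1))(\rho_n e^{-T})^2 &\ge m(\{\textrm{sys}\le \rho_n e^{-T}\}) \ge m(Y(\rho_n,T)) \\
&= \tfrac{\pi}{2}e^{-2T}\,m_0^{\rho_n}(X_0^*(\rho_n)) \ge \tfrac12(c(m)-o(1))(\rho_n e^{-T})^2,
\end{align*}
so $Y(\rho_n,T)$ exhausts $\{\textrm{sys}\le\rho_n e^{-T}\}$ up to $m$-mass $o(1)(\rho_n e^{-T})^2$ when $T$ is large. Proposition~\ref{p.AMY2} then forces the bad part of $X_0^*(\rho_n)$ to have $m_0^{\rho_n}$-mass $o(1)\rho_n^2$, and the disintegration yields
$$m(\mathcal{C}(A,\rho_n e^{-T}))\le\tfrac{\pi}{2}e^{-2T}\cdot o(1)\rho_n^2 + o(1)(\rho_n e^{-T})^2=o(1)(\rho_n e^{-T})^2.$$
A monotone interpolation (letting $T=T_n\to\infty$ slowly so that $\rho_n e^{-T_n}$ traverses all small scales, and using the rough monotonicity of $\rho\mapsto m(\mathcal{C}(A,\rho))/\rho^2$) upgrades the subsequential statement to $m(\mathcal{C}(A,\rho))=o(\rho^2)$ for all $\rho\to 0$.

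The main obstacle is Proposition~\ref{p.AMY1}: the universal upper flux bound is immediate from Lemma~\ref{l.Eskin-Masur}, but producing a subsequence along which \emph{both} $F(\rho)/\rho^2$ and $F'(\rho)/\rho$ saturate their Siegel-Veech limits simultaneously requires a delicate $\liminf$ selection and a careful justification that the disintegration derivative really coincides with $F'$ in the distributional sense. Proposition~\ref{p.AMY2} is technical but essentially routine, amounting to quantitative control of how the two shortest saddle-connections transform under the inverse shear-and-stretch $g_{-t}R_{-\theta}$.
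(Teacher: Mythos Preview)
Your plan is essentially the paper's approach: the disintegration $m|_{Y^*}=dt\otimes\cos 2\theta\,d\theta\otimes m_0^\rho$, the flux inequality $F(\rho_0 e^{-T})\ge\tfrac12 e^{-2T}\rho_0 F'(\rho_0)$, the squeeze, and the geometric Proposition~\ref{p.AMY2} are all the right ingredients. Two points in your write-up are imprecise, though neither is fatal.

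First, the asymptotic $F(\rho)/\rho^2\to c(m)$ does \emph{not} follow directly from the Siegel--Veech formula. Siegel--Veech gives $\int N(M,\rho)\,dm(M)=c\cdot\pi\rho^2$ for the integrated counting function, but $F(\rho)=m(\{N(\cdot,\rho)\ge 1\})$ is only bounded \emph{above} by this integral; the matching lower bound is not available a priori. In the paper the existence of the limit is an \emph{output} of the flux machinery: the flux inequality plus the Siegel--Veech upper bound give $F'(\rho)\le 2F(\rho)/\rho=O(\rho)$, hence $F$ is absolutely continuous; then with $c(m):=\tfrac12\sup_{F(\rho)<1}F'(\rho)/\rho$, the flux inequality yields $\liminf F(\rho)/\rho^2\ge c(m)$ while $F(\rho)=\int_0^\rho(F'(s)/s)\,s\,ds$ yields $\limsup F(\rho)/\rho^2\le c(m)$. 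With the limit in hand one can pick a \emph{single} $\rho_0$ (not a subsequence) satisfying both $\tfrac12 F'(\rho_0)/\rho_0>c(m)-\eta/4$ and $F(\rho)/\rho^2<c(m)+\eta/4$ for all $\rho<\rho_0$.

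Second, your ``monotone interpolation'' is unnecessary, and the claimed rough monotonicity of $\rho\mapsto m(\mathcal C(A,\rho))/\rho^2$ is false. Once $\rho_0$ is fixed, $T$ ranges continuously over $[T_0(A,\rho_0,\eta),\infty)$, so $\rho=\rho_0 e^{-T}$ already sweeps the full interval $(0,\rho_0 e^{-T_0}]$; the bound $m(\mathcal C(A,\rho))\le\eta\rho^2$ holds for every such $\rho$, which is precisely $o(\rho^2)$. No discrete-to-continuous passage is needed.
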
 

Our proof of Theorem \ref{t.AMY'} is naturally divided into two statements. First, we will show that a large portion of $\{M\in\mathcal{C}:\textrm{sys}(M)\leq \rho_0\exp(-T)\}$, $T>0$, can be captured with the aid of the $SL(2,\mathbb{R})$ by pushing certain translation surfaces $M_0$ with $\textrm{sys}(M_0)=\rho_0$ for an adequate choice of the level $\rho_0$ of the systole function. 

\begin{proposition}\label{p.AMY1} Given $\eta>0$, there exists $\rho_0=\rho_0(\eta)>0$ with the following property. Let  $X_0^*$ be the set of translation surfaces $M\in\mathcal{C}$ with $\textrm{sys}(M)=\rho_0$ whose non-vertical saddle-connections have lengths $>\rho_0$, and, for each $T>0$, $\omega_0\in (0,\pi/2]$, and $B\subset X_0^*$ a Borel subset, denote by 
$$Y(T,\omega_0,B):=\{M=g_t R_{\theta}M_0\in\mathcal{C}: M_0\in B, |\sin 2\theta|<\exp(-T) \sin\omega_0, \|g_tR_{\theta}e_2\|<\exp(-T)\}$$
Then, for all $T>0$, the subset $Y(T, \pi/2, X_0^*)$ of $\{M\in\mathcal{C}: \textrm{sys}(M)<\rho_0 \exp(-T)\}$ has almost full $m$-measure, i.e.,   
$$m(\{M\in\mathcal{C}: \textrm{sys}(M)<\rho_0 \exp(-T)\} - Y(T,\pi/2,X_0^*))<\frac{\eta}{2}\rho_0^2 \exp(-2T)$$
\end{proposition}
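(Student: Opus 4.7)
The plan is to combine two ingredients: a disintegration of $m$ along $(g_t,R_\theta)$-coordinates anchored at the base $X_0^*(\rho)$, and a refined Siegel--Veech asymptotic for the level sets of the systole. Together they force, for a well-chosen $\rho_0$, the $(g_t,R_\theta)$-push-forward of $X_0^*(\rho_0)$ to fill nearly all of $\{\textrm{sys}<\rho_0 e^{-T}\}$.

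\emph{Step 1 (disintegration).} I would first introduce a flux measure $m_0^\rho$ on $X_0^*(\rho)$ so that on $Y^*(\rho)=\bigcup_{|\theta|<\pi/4}\bigcup_{0\le t<\log\cot|\theta|}g_tR_\theta X_0^*(\rho)$ the measure $m$ factors as $dt\,\cos 2\theta\,d\theta\,dm_0^\rho(M_0)$. The geometric input is uniqueness of the presentation $M=g_tR_\theta M_0$: since every non-vertical saddle connection on $M_0\in X_0^*(\rho)$ has length $>\rho$, only the vertical minimizer contracts under $(g_t,R_\theta)$ in the chosen parameter range. Combining this with the $SL(2,\mathbb{R})$-invariance of $m$ and the $KAK$ decomposition of Haar measure on $SL(2,\mathbb{R})$ yields the claimed factorization. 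The region in the definition of $Y(T,\pi/2,X_0^*)$ then sits inside $Y^*(\rho_0)$, and a direct computation shows that its $(t,\theta)$-Haar volume equals $\tfrac{\pi}{2}e^{-2T}$, so that
\[
m(Y(T,\pi/2,X_0^*))=\tfrac{\pi}{2}\,e^{-2T}\,m_0^{\rho_0}(X_0^*).
\]

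\emph{Step 2 (refined Siegel--Veech).} Next I would upgrade Lemma \ref{l.Eskin-Masur} to an exact asymptotic. Applying the Siegel--Veech formula to smooth approximations of $\chi_{B(0,s)}$ for the counting function of saddle-connection holonomies, one obtains $F(s):=m(\{\textrm{sys}\le s\})=\tfrac{1}{2}c(m)\,s^2+o(s^2)$ as $s\to 0$, where $c(m)>0$ is the Siegel--Veech constant associated to shortest saddle connections. Integrating the disintegration of Step 1 across $\rho$ yields $F'(\rho)=\pi\rho\,m_0^\rho(X_0^*(\rho))$ for a.e.~$\rho$; a L'H\^opital-type comparison with the $F$-asymptotic then forces $\limsup_{\rho\to 0}\pi\,m_0^\rho(X_0^*(\rho))/\rho^2=c(m)$, so that arbitrarily small $\rho_0$ exist with $\pi\,m_0^{\rho_0}(X_0^*)\ge (c(m)-\delta)\rho_0^2$ for any prescribed $\delta>0$.

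\emph{Step 3 (closing).} Given $\eta>0$, I would choose $\rho_0$ small enough that both $F(\rho_0 e^{-T})\le\tfrac{1}{2}(c(m)+\eta/4)\rho_0^2 e^{-2T}$ for all $T\ge 0$ (by the $F$-asymptotic), and $\pi m_0^{\rho_0}(X_0^*)\ge (c(m)-\eta/4)\rho_0^2$ (by Step 2). Step 1 then gives
\[
m(\{\textrm{sys}<\rho_0 e^{-T}\})-m(Y(T,\pi/2,X_0^*))\le \tfrac{\eta}{2}\,\rho_0^2\,e^{-2T},
\]
which is exactly the claimed bound.

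\emph{Main obstacle.} The technical heart lies in Step 1: making the disintegration $m|_{Y^*(\rho)}=dt\,\cos 2\theta\,d\theta\,dm_0^\rho$ rigorous requires showing that $(t,\theta,M_0)\mapsto g_tR_\theta M_0$ is a measurable bijection off an $m$-null subset of $Y^*(\rho)$, which forces one to control translation surfaces with coincident or nearly parallel short saddle connections, a delicate issue given that bounding $m(\mathcal{C}_{(2)}(\rho))$ is the very goal of Theorem \ref{t.AMY}. The way around this is to work only with generic $\rho$, the non-generic ones being Lebesgue-negligible thanks to the $O(\rho^2)$ bound of Lemma \ref{l.Eskin-Masur}. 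The second subtle point is the upgrade from the $O(s^2)$ bound to the genuine limit $F(s)/s^2\to c(m)/2$, which is classical (Eskin--Masur) but requires some care with the continuity of Siegel--Veech constants.
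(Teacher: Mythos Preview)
Your three-step architecture is right and Steps~1 and~3 match the paper closely. The gap is in Step~2, and it is not where you think it is.

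You assert that the Siegel--Veech formula applied to approximations of $\chi_{B(0,s)}$ yields the exact asymptotic $F(s)=\tfrac12 c(m)s^2+o(s^2)$. It does not: Siegel--Veech gives $\int N(M,s)\,dm(M)=c_{SV}\,\pi s^2$, where $N(M,s)$ is the \emph{count} of saddle connections of length $\le s$, whereas $F(s)=m(\{N\ge1\})$. The discrepancy $\int N\,dm-F(s)=\int (N-1)^+\,dm$ is governed by the event $\{N\ge 2\}$, i.e.\ by surfaces with two short saddle connections---which is essentially what Theorem~\ref{t.AMY} is proving. Eskin--Masur gives the $O(s^2)$ upper bound and per-surface counting asymptotics as $L\to\infty$, but not the thin-part volume asymptotic you invoke; treating it as ``classical'' is circular.

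The paper avoids this entirely. It never imports an external limit for $F$; instead it \emph{defines} $c(m):=\tfrac12\sup_{F(\rho)<1}F'(\rho)/\rho$ and proves $\lim F(\rho)/\rho^2=c(m)$ as an internal consequence of the disintegration. The point is that your Step~1 already delivers the inequality
\[
F(\rho_0 e^{-T})\ge m(Y(T,\pi/2,X_0^*))=\tfrac12\,e^{-2T}\rho_0 F'(\rho_0),
\]
hence $F'(\rho)\le 2F(\rho)/\rho$ for every $\rho$. Together with absolute continuity of $F$ (which follows from Proposition~\ref{p.slice}), this single inequality bootstraps into the full two-sided asymptotic $\lim F(\rho)/\rho^2=c(m)=\tfrac12\limsup F'(\rho)/\rho$ (Lemma~\ref{l.p.6.2}). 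Your reverse L'H\^opital is then unnecessary: the $\limsup$ for $F'/\rho$ and the limit for $F/\rho^2$ come out simultaneously.

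One more remark: the obstacle you flag in Step~1 is not real. By the very definition of $X_0^*(\rho)$ (all non-vertical saddle connections have length $>\rho$), the map $(t,\theta,M_0)\mapsto g_tR_\theta M_0$ is genuinely injective on the stated parameter range---no null set needs excising, and no control on $\mathcal{C}_{(2)}(\rho)$ enters. The actual work in the disintegration is computing the Haar density $\cos 2\theta$ and constructing the flux measure $m_0$, which is a clean local-coordinates argument (Lemma~\ref{l.p.4.1}).
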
 

Secondly, for each fixed $A>1$, we will exploit the geometry of saddle-connections of translation surfaces $M\in Y(T,\pi/2,X_0^*)$ for $T\gg 1$ sufficiently large to prove that the $m$-measure of $Y(T,\pi/2,X_0^*)\cap\mathcal{C}(A,\rho_0\exp(-T))$ is small. 

\begin{proposition}\label{p.AMY2} Given $A>1$, $\rho_0>0$ and $\eta>0$, there exists $T_0=T_0(A,\rho_0,\eta)$ such that 
$$m(Y(T,\pi/2,X_0^*)\cap\mathcal{C}(A,\rho_0\exp(-T)))\leq \frac{\eta}{2}\rho_0^2\exp(-2T)$$
for all $T\geq T_0$. 
\end{proposition}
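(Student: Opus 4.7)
The plan is to use the disintegration $m|_{Y^*(\rho_0)} = \cos 2\theta\, d\theta\, dt \otimes dm_0$ from the proof of Proposition \ref{p.AMY1} together with the Siegel--Veech formula. Setting $\rho := \rho_0 \exp(-T)$, I would write
\begin{equation*}
m(Y(T,\pi/2,X_0^*)\cap \mathcal{C}(A,\rho)) = \int_{X_0^*}\int_{Y_T(M_0)} \mathbf{1}_{g_tR_\theta M_0 \in \mathcal{C}(A,\rho)}\,\cos 2\theta\, d\theta\, dt\, dm_0(M_0),
\end{equation*}
where $Y_T(M_0)\subset\{(t,\theta)\}$ is the parameter region defined by the constraints of $Y(T,\pi/2,X_0^*)$. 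Since the image of the vertical saddle connection $\rho_0 e_2$ of $M_0$ has length $\rho_0\|g_tR_\theta e_2\|<\rho$, the condition $g_tR_\theta M_0\in\mathcal{C}(A,\rho)$ amounts to the existence of a \emph{non-vertical} saddle connection $v\in V(M_0)\setminus\mathbb{R} e_2$ with $\|g_tR_\theta v\|\leq A\rho$ (and with $|v|>\rho_0$, by the definition of $X_0^*$).

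The key geometric observation is a \emph{thin sector estimate}: if $v=(v_1,v_2)$ satisfies $\|g_tR_\theta v\|\leq A\rho$ and $\|g_tR_\theta e_2\|<\exp(-T)$, then Hadamard's inequality applied to the $SL(2,\mathbb{R})$-invariant quantity $\det(e_2,v)=-v_1$ gives
\begin{equation*}
|v_1| = |\det(g_tR_\theta e_2,\,g_tR_\theta v)|\leq \|g_tR_\theta e_2\|\cdot\|g_tR_\theta v\|\leq A\rho_0\exp(-2T).
\end{equation*}
Hence only $v$ in the thin strip $S_T:=\{v\in\mathbb{R}^2 : |v_1|\leq A\rho_0\exp(-2T)\}$ can contribute. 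Moreover, using the change of variables $(t,\theta)\mapsto u:=g_tR_\theta e_2$, whose Jacobian is $-\cos 2\theta$ so that $\cos 2\theta\, d\theta\, dt = du$ on $|\theta|<\pi/4$, I would obtain for each fixed $v$ with $|v|>\rho_0$ the pointwise bound
\begin{equation*}
\int_{Y_T(M_0)} \mathbf{1}_{\|g_tR_\theta v\|\leq A\rho}\cos 2\theta\, d\theta\, dt \;\leq\; C\exp(-2T)\cdot \min\!\left(1,\tfrac{A^2\rho_0^2}{|v|^2}\right),
\end{equation*}
the first factor coming from $u\in B(0,\exp(-T))$, the second from an analogous estimate for $g_tR_\theta v\in B(0,A\rho)$.

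To conclude I would invoke a Siegel--Veech-type formula for the flux measure $m_0$: there is a constant $C_0=C_0(m)>0$ with
\begin{equation*}
\int_{X_0^*}\sum_{v\in V(M_0)\setminus\mathbb{R} e_2}f(v)\,dm_0(M_0) \;\leq\; C_0 \int_{\mathbb{R}^2} f(v)\,dv
\end{equation*}
for all nonnegative measurable $f$, which one derives from the standard Siegel--Veech formula for $m$ via the disintegration. Applying this with $f(v)=\mathbf{1}_{v\in S_T, |v|>\rho_0}\min(1,A^2\rho_0^2/|v|^2)$ and computing $\int f\, dv = O(\rho_0^2\exp(-2T))$ (both the annular piece $\rho_0<|v|\leq A\rho_0$ and the tail $|v|>A\rho_0$ with weight $1/|v|^2$ contribute at this order), the pointwise bound above yields
\begin{equation*}
m(Y(T,\pi/2,X_0^*)\cap \mathcal{C}(A,\rho)) \leq C\exp(-2T)\cdot C_0\int f\,dv = O(\rho_0^2\exp(-4T)),
\end{equation*}
and for $T\geq T_0(A,\rho_0,\eta)$ large enough this is at most $\tfrac{\eta}{2}\rho_0^2\exp(-2T)$.

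The main obstacle is the Siegel--Veech estimate for the flux measure $m_0$: although $m$ is $SL(2,\mathbb{R})$-invariant so that the classical Siegel--Veech formula applies, the measure $m_0$ lives on the cross-section $X_0^*$ and is not $SL(2,\mathbb{R})$-invariant, so the reduction to the invariant case requires a careful use of the disintegration and an explicit change of variables between $(M_0,t,\theta)$ and $M=g_tR_\theta M_0$. The other ingredients---the Hadamard estimate giving the thin strip $S_T$, the Jacobian computation, and the elementary integral of $f$ over $S_T$---are comparatively routine.
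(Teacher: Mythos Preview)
Your route is genuinely different from the paper's, and the step you yourself flag as the main obstacle is a real gap, not a routine reduction.

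The paper never invokes a Siegel--Veech estimate for the flux measure $m_0$. Instead it proves two elementary geometric lemmas: (i) when $|\sin 2\theta|>\sin\omega_0\,\exp(-2T)$, saddle connections of $M_0$ with $|v|>AK(\omega_0)\rho_0$ cannot produce saddle connections of $g_tR_\theta M_0$ of length $\le A\,\mathrm{sys}$; and (ii) for $T\ge T_0(A,\overline{\theta_0})$, saddle connections of $M_0$ making angle $>\overline{\theta_0}$ with the vertical cannot either. Together these give the inclusion
\[
Y(T,\tfrac{\pi}{2},X_0^*)\cap\mathcal{C}(A,\rho)\ \subset\ Y(T,\omega_0,X_0^*)\ \cup\ Y(T,\tfrac{\pi}{2},B_{\omega_0}(\overline{\theta_0})),
\]
where $B_{\omega_0}(\overline{\theta_0})\subset X_0^*$ is the set of $M_0$ carrying a non-vertical saddle connection of length $\le AK(\omega_0)\rho_0$ within angle $\overline{\theta_0}$ of vertical. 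One then picks $\omega_0$ small (so the explicit integral in Proposition~\ref{p.mY*} is small), then $\overline{\theta_0}$ small (using only that $m_0$ is finite and $B_{\omega_0}(\overline{\theta_0})\downarrow\emptyset$, hence $m_0(B_{\omega_0}(\overline{\theta_0}))\to 0$ by monotone convergence---no rate), and finally $T_0$ large. This yields only $o(\exp(-2T))$, not your $O(\exp(-4T))$, but that is all that is needed.

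Your Hadamard thin-strip estimate and the Jacobian identity $\cos 2\theta\,d\theta\,dt=du$ are correct, and the pointwise bound on $I(v)$ is valid for $v\in S_T$, $|v|>\rho_0$, $T$ large (the factor $1/|v|^2$ is indeed the Jacobian of $u\mapsto g_tR_\theta v$, since the Hadamard bound forces $v$ nearly vertical and hence $|\cos 2(\theta+\arg v)|\approx 1$). But the inequality $\int_{X_0^*}\sum_v f(v)\,dm_0\le C_0\int f$ does not follow from ``the disintegration and an explicit change of variables'': pulling back through $M=g_tR_\theta M_0$ converts $\widehat f(M_0)$ into averages of $f\circ g_tR_\theta$, and for $f$ supported in the thin strip $S_T$ there is no uniform lower bound relating these back to $f$. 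The only orbital symmetry $m_0$ genuinely inherits is the horocycle $n_u$ (since $n_u e_2=e_2$), and integrating in $u$ produces one-dimensional integrals $\frac{1}{|v_2|}\int f(\cdot,v_2)\,dv_1$, not the two-dimensional Lebesgue bound you want. In effect you are asserting the quantitative rate $m_0(B_{\omega_0}(\overline{\theta_0}))=O(\overline{\theta_0})$, strictly stronger than the qualitative $m_0(B_{\omega_0}(\overline{\theta_0}))\to 0$ the paper needs; that rate would require an independent argument that you have not supplied. A weaker salvage via dominated convergence (giving only $o(\exp(-2T))$) would still require dominating $\sum_{v\in S_T,\,|v|>\rho_0}\min(1,A^2\rho_0^2/|v|^2)$ by an $m_0$-integrable function independent of $T$, which in turn needs a linear-in-$R$ count of saddle connections in a fixed strip---again non-trivial and not addressed.
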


Of course, these propositions imply Theorem \ref{t.AMY'}. 

\begin{proof}[Proof of Theorem \ref{t.AMY'}] Fix $A>1$. Given $\eta>0$, we choose $\rho_0=\rho_0(\eta)>0$ as in Proposition \ref{p.AMY1} and $T_0=T_0(A,\rho_0(\eta),\eta)=T_0(A,\eta)$ as in Proposition \ref{p.AMY2}. By writing $\rho=\rho_0\exp(-T)$, the conclusions of Propositions \ref{p.AMY1} and \ref{p.AMY2} tell us that 
\begin{eqnarray*}
m(\mathcal{C}(A,\rho))&\leq& m(\{M\in\mathcal{C}: \textrm{sys}(M)<\rho\} - Y(T,\pi/2,X_0^*)) + m(Y(T,\pi/2,X_0^*)\cap\mathcal{C}(A,\rho)) \\ 
&\leq& \frac{\eta}{2}\rho_0^2\exp(-2T) + \frac{\eta}{2}\rho_0^2\exp(-2T) = \eta\rho^2 
\end{eqnarray*}
for all $0<\rho=\rho_0\exp(-T)\leq\rho_0\exp(-T_0)$. Since $\eta>0$ was arbitrary, the proof is complete.
\end{proof}

In the sequel, we shall reduce Propositions \ref{p.AMY1} and \ref{p.AMY2} to the following facts about the measure $m$ (whose proofs are postponed to Subsections \ref{ss.p.mY*} and \ref{ss.flux}. First, the $SL(2,\mathbb{R})$-invariance of $m$, Rokhlin disintegration theorem and the features of the Haar measure of $SL(2,\mathbb{R})$ will be exploited to show the following result.

\begin{proposition}\label{p.mY*} Given $\rho_0>0$ such that $\{M\in\mathcal{C}:\textrm{sys}(M)>\rho_0\}$ has positive $m$-measure, denote by $X_0^*$ the set of $M\in\mathcal{C}$ with $\textrm{sys}(M)=\rho_0$ such that all non-vertical saddle-connections of $M$ have length $>\rho_0$. 

Then, the set 
$$Y^*:=\{M\in\mathcal{C}: M=g_tR_{\theta}M_0, M_0\in X_0^*, |\theta|<\pi/4, \|g_tR_{\theta}e_2\|<1\}$$ 
has positive $m$-measure and the restriction of $m$ to $Y^*$ has the form 
$$m|_{Y^*}=dt\times\cos(2\theta)d\theta\times m_0$$
where $m_0$ is a finite measure on $X_0^*$. 

In particular, for each $T>0$, $\omega_0>0$, $B\subset X_0^*$ Borel, the $m$-measure of the set 
$$Y(T,\omega_0,B):=\{M=g_tR_{\theta}M_0\in\mathcal{C}: M_0\in B, |\sin 2\theta|<\exp(-2T)\sin\omega_0, \|g_tR_{\theta}e_2\| < \exp(-T)\}$$
equals to 
$$m(Y(T,\omega_0,B)) = \frac{1}{4}\exp(-2T)m_0(B)\int_{-\omega_0}^{\omega_0}\log\frac{1+\cos\omega}{1-\cos\omega} \cos\omega d\omega$$
\end{proposition}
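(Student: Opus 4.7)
The plan is to establish $m|_{Y^*} = dt \times \cos(2\theta)d\theta \times m_0$ via Rokhlin disintegration exploiting the $SL(2,\mathbb{R})$-invariance of $m$, and then to compute the stated integral by elementary calculus. The first step would be to check that the map $\Phi(M_0,t,\theta) = g_tR_\theta M_0$, defined on $\{(M_0,t,\theta) \in X_0^* \times \mathbb{R} \times (-\pi/4,\pi/4) : \|g_tR_\theta e_2\| < 1\}$, has image precisely $Y^*$ and is essentially injective. The vertical saddle connection of length $\rho_0$ in $M_0 \in X_0^*$ is mapped to a saddle connection of $g_tR_\theta M_0$ of length $\rho_0\|g_tR_\theta e_2\| < \rho_0$, and the defining properties of $X_0^*$ together with $|\theta|<\pi/4$ should guarantee that this remains the unique shortest saddle connection of $g_tR_\theta M_0$. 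Hence the fibers of $\Phi$ form a measurable partition of $Y^*$ in Rokhlin's sense.

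Next I would determine the conditional measures on these fibers using a Siegel-Veech-type identity. Consider $\widetilde{Y} = \{(M,v) \in \mathcal{C} \times (\mathbb{R}^2-\{0\}) : v \text{ is a saddle-connection holonomy of } M\}$ with the measure $\widetilde{m} = \int_\mathcal{C} \sum_{v \in V(M)} \delta_{(M,v)}\,dm$. This measure is $SL(2,\mathbb{R})$-invariant under the diagonal action, and its projection $(p_2)_*\widetilde{m}$ on $\mathbb{R}^2-\{0\}$ is $SL(2,\mathbb{R})$-invariant and locally finite, the latter being exactly the Siegel-Veech/Eskin-Masur bound recalled in Lemma \ref{l.Eskin-Masur}. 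By uniqueness of the $SL(2,\mathbb{R})$-invariant measure on $\mathbb{R}^2-\{0\}$, one has $(p_2)_*\widetilde{m} = c(m) \cdot \mathrm{Leb}_{\mathbb{R}^2}$ for some $c(m) > 0$; Rokhlin disintegration then yields $\widetilde{m} = c(m)\int \sigma_v\,dv$ with equivariance $\sigma_{gv} = g_*\sigma_v$. Setting $m_0$ to be an appropriate constant multiple of $\sigma_{\rho_0 e_2}|_{X_0^*}$ and identifying $Y^*$ with a subset of $\widetilde{Y}$ by labeling each $M \in Y^*$ with its unique shortest-saddle-connection holonomy, the equivariance $\sigma_{g_tR_\theta(\rho_0 e_2)} = (g_tR_\theta)_* \sigma_{\rho_0 e_2}$ together with the direct Jacobian computation
\[
du\,dv = \cos(2\theta)\,dt\,d\theta, \qquad (u,v) = g_tR_\theta e_2,\ |\theta|<\pi/4,
\]
yields $m|_{Y^*} = dt \times \cos(2\theta)d\theta \times m_0$. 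Finiteness of $m_0$ is then immediate from $m(Y^*) \leq 1$.

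Given the disintegration, the computation of $m(Y(T,\omega_0,B))$ reduces to
\[
m_0(B) \int_{|\sin 2\theta|<e^{-2T}\sin\omega_0} \cos(2\theta)\,d\theta \int_{\{t:\|g_tR_\theta e_2\|<e^{-T}\}} dt.
\]
Writing $\|g_tR_\theta e_2\|^2 = e^{2t}\sin^2\theta + e^{-2t}\cos^2\theta$ and solving the resulting quadratic in $e^{2t}$, the $t$-range has length $\tfrac{1}{2}\log\tfrac{1+\cos\omega}{1-\cos\omega}$, where $\omega$ is defined by $\sin 2\theta = e^{-2T}\sin\omega$. The substitution $\theta \mapsto \omega$ produces $\cos(2\theta)\,d\theta = \tfrac{e^{-2T}}{2}\cos\omega\,d\omega$, and assembling these pieces yields the stated formula.

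I expect the principal obstacle to be the parameterization step: verifying that $\Phi$ is essentially injective, equivalently that the shortest saddle connection of any $M \in Y^*$ is the image of the vertical one in $M_0$. Since $g_tR_\theta$ is area-preserving but not isometric, a non-vertical saddle connection of $M_0$ of length slightly above $\rho_0$ could a priori be shortened below $\rho_0\|g_tR_\theta e_2\|$, and ruling this out requires careful use of both the $|\theta|<\pi/4$ restriction and the strict inequality ``$>\rho_0$'' in the definition of $X_0^*$. Once this is settled, the measure-theoretic identification via the Siegel-Veech disintegration is fairly standard, and the remaining integration is routine calculus.
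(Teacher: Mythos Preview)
Your Siegel--Veech disintegration idea is a genuine alternative to the paper's route, and your final Jacobian and integration are correct. But the gap you flag is real, and your proposed way of closing it does not work. You claim that injectivity of $\Phi$ is \emph{equivalent} to the image of the vertical saddle connection of $M_0$ being the shortest saddle connection of $M=g_tR_\theta M_0$; it is not, and the latter statement can fail. For instance, if $M_0\in X_0^*$ has a non-vertical saddle connection in direction $R_{-\theta}e_2$ of length $\rho$ with $\rho_0<\rho<\rho_0/\|g_tR_\theta e_2\|\cdot e^{-t}$ inverted appropriately, then under $g_tR_\theta$ this connection acquires length $\rho e^{-t}$, which for $\theta$ near $\pi/4$ and $t$ near $\log\cot|\theta|$ is strictly less than $\rho_0\|g_tR_\theta e_2\|$. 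So your labeling of $M\in Y^*$ by its shortest holonomy does not recover the parameterization, and the identification with a slice of $\widetilde{Y}$ breaks down.

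The paper sidesteps this entirely. Injectivity of $\Phi$ comes from a much softer observation: for $0<s<\log\cot|\theta|$ one has $\|g_sR_\theta e_2\|<1$, hence $\textrm{sys}(g_sR_\theta M_0)<\rho_0$. Thus along the backward $g$-orbit of $M$, the \emph{first} time the systole returns to $\rho_0$ determines $t$ uniquely; at that instant the minimizing saddle connection is unique up to direction (by the strict inequality in the definition of $X_0^*$), which then determines $\theta$ and $M_0$. No claim about which saddle connection of $M$ itself is shortest is needed. With injectivity in hand, the paper builds $m_0$ locally: pick a small transversal $\Sigma\subset X_0^*$ to the unipotent flow $n_u$ (which preserves $X_0^*$ since $n_ue_2=e_2$), use the diffeomorphism $(t,\theta,u)\mapsto g_tR_\theta n_u$ onto an open set of $SL(2,\mathbb{R})$, and disintegrate $m$ on $U\times\Sigma$ using its $SL(2,\mathbb{R})$-invariance to get $\text{Haar}|_U\times\nu$. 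The density $\cos 2\theta$ then comes from computing the Haar measure in these coordinates, which is your Jacobian $du\,dv=\cos(2\theta)\,dt\,d\theta$ dressed up with the extra $n_u$-coordinate. So your approach can be salvaged once you replace the shortest-holonomy labeling by the systole argument, but at that point you are essentially doing what the paper does, only packaged through $\widetilde{Y}$ rather than through local $SL(2,\mathbb{R})$-charts.
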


Also, we will show that the total mass of the measure $m_0$ introduced above can be interpreted as a flux of the measure $m$ through the level set $\{M\in\mathcal{C}:\textrm{sys}(M)=\rho_0\}$ of the systole function. 

\begin{proposition}\label{p.slice} For any $\rho_0>0$ with $m(\{M\in\mathcal{C}:\textrm{sys}(M)>\rho_0\})>0$, one has 
$$\lim\limits_{\tau\to 0}\frac{1}{\tau}m(\{M\in\mathcal{C}:\rho_0\exp(-\tau)\leq \textrm{sys}(M)\leq\rho_0\}) = \pi m_0(X_0^*)$$
\end{proposition}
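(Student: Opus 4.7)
The plan is to use the disintegration $m|_{Y^*}=dt\cdot\cos(2\theta)d\theta\cdot dm_0$ from Proposition~\ref{p.mY*} together with the $SO(2,\mathbb{R})$-invariance of $m$. Set $S_\tau := \{M\in\mathcal{C}:\rho_0 e^{-\tau}\leq\textrm{sys}(M)\leq\rho_0\}$. For $M\in S_\tau$ outside an $m$-null set the shortest saddle-connection is unique; denote by $\varphi(M)\in\mathbb{R}/\pi\mathbb{Z}$ its direction. Decompose $S_\tau = S_\tau^v\sqcup S_\tau^h$ according to whether $|\varphi(M)|<\pi/4$ (systole close to vertical) or $|\varphi(M)|>\pi/4$ (close to horizontal). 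The rotation $R_{\pi/2}\in SO(2,\mathbb{R})\subset SL(2,\mathbb{R})$ preserves $m$ and maps $S_\tau^v$ bijectively onto $S_\tau^h$ (it rotates the systole direction by $\pi/2$ without changing its length), so $m(S_\tau) = 2\,m(S_\tau^v)$; it suffices to compute $m(S_\tau^v)$.

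For a generic $M\in S_\tau^v$ with $s=\textrm{sys}(M)$ and $\theta = \varphi(M)\in(-\pi/4, \pi/4)$, I set $t := \log(\rho_0/s)\in[0,\tau]$ and $M_0 := g_{-t}R_{-\theta}M$. Then $M_0$ carries a vertical saddle-connection of length $\rho_0$ (the pre-image of the systolic saddle-connection of $M$), and $M = g_tR_\theta M_0$. Provided $M$ avoids a small exceptional set---those surfaces admitting a second saddle-connection of length comparable to $\rho_0$ whose direction becomes nearly perpendicular to the systolic one after the change of frame---all other saddle-connections of $M_0$ have length $>\rho_0$, so $M_0\in X_0^*$. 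Under this identification $(t,\theta)$ ranges over the ``first-branch'' region $\Omega_\tau := \{(t,\theta): |\theta|<\pi/4,\ 0\leq t\leq\frac{1}{2}\log\cot|\theta|,\ \|g_tR_\theta e_2\|^2\in[e^{-2\tau}, 1]\}$, on which the originally-vertical saddle-connection of $M_0$ remains the systole of $g_tR_\theta M_0$, with length $\rho_0\|g_tR_\theta e_2\|$.

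Plugging this parametrization into the disintegration gives $m(S_\tau^v) = m_0(X_0^*)\,\iint_{\Omega_\tau} dt\,\cos(2\theta)\,d\theta + o(\tau)$. I change variables from $(t,\theta)$ to $(u,\theta)$ with $u := \|g_tR_\theta e_2\|^2 = e^{-2t}\cos^2\theta + e^{2t}\sin^2\theta$; a direct computation shows $|\partial u/\partial t| = 2\sqrt{u^2-\sin^2(2\theta)}$ on the first branch, so
$$\iint_{\Omega_\tau}dt\,\cos(2\theta)\,d\theta = \int_{-\pi/4}^{\pi/4}\cos(2\theta)\,d\theta\int_{e^{-2\tau}}^1\frac{du}{2\sqrt{u^2-\sin^2(2\theta)}}.$$
The inner integral admits the expansion $\tau/|\cos(2\theta)| + O(\tau^2)$ as $\tau\to 0$, uniformly on compact subsets of $(-\pi/4,\pi/4)$; a separate check confirms that the narrow strip where $|\sin 2\theta|>e^{-2\tau}$ contributes only $O(\tau^2)$ to the double integral. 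The factors of $\cos(2\theta)$ then cancel, yielding $\iint_{\Omega_\tau}= \pi\tau/2 + O(\tau^2)$. Hence $m(S_\tau^v) = (\pi\tau/2)\,m_0(X_0^*) + o(\tau)$, and doubling gives $m(S_\tau) = \pi\tau\,m_0(X_0^*)+o(\tau)$; dividing by $\tau$ and letting $\tau\to 0$ yields the claim.

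The main technical obstacle is the $o(\tau)$ control of the exceptional set in the parametrization step: one must show that the $m$-measure of surfaces $M\in S_\tau^v$ carrying a second saddle-connection of length $\lesssim\rho_0 e^\tau$ whose direction becomes nearly perpendicular to the systole after $g_{-t}R_{-\theta}$ is genuinely $o(\tau)$. This is a Siegel-Veech-style bound for pairs of short saddle-connections in prescribed angular configuration, proved in the spirit of Lemma~\ref{l.Eskin-Masur} with a suitable counting function $V$ on pairs of saddle-connections; it is the only genuinely delicate ingredient of the argument.
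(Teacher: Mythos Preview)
Your strategy matches the paper's: parametrize $S_\tau$ from $X_0^*$ via the disintegration of Proposition~\ref{p.mY*}, integrate, and bound the leftover by $o(\tau)$. There is a slip in the setup: writing $M_0=g_{-t}R_{-\theta}M$ with $t=\log(\rho_0/s)$ and $\theta=\varphi(M)$ gives $M=R_\theta g_t M_0$, not $M=g_tR_\theta M_0$, and only the latter matches the disintegration formula. Your integral over $\Omega_\tau$ is written in $g_tR_\theta$-coordinates, so the computation goes through---but the explicit formulas $t=\log(\rho_0/s)$, $\theta=\varphi(M)$ are then incorrect (the actual relation on the first branch is $\sin 2\theta=(s/\rho_0)^2\sin 2\varphi(M)$). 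The asymptotic $\iint_{\Omega_\tau}dt\,\cos 2\theta\,d\theta=\pi\tau/2+O(\tau^2)$ and the doubling step are correct. The paper reaches the same endpoint via the ``pseudo-Teichm\"uller flow'' $\Phi_t(R_\theta M_0)=R_\theta g_t M_0$, for which $\textrm{sys}(\Phi_t M)=\rho_0e^{-t}$ exactly, and obtains the closed form $m(\textrm{Reg}(\tau))=\tfrac{1-e^{-2\tau}}{2}\pi m_0(X_0^*)$ (Lemma~\ref{l.regular-part}).

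The substantive gap is the exceptional set. The paper shows it is contained in $Z(\tau)=\{M\in S_\tau:\text{some saddle-connection of length }\leq\rho_0e^\tau\text{ is not parallel to a minimizing one}\}$ and bounds $m(Z(\tau))=o(\tau)$ by splitting on the angle $\hat\theta(M)$ between the two short connections. The large-angle part (Lemma~\ref{l.big-angle}) is a Fubini/Haar estimate giving $O(\tau^{3/2})$, close to what you sketch. But the small-angle part (Lemma~\ref{l.small.angle}) is \emph{not} a Siegel--Veech pair bound: a pair-counting function on $\mathbb{R}^2\times\mathbb{R}^2$ with no angular cutoff has integrability problems, and an argument ``in the spirit of Lemma~\ref{l.Eskin-Masur}'' does not give $o(\tau)$ here. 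The paper instead uses a disjointness trick---the translates $g_{3j\tau}(S_1(\tau))$ for $O(1/\tau)$ values of $j$ are pairwise disjoint and land in a fixed set whose $m$-measure can be made small by shrinking $\hat\theta_0$---to force $m(\{M\in S_\tau:\hat\theta(M)<\hat\theta_0\})<\eta\tau$. This is the core of the proposition and is missing from your sketch.
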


\subsection{Proof of Proposition \ref{p.AMY1} (modulo Propositions \ref{p.mY*} and \ref{p.slice})} 

Denote by $F(\rho):=m(\{M\in\mathcal{C}: \textrm{sys}(M)\leq\rho\})$. Note that $F(\rho)$ is a non-decreasing function of $\rho$. 

\begin{lemma} The function $F(\rho)$ is continuous, i.e., $m(\{M\in\mathcal{C}:\textrm{sys}(M)=\rho\})=0$ for all $\rho>0$.
\end{lemma}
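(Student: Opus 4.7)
The plan is to exploit the $g_t$-invariance of $m$, where $g_t=\textrm{diag}(e^t,e^{-t})$ is the Teichm\"uller flow, in order to reduce continuity of $F$ to a pointwise statement about how the systole varies along a single $g_t$-orbit. Fix $\rho>0$ and set $A_{\rho}:=\{M\in\mathcal{C}:\textrm{sys}(M)=\rho\}$. Since $g_{-t}(A_{\rho})=\{M:\textrm{sys}(g_tM)=\rho\}$ and $m$ is $g_t$-invariant, integrating over $t\in[0,1]$ and applying Fubini would yield
$$m(A_{\rho})=\int_0^1 m(g_{-t}A_{\rho})\,dt=\int_{\mathcal{C}}\textrm{Leb}\bigl(\{t\in[0,1]:\textrm{sys}(g_tM)=\rho\}\bigr)\,dm(M).$$
It therefore suffices to show that, for every $M\in\mathcal{C}$, the set $E_{\rho}(M):=\{t\in\mathbb{R}:\textrm{sys}(g_tM)=\rho\}$ has Lebesgue measure zero.

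For the pointwise claim, I would use that a translation surface has only countably many saddle connections. For any saddle connection $\gamma$ of $M$ with holonomy vector $(x,y)\in\mathbb{R}^2\setminus\{0\}$, its length along the $g_t$-orbit is
$$\ell_{\gamma}(t)=\sqrt{e^{2t}x^2+e^{-2t}y^2},$$
a real-analytic non-constant function of $t$: it is strictly convex in $t$ when $xy\neq 0$ and strictly monotone when one of $x,y$ vanishes. In either case the equation $\ell_{\gamma}(t)=\rho$ has at most two solutions. Since $\textrm{sys}(g_tM)=\min_{\gamma}\ell_{\gamma}(t)$ over the countable family of saddle connections of $M$, the equality $\textrm{sys}(g_tM)=\rho$ forces $\ell_{\gamma}(t)=\rho$ for at least one $\gamma$, so $E_{\rho}(M)$ is a countable set, hence of Lebesgue measure zero.

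Combining the two steps yields $m(A_{\rho})=0$, establishing continuity of $F$. The only technicality worth checking is the measurability needed to apply Fubini, which follows from the fact that $\textrm{sys}$ is continuous on $\mathcal{C}$: below any fixed length bound there are only finitely many saddle connections, whose lengths depend continuously on the translation structure, so $\textrm{sys}$ is locally the minimum of finitely many continuous functions. I do not expect any serious obstacle in this scheme; the principal subtlety is merely to note that $g_t$ sends saddle connections of $M$ bijectively to saddle connections of $g_tM$, so that the enumeration of $\gamma$'s used in the pointwise argument is well-defined along the whole orbit.
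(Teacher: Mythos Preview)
Your argument is correct. It differs from the paper's in two respects. First, you average only over the diagonal subgroup $g_t$, whereas the paper averages over a compact neighbourhood $L\subset SL(2,\mathbb{R})$ using the full $SL(2,\mathbb{R})$-invariance of $m$. Second, and more substantively, the pointwise null-set argument is different: the paper invokes Masur's result on the number of short saddle connections to reduce, for each fixed $x$, the set $\{\gamma\in L:\textrm{sys}(\gamma x)=\rho\}$ to a finite union of codimension-one subsets of $SL(2,\mathbb{R})$, while you use only that the collection of saddle connections is countable together with the elementary observation that each $\ell_\gamma(t)=\rho$ has at most two solutions. Your route is more self-contained, since it avoids citing Masur's growth estimate; the paper's route, on the other hand, fits the pattern used again later (e.g.\ in the proof of Lemma~\ref{l.big-angle}), where averaging over $L$ and Masur's bound are reused.
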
 

\begin{proof} Fix $\rho>0$. By Fubini's theorem, 
$$m(\{M\in\mathcal{C}:\textrm{sys}(M)=\rho\}) = \int_{\mathcal{C}} \mu_L(\{g\in SL(2,\mathbb{R}): \textrm{sys}(gx)=\rho\}) \, dm(x)$$
where $\mu_L$ is the normalized restriction of the Haar measure on $SL(2,\mathbb{R})$ to the compact subset $L:=\{g\in SL(2,\mathbb{R}): \|g\|\leq 2\}$. 

On the other hand, by a result of Masur (see \cite{M90}), the number of length-minimizing saddle-connections on a translation surface $X\in\mathcal{C}$ with $\textrm{sys}(X)=\rho$ is uniformly bounded in terms of a constant depending only on $\rho$ and the genus $g$ of $X$. 

It follows that, for each $x\in\mathcal{C}$, the $\nu_L$-measure of $\{g\in SL(2,\mathbb{R}): \textrm{sys}(gx)=\rho\}$ is zero, and, \emph{a fortiori}, $m(\{M\in\mathcal{C}:\textrm{sys}(M)=\rho\}) = 0$.
\end{proof}

By Proposition \ref{p.slice}, the function $F(\rho)$ has a left-derivative $F'(\rho)$ at every $\rho_0$ with $F(\rho_0)<1$ and  
$$\rho_0 F'(\rho_0) = \pi m_0(X_0^*)$$ 

By Proposition \ref{p.mY*} and the elementary fact\footnote{The change of variables $u=\tan(\omega/2)$ gives that $\int_0^{\pi/2}\log\frac{1+\cos\omega}{1-\cos\omega}\cos\omega\,d\omega = 4\int_0^1 \log(1/u)\frac{1-u^2}{(1+u^2)}du$. It follows that $\int_0^{\pi/2}\log\frac{1+\cos\omega}{1-\cos\omega}\cos\omega\,d\omega =\pi$ because $\frac{1-u^2}{(1+u^2)^2}=\sum\limits_{n\geq 0}(-1)^n(2n+1)u^{2n}$, $\int_0^1\log(1/u) u^n du=1/(n+1)^2$ and $\sum\limits_{n\geq 0}\frac{(-1)^n}{2n+1}=\pi$  (cf. Lemma 3.5 in \cite{AMY}).} that $\int_{-\pi/2}^{\pi/2}\log\frac{1+\cos\omega}{1-\cos\omega}\cos\omega\,d\omega=2\pi$, we deduce that 
\begin{equation}\label{e.p.6.1}
F(\rho_0\exp(-T))\geq m(Y(T,\pi/2,X_0^*)) = \frac{1}{2}\exp(-2T)\rho_0 F'(\rho_0)
\end{equation}
for all $T>0$ and $\rho_0>0$ (with $F(\rho_0)<1$).

\begin{lemma}\label{l.p.6.2} The function $F$ is absolutely continuous and its left-derivative verifies 
$$F'(\rho)=O(\rho)$$

Moreover, the constant 
$$c(m):=\frac{1}{2}\sup\limits_{F(\rho)<1}\frac{F'(\rho)}{\rho}$$
satisfies 
$$\lim\limits_{\rho\to 0}\frac{F(\rho)}{\rho^2} = c(m) = \frac{1}{2}\limsup\limits_{\rho\to 0}\frac{F'(\rho)}{\rho}$$ 
\end{lemma}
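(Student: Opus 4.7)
The plan is to chain together the three ingredients already available: the flux formula of Proposition \ref{p.slice}, the orbitwise lower bound \eqref{e.p.6.1}, and the Eskin--Masur estimate $F(\rho)\leq C(m)\rho^2$ of Lemma \ref{l.Eskin-Masur}. I would begin by extracting from Proposition \ref{p.slice} the existence of the left-derivative $F'(\rho_0)=\pi m_0(X_0^*)/\rho_0$ at every $\rho_0$ with $F(\rho_0)<1$; this is just the change of variable $u=\rho_0 e^{-\tau}$, for which $\tau\sim(\rho_0-u)/\rho_0$ as $u\to\rho_0^-$. Inserting this left-derivative into \eqref{e.p.6.1} and comparing with the Eskin--Masur upper bound yields
$$\tfrac{1}{2}e^{-2T}\rho_0\, F'(\rho_0)\leq F(\rho_0 e^{-T})\leq C(m)\rho_0^2 e^{-2T},$$
so that $F'(\rho_0)\leq 2C(m)\rho_0$, i.e.\ $F'(\rho)=O(\rho)$. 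Since $F$ is continuous (previous lemma) with left-derivative bounded by $2C(m)R$ on any $[0,R]$, a standard mean value theorem for one-sided derivatives makes $F$ Lipschitz---hence absolutely continuous---on every bounded interval.

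For $\lim_{\rho\to 0}F(\rho)/\rho^2=c(m)$ I would sandwich from both sides. Rewriting \eqref{e.p.6.1} as $F(\rho_0 e^{-T})/(\rho_0 e^{-T})^2\geq \tfrac{1}{2}F'(\rho_0)/\rho_0$ and sending $T\to\infty$ gives $\liminf_{\rho\to 0}F(\rho)/\rho^2\geq \tfrac{1}{2}F'(\rho_0)/\rho_0$ for every admissible $\rho_0$, and taking the supremum over such $\rho_0$ produces $\liminf\geq c(m)$. For the upper bound, absolute continuity allows me to write
$$F(\rho)=\int_0^\rho F'(s)\,ds\leq \int_0^\rho 2c(m)\,s\,ds = c(m)\rho^2,$$
where the pointwise inequality $F'(s)\leq 2c(m)s$ is precisely the definition of $c(m)$. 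This bound is valid universally (for $\rho$ small enough that the whole range of integration stays in $\{F<1\}$) and closes the sandwich.

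The equality $c(m)=\tfrac{1}{2}\limsup_{\rho\to 0}F'(\rho)/\rho$ then follows from two one-line arguments: the inequality $\geq$ is immediate from the definition of $c(m)$ as a supremum, while the reverse is obtained by contradiction---if one had $F'(\rho)/\rho\leq 2c(m)-\epsilon$ for all sufficiently small $\rho$, integration would yield $F(\rho)\leq(c(m)-\epsilon/2)\rho^2$ for such $\rho$, contradicting the limit just proved. The only step that is not routine bookkeeping is the absolute continuity assertion, which requires the mean value theorem for left-derivatives applied to a continuous nondecreasing function; once this is in hand, every other step is a short manipulation combining \eqref{e.p.6.1}, Lemma \ref{l.Eskin-Masur}, and Proposition \ref{p.slice}.
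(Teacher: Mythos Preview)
Your proof is correct and follows essentially the same approach as the paper: both combine \eqref{e.p.6.1} with the Eskin--Masur bound to get $F'(\rho)=O(\rho)$, invoke the mean-value theorem for one-sided derivatives to obtain absolute continuity, and then sandwich $F(\rho)/\rho^2$ using \eqref{e.p.6.1} for the lower bound and integration of $F'$ for the upper bound. The only cosmetic difference is that the paper arranges the upper-bound step as $\limsup F(\rho)/\rho^2\leq\tfrac12\limsup F'(\rho)/\rho\leq c(m)$, so that the identity $c(m)=\tfrac12\limsup F'(\rho)/\rho$ drops out of a single chain of inequalities rather than requiring your separate contradiction argument.
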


\begin{proof} By Lemma \ref{l.Eskin-Masur}, we know that $F(\rho)=O(\rho^2)$. Hence, 
$$F'(\rho)\leq 2 \frac{F(\rho)}{\rho} = O(\rho)$$ 
thanks to \eqref{e.p.6.1}. In particular, $F'$ is bounded. 

We affirm that $F$ is absolutely continuous. Actually, this follows immediately from the general claim: if a continuous function $f$ on an interval $[\rho_0,\rho_1]$ whose left-derivative is bounded by $C$, then 
$$|f(\rho_1)-f(\rho_0)|\leq C(\rho_1-\rho_0)$$ 
The proof of this claim is not difficult. For each $C'>C$, denote by 
$$I(C')=\{\rho\in [\rho_0,\rho_1]: |f(\rho_1)-f(\rho)|\leq C'|\rho_1-\rho|\}$$
Note that $I(C')$ is not empty (because $I(C')\ni\rho_1$), $I(C')$ is closed (by continuity of $f$), and $I(C')$ is open to the left\footnote{This means that if $\rho_*\in I(C')\cap (\rho_0,\rho_1]$, then there exists $\delta_*=\delta_*(\rho_*)$ such that $[\rho_* - \delta_*, \rho_*]\subset I(C')$.} (because the left-derivative of $f$ is bounded by $C<C'$). By connectedness, it follows that $I(C')=[\rho_0,\rho_1]$. Since $C'>C$ was arbitrary, the claim is proved. 

The absolute continuity of $F$ implies that $F$ is the integral of its almost everywhere derivative: 
$$F(\rho)=\int_0^{\rho} F'(s) \, ds = \int_0^{\rho} \frac{F'(s)}{s} s \, ds$$ 
Therefore, 
$$\limsup\limits_{\rho\to 0} \frac{F(\rho)}{\rho^2} \leq \frac{1}{2}\limsup\limits_{\rho\to 0} \frac{F'(\rho)}{\rho}\leq \frac{1}{2}\sup_{F(\rho)<1}\frac{F'(\rho)}{\rho}:=c(m)$$ 
Moreover, the estimate \eqref{e.p.6.1} says that 
$$\liminf\limits_{\rho\to 0} \frac{F(\rho)}{\rho^2}\geq \frac{1}{2}\sup_{F(\rho)<1}\frac{F'(\rho)}{\rho}:=c(m)$$ 
It follows from these inequalities that 
$$c(m)\leq \liminf\limits_{\rho\to 0} \frac{F(\rho)}{\rho^2}\leq \limsup\limits_{\rho\to 0} \frac{F(\rho)}{\rho^2} \leq \frac{1}{2}\limsup\limits_{\rho\to 0} \frac{F'(\rho)}{\rho}\leq c(m)$$
This completes the proof of the lemma. 
\end{proof}

At this point, we are ready to prove Proposition \ref{p.AMY1}. Indeed, given $\eta>0$, we use Lemma \ref{l.p.6.2} to select $\rho_0=\rho_0(\eta)>0$ (with $F(\rho_0)<1$) such that 
$$\frac{1}{2}\frac{F'(\rho_0)}{\rho_0} > c(m)-\frac{\eta}{4}$$
and 
$$\frac{F(\rho)}{\rho^2} < c(m)+\frac{\eta}{4} \quad \textrm{ for all } 0<\rho<\rho_0$$ 
By plugging these estimates into \eqref{e.p.6.1} and by writing $\rho=\rho_0\exp(-T)$, $T>0$, we deduce that 
\begin{eqnarray*}
F(\rho) - m(Y(T,\pi/2,X_0^*))  
&=& F(\rho) - \frac{1}{2}\frac{F'(\rho_0)}{\rho_0}\rho^2 \\ 
&<& \left(c(m)+ \frac{\eta}{4}\right)\rho^2 - \left(c(m)- \frac{\eta}{4}\right)\rho^2 \\ 
&=& \frac{\eta}{2}\rho^2 
\end{eqnarray*}
Since $m(\{M\in\mathcal{C}: \textrm{sys}(M)<\rho\} - Y(T,\pi/2,X_0^*)):=F(\rho) - m(Y(T,\pi/2,X_0^*))$, the proof of Proposition \ref{p.AMY1} is complete. 

\subsection{Proof of Proposition \ref{p.AMY2} (modulo Proposition \ref{p.mY*})} The basic idea behind the proof of Proposition \ref{p.AMY2} is very simple: given $A>1$ and $M\in Y(T,\pi/2,X_0^*)$, i.e., $M=g_t R_{\theta} M_0$ with $$|\sin 2\theta|<\exp(-2T), \quad \|g_t R_{\theta} e_2\|<\exp(-T), \quad M_0\in X_0^*,$$ 
we will show that $M$ can not have saddle-connections with length $\leq A\cdot\textrm{sys}(M)$ which are not parallel to length-minimizing ones unless $M_0$ and $\theta$ satisfy some severe constraints; by Proposition \ref{p.mY*}, these constraints imply that the $m$-measure of $Y(T,\pi/2,X_0^*)\cap \mathcal{C}(A,\rho_0\exp(-T))$ must be small.

More precisely, we start with the following result saying that if $\theta$ is not too small, then the long saddle-connections of $M_0\in X_0^*$ can not give rise to a saddle-connection of $M = g_t R_{\theta} M_0$ of length $\leq A\cdot\textrm{sys}(M)$. 

\begin{lemma}\label{l.AMY.6.5} Given $\omega_0>0$, the constant $K=K(\omega_0):= \sqrt{1 + \frac{4}{\sin^2\omega_0}}$ has the following property. For all $T>0$,  $\exp(-2T)\sin\omega_0<|\sin(2\theta)|<\exp(-2T)$,
and $t\in\mathbb{R}$ with $\|g_tR_{\theta} e_2\|<\exp(-T)$, one has 
$$\|g_t R_{\theta}e_2\|\leq K\exp(-t)$$
In particular, for such $T$, $\theta$ and $t$, we have 
$$\|g_t R_{\theta} v\| > A K \exp(-t)\geq A\|g_t R_{\theta} e_2\|$$
for all vector $v\in\mathbb{R}^2$ with $\|v\|\geq A K$. Thus, in this setting, a saddle-connection of $M=g_t R_{\theta} M_0$ of length $\leq A\cdot\textrm{sys}(M)$ does not come from a saddle-connection of $M_0\in X_0^*$ of length $>AK\rho_0$. 
\end{lemma}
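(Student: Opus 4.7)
The plan is to prove this by a direct computation on the orbit $t \mapsto g_t R_\theta e_2$, which traces out a hyperbola in $\mathbb{R}^2$. Writing $R_\theta e_2 = (-\sin\theta,\cos\theta)$ and $g_t = \mathrm{diag}(e^t,e^{-t})$, we get
$$\|g_t R_\theta e_2\|^2 = e^{2t}\sin^2\theta + e^{-2t}\cos^2\theta.$$
Under the hypothesis $|\sin 2\theta|>0$, this quantity is minimized at $t^* = \tfrac12\log|\cot\theta|$ with minimum value $|\sin 2\theta|$. Substituting $u=t-t^*$, one finds the clean identity
$$\|g_t R_\theta e_2\|^2 = |\sin 2\theta|\cosh(2u).$$

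The key step is to control $u$ from the two assumptions. From $\|g_t R_\theta e_2\|^2<e^{-2T}$ and $|\sin 2\theta|>e^{-2T}\sin\omega_0$, dividing gives $\cosh(2u) < 1/\sin\omega_0$. Since $e^{2u}\le e^{2u}+e^{-2u}=2\cosh(2u)$, this yields $e^{4u}\le 4/\sin^2\omega_0 = K^2-1$. Now rewrite the target inequality: $\|g_t R_\theta e_2\|^2\le K^2 e^{-2t}$ becomes, after using $e^{-2t^*}=|\tan\theta|$ and the identity $e^{2u}\cosh(2u)=(e^{4u}+1)/2$,
$$\cos^2\theta\,(e^{4u}+1) \le K^2,$$
which holds since $\cos^2\theta\le 1$ and $e^{4u}+1\le K^2$. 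This establishes the first displayed estimate.

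For the second statement, use that $g_t\in SL(2,\mathbb{R})$ has smallest singular value $e^{-t}$, so for any $v\in\mathbb{R}^2$,
$$\|g_t R_\theta v\|\ge e^{-t}\|R_\theta v\| = e^{-t}\|v\|.$$
If $\|v\|\ge AK$, this gives $\|g_t R_\theta v\|\ge AKe^{-t}\ge A\|g_t R_\theta e_2\|$ by the first part. Finally, to draw the conclusion about saddle connections: the shortest (vertical) saddle connection of $M_0\in X_0^*$ has holonomy $\pm\rho_0 e_2$, so $\mathrm{sys}(M)\le\rho_0\|g_tR_\theta e_2\|$; any saddle connection of $M_0$ with holonomy $w$ satisfying $\|w\|>AK\rho_0$ produces a saddle connection of $M$ of length
$$\|g_t R_\theta w\|\ge e^{-t}\|w\| > AK\rho_0 e^{-t}\ge A\rho_0\|g_t R_\theta e_2\|\ge A\cdot\mathrm{sys}(M),$$
ruling out such $w$ as the source of a short saddle connection in $M$.

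The proof is essentially an explicit two-dimensional calculation, so there is no substantive obstacle; the only thing requiring care is the choice of coordinate $u=t-t^*$ around the closest approach of the hyperbolic orbit to the origin, which converts the pair of hypotheses into the single clean bound $\cosh(2u)<1/\sin\omega_0$ that matches exactly the definition $K^2=1+4/\sin^2\omega_0$.
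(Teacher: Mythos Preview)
Your proof is correct and takes a somewhat different route from the paper's. The paper works directly with the quadratic inequality $x^2\sin^2\theta - e^{-2T}x + \cos^2\theta<0$ in $x=e^{2t}$, writes down the larger root $x_+$ via the auxiliary angle $\omega$ (defined by $\sin 2\theta = e^{-2T}\sin\omega$), and combines $e^{2t}\le x_+$ with $e^{2t}\sin^2\theta<e^{-2T}$ to bound $e^{-2T}e^{2t}\le 4/\sin^2\omega_0$. You instead center the computation at the minimum $t^*=\tfrac12\log|\cot\theta|$ of the hyperbolic orbit and use the identity $\|g_tR_\theta e_2\|^2=|\sin 2\theta|\cosh(2u)$, which turns the two hypotheses into the single bound $\cosh(2u)<1/\sin\omega_0$ and then $e^{4u}\le K^2-1$. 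Both arguments end at the same inequality $\cos^2\theta\,e^{4u}=e^{4t}\sin^2\theta\le K^2-1$; your parametrization is cleaner and avoids the explicit quadratic formula and the auxiliary angle~$\omega$, while the paper's approach is more hands-on.

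One small remark: when you say ``$g_t$ has smallest singular value $e^{-t}$'' you are implicitly using $t\ge 0$. This does follow from the hypotheses once $|\theta|<\pi/4$ (since $e^{-2t}\cos^2\theta<e^{-2T}$ then forces $t>T-\tfrac12\log 2$, and in the application the lemma is only invoked for such $\theta$), but the lemma statement allows $\theta$ near $\pm\pi/2$ as well. The paper is equally silent on this point; it does not affect the use of the lemma in the subsequent corollary.
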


\begin{proof} By definition 
$$\|g_t R_{\theta} e_2\|^2 = e^{-2t}(\cos^2\theta + e^{4t}\sin^2\theta) \leq e^{-2t} (1+e^{4t}\sin^2\theta)$$ 
Moreover, the fact that $\|g_t R_{\theta} e_2\|<\exp(-T)$ implies that 
$$e^{2t}\sin^2\theta\leq\|g_t R_{\theta} e_2\|^2<\exp(-2T)$$ 
It follows from these estimates that 
\begin{equation}\label{e.AMY.6.5.I}
\|g_t R_{\theta} e_2\|^2\leq e^{-2t}(1+\exp(-2T)e^{2t})
\end{equation}

On the other hand, the hypothesis $e^{-2t}\cos^2\theta+e^{2t}\sin^2\theta=\|g_tR_{\theta}e_2\|^2<\exp(-2T)$ becomes 
$$x^2\sin^2\theta - \exp(-2T) x + \cos^2\theta <0$$
after the change of variables $x=e^{2t}$. Since the largest root of this second degree inequality is 
$$x_+:=\frac{\exp(-2T)+\sqrt{\exp(-4T)-\sin^2(2\theta)}}{2\sin^2\theta},$$
we deduce that 
$$e^{2t}=x\leq x_+=\frac{\exp(-2T)+\sqrt{\exp(-4T)-\sin^2(2\theta)}}{2\sin^2\theta} = \frac{\exp(-2T)(1+\cos\omega)}{2\sin^2\theta}
$$
after the change of variables $\sin(2\theta):=\exp(-2T)\sin\omega$ (with $\cos\omega>0$). Because $\omega_0<|\omega|<\pi/2$ (thanks to our assumption that $\exp(-2T)\sin\omega_0<|\sin(2\theta)|<\exp(-2T)$), we deduce from this last inequality that 
\begin{equation}\label{e.AMY.6.5.II}
\exp(-2T)e^{2t}\leq \exp(-4T)\frac{1}{\sin^2\theta}\leq \frac{4}{\sin\omega_0}
\end{equation}

By combining \eqref{e.AMY.6.5.I} and \eqref{e.AMY.6.5.II}, we obtain that 
$$\|g_t R_{\theta} e_2\|^2\leq \exp(-2t)\left(1+\frac{4}{\sin^2\omega_0}\right)=:\exp(-2t) K(\omega_0)^2$$
This completes the proof of the lemma. 
\end{proof}

Next, we show that given $A>1$, all saddle-connections of $M=g_t R_{\theta} M_0\in Y(T,\pi/2,X_0^*)$ of length $\leq A\cdot\textrm{sys}(M)$ comes exclusively from saddle-connections of $M_0\in X_0^*$ of length $\leq A\cdot\textrm{sys}(M_0)$ making a small angle with the vertical direction whenever $T$ is sufficiently large.

\begin{lemma}\label{l.AMY.6.7} Given $A>1$ and $\overline{\theta_0}>0$, there exists $T_0=T_0(A,\overline{\theta_0})\geq 1$ such that 
$$\|g_t R_{\theta+\theta'} e_2\| > A \|g_t R_{\theta} e_2\|$$
for all $T\geq T_0$, $|\sin 2\theta|<\exp(-2T)$, $\|g_tR_{\theta}e_2\|<\exp(-T)$, and $\overline{\theta_0}<|\theta'|<\pi/2$. 

In particular, in this setting, a saddle-connection of $M=g_t R_{\theta} M_0\in Y(T,\pi/2,X_0^*)$ of length $\leq A\cdot \textrm{sys}(M)$ does not come from a saddle-connection of $M_0\in X_0^*$ making an angle $>\overline{\theta_0}$ with the vertical direction. 
\end{lemma}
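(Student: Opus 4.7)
The plan is to prove the trigonometric inequality $\|g_tR_{\theta+\theta'}e_2\| > A\|g_tR_\theta e_2\|$ by a direct computation and then deduce the geometric statement about saddle-connections as an immediate corollary. The first step is to exploit the involution $(t,\theta,\theta')\mapsto(-t,\pi/2-\theta,-\theta')$, which (by a short calculation) preserves both $\|g_tR_\theta e_2\|^2=e^{2t}\sin^2\theta+e^{-2t}\cos^2\theta$ and $\|g_tR_{\theta+\theta'}e_2\|^2$, as well as the hypotheses $|\sin2\theta|<\exp(-2T)$ and $\overline{\theta_0}<|\theta'|<\pi/2$. This allows me to reduce without loss of generality to the case $t\geq 0$.

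Under this assumption, the hypothesis $\|g_tR_\theta e_2\|^2<\exp(-2T)$ gives two useful one-sided bounds: from the first summand, $|\sin\theta|<\exp(-T-t)\leq\exp(-T)$, so (after reducing $\theta$ modulo $\pi$ to $(-\pi/2,\pi/2]$) we have $|\theta|<2\exp(-T)$ and in particular $\cos\theta>1/2$ once $T$ exceeds an absolute threshold; from the second summand, $e^t\geq\exp(T)\,|\cos\theta|>\exp(T)/2$. Next, for $\theta'$ with $\overline{\theta_0}<|\theta'|<\pi/2$ and $|\theta|<\overline{\theta_0}/2$ (again for $T$ large), the triangle inequality keeps $\theta+\theta'$ inside $(\overline{\theta_0}/2,\pi-\overline{\theta_0}/2)$, so $|\sin(\theta+\theta')|\geq\sin(\overline{\theta_0}/2)$. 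Combining these I obtain
\[
\|g_tR_{\theta+\theta'}e_2\|^2\geq e^{2t}\sin^2(\theta+\theta')\geq \tfrac{1}{4}\exp(2T)\sin^2(\overline{\theta_0}/2),
\]
while $A^2\|g_tR_\theta e_2\|^2<A^2\exp(-2T)$; the strict inequality then follows provided $\exp(4T)\geq 4A^2/\sin^2(\overline{\theta_0}/2)$, and this fixes $T_0=T_0(A,\overline{\theta_0})$ after absorbing the absolute thresholds used above.

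The geometric consequence is then immediate. A saddle-connection $\gamma$ of $M_0\in X_0^*$ making angle $|\theta'|>\overline{\theta_0}$ with the vertical has holonomy $\|\gamma\|\cdot R_{\theta'}e_2$, and its image in $M=g_tR_\theta M_0$ has length $\|\gamma\|\cdot\|g_tR_{\theta+\theta'}e_2\|>A\|\gamma\|\cdot\|g_tR_\theta e_2\|$ by the first part. Since $M_0\in X_0^*$ forces $\|\gamma\|\geq\rho_0=\textrm{sys}(M_0)$, and since the image of any vertical length-minimizing saddle-connection of $M_0$ is a saddle-connection of $M$ of length $\rho_0\|g_tR_\theta e_2\|$ (so $\textrm{sys}(M)\leq\rho_0\|g_tR_\theta e_2\|$), the length of the image of $\gamma$ strictly exceeds $A\cdot\textrm{sys}(M)$, which is the second assertion of the lemma.

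No serious obstacle is expected: the argument is pure elementary trigonometry, entirely in the spirit of the preceding Lemma \ref{l.AMY.6.5}. The only mildly delicate point is the \emph{WLOG} $t\geq 0$ reduction (which simultaneously disposes of the alternative $\theta$-near-$\pi/2$ regime via the involution above), together with careful bookkeeping of the universal constants built into $T_0$.
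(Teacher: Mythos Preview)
Your proof is correct and follows essentially the same route as the paper: lower-bound $\|g_tR_{\theta+\theta'}e_2\|^2$ by $e^{2t}\sin^2(\overline{\theta_0}/2)$, then show $e^{2t}$ is large compared to $\|g_tR_\theta e_2\|^2$. The only notable difference is in how each argument bounds $e^{2t}$ and disposes of the ambiguity in $\theta$. The paper (implicitly relying on the ambient context $|\theta|<\pi/4$) uses the quadratic constraint $x^2\sin^2\theta-\exp(-2T)x+\cos^2\theta<0$ together with the change of variable $\sin 2\theta=\exp(-2T)\sin\omega$ to obtain $e^{2t}\geq x_-$ and then $\frac{1-\cos\omega}{2\sin^2\theta}\geq\frac{1}{4}\exp(4T)$; you instead invoke the involution $(t,\theta,\theta')\mapsto(-t,\pi/2-\theta,-\theta')$ to reduce to $t\geq 0$, after which the two one-sided bounds $|\sin\theta|<\exp(-T)$ and $e^{t}>\tfrac12\exp(T)$ follow directly from the two summands of $\|g_tR_\theta e_2\|^2$. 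Your reduction is a bit cleaner and makes the lemma self-contained (no tacit appeal to $|\theta|<\pi/4$), while the paper's version has the virtue of reusing the $\omega$-parametrisation already set up in Lemma~\ref{l.AMY.6.5} and Lemma~\ref{l.c.4.2}. Either way the final inequality $\exp(4T)\gtrsim A^2/\sin^2(\overline{\theta_0}/2)$ is the same, and your deduction of the geometric statement about saddle-connections is exactly as intended.
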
 

\begin{proof} Since $|\sin(2\theta)|\leq \exp(-2T)$ (by hypothesis), we have $|\theta|<\overline{\theta_0}/2$ for all $T$ sufficiently large depending on $\overline{\theta_0}$, say $T\geq T_0(\overline{\theta_0})$. Hence, 
$$\|g_t R_{\theta+\theta'} e_2\|^2 = e^{2t}\sin^2(\theta+\theta') + e^{-2t}\cos^2(\theta+\theta')\geq e^{2t}\sin^2(\overline{\theta_0}/2)$$ 

On the other hand, our assumption that $e^{2t}\sin^2\theta + e^{-2t}\cos^2\theta = \|g_tR_{\theta}e_2\|^2<\exp(-2T)$ implies that $x=e^{2t}$ solves the second degree inequality 
$$x^2\sin^2\theta - \exp(-2T) x + \cos^2\theta <0$$
whose smallest root is 
$$x_-:=\frac{\exp(-2T)-\sqrt{\exp(-4T)-\sin^2(2\theta)}}{2\sin^2\theta}=\exp(-2T)\frac{1-\cos\omega}{2\sin^2\theta}$$ 
where $\sin 2\theta:=\exp(-2T)\sin\omega$ and $\cos\omega>0$. Thus, 
$$e^{2t}\geq \exp(-2T)\frac{1-\cos\omega}{2\sin^2\theta}$$

It follows from this discussion that 
$$\|g_t R_{\theta+\theta'} e_2\|^2\geq \sin^2(\overline{\theta_0}/2)\exp(-2T)\frac{1-\cos\omega}{2\sin^2\theta} > \sin^2(\overline{\theta_0}/2)\frac{1-\cos\omega}{2\sin^2\theta}\|g_tR_{\theta}e_2\|^2$$
for all $T\geq T_0(\overline{\theta_0})$.  

Next, we notice $|\cos\theta|\geq 1/2$ whenever $T$ is larger than an absolute constant (because $|\sin(2\theta)|<\exp(-2T)$). Hence, 
$$2(1-\cos\omega) \geq 1-\cos^2\omega = \sin^2\omega = \exp(4T)\sin^2(2\theta)\geq \exp(4T)\sin^2\theta$$ 

By combining the previous two inequalities, we conclude that 
$$\|g_t R_{\theta+\theta'} e_2\|^2 > \sin^2(\overline{\theta_0}/2)\frac{\exp(4T)}{4}\|g_tR_{\theta}e_2\|^2\geq A^2\|g_tR_{\theta}e_2\|^2$$ 
for all $T\geq T_0=T_0(A,\overline{\theta_0})$. This proves the lemma. 
\end{proof}

These lemmas have the following consequence for the study of $Y(T,\pi/2,X_0^*)\cap \mathcal{C}(A,\rho_0\exp(-T))$: 

\begin{corollary}\label{c.} Fix $A>1$ and $\rho_0>0$ (with $m(\{M\in\mathcal{C}: \textrm{sys}(M)>\rho_0\})>0$). Given $\omega_0>0$, let $K=K(\omega_0)=\sqrt{1+4\sin^{-2}\omega_0}$ and, for each $M_0\in X_0^*$, denote by $\overline{\theta}_{\omega_0}(M_0)$ the minimal angle between a non-vertical saddle-connection of $M_0$ of length $A K(\omega_0)\textrm{sys}(M_0) = A K \rho_0$ and the vertical direction (with the convention that $\overline{\theta}_{\omega_0}(M_0)=\pi/2$ whenever such saddle-connections do not exist). 

Then, for each $\omega_0>0$ and $\overline{\theta_0}>0$, one has 
$$Y(T,\pi/2,X_0^*)\cap\mathcal{C}(A,\rho_0\exp(-T))\subset Y(T,\omega_0,X_0^*)\cup Y(T,\pi/2,B_{\omega_0}(\overline{\theta_0})) \quad \forall \, T\geq T_0(A,\overline{\theta_0})$$ 
where $T_0(A,\overline{\theta_0})$ is the constant provided by Lemma \ref{l.AMY.6.7} and $B_{\omega_0}(\overline{\theta_0}):=\{M_0\in X_0^*: \overline{\theta}_{\omega_0}(M_0)\leq\overline{\theta_0}\}$.
\end{corollary}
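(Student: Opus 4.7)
The plan is a dichotomy on the size of $|\sin 2\theta|$: the first case produces the inclusion in $Y(T,\omega_0,X_0^*)$ directly from the definition, while the second case combines Lemmas \ref{l.AMY.6.5} and \ref{l.AMY.6.7} to force $M_0\in B_{\omega_0}(\overline{\theta_0})$.

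Concretely, I would fix $M=g_tR_{\theta}M_0$ in the intersection and select the two non-parallel saddle connections $\gamma$ (minimizing) and $\delta$ (of length $\leq A\cdot\textrm{sys}(M)$) guaranteed by $M\in\mathcal{C}(A,\rho_0\exp(-T))$. If $|\sin 2\theta|<\exp(-2T)\sin\omega_0$, then $M\in Y(T,\omega_0,X_0^*)$ is immediate from the definition. Otherwise $\exp(-2T)\sin\omega_0\leq|\sin 2\theta|<\exp(-2T)$, which is precisely the hypothesis of Lemma \ref{l.AMY.6.5} with the constant $K=K(\omega_0)$; that lemma says every saddle connection of $M$ of length $\leq A\cdot\textrm{sys}(M)$ is the image under $g_tR_\theta$ of a saddle connection of $M_0$ of length $\leq AK\rho_0$. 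In particular the preimages $\gamma_0,\delta_0\subset M_0$ of $\gamma,\delta$ satisfy this bound. Since $M_0\in X_0^*$, each of $\gamma_0,\delta_0$ is either vertical of length exactly $\rho_0$, or non-vertical of length strictly greater than $\rho_0$; non-parallelism of $\gamma,\delta$ then forces at least one of $\gamma_0,\delta_0$, call it $\alpha_0$, to be non-vertical. Let $\theta'$ denote its angle to the vertical direction and $\ell>\rho_0$ its length.

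The key step is to rule out $|\theta'|>\overline{\theta_0}$ by invoking Lemma \ref{l.AMY.6.7}: in that range, and for $T\geq T_0(A,\overline{\theta_0})$, the lemma yields $\|g_tR_{\theta+\theta'}e_2\|>A\|g_tR_\theta e_2\|$. Multiplying by $\ell$ and using the bound $\textrm{sys}(M)\leq\rho_0\|g_tR_\theta e_2\|$ (valid because the image of a vertical saddle connection of $M_0$ is a saddle connection of $M$ of length $\rho_0\|g_tR_\theta e_2\|$), the image $\alpha$ of $\alpha_0$ in $M$ has length $>A\rho_0\|g_tR_\theta e_2\|\geq A\cdot\textrm{sys}(M)$. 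This contradicts either the length bound on $\delta$ (if $\alpha=\delta$) or the minimality of $\gamma$ together with $A>1$ (if $\alpha=\gamma$). Hence $|\theta'|\leq\overline{\theta_0}$, so $\overline{\theta}_{\omega_0}(M_0)\leq\overline{\theta_0}$ and $M_0\in B_{\omega_0}(\overline{\theta_0})$, which yields $M\in Y(T,\pi/2,B_{\omega_0}(\overline{\theta_0}))$, as desired.

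The main point to watch is that $\textrm{sys}(M)$ need not equal $\rho_0\|g_tR_\theta e_2\|$ (the systole of $M$ could be realized by the image of a short non-vertical saddle connection of $M_0$), so one only has a one-sided inequality; the factor $A$ provided by Lemma \ref{l.AMY.6.7} absorbs this discrepancy and the strict inequality $\ell>\rho_0$ coming from $M_0\in X_0^*$ keeps the final contradiction clean. Beyond this, the argument is a direct application of the two preceding lemmas together with the definitions of $Y(T,\omega_0,B)$ and $B_{\omega_0}(\overline{\theta_0})$.
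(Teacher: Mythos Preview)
Your proof is correct and follows essentially the same approach as the paper's: the paper argues by contrapositive (assuming $M$ lies in neither set on the right and deducing $M\notin\mathcal{C}(A,\rho_0\exp(-T))$), while you argue directly via the dichotomy on $|\sin 2\theta|$, but both routes invoke Lemma~\ref{l.AMY.6.5} to bound the $M_0$-length of the preimages and Lemma~\ref{l.AMY.6.7} to rule out large angles. Your explicit handling of the one-sided inequality $\textrm{sys}(M)\leq\rho_0\|g_tR_\theta e_2\|$ is a nice touch that the paper leaves implicit in the ``In particular'' clause of Lemma~\ref{l.AMY.6.7}.
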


\begin{proof} Let $M\in Y(T,\pi/2,X_0^*)$. Our task is to show that if $M\notin Y(T,\omega_0,X_0^*)\cup Y(T,\pi/2,B_{\omega_0}(\overline{\theta_0}))$, then $M\notin\mathcal{C}(A,\rho_0\exp(-T))$. 

For this sake, we note that if $M\notin Y(T,\omega_0,X_0^*)\cup Y(T,\pi/2,B_{\omega_0}(\overline{\theta_0}))$, then $M=g_t R_{\theta} M_0$ with $\sin\omega_0\exp(-2T)<|\sin2\theta|<\exp(-2T)$, $\|g_t R_{\theta} e_2\|<\exp(-T)$, $\overline{\theta}_{\omega_0}(M_0)>\overline{\theta_0}$ and $T\geq T_0(A,\overline{\theta_0})$. It follows from Lemmas \ref{l.AMY.6.5} and \ref{l.AMY.6.7} that:
\begin{itemize} 
\item no saddle-connection of $M_0$ of length $>AK\rho_0$ gives rise to a saddle-connection of $M=g_t R_{\theta} M_0$ of length $\leq A\cdot\textrm{sys}(M)$; 
\item all non-vertical saddle connections of $M_0$ of length $\leq AK\rho_0$ make an angle $\geq \overline{\theta}_{\omega_0}(M_0)>\overline{\theta_0}$ with the vertical direction and, thus, they do not give rise to saddle-connections of $M=g_t R_{\theta} M_0$ of length $\leq A\cdot\textrm{sys}(M)$. 
\end{itemize} 
This means that all saddle-connections of $M=g_t R_{\theta} M_0$ of length $\leq A\cdot\textrm{sys}(M)$ are parallel to the length-minimizing ones, i.e., $M\notin\mathcal{C}(A,\rho_0\exp(-T))$. This proves the corollary. 
\end{proof} 

At this point, it is fairly easy to complete the proof of Proposition \ref{p.AMY2}. Indeed, the previous corollary says that 
\begin{equation}\label{e.p.AMY2}
m(Y(T,\frac{\pi}{2},X_0^*)\cap\mathcal{C}(A,\rho_0\exp(-T)))\leq m(Y(T,\omega_0,X_0^*)) + m(Y(T,\frac{\pi}{2},B_{\omega_0}(\overline{\theta_0})))
\end{equation}
for all $\omega_0>0$, $\overline{\theta_0}>0$ and $T\geq T_0(A,\overline{\theta_0})$. Also, the Proposition \ref{p.mY*} tells us that 
$$m(Y(T,\omega_0,X_0^*)) = \frac{1}{4}\exp(-2T)m_0(X_0^*)\int_{-\omega_0}^{\omega_0}\log\frac{1+\cos\omega}{1-\cos\omega} \cos\omega d\omega$$
and 
$$m(Y(T,\frac{\pi}{2},B_{\omega_0}(\overline{\theta_0}))) = \frac{1}{4}\exp(-2T)m_0(B_{\omega_0}(\overline{\theta_0}))\int_{-\pi/2}^{\pi/2}\log\frac{1+\cos\omega}{1-\cos\omega} \cos\omega d\omega$$ Therefore, given $\eta>0$, if we choose $\omega_0=\omega_0(\rho_0,\eta)>0$ small so that 
$$m_0(X_0^*)\int_{-\omega_0}^{\omega_0}\log\frac{1+\cos\omega}{1-\cos\omega} \cos\omega d\omega<\eta\rho_0^2$$
and $\overline{\theta_0}(\rho_0,\eta)>0$ small so that 
$$m_0(B_{\omega_0}(\overline{\theta_0}))\int_{-\pi/2}^{\pi/2}\log\frac{1+\cos\omega}{1-\cos\omega} \cos\omega d\omega<\eta\rho_0^2,$$ 
it follows from this discussion that 
$$m(Y(T,\omega_0,X_0^*))<\frac{\eta}{4}\rho^2\exp(-2T) \quad \textrm{ and } \quad m(Y(T,\frac{\pi}{2},B_{\omega_0}(\overline{\theta_0}))) <\frac{\eta}{4}\rho^2\exp(-2T)$$
for all $T\geq T_0(A,\overline{\theta_0}(\rho_0,\eta))=T_0(A,\rho_0,\eta)$. By plugging these inequalities into \eqref{e.p.AMY2}, we obtain  
$$m(Y(T,\frac{\pi}{2},X_0^*)\cap\mathcal{C}(A,\rho_0\exp(-T)))\leq\frac{\eta}{2}\rho_0^2\exp(-2T)$$
for all $T\geq T_0(A,\rho_0,\eta)$. This proves Proposition \ref{p.AMY2} (modulo Proposition \ref{p.mY*}). 

\subsection{Proof of Proposition \ref{p.mY*} via Rokhlin's disintegration theorem}\label{ss.p.mY*} Fix $\rho_0>0$ with $m(\{M\in\mathcal{C}:\textrm{sys}(M)>\rho_0\})>0$. Denote by $X_0^*$ the set of $M\in\mathcal{C}$ with $\textrm{sys}(M)=\rho_0$ such that all non-vertical saddle-connections of $M$ have length $>\rho_0$.

Let $X^*:=\bigcup_{\theta} R_{\theta}(X_0^*)$. Note that $R_{\theta}(X_0^*) = R_{\theta+\pi}(X_0^*)$ and $R_{\theta_0}(X_0^*)\cap R_{\theta_1}(X_0^*)=\emptyset$ for $-\frac{\pi}{2}<\theta_0<\theta_1\leq \frac{\pi}{2}$. In particular, 
$$X^*=\bigsqcup\limits_{-\frac{\pi}{2}<\theta\leq\frac{\pi}{2}} R_{\theta}(X_0^*)$$
By definition, $X^*$ and $X_0^*$ are submanifolds of $\mathcal{C}$ of codimensions one and two. 

Observe that 
$$e^{2t}\sin^2\theta + e^{-2t}\cos^2\theta=\|g_t R_{\theta}e_2\|<\|e_2\|=1$$ 
for $0<t<\log\cot|\theta|$ and $|\theta|<\pi/4$. Thus,  $g_t R_{\theta}(X_0^*)$ is disjoint from $\{M\in\mathcal{C}:\textrm{sys}(M)=\rho_0\}$ for such $t$ and $\theta$. This means that 
\begin{eqnarray*}
Y^* &:=& \{M\in\mathcal{C}: M=g_tR_{\theta}M_0, M_0\in X_0^*, |\theta|<\pi/4, \|g_tR_{\theta}e_2\|<1\} \\ 
&=& \bigsqcup\limits_{|\theta|<\frac{\pi}{4}}\bigsqcup\limits_{0<t<\log\cot|\theta|} g_tR_{\theta}(X_0^*)
\end{eqnarray*}
is a disjoint union of certain pieces of $SL(2,\mathbb{R})$-orbits. In particular, we can identify  
\begin{equation}\label{e.Y*coordinates}
Y^*\simeq \{(t,\theta,M)\in \mathbb{R}\times(-\pi/4,\pi/4)\times X_0^*: 0<t<\log\cot|\theta|\}
\end{equation}

We want to use this information to study the restriction of $m$ to $Y^*$. In this direction, the first step is the following lemma:

\begin{lemma}\label{l.mY*>0} The $m$-measure of $Y^*$ is positive. 
\end{lemma}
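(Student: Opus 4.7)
The plan is to establish $m(Y^*)>0$ by showing that $Y^*$ is an open subset of $\mathcal{C}$ and that it meets $\textrm{supp}(m)$; then the definition of support will yield the conclusion.

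First, I will verify that $Y^*$ is open. The subset $X_0^*$ is cut out inside $\mathcal{C}$ by two independent conditions---the level-set equation $\textrm{sys}(M)=\rho_0$ and the requirement that the realizing saddle connection be vertical---so it is a codimension-$2$ locally closed subvariety. The smooth map
$$\Psi:X_0^*\times\{(t,\theta):|\theta|<\pi/4,\ 0<t<\log\cot|\theta|\}\longrightarrow\mathcal{C},\qquad \Psi(M_0,t,\theta)=g_tR_\theta M_0,$$
has its two extra parameter directions transverse to $T_{M_0}X_0^*$ at each $M_0\in X_0^*$: the Teichm\"uller direction $\partial_t$ exits the level set $\{\textrm{sys}=\rho_0\}$ transversely (the vertical length-minimizing saddle connection is shortened at nonzero rate), while $\partial_\theta$ tilts the realizing direction away from vertical without changing the systole. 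Since the domain of $\Psi$ has the same dimension as $\mathcal{C}$, $\Psi$ is a local diffeomorphism and its image $Y^*$ is therefore open.

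Next, I will exhibit a point of $X_0^*\cap\textrm{supp}(m)$. Pick any $M_*\in A\cap\textrm{supp}(m)$, where $A=\{M:\textrm{sys}(M)>\rho_0\}$; this intersection is non-empty since $A$ is open and $m(A)>0$. Choose any length-minimizing saddle connection $v$ of $M_*$, let $\theta$ be the angle such that $R_\theta$ rotates $v$ to the vertical direction, and set $t=\log(L/\rho_0)>0$ where $L=\|v\|=\textrm{sys}(M_*)$. Then the image of $v$ in $M_0:=g_tR_\theta M_*$ is vertical of length $Le^{-t}=\rho_0$. For any other saddle connection $w$ of $M_*$, writing $\alpha$ for the angle between $R_\theta(\mathrm{hol}\,w)$ and the vertical direction, a direct calculation yields
$$\|g_tR_\theta w\|^2=\|w\|^2 e^{-2t}\bigl(1+(e^{4t}-1)\sin^2\alpha\bigr)\geq L^2e^{-2t}=\rho_0^2,$$
with equality only when $\|w\|=L$ and $\sin\alpha=0$, i.e.\ when $w$ is parallel to $v$ and of the same length. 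Hence every non-vertical saddle connection of $M_0$ has length strictly greater than $\rho_0$, giving $M_0\in X_0^*$; the $SL(2,\mathbb{R})$-invariance of $\textrm{supp}(m)$ ensures $M_0\in\textrm{supp}(m)$ as well.

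To conclude, I take $\theta=0$ in the definition of $Y^*$ (so that $\log\cot|\theta|=+\infty$ and every $t>0$ is admissible): then $g_sM_0\in Y^*$ for every $s>0$, and $g_sM_0\in\textrm{supp}(m)$ by $g_s$-invariance, so $Y^*\cap\textrm{supp}(m)$ is non-empty. Combined with the openness of $Y^*$, this gives $m(Y^*)>0$. The main obstacle in carrying out this scheme is the local-diffeomorphism claim for $\Psi$, which I would handle by restricting to the (still open) subset of $X_0^*$ where the length-minimizing saddle connection is the unique vertical one; this subset still contains the $M_0$ produced above, since the construction uses only that $v$ realizes the systole of $M_*$, not any genericity of $M_*$.
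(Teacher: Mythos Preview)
Your construction of $M_0\in X_0^*\cap\textrm{supp}(m)$ matches the paper's, and the overall scheme is sound, but the openness argument has a gap. You verify that $\partial_t,\partial_\theta$ span a complement to $T_{M_0}X_0^*$ only at the boundary point $(t,\theta)=(0,0)$ of the parameter domain; the local-diffeomorphism property of $\Psi$ must however be checked at \emph{interior} points $(M_0,t_0,\theta_0)$, where $D\Psi(\partial_t)$ is the Teichm\"uller direction at $M=g_{t_0}R_{\theta_0}M_0$ and $D\Psi(T_{M_0}X_0^*)=(g_{t_0}R_{\theta_0})_*T_{M_0}X_0^*$, and transversality of these does not follow from the boundary case. (It does in fact hold---a short Lie-algebra computation gives a Jacobian proportional to $\cos 2\theta_0$---but you have not carried this out.) The repair is cheap: in your last paragraph take $s>0$ small; continuity of the (smoothly extended) differential from $(M_0,0,0)$ shows $\Psi$ is a local diffeomorphism near $(M_0,s,0)$, so its image there is an open subset of $Y^*$ containing $g_sM_0\in\textrm{supp}(m)$, and this already gives $m(Y^*)>0$.

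The paper avoids the openness question. It writes, via Fubini and $SL(2,\mathbb{R})$-invariance, $m(Y^*)=\int_{\mathcal{C}}\mu(\{\gamma:\gamma x\in Y^*\})\,dm(x)$ for a suitable probability $\mu$ on $SL(2,\mathbb{R})$, and shows the integrand is positive whenever $\textrm{sys}(x)>\rho_0$: after bringing $x$ into $X_0^*$ via some $\gamma_0$ (your construction), the elements $g_tR_\theta n_u\gamma_0$ with $|u|$ small all send $x$ into $Y^*$, and the set of such matrices is open in $SL(2,\mathbb{R})$ because $(t,\theta,u)\mapsto g_tR_\theta n_u$ is a diffeomorphism onto an open set (Lemma~\ref{l.p.3.1}). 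This argument stays within a single $SL(2,\mathbb{R})$-orbit and needs no information about $X_0^*$ or $Y^*$ as submanifolds of $\mathcal{C}$.
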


The proof of this lemma goes along the following lines. By Fubini's theorem and the $SL(2,\mathbb{R})$-invariance of $m$, we have  
$$m(Y^*) = \int_{\mathcal{C}} \mu(\{\gamma\in SL(2,\mathbb{R}): \gamma x\in Y^*\}) dm(x)$$
where $\mu$ is any Borel probability measure on $SL(2,\mathbb{R})$. 

Since $m(\{x\in\mathcal{C}:\textrm{sys}(x)>\rho_0\})>0$, this reduces our task to show that, given $x\in\mathcal{C}$ with $\textrm{sys}(x)>\rho_0$, the set of $\gamma\in SL(2,\mathbb{R})$ such that $\gamma x\in Y^*$ has non-empty interior (and hence positive Haar measure). 

Let $\omega$ be an angle such that $R_{\omega} x$ has a length-minimizing saddle-connection in the vertical direction. By definition, the quantity $s=\log(\textrm{sys}(x)/\rho_0)>0$ has the property that $g_s R_{\omega} x\in X_0^*$, i.e., $\gamma_0 x\in X_0^*$ where $\gamma_0:=g_s R_{\omega}\in SL(2,\mathbb{R})$. 

Observe that $n_u$ fixes the basis vector $e_2=(0,1)\in\mathbb{R}^2$, $n_u\gamma_0 x\in X_0^*$ whenever $|u|$ is sufficiently small, say $|u|<u_0$. Thus, $g_t R_{\theta} n_u \gamma_0 x\in Y^*$ for $|u|$ small, $|\theta|<\pi/4$ and $0<t<\log\cot|\theta|$. 

Therefore, our proof of lemma \ref{l.mY*>0} is reduced to prove that the set of $\gamma=g_t R_{\theta} n_u\gamma_0\in SL(2,\mathbb{R})$ with $|u|<u_0$, $|\theta|<\pi/4$ and $0<t<\log\cot|\theta|$ has non-empty interior in $SL(2,\mathbb{R})$. As it turns out, this is an immediate consequence of the following elementary fact about $SL(2,\mathbb{R})$: 

\begin{lemma}\label{l.p.3.1} The map $(t,\theta,u)\mapsto g_t R_{\theta} n_u$ is a diffeomorphism from $\mathbb{R}\times (-\frac{\pi}{4}, \frac{\pi}{4})\times \mathbb{R}$ to 
$$W:=\left\{\left(\begin{array}{cc} a & b \\ c & d \end{array}\right)\in SL(2,\mathbb{R}): d>0, |bd|<1/2\right\}$$
\end{lemma}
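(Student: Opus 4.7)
The plan is to invert $\Phi(t,\theta,u):=g_t R_\theta n_u$ explicitly and check that both the map and its inverse are smooth. First, a direct multiplication yields
$$g_t R_\theta n_u=\begin{pmatrix} e^t(\cos\theta-u\sin\theta) & -e^t\sin\theta \\ e^{-t}(\sin\theta+u\cos\theta) & e^{-t}\cos\theta\end{pmatrix},$$
so $d=e^{-t}\cos\theta>0$ (since $|\theta|<\pi/4$ makes $\cos\theta>0$), and
$$bd=-\sin\theta\cos\theta=-\tfrac12\sin(2\theta),$$
whence $|bd|<1/2$. Thus $\Phi$ sends $\mathbb R\times(-\pi/4,\pi/4)\times\mathbb R$ smoothly into $W$.

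Next I would construct a smooth inverse $\Psi:W\to\mathbb R\times(-\pi/4,\pi/4)\times\mathbb R$ by reading $(\theta,t,u)$ off from $(b,d)$ and then from $c$. Given $M=\begin{pmatrix}a&b\\c&d\end{pmatrix}\in W$, the condition $|bd|<1/2$ gives $|{-2bd}|<1$, so there is a unique $\theta\in(-\pi/4,\pi/4)$ with $\sin(2\theta)=-2bd$; set $\theta:=\tfrac12\arcsin(-2bd)$. Then $\cos\theta>0$ and $d>0$, so put $t:=\log(\cos\theta/d)$. Finally define $u:=(c e^t-\sin\theta)/\cos\theta$, which is exactly what forces the $(2,1)$-entry of $\Phi(t,\theta,u)$ to equal $c$. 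All three formulas are smooth on $W$ because $\arcsin$ is smooth on the open interval $(-1,1)$ and $d,\cos\theta$ stay strictly positive.

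Finally I would verify $\Phi\circ\Psi=\mathrm{id}_W$ and $\Psi\circ\Phi=\mathrm{id}$. The equality $\Psi\circ\Phi=\mathrm{id}$ is a line-by-line substitution using $\arcsin\sin=\mathrm{id}$ on $(-\pi/2,\pi/2)$. For $\Phi\circ\Psi=\mathrm{id}_W$, the $(2,2)$, $(1,2)$, and $(2,1)$ entries of $\Phi(\Psi(M))$ reproduce $d$, $b$, $c$ by the very definitions of $t$, $\theta$, $u$; the $(1,1)$ entry matches because both $M$ and $\Phi(\Psi(M))$ are in $SL(2,\mathbb R)$ with the same $b,c,d$ and $d>0$, forcing the $(1,1)$-entry to equal $(1+bc)/d=a$ in either case. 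Since $\Phi$ and $\Psi$ are both smooth, $\Phi$ is the claimed diffeomorphism.

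The main obstacle is purely algebraic bookkeeping rather than a conceptual one; the genuine idea is that $(bd,d)$ already determines $(\theta,t)$ via $\arcsin$ and $\log$, and the open conditions $d>0$, $|bd|<1/2$ cutting out $W$ are exactly those needed to keep these two inversions well defined and smooth on all of $W$.
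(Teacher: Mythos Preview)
Your proof is correct and follows essentially the same approach as the paper: both recover $(t,\theta)$ from the second column $(b,d)=g_tR_\theta e_2$ (using that $n_u$ fixes $e_2$) and then read off $u$ from the remaining entry. The only difference is cosmetic---the paper phrases the inversion of $(t,\theta)\mapsto(b,d)$ geometrically via the hyperbola/unit-circle picture, whereas you write down the explicit formulas $\theta=\tfrac12\arcsin(-2bd)$ and $t=\log(\cos\theta/d)$.
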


\begin{proof} The matrix $n_u$ fixes $e_2$ and the vector $g_t R_{\theta} e_2 = (b,d)$ satisfies $d>0$ and $|bd|<1/2$. Conversely, given $(b,d)\in\mathbb{R}^2$ with $d>0$ and $|bd|<1/2$, there exists an unique $(t,\theta)\in\mathbb{R}\times(-\frac{\pi}{4}, \frac{\pi}{4})$ depending smoothly on $(b,d)$ such that $(b,d)=g_t R_{\theta} e_2$. In fact, this happens because $g_t$ moves a non-zero vector $(x_0,y_0)$ along the hyperbola $\{(x,y)\in\mathbb{R}^2: xy=x_0y_0\}$ and $R_{\theta}$, $|\theta|<\pi/4$, moves $e_2$ along the arc of unit circle $\{(-\sin\theta,\cos\theta):|\theta|<\pi/4\}$ located between the hyperbolas $\{(x,y)\in\mathbb{R}^2: xy=-1/2\}$ and $\{(x,y)\in\mathbb{R}^2: xy=1/2\}$, see Figure \ref{f.AMY.p.3.1} below.

\begin{figure}[h!]
\begin{center}
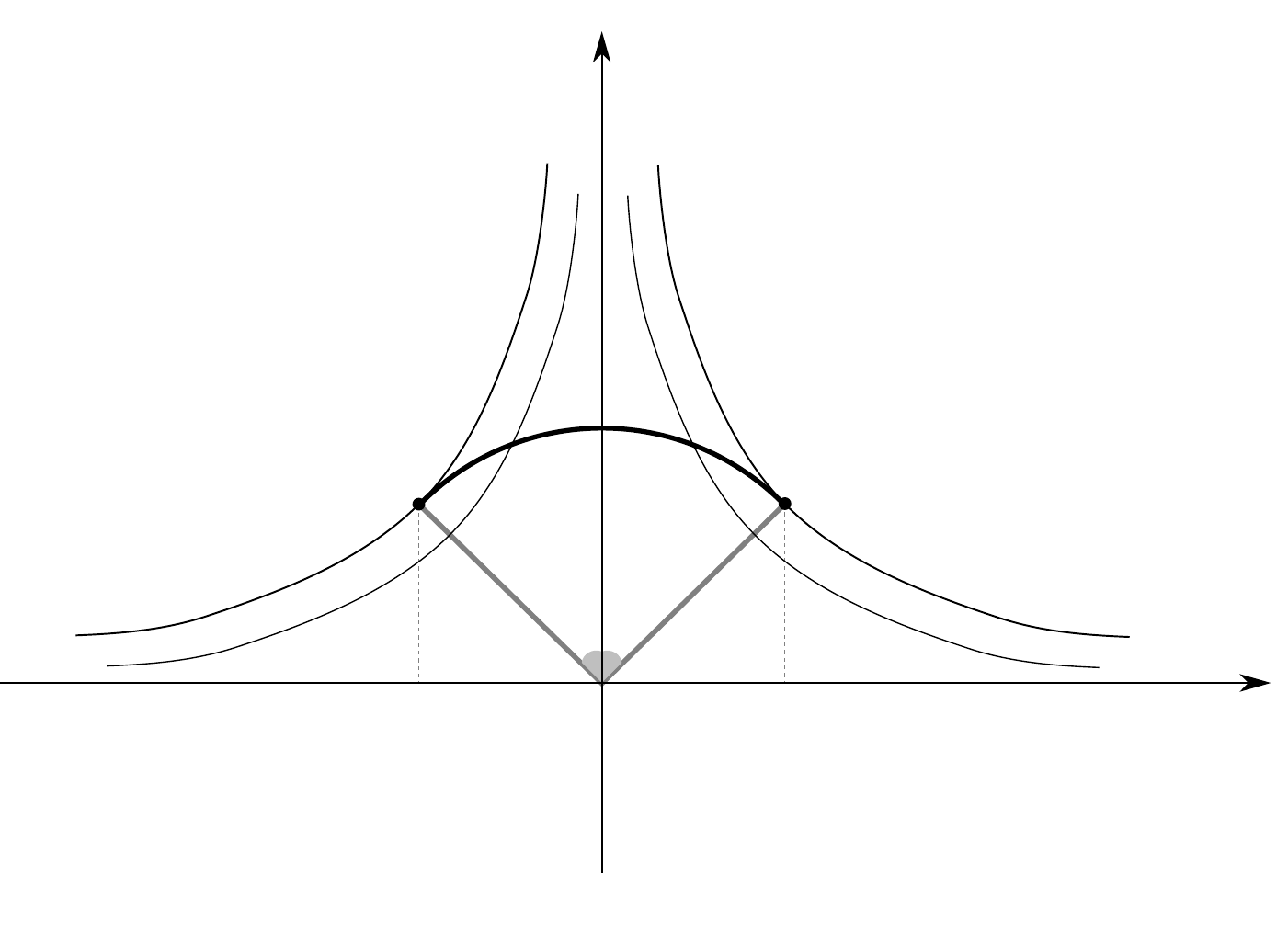
\end{center}
\caption{The region $\{(b,d)\in\mathbb{R}^2: d>0 \textrm{ and }|bd|<1/2\}$.}\label{f.AMY.p.3.1}
\end{figure}

This proves the lemma.  
\end{proof}

The second step is the study of $m|_{Y^*}$ via Rokhlin's disintegration theorem:

\begin{lemma}\label{l.p.4.1} There exists a finite measure $m_0$ on $X_0^*$ such that 
$$m|_{Y^*} = dt\times cos(2\theta)d\theta\times m_0$$
under the identification $Y^*\simeq \{(t,\theta,M)\in \mathbb{R}\times(-\pi/4,\pi/4)\times X_0^*: 0<t<\log\cot|\theta|\}$ in \eqref{e.Y*coordinates}.
\end{lemma}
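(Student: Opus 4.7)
My strategy is to combine Rokhlin's disintegration theorem with the $SL(2,\mathbb{R})$-invariance of $m$ and the explicit description of Haar measure on $SL(2,\mathbb{R})$ in the coordinates of Lemma \ref{l.p.3.1}. First, I would pull back a Haar measure on $SL(2,\mathbb{R})$ through the diffeomorphism $(t,\theta,u)\mapsto g_tR_\theta n_u$. Writing out the entries, one finds $b=-e^t\sin\theta$, $d=e^{-t}\cos\theta$, $c=e^{-t}(\sin\theta+u\cos\theta)$; a direct Jacobian calculation gives $|\partial(b,c,d)/\partial(t,\theta,u)|=e^{-t}\cos\theta\cdot\cos(2\theta)$ for $|\theta|<\pi/4$. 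Since a standard expression for Haar measure in the $(b,c,d)$-coordinates on $SL(2,\mathbb{R})$ is $|d|^{-1}\,db\,dc\,dd$, the pullback is simply $\cos(2\theta)\,dt\,d\theta\,du$ on the chart $W$ from Lemma \ref{l.p.3.1}.

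Next I would install a local product structure for $m$ near an arbitrary point $M_0\in X_0^*$. Since $n_u$ fixes $e_2$ pointwise (hence preserves lengths of vertical vectors), there is $\varepsilon>0$ such that $n_uM_0'\in X_0^*$ whenever $M_0'$ lies in a small $X_0^*$-neighborhood of $M_0$ and $|u|<\varepsilon$. Choose a small codimension-one (in $X_0^*$) piece $X_1^*\subset X_0^*$ through $M_0$ transverse to the $n_u$-orbits, so that $(u,M_1)\mapsto n_uM_1$ is a local diffeomorphism from $(-\varepsilon,\varepsilon)\times X_1^*$ into $X_0^*$. The composition $(t,\theta,u,M_1)\mapsto g_tR_\theta n_uM_1$ is then a local diffeomorphism from a neighborhood of $0$ in $\mathbb{R}^3\times X_1^*$ onto an open neighborhood $V$ of $M_0$ in $\mathcal{C}$. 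Rokhlin-disintegrating $m|_V$ along the partition by $SL(2,\mathbb{R})$-orbits and invoking the $SL(2,\mathbb{R})$-invariance of $m$, one sees that the conditional measure on each orbit piece is forced to be a scalar multiple of the pushforward of Haar (the action being locally free off an $m$-null set, so that the conditional is a translation-invariant measure on an open subset of $SL(2,\mathbb{R})$). By the previous step this reads $\cos(2\theta)\,dt\,d\theta\,du$, giving the local product form
$$m|_V=\cos(2\theta)\,dt\,d\theta\,du\otimes dm_1(M_1)$$
for some transverse measure $m_1$ on $X_1^*$.

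Finally, I would reorganize $(u,M_1)$ into the single variable $M_0=n_uM_1\in X_0^*$, defining $m_0$ locally as the pushforward of $du\otimes dm_1$ under $(u,M_1)\mapsto n_uM_1$. Under the global identification \eqref{e.Y*coordinates}, this rewrites $m|_{Y^*\cap V}$ as $dt\times\cos(2\theta)d\theta\times m_0$; uniqueness of the Rokhlin disintegration ensures that the locally defined $m_0$'s patch into a well-defined Borel measure on all of $X_0^*$. Finiteness of $m_0$ then follows automatically since $m$ is a probability measure and the integral $\int_\Delta\cos(2\theta)\,dt\,d\theta$ over $\Delta=\{(t,\theta):|\theta|<\pi/4,\,0<t<\log\cot|\theta|\}$ is positive and finite (a short integration by parts with $u=\log\cot|\theta|$, $dv=\cos(2\theta)d\theta$ in fact shows it equals $\pi/2$, which is consistent with the formula for $m(Y(T,\omega_0,B))$ stated in Proposition \ref{p.mY*} when $T=0$ and $\omega_0=\pi/2$). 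The main technical obstacle is the rigorous derivation of the local product form of $m$: one must use that the $SL(2,\mathbb{R})$-action is locally free off an $m$-null orbifold locus, and work with the auxiliary transversal $X_1^*$ because $X_0^*$ is $n_u$-saturated rather than transverse to the full $SL(2,\mathbb{R})$-action.
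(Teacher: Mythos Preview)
Your proposal is correct and follows essentially the same architecture as the paper: choose a local transversal $\Sigma\subset X_0^*$ to the $n_u$-direction, parametrize a neighborhood in $\mathcal{C}$ by $(g,M)\mapsto gM$ with $g=g_tR_\theta n_u$, invoke an $SL(2,\mathbb{R})$-invariant version of Rokhlin disintegration to get a product $\mu|_U\times\nu$, then push $du\times\nu$ forward to define $m_0$ locally and patch. The paper quotes an elementary variant of Rokhlin's theorem (its Proposition~2.6) for the invariance step; your formulation via conditional measures being Haar is equivalent.

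The one substantive difference is the computation of the density $\cos(2\theta)$. You pull back the explicit Haar formula $|d|^{-1}\,db\,dc\,dd$ through the entries of $g_tR_\theta n_u$ and read off $\cos(2\theta)\,dt\,d\theta\,du$ directly. The paper instead argues abstractly that the density depends only on $\theta$ (by bi-invariance of Haar), then determines $\gamma(\theta)/\gamma(0)$ by computing the Jacobian of the change of variables induced by left multiplication by $R_{\theta_0}$ at the identity, obtaining $J_{\theta_0}(0,0,0)=1/\cos(2\theta_0)$ after differentiating the matrix identity $R_{\theta_0}g_tR_\theta n_u=g_TR_\Theta n_U$. Your route is shorter and more elementary, at the cost of invoking the specific form of Haar in $(b,c,d)$-coordinates; the paper's route is self-contained but longer. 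Both are valid.
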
  

\begin{proof} We define the measure $m_0$ on $X_0^*$ as follows. Since $n_u$ fixes $e_2=(0,1)\in\mathbb{R}^2$, the vector field $\mathfrak{n}$ generating $n_u$ is tangent to $X_0^*$ at any of its points. 

Given a small smooth codimension-one submanifold $\Sigma$ of $X_0^*$ which is transverse to $\mathfrak{n}$, we can find $u_0>0$ such that $n_u(\Sigma)\subset X_0^*$ for all $|u|<u_0$ and $n_u(\Sigma)\cap \Sigma=\emptyset$ for all $0<u<2u_0$. In this setting, the map 
$$\Psi_0(u,M):=n_u M$$
is a smooth diffeomorphism from $(-u_0,u_0)\times \Sigma$ onto an open subset $B\subset X_0^*$. Furthermore, $X_0^*$ has a locally finite covering by such subsets $B$, so that it suffices to define the measure $m_0$ on $X_0^*$ by its restrictions to such subsets $B$. 

For this sake, given $B=\Psi_0((-u_0,u_0)\times\Sigma)$, consider 
$$U:=\{g_tR_{\theta}n_u: |u|<u_0, |\theta|<\pi/4, 0<t<\log\cot|\theta|\}\subset W$$
where $W=\left\{\left(\begin{array}{cc} a & b \\ c & d \end{array}\right)\in SL(2,\mathbb{R}): d>0, |bd|<1/2\right\}$ is the set from Lemma \ref{l.p.3.1}. The map 
$$\Psi(g,M)=gM$$
is a smooth diffeomorphism from $U\times \Sigma$ onto an open subset $V\subset\mathcal{C}$.

If $m(V)=0$, then $U$ is disjoint from the support of $m_0$.

If $m(V)>0$, we note that the Borel probability measure $\Psi^*(m|_V)$ on $U\times\Sigma$ is \emph{invariant}, that is, for any measurable subset $Z\subset U\times\Sigma$ and for any $h\in SL(2,\mathbb{R})$ such that $hZ:=\{(hg,M): (g,M)\in Z\}\subset U\times\Sigma$, we have $\Psi^*(m|_V)(hZ) = \Psi^*(m|_V)(Z)$. Indeed, this is a direct consequence of the $SL(2,\mathbb{R})$-invariance of $m$. 

In this context, an elementary variant\footnote{The one-page proof of this statement can be found in Proposition 2.6 of \cite{AMY}.} of Rokhlin's disintegration theorem says that 
$$\Psi^*(m|_V)=\mu|_{U}\times\nu$$
where $\nu=p_*(\Psi^*(m|_V))$, $p:U\times\Sigma\to \Sigma$ is the natural projection, and $\mu|_{U}$ is the normalized restriction to $U$ of the Haar measure $\mu$ of $SL(2,\mathbb{R})$.

In terms of the diffeomorphism $\Phi(t,\theta,u):= g_t R_{\theta} n_u$ from Lemma \ref{l.p.3.1}, the restriction $\mu|_{W}$ of the Haar measure to $W$ has the form $\gamma(t,\theta,u) dt d\theta du$ for some positive function $\gamma$ on $\mathbb{R}\times(-\frac{\pi}{4}, \frac{\pi}{4})\times\mathbb{R}$. Since the Haar measure $\mu$ of $SL(2,\mathbb{R})$ is left-invariant and right-invariant, $\gamma(t,\theta,u)=\gamma(\theta)$. Therefore, $\Phi_*(dt\times\gamma(\theta) d\theta\times du) = \mu|_{W}$.

Since $U\subset W$, it follows from this discussion that if we define
$$m_0|_{U}:=(\Phi_0)_*(du\times \nu),$$
then the corresponding finite measure $m_0$ on $X_0^*$ satisfies 
$$m|_{Y^*} = dt\times\gamma(\theta) d\theta\times m_0$$

In other words, it remains only to show that $\gamma(\theta)=\cos(2\theta)$ to complete the proof of the lemma. For this sake, fix $\theta_0\in(-\pi/4,\pi/4)$ and consider tiny open sets around the origin $(t,\theta,u)=(0,0,0)$ and their respective images under $R_{\theta_0}$. In terms of matrices, this amounts to consider the equation:
\begin{equation}\label{e.Haar-density}
R_{\theta_0}g_t R_{\theta} n_u = g_T R_{\Theta} n_U
\end{equation}
for $t,\theta, u$ close to $0$, and $T=T_{\theta_0}(t,\theta,u)$, $\Theta=\Theta_{\theta_0}(t,\theta,u)$, $U=U_{\theta_0}(t,\theta,u)$. For sake of simplicity, since $\theta_0$ is fixed, we will omit the dependence of the functions $T, \Theta, U$ on $\theta_0$ in what follows. From the $R_{\theta_0}$-invariance of Haar measure and the change of variables formula. one has  
\begin{equation}\label{e.Jacobian-formula-I} 
\gamma(\theta_0)=\gamma(0)/J_{\theta_0}(0,0,0)
\end{equation} 
where $J_{\theta_0}(0,0,0)$ is the determinant of the Jacobian matrix $D(T,\Theta,U)=\frac{\partial(T,\Theta,U)}{\partial(t,\theta,u)}$ at the origin $(0,0,0)$. So, our task is to show that $J_{\theta_0}(0,0,0)=1/\cos2\theta_0$. Keeping this goal in mind, note that $t=0$ implies that $T=0$, $\Theta=\theta+\theta_0$ and $U=u$ in \eqref{e.Haar-density}. Thus, at the origin $(0,0,0)$, one has 
$$\frac{\partial T}{\partial \theta}=\frac{\partial T}{\partial u}=0, \quad \frac{\partial \Theta}{\partial \theta}=1, \quad \frac{\partial \Theta}{\partial u}=0, \quad \frac{\partial U}{\partial \theta}=0, \quad \textrm{ and } \quad \frac{\partial T}{\partial u}=1$$
In particular, $\frac{\partial(T,\Theta,U)}{\partial(t,\theta,u)}(0,0,0)=\left(\begin{array}{ccc}\frac{\partial T}{\partial t}(0,0,0) & 0 & 0 \\ \frac{\partial \Theta}{\partial t}(0,0,0) & 1 & 0 \\ \frac{\partial U}{\partial t}(0,0,0) & 0 & 1\end{array}\right)$ and, \emph{a fortiori}, 
\begin{equation}\label{e.Jacobian-formula-II} 
J_{\theta_0}(0,0,0)=\frac{\partial T}{\partial t}(0,0,0).
\end{equation} 
In order to compute $\frac{\partial T}{\partial t}(0,0,0)$, we apply both matrices in \eqref{e.Haar-density} to the vertical basis vector $e_2$ to get the equations 
\begin{equation}\label{e.eTsin}
-e^T \sin\Theta = -e^t \cos\theta_0 \sin\theta - e^{-t} \sin\theta_0 \cos\theta
\end{equation} 
and 
\begin{equation}\label{e.e-Tcos}
e^{-T} \cos\Theta = -e^t \sin\theta_0\sin\theta+e^{-t}\cos\theta_0\cos\theta
\end{equation}

By taking the partial derivative with respect to $t$ in \eqref{e.e-Tcos}, we get 
$$-\frac{\partial T}{\partial t}e^{-T}\cos\Theta - e^{-T}\sin\Theta \frac{\partial \Theta}{\partial t} = -e^t\sin\theta_0\sin\theta-e^{-t}\cos\theta_0\cos\theta$$
Because $(T(0,0,0),\Theta(0,0,0), U(0,0,0))=(0,\theta_0,0)$, we obtain that 
\begin{equation}\label{e.dTdTheta}
-\frac{\partial T}{\partial t}(0,0,0)\cos\theta_0 - \sin\theta_0 \frac{\partial \Theta}{\partial t}(0,0,0) = -\cos\theta_0
\end{equation}

On the other hand, by multiplying together \eqref{e.eTsin} and \eqref{e.e-Tcos}, we get the relation:
$$\frac{1}{2}\sin 2\Theta=\sin\Theta \cos\Theta = 
 (e^t \cos\theta_0 \sin\theta + e^{-t} \sin\theta_0 \cos\theta) (-e^t \sin\theta_0\sin\theta+e^{-t}\cos\theta_0\cos\theta)$$
By taking the partial derivative with respect to $t$, we deduce that 
\begin{eqnarray*}
\cos 2\Theta \frac{\partial \Theta}{\partial t} &=& (e^t \cos\theta_0 \sin\theta - e^{-t} \sin\theta_0 \cos\theta) (-e^t \sin\theta_0\sin\theta+e^{-t}\cos\theta_0\cos\theta) \\ 
&+& (e^t \cos\theta_0 \sin\theta + e^{-t} \sin\theta_0 \cos\theta) (-e^t \sin\theta_0\sin\theta-e^{-t}\cos\theta_0\cos\theta)
\end{eqnarray*}
Since $(T(0,0,0),\Theta(0,0,0), U(0,0,0))=(0,\theta_0,0)$, we have $\cos 2\theta_0 \frac{\partial \Theta}{\partial t} (0,0,0)= -2\sin\theta_0\cos\theta_0$, i.e.,  
\begin{equation}\label{e.partialThetapartialt}
\frac{\partial \Theta}{\partial t}(0,0,0)=-\tan 2\theta_0
\end{equation}

By combining \eqref{e.dTdTheta} and \eqref{e.partialThetapartialt}, we conclude that 
\begin{eqnarray*}
\frac{\partial T}{\partial t}(0,0,0)&=&1+\tan\theta_0\tan2\theta_0 = 1+ \frac{\sin\theta_0}{\cos\theta_0}\frac{\sin2\theta_0}{\cos2\theta_0}  = \frac{1}{\cos 2\theta_0} \left(\cos 2\theta_0 + \frac{\sin\theta_0}{\cos\theta_0}\sin2\theta_0\right) \\ 
&=& \frac{1}{\cos 2\theta_0}(\cos^2\theta_0-\sin^2\theta_0+2\sin^2\theta_0)  
= \frac{1}{\cos 2\theta_0}(\cos^2\theta_0+\sin^2\theta_0) \\ 
&=& \frac{1}{\cos 2\theta_0}
\end{eqnarray*}

By \eqref{e.Jacobian-formula-I} and \eqref{e.Jacobian-formula-II}, this means that $\gamma(\theta_0)=\frac{1}{\cos2\theta_0}$. The proof of the lemma is now complete. 
\end{proof}

\begin{remark} We computed all entries of the Jacobian matrix $\frac{\partial(T,\Theta,U)}{\partial(t,\theta,u)}$ at the origin except for $\frac{\partial U}{\partial t}(0,0,0)$. Even though this particular entry plays no role in our calculation of $J_{\theta_0}(0,0,0)$ above, the curious reader is invited to compute this entry along the following lines. By applying both matrices in \eqref{e.Haar-density} to the horizontal basis vector $e_1=(1,0)$, one gets two relations:
$$e^T\cos\Theta-Ue^T\sin\Theta=e^t\cos\theta_0\cos\theta-e^{-t}\sin\theta_0\sin\theta -u(e^t\cos\theta_0\sin\theta+e^{-t}\sin\theta_0\cos\theta)$$
and
$$e^{-T}\sin\Theta+Ue^{-T}\cos\Theta=e^t\sin\theta_0\cos\theta + e^{-t}\cos\theta_0\sin\theta -u(e^t\sin\theta_0\sin\theta-e^{-t}\cos\theta_0\cos\theta)$$
By taking the partial derivative of the second relation above with respect to $t$ at the origin and by plugging the values $\frac{\partial \Theta}{\partial t}(0,0,0)=-\tan2\theta_0$ and $\frac{\partial T}{\partial t}(0,0,0)=1/\cos2\theta_0$ just computed, one has 
$$-\frac{\sin\theta_0}{\cos2\theta_0}-\cos\theta_0\tan2\theta_0+\cos\theta_0\frac{\partial U}{\partial t}(0,0,0)=\sin\theta_0,$$
i.e.,
\begin{eqnarray*}
\frac{\partial U}{\partial t}(0,0,0)&=&\tan2\theta_0+\frac{\sin\theta_0}{\cos\theta_0}\left(1+\frac{1}{\cos2\theta_0}\right) 
= \tan2\theta_0+\frac{\sin\theta_0}{\cos\theta_0}
\left(\frac{\cos^2\theta_0-\sin^2\theta_0+1}{\cos2\theta_0}\right) \\
&=& \tan2\theta_0+\frac{\sin\theta_0}{\cos\theta_0}
\left(\frac{2\cos^2\theta_0}{\cos2\theta_0}\right) = \tan2\theta_0+\frac{2\sin\theta_0\cos\theta_0}{\cos2\theta_0} \\
&=& 2 \tan 2\theta_0
\end{eqnarray*}
\end{remark}

At this stage, the proof of Proposition \ref{p.mY*} is almost complete: thanks to Lemmas \ref{l.mY*>0} and \ref{l.p.4.1}, we just need to show the following result. 

\begin{lemma}\label{l.c.4.2} For any $T>0$, $\omega_0>0$ and $B$ a Borel subset of $X_0^*$, the set 
$$Y(T,\omega_0, B)=\{g_tR_{\theta} M: |\sin2\theta|<\exp(-2T)\sin\omega_0, \|g_tR_{\theta}e_2\|<\exp(-T), M\in B\}$$
has $m$-measure 
$$m(Y(T,\omega_0,B)) = \frac{1}{4}\exp(-2T)m_0(B)\int_{-\omega_0}^{\omega_0}\log\frac{1+\cos\omega}{1-\cos\omega} \cos\omega d\omega$$
\end{lemma}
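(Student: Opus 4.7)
The plan is to realize $Y(T,\omega_0, B)$ inside $Y^*$ and then apply the product disintegration $m|_{Y^*} = dt \times \cos(2\theta)\, d\theta \times m_0$ from Lemma \ref{l.p.4.1}. The computation will reduce to an elementary integral after a natural change of variables.

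First I would verify the inclusion $Y(T,\omega_0, B) \subset Y^*$: since $T>0$ and $\omega_0\leq \pi/2$, the condition $|\sin 2\theta| < e^{-2T}\sin\omega_0 < 1$ confines $\theta$ to the admissible range $|\theta|<\pi/4$ of the parametrization \eqref{e.Y*coordinates}, and $\|g_t R_\theta e_2\| < e^{-T} < 1$ yields the remaining constraint defining $Y^*$ (in particular forcing $t>0$).

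Second, for each fixed $\theta$ with $|\sin 2\theta| < e^{-2T}\sin\omega_0$, I would determine the length of the $t$-interval on which $\|g_t R_\theta e_2\|^2 = e^{2t}\sin^2\theta + e^{-2t}\cos^2\theta < e^{-2T}$. Writing $x=e^{2t}$, this becomes the quadratic inequality $x^2 \sin^2\theta - e^{-2T} x + \cos^2\theta < 0$, whose roots (already encountered in the proofs of Lemmas \ref{l.AMY.6.5} and \ref{l.AMY.6.7}) are
\begin{equation*}
x_\pm = \frac{e^{-2T}(1 \pm \cos\omega)}{2\sin^2\theta}
\end{equation*}
under the substitution $\sin\omega := e^{2T}\sin 2\theta$, with $\cos\omega > 0$. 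Hence the admissible $t$-interval has length $\tfrac{1}{2}\log(x_+/x_-) = \tfrac{1}{2}\log\tfrac{1+\cos\omega}{1-\cos\omega}$, independently of $T$ and of $\theta$ once expressed in the variable $\omega$.

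Applying the disintegration of Lemma \ref{l.p.4.1} then gives
\begin{equation*}
m(Y(T,\omega_0, B)) = m_0(B) \int_{|\sin 2\theta| < e^{-2T}\sin\omega_0} \frac{1}{2}\log\frac{1+\cos\omega}{1-\cos\omega}\, \cos(2\theta)\, d\theta.
\end{equation*}
The natural change of variables $\sin 2\theta = e^{-2T}\sin\omega$ yields $\cos(2\theta)\, d\theta = \tfrac{1}{2}e^{-2T}\cos\omega\, d\omega$ with $\omega\in(-\omega_0,\omega_0)$, producing the prefactor $\tfrac{1}{4}e^{-2T}$ in front of the claimed integral.

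None of these steps is a genuine obstacle: the inclusion is immediate, the quadratic analysis is the one already carried out in Lemmas \ref{l.AMY.6.5} and \ref{l.AMY.6.7}, and the change of variables $\sin 2\theta = e^{-2T}\sin\omega$ is the one naturally suggested by the target formula. In effect, Lemma \ref{l.c.4.2} is the concrete packaging of the disintegration provided by Lemma \ref{l.p.4.1}, with the $\tfrac{1}{4}e^{-2T}$ factor arising from the Jacobian of the $\theta\mapsto\omega$ substitution combined with the $\tfrac{1}{2}$ coming from the length of the admissible $t$-interval.
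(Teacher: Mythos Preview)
Your proposal is correct and follows essentially the same route as the paper: apply the disintegration $m|_{Y^*}=dt\times\cos(2\theta)\,d\theta\times m_0$ from Lemma~\ref{l.p.4.1}, compute the length of the admissible $t$-interval via the quadratic in $x=e^{2t}$ under the substitution $\sin 2\theta=e^{-2T}\sin\omega$, and then use that same substitution to evaluate the $\theta$-integral. Your explicit check of the inclusion $Y(T,\omega_0,B)\subset Y^*$ is a detail the paper leaves implicit, but otherwise the arguments coincide.
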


\begin{proof} Denote by 
$$J(T,\theta):=\{t\in\mathbb{R}:\|g_tR_{\theta}e_2\|<\exp(-T)\}$$
From Lemma \ref{l.p.4.1}, one has that the $m$-measure of the subset $Y(T,\omega_0,B)\subset Y^*$ equals to 
\begin{equation}\label{e.c.4.2}
m(Y(T,\omega,B)) = m_0(B) \int_{|\sin2\theta|<\exp(-2T)\sin\omega_0} \left(\int_{t\in J(T,\theta)} dt \right) \cos 2\theta d\theta
\end{equation}

Let us compute the length of the interval $J(T,\theta)$. For this sake, we observe that the condition 
$e^{2t}\sin^2\theta + e^{-2t}\cos^2\theta = \|g_tR_{\theta}e_2\|^2<\exp(-2T)$ is equivalent to the requirement that $x=e^{2t}$ solves the second degree inequality 
$$x^2\sin^2\theta - \exp(-2T) x + \cos^2\theta <0$$
Hence, $t\in J(T,\theta)$ if and only if $x_-< x=e^{2t} <x_+$ where 
$$x_{\pm}:=\frac{\exp(-2T)\pm\sqrt{\exp(-4T)-\sin^2(2\theta)}}{2\sin^2\theta}$$
In other terms, using the change of variables $\sin2\theta:=\exp(-2T)\sin\omega$ with $\cos\omega>0$, we have that $x_{\pm}=\exp(-2T)\frac{1\pm\cos\omega}{2\sin^2\theta}$. This means that the length of $J(T,\theta)$ is 
$$\int_{t\in J(T,\theta)} dt = \frac{1}{2}(\log x_+ - \log x_-) = \frac{1}{2}\log\left(\frac{x_+}{x_-}\right) = \frac{1}{2}\log\frac{1+\cos\omega}{1-\cos\omega}$$
By plugging this formula in \eqref{e.c.4.2} while keeping the change of variables $\sin2\theta:=\exp(-2T)\sin\omega$ in mind, we deduce that 
$$m(Y(T,\omega,B)) = \frac{1}{4} m_0(B) \int_{-\omega_0}^{\omega_0}  \log\frac{1+\cos\omega}{1-\cos\omega}\cos \omega d\omega$$
This proves the lemma. 
\end{proof}

This ends our discussion of Proposition \ref{p.mY*}. 

\subsection{Proof of Proposition \ref{p.slice} via Rokhlin's disintegration theorem}\label{ss.flux}

We want to interpret the total mass of the measure $m_0$ constructed above as a flux of the measure $m$ through $\{M\in\mathcal{C}: \textrm{sys}(M)=\rho_0\}$. For this sake, we will follow the same strategy used in the proof of Theorem \ref{t.AMY'}, namely: 

\begin{itemize}
\item we will use pieces of $SL(2,\mathbb{R})$-orbits to capture a portion (called \emph{regular part}) of the slice $\{M\in\mathcal{C}: \rho_0\exp(-\tau)\leq \textrm{sys}(M)\leq \rho_0\}$ whose $m$-measure is not hard to compute, and 
\item we will prove that the portion of the slice that was not captured by this procedure (called \emph{singular part}) has negligible $m$-measure.
\end{itemize}

Let us start by formalizing the first item. Given $M\in X^*=\bigsqcup\limits_{-\frac{\pi}{2}<\theta\leq\frac{\pi}{2}} R_{\theta}(X_0^*)$ and $t\geq 0$, we define the following ``pseudo Teichm\"uller flow'':
$$\Phi_t(M) = R_{\theta} g_t R_{-\theta}(M) \quad \textrm{when} \quad M\in R_{\theta}(X_0^*)$$ 

The systole of $\Phi_t(M)$ is $\rho_0\exp(-t)$. Also, for $M\in R_{\theta}(X_0^*)$, all length-minimizing saddle-connections of $\Phi_t(M)$ make angle $\theta$ with the vertical direction. Therefore, $\Phi_t$ is injective and $\Phi_t(X^*)\cap\Phi_{t'}(X^*)=\emptyset$ for $t\neq t'$. 

Given $\tau>0$, we say that the \emph{regular part} $Reg(\tau)$ of the slice 
$$S(\tau):=\{M\in\mathcal{C}:\rho_0\exp(-\tau)\leq\textrm{sys}(M)\leq \rho_0\}$$
is the set 
$$Reg(\tau):=\bigsqcup\limits_{0\leq t\leq\tau} \Phi_t(X^*)$$ 

The $m$-measure of $Reg(\tau)$ is provided by the following lemma:

\begin{lemma}\label{l.regular-part} Let $\tilde{m}_{\tau}$ be the measure on $X^*$ given by  
$$\tilde{m}_{\tau}(B)=\frac{2}{1-\exp(-2\tau)} \, m\left(\bigsqcup\limits_{0\leq t\leq \tau}\Phi_t(B)\right)$$
for $B\subset X^*$ a Borel subset. Then, $\tilde{m}_{\tau}$ is independent of $\tau$ and $\tilde{m}_{\tau} = d\theta\times m_0$. In particular, $m(Reg(\tau))=\frac{1-\exp(-2\tau)}{2}\pi m_0(X_0^*)$ and 
$$\lim\limits_{\tau\to 0} \frac{1}{\tau} m(Reg(\tau)) = \pi m_0(X_0^*)$$
\end{lemma}

\begin{proof} By definition, $\tilde{m}_{\tau}$ is invariant under the group $SO(2,\mathbb{R})$ of rotations. Thus, by an elementary variant\footnote{Cf. Proposition 2.6 of \cite{AMY}.} of Rokhlin's disintegration theorem, one has $\tilde{m}_{\tau} = d\theta\times m_{\tau}$. 

This reduces our task to show that $m_{\tau}=m_0$ for all $\tau>0$. In this direction, consider $\Sigma$ a small codimension one submanifold of $X_0^*$ which is transverse to the infinitesimal generator $\mathfrak{n}$ of $n_u$ and take $u_0>0$ so that $n_u(\Sigma)\subset X_0^*$ for all $|u|<u_0$ and $n_u(\Sigma)\cap\Sigma=\emptyset$ for all $0<u<2u_0$. 

Note that $\Phi_t(R_{\theta} n_u M) = R_{\theta} g_t n_u M$ for any $|u|<u_0$, $M\in\Sigma$, $\theta$ and $t\geq 0$. Also, observe that the set $W$ from Lemma \ref{l.p.3.1} contains $R_{\theta} g_t\in W$ for $|\theta|<\pi/4$ and $t\geq 0$, so that 
$$\left(\begin{array}{cc} e^t\cos\theta & -e^{-t}\sin\theta \\ e^t\sin\theta & e^{-t}\cos\theta \end{array}\right) = R_{\theta} g_t = g_T R_{\Theta} n_U=\left(\begin{array}{cc} e^T(\cos\Theta - U \sin\Theta) & -e^T\sin\Theta \\ e^{-T}(\sin\Theta - U\cos\Theta) & e^{-T}\cos\Theta \end{array}\right)$$
for some smooth functions $T=T(t,\theta)$, $\Theta=\Theta(t,\theta)$, $U=U(t,\theta)$. It follows that 
$$T(t,\theta) = t + O(\theta), \quad \Theta(t,\theta) = e^{-2t}\theta + O(\theta^2), \quad U(t,\theta) = O(\theta)$$
for $\theta$ close to zero. 

This information can be combined with the expression for $m|_{Y^*}$ in $g_T R_{\Theta} n_{U}$-coordinates in Lemma \ref{l.p.4.1} in order to compute $m_{\tau}$ in the following way. If $B_0=(u_1, u_2)\times B$ is a Borel subset of $(-u_0,u_0)\times\Sigma$, then 
\begin{eqnarray*}
\tilde{m}_{\tau}([0,\theta_0]\times B_0) &=& \frac{2}{1-\exp(-2\tau)} \int_{0}^{\tau}\int_{u_1}^{u_2}\int_{B} e^{-2t}\theta_0 dt\,du\,d\nu + O(\theta_0^2) \\ 
&=& m_0(B_0)\theta_0 + O(\theta_0^2)
\end{eqnarray*} 
for $\theta_0$ close to zero. This implies that $m_{\tau} = m_0$, so that the proof of the lemma is complete. 
\end{proof}

Next, we will study the $m$-measure of the \emph{singular part} 
$$Sing(\tau):= S(\tau)-Reg(\tau)$$ 
of the slice $S(\tau)$. 

\begin{lemma}\label{l.p.5.3} For $\tau>0$ small, the $m$-measure of $Sing(\tau)$ is $o(\tau)$, i.e., 
$$\lim\limits_{\tau\to 0} \frac{1}{\tau}m(Sing(\tau)) = 0$$
\end{lemma}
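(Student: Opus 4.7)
The plan is to characterize $Sing(\tau)$ as the set of surfaces carrying two non-parallel saddle connections whose holonomies are constrained to a specific two-dimensional region of Lebesgue volume $O(\tau^2)$, and then apply a Siegel-Veech-type bound for pairs of saddle connections to conclude $m(Sing(\tau)) = O(\tau^2) = o(\tau)$.

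First I characterize $Sing(\tau)$ geometrically. For $M \in S(\tau)$ with systole $s$, set $t = \log(\rho_0/s) \in [0,\tau]$ and (up to an $m$-null set of surfaces with multiple non-parallel minimizing saddle connections, null by a Fubini argument against the $SL(2,\mathbb{R})$ Haar measure of the type used above to establish continuity of systole-level sets) let $\theta$ be the unique minimizing direction. By the definitions of $Reg(\tau)$ and $X^*$, $M \in Reg(\tau)$ iff the lifted surface $M_0 := R_\theta g_{-t} R_{-\theta}(M)$ lies in $X^*$, i.e.\ iff every non-vertical saddle connection of $g_{-t}R_{-\theta}(M)$ has length $>\rho_0$. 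Writing the holonomy of a non-parallel saddle connection $v_2$ of $M$ in the frame where the minimizing direction is vertical as $(x,y)$, membership $M \in Sing(\tau)$ therefore forces
$$e^{-2t} x^2 + e^{2t} y^2 \leq \rho_0^2,$$
while $\|v_2\| \geq s$ because $v_2$ is a saddle connection of $M$. The admissible set of $v_2$ is thus the ellipse with semi-axes $\rho_0 e^t$ and $\rho_0 e^{-t} = s$ (total area $\pi \rho_0^2$) minus the inscribed disk of radius $s$, with area
$$\pi\rho_0^2 - \pi s^2 = \pi\rho_0^2(1 - e^{-2t}) = O(t).$$

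Second, I apply a Siegel-Veech estimate for pairs. For the counting function
$$N_\tau(M) := \#\bigl\{(v_1,v_2) : v_1,v_2 \in \mathrm{SCs}(M) \text{ non-parallel},\ \|v_1\| \in (\rho_0 e^{-\tau}, \rho_0],\ v_2 \text{ in the region above relative to } v_1\bigr\},$$
the characterization above gives $N_\tau \geq \mathbf{1}_{Sing(\tau)}$ outside the null set. A Siegel-Veech-type estimate for ordered pairs of non-parallel saddle connections then yields $\int N_\tau\, dm \leq C(m)\, I_\tau$, where $I_\tau$ is the Lebesgue volume of admissible pairs. Computing in polar coordinates for $v_1$,
$$I_\tau = \int_{\rho_0 e^{-\tau}}^{\rho_0} 2\pi s \cdot \pi \rho_0^2 \Bigl(1 - \frac{s^2}{\rho_0^2}\Bigr)\,ds = \frac{\pi^2 \rho_0^4}{2}(1 - e^{-2\tau})^2 = O(\tau^2),$$
so $m(Sing(\tau)) = O(\tau^2) = o(\tau)$, which is the desired conclusion.

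The main technical obstacle is justifying the Siegel-Veech bound for pairs in the form above. The diagonal $SL(2,\mathbb{R})$-orbits on $(\mathbb{R}^2 \setminus \{0\})^2$ are parametrized by $\det(v_1,v_2)$, so the abstract Siegel-Veech identity for pairs naturally decomposes as an integral over this invariant rather than a direct Lebesgue formula. The cleanest route I see is to bypass this by disintegrating $m$ (via Rokhlin) along the codimension-two cross-section $\{M : v \in \mathrm{SCs}(M)\}$ for fixed holonomy $v$: the leaf measures so produced are invariant under the horocyclic stabilizer of $v$, which permits running the Eskin-Masur single-saddle-connection argument on each leaf to bound the expected number of second saddle connections falling in the prescribed annular region by a constant times its Lebesgue area. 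Integrating over $v = v_1$ then reproduces the displayed $O(\tau^2)$ bound and completes the proof.
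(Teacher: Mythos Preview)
Your geometric characterization of $Sing(\tau)$ is correct, but the heart of the argument --- the bound $\int N_\tau\,dm \le C(m)\,I_\tau$ --- is a genuine gap, and the route you sketch does not close it. The one-variable Siegel--Veech identity works because every $SL(2,\mathbb{R})$-invariant Radon measure on $\mathbb{R}^2$ is (away from the origin) a multiple of Lebesgue. On the pair space $(\mathbb{R}^2\setminus 0)^2$ the diagonal action has a one-parameter family of orbits indexed by $d=\det(v_1,v_2)$, and the induced ``pair Siegel--Veech'' measure decomposes as $\int \mathrm{Haar}_d\,d\nu(d)$ for an unknown measure $\nu$ on $\mathbb{R}$; when $m$ is supported on a Teichm\"uller curve, $\nu$ is purely atomic and the pair measure is \emph{singular} with respect to $4$-dimensional Lebesgue, so an inequality of the form $\mu_2(A)\le C\cdot\mathrm{Leb}(A)$ cannot hold for general Borel $A$. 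Your fallback --- disintegrate along the cross-section $\{M:v_1\in\mathrm{SCs}(M)\}$ and run Eskin--Masur leafwise --- fails for the same structural reason: the leaf measures are invariant only under the one-parameter unipotent stabilizer of $v_1$, and horocycle-invariant measures on $\mathbb{R}^2$ are far from unique (e.g.\ one-dimensional Lebesgue on the fixed line), so there is no leafwise Siegel--Veech constant and no way to convert ``number of $v_2$ in a region'' into ``Lebesgue area of that region'' with a uniform constant.

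The paper does not attempt $O(\tau^2)$. It encloses $Sing(\tau)$ in the set $Z(\tau)$ of surfaces in the slice carrying a second saddle connection of length $\le\rho_0 e^\tau$ not parallel to a minimizer, and splits $Z(\tau)$ according to the minimal angle $\hat\theta(M)$ between two short non-parallel connections. For $\hat\theta(M)\ge\hat\theta_0$, a Fubini computation against Haar on a fixed compact $L\subset SL(2,\mathbb{R})$ shows that the set of $\gamma\in L$ sending \emph{two} transverse bounded vectors simultaneously into a $\tau$-thin annulus has Haar volume $O(\tau^{3/2})$; combined with Masur's bound on the number of short connections this gives $m=O(\tau^{3/2})$ for that piece. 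For $\hat\theta(M)<\hat\theta_0$, a short geodesic-flow packing argument places $\sim 1/\tau$ disjoint $g_t$-translates of the slice into a fixed set whose measure tends to $0$ as $\hat\theta_0\to 0$, yielding $<\eta\tau$ for any prescribed $\eta$. Together these give $o(\tau)$.
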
 

We introduce the set $Z(\tau)$ of translation surfaces $M\in S(\tau)$ possessing a saddle-connection of length $\leq\rho_0\exp(\tau)$ which is not parallel to a minimizing one. By definition, 
\begin{equation}\label{e.Ztau}
Sing(\tau)\subset Z(\tau),
\end{equation} 
so that proof of Lemma \ref{l.p.5.3} is reduced to prove that $m(Z(\tau)) = o(\tau)$. The proof of this fact is divided into two parts depending on the size of the angle between short saddle-connections of $M\in Z(\tau)$. More concretely, for $M\in S(\tau)$, denote by $\hat{\theta}(M)$ the smallest angle between two saddle-connections of lengths $\leq 3\rho_0$ which are not parallel (with the convention that $\hat{\theta}(M)=\pi/2$ when such connections do not exist). 

We begin by estimating the $m$-measure of the subset of $S(\tau)$ consisting of translation surfaces $M$ with $\hat{\theta}(M)$ small. 

\begin{lemma}\label{l.small.angle} Given $\eta>0$, there exists $\hat{\theta}_0=\hat{\theta}_0(\eta)>0$ such that 
$$m(\{M\in S(\tau): \hat{\theta}(M)<\hat{\theta}_0\})<\eta\tau$$
for all $\tau>0$ small enough. 
\end{lemma}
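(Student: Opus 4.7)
The plan is to push $E_\tau := \{M\in S(\tau): \hat\theta(M)<\hat\theta_0\}$ forward many times by the pseudo-Teichm\"uller flow $\Phi_s$, building a (nearly) disjoint family of copies of it inside the region $\{\textrm{sys}\leq\rho_0\}$, and then to invoke the Eskin-Masur bound of Lemma \ref{l.Eskin-Masur}. The key geometric observation is that when the two shortest non-parallel saddle connections of $M$ make a very small angle, $\Phi_s$ contracts \emph{both} of them simultaneously over a long range of $s$, so that $\Phi_s(E_\tau)$ quickly descends into the deep-systole region. Concretely, for $M\in E_\tau$ with minimizing saddle connection $\gamma_1$ and a second non-parallel saddle connection $\gamma_2$ of length $\leq 3\rho_0$ at angle $\hat\theta(M)<\hat\theta_0$ with $\gamma_1$, setting $T:=\tfrac12\log(1/\hat\theta_0)$, essentially the same computation as in Lemma \ref{l.AMY.6.5} gives
$$|\Phi_s\gamma_2|=|\gamma_2|\sqrt{e^{2s}\sin^2\hat\theta(M)+e^{-2s}\cos^2\hat\theta(M)}\leq 3\rho_0(e^{s}\hat\theta_0+e^{-s})\leq 4\rho_0 e^{-s}$$
for all $s\in[0,T]$ (using $e^{2s}\hat\theta_0^2\leq\hat\theta_0\leq 1$). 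Combined with $|\Phi_s\gamma_1|=e^{-s}\textrm{sys}(M)$, this gives $\textrm{sys}(\Phi_s(M))\leq 4\rho_0 e^{-s}$.

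The second ingredient is the $m$-invariance of $\Phi_s$: on the $m$-full set where the minimizing direction $\theta_1(\cdot)$ is unique (by the Masur finiteness result already invoked in the proof of the continuity of $F$ above), $\Phi_s$ is locally the $SL(2,\mathbb{R})$-element $R_{\theta_1(M)}g_s R_{-\theta_1(M)}$; disintegrating $m$ along the measurable function $\theta_1(\cdot)$ and applying $SL(2,\mathbb{R})$-invariance fiberwise yields $m\circ\Phi_s^{-1}=m$. I would then check that the images $\Phi_{k\tau}(E_\tau)$ for $k=0,1,\ldots,N:=\lfloor T/\tau\rfloor$ have overlap multiplicity at most $2$: if $\Phi_{k\tau}(M)=\Phi_{k'\tau}(M')$ with $0\leq k<k'\leq N$, then the invertibility of $\Phi_{k\tau}$ forces $M=\Phi_{(k'-k)\tau}(M')$, hence $\textrm{sys}(M)\leq e^{-(k'-k)\tau}\rho_0$, and $\textrm{sys}(M)\geq\rho_0 e^{-\tau}$ then forces $k'-k\leq 1$.

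Combining both ingredients with Lemma \ref{l.Eskin-Masur},
$$(N+1)\,m(E_\tau)=\sum_{k=0}^{N}m(\Phi_{k\tau}(E_\tau))\leq 2\,m\Big(\bigcup_{k=0}^{N}\Phi_{k\tau}(E_\tau)\Big)\leq 2\,m(\{\textrm{sys}\leq\rho_0\})\leq 2C(m)\rho_0^2.$$
For $\tau$ small, $N+1\geq T/\tau=\log(1/\hat\theta_0)/(2\tau)$, so $m(E_\tau)\leq 4C(m)\rho_0^2\tau/\log(1/\hat\theta_0)$; picking $\hat\theta_0:=\exp(-4C(m)\rho_0^2/\eta)$ then yields $m(E_\tau)<\eta\tau$. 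The hard part will be rigorously justifying that the pseudo-Teichm\"uller flow preserves $m$ even though its slope $\theta_1(M)$ varies measurably with $M$; the cleanest approach uses Rokhlin's disintegration along the measurable partition induced by $\theta_1(\cdot)$, in the spirit of Lemma \ref{l.p.4.1}, and applies $SL(2,\mathbb{R})$-invariance of $m$ on each fiber.
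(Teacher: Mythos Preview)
Your central claim --- that the pseudo-Teichm\"uller flow $\Phi_s$ preserves $m$ --- is false, and this breaks the argument. The formula in Lemma~\ref{l.regular-part}, namely $m\big(\bigcup_{0\le t\le\tau}\Phi_t(B)\big)=\tfrac{1-e^{-2\tau}}{2}\,\tilde m(B)$, shows directly that $\Phi_s$ contracts $m$ by the factor $e^{-2s}$ rather than preserving it. The disintegration idea does not rescue this: $SL(2,\mathbb{R})$-invariance of $m$ is a statement about a single \emph{fixed} group element acting on the whole space and gives no control on the conditional measures on the fibers of $\theta_1$. A toy model makes the failure transparent: for $SL(2,\mathbb{R})$ acting on $\mathbb{R}^2$ with Lebesgue measure, the analogous map $v\mapsto R_{\theta_1(v)}g_sR_{-\theta_1(v)}v$ is just $v\mapsto e^{-s}v$, which scales Lebesgue by $e^{-2s}$. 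With the correct Jacobian, your sum $\sum_{k=0}^N m(\Phi_{k\tau}(E_\tau))$ becomes a geometric series bounded by roughly $m(E_\tau)/(2\tau)$ regardless of $N$, so the inequality only yields $m(E_\tau)=O(\tau)$ with a constant independent of $\hat\theta_0$ --- not the desired $\eta\tau$ for arbitrary $\eta$.

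The paper instead uses the genuine Teichm\"uller flow $g_t$, which does preserve $m$. By $SO(2,\mathbb{R})$-invariance it suffices (up to a factor $3$) to treat $S_1(\tau)\subset S(\tau)$, the surfaces whose minimizing saddle connection makes angle $\le\pi/6$ with the vertical. An elementary estimate on $\|g_{3j\tau}R_\theta e_2\|$ shows that the images $g_{3j\tau}(S_1(\tau))$ for $0<j<\tfrac{\log 3}{6\tau}-1$ are pairwise disjoint, and that each image lands in a fixed set $\mathcal{C}_{\hat\theta_0}(\rho_0)$ of surfaces with systole in $(\rho_0/2,3\rho_0)$ carrying two short non-parallel saddle connections at angle $\le 10\hat\theta_0$. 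The number of disjoint copies is only $\sim 1/\tau$; the smallness in $\hat\theta_0$ comes from the target set: $m(\mathcal{C}_{\hat\theta_0}(\rho_0))\to 0$ as $\hat\theta_0\to 0$.
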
 

\begin{proof} Let $S_1(\tau)$ be the subset of $M\in S(\tau)$ possessing a length-minimizing saddle-connection making an angle $\leq\pi/6$ with the vertical direction. 

The $SO(2,\mathbb{R})$-invariance of $m$ tells us that 
$$m(\mathcal{S})\leq 3 \, m(\mathcal{S}\cap S_1(\tau))$$
for any $SO(2,\mathbb{R})$-invariant subset $\mathcal{S}\subset S(\tau)$. In particular, for any $\hat{\theta}_0>0$, one has 
\begin{equation}\label{e.small-angle}
m(\{M\in S(\tau): \hat{\theta}(M)<\hat{\theta}_0\})\leq 3 \, m(\{M\in S_1(\tau): \hat{\theta}(M)<\hat{\theta}_0\})
\end{equation}

In order to estimate the right-hand side of this inequality, we claim that, for any $M\in S_1(\tau)$ and $j\in\mathbb{N}-\{0\}$ with $\exp(3(j+1)\tau)<\cot\frac{\pi}{6}=\sqrt{3}$, the systole of $g_{3j\tau}M$ is \begin{equation}\label{e.small-angle-sys}
\textrm{sys}(g_{3j\tau}M)<\rho_0\exp(-\tau)
\end{equation} 
Indeed, this happens whenever the estimate $e^{6j\tau}\sin^2\theta + e^{-6j\tau}\cos^2\theta=\|g_{3j\tau}R_{\theta}e_2\|^2< e^{-2\tau}$ holds for all $|\theta|\leq\pi/6$. Since this second degree inequality on $x=e^{6j\tau}$ is satisfied when 
$$e^{6j\tau}< \frac{e^{-2\tau}+\sqrt{e^{-4\tau}-\sin^2(2\theta)}}{2\sin^2\theta}$$
for all $|\theta|\leq \pi/6$ and 
$$2(e^{-2\tau}+\sqrt{e^{-4\tau}-3/4})\leq \frac{e^{-2\tau}+\sqrt{e^{-4\tau}-\sin^2(2\theta)}}{2\sin^2\theta}$$
for all $|\theta|\leq\pi/6$, the proof of our claim is reduced to check that 
$$e^{6j\tau} < 2(e^{-2\tau}+\sqrt{e^{-4\tau}-3/4}).$$
This last inequality follows easily from our assumption that $e^{3(j+1)\tau}<\sqrt{3}$, i.e., $e^{6j\tau}<3 e^{-6\tau}$: in fact, this is an immediate consequence of the fact that the inequality 
$$3 e^{-\kappa\tau} < 2(e^{-2\tau}+\sqrt{e^{-4\tau} - 3/4})$$ is equivalent to $3e^{-\kappa\tau} - 2e^{-2\tau} < 2\sqrt{e^{-4\tau}-3/4}$, that is, $9 e^{(2-\kappa)\tau} + 3 e^{(\kappa+2)\tau}<12$, and this estimate is true for any $\kappa>4$ and $\tau>0$ small enough because the derivative at $\tau=0$ of the function $9 e^{(2-\kappa)\tau} + 3 e^{(\kappa+2)\tau}$ is $9(2-\kappa)+3(\kappa+2)=24-6\kappa<0$. 

Now, we observe that \eqref{e.small-angle-sys} implies the disjointness of $g_{3j\tau}(S_1(\tau))$ and $g_{3j'\tau}(S_1(\tau))$ for all $0<j<j'<\frac{\log 3}{6\tau}-1$. In particular, 
\begin{equation}\label{e.small-angle-sys-cor}
\frac{1}{6\tau} m(\{M\in S_1(\tau): \hat{\theta}(M)<\hat{\theta}_0\})\leq m\left(\bigcup\limits_{0<j<\frac{\log 3}{6\tau} - 1} g_{3j\tau}(\{M\in S_1(\tau): \hat{\theta}(M)<\hat{\theta}_0\})\right)
\end{equation}
because the number of $j\in\mathbb{N}$ with $0 < j < \frac{\log 3}{6\tau} - 1$ is $\geq 1/6\tau$. 

On the other hand, if $0<j<\frac{\log 3}{6\tau}-1$, then, for any $M\in S(\tau)$, the systole of $M'=g_{3j\tau}M$ is $\rho_0/2<\textrm{sys}(M')<3\rho_0$ and  $M'$ has a pair of saddle-connections of lengths $\leq 3\sqrt{3}\rho_0$ with angle $\leq 10\cdot\hat{\theta}(M)$. Therefore, for each $\hat{\theta}_0>0$, the set $\mathcal{C}_{\hat{\theta}_0}(\rho_0)$ consisting of translation surfaces $M'$ with $\textrm{sys}(M')\in (\frac{\rho_0}{2}, 3\rho_0)$ and a pair of non-parallel saddle-connections of lengths $\leq 3\sqrt{3}\rho_0$ with angle $\leq 10\hat{\theta_0}$ contains 
$$\bigcup\limits_{0<j<\frac{\log 3}{6\tau} - 1} g_{3j\tau}(\{M\in S_1(\tau): \hat{\theta}(M)<\hat{\theta}_0\})$$ 

It follows from \eqref{e.small-angle} and \eqref{e.small-angle-sys-cor} that 
$$m(\{M\in S(\tau): \hat{\theta}(M)<\hat{\theta}_0\}) \leq 18\tau \, m(\mathcal{C}_{\hat{\theta}_0}(\rho_0))$$ 

This completes the proof of the lemma: indeed, given $\eta>0$, if we take $\hat{\theta}_0=\hat{\theta}(\eta)>0$ small enough so that $m(\mathcal{C}_{\hat{\theta}_0}(\rho_0))<\eta/18$, then $m(\{M\in S(\tau): \hat{\theta}(M)<\hat{\theta}_0\}) <\eta\tau$. 
\end{proof} 

Next, we estimate the $m$-measure of the subset of $S(\tau)$ consisting of translation surfaces $M\in Z(\tau)$ with $\hat{\theta}(M)$ large. 

\begin{lemma}\label{l.big-angle} For any $\hat{\theta}_0>0$, one has 
$$m(\{M\in Z(\tau):\hat{\theta}(M)\geq\hat{\theta}_0\}) = O(\tau^{3/2})$$
where the implied constant depends on $\hat{\theta}_0$, $\rho_0$ and the genus $g$ of the translation surfaces in $\mathcal{C}$. 
\end{lemma}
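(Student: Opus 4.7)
The strategy is a Siegel--Veech-type estimate applied to \emph{pairs} of non-parallel saddle-connection holonomies whose directions differ by at least $\hat{\theta}_0$. Any $M$ in the target set carries such a pair $(v_1, v_2)$ with $|v_1| \in [\rho_0 e^{-\tau}, \rho_0]$ (the minimizing one), $|v_2| \in [\rho_0 e^{-\tau}, \rho_0 e^{\tau}]$, and $\sin\angle(v_1, v_2) \geq \sin\hat{\theta}_0$. Let $\Omega(\tau) \subset (\mathbb{R}^2)^2$ denote this region of admissible pairs. The goal is to bound the $m$-mass of such $M$ by the ``volume'' of $\Omega(\tau)$, which is a product of two planar annuli of radial widths $O(\tau)$ and hence has $4$-dimensional Lebesgue measure $O(\tau^2)$.

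The first step is to introduce the pair Siegel--Veech transform $\widehat{g}(M) := \sum g(v_1, v_2)$, summed over ordered pairs of distinct, non-parallel saddle-connection holonomies of $M$. Mimicking the proof of Lemma~\ref{l.Eskin-Masur}, one shows that $g \mapsto \int_{\mathcal{C}} \widehat{g}\, dm$ extends to an $SL(2,\mathbb{R})$-invariant Radon measure $\nu$ on the open subset of $(\mathbb{R}^2 \setminus \{0\})^2$ where $\sin\angle(v_1, v_2)$ is bounded below. The required $L^{1+\varepsilon}$ integrability of the pair-counting function $N_2(x,R) := \#\{(v_1,v_2) \in V(x)^2 : |v_i| \leq R,\ \sin\angle \geq \sin\hat{\theta}_0\}$ follows from squaring Eskin--Masur's bound on the single-vector counting function, and the angle hypothesis is crucial to stay away from the parallel locus where the transform fails to be integrable.

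Next, I use that the $SL(2,\mathbb{R})$-orbits on non-parallel pairs are precisely the level sets $\{\det(v_1, v_2) = c\}$, $c \neq 0$, each carrying (up to scalar) a unique invariant measure $\eta_c$. Hence $\nu$ disintegrates as $\int \eta_c\, d\mu(c)$ for some Radon measure $\mu$ on $\mathbb{R}\setminus\{0\}$. Applied to $g = \mathbf{1}_{\Omega(\tau)}$, the support forces $|c| = |v_1|\,|v_2|\sin\angle(v_1, v_2)$ into a bounded range (contained in $[\sin\hat{\theta}_0\,\rho_0^2/2,\, 2\rho_0^2]$ for $\tau$ small), on which $\mu$ is finite by the previous integrability. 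Disintegrating Lebesgue on $(\mathbb{R}^2)^2$ as $dc\otimes\eta_c$ (up to a constant) and applying Fubini to $\mathrm{Leb}_4(\Omega(\tau)) = O(\tau^2)$ yields $\eta_c(\Omega(\tau)) = O(\tau^2)$ uniformly in $c$ in the relevant range (after a routine check using the product structure of $\Omega(\tau)$). Thus $\nu(\Omega(\tau)) = O(\tau^2)$. Since $\widehat{g}(M) \geq 1$ on the target set, we conclude
$$m(\{M \in Z(\tau): \hat{\theta}(M) \geq \hat{\theta}_0\}) \leq \nu(\Omega(\tau)) = O(\tau^2),$$
which is in particular $O(\tau^{3/2})$.

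The main obstacle is the first step: rigorously establishing the pair Siegel--Veech formula with angle bounded below. Unlike the single-vector case, where $SL(2,\mathbb{R})$ acts transitively on $\mathbb{R}^2\setminus\{0\}$, the orbit space of non-parallel pairs is one-dimensional (parametrized by the determinant), yielding a one-parameter family of orbit measures rather than a single Lebesgue-proportional measure. Controlling the disintegration measure $\mu$ uniformly on bounded ranges of the determinant, and in turn the masses $\eta_c(\Omega(\tau))$, requires care; the hypothesis $\hat{\theta}(M) \geq \hat{\theta}_0$ is essential precisely to remain in the non-degenerate (non-parallel) locus where all these estimates are available.
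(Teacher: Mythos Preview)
Your approach has a real gap at the key step where you pass from $\mathrm{Leb}_4(\Omega(\tau))=O(\tau^2)$ to a \emph{uniform} pointwise bound $\eta_c(\Omega(\tau))=O(\tau^2)$. The disintegration $\mathrm{Leb}_4=\int g(c)\,\eta_c\,dc$ only yields the averaged statement $\int_I g(c)\,\eta_c(\Omega(\tau))\,dc=O(\tau^2)$, and this does \emph{not} imply a uniform bound on $\eta_c(\Omega(\tau))$. In fact it is false: an Iwasawa computation shows that when $c$ is within $O(\tau)$ of $\rho_0^2$ (i.e., when the target pair $(v_1,v_2)$ can be nearly orthogonal with both norms $\approx\rho_0$), the $u$-variable in $\gamma=k\,a_t\,n_u$ ranges over an interval of length $\asymp\sqrt{\tau}$ rather than $\tau$, giving $\eta_c(\Omega(\tau))\asymp\tau^{3/2}$ for such $c$. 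Since your measure $\mu$ on the determinant is abstract and could well have mass (even an atom) at $c=\rho_0^2$, you cannot conclude $\nu(\Omega(\tau))=O(\tau^2)$. The honest bound your framework produces is $\nu(\Omega(\tau))\le \mu(I)\cdot\sup_{c\in I}\eta_c(\Omega(\tau))=O(\tau^{3/2})$, and establishing this $\sup$ requires exactly the Iwasawa estimate the paper invokes (Proposition~3.3 of \cite{AMY}).

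A secondary issue: the claim that $N_2(x,R)\in L^{1+\varepsilon}$ ``follows from squaring Eskin--Masur's bound'' is not justified. Eskin--Masur give $N_V(x,R)\in L^{1+\varepsilon}$ for \emph{small} $\varepsilon$; squaring would require $N_V\in L^{2+2\varepsilon}$, which is not available. So the very existence of your pair Siegel--Veech measure $\nu$ is not established.

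The paper's proof sidesteps both problems by working orbit-by-orbit rather than through a Siegel--Veech transform: by Fubini and $SL(2,\mathbb{R})$-invariance, it suffices to bound, for each fixed $x$, the Haar measure of $\{\gamma\in L:\gamma x\in Z(\tau),\ \hat\theta(\gamma x)\ge\hat\theta_0\}$. Masur's bound gives only finitely many candidate pairs $(v,v')$ (a constant depending on $\rho_0,g$), and for each pair the Haar measure of $\gamma$ sending both into the thin annulus is $O(\tau^{3/2})$. This last Haar estimate is precisely the pointwise bound $\eta_c(\Omega(\tau))=O(\tau^{3/2})$ that your argument also ultimately needs; the pair-Siegel--Veech superstructure adds integrability headaches and the erroneous $O(\tau^2)$ shortcut without contributing anything to the core estimate.
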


\begin{proof} By Fubini's theorem and the $SL(2,\mathbb{R})$-invariance of $m$, we have  
$$m(\{M\in Z(\tau): \hat{\theta}(M)\geq\hat{\theta}_0\}) = \int_{\mathcal{C}}\mu_L(\{\gamma\in SL(2,\mathbb{R}): \gamma x\in Z(\tau), \hat{\theta}(\gamma x)\geq\hat{\theta}_0\}) \, dm(x)$$
where $\mu_L$ is the normalized restriction of the Haar measure of $SL(2,\mathbb{R})$ to the compact subset 
$$L:=\{\gamma\in SL(2,\mathbb{R}): \|\gamma\|\leq 2\}$$

This reduces our task to prove the following claim: for each $x\in\mathcal{C}$, one has 
$$\mu_L(\{\gamma\in SL(2,\mathbb{R}): \gamma x\in Z(\tau), \hat{\theta}(\gamma x)\geq\hat{\theta}_0\}) = O(\tau^{3/2})$$

Fix $x\in\mathcal{C}$. If the set $B_{\hat{\theta}_0, \tau}(x):=\{\gamma\in L: \gamma x\in Z(\tau), \hat{\theta}(\gamma x)\geq\hat{\theta}_0\}$ is empty, we are done. So, we can assume that this set is not empty. This imposes a constraint on the systole of $x$. Indeed, if $B_{\hat{\theta}_0, \tau}(x) \neq\emptyset$, then one has $\rho_0\exp(-\tau)\leq\textrm{sys}(\gamma_0 x)\leq\rho_0$ for some $\gamma_0\in SL(2,\mathbb{R})$ with $\|\gamma\|\leq 2$. Since $\|\gamma_0^{-1}\|=\|\gamma_0\|$, we have 
$$\frac{\rho_0}{2}\exp(-\tau)\leq\textrm{sys}(x)\leq 2\rho_0$$ 

Moreover, the matrices $\gamma\in B_{\hat{\theta}_0,\tau}(x)$ satisfy some severe restrictions. In fact, given $\gamma\in B_{\hat{\theta}_0,\tau}(x)$, there are non-parallel holonomy vectors $v, v'\in\mathbb{R}^2$ of saddle-connections of $x$ such that the angle between $\gamma v$ and $\gamma v'$ is $\geq\hat{\theta}_0$ and 
$$\rho_0\exp(-\tau)\leq \|\gamma v\|\leq \rho_0\exp(\tau) \textrm{ and } \rho_0\exp(-\tau)\leq \|\gamma v\|\leq \rho_0\exp(\tau).$$ 
In other words, $\gamma\in E(\frac{v}{\rho_0}, \frac{v'}{\rho_0},\tau)$ where 
$$E(w,w',\tau):=\{\gamma\in L: \rho_0\exp(-\tau)\leq \|\gamma w\|, \|\gamma w'\|\leq \rho_0\exp(\tau)\}$$ 
Since $\|\gamma\|=\|\gamma^{-1}\|\leq 2$, we have that the angle between $v$ and $v'$ is $\geq\hat{\theta}_0/10$ and $\|v\|, \|v'\|\leq 3\rho_0$ (for $\tau>0$ small enough). In particular, the quantity $\rho_0^{-1}\|v\pm v'\|$ is uniformly bounded away from zero by a constant $c=c(\hat{\theta}_0)>0$:
$$\left\|\frac{v}{\rho_0}\pm \frac{v'}{\rho_0}\right\|\geq c$$

In summary, if we denote by $v_1,\dots, v_N$ the holonomy vectors of saddle-connections of $x$ of length $\leq 3\rho_0$, then 
\begin{equation}\label{e.big-angle} 
\{\gamma\in SL(2,\mathbb{R}): \gamma x\in Z(\tau), \hat{\theta}(\gamma x)\geq\hat{\theta}_0\} \subset \bigcup\limits_{\|\frac{v_i}{\rho_0}\pm \frac{v_j}{\rho_0}\|\geq c} E(\frac{v_i}{\rho_0}, \frac{v_j}{\rho_0},\tau)
\end{equation}

By a result of Masur (see \cite{M90}), the number $N$ of saddle-connections of lengths $\leq 3\rho_0$ on the translation surface $x$ with $\textrm{sys}(x)\geq\rho_0\exp(-\tau)/2>\rho_0/3$ is bounded by a constant $N(\rho_0, g)$. Also, an elementary computation\footnote{Cf. Proposition 3.3 in \cite{AMY} for a one-page proof of this fact.} with the Iwasawa decomposition of $SL(2,\mathbb{R})$ says that 
$$\mu_L(E(w,w',\tau))=O(\tau^{3/2})$$
where the implied constant depends only on $\|w\pm w'\|$. In other terms, given a pair of non-collinear vectors $w, w'\in\mathbb{R}^2$, the (Haar) probability that a matrix $\gamma\in SL(2,\mathbb{R})$ with $\|\gamma\|\leq 2$ takes both of them to vectors $\gamma w, \gamma w'$ inside a ``$\tau$-thin'' annulus $\{v\in\mathbb{R}^2: e^{-\tau}\leq\|v\|\leq e^{\tau}\}$ around the unit circle has order $\tau^{3/2}$. 

By combining the information in the previous paragraph with \eqref{e.big-angle}, we conclude that 
$$\mu_L(\{\gamma\in SL(2,\mathbb{R}): \gamma x\in Z(\tau), \hat{\theta}(\gamma x)\geq\hat{\theta}_0\}) = O(\tau^{3/2})$$
where the implied constant depends only on $\hat{\theta}_0$, $\rho_0$ and $g$. This proves our claim. 
\end{proof}

At this point, the proof of Proposition \ref{p.slice} is complete. Indeed, Lemmas \ref{l.small.angle} and \ref{l.big-angle} imply Lemma \ref{l.p.5.3} saying that $m(Sing(\tau))=o(\tau)$. By combining this fact with Lemma \ref{l.regular-part}, we conclude the desired formula   
$$m(S(\tau))=m(Reg(\tau))+m(Sing(\tau)) = \pi m_0(X_0^*)\tau + o(\tau)$$
for the $m$-measure of the slices $S(\tau):=Reg(\tau)\sqcup Sing(\tau)$. 

\newpage 

\null

\newpage


\begin{centering}
\rule{\textwidth}{1.6pt}\vspace*{-\baselineskip}\vspace*{2.5pt}
\rule{\textwidth}{0.4pt}

\section{Arithmetic Teichm\"uller curves with complementary series}\label{s.MS}

\rule{\textwidth}{0.4pt}\vspace*{-\baselineskip}\vspace{3.2pt}
\rule{\textwidth}{1.6pt}
\end{centering}\\

Let $\mathcal{C}$ be a connected component of a stratum of the moduli space of unit area translation surfaces of genus $g\geq 1$. 

It is well-known\footnote{See Subsection \ref{ss.Ratner-mixing}.} that the theory of unitary representations of $SL(2,\mathbb{R})$ and the fact that the Teichm\"uller flow $g_t$ is part of an action of $SL(2,\mathbb{R})$ on $\mathcal{C}$ can be used to prove that any ergodic $SL(2,\mathbb{R})$-invariant probability measure $\mu$ on $\mathcal{C}$ is actually \emph{mixing}, i.e., for all $u, v\in L^2(\mathcal{C},\mu)$, the \emph{correlation function} $C_t(f,g):=\int_{\mathcal{C}} (u \cdot v\circ g_t) d\mu - \int_{\mathcal{C}} u d\mu \cdot \int_{\mathcal{C}} v d\mu$ decays to zero: 
$$\lim\limits_{t\to\infty} C_t(u,v) = 0$$ 

In general, the speed of decay of correlation functions of a mixing measure depends on the features of the dynamics at hand: for example, the presence of hyperbolicity usually tends to accelerate the rate of convergence of $C_t(u,v)$ to zero for significant classes of observables $u$ and $v$. 

In particular, the non-uniform hyperbolicity properties of Teichm\"uller flow established by Veech \cite{V86} and Forni \cite{F02} indicate that $SL(2,\mathbb{R})$-invariant probability measures on $\mathcal{C}$ exhibit a fast decay of correlations. 

\subsection{Exponential mixing of the Teichm\"uller flow} The rate of mixing of the \emph{Masur-Veech measure} $\mu_{\mathcal{C}}$ of $\mathcal{C}$ was computed in the celebrated work of Avila, Gou\"ezel and Yoccoz \cite{AGY}: 

\begin{theorem}\label{t.AGY} The Teichm\"uller flow $g_t$ is exponentially mixing with respect to $\mu_{\mathcal{C}}$, i.e., $C_t(u,v)$ converges exponentially fast to zero as $t\to\infty$ for all sufficiently smooth observables $u, v\in L^2(\mathcal{C},\mu_{\mathcal{C}})$. 
\end{theorem}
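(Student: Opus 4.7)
The plan is to realize the Teichm\"uller flow on $\mathcal{C}$ as a suspension semi-flow over an expanding Markov base map (Rauzy--Veech renormalization, in its Zorich-accelerated form) and then invoke an abstract spectral-gap criterion for such ``excellent hyperbolic semi-flows'' to extract exponential decay of correlations.

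First, I would pass to a concrete coding of $g_t$ via Veech's zippered rectangles construction applied to the Rauzy class associated to a stratum of $\mathcal{C}$. After the Zorich acceleration, this provides a countable Markov map $\widehat{T}$ on a disjoint union of simplices with a finite absolutely continuous invariant probability measure $\mu_{\text{RV}}$, and realizes $g_t$ as the suspension of $\widehat{T}$ with roof function $r$ equal to the logarithm of the Jacobian of $\widehat{T}$. Under this conjugacy, $\mu_{\mathcal{C}}$ pulls back to $\mu_{\text{RV}}$ times Lebesgue in the suspension direction, so correlations $C_t(u,v)$ for smooth $u,v$ on $\mathcal{C}$ are converted into correlations of H\"older observables for the suspension.

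Second, I would verify the axioms of an ``excellent hyperbolic semi-flow'' for this suspension. The key structural properties are: (i) $\widehat{T}$ is uniformly expanding with bounded distortion on each cylinder (Zorich acceleration yields a log-uniform expansion rate); (ii) the invariant density is bounded away from $0$ and $\infty$; (iii) the roof function $r$ has exponential tails, $\mu_{\text{RV}}(r > R) \leq C e^{-\alpha R}$. Property (iii) is the essential quantitative recurrence statement and can be obtained from Siegel--Veech counting bounds on short saddle connections (in the spirit of the Eskin--Masur lemma used in Section~\ref{s.AMY}) combined with a direct combinatorial control on how often Rauzy--Veech induction produces ``unbalanced'' simplices.

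Third, with this framework in place, exponential mixing reduces to a Dolgopyat-type estimate for the twisted transfer operator
\begin{equation*}
\mathcal{L}_s u(x) = \sum_{\widehat{T}y = x} \frac{e^{-sr(y)}}{|\widehat{T}'(y)|}\, u(y),
\qquad s = \sigma + ib,
\end{equation*}
acting on a space of Lipschitz (or bounded variation) functions on the base: one must prove $\|\mathcal{L}_s^N\|\leq C|b|^{-\alpha}$ for $|\sigma|\leq\sigma_0$, $|b|\geq b_0$, and some $N = O(\log|b|)$. Feeding this into the Laplace-transform representation of $C_t(u,v)$ and shifting the contour slightly into $\Re s < 0$ yields $|C_t(u,v)|\leq C\|u\|\,\|v\|\,e^{-\kappa t}$.

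The principal obstacle lies in this last step, specifically in establishing the uniform non-integrability (UNI) condition that underlies the Dolgopyat cancellation: the roof function $r$ restricted to long Rauzy--Veech cylinders must have a relative derivative, along pairs of inverse branches, that is uniformly bounded below. One produces this transversality by exhibiting, in each Rauzy class, two distinct periodic Rauzy--Veech loops whose ``log-expansion'' contributions are linearly independent; the combinatorics of the winners and losers of the induction then guarantees that the resulting $2\times 2$ determinant is uniformly away from zero along a definite proportion of cylinders. Once UNI is in hand, the Dolgopyat bootstrap (oscillatory $L^2$ cancellation, iteration, and a regularity argument via a Lasota--Yorke-type inequality) proceeds by standard but delicate machinery, completing the proof.
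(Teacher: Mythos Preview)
The paper does not prove this theorem; it merely cites the result of Avila--Gou\"ezel--Yoccoz \cite{AGY} and summarizes the strategy in a single sentence: a symbolic model via Rauzy--Veech induction together with a Dolgopyat-type criterion for exponential mixing of suspension flows. Your outline is a faithful expansion of exactly that strategy and of the actual AGY argument, so there is essentially nothing to compare---your proposal is simply far more detailed than what the paper itself provides.

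One minor remark: the exponential tails of the roof function in \cite{AGY} are obtained by a direct combinatorial analysis of Rauzy--Veech induction rather than via Siegel--Veech counting as you suggest; but this does not affect the correctness of your overall plan.
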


The proof of Theorem \ref{t.AGY} is based on the (mostly \emph{combinatorial}) analysis of a symbolic model of $(g_t,\mu_{\mathcal{C}})$ called \emph{Rauzy-Veech induction} and a criterion (based on \emph{Dolgopyat-like estimates}) for the exponential mixing of certain suspension flows. 

The strategy outline above is hard to extend to \emph{arbitrary} $SL(2,\mathbb{R})$-invariant probability measures on $\mathcal{C}$: indeed, the symbolic models provided by the Rauzy-Veech induction are somehow tailor-made for the Masur-Veech measures. 

Nevertheless, Avila and Gou\"ezel \cite{AG} managed to compute the rate of mixing of an arbitrary $SL(2,\mathbb{R})$-invariant probability measure $\mu$ on $\mathcal{C}$:

\begin{theorem}\label{t.AG} The Teichm\"uller flow $g_t$ is exponentially mixing with respect to any $SL(2,\mathbb{R})$-invariant probability measure $\mu$ on $\mathcal{C}$. In particular, there exists $\delta(\mu)>0$ such that 
$$|C_t(u,v)|\leq e^{-\delta(\mu) t}\|u\|_{L^2(\mathcal{C},\mu)}\|v\|_{L^2(\mathcal{C},\mu)}$$
for all $SO(2,\mathbb{R})$-invariant observables $u, v\in L^2(\mathcal{C},\mu)$. 
\end{theorem}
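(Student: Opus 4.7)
The plan is to translate the statement into a spectral gap problem for the unitary representation of $SL(2,\mathbb{R})$ on $L^2_0(\mathcal{C},\mu) := L^2(\mathcal{C},\mu) \ominus \mathbb{C}$, and then to establish this gap by adapting the Avila--Gou\"ezel--Yoccoz mixing scheme behind Theorem \ref{t.AGY} to an arbitrary $\mu$.

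First I would set up the representation-theoretic dictionary. The $SL(2,\mathbb{R})$-action produces a unitary representation $\pi$ on $L^2_0(\mathcal{C},\mu)$, which by ergodicity of $\mu$ has no invariant vectors. Decompose $\pi$ as a direct integral of irreducibles over the unitary dual of $SL(2,\mathbb{R})$; the only components carrying $K := SO(2,\mathbb{R})$-fixed vectors are the (unitary) principal series and the complementary series $\pi_s$. By Harish-Chandra's formula for spherical matrix coefficients, for any $K$-invariant $u,v$ in an irreducible $\pi_s$ of spectral parameter $s$,
$$\bigl|\langle \pi_s(g_t)u,v\rangle\bigr| \;\leq\; C\,(1+t)\,e^{-\alpha(s)\,t}\,\|u\|\,\|v\|,$$
where $\alpha(s)>0$ is an explicit continuous function of the parameter that vanishes only as $\pi_s$ tends to the trivial representation. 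Consequently, if $\pi|_{L^2_0}$ has a \emph{spectral gap}---meaning the spectral parameters remain bounded away from the trivial one throughout the decomposition, say $\alpha(s)\geq 2\delta(\mu)$---then integrating over the decomposition yields
$$\bigl|C_t(u,v)\bigr| \;\leq\; e^{-\delta(\mu)\,t}\,\|u\|_{L^2(\mu)}\,\|v\|_{L^2(\mu)}$$
for all $K$-invariant $u,v$, after shrinking $\delta(\mu)$ slightly to absorb the polynomial factor $(1+t)$.

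The heart of the matter, and the main obstacle, is proving this spectral gap for an arbitrary ergodic $SL(2,\mathbb{R})$-invariant $\mu$, where (unlike the Masur--Veech case) no Rauzy--Veech combinatorial coding is available. The strategy is geometric: using Lemma \ref{l.Eskin-Masur} and the regularity estimate of Theorem \ref{t.AMY}, select a compact set $K_0 \subset \mathcal{C}$ carrying a definite fraction of the $\mu$-mass and on which systole and cylinder geometry are uniformly controlled, together with a smooth transversal $\Sigma \subset K_0$ to the Teichm\"uller flow. Disintegrating $\mu$ along $SL(2,\mathbb{R})$-orbits via Rokhlin's theorem, $g_t$ can be represented as a suspension over a return map $R : \Sigma \to \Sigma$ with roof function $\tau$. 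The crucial step---where the regularity of $\mu$ really enters---is to verify that $R$ is non-uniformly hyperbolic with good distortion control and that $\tau$ has exponential tails, both obtained by transporting the cusp estimate of Theorem \ref{t.AMY} into quantitative recurrence to $K_0$. Once these properties are in place, the Dolgopyat-type oscillatory-integral estimates from the proof of Theorem \ref{t.AGY} apply to the pair $(R,\tau)$ and yield exponential mixing of the suspension; this in turn forces the spectral gap on $\pi|_{L^2_0}$ via standard transfer arguments, completing the proof.
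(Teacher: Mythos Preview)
The paper does not prove Theorem \ref{t.AG}; it is quoted from Avila--Gou\"ezel \cite{AG}, with a one-sentence description of the method: ``the delicate construction of \emph{anisotropic} Banach spaces adapted to the spectral analysis of certain transfer operators.'' Your representation-theoretic reduction to a spectral gap for $\pi|_{L^2_0}$ is correct and is indeed how the problem is framed, but your proposed route to the spectral gap is essentially the one the paper explicitly warns against just before the statement of Theorem \ref{t.AG}: the AGY scheme of Theorem \ref{t.AGY} (suspension over a hyperbolic base with exponential-tail roof, then Dolgopyat) is ``hard to extend to arbitrary $SL(2,\mathbb{R})$-invariant probability measures'' because the Rauzy--Veech coding that supplies the Markov structure, the distortion bounds, and the exponential tails is tailor-made for Masur--Veech measures.

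The concrete gap in your plan is the ``crucial step'': you assert that Theorem \ref{t.AMY} and Lemma \ref{l.Eskin-Masur} yield a transversal $\Sigma$ on which the return map has controlled distortion and the roof $\tau$ has exponential tails. Neither follows. Theorem \ref{t.AMY} gives $m(\mathcal{C}_{(2)}(\rho))=o(\rho^2)$, a statement about the measure of surfaces with two short non-parallel saddle connections; it says nothing about the rate of decay, and in particular does not give the exponential recurrence estimates you need. Likewise, for a general affine invariant $\mu$ there is no obvious Markov section on $\mathrm{supp}(\mu)$ with the bounded-distortion hyperbolic structure Dolgopyat requires. Avila and Gou\"ezel bypass all of this: they work directly with the foliated Laplacian/Casimir and build anisotropic Banach spaces on which the resolvent is analyzed, obtaining the spectral gap without any symbolic model. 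That is the argument the paper is pointing to.
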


The proof of Theorem \ref{t.AG} is based on the delicate construction of \emph{anisotropic} Banach spaces adapted to the spectral analysis of certain transfer operators. 

\subsection{Teichm\"uller curves with complementary series} The particularly nice features of the $SL(2,\mathbb{R})$-action on moduli spaces of translation surfaces led Avila and Gou\"ezel \cite{AG} to ask if there can be some sort of uniformity in the way that the Teichm\"uller flow mixes the phase space: for instance, is it possible to take the constant $\delta(\mu)>0$ in the statement of Theorem \ref{t.AG} uniformly bounded away from zero as $\mu$ varies? 

This question is still open (to the best of our knowledge) if the $SL(2,\mathbb{R})$-invariant probability measures $\mu$ are only allowed to vary within a \emph{fixed} connected component $\mathcal{C}$ of a stratum of the moduli space of translation surfaces of genus $g\geq 2$. 

Also, it was conjectured\footnote{In the language of unitary $SL(2,\mathbb{R})$-representations (see Subsection \ref{ss.Ratner-mixing}), the precise statement of Yoccoz's conjecture is: ``the regular $SL(2,\mathbb{R})$ representation $L^2(\mathcal{C},\mu_{\mathcal{C}})$ has no complementary series whenever $\mu_{\mathcal{C}}$ is a Masur-Veech measure''. So far, the validity of this conjecture is known only for the moduli space of unit area flat torii $\mathcal{C}=SL(2,\mathbb{R})/SL(2,\mathbb{Z})$ thanks to a classical theorem of Selberg.} by Yoccoz that $\delta(\mu_{\mathcal{C}})$ can be taken arbitrarily close to one when $\mu_{\mathcal{C}}$ is a Masur-Veech measure. 

On the other hand, if we allow the support of $\mu$ to vary among \emph{all} strata of moduli spaces of translation surfaces, then it was proved by Schmith\"usen and the author \cite{MaSc} that no uniform lower bound on $\delta(\mu)>0$ is possible. 

\begin{theorem}\label{t.MaSc} For each $k\geq 3$, there exists an explicit square-tiled surface $Z_{2k}$ (of genus $48k+3$ tiled by $192k$ squares) generating an arithmetic Teichm\"uller curve $\mathcal{S}_{2k}$ such that there is no uniform lower bound on the exponential rate of mixing of the $SL(2,\mathbb{R})$-invariant probability $\mu_{2k}$ supported on $\mathcal{S}_{2k}$, i.e., $\lim\limits_{k\to\infty}\delta(\mu_{2k})=0$ (where $\delta(.)$ is the best constant in the statement of Theorem \ref{t.AG}).
\end{theorem}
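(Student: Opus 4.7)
The plan is to reduce the exponential mixing question for these arithmetic Teichm\"uller curves to a spectral gap problem for the hyperbolic Laplacian, and then to construct origamis whose Veech groups produce hyperbolic quotients with spectral gap tending to zero. The key reduction uses the identification, for an arithmetic Teichm\"uller curve, $\mathcal{S}(Z)\cong SL(2,\mathbb{R})/SL(Z)$ as the unit cotangent bundle of the hyperbolic orbifold $M(Z):=\mathbb{H}/SL(Z)$, under which the Teichm\"uller flow becomes the geodesic flow. By the Howe--Moore theorem and the Bargmann classification of irreducible unitary representations of $SL(2,\mathbb{R})$, the optimal exponential rate of mixing of the geodesic flow for $SO(2,\mathbb{R})$-invariant observables is controlled by the support of the Casimir spectrum on $L^2(\mathcal{S}(Z),\mu)$ modulo constants, which equals the bottom $\lambda_1(M(Z))$ of the spectrum of the hyperbolic Laplacian on $M(Z)$. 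Concretely, when $\lambda_1(M(Z))<1/4$, writing $\lambda_1=s(1-s)$ with $s\in(1/2,1)$, one gets $\delta(\mu)=1-s$. Hence it suffices to produce origamis $Z_{2k}$ with $\lambda_1(M(Z_{2k}))\to 0$.

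For the construction, I would specify $Z_{2k}$ as an explicit finite cover of a fixed small origami, engineered combinatorially so that its Veech group $SL(Z_{2k})$ is a \emph{non-congruence} finite index subgroup of $SL(2,\mathbb{Z})$ whose hyperbolic quotient has a long, thin geometric bottleneck. Using the combinatorial description of origamis by pairs of permutations $(h,v)$ modulo simultaneous conjugation, together with the rules $T(h,v)=(h,vh^{-1})$ and $S(h,v)=(hv^{-1},v)$ for the action of generators of $SL(2,\mathbb{Z})$, the $SL(2,\mathbb{Z})$-orbit of $(h,v)$ can be computed algorithmically, yielding both the index $[SL(2,\mathbb{Z}):SL(Z_{2k})]$ and a fundamental domain of $SL(Z_{2k})$ in $\mathbb{H}$. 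The numerology $192k$ squares and genus $48k+3$ is expected to drop out of the construction via Riemann--Hurwitz once the permutation data is fixed; the scaling in $k$ would correspond to a family of covers whose widths of a distinguished cusp (or lengths of a distinguished separating geodesic) depend monotonically on $k$.

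The spectral estimate $\lambda_1(M(Z_{2k}))\to 0$ would then be established by exhibiting an explicit test function with vanishing Rayleigh quotient. The natural candidate is a smooth cutoff supported on one of the two pieces obtained by cutting $M(Z_{2k})$ along the combinatorial bottleneck: if the bottleneck has length $\ell_{2k}\to\infty$ and width $w_{2k}\to 0$, a direct computation of the Dirichlet energy shows that the Rayleigh quotient is of order $w_{2k}/\ell_{2k}$, which, together with the Cheeger--Buser inequalities, forces $\lambda_1(M(Z_{2k}))\to 0$. This route is only viable because $SL(Z_{2k})$ is non-congruence: by Selberg's $3/16$ theorem, any congruence subgroup of $SL(2,\mathbb{Z})$ satisfies $\lambda_1\geq 3/16$, so the non-congruence property is essential, not incidental.

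The main obstacle is the simultaneous control of two delicate aspects of the construction. First, one must verify that the $SL(Z_{2k})$ are genuinely non-congruence, which usually requires identifying an obstruction (for example a quotient of $SL(Z_{2k})$ that does not factor through any $SL(2,\mathbb{Z}/N\mathbb{Z})$, or an explicit incompatibility with the action on principal congruence coset spaces). Second, the purely combinatorial description of the $SL(2,\mathbb{Z})$-orbit of $(h,v)$ must be translated into a quantitative geometric bottleneck in $\mathbb{H}/SL(Z_{2k})$ whose dimensions can be estimated effectively in $k$. Once these two points are secured, the conclusion $\delta(\mu_{2k})=1-s_{2k}\to 0$ follows automatically from $s_{2k}(1-s_{2k})=\lambda_1(M(Z_{2k}))\to 0$ with $s_{2k}\in(1/2,1)$, so the real content of the theorem is the explicit combinatorial design of the origamis $Z_{2k}$.
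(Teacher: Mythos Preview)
Your reduction to a spectral gap problem is exactly right and matches the paper: the Teichm\"uller flow on an arithmetic Teichm\"uller curve is the geodesic flow on $\mathbb{H}/SL(Z)$, and Ratner's estimates (equivalently, the Bargmann classification you invoke) translate $\delta(\mu)\to 0$ into $\lambda_1(\mathbb{H}/SL(Z))\to 0$. The Cheeger--Buser approach to the spectral estimate is also what the paper uses.

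The gap is in the construction. You propose to ``engineer combinatorially'' origamis $Z_{2k}$ so that $\mathbb{H}/SL(Z_{2k})$ directly exhibits a geometric bottleneck, and to read this off from the $SL(2,\mathbb{Z})$-orbit of the permutation pair. This is the hard direction: there is no known mechanism for prescribing the coarse geometry of $\mathbb{H}/SL(Z)$ by tweaking $(h,v)$. The paper avoids this by \emph{decoupling} the two tasks. First it builds, entirely on the group-theoretic side, an explicit tower $\Gamma_6(2k)\subset \Gamma_6\subset SL(2,\mathbb{Z})$: the base $\mathbb{H}/\Gamma_6$ has genus one, so one can take $2k$-fold \emph{cyclic} covers along a fixed non-peripheral geodesic $c_1$ (Selberg's classical construction). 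The bottleneck is then just two lifts of $c_1$, and Buser's inequality gives $\lambda_1(\Gamma_6(2k))<1/(2k)$ by an explicit Cheeger-constant computation. Only afterwards does one produce the origami: using the Ellenberg--McReynolds technology (realizing prescribed finite-index subgroups of $\Gamma_2$ as Veech groups via ramified covers with controlled ramification data), one builds $Z_{2k}$ so that $SL(Z_{2k})\subset\Gamma_6(2k)$. Since $\lambda_1$ is monotone under finite covers, containment suffices; one does not need equality.

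Two smaller points. Your ``obstacle 1'' (verifying non-congruence) is not an obstacle at all: once $\lambda_1<3/16$, non-congruence is a \emph{consequence} of Selberg's theorem, not an input. And the $192k$ squares, genus $48k+3$ numerology does not fall out of Riemann--Hurwitz applied to a single cover of a base origami; it comes from the composite $Z_{2k}\to Y_{2k}\to E[2]\to E$, where $Y_{2k}\to E[2]$ has degree $[\Gamma_2:\Gamma_6(2k)]=24k$ and $Z_{2k}\to Y_{2k}$ is a further double cover chosen to rigidify the ramification data so that every affine homeomorphism of $Z_{2k}$ descends.
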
 

We devote the rest of this section to discuss this theorem.

\subsection{Idea of proof of Theorem \ref{t.MaSc}} From the abstract point of view, an old procedure\footnote{Selberg used cyclic covers to show that there is no uniform \emph{spectral gap} for the Laplacian of $\mathbb{H}/\Gamma$ (where $\Gamma\subset SL(2,\mathbb{Z})$ is a lattice). As it turns out, this is equivalent to our assertion on rates of mixing: see Subsection \ref{ss.Ratner-mixing}.} due to Selberg allows one to build a sequence  $(\mathbb{H}/\Gamma^{(k)})_{k\in\mathbb{N}}$, $\Gamma^{(k)}\subset SL(2,\mathbb{Z})$, of arithmetic finite-area hyperbolic surface such that there is no uniform lower bound on the exponential rate of mixing of the Lebesgue measures $\mu_k$ of $\mathbb{H}/\Gamma^{(k)}$ by taking appropriate \emph{cyclic} covers of a fixed finite-area hyperbolic surface of positive genus (see Figure \ref{f.Selberg-cyclic-cover}). 

On the other hand, it is not obvious at all that the lattices $\Gamma^{(k)}\subset SL(2,\mathbb{Z})$ provided by Selberg's procedure are useful for our purposes of proving Theorem \ref{t.MaSc}: in fact, there is no reason for $\Gamma^{(k)}$ to correspond to Veech groups of origamis $Z_{2k}$ or, equivalently, it is not clear that $SL(2,\mathbb{R})/\Gamma^{(k)}$ is realizable as an arithmetic Teichm\"uller curve. 

Nevertheless, Avila, Yoccoz and the author noticed during some conversations that the results of Ellenberg and McReynolds \cite{EM} on the realizability of certain lattices $\Gamma\subset SL(2,\mathbb{Z})$ as Veech groups of origamis could be combined with Selberg's argument to show the \emph{existence} of a sequence $Z_{2k}$ of square-tiled surfaces satisfying the conclusions of Theorem \ref{t.MaSc}. 

In principle, it is not easy to build the explicit examples of origamis in Theorem \ref{t.MaSc} \emph{directly} from the arguments of Avila, Yoccoz and the author mentionned in the previous paragraph, but Schmith\"usen and the author were able to \emph{adapt} them to obtain Theorem \ref{t.AG}. 

\subsection{Quick review of representation theory of $SL(2,\mathbb{R})$}\label{ss.Ratner-mixing} Before starting the proof of Theorem \ref{t.MaSc}, it is useful to recall the relationship between the spectral properties of the regular $SL(2,\mathbb{R})$-representation $L^2(\mathcal{C},\mu)$ and the exponential rate of mixing of the Teichm\"uller flow with respect to an ergodic $SL(2,\mathbb{R})$-invariant probability measure $\mu$ supported on Teichm\"uller curves. For this reason, we shall review in this subsection some basic aspects of the theory of unitary $SL(2,\mathbb{R})$-representation and the results of Ratner \cite{Rt92} on rates of mixing. 

\subsubsection{Spectrum of unitary $SL(2,\mathbb{R})$-representations} Let $\rho:SL(2,\mathbb{R})\to U(\mathcal{H})$ be a unitary \emph{representation} of $SL(2,\mathbb{R})$, i.e., $\rho$ is a homomorphism from $SL(2,\mathbb{R})$ into the group $U(\mathcal{H})$ of \emph{unitary} transformations of the \emph{complex} separable Hilbert space $\mathcal{H}$. We say that a vector $v\in\mathcal{H}$ is a $C^k$-vector of $\rho$ if $g\mapsto\rho(g)v$ is a $C^k$ function on $SL(2,\mathbb{R})$ . Recall that the subset of $C^{\infty}$-vectors is \emph{dense} in $\mathcal{H}$.

The \emph{Lie algebra}\footnote{I.e., the tangent space of $SL(2,\mathbb{R})$ at the identity.} $sl(2,\mathbb{R})$ of $SL(2,\mathbb{R})$ is the set of all $2\times2$ matrices with zero trace. Given a $C^1$-vector $v$ of the representation $\rho$ and $X\in sl(2,\mathbb{R})$, the \emph{Lie derivative} $L_X v$ is
$$L_X v := \lim\limits_{t\to0}\frac{\rho(\exp(tX))\cdot v - v}{t}\,,$$
where $\exp(X)$ is the \emph{exponential map} (of matrices).

An important basis of $sl(2,\mathbb{R})$ is 
$$W:=\left(\begin{array}{cc}0&1\\-1&0\end{array}\right), \quad Q:=\left(\begin{array}{cc}1&0\\0&-1\end{array}\right),  \quad V:=\left(\begin{array}{cc}0&1\\1&0\end{array}\right)$$

These vectors are the infinitesimal generators of the following subgroups of $SL(2,\mathbb{R})$:
$$\exp(tW)=\left(\begin{array}{cc}\cos t&\sin t\\-\sin t&\cos t\end{array}\right), \quad \exp(tQ) = \left(\begin{array}{cc}e^t&0\\0&e^{-t}\end{array}\right), \quad \exp(tV) = \left(\begin{array}{cc}\cosh t&\sinh t\\-\sinh t&\cosh t\end{array}\right)$$ Furthermore, $[Q,W]=2V$, $[Q,V]=2W$, $[W,V]=2Q$, where $[.,.]$ is the Lie bracket\footnote{I.e., $[A,B]:= AB-BA$ is the commutator.} of $sl(2,\mathbb{R})$.

The \emph{Casimir operator} $\Omega_{\rho}$  is $\Omega_{\rho}:=(L_V^2+L_Q^2-L_W^2)/4$ on the dense subspace of $C^2$-vectors of $\rho$. It is known that $\Omega_{\rho}$ is symmetric\footnote{That is, 
$\langle \Omega_{\rho}v,w\rangle = \langle v,\Omega_{\rho}w\rangle$ for any $C^2$-vectors $v,w\in\mathcal{H}$}, its closure is a \emph{self-adjoint} operator, and it commutes with $L_X$ on $C^3$-vectors and with $\rho(g)$ on 
$C^2$-vectors (for all $X\in sl(2,\mathbb{R})$ and $g\in SL(2,\mathbb{R})$). 

In addition, when the representation $\rho$ is \emph{irreducible}, $\Omega_{\rho}$ is a scalar multiple of the identity operator, i.e., $\Omega_{\rho}v = \lambda(\rho)v$ for some $\lambda(\rho)\in\mathbb{R}$ and for all $C^2$-vectors $v\in\mathcal{H}$ of $\rho$. In general, as we're going to see below, the spectrum $\sigma(\Omega_{\rho})$ of the Casimir operator $\Omega_{\rho}$ is a fundamental object.

\subsubsection{Bargmann's classification}

We introduce the following notation:
\begin{equation*}r(\lambda):=\left\{\begin{array}{cc}-1 & \quad\quad\textrm{if } \lambda\leq -1/4, \\ -1+\sqrt{1+4\lambda} & \quad\quad\quad\,\,\,\,\textrm{ if } -1/4<\lambda<0 \\ -2 & \textrm{if } \lambda\geq 0\end{array}\right.
\end{equation*}
Note that $r(\lambda)$ satisfies the quadratic equation $x^2+2x-4\lambda=0$ when $-1/4<\lambda<0$. 

Bargmann's classification of \emph{irreducible} unitary $SL(2,\mathbb{R})$ says that the eigenvalue $\lambda(\rho)$ of the Casimir operator $\Omega_{\rho}$ has the form
$$\lambda(\rho) = (s^2-1)/4$$
where $s\in\mathbb{C}$ falls into one of the following three categories:
\begin{itemize}
\item \emph{Principal series}: $s$ is \emph{purely imaginary}, i.e., $s\in\mathbb{R}i$;
\item \emph{Complementary series}: $s\in (0,1)$ and $\rho$ is \emph{isomorphic} to the representation 
$$\rho_s\left(\begin{array}{cc}a&b\\c&d\end{array}\right) f(x):= (cx+d)^{-1-s} f\left(\frac{ax+b}{cx+d}\right),$$ where $f$ belongs to the Hilbert space $\mathcal{H}_s:=\left\{f:\mathbb{R}\to\mathbb{C}: \iint\frac{f(x)\overline{f(y)}}{|x-y|^{1-s}}dx\,dy<\infty\right\}$;
\item \emph{Discrete series}: $s\in\mathbb{N}-\{0\}$.
\end{itemize}
In other words, $\rho$ belongs to the \emph{principal series} when $\lambda(\rho)\in(-\infty,-1/4]$, $\rho$ belongs to the \emph{complementary series} when $\lambda(\rho)\in (-1/4,0)$ and $\rho$ belongs to the \emph{discrete series} when $\lambda(\rho)=(n^2-1)/4$ for some natural number $n\geq 1$.
Note that, when $-1/4<\lambda(\rho)<0$ (i.e., $\rho$ belongs to the complementary series), we have $r(\lambda(\rho))=-1+s$.

\subsubsection{Hyperbolic surfaces and examples of regular unitary $SL(2,\mathbb{R})$-representations}

Recall that $SL(2,\mathbb{R})$ is naturally identified with the unit cotangent bundle of the upper half-plane $\mathbb{H}$. Indeed, the quotient $SL(2,\mathbb{R})/SO(2,\mathbb{R})$ is diffeomorphic to $\mathbb{H}$ via
$$\left(\begin{array}{cc}a&b\\c&d\end{array}\right)\cdot SO(2,\mathbb{R})\mapsto \frac{ai+b}{ci+d}$$
Let $\Gamma$ be a lattice of $SL(2,\mathbb{R})$ and denote by $\mu$ the $SL(2,\mathbb{R})$-invariant probability measure on $\Gamma\backslash SL(2,\mathbb{R})$ induced from the Haar measure of $SL(2,\mathbb{R})$. Note that, in this situation, $M:=\Gamma\backslash SL(2,\mathbb{R})$ is naturally identified with the unit cotangent bundle $T_1 S$ of the hyperbolic surface $S:=\Gamma\backslash SL(2,\mathbb{R})\slash SO(2,\mathbb{R}) = \Gamma\backslash \mathbb{H}$ of finite area with respect to the natural measure $\nu$.

Since the actions of $SL(2,\mathbb{R})$ on $M:=\Gamma\backslash SL(2,\mathbb{R})$ and $S:=\Gamma\backslash\mathbb{H}$ preserve $\mu$ and $\nu$, we obtain the following \emph{regular} unitary $SL(2,\mathbb{R})$ representations:
$$\rho_{\Gamma}(g)f(\Gamma z):=f(\Gamma z\cdot g) \quad \forall\, f\in L^2(M,\mu)$$
and
$$\rho_S(g)f(\Gamma z SO(2,\mathbb{R})) := f(\Gamma z\cdot g SO(2,\mathbb{R})) \quad \forall\, f\in L^2(S,\nu).$$
Observe that $\rho_S$ is a subrepresentation of $\rho_M$ because the space $L^2(S,\nu)$ can be identified with the subspace $\mathcal{H}_{\Gamma}:=\{f\in L^2(M,\mu): f \textrm{ is constant along } SO(2,\mathbb{R})-\textrm{orbits}\}$. Nevertheless, it is possible to show that the Casimir operator $\Omega_{\rho_M}$ restricted to $C^2$-vectors of $\mathcal{H}_{\Gamma}$ \emph{coincides} with the Laplacian $\Delta=\Delta_S$ on $L^2(S,\nu)$. Also, we have that a number $-1/4<\lambda<0$ belongs to the spectrum of the Casimir operator $\Omega_{\rho_M}$ (on $L^2(M,\mu)$) if and only if $-1/4<\lambda<0$ belongs to the spectrum of the Laplacian $\Delta=\Delta_S$ on $L^2(S,\nu)$.

\subsubsection{Rates of mixing and spectral gap}

Recall that the action of the $1$-parameter subgroup $g(t):= \textrm{diag}(e^t,e^{-t})$, $t\in\mathbb{R}$, of diagonal matrices of $SL(2,\mathbb{R})$ on $M=\Gamma\backslash SL(2,\mathbb{R})$ is identified with the geodesic flow on a hyperbolic surface of finite area $S=\Gamma\backslash\mathbb{H}$.

Ratner \cite{Rt92} showed that the Bargmann's series of the irreducible factors of the regular $SL(2,\mathbb{R})$-representation $\rho_{\Gamma}$ on $L^2(M,\mu)$ can be deduced from the \emph{rates of mixing} of the geodesic flow $g(t)$ along a certain class of observables. More concretely, let $c(\Gamma)=\sigma(\Delta_S)\cap (-1/4,0)$ be the intersection of the spectrum of the Laplacian $\Delta_S$ with the open interval $(-1/4,0)$. We denote 
$$\beta(\Gamma) = \sup c(\Gamma)$$
with the convention $\beta(c(\Gamma))=-1/4$ when $c(\Gamma)=\emptyset$ and
$$\sigma(\Gamma)=r(\beta(\Gamma)):=-1+\sqrt{1+4\beta(\Gamma)}\,.$$
Observe that the subset $c(\Gamma)$ detects the presence of complementary series in the decomposition of $\rho_{\Gamma}$ into irreducible representations. Also, since $\Gamma$ is a lattice, it is possible to show that $c(\Gamma)$ is finite and, \emph{a fortiori}, $\beta(\Gamma)<0$. Since $\beta(\Gamma)$ essentially measures the distance between zero and the first eigenvalue of $\Delta_S$ on $\mathcal{H}_{\Gamma}$, it is natural to call $\beta(\Gamma)$ the \emph{spectral gap}.  

\begin{theorem}[Ratner]\label{t.Ratner}For any $u,v\in\mathcal{H}_{\Gamma}$ and $|t|\geq 1$, we have
\begin{itemize}
\item $|\langle u, \rho_{\Gamma}(g(t))v\rangle|\leq C_{\beta(\Gamma)}\cdot e^{\sigma(\Gamma)t}\cdot \|u\|_{L^2}\|v\|_{L^2}$ when $\mathcal{C}(\Gamma)\neq\emptyset$;
\item $|\langle u, \rho_{\Gamma}(g(t))v\rangle|\leq C_{\beta(\Gamma)}\cdot e^{\sigma(\Gamma)t}\cdot \|u\|_{L^2}\|v\|_{L^2} = C_{\beta(\Gamma)}\cdot e^{-t}\cdot \|u\|_{L^2}\|v\|_{L^2}$ when $\mathcal{C}(\Gamma)=\emptyset$, $\sup(\sigma(\Delta_S)\cap(-\infty,-1/4))<-1/4$ and $-1/4$ is not an eigenvalue of the Casimir operator $\Omega_{\rho_{\Gamma}}$;
\item $|\langle u, \rho_{\Gamma}(g(t))v\rangle|\leq C_{\beta(\Gamma)}\cdot t\cdot e^{\sigma(\Gamma)t}\cdot \|u\|_{L^2}\|v\|_{L^2} = C_{\beta(\Gamma)}\cdot t\cdot e^{-t}\cdot \|u\|_{L^2}\|v\|_{L^2}$ otherwise, i.e., when $\mathcal{C}(\Gamma)=\emptyset$ and either $\sup(\sigma(\Delta_S)\cap(-\infty,-1/4))=-1/4$ or $-1/4$ is an eigenvalue of the Casimir operator $\Omega_{\rho_{\Gamma}}$.
\end{itemize}
The above constants\footnote{The original arguments of Ratner allow one to explicitly these constants: see our paper \cite{Ma-Rt} for more details.} $C_{\mu}$ are uniformly bounded when $\mu$ varies on compact subsets of $(-\infty,0)$.
\end{theorem}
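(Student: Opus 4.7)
\medskip

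\noindent\emph{Proof proposal.} The plan is a classical spectral-theoretic argument: decompose the regular representation $\rho_\Gamma$ into irreducibles along the spectrum of the Casimir, reduce each matrix coefficient to a spherical function on $SO(2,\mathbb{R})\backslash SL(2,\mathbb{R})/SO(2,\mathbb{R})$, and bound these spherical functions uniformly via the Legendre-type ODE that the Casimir imposes on them.

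First I would invoke the direct integral decomposition $L^2(M,\mu)=\int^\oplus \mathcal{H}_\lambda\,d\sigma(\lambda)$ coming from the fact that $\Omega_{\rho_\Gamma}$ is essentially self-adjoint and commutes with the $SL(2,\mathbb{R})$-action, with $\Omega$ acting by $\lambda$ on $\mathcal{H}_\lambda$. By Bargmann's trichotomy, each $\mathcal{H}_\lambda$ lies in the principal, complementary, or discrete series. The crucial observation is that discrete-series representations have no nonzero $SO(2,\mathbb{R})$-fixed vector, so the subspace $\mathcal{H}_\Gamma\subset L^2(M,\mu)$ only sees the principal series ($\lambda\le -1/4$) and the complementary series ($-1/4<\lambda<0$). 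Hence for $u,v\in\mathcal{H}_\Gamma$ we decompose $u=\int^\oplus u_\lambda$, $v=\int^\oplus v_\lambda$ supported on $\sigma(\Omega_{\rho_\Gamma})\cap(-\infty,0)$.

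Next, inside each irreducible principal or complementary factor, the line of $SO(2,\mathbb{R})$-fixed vectors is one-dimensional (spherical pair): let $e_\lambda$ be a unit $K$-fixed vector, so that $u_\lambda,v_\lambda\in\mathbb{C}\cdot e_\lambda$. Define the spherical function $\phi_\lambda(t):=\langle e_\lambda,\rho_\lambda(g(t))e_\lambda\rangle$. Using $\Omega_{\rho_\lambda}e_\lambda=\lambda e_\lambda$ and the commutation identities among $W,Q,V$ (together with the fact that $K$-invariance of $e_\lambda$ kills the $L_W$-terms after conjugating by $g(t)$), a short computation shows that $\phi_\lambda$ satisfies a Legendre-type second-order ODE
\[
\phi_\lambda''(t)+\coth(t)\,\phi_\lambda'(t)=\lambda\,\phi_\lambda(t),\qquad \phi_\lambda(0)=1,\;|\phi_\lambda|\le 1.
\]
The two independent behaviors at $t\to\infty$ are $e^{r(\lambda)t}$ and $e^{-(2+r(\lambda))t}$; the normalization forces the first to appear, giving $|\phi_\lambda(t)|\le C_\lambda\,e^{r(\lambda)t}$ for $t\ge 1$, with $C_\lambda$ uniform on compact subsets of $(-\infty,0)\setminus\{-1/4\}$. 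At the resonance $\lambda=-1/4$ the two exponentials collide into a single $e^{-t}$ mode, producing the logarithmic correction $|\phi_{-1/4}(t)|\le C\,t\,e^{-t}$.

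Finally I would assemble the pieces: one-dimensionality of the $K$-fixed line gives $\langle u_\lambda,\rho_\lambda(g(t))v_\lambda\rangle=\phi_\lambda(t)\langle u_\lambda,v_\lambda\rangle$, so
\[
|\langle u,\rho_\Gamma(g(t))v\rangle|\le \Bigl(\sup_{\lambda\in\mathrm{supp}(\sigma)\cap(-\infty,0)}|\phi_\lambda(t)|\Bigr)\|u\|_{L^2}\|v\|_{L^2}.
\]
When $c(\Gamma)\neq\emptyset$, the sup is attained at the top of the complementary spectrum $\lambda=\beta(\Gamma)$ and yields $C_{\beta(\Gamma)}\,e^{\sigma(\Gamma)t}$. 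When $c(\Gamma)=\emptyset$ and $-1/4$ is neither an eigenvalue nor accumulated by $\sigma(\Delta_S)\cap(-\infty,-1/4)$, the sup is realized at some $\lambda_0<-1/4$, giving $C\,e^{-t}$. In the borderline case, the resonance at $-1/4$ forces the $t\,e^{-t}$ correction. The main obstacle I foresee is precisely this uniformity of the spherical function estimates as $\lambda$ ranges over the Casimir spectrum near $-1/4$: the two exponential modes degenerate there, and one must track carefully how the coefficients in the Legendre representation of $\phi_\lambda$ depend on $\lambda$, both to extract a constant $C_{\beta(\Gamma)}$ uniform on compact subsets of $(-\infty,0)$ and to produce the exact form ($e^{-t}$ versus $t\,e^{-t}$) of the bound when $c(\Gamma)=\emptyset$.
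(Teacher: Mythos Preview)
The paper does not give its own proof of this theorem: it is quoted as Ratner's result, with a citation to her 1987 paper and a footnote pointing to \cite{Ma-Rt} for explicit constants. So there is nothing in the paper to compare your argument against.

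Your sketch is the standard spectral-theoretic route and is essentially Ratner's own. The structure---direct-integral decomposition along the Casimir spectrum, elimination of the discrete series via the absence of $K$-fixed vectors, reduction to the single spherical function $\phi_\lambda$, and analysis of its large-$t$ asymptotics through the second-order ODE it satisfies---is correct, and you have correctly isolated the only genuine subtlety, namely the uniformity of the bound as $\lambda\to -1/4^{-}$.

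One small coefficient slip: with $g(t)=\mathrm{diag}(e^t,e^{-t})$, the hyperbolic distance $d(i,g(t)\!\cdot\! i)$ equals $2t$, so the radial Laplace equation $\Phi''(r)+\coth(r)\Phi'(r)=\lambda\Phi(r)$ becomes, in the variable $t$,
\[
\phi_\lambda''(t)+2\coth(2t)\,\phi_\lambda'(t)=4\lambda\,\phi_\lambda(t),
\]
whose indicial roots at infinity are $-1\pm\sqrt{1+4\lambda}$. These are exactly the exponents $r(\lambda)$ and $-(2+r(\lambda))$ you use, so your downstream conclusions are unaffected; but the ODE as you wrote it would produce half those exponents.
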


In other words, Ratner's theorem relates the (exponential) rate of mixing of the geodesic flow $g(t)$ with the spectral gap: indeed, the quantity $|\langle f,\rho_{\Gamma}(a(t))g\rangle|$ roughly measures how fast the geodesic flow $g(t)$ mixes different places of phase space\footnote{This is more clearly seen when $f$ and $g$ are characteristic functions of Borelian sets.}, so that Ratner's result says that the exponential rate $\sigma(\Gamma)$ of mixing of $g(t)$ is an explicit function of the spectral gap $\beta(\Gamma)$ of $\Delta_S$.

\subsection{Explicit hyperbolic surfaces $\mathbb{H}/\Gamma_6(2k)$ with complementary series} After this brief revision of Ratner's work \cite{Rt92}, let us discuss now one of the key ingredients in the proof of Theorem \ref{t.MaSc}, namely, Selberg's construction of cyclic covers with arbitrarily small spectral gap. 

We want to define a sequence of lattices $\Gamma_6(2k)\subset SL(2,\mathbb{Z})$ in such a way that $\mathbb{H}/\Gamma_6(2k)$ is a family of cyclic covers of a fixed genus one (finite area) hyperbolic surface. 

Evidently, the first step is to find an appropriate genus one hyperbolic surface serving as ``base surface'' of the cyclic cover construction. For this sake, we denote by   
$$\Gamma_N:=\left\{ \left(\begin{array}{cc} a & b \\ c & d \end{array}\right)\in SL(2,\mathbb{Z}): a\equiv d\equiv 1, \, b\equiv c\equiv 0 \,\, (\textrm{mod } N)\right\}$$ 
the principal congruence subgroup of level $N\in\mathbb{N}$ of $SL(2,\mathbb{Z})$, and we observe that $\mathbb{H}/\Gamma_6$ can be used as the base surface in the cyclic cover construction thanks to the following classical fact\footnote{This proposition fails for $N<6$: indeed, $\mathbb{H}/\Gamma_n$ has genus $0$ for $1\leq n\leq 5$.}: 

\begin{proposition}\label{p.Gamma6} $\mathbb{H}/\Gamma_6$ is a genus one hyperbolic surface with 12 cusps. Moreover, $\rho(c_1) = \left(\begin{array}{cc} 29 & 12 \\ 12 & 5\end{array}\right)\in \Gamma_6$ represents a non-peripheral, homotopically non-trivial closed geodesic of $\mathbb{H}/\Gamma_6$. 
\end{proposition}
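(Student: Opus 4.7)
The genus and cusp count for $\mathbb{H}/\Gamma_6$ follow from classical formulas for the modular curves $X(N)=\mathbb{H}^*/\Gamma(N)$. The plan is to apply the index formula
\[
[PSL(2,\mathbb{Z}):\overline{\Gamma(N)}] = \frac{N^3}{2}\prod_{p\mid N}\!\Big(1-\frac{1}{p^2}\Big) \qquad (N\geq 3),
\]
which for $N=6$ gives $d_6 = 108\cdot\tfrac{3}{4}\cdot\tfrac{8}{9}=72$. Since for $N\geq 3$ the image $\overline{\Gamma(N)}$ acts on the cusps of $\mathbb{H}^*$ with each cusp-stabilizer generated by a single primitive parabolic of width $N$, the number of cusps of $\mathbb{H}/\Gamma_6$ equals $d_6/6=12$. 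The genus can then be obtained either from the Riemann-Hurwitz formula applied to the cover $X(6)\to X(1)$ (using that $X(1)$ is a sphere with elliptic orbifold points of orders $2,3$ and one cusp, and that $\Gamma(N)$ is torsion-free for $N\geq 2$), or from the closed formula $g(X(N))=1+d_N(N-6)/(12N)$. For $N=6$ both give $g=1$.

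For the closed geodesic assertion I would proceed as follows. First note that although $29\equiv -1\pmod{6}$, so strictly speaking $-\rho(c_1)$ rather than $\rho(c_1)$ lies in $\Gamma_6$, the two matrices induce the same M\"obius transformation of $\mathbb{H}$, so $\rho(c_1)$ defines a well-defined conjugacy class in $\overline{\Gamma_6}\subset PSL(2,\mathbb{R})$. Next, a direct computation gives $\mathrm{tr}(\rho(c_1))=34>2$, so $\rho(c_1)$ is a hyperbolic element of $SL(2,\mathbb{R})$ with two distinct real fixed points on $\partial\mathbb{H}$. Its axis in $\mathbb{H}$ descends to a closed geodesic on $\mathbb{H}/\Gamma_6$, and this geodesic is automatically homotopically non-trivial (the only trivial class is $\pm I$) and non-peripheral: on a finite-area cusped hyperbolic surface, peripheral loops correspond exactly to parabolic conjugacy classes, i.e.\ to elements with $|\mathrm{tr}|=2$, which rules out $\rho(c_1)$.

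\textbf{Main obstacle.} The only substantive difficulty is justifying the genus-and-cusp count from scratch. Invoking the classical modular-curve formulas makes this immediate, and this is the route I would take. If one instead wants a self-contained topological argument, one would have to build an explicit fundamental polygon for $\Gamma_6$ by assembling $72$ translates of the standard $SL(2,\mathbb{Z})$ domain and reading off its side-pairings from reduction mod $6$, then apply Gauss--Bonnet. This is mechanical but laborious; no conceptual obstacle arises beyond the bookkeeping of $12$ cusps and $0$ elliptic points, which is precisely what the classical formulas package for us.
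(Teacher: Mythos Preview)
Your proof is correct, and it takes a genuinely different route from the paper's. The paper argues via the degree-$12$ cover $\mathbb{H}/P\Gamma_6\to\mathbb{H}/P\Gamma_2$: it identifies $P\Gamma_2$ as the free group on $x=\left(\begin{smallmatrix}1&2\\0&1\end{smallmatrix}\right)$ and $y=\left(\begin{smallmatrix}1&0\\2&1\end{smallmatrix}\right)$, describes an ideal-quadrilateral fundamental domain for $P\Gamma_2$ (genus $0$, three cusps), then tiles a fundamental domain for $P\Gamma_6$ by twelve copies and reads off genus $1$ and twelve cusps from the Cayley graph of $P\Gamma_2/P\Gamma_6\simeq PSL(2,\mathbb{Z}/3\mathbb{Z})$. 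The curve $c_1$ is then exhibited explicitly in that picture, with $\rho(c_1)=xyxy$. Your approach via the classical $X(N)$ formulas and the trace test is faster for the bare statement; what the paper's construction buys is an \emph{explicit} simple closed curve $c_1$ on the surface, which is exactly what is needed immediately afterwards to define the cyclic covers $\Gamma_6(2k)$ by slitting along $c_1$ and to compute the intersection-number homomorphism $m:P\Gamma_6\to\mathbb{Z}$ on generators.

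Your observation that $29\equiv 5\equiv -1\pmod 6$, so that strictly $-\rho(c_1)\in\Gamma_6$ rather than $\rho(c_1)$, is a nice catch: indeed $xyxy\equiv -I\pmod 6$, and the statement should be read in $PSL(2,\mathbb{R})$, as you do.
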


\begin{proof} A complete proof of this proposition can be found in \cite[Subsection 2.1]{MaSc}. For the sake of convenience of the reader, let us give a brief sketch of the argument.

The group $P\Gamma_6:=\Gamma_6/\{\pm\textrm{Id}\}$ is a normal subgroup of index 12 of $P\Gamma_2:=\Gamma_2/\{\pm\textrm{Id}\}$: this is so because the exact sequence 
$$1\to P\Gamma_6\to PSL(2,\mathbb{Z})\to PSL(2,\mathbb{Z}/6\mathbb{Z}) \simeq PSL(2,\mathbb{Z}/2\mathbb{Z}) \times PSL(2,\mathbb{Z}/3\mathbb{Z}) \to 1$$
restricts to the exact sequence 
$$1\to P\Gamma_6\to P\Gamma_2\to PSL(2,\mathbb{Z}/3\mathbb{Z})\to 1,$$
so that the quotient $P\Gamma_2/P\Gamma_6$ is isomorphic to the finite group $PSL(2,\mathbb{Z}/3\mathbb{Z})$ of order 12. Moreover, the matrices 
\begin{eqnarray*}
& & A_1=\textrm{Id}, A_2=x, A_3=x^2, A_4 = y^{-1}x, A_5=y^{-1}, A_6=y \\ 
& & A_7 = y x^{-1}, A_8=y^{-1}x^{-1}, A_9=yx, A_{10}=y^{-1}x^{-1}y, A_{11}=yxy^{-1}, A_{12}= yxy^{-1}x^{-1}
\end{eqnarray*}
form a system of representatives of the cosets of $P\Gamma_2/P\Gamma_6$. 

The group $P\Gamma_2$ is isomorphic to the free group on 
$$x= \left(\begin{array}{cc} 1 & 2 \\ 0 & 1 \end{array}\right) \quad \textrm{ and } \quad y = \left(\begin{array}{cc} 1 & 0 \\ 2 & 1 \end{array}\right)$$ 
and a fundamental domain for the action of $P\Gamma_2$ on $\mathbb{H}$ is given by 
$$\mathcal{F}_2 = \bigcup\limits_{l=1}^6\alpha_l(\{z\in\mathbb{H}: |\textrm{Re}(z)|\leq 1/2, |z|\geq 1\})$$ 
where $\alpha_1=\textrm{Id}$, $\alpha_2:=\left(\begin{array}{cc} 1 & 1 \\ 0 & 1 \end{array}\right)$, $\alpha_3:=\left(\begin{array}{cc} 0 & -1 \\ 1 & 0 \end{array}\right)$, $\alpha_4=\alpha_2\alpha_3$, $\alpha_5 = \alpha_3\alpha_2$ and $\alpha_6 = \alpha_2^{-1}\alpha_3\alpha_2$. Note that $\mathcal{F}_2$ is an ideal quadrilateral in $\mathbb{H}$ whose edges are paired, so that $\mathbb{H}/P\Gamma_2$ is a genus 0 curve with three cusps.  

It follows from this discussion that the cover $\mathbb{H}/P\Gamma_6\to\mathbb{H}/P\Gamma_2$ has a tesselation into $12$ quadrilaterals whose dual graph is $P\Gamma_2/P\Gamma_6=\{A_m\cdot  P\Gamma_6:m=1, \dots, 12\}$ with respect to the generators $x$ and $y$: see Figure \ref{f.cayley-Gamma-6}. 

\begin{figure}[htb!]
\includegraphics[scale=0.4]{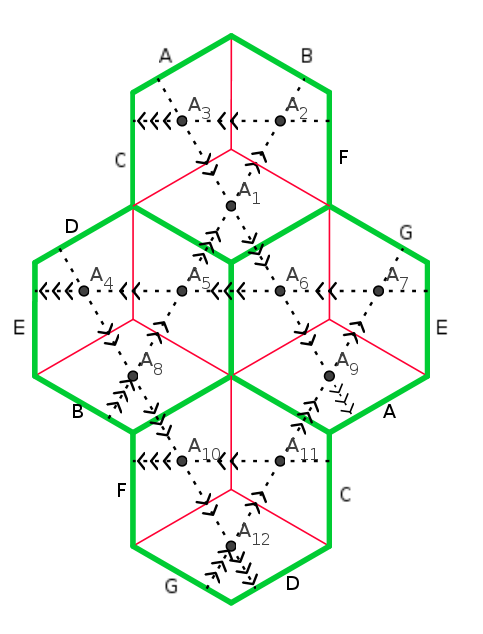}
\caption{Cayley graph of $P\Gamma_2/P\Gamma_6$.}\label{f.cayley-Gamma-6}
\end{figure}

In Figure \ref{f.cayley-Gamma-6}, the edges labelled by the same letter are identified and, thus, we have that $\mathbb{H}/P\Gamma_6$ has genus 1. Moreover, all vertices of the quadrilaterals are cusps, so that $\mathbb{H}/P\Gamma_6$ has 12 cusps. 

Finally, the fundamental group $P\Gamma_6$ of $\mathbb{H}/P\Gamma_6$ is generated by the paths $A, \dots, G$ and the small loops around the cusps $L_1,\dots, L_6$ indicated in Figure \ref{f.fundamental-group-Gamma-6}. 

\begin{figure}[htb!]
\includegraphics[scale=0.4]{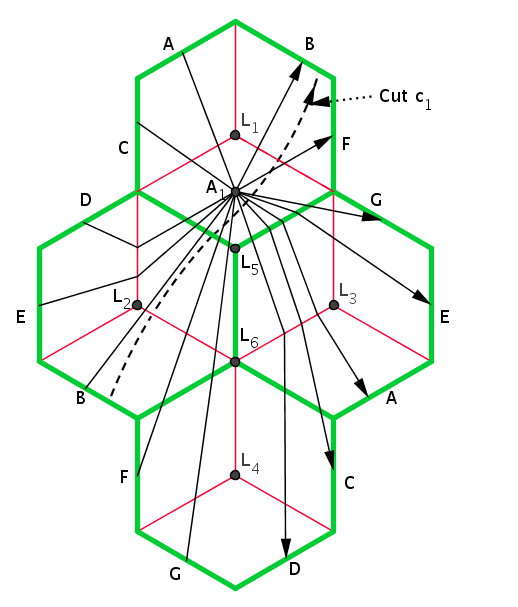}
\caption{Generators of $\pi_1(\mathbb{H}/\Gamma_6)$.}\label{f.fundamental-group-Gamma-6}
\end{figure}

In particular,  the path $c_1$ (connecting $B$-sides) in Figure \ref{f.fundamental-group-Gamma-6} is a homotopically non-trivial, non-peripheral, closed curve whose geodesic representative corresponds to the matrix 
$$\rho(c_1)=xyxy=\left(\begin{array}{cc} 29 & 12 \\ 12 & 5\end{array}\right)$$ 
This completes the proof of the proposition. 
\end{proof} 

Using this proposition, we construct a family $\mathbb{H}/\Gamma_6(2k)$ of cyclic covers of $\mathbb{H}/\Gamma_6$ as follows. We slit $\mathbb{H}/\Gamma_6$ along $c_1=\alpha$, we take $2k$ copies of the resulting slitted surface and we glue them in a cyclic order as in Figure \ref{f.Selberg-cyclic-cover} below. 

\begin{figure}[h!]
\begin{center}
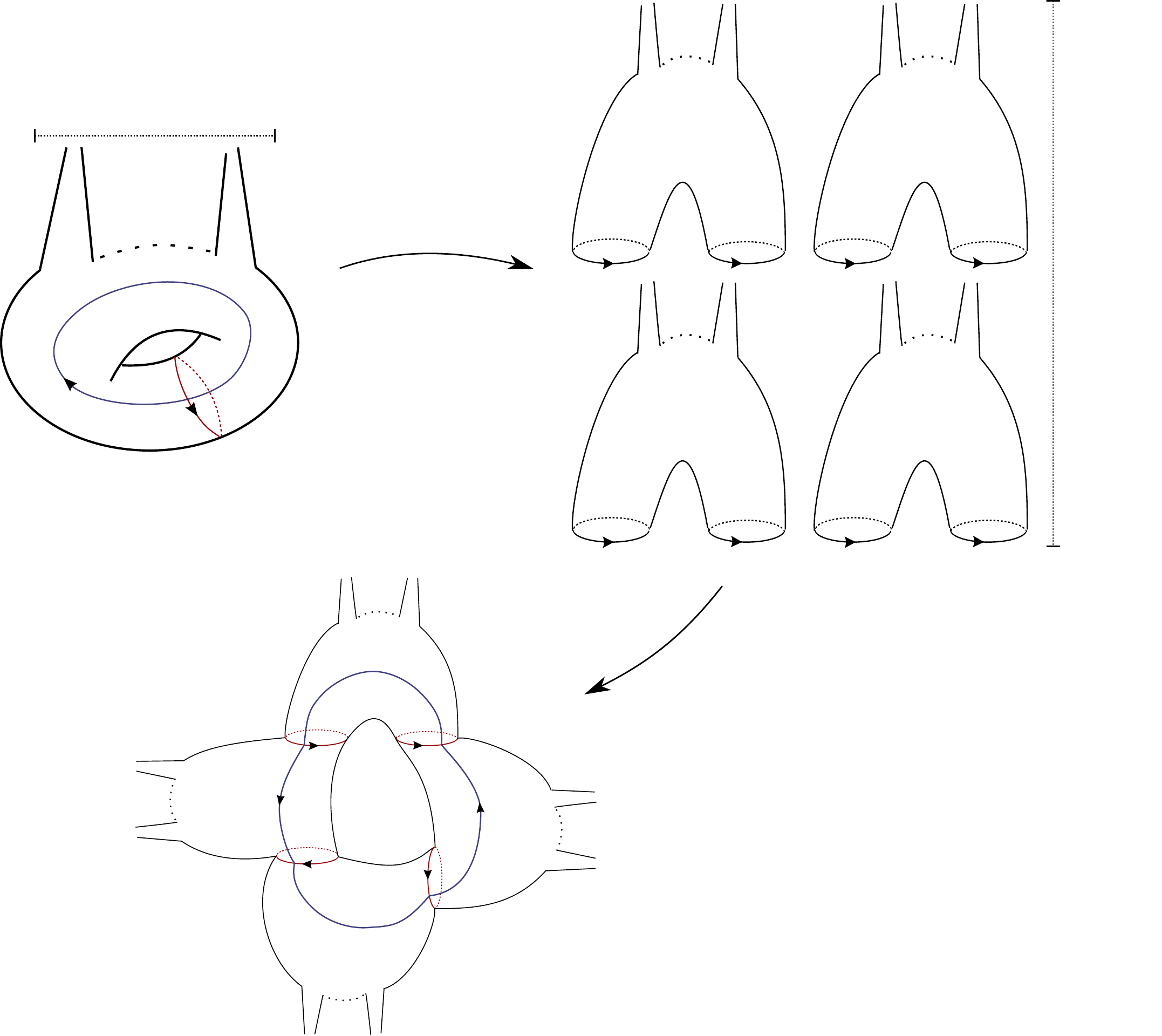
\end{center}
\caption{Selberg's cyclic cover construction.}\label{f.Selberg-cyclic-cover}
\end{figure}

Algebraically, we can describe the cyclic covers $\mathbb{H}/\Gamma_6(2k)$ in the following way. Consider the generators $\{A,\dots, G, L_1,\dots, L_6\}$ (in Figure \ref{f.fundamental-group-Gamma-6}) of the fundamental group $P\Gamma_6$ of $\mathbb{H}/P\Gamma_6$. The homomorphism $m:P\Gamma_6\to\mathbb{Z}$ defined on the generators $\{A,\dots, G, L_1, \dots, L_6\}$ of given by 
$$m(A)=m(C)=1=m(D)=m(E), \quad m(B)=m(F)=0=m(G)=m(L_n),$$
$n=1,\dots, 6$, is precisely the homomorphism assigning to elements of the fundamental group of $\mathbb{H}/P\Gamma_6$ their oriented intersection numbers with $c_1$. For each $k\in\mathbb{N}$, the kernel of the composition of $m$ with the reduction modulo $2k$ is denoted $P\Gamma_6(2k)$ and its inverse image in $SL(2,\mathbb{Z})$ is $\Gamma_6(2k)$. 

The presence of complementary series for $\mathbb{H}/\Gamma_6(2k)$ is easily detectable thanks to the so-called \emph{Buser's inequality}:

\begin{proposition}\label{p.2kGamma6} For every $k\geq 3$, the first eigenvalue $\lambda_{2k}>0$ of hyperbolic Laplacian of $\mathbb{H}/\Gamma_6(2k)$ satisfies 
$$\lambda_{2k}<\frac{1}{2k}$$ 
In particular, $\mathbb{H}/\Gamma_6(2k)$ exhibits complementary series for all $k\geq 3$ (because $\lambda_{2k}<1/4$). 
\end{proposition}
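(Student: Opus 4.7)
The strategy is to invoke Buser's inequality
\[
\lambda_1(M)\;\leq\;2h(M)+10\,h(M)^2
\]
for $M=\mathbb{H}/\Gamma_6(2k)$, where $h(M)$ denotes the Cheeger isoperimetric constant, and to combine it with a direct Cheeger estimate obtained from the cyclic structure of the cover. Once we verify $2h+10h^2<1/(2k)$, the bound $\lambda_{2k}<1/(2k)$ follows immediately; and since $1/(2k)<1/4$ for every $k\geq 3$, this automatically places $\lambda_{2k}$ strictly below the bottom $1/4$ of the continuous spectrum of the Laplacian on the finite-area hyperbolic surface, producing the desired complementary series.

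To estimate $h(\mathbb{H}/\Gamma_6(2k))$, I will use the canonical separating system built into the construction. Because $p\colon\mathbb{H}/\Gamma_6(2k)\to\mathbb{H}/\Gamma_6$ is a normal cover of degree $2k$ with deck group $\mathbb{Z}/2k\mathbb{Z}$, the slit-and-glue description of Selberg's construction shows that $p^{-1}(c_1)$ splits into $2k$ pairwise disjoint simple closed geodesics $c_1^{(0)},\dots,c_1^{(2k-1)}$, each of length $\ell(c_1)=2\,\mathrm{arccosh}(17)\approx 7.05$ (from $\mathrm{tr}\,\rho(c_1)=34$). Cutting $\mathbb{H}/\Gamma_6(2k)$ along the pair $c_1^{(0)}\sqcup c_1^{(k)}$ separates it into two connected components of equal hyperbolic area $k\cdot\mathrm{vol}(\mathbb{H}/\Gamma_6)=24\pi k$ (using $[PSL(2,\mathbb{Z}):P\Gamma_6]=72$ together with $\mathrm{vol}(\mathbb{H}/PSL(2,\mathbb{Z}))=\pi/3$), so that
\[
h(\mathbb{H}/\Gamma_6(2k))\;\leq\;\frac{2\ell(c_1)}{24\pi k}\;=\;\frac{\ell(c_1)}{12\pi k}.
\]

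Substituting this Cheeger bound into Buser's inequality yields
\[
\lambda_{2k}\;\leq\;\frac{\ell(c_1)}{6\pi k}+\frac{10\,\ell(c_1)^2}{144\pi^2 k^2}\;\approx\;\frac{0.374}{k}+\frac{0.354}{k^2},
\]
and a direct numerical check shows this is strictly smaller than $1/(2k)=0.500/k$ precisely in the range $k\geq 3$: the tight case $k=3$ gives $0.125+0.039\approx 0.164<0.167=1/6$, while for larger $k$ the linear term alone already sits comfortably below $1/(2k)$. The principal technical obstacle is that $\mathbb{H}/\Gamma_6(2k)$ is non-compact---it has $24k$ cusps---so Buser's original compact-manifold theorem does not apply verbatim; one must instead invoke its well-known extension to finite-area hyperbolic surfaces. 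This extension goes through by truncating the cusps at small horocycles: such a truncation neither alters the Cheeger separator $c_1^{(0)}\sqcup c_1^{(k)}$ (which lies in the thick part of the surface) nor significantly perturbs the areas or boundary lengths involved, so the Cheeger estimate and the Buser argument both carry over essentially unchanged.
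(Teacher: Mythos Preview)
Your proof is correct and matches the paper's argument essentially line for line: both use Buser's inequality (the paper writes it as $\sqrt{10\lambda+1}\leq 10h+1$, which is algebraically your $\lambda\leq 2h+10h^2$), the same Cheeger separator $c_1^{(0)}\sqcup c_1^{(k)}$, the same geometric data $\ell(c_1)=2\,\mathrm{arccosh}(17)$ and $\mathrm{area}(\mathbb{H}/\Gamma_6)=24\pi$, and the same numerical verification that the resulting bound drops below $1/(2k)$ for $k\geq 3$. The paper handles the non-compactness by citing Buser and Lubotzky for the finite-area version of the inequality, where you sketch the cusp-truncation argument directly.
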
 

\begin{proof} Given a hyperbolic surface $\mathbb{H}/\Gamma$ of finite area, Buser's inequality (cf. Buser \cite{Bu} and Lubotzky \cite[p. 44]{Lu}) says that the first eigenvalue $\lambda(\Gamma)>0$ of the hyperbolic Laplacian of $\mathbb{H}/\Gamma$ verifies the following estimate:
$$\sqrt{10\lambda(\Gamma)+1}\leq 10h(\Gamma)+1$$
where 
$$h(\Gamma):=\min\limits_{\substack{\gamma \textrm{ multicurve of } \mathbb{H}/\Gamma \\ \textrm{separating it into } \\ \textrm{ two connected components } A, B}} \frac{\textrm{length}(\gamma)}{\min\{\textrm{area}(A), \textrm{area}(B)\}}$$ 
is the Cheeger constant of $\mathbb{H}/\Gamma$. 

In the case of $\mathbb{H}/\Gamma_6(2k)$, we can bound its Cheeger constant $h_{2k}:=h(\Gamma_6(2k))$ as follows. Consider the multicurve in $\mathbb{H}/\Gamma_6(2k)$ consisting of the disjoint union of the copies $c_1^{(0)}$ and $c_1^{(k)}$ of $c_1$ (as indicated in Figure \ref{f.Selberg-cyclic-cover}). By definition, this multicurve separates $\mathbb{H}/\Gamma_6(2k)$ into two connected components, each of them formed of $k$ copies of $\mathbb{H}/\Gamma_6$. Thus, 
$$h_{2k}\leq \frac{2\cdot\textrm{length}(c_1)}{k\cdot \textrm{area}(\mathbb{H}/\Gamma_6)}$$ 
Since $c_1$ is represented by the matrix $\left(\begin{array}{cc} 29 & 12 \\ 12 & 5\end{array}\right)\in \Gamma_6$ (cf. Proposition \ref{p.Gamma6}), we have 
$$\textrm{length}(c_1) = 2\textrm{ arc cosh} \left(\frac{|\textrm{tr}(\rho(c_1))|}{2}\right) = 2 \textrm{ arc cosh} (17) $$ 
Also, the area\footnote{Because $\Gamma_6$ has index $72$ in $SL(2,\mathbb{Z})$ and the fundamental domain $\mathcal{F}_1=\{z\in\mathbb{H}: |\textrm{Re}(z)|\leq 1/2, |z|\geq 1\}$ of $\mathbb{H}/SL(2,\mathbb{Z})$ has hyperbolic area $\int_{-1/2}^{1/2}\int_{\sqrt{1-x^2}}^{\infty}\frac{dxdy}{y^2}=\pi/3$} of $\mathbb{H}/\Gamma_6$ is $24\pi$. By plugging this into the previous inequality, we deduce that 
$$h_{2k}\leq \frac{\textrm{arc cosh}(17)}{6k\pi}$$

By Buser's inequality, this means that the first eigenvalue $\lambda_{2k}$ of the hyperbolic Laplacian of $\mathbb{H}/\Gamma_6(2k)$ satisfies 
$$\sqrt{10\lambda_{2k}+1}\leq \frac{5\textrm{ arc cosh}(17)}{3k\pi}+1$$
i.e., 
$$\lambda_{2k}\leq \left(\frac{5\textrm{ arc cosh}(17)^2}{9k\pi^2}+\frac{2\textrm{ arc cosh}(17)}{3\pi}\right)\frac{1}{2k}$$
Since $\textrm{arc cosh}(17)<3.5255$, it follows that 
$$\lambda_{2k}<\frac{1}{2k}$$ 
for all $k\geq 3$. This proves the proposition. 
\end{proof}

\begin{remark}\label{r.2kGamma6} In general, the first eigenvalue $\lambda(\Gamma)$ of the Laplacian of $\mathbb{H}/\Gamma$ does not increase under finite covers: if $\Gamma'$ is a finite index subgroup of $\Gamma$, then $\lambda(\Gamma')\leq\lambda(\Gamma)$. Therefore, $\lambda(\Gamma)\leq\lambda_{2k}<1/k$ for any finite-index subgroup $\Gamma$ of $\Gamma_{6}(2k)$. 
\end{remark}

\begin{remark}
$\Gamma_6(2k)$, $k\geq 3$, is \emph{not} a congruence\footnote{$\Gamma\subset SL(2,\mathbb{Z})$ is a congruence subgroup if $\Gamma$ contains the principal congruence subgroup $\Gamma_N$ for some $N\in\mathbb{N}$.} subgroup of $SL(2,\mathbb{Z})$. Indeed, Selberg's $3/16$ theorem says that the first eigenvalue $\lambda(\Gamma)$ of the Laplacian of $\mathbb{H}/\Gamma$ satisfies $\lambda(\Gamma)\geq 3/16$ when $\Gamma$ is congruence, but we know from Proposition \ref{p.2kGamma6} that $\lambda(\Gamma_6(2k))=\lambda_{2k}<1/6$ for all $k\geq 3$. 
\end{remark}

\subsection{Arithmetic Teichm\"uller curves $\mathcal{S}_{2k}$ birational to $\mathbb{H}/\Gamma_6(2k)$} 

In view of Remark \ref{r.2kGamma6} and Ratner's theorem \ref{t.Ratner}, the proof of Theorem \ref{t.MaSc} is reduced to the following statement. 

\begin{theorem}\label{t.Z2k} For each $k\geq 3$, there exists an explicit square-tiled surface $Z_{2k}$ (of genus $48k+3$ tiled by $192k$ unit squares) whose Veech group is contained in $\Gamma_6(2k)$. In particular, the hyperbolic surface $\mathbb{H}/\Gamma_6(2k)$ is finitely covered by the arithmetic Teichm\"uller curve $\mathcal{S}_{2k}$ generated by the $SL(2,\mathbb{R})$-orbit of $Z_{2k}$. 
\end{theorem}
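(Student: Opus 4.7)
The plan is to construct $Z_{2k}$ as a cyclic degree-$2k$ cover of a fixed base origami $X_0$, mimicking on the flat side the slit-and-glue procedure of Figure \ref{f.Selberg-cyclic-cover}. Once this is done, the inclusion $SL(Z_{2k})\subset\Gamma_6(2k)$ will follow from a homological criterion characterising when an affine homeomorphism of $X_0$ lifts to the cover.

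First, I would exhibit a base origami $X_0$ whose Veech group $SL(X_0)$ is contained in $\Gamma_6$, together with an explicit combinatorial model. Such an $X_0$ can be produced from a pair of permutations $(h,v)$ in a suitable symmetric group chosen so that their natural images encode the finite quotient $SL(2,\mathbb{Z})/\Gamma_6$. Using the algorithm $T(h,v)=(h,vh^{-1})$ and $S(h,v)=(hv^{-1},v)$ recalled in Subsection \ref{ss.Veech-surfaces}, one verifies that the Veech group of $X_0$ sits inside $\Gamma_6$. The combinatorial data can be adjusted so that $X_0$ has exactly $96$ squares, and so that the flat-geometric analogues of the generators $A,B,\dots,G,L_1,\dots,L_6$ from Proposition \ref{p.Gamma6} and of the non-separating curve $c_1$ of Figure \ref{f.fundamental-group-Gamma-6} can be identified with concrete cycles in $H_1(X_0-\textrm{div}(\omega_0),\mathbb{Z})$.

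Second, I would define $Z_{2k}$ as the cyclic cover of $X_0$ classified by the character $\phi:H_1(X_0-\textrm{div}(\omega_0),\mathbb{Z})\to\mathbb{Z}/2k\mathbb{Z}$ given by algebraic intersection number modulo $2k$ with the flat representative of $c_1$. Concretely, $Z_{2k}$ is obtained by slitting $X_0$ along this representative, taking $2k$ copies, and regluing in a cyclic pattern. A Riemann--Hurwitz computation then yields the announced values $|Z_{2k}|=192k$ and $g(Z_{2k})=48k+3$.

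The main step, and the main obstacle, is to verify $SL(Z_{2k})\subset\Gamma_6(2k)$. Any $A\in SL(Z_{2k})$ descends to $SL(X_0)\subset\Gamma_6$, and $A$ lifts to an affine homeomorphism of $Z_{2k}$ if and only if its induced cohomological action preserves the classifying character, i.e.\ $\phi\circ A_*=\phi$ (up to an automorphism of $\mathbb{Z}/2k\mathbb{Z}$). By construction $\phi$ is the pullback, under the dictionary of Step~1, of the intersection-number homomorphism $m:P\Gamma_6\to\mathbb{Z}/2k\mathbb{Z}$ defining $\Gamma_6(2k)$; consequently the preservation condition on $\phi$ translates into $m(A)\equiv 0\pmod{2k}$, which is precisely the defining condition for $A\in\Gamma_6(2k)$. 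The delicate point is to arrange the matching between $\phi$ and $m$ sharply: this requires an explicit bookkeeping on a basis of cycles of $X_0$, on the image of $c_1$ under this identification (which one expects to correspond to the matrix $\rho(c_1)=xyxy=\left(\begin{smallmatrix}29&12\\12&5\end{smallmatrix}\right)$ from Proposition \ref{p.Gamma6}), and on how the generators of $\Gamma_6\subset SL(X_0)$ act on $\phi$. Once this inclusion is established, the Teichm\"uller curve $\mathcal{S}_{2k}\simeq SL(2,\mathbb{R})/SL(Z_{2k})$ admits a finite covering map onto $SL(2,\mathbb{R})/\Gamma_6(2k)$, and passing to the quotient by $SO(2,\mathbb{R})$ gives the desired finite cover $\mathcal{S}_{2k}\to\mathbb{H}/\Gamma_6(2k)$.
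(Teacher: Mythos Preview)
Your proposal has a genuine gap at the main step. You assert that ``any $A\in SL(Z_{2k})$ descends to $SL(X_0)\subset\Gamma_6$'', but this is not automatic: an affine homeomorphism of a translation cover need not descend to the base. The lifting criterion you invoke runs in the opposite direction --- it tells you which elements of $SL(X_0)$ lift to $Z_{2k}$, not that every element of $SL(Z_{2k})$ arises as such a lift. Without descent, the inclusion $SL(Z_{2k})\subset\Gamma_6(2k)$ does not follow, since $Z_{2k}$ could acquire extra affine symmetries invisible from $X_0$. There is also a conceptual mismatch: the curve $c_1$ of Proposition~\ref{p.Gamma6} lives on the Teichm\"uller curve $\mathbb{H}/\Gamma_6$ (the parameter space), not on the translation surface $X_0$, so it is unclear what a ``flat representative of $c_1$ on $X_0$'' should mean or why a cover of $X_0$ along such a cycle would control the Veech group of $Z_{2k}$ rather than merely the subgroup of lifts.

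The paper's construction is accordingly quite different and is designed precisely to force descent. Rather than taking a cyclic $2k$-cover of a fixed origami with Veech group in $\Gamma_6$, it first builds (following Ellenberg--McReynolds and Schmith\"usen) an intermediate origami $Y_{2k}$ covering the $2$-torsion torus $E[2]$, with $SL(Y_{2k})=SL(2,\mathbb{Z})$ and with the fibre over one $2$-torsion point $R$ equivariantly identified with the coset space $\Gamma_2/\Gamma_6(2k)$. Then $Z_{2k}$ is a double cover of $Y_{2k}$ whose ramification data are chosen so that (i) they single out the point $R_{\mathrm{id}}$ among all preimages of $R$, and (ii) they are compatible only with affine homeomorphisms that descend to $Y_{2k}$. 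Condition~(ii) forces every affine homeomorphism of $Z_{2k}$ to descend, and condition~(i) then forces the descended map to fix $R_{\mathrm{id}}$, which by equivariance means its linear part lies in $\Gamma_6(2k)$. Your slit-and-glue construction offers no analogous mechanism to guarantee descent, and this is the missing idea.
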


The construction of $Z_{2k}$ is based on the ideas of Ellenberg-McReynolds \cite{EM} and Schmith\"usen \cite{Sc}. Unfortunately, the implementation of these ideas is somewhat technical and, for this reason, we'll just give a sketch of the construction of $Z_{2k}$ while referring to Sections 3 and 4 of \cite{MaSc} for more details.

We build $Z_{2k}$ with the aid of ramified covers of translation surfaces. For this sake,  let us recall that if $h:X_1\to X_2$ is a finite covering of Riemann surfaces, then the \emph{ramification data} $\textrm{rm}(P,h)$ of a point $P\in X_2$ consists of the ramification indices of all preimages of $P$ counted with multiplicities. 

This notion is important for our purposes because any affine homeomorphism $\hat{f}$ on $X_1$ descending to an affine homeomorphism $f$ of $X_2$ under a translation covering $h:X_1\to X_2$ must respect the ramification data, i.e., $\textrm{rm}(f(P), h) = \textrm{rm}(P,h)$. In particular, we can force such an affine homeomorphism $f$ to respect certain partitions of the branch points of $h$ by prescribing distinct ramification data to them and, as it turns out, this information can be used to put constraints on the linear part $Df$ of $f$, that is, on the element $Df$ of the Veech group of $X_1$. 

The simplest example illustrating the ideas in the previous paragraph is the translation covering from $E[2]:=\mathbb{C}/(2\mathbb{Z}\oplus 2\mathbb{Z}i)$ to $E:=\mathbb{C}/(\mathbb{Z}\oplus\mathbb{Z}i)$ (given by the natural isogeny). Indeed, the affine homeomorphisms of $E$ fixing the origin $P:=(0,0)$ correspond to $SL(2,\mathbb{Z})$ and the affine homeomorphisms of $E[2]$ fixing the $2$-torsion points $P$, $Q=(1,0)$, $S=(1,1)$ correspond to the principal congruence subgroup $\Gamma_2$ of level $2$.

Next, for each $k\in\mathbb{N}$, one builds a translation covering $q_{2k}:Y_{2k}\to E[2]$ of degree $[\Gamma_2:\Gamma_6(2k)]=24k$ such that: 
\begin{itemize}
\item $q_{2k}$ is ramified precisely over $P$, $Q$ and $S$;  
\item all affine homeomorphisms of $E[2]$ fixing $P$, $Q$ and $S$ lift to affine homeomorphisms of $Y_{2k}$: in particular, the image of the group $\textrm{Aff}_*^{q_{2k}}(Y_{2k})$ of such lifts under the derivative homomorphism $D$ is $\Gamma_2$; 
\item the fiber $q_{2k}^{-1}(R)$ of the $2$-torsion point $R=(0,1)$ is bijectively mapped to $\Gamma_2/\Gamma_6(2k)$ by a map $\theta$ which is equivariant with respect to the derivative homomorphism $D:\textrm{Aff}_*^{q_{2k}}(Y_{2k})\to\Gamma_2$, i.e., $Df\cdot \theta(R_i) = \theta(f(R_i))$ for all $R_i\in q_{2k}^{-1}(R)$ and $f\in \textrm{Aff}_*^{q_{2k}}(Y_{2k})$. 
\item the Veech group of $Y_{2k}$ is $SL(2,\mathbb{Z})$. 
\end{itemize} 

Finally, the square-tiled surface $Z_{2k}$ is obtained from a (double) cover $r_{2k}:Z_{2k}\to Y_{2k}$ such that the ramification data of $P$, $Q$ and $S$ with respect to $r_{2k}\circ q_{2k}$ are pairwise distinct and the ramification data of the point $R_{\textrm{id}}:=\theta^{-1}(\textrm{id}\cdot \Gamma_6(2k))\in q_{2k}^{-1}(R)$ is different from the ramification data of all points in $q_{2k}^{-1}(R)$. 

In this way, we have that any affine homeomorphism $\hat{f}$ of $Z_{2k}$ \emph{descending} to an affine homeomorphism $f$ of  $Y_{2k}$ has linear part $D\hat{f}\in\Gamma_6(2k)$. Indeed, our condition on the ramification data forces $f\in\textrm{Aff}_*^{q_{2k}}(Y_{2k})$ to fix $R_{\textrm{id}}$. In particular, 
$$Df\cdot\Gamma_6(2k) = Df\cdot\theta(R_{\textrm{id}})=\theta(f(R_{\textrm{id}})) = \theta(R_{\textrm{id}}) = \textrm{id}\cdot\Gamma_6(2k),$$
that is, $D\hat{f}=Df\in\Gamma_6(2k)$. 

Therefore, the proof of Theorem \ref{t.Z2k} will be complete if we have that all affine homeomorphisms of $Z_{2k}$ descend to $Y_{2k}$. Here, one exploits the action of $\textrm{Aff}(Y_{2k})$ on $q_{2k}^{-1}(\{P, Q, R, S\})$ in order to detect a partition of $q_{2k}^{-1}(\{P, Q, R, S\})$ with the following property: if the ramification data of $r$ on the atoms of this partition are distinct, then all affine homeomorphisms of $Z_{2k}$ descend to $Y_{2k}$. Since it is not hard to produce a branched cover $r$ with this feature (for any given partition of $q_{2k}^{-1}(\{P, Q, R, S\})$), this finishes our sketch of proof of Theorem \ref{t.Z2k}.

\begin{remark} The first member $Z_6$ of the family $Z_{2k}$, $k\geq 3$, is a square-tiled surface associated to the following pair of permutations $\sigma_h$ and $\sigma_v$ (on $576$ unit squares). 

\begin{longtable}{lcl}
  $\sigma_h$ &=& 
  $(1, 13, 193, 207, 243, 253)
  (2, 14, 194, 208, 244, 254)
  (3, 15, 195, 209, 245, 255)$ \\
  &&
  $(4, 16, 196, 210, 246, 256)
  (5, 17, 197, 211, 247, 257)
  (6, 18, 198, 212, 248, 258)$\\
  &&
  $(7, 19, 199, 213, 249, 259)
  (8, 20, 200, 214, 250, 260)
  (9, 21, 201, 215, 251, 261)$\\
  &&
  $(10, 22, 202, 216, 252, 262)
  (11, 23, 203, 205, 241, 263)
  (12, 24, 204, 206, 242, 264)$\\
  &&
  $(25, 38, 266, 280, 220, 230, 26, 37, 265, 279, 219, 229)
  (27, 39, 267, 281, 221, 231)$\\
  &&
  $(28, 40, 268, 282, 222, 232)
  (29, 41, 269, 283, 223, 233)
  (30, 42, 270, 284, 224, 234)$\\
  &&
  $(31, 43, 271, 285, 225, 235)
  (32, 44, 272, 286, 226, 236)
  (33, 45, 273, 287, 227, 237)$\\ 
  &&
  $(34, 46, 274, 288, 228, 238) 
  (35, 47, 275, 277, 217, 239)
  (36, 48, 276, 278, 218, 240)$\\
  &&
  $(49, 61, 433, 445, 337, 349)
  (50, 62, 434, 446, 338, 350)
  (51, 63, 435, 447, 339, 351)$\\ 
  &&
  $(52, 64, 436, 448, 340, 352)
  (53, 65, 437, 449, 341, 353)
  (54, 66, 438, 450, 342, 354)$\\
  &&
  $(55, 67, 439, 451, 343, 355)
  (56, 68, 440, 452, 344, 356)
  (57, 69, 441, 453, 345, 357)$\\
  &&
  $(58, 70, 442, 454, 346, 358)
  (59, 71, 443, 455, 347, 359)
  (60, 72, 444, 456, 348, 360)$\\
  &&
  $(73, 85, 361, 373, 457, 469)
  (74, 86, 362, 374, 458, 470)
  (75, 87, 363, 375, 459, 471)$\\
  &&
  $(76, 88, 364, 376, 460, 472)
  (77, 89, 365, 377, 461, 473)
  (78, 90, 366, 378, 462, 474)$\\
  &&
  $(79, 91, 367, 379, 463, 475)
  (80, 92, 368, 380, 464, 476)
  (81, 93, 369, 381, 465, 477)$\\
  &&
  $(82, 94, 370, 382, 466, 478)
  (83, 95, 371, 383, 467, 479)
  (84, 96, 372, 384, 468, 480)$\\
  &&
  $(97, 109, 385, 397, 481, 493)
  (98, 110, 386, 398, 482, 494)
  (99, 111, 387, 399, 483, 495)$\\
  &&
  $(100, 112, 388, 400, 484, 496)
  (101, 113, 389, 401, 485, 497)
  (102, 114, 390, 402, 486, 498)$\\
  &&
  $(103, 115, 391, 403, 487, 499)
  (104, 116, 392, 404, 488, 500)
  (105, 117, 393, 405, 489, 501)$\\
  &&
  $(106, 118, 394, 406, 490, 502)
  (107, 119, 395, 407, 491, 503)
  (108, 120, 396, 408, 492, 504)$\\
  &&
  $(121, 133, 505, 517, 409, 421)
  (122, 134, 506, 518, 410, 422)
  (123, 135, 507, 519, 411, 423)$\\
  &&
  $(124, 136, 508, 520, 412, 424)
  (125, 137, 509, 521, 413, 425)
  (126, 138, 510, 522, 414, 426)$\\
  &&
  $(127, 139, 511, 523, 415, 427)
  (128, 140, 512, 524, 416, 428)
  (129, 141, 513, 525, 417, 429)$\\
  &&
  $(130, 142, 514, 526, 418, 430)
  (131, 143, 515, 527, 419, 431)
  (132, 144, 516, 528, 420, 432)$\\
  &&
  $(145, 167, 539, 551, 299, 301)
  (146, 168, 540, 552, 300, 302)
  (147, 157, 529, 541, 289, 303)$\\
  &&
  $(148, 158, 530, 542, 290, 304)
  (149, 159, 531, 543, 291, 305)
  (150, 160, 532, 544, 292, 306)$\\
  &&
  $(151, 161, 533, 545, 293, 307)
  (152, 162, 534, 546, 294, 308)
  (153, 163, 535, 547, 295, 309)$\\
  &&
  $(154, 164, 536, 548, 296, 310)
  (155, 165, 537, 549, 297, 311)
  (156, 166, 538, 550, 298, 312)$\\
  &&
  $(169, 183, 315, 325, 553, 565)
  (170, 184, 316, 326, 554, 566)
  (171, 185, 317, 327, 555, 567)$\\
  &&
  $(172, 186, 318, 328, 556, 568)
  (173, 187, 319, 329, 557, 569)
  (174, 188, 320, 330, 558, 570)$\\
  &&
  $(175, 189, 321, 331, 559, 571)
  (176, 190, 322, 332, 560, 572)
  (177, 191, 323, 333, 561, 573)$\\
  &&
  $(178, 192, 324, 334, 562, 574)
  (179, 181, 313, 335, 563, 575)
  (180, 182, 314, 336, 564, 576)$,\\
  $\sigma_v$ &=&
  $(1, 265, 289, 553, 433, 73)
  (2, 266, 290, 554, 434, 74)
  (3, 267, 291, 555, 435, 75)$\\
  &&
  $(4, 268, 292, 556, 436, 76)
  (5, 269, 293, 557, 437, 77)
  (6, 270, 294, 558, 438, 78)$\\
  &&
  $(7, 271, 295, 559, 439, 79)
  (8, 272, 296, 560, 440, 80)
  (9, 273, 297, 561, 441, 81)$\\
  &&
  $(10, 274, 298, 562, 442, 82)
  (11, 275, 299, 563, 443, 83, 12, 276, 300, 564, 444, 84)$\\
  &&
  $(13, 229, 157, 565, 493, 133)
  (14, 230, 158, 566, 494, 134)
  (15, 231, 159, 567, 495, 135)$\\
  &&
  $(16, 232, 160, 568, 496, 136)
  (17, 233, 161, 569, 497, 137)
  (18, 234, 162, 570, 498, 138)$\\
  &&
  $(19, 235, 163, 571, 499, 139)
  (20, 236, 164, 572, 500, 140)
  (21, 237, 165, 573, 501, 141)$\\
  &&
  $(22, 238, 166, 574, 502, 142)
  (23, 239, 167, 575, 503, 143)
  (24, 240, 168, 576, 504, 144)$\\
  &&
  $(25, 97, 505, 529, 169, 193, 26, 98, 506, 530, 170, 194)
  (27, 99, 507, 531, 171, 195)$\\
  &&
  $(28, 100, 508, 532, 172, 196)
  (29, 101, 509, 533, 173, 197)
  (30, 102, 510, 534, 174, 198)$\\
  &&
  $(31, 103, 511, 535, 175, 199)
  (32, 104, 512, 536, 176, 200)
  (33, 105, 513, 537, 177, 201)$\\
  &&
  $(34, 106, 514, 538, 178, 202)
  (35, 107, 515, 539, 179, 203)
  (36, 108, 516, 540, 180, 204)$\\
  &&
  $(37, 61, 469, 541, 325, 253)
  (38, 62, 470, 542, 326, 254)
  (39, 63, 471, 543, 327, 255)$\\
  &&
  $(40, 64, 472, 544, 328, 256)
  (41, 65, 473, 545, 329, 257)
  (42, 66, 474, 546, 330, 258)$\\
  &&
  $(43, 67, 475, 547, 331, 259)
  (44, 68, 476, 548, 332, 260)
  (45, 69, 477, 549, 333, 261)$\\
  &&
  $(46, 70, 478, 550, 334, 262)
  (47, 71, 479, 551, 335, 263)
  (48, 72, 480, 552, 336, 264)$\\
  &&
  $(49, 361, 145, 313, 385, 121)
  (50, 362, 146, 314, 386, 122)
  (51, 363, 147, 315, 387, 123)$\\
  &&
  $(52, 364, 148, 316, 388, 124)
  (53, 365, 149, 317, 389, 125)
  (54, 366, 150, 318, 390, 126)$\\
  &&
  $(55, 367, 151, 319, 391, 127)
  (56, 368, 152, 320, 392, 128)
  (57, 369, 153, 321, 393, 129)$\\
  &&
  $(58, 370, 154, 322, 394, 130)
  (59, 371, 155, 323, 395, 131)
  (60, 372, 156, 324, 396, 132)$\\
  &&
  $(85, 109, 421, 301, 181, 349)
  (86, 110, 422, 302, 182, 350)
  (87, 111, 423, 303, 183, 351)$\\
  &&
  $(88, 112, 424, 304, 184, 352)
  (89, 113, 425, 305, 185, 353)
  (90, 114, 426, 306, 186, 354)$\\
  &&
  $(91, 115, 427, 307, 187, 355)
  (92, 116, 428, 308, 188, 356)
  (93, 117, 429, 309, 189, 357)$\\
  &&
  $(94, 118, 430, 310, 190, 358)
  (95, 119, 431, 311, 191, 359)
  (96, 120, 432, 312, 192, 360)$\\
  &&
  $(205, 277, 397, 517, 445, 373)
  (206, 278, 398, 518, 446, 374)
  (207, 279, 399, 519, 447, 375)$\\
  &&
  $(208, 280, 400, 520, 448, 376)
  (209, 281, 401, 521, 449, 377)
  (210, 282, 402, 522, 450, 378)$\\
  &&
  $(211, 283, 403, 523, 451, 379)
  (212, 284, 404, 524, 452, 380)
  (213, 285, 405, 525, 453, 381)$\\
  &&
  $(214, 286, 406, 526, 454, 382)
  (215, 287, 407, 527, 455, 383)
  (216, 288, 408, 528, 456, 384)$\\
  &&
  $(217, 337, 457, 481, 409, 241)
  (218, 338, 458, 482, 410, 242)
  (219, 339, 459, 483, 411, 243)$\\
  &&
  $(220, 340, 460, 484, 412, 244)
  (221, 341, 461, 485, 413, 245)
  (222, 342, 462, 486, 414, 246)$\\
  &&
  $(223, 343, 463, 487, 415, 247)
  (224, 344, 464, 488, 416, 248)
  (225, 345, 465, 489, 417, 249)$\\
  &&
  $(226, 346, 466, 490, 418, 250)
  (227, 347, 467, 491, 419, 251)
  (228, 348, 468, 492, 420, 252)$
\end{longtable}
\end{remark}

\newpage 

\null

\newpage


\begin{centering}
\rule{\textwidth}{1.6pt}\vspace*{-\baselineskip}\vspace*{2.5pt}
\rule{\textwidth}{0.4pt}

\section{Some finiteness results for algebraically primitive Teichm\"uller curves}\label{s.MW}

\rule{\textwidth}{0.4pt}\vspace*{-\baselineskip}\vspace{3.2pt}
\rule{\textwidth}{1.6pt}
\end{centering}\\

Many applications of the dynamics of $SL(2,\mathbb{R})$ on moduli spaces of translation surfaces to the investigation of translation flows and billiards rely on the features of the closure of certain $SL(2,\mathbb{R})$-orbits. For example, Delecroix-Hubert-Leli\`evre \cite{DHL} exploited the properties of the closure of certain $SL(2,\mathbb{R})$-orbits of translation surfaces of genus five in order to confirm a conjecture of Hardy and Weber on the abnormal rate of diffusion of trajectories in $\mathbb{Z}^2$-periodic Ehrenfest wind-tree models. 

Partly motivated by potential further applications, the problem of classifying closures of $SL(2,\mathbb{R})$-orbits in moduli spaces of translation surfaces received a considerable attention in recent years. 

\subsection{Some classification results for the closures of $SL(2,\mathbb{R})$-orbits in moduli spaces}\label{ss.classication-survey} 

The quest of listing all $SL(2,\mathbb{R})$-orbit closures in moduli spaces of translation surfaces became a reasonable goal after the groundbreaking works of Eskin and Mirzakhani \cite{EsMi}, Eskin, Mirzakhani and Mohammadi \cite{EMM} and Filip \cite{Fi13a}. Indeed, these results say that such $SL(2,\mathbb{R})$-orbit closures have many good properties including: they are affine in period coordinates, there are only countably many of them, and they are quasi-projective varieties with respect to the natural algebraic structure on moduli spaces. 

Despite the absence of a complete classification of $SL(2,\mathbb{R})$-orbit closures of translation surfaces, the current literature on the subject contains many papers. For this reason, instead of trying to give an exhaustive list of articles on this topic, we shall restrict ourselves to the discussion of the smallest possible $SL(2,\mathbb{R})$-orbit closures -- namely, \emph{Teichm\"uller curves} -- while refereeing to the introduction of the paper of Apisa \cite{Ap} and the references therein for more details on higher-dimensional $SL(2,\mathbb{R})$-orbit closures. 

\emph{Arithmetic} Teichm\"uller curves are always abundant: they form a dense subset in any connected component of any stratum of the moduli space of translation surfaces. This indicates that a complete classification of these objects is a challenging task and, indeed, we are able to list all arithmetic Teichm\"uller curves only in the case of the minimal stratum $\mathcal{H}(2)$ thanks to the works of Hubert and Leli\`evre \cite{HL} and McMullen \cite{McM05}. 

\emph{Non-arithmetic} Teichm\"uller curves seem less abundant and we dispose of many partial results towards their classification. In fact, Calta \cite{Ca} and McMullen \cite{McM03}, \cite{McM06} obtained a complete classification of all $SL(2,\mathbb{R})$-orbit closures of translation surfaces of genus two: it follows from their results that the minimal stratum $\mathcal{H}(2)$ contains infinitely many non-arithmetic Teichm\"uller curves, but the principal stratum $\mathcal{H}(1,1)$ contains just one non-arithmetic Teichm\"uller curve (generated by a regular decagon). In higher genera $g\geq 3$, we have many results establishing the \emph{finiteness} of \emph{algebraically primitive} Teichm\"uller curves, i.e., Teichm\"uller curves whose trace field\footnote{See Subsection \ref{ss.Veech-surfaces} above.} has the largest possible degree $g$ over $\mathbb{Q}$. For example:
\begin{itemize}
\item M\"oller \cite{Mo08} showed that $\mathcal{H}(g-1,g-1)^{hyp}$ contains only finitely many algebraically primitive Teichm\"uller curves, and  
\item Bainbridge and M\"oller \cite{BM} established the finiteness of algebraically primitive Teichm\"uller curves in $\mathcal{H}(3,1)$.
\end{itemize} 

The main theorem of this section (namely, Theorem \ref{t.MW-A}) is a result due to Wright and the author \cite{MW} showing the finiteness of algebraically primitive Teichm\"uller curves in the minimal stratum $\mathcal{H}(2g-2)$ when $g>2$ is a prime number. 

Before giving the precise statement of the main result of \cite{MW} (and sketching its proof), let us mention that a recent work of Bainbridge, Habbeger and M\"oller \cite{BHM} proved the finiteness of algebraically primitive Teichm\"uller curves in all strata of the moduli space of translation surfaces of genus three: similarly to the work of Bainbridge and M\"oller \cite{BM} mentionned above, Bainbridge, Habegger and M\"oller rely mostly on algebro-geometrical arguments, even though their treatment of the particular of the case $\mathcal{H}(2,2)^{odd}$ build upon the techniques of our joint work \cite{MW} with Wright. 

\subsection{Statement of the main results}\label{ss.HTplane}

The main result of our paper \cite{MW} with A. Wright is: 

\begin{theorem}\label{t.MW-A} Let $\mathcal{C}$ be a connected component of a stratum $\mathcal{H}(k_1,\dots, k_{\sigma})$ of the moduli space of translation surfaces of genus $g$ ($=1+\sum\limits_{j=1}^{\sigma} k_j/2$). 
\begin{itemize}
\item[(a)] If $g\geq 3$, then the (countable) union $A=\bigcup C_i$ of all algebraically primitive Teichm\"uller curves $C_i$ contained in $\mathcal{C}$ is not dense, i.e., $\overline{A}\neq\mathcal{C}$.
\item[(b)] If $g\geq 3$ is prime and $\mathcal{C}$ is a connected component of the minimal stratum $\mathcal{H}(2g-2)$, then there are only finitely many algebraically primitive Teichm\"uller curves contained in $\mathcal{C}$.
\end{itemize}
\end{theorem}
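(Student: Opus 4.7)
My plan is to introduce the notion of a \emph{Hodge-Teichm\"uller plane}: a $2$-dimensional symplectic subspace $P \subset H^1(M,\mathbb{R})$ at $(M,\omega) \in \mathcal{C}$ which is compatible with the Hodge decomposition (i.e.\ $P_{\mathbb{C}} = P^{1,0} \oplus P^{0,1}$) and is invariant under the Gauss-Manin/KZ parallel transport along some neighborhood of $(M,\omega)$ inside the stratum. The tautological plane $\mathbb{R}\cdot\mathrm{Re}(\omega) \oplus \mathbb{R}\cdot\mathrm{Im}(\omega)$ is always a Hodge-Teichm\"uller plane. The first main step is to prove the crucial closure property: if $(M_n,\omega_n) \to (M,\omega)$ in $\mathcal{C}$ and each $(M_n,\omega_n)$ admits $k$ transverse Hodge-Teichm\"uller planes, then so does $(M,\omega)$. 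This follows because the Grassmannian of $2$-planes is compact, the Hodge decomposition varies continuously, and symplectic/transversality conditions are closed.

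Next, I would invoke M\"oller's work on variations of Hodge structure over algebraically primitive Teichm\"uller curves: if $\mathcal{C}_i$ is such a curve generated by $(M,\omega)$ with trace field $K(M,\omega)$ of degree $g$ over $\mathbb{Q}$, then the rational Hodge bundle splits under the action of $K(M,\omega)$ into $g$ rank-$2$ pieces, permuted by the Galois group $\mathrm{Gal}(K/\mathbb{Q})$, and each piece gives a Hodge-Teichm\"uller plane on $(M,\omega)$. Consequently every translation surface in the closure $\overline{\bigcup_i \mathcal{C}_i}$ of the algebraically primitive locus carries at least $g$ pairwise transverse Hodge-Teichm\"uller planes. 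Part (a) then reduces to the concrete task of \emph{exhibiting} a single translation surface in $\mathcal{C}$ that admits strictly fewer than $g$ such planes: for a generic or cleverly chosen square-tiled surface (e.g.\ one whose KZ cocycle matrix has an irreducible characteristic polynomial of degree $2g-2$ on the orthogonal complement of the tautological plane, ruling out further $\mathrm{Aff}$-invariant splittings), no further Hodge-Teichm\"uller plane can exist, contradicting density.

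For part (b), the strategy is to combine the Hodge-Teichm\"uller plane constraint with a boundary/degeneration analysis in the minimal stratum. Using Eskin-Mirzakhani-Mohammadi-Filip, the closure of the union $A = \bigcup \mathcal{C}_i$ of algebraically primitive Teichm\"uller curves in $\mathcal{H}(2g-2)$ is a finite union of affine invariant submanifolds; by part (a) this closure is a proper subvariety, and in each irreducible component one still has $g$ Hodge-Teichm\"uller planes everywhere. The key input is then to study what happens near the principal boundary of $\mathcal{H}(2g-2)$: a sequence of surfaces in $A$ that escapes to infinity must degenerate in a controlled way (collapsing cylinders), and the limit carries both the $g$ Hodge-Teichm\"uller planes and the extra structure coming from the degeneration. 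Here the primality of $g$ plays a decisive role: it forces the trace fields to have no proper non-trivial subfields, so the rational splitting of the Hodge bundle cannot further decompose across the boundary strata, which together with the fact that $\mathcal{H}(2g-2)$ has only one zero excludes most possible degenerate configurations and leaves only finitely many possibilities.

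The main obstacle, I expect, will be step three: proving that the generic point of each positive-dimensional invariant subvariety of $\mathcal{H}(2g-2)$ cannot support $g$ Hodge-Teichm\"uller planes when $g$ is prime. Constructing a single explicit counterexample for part (a) is relatively soft, but upgrading this to a genuine finiteness statement requires ruling out infinite families, which means controlling how the Hodge-Teichm\"uller planes extend to the Deligne-Mumford compactification and using the arithmeticity of the trace field together with the primality of $g$ to exclude unbounded chains of such curves. I expect the hardest technical work to lie in verifying that the closure of $A$ in $\mathcal{H}(2g-2)$ for prime $g$ is in fact $0$-dimensional, i.e.\ that no positive-dimensional affine invariant submanifold (other than $A$ itself) can carry $g$ Hodge-Teichm\"uller planes.
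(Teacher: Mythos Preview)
Your outline for part (a) matches the paper's strategy: define Hodge--Teichm\"uller planes, show they persist under limits (compactness of the Grassmannian plus continuity of the Hodge filtration), use M\"oller's real-multiplication decomposition to produce $g$ such planes on every algebraically primitive Teichm\"uller curve, and then exhibit a single surface in $\mathcal{C}$ with fewer than $g$ planes. Two corrections: the right definition asks only that the parallel transport of $P$ along the $SL(2,\mathbb{R})$-\emph{orbit} (not a full open neighborhood in the stratum) satisfy $\dim_{\mathbb{C}}((hP)_{\mathbb{C}}\cap H^{1,0}(hM))=1$; and exhibiting the bad surface in an \emph{arbitrary} component $\mathcal{C}$ is not a one-off origami computation---the paper handles it by a degeneration argument (bubbling off handles) that reduces every component to a component of $\mathcal{H}(4)$, where an explicit Zariski-dense monodromy calculation finishes the job.

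For part (b), however, you have missed the decisive ingredient, and your proposed boundary/degeneration route is not what is used. The paper invokes a theorem of Wright on the \emph{field of definition} $k(\mathcal{M})$ of an affine invariant submanifold: it is hereditary ($\mathcal{N}\subset\mathcal{M}$ implies $k(\mathcal{M})\subset k(\mathcal{N})$), it coincides with the trace field on Teichm\"uller curves, and it satisfies $\dim_{\mathbb{C}}p(T\mathcal{M})\cdot\deg_{\mathbb{Q}}k(\mathcal{M})\leq 2g$. If some $\mathcal{M}\subset\mathcal{H}(2g-2)$ properly contains an algebraically primitive curve, then $\deg_{\mathbb{Q}}k(\mathcal{M})$ divides $g$; primality forces it to be $1$ or $g$, and the dimension bound rules out $g$ (since in the minimal stratum $p$ is an isomorphism and $\dim T\mathcal{M}>2$). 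Hence $k(\mathcal{M})=\mathbb{Q}$, which forces $p(T\mathcal{M})$ to contain all $g$ Galois conjugates of the tautological plane, so $\dim\mathcal{M}\geq 2g$ and $\mathcal{M}=\mathcal{C}$. Thus Wright's theorem plus Eskin--Mirzakhani--Mohammadi say that any accumulation of algebraically primitive curves in $\mathcal{H}(2g-2)$ fills the whole component, and part (a) then gives finiteness immediately. No further Hodge--Teichm\"uller or boundary analysis is needed for (b); the hard step you anticipated---ruling out $g$ planes on intermediate-dimensional invariant submanifolds---is bypassed entirely by the field-of-definition argument.
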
 

\begin{remark} Apisa \cite{Ap} recently improved item (b) for the hyperelliptic component $\mathcal{C}=\mathcal{H}(2g-2)^{\textrm{hyp}}$ of the minimal stratum by removing the constraint ``$g$ is prime''. 
\end{remark}

\begin{remark} The technique of proof of this theorem is ``flexible'': for example, it was used (beyond the context of algebraic primitivity) by Nguyen, Wright and the author (cf. \cite[Theorem 1.6]{MW}) to show that the hyperelliptic component $\mathcal{H}(4)^{\textrm{hyp}}$ of the minimal stratum in genus $3$ contains\footnote{In fact, it was conjectured by Bainbridge-M\"oller \cite{BM} that it contains exactly two non-arithmetic Teichm\"uller curves, namely, the algebraically primitive closed $SL(2,\mathbb{R})$-orbit generated by the regular $7$-gon and the non-algebraically primitive closed $SL(2,\mathbb{R})$-orbit generated by the $12$-gon.} only finitely many non-arithmetic Teichm\"uller curves. 
\end{remark}

A key idea in the proof of Theorem \ref{t.MW-A} is the study of \emph{Hodge-Teichm\"uller planes}:

\begin{definition} Let $M$ be a translation surface. We say that $P\subset H^1(M,\mathbb{R})$ is a \emph{Hodge-Teichm\"uller plane} if the (Gauss-Manin) parallel transport\footnote{Technically speaking, this parallel transport might be well-defined only on an adequate finite cover of $\mathcal{C}$ (getting rid of all ambiguities coming from eventual automorphisms of $M$): see Remark \ref{r.KZ-lift}. Of course, this minor point does not affect the arguments in this section and, for this reason, we will skip in all subsequent discussion.} of $P$ along the $SL(2,\mathbb{R})$-orbit of $M$ respect the Hodge decomposition\footnote{Recall that Hodge's decomposition theorem says that  $H^1(M,\mathbb{C}) = H^{1,0}(M)\oplus H^{0,1}(M)$ where $H^{1,0}(M)$, resp. $H^{0,1}(M)$, is the space of holomorphic, resp. anti-holomorphic, forms.}, i.e., 
$$\textrm{dim}_{\mathbb{C}}((hP\otimes\mathbb{C})\cap H^{1,0}(hM)) = 1$$
for all $h\in SL(2,\mathbb{R})$. 
\end{definition}

\begin{example}\label{ex.HT-canonical} Any translation surface $M=(X,\omega)$ possesses a canonical Hodge-Teichm\"uller plane, namely its tautological plane $\textrm{span}_{\mathbb{R}}(\textrm{Re}(\omega), \textrm{Im}(\omega)) \subset H^1(M,\mathbb{R})$. 
\end{example}

\begin{example}\label{ex.HT-Galois-conjugates} Let $M$ be a Veech surface whose trace field $k(M)=\mathbb{Q}(\{\textrm{tr}(\gamma):\gamma\in SL(M)\})$ associated to its Veech group $SL(M)$ has degree $k$ over $\mathbb{Q}$. The $k$ embeddings of $k(M)$ can be used to construct $k$ planes $\mathbb{L}_1, \dots, \mathbb{L}_k\subset H^1(M,\mathbb{R})$ obtained from the tautological plane $\mathbb{L}_1$ by Galois conjugation. As it was observed by M\"oller \cite[Proposition 2.4]{Mo06}, we have a decomposition 
$$H^1(M,\mathbb{R})=\mathbb{L}_1\oplus\dots\oplus \mathbb{L}_k\oplus \mathbb{M}$$ 
of \emph{variation of Hodge structures}\footnote{I.e., this is a $SL(2,\mathbb{R})$-equivariant decomposition such that the complexification of each $\mathbb{L}_j$ is the sum of its $(1,0)$ and $(0,1)$ parts: $(\mathbb{L}_j)_{\mathbb{C}}:=\mathbb{L}_j\otimes\mathbb{C}$ equals $\mathbb{L}_j^{1,0}\oplus \mathbb{L}_j^{0,1}$ where $\mathbb{L}_j^{a,b} := (\mathbb{L}_j)_{\mathbb{C}}\cap H^{a,b}(X)$.} whose summands are symplectically orthogonal. By definition, this means that $\mathbb{L}_1,\dots,\mathbb{L}_k$ are symplectically orthogonal Hodge-Teichm\"uller planes. 
\end{example}

In fact, these planes are important for our purposes because of the following features highlighted in the next two theorems (compare with Theorems 1.2 and 1.3 in \cite{MW}).

\begin{theorem}\label{t.MW-B} Suppose that $\mathcal{M}$ is an affine invariant submanifold in the moduli space of genus $g$ translation surfaces containing a dense set of algebraically primitive Teichm\"uller curves. Then, every translation surface in $\mathcal{M}$ has $g$ symplectically orthogonal Hodge-Teichm\"uller planes. 
\end{theorem}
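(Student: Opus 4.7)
The plan is to run a continuity and compactness argument in the Grassmannian of 2-planes in the Hodge bundle over $\mathcal{M}$: I will approximate an arbitrary $M_0 \in \mathcal{M}$ by a sequence $M_n \to M_0$ lying on algebraically primitive Teichm\"uller curves, transport the $g$ canonical Hodge--Teichm\"uller planes supplied by Example \ref{ex.HT-Galois-conjugates} to the limit, and check that the limiting planes inherit both the Hodge--Teichm\"uller property and pairwise symplectic orthogonality.

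First, using the density hypothesis, I would pick $M_n \to M_0$ with each $M_n$ lying on an algebraically primitive Teichm\"uller curve. Example \ref{ex.HT-Galois-conjugates} then provides $g$ symplectically orthogonal Hodge--Teichm\"uller planes $\mathbb{L}_1^n, \dots, \mathbb{L}_g^n \subset H^1(M_n,\mathbb{R})$ at each $M_n$ (the tautological plane together with its $g-1$ Galois conjugates, which are the summands of an $SL(2,\mathbb{R})$-equivariant variation of Hodge structures). Trivializing the Hodge bundle in a neighborhood of $M_0$ via the Gauss--Manin connection, I can view each $\mathbb{L}_j^n$ as a 2-plane in the fixed vector space $H^1(M_0,\mathbb{R})$, and by compactness of the Grassmannian $\mathrm{Gr}(2, H^1(M_0,\mathbb{R}))$ pass to a subsequence so that $\mathbb{L}_j^n \to L_j^\infty$ for each $1 \le j \le g$. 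Pairwise symplectic orthogonality is a closed condition on pairs of planes, so it is automatically inherited by the limits.

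To verify the Hodge--Teichm\"uller property at $L_j^\infty$, I would fix $h \in SL(2,\mathbb{R})$; by continuity of the $SL(2,\mathbb{R})$-action one has $hM_n \to hM_0$, and the transported planes $h_* \mathbb{L}_j^n$ converge to $h_* L_j^\infty$ in the Grassmannian over $hM_0$. Since $\dim_{\mathbb{C}}\bigl((h_* \mathbb{L}_j^n) \otimes \mathbb{C} \cap H^{1,0}(hM_n)\bigr) = 1$ for every $n$, the upper semicontinuity of intersection dimension in the Grassmannian---together with the smooth variation of the Hodge decomposition with the complex structure---yields $\dim_{\mathbb{C}}\bigl((h_* L_j^\infty) \otimes \mathbb{C} \cap H^{1,0}(hM_0)\bigr) \ge 1$. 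The reverse inequality $\le 1$ follows because $H^{1,0}(hM_0)$ contains no nonzero real subspace: any real plane whose complexification lay in $H^{1,0}$ would be stable under conjugation and hence contained in $H^{1,0} \cap H^{0,1} = 0$. Thus the intersection has dimension exactly one.

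For the linear independence of the $L_j^\infty$, observe that on any Hodge--Teichm\"uller plane $L = L_j^\infty$, if $\alpha$ spans $L \otimes \mathbb{C} \cap H^{1,0}(M_0)$, then the Riemann bilinear relation $i \int_{M_0} \alpha \wedge \bar\alpha > 0$ shows that the symplectic pairing is nondegenerate on $L$; combined with pairwise symplectic orthogonality this forces the mutual intersections to be trivial, so the $g$ planes span a subspace of dimension $2g = \dim H^1(M_0,\mathbb{R})$ and are therefore linearly independent. The hardest point I anticipate is ensuring the Hodge--Teichm\"uller condition survives at the limit simultaneously for \emph{all} $h \in SL(2,\mathbb{R})$ rather than a fixed one; however, since the upper-semicontinuity argument above operates pointwise in $h$ and only uses that $hM_n \to hM_0$, it can simply be applied separately for each $h$. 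A minor subtlety regarding the possible ambiguity of the Gauss--Manin transport coming from automorphisms of the $M_n$ (cf.\ Remark \ref{r.KZ-lift}) is handled at the start by passing to a finite torsion-free cover of $\mathcal{M}$ where the Hodge bundle is a genuine vector bundle.
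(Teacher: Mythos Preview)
Your proposal is correct and follows essentially the same approach as the paper: reduce to a continuity statement (the paper's Proposition~\ref{p.HT-continuity}), pass to a convergent subsequence of planes in the Grassmannian, use continuity of the Hodge filtration to preserve the Hodge--Teichm\"uller condition, and invoke a Hodge-theoretic nondegeneracy argument to ensure the limiting planes remain distinct. The only cosmetic difference is that the paper phrases the distinctness step via Hodge-star invariance (so that symplectic orthogonality coincides with Hodge orthogonality), whereas you phrase it via the Riemann bilinear relations making each limit plane symplectically nondegenerate---these are two expressions of the same underlying fact.
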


\begin{theorem}\label{t.MW-C} Let $\mathcal{C}$ be a connected component of a stratum of translation surfaces of genus $g\geq 3$. Then, there exists a translation surface $M_{\mathcal{C}}\in\mathcal{C}$ which does not have $g-1$ symplectically orthogonal Hodge-Teichm\"uller planes.
\end{theorem}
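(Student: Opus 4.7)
The plan is to exhibit, in each connected component $\mathcal{C}$ of every stratum in genus $g\geq 3$, an explicit square-tiled surface $M_{\mathcal{C}}\in\mathcal{C}$ whose cohomological action of some affine pseudo-Anosov diffeomorphism constrains its Hodge-Teichm\"uller planes to be too few to form a collection of $g-1$ symplectically orthogonal ones (indeed, ideally the tautological plane will be the only HT plane).

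The conceptual engine of the proof is the following monodromy constraint on HT planes. Because the condition of being a Hodge-Teichm\"uller plane is invariant under parallel transport along the $SL(2,\mathbb{R})$-orbit, the cohomological action $\phi^{*}$ of any $\phi\in\mathrm{Aff}(M_{\mathcal{C}})$ permutes the (finitely many) HT planes; hence some power of $\phi^*$ fixes each HT plane $P$. Moreover, each such $P$ underlies a rank-$2$ sub-variation of Hodge structures on the $SL(2,\mathbb{R})$-orbit of the ``tautological shape'' (in the sense that the Hodge line $P\otimes\mathbb{C}\cap H^{1,0}(hM)$ varies with $h\in SL(2,\mathbb{R})$ in the same way as $\mathbb{C}\cdot h\omega$ varies on the tautological plane, as in Examples \ref{ex.HT-canonical} and \ref{ex.HT-Galois-conjugates}). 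For a Veech surface of trace field $K(M)$ of degree $k$ over $\mathbb{Q}$, the restriction of $\phi^{*}$ to any HT plane has eigenvalues that are Galois conjugates of the expansion factor $\lambda$ of $D\phi$ in $K(M)$. Specializing to the square-tiled case $K(M_{\mathcal{C}})=\mathbb{Q}$, every HT plane must lie in the sum $V_{\lambda}\oplus V_{\lambda^{-1}}$ of generalized eigenspaces of $\phi^{*}$ on $H^{1}(M_{\mathcal{C}},\mathbb{R})$ for eigenvalues $\lambda^{\pm 1}$, and the number of symplectically orthogonal HT planes is at most $\tfrac{1}{2}\dim(V_{\lambda}\oplus V_{\lambda^{-1}})$.

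The concrete step is then to realize in each connected component $\mathcal{C}$ a square-tiled surface $M_{\mathcal{C}}$ together with an affine pseudo-Anosov $\phi$ (built from Dehn multitwists in two transverse cylinder decompositions, as in Subsection~\ref{ss.Veech-surfaces}) for which $\dim(V_{\lambda}\oplus V_{\lambda^{-1}})<2(g-1)$; the preferred target is $\dim(V_{\lambda}\oplus V_{\lambda^{-1}})=2$, which makes the tautological plane the unique HT plane. This amounts to producing an origami whose KZ cocycle along $\phi$ has enough eigenvalues outside $\{\lambda^{\pm 1}\}$ — equivalently, whose Lyapunov spectrum is sufficiently non-degenerate. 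Such origamis are produced component by component using the Kontsevich--Zorich invariants: $L$-shaped and staircase origamis serve in minimal strata in low genus; cyclic covers of tori and explicit coverings realizing prescribed spin parities serve to cover all remaining components in the Kontsevich--Zorich classification.

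The main obstacle is the uniformity of the construction across \emph{all} connected components of \emph{all} strata in genus $g\geq 3$ — in particular, realizing the dimensional condition on $V_{\lambda}\oplus V_{\lambda^{-1}}$ in the three components of minimal strata $\mathcal{H}(2g-2)$ (hyperelliptic, even spin, odd spin), where one must both respect the combinatorial invariants of the component and control the characteristic polynomial of $\phi^{*}$ on a large cohomology. The analysis is largely existential (one surface per component suffices), so a direct check on carefully chosen origamis suffices to close the argument.
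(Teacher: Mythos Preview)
Your approach has two genuine gaps.

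First, the central claim that $\phi^{*}$ restricted to any HT plane has eigenvalues that are Galois conjugates of the expansion factor $\lambda$ is not justified. The definition of an HT plane only requires $\dim_{\mathbb{C}}(P_{\mathbb{C}}\cap H^{1,0}(hM))=1$ for all $h$; it says nothing about \emph{how} this Hodge line varies. Your parenthetical assertion that it ``varies in the same way as $\mathbb{C}\cdot h\omega$'' is precisely what needs proving, and it can fail: if the Hodge line on $P$ is locally constant, then $P$ carries a unitary rank-$2$ sub-VHS and $\phi^{*}|_{P}$ may have eigenvalues on the unit circle, not $\lambda^{\pm1}$. M\"oller's result cited in Example~\ref{ex.HT-Galois-conjugates} produces specific HT planes $\mathbb{L}_{1},\dots,\mathbb{L}_{k}$ with the eigenvalue structure you want, but does not assert these are the only HT planes. (You also assume the set of HT planes is finite; the paper only shows it is a subvariety of the Grassmannian, cf.\ Proposition~\ref{p.HT-mon-invariance}.)

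Second, even granting the eigenvalue claim, your ``concrete step'' of exhibiting in \emph{every} connected component of \emph{every} stratum in \emph{every} genus $g\geq 3$ an origami with $\dim(V_{\lambda}\oplus V_{\lambda^{-1}})<2(g-1)$ is not carried out; you gesture at $L$-shapes, staircases and cyclic covers, but controlling the characteristic polynomial of $\phi^{*}$ uniformly across the Kontsevich--Zorich classification is exactly the substantial difficulty. The paper's argument is designed to avoid both problems: it uses Zariski-density of the monodromy in $Sp(2g-2,\mathbb{R})$ (Proposition~\ref{p.few-HT-criterion}) rather than eigenvalue analysis, handles explicitly only the two components of $\mathcal{H}(4)$ (Proposition~\ref{p.MW-C-g3}), and then reduces every higher-genus component to this base case by degeneration through adjacencies of strata and the Deligne--Mumford boundary (Propositions~\ref{p.MW-prop-5-1} and~\ref{p.MW-prop-5-3}).
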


In other words, Theorem \ref{t.MW-B} says that algebraically primitive Teichm\"uller curves support many Hodge-Teichm\"uller planes and, moreover, these planes pass to the closure of any sequence of algebraically primitive Teichm\"uller curves. On the other hand, Theorem \ref{t.MW-C} says that the presence of many symplectically orthogonal Hodge-Teichm\"uller planes is not satisfied by all translation surfaces in any given stratum. 

Note that Theorems \ref{t.MW-B} and \ref{t.MW-C} trivially imply the item (a) of Theorem \ref{t.MW-A}. Furthermore, these two theorems also imply immediately the item (b) of Theorem \ref{t.MW-A} when they are combined with the following result of A. Wright (cf. \cite[Corollary 8.1]{W}):

\begin{theorem}[Wright]\label{t.Wr} Let $m\geq 2$ be a prime number. Denote by $\mathcal{M}$ an affine invariant submanifold of a connected component $\mathcal{C}$ the minimal stratum $\mathcal{H}(2m-2)$. If $\mathcal{M}$ properly contains an algebraically primitive Teichm\"uller curve, then $\mathcal{M}=\mathcal{C}$.
\end{theorem}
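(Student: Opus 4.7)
The plan is to deduce Wright's theorem from two core results developed in Wright's paper \cite{W} on the field of definition of affine invariant submanifolds: (i) the \emph{monotonicity} $k(\mathcal{N})\supseteq k(\mathcal{M})$ whenever $\mathcal{N}\subseteq\mathcal{M}$, and (ii) the \emph{rank bound} $[k(\mathcal{M}):\mathbb{Q}]\cdot \mathrm{rank}(\mathcal{M})\leq g$, where $k(\mathcal{M})$ is the (totally real) field of definition and $\mathrm{rank}(\mathcal{M})=\tfrac12\dim_{\mathbb{R}}p(T\mathcal{M})$ with $p:H^1(M,\Sigma;\mathbb{R})\to H^1(M,\mathbb{R})$ the natural projection. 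I will combine these inputs with the fact that $\mathcal{M}$ in the minimal stratum $\mathcal{H}(2m-2)$ (where $\sigma=1$) has no relative periods to absorb, so $\dim_{\mathbb{C}}\mathcal{M}=2\,\mathrm{rank}(\mathcal{M})$.

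Let $C\subset\mathcal{M}\subseteq\mathcal{C}$ with $C$ algebraically primitive. Since $[k(C):\mathbb{Q}]=g=m$ and $k(\mathcal{M})\subseteq k(C)$ by monotonicity, the primality of $m$ forces $[k(\mathcal{M}):\mathbb{Q}]\in\{1,m\}$. In the first alternative $[k(\mathcal{M}):\mathbb{Q}]=m$, the rank bound yields $\mathrm{rank}(\mathcal{M})\leq 1$; combined with $\mathrm{rank}(\mathcal{M})\geq\mathrm{rank}(C)=1$ we obtain equality, so $\dim_{\mathbb{C}}\mathcal{M}=2=\dim_{\mathbb{C}}C$, forcing $\mathcal{M}=C$ and contradicting proper containment.

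The substantive case is $k(\mathcal{M})=\mathbb{Q}$. Here $p(T\mathcal{M})\subset H^1(M,\mathbb{R})$ admits a $\mathbb{Q}$-structure, i.e.\ there is $V\subset H^1(M,\mathbb{Q})$ with $V\otimes_{\mathbb{Q}}\mathbb{R}=p(T\mathcal{M})$. Because $C\subseteq\mathcal{M}$, we have $p(TC)=L_1\subset p(T\mathcal{M})$, where $L_1=\mathbb{R}\,\mathrm{Re}(\omega)\oplus\mathbb{R}\,\mathrm{Im}(\omega)$ is the tautological plane, which in the algebraically primitive case is defined over $k(C)$ (cf.\ Example \ref{ex.HT-Galois-conjugates} and M\"oller's variation of Hodge structures decomposition $H^1(M,\mathbb{R})=\mathbb{L}_1\oplus\cdots\oplus\mathbb{L}_m\oplus\mathbb{M}$). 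The key algebraic step is: complexifying, $V\otimes\mathbb{C}\supseteq L_1\otimes\mathbb{C}$ is Galois-stable under $\mathrm{Gal}(\overline{\mathbb{Q}}/\mathbb{Q})$ (since $V$ is $\mathbb{Q}$-rational), so it must contain the full Galois orbit $L_1\otimes\mathbb{C},\ldots,L_m\otimes\mathbb{C}$ of the tautological plane; descending to $\mathbb{R}$, this gives $p(T\mathcal{M})\supseteq L_1\oplus\cdots\oplus L_m$, of real dimension $2m=2g$. Since $\dim_{\mathbb{R}}H^1(M,\mathbb{R})=2g$, equality holds and $\mathrm{rank}(\mathcal{M})=g$.

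Finally, because $\sigma=1$ in the minimal stratum, $\dim_{\mathbb{C}}\mathcal{M}=2\mathrm{rank}(\mathcal{M})+(\sigma-1)=2g$, which coincides with $\dim_{\mathbb{C}}\mathcal{H}(2g-2)$, hence $\mathcal{M}=\mathcal{C}$. The main obstacle, and where I would need to invoke \cite{W} as a black box rather than reproving, is the rank bound $[k(\mathcal{M}):\mathbb{Q}]\cdot\mathrm{rank}(\mathcal{M})\leq g$ together with the fact that the $k(\mathcal{M})$-structure on $p(T\mathcal{M})$ is compatible with the rational structure on $H^1$ in such a way that the Galois-descent argument in the third paragraph is rigorous (i.e.\ that the $\mathbb{Q}$-rational subspace $V$ really is Galois-invariant when viewed inside $H^1(M,\overline{\mathbb{Q}})$); the rest of the argument is a short dimension count once these tools are in hand, and it is precisely the primality of $m=g$ that rules out the intermediate degree $[k(\mathcal{M}):\mathbb{Q}]$ values which would otherwise permit partial collapses.
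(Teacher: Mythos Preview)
Your proposal is correct and follows essentially the same route as the paper's sketch: both invoke from \cite{W} the monotonicity $k(\mathcal{M})\subseteq k(C)$, the bound $[k(\mathcal{M}):\mathbb{Q}]\cdot\dim_{\mathbb{C}}p(T\mathcal{M})\leq 2g$, and then use primality of $m=g$ to force $k(\mathcal{M})=\mathbb{Q}$, after which Galois invariance of the $\mathbb{Q}$-rational subspace $p(T\mathcal{M})$ swallows all $m$ conjugate planes and a dimension count in the minimal stratum finishes. Your acknowledgment that the Galois-descent step and the rank bound are the black-box inputs from \cite{W} matches exactly what the paper treats as given.
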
  

\begin{proof}[Sketch of proof of Theorem \ref{t.Wr}] Denote by $k(\mathcal{M})$ the \emph{field of definition} of $\mathcal{M}$, i.e., the smallest extension of $\mathbb{Q}$ containing the coefficients of all affine equations in period coordinates defining $\mathcal{M}$. 

The field of definition has the following three general properties: 
\begin{itemize}
\item the field of definition of a Teichm\"uller curve coincides with its trace field; 
\item it has a hereditary property: if $\mathcal{N}$ and $\mathcal{M}$ are affine invariant submanifolds and $\mathcal{N}\subset\mathcal{M}$, then $k(\mathcal{M})\subset k(\mathcal{N})$; 
\item $\textrm{dim}_{\mathbb{C}}p(T\mathcal{M})\cdot\textrm{deg}_{\mathbb{Q}}(k(\mathcal{M}))\leq 2m$ whenever $\mathcal{M}$ is an affine invariant submanifold in a stratum of genus $m$ translation surfaces whose tangent space $T\mathcal{M}\subset H^1(M,\textrm{div}(\omega),\mathbb{C})$ at a point $(M,\omega)\in\mathcal{M}$ projects to a subspace $p(T\mathcal{M})\subset H^1(M,\mathbb{C})$ under the natural projection $p: H^1(M,\textrm{div}(\omega),\mathbb{C})\to H^1(M,\mathbb{C})$. 
\end{itemize}

Let $\mathcal{M}$ be an affine invariant submanifold of a connected component $\mathcal{C}$ of $\mathcal{H}(2m-2)$. Suppose that $m\geq 2$ is a prime number 
and $\mathcal{M}$ properly contains an algebraically primitive Teichm\"uller curve $C$. Then, the first two properties above of the field of definition imply that the degree $\textrm{deg}_{\mathbb{Q}}(k(\mathcal{M}))$ divides the degree of the trace field of $C$, i.e., $\textrm{deg}_{\mathbb{Q}}(k(\mathcal{M}))$ divides $m$. Since $m$ is a prime number, this means that $\textrm{deg}_{\mathbb{Q}}(k(\mathcal{M}))$ equals $1$ or $m$. We affirm that $\textrm{deg}_{\mathbb{Q}}(k(\mathcal{M}))=1$: indeed, if $\textrm{deg}_{\mathbb{Q}}(k(\mathcal{M}))=m$, then the third property of the field of definition would imply that $\textrm{dim}_{\mathbb{C}}p(T\mathcal{M})\leq 2$, a contradiction with the fact that $\mathcal{M}$ \emph{properly} contains a Teichm\"uller curve. Once we know that $k(\mathcal{M})=\mathbb{Q}$, it is not hard to see that the tangent space to $\mathcal{M}$ has complex dimension at least $2m$: in fact, since $\mathcal{M}$ is defined over $\mathbb{Q}$, the space $p(T\mathcal{M})$ at any point $(M,\omega)\in C$ contains the tangent space (tautological plane) to the algebraically primitive Teichm\"uller curve $C$ and all of its $m$ Galois conjugates. Because $\mathcal{M}\subset\mathcal{H}(2m-2)$ and the minimal stratum $\mathcal{H}(2m-2)$ has complex dimension $2m\leq\textrm{dim}_{\mathbb{C}}(T\mathcal{M})$, it follows that $\mathcal{M}$ is an open $GL^+(2,\mathbb{R})$-invariant subset of $\mathcal{H}(2m-2)$. By the ergodicity theorem of Masur and Veech, this implies that $\mathcal{M}$ is a connected component of the stratum $\mathcal{H}(2m-2)$.
\end{proof}

In the sequel, we will discuss the proofs of Theorems \ref{t.MW-B} and \ref{t.MW-C}. More precisely, we will establish Theorem \ref{t.MW-B} in Subsection \ref{ss.t.MW-B} below by studying some continuity properties of Hodge-Teichm\"uller planes, and we will provide a \emph{sketch} of proof of Theorem \ref{t.MW-C}  together with an elementary proof of a particular case of this theorem in Subsection \ref{ss.t.MW-C} below. 

\subsection{Proof of Theorem \ref{t.MW-B}}\label{ss.t.MW-B} Recall that the most basic example of Hodge-Teichm\"uller plane associated to any given translation surface $(M,\omega)$ is the tautological plane $\mathbb{L}_1:=\textrm{span}_{\mathbb{R}}(\textrm{Re}(\omega), \textrm{Im}(\omega)) \subset H^1(M,\mathbb{R})$ (cf. Example \ref{ex.HT-canonical}). 

If the $SL(2,\mathbb{R})$-orbit of a translation surface $X=(M,\omega)$ of genus $g$ generates an algebraically Teichm\"uller curve $\mathcal{C}$, then we have $g$ symplectically orthogonal Hodge-Teichm\"uller planes $\mathbb{L}_1, \dots, \mathbb{L}_g$ (cf. Example \ref{ex.HT-Galois-conjugates}). 

Therefore, the proof of Theorem \ref{t.MW-B} is reduced to the following continuity property of Hodge-Teichm\"uller planes: 

\begin{proposition}\label{p.HT-continuity} Let $\mathcal{C}$ be a connected component of a stratum of the moduli space of  translation surfaces. Suppose that $X_n\in\mathcal{C}$ is a sequence of translation surfaces converging to $X\in\mathcal{C}$ such that, for some fixed $k\in\mathbb{N}$, each $X_n$ possesses $k$ symplectically orthogonal Hodge-Teichm\"uller planes, say $P_n^{(1)}, \dots, P_n^{(k)}$. Then, $X$ possesses $k$ symplectically orthogonal Hodge-Teichm\"uller planes.  
\end{proposition}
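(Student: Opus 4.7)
The plan is to extract convergent subsequences in an appropriate Grassmannian and then check that the three defining properties --- being a real $2$-plane in $H^1(X,\mathbb{R})$, being mutually symplectically orthogonal, and satisfying the Hodge-Teichm\"uller condition along the whole $SL(2,\mathbb{R})$-orbit of $X$ --- all pass to the limit. First I would work in a small neighborhood of $X$ on which the cohomology bundle $H^1$ is trivialized smoothly via the Gauss-Manin connection; under this trivialization each $P_n^{(j)}$ becomes a point of the compact real Grassmannian $\mathrm{Gr}(2,H^1(X,\mathbb{R}))$, so by a diagonal extraction we may assume $P_n^{(j)}\to P^{(j)}$ for every $j=1,\dots,k$. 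Symplectic orthogonality then passes to the limit immediately, since the intersection form $\{\cdot,\cdot\}$ is flat for the Gauss-Manin connection, so $\{P^{(i)},P^{(j)}\} = \lim_n \{P_n^{(i)},P_n^{(j)}\} = 0$ for $i\neq j$.

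The heart of the argument is to verify that, for every $h\in SL(2,\mathbb{R})$,
\[
\dim_{\mathbb{C}}\bigl((hP^{(j)}\otimes\mathbb{C})\cap H^{1,0}(hX)\bigr) = 1.
\]
Two standard facts enter here. First, the Hodge subbundle $H^{1,0}\subset H^1\otimes\mathbb{C}$ is a holomorphic (in particular continuous) subbundle of the complex cohomology bundle over the stratum, so $H^{1,0}(hX_n)\to H^{1,0}(hX)$ in the complex Grassmannian of $g$-planes as $n\to\infty$. Second, the $SL(2,\mathbb{R})$-action on the Hodge bundle is continuous, so $hP_n^{(j)}\to hP^{(j)}$ at $hX_n\to hX$ (no further extraction is needed, since this is just the parallel transport under $h$ of the already-extracted subsequence). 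It therefore suffices to show that the Hodge-Teichm\"uller condition, regarded as a condition on pairs (real $2$-plane, complex $g$-plane) in a fixed complex vector space, is closed.

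The main technical observation --- and the only nonformal step --- is that for any real $2$-plane $P\subset H^1(M,\mathbb{R})$ one has $\dim_{\mathbb{C}}\bigl((P\otimes\mathbb{C})\cap H^{1,0}(M)\bigr)\leq 1$: indeed, if this dimension were $2$ then $(P\otimes\mathbb{C})\cap H^{1,0}(M) = P\otimes\mathbb{C}$ would be conjugation-invariant (as $P$ is real) and would therefore be contained in $H^{1,0}(M)\cap\overline{H^{1,0}(M)} = H^{1,0}(M)\cap H^{0,1}(M) = 0$, contradicting $\dim P=2$. Hence the Hodge-Teichm\"uller condition ``$\dim=1$'' is equivalent to the condition ``$(P\otimes\mathbb{C})\cap H^{1,0}(M)\neq\{0\}$'', and the latter is closed under limits in the relevant product of Grassmannians by upper semicontinuity of the dimension of the intersection of two subspaces. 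Applying this to the converging pairs $\bigl(hP_n^{(j)},H^{1,0}(hX_n)\bigr)\to\bigl(hP^{(j)},H^{1,0}(hX)\bigr)$ yields the required dimension equality for every $h\in SL(2,\mathbb{R})$. The main obstacle --- that the dimension of an intersection of subspaces can only jump up in the limit, so ``$\dim=1$'' is not a priori a closed condition --- is thus bypassed by the upper bound ``$\dim\leq 1$'' that comes for free from the real structure of $P$.
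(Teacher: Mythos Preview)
Your argument follows essentially the same route as the paper's, and your treatment of the Hodge--Teichm\"uller condition is in fact more careful: you explicitly justify why the limit plane satisfies $\dim_{\mathbb{C}}\bigl((P\otimes\mathbb{C})\cap H^{1,0}\bigr)=1$ rather than $2$, via the observation that a conjugation-invariant $2$-plane cannot sit inside $H^{1,0}$. The paper simply passes to the limit with the nontriviality of the intersection and does not comment on the upper bound.

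However, you have omitted one step that the paper does address: the pairwise \emph{distinctness} of the limit planes $P^{(1)},\dots,P^{(k)}$. Symplectic orthogonality alone does not prevent two of the $P^{(j)}$ from coinciding in the limit, since an isotropic plane is symplectically orthogonal to itself. The paper handles this by observing that a Hodge--Teichm\"uller plane is Hodge-star invariant (its complexification splits as its $(1,0)$ and $(0,1)$ parts), so for such planes symplectic orthogonality coincides with orthogonality for the positive-definite Hodge inner product; mutual Hodge-orthogonality then forces distinctness. Equivalently, and perhaps more in the spirit of your write-up, one can note directly that any Hodge--Teichm\"uller plane $P$ is symplectic: if $0\neq\alpha\in(P\otimes\mathbb{C})\cap H^{1,0}$ then $\mathrm{Re}(\alpha),\mathrm{Im}(\alpha)$ span $P$ and $\{\mathrm{Re}(\alpha),\mathrm{Im}(\alpha)\}=\tfrac{i}{2}\int\alpha\wedge\overline{\alpha}>0$, so $P$ is not isotropic and cannot be symplectically orthogonal to itself. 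Either way, this is a genuine missing step in your proposal and should be added.
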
 

\begin{proof} By definition, $X_n=(M_n,\omega_n)$ converges to $X=(M,\omega)$ whenever we can find diffeomorphisms $f_n: M_n\to M$ such that $(f_n)_*(\omega_n)\to\omega$. 

By extracting an appropriate subsequence if necessary, we can assume that, for each $1\leq j\leq k$,  $(f_n)_*(P_n^{(j)})$ converges to a plane $P^{(j)}$ in the Grassmanian of planes of $H^1(M,\mathbb{R})$. 

We claim that $P^{(j)}$ is a Hodge-Teichm\"uller plane (for each $1\leq j\leq k$). In fact, given any $h\in SL(2,\mathbb{R})$, we have that 
\begin{equation}\label{e.hPn}
(\phi_h\circ f_n\circ \phi_h^{-1})_*(h P_n^{(j)})\to h P^{(j)},
\end{equation} 
where $\phi_h$ is the affine homeomorphism induced by $h$. On the other hand, we know that  $$(hP_n^{(j)}\otimes\mathbb{C})\cap H^{1,0}(hM_n)\neq\{0\}$$ for each $n\in\mathbb{N}$ and $1\leq j\leq k$ (because $P_n^{(j)}$ are Hodge-Teichm\"uller planes) and, in general,  $H^{1,0}(N)$ varies continuously with $N$ (see, e.g., \cite[Chapitre 9]{Voisin}). Thus, it follows from \eqref{e.hPn} that $(hP^{(j)}\otimes\mathbb{C})\cap H^{1,0}(hM)\neq\{0\}$, i.e., $P^{(j)}$ is a Hodge-Teichm\"uller plane. 

Finally, we affirm that $P^{(j)}$ are $k$ pairwise distinct symplectically orthogonal planes. Indeed, the continuity of the symplectic intersection form implies that $P^{(j)}$ are mutually symplectically orthogonal. Nevertheless, this is not sufficient to obtain that $P^{(j)}$ are pairwise distinct. For this sake, we observe that, by definition, a Hodge-Teichm\"uller plane is Hodge-star\footnote{Recall that the Hodge-star operator $\ast:H^1(M,\mathbb{R})\to H^1(M,\mathbb{R})$ is defined by the fact that the form $c+i(\ast c)$ is holomorphic for all $c\in H^1(M,\mathbb{R})$.} invariant (because its complexification is the sum of its $(1,0)$ and $(0,1)$ parts). Hence, the symplectic orthogonal of a Hodge-Teichm\"uller plane coincides\footnote{See Lemma 3.4 of \cite{FMZ-ETDS} for more details.} with its orthogonal for the Hodge inner product\footnote{The Hodge norm $\|.\|$ is $\|c\|^2:=\int_M c\wedge \ast c$.} and, \emph{a fortiori}, $P^{(j)}$ are mutually orthogonal with respect to the Hodge inner product. In particular, $P^{(j)}$ are pairwise distinct as it was claimed. 
\end{proof}

\subsection{Sketch of proof of Theorem \ref{t.MW-C}}\label{ss.t.MW-C} Let $X=(M,\omega)$ be a translation surface. The group $\textrm{Aff}(X)$ of affine homeomorphisms of $X$ acts on $H^1(M,\mathbb{R})$ via symplectic matrices. Let $\Gamma(X)$ be the image of the natural representation $\textrm{Aff}(X)\to \textrm{Sp}(H^1(M,\mathbb{R}))$ and denote by $\overline{\Gamma(X)}$ the Zariski closure of $\Gamma(X)$. 

The proof of Theorem \ref{t.MW-C} starts with the following fact: 

\begin{proposition}\label{p.HT-mon-invariance} The set of Hodge-Teichm\"uller planes of $X$ is $\overline{\Gamma(X)}$-invariant.  
\end{proposition}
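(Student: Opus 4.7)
The plan is to reduce the proposition to two steps: first, show directly that $\Gamma(X)$ preserves the set of Hodge-Teichmüller planes; then, observe that this set is cut out by Zariski-closed conditions, so invariance automatically propagates to the Zariski closure $\overline{\Gamma(X)}$.

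For the first step, I would exploit the following geometric picture. Any $\gamma \in \Gamma(X)$ is the action on $H^1(M,\mathbb{R})$ of some affine homeomorphism $f$ with derivative $A = Df \in SL(X)$. Since the translation charts of $h\cdot X$ are obtained from those of $X$ by post-composition with $h$, the same $f$ is also affine on $h\cdot X$ with derivative $hAh^{-1}$, and realizes a translation equivalence (in particular, a biholomorphism of Riemann surfaces) from $hA\cdot X$ onto $h\cdot X$ for every $h \in SL(2,\mathbb{R})$. Consequently, working in the finite cover of $\mathcal C$ where Gauss-Manin parallel transport along $SL(2,\mathbb{R})$-orbits is trivial, the pullback $f^*$ maps $H^{1,0}(h\cdot X)$ onto $H^{1,0}(hA\cdot X)$, equivalently $f^{*}\bigl(H^{1,0}(hA^{-1}\cdot X)\bigr)=H^{1,0}(h\cdot X)$. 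The computation
\[
(\gamma(P)\otimes\mathbb{C})\cap H^{1,0}(h\cdot X)=f^{*}\bigl((P\otimes\mathbb{C})\cap H^{1,0}(hA^{-1}\cdot X)\bigr)
\]
then yields $\dim_{\mathbb{C}}\bigl((\gamma(P)\otimes\mathbb{C})\cap H^{1,0}(h\cdot X)\bigr)=1$ directly from the Hodge-Teichmüller property of $P$ applied at $hA^{-1}$, establishing $\Gamma(X)$-invariance.

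For the second step, fix $h \in SL(2,\mathbb{R})$ and consider
\[
Z_h=\bigl\{\gamma\in GL\bigl(H^1(M,\mathbb{R})\bigr):\dim_{\mathbb{C}}\bigl((\gamma(P)\otimes\mathbb{C})\cap H^{1,0}(h\cdot X)\bigr)\geq 1\bigr\}.
\]
After choosing bases, this is the vanishing locus of the $2\times 2$ minors of a matrix whose entries are polynomial in $\gamma$, so $Z_h$ is Zariski closed. Moreover, the intersection in question can never have complex dimension $2$: if it did, then $\gamma(P)\otimes\mathbb{C}\subseteq H^{1,0}(h\cdot X)$, but $\gamma(P)\otimes\mathbb{C}$ is stable under complex conjugation (since $\gamma(P)$ is real), so it would also sit in $H^{0,1}(h\cdot X)$, contradicting $H^{1,0}\cap H^{0,1}=\{0\}$. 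Hence on $Z_h$ the dimension is exactly $1$, so the set of $\gamma$ sending $P$ to a Hodge-Teichmüller plane equals $\bigcap_{h\in SL(2,\mathbb{R})}Z_h$, which is Zariski closed. By Step 1 it contains $\Gamma(X)$, so it also contains $\overline{\Gamma(X)}$.

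I expect the main obstacle to be less conceptual than bookkeeping: one has to verify with care that the commutation of $f$ with the $SL(2,\mathbb{R})$-action really yields the identification $f^*\bigl(H^{1,0}(hA^{-1}\cdot X)\bigr)=H^{1,0}(h\cdot X)$ in the cover where parallel transport is trivial, which is sensitive to the conventions chosen for the actions of $\textrm{Aff}(X)$, $SL(2,\mathbb{R})$, and the direction of pullbacks; once this identification is pinned down, both the orbit-by-orbit invariance and the Zariski-closure argument are essentially formal.
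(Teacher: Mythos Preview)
Your proof is correct and follows essentially the same two-step strategy as the paper: establish $\Gamma(X)$-invariance, then use that the Hodge--Teichm\"uller condition is cut out by polynomial equations to pass to the Zariski closure. The only cosmetic difference is that the paper phrases Step~2 on the Grassmannian side (the set $\mathcal{P}$ of Hodge--Teichm\"uller planes is a subvariety, and the stabilizer of a subvariety is algebraic), whereas you phrase it on the group side (for each fixed $P$, the set of $\gamma$ with $\gamma(P)\in\mathcal{P}$ is Zariski closed in $GL$); your version is a touch more explicit, and your care in checking that the intersection dimension cannot be $2$ fills in a point the paper leaves implicit.
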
 

\begin{proof} For each $h\in SL(2,\mathbb{R})$, let $H^{1,0}_h:=h^{-1}(H^{1,0}(hX))\subset H^1(X,\mathbb{C})$. By definition, a plane $P\subset H^1(X,\mathbb{R})$ is Hodge-Teichm\"uller if and only if $P_{\mathbb{C}}:=P\otimes\mathbb{C}$ intersects $H^{1,0}_h$ non-trivially for all $h\in SL(2,\mathbb{R})$. 

Given $h\in SL(2,\mathbb{R})$, the condition that $P_{\mathbb{C}}$ intersects $H^{1,0}$ non-trivially corresponds to a finite number of polynomial equations on the Grassmanian of planes in $H^1(X,\mathbb{R})$: indeed, if we form a matrix $M_P$ by listing a basis of $P$ next to a basis of $H^{1,0}_h$, then $P_{\mathbb{C}}\cap H^{1,0}_h \neq \{0\}$ is equivalent to $\textrm{rank}(M_P)\leq g+1$, i.e., all $(g+2)\times (g+2)$ minors of $M_P$ vanish. (Here, $g$ is the genus of $X$.)

In particular, the set $\mathcal{P}$ of Hodge-Teichm\"uller planes is a subvariety of the Grassmanian of planes in $H^1(X,\mathbb{R})$. It follows that the stabilizer of $\mathcal{P}$ contains the Zariski closure $\overline{\Gamma(X)}$ (because $\mathcal{P}$ is clearly $\Gamma(M)$-invariant). This proves the proposition. 
\end{proof}

Given a translation surface $X=(M,\omega)$ (of genus $g$), let $H^1(X,\mathbb{R})^{\perp}$ be the ($(2g-2)$-dimensional) symplectic orthogonal of the tautological plane $\textrm{span}_{\mathbb{R}}(\textrm{Re}(\omega), \textrm{Im}(\omega))$. Denote by $\Gamma_{\perp}(X)$ the restriction of $\Gamma(X)$ to $H^1(X,\mathbb{R})^{\perp}$ and let $\overline{\Gamma_{\perp}(X)}$ its Zariski closure. 

We use Proposition \ref{p.HT-mon-invariance} to show that a translation surface has few Hodge-Teichm\"uller planes when $\overline{\Gamma_{\perp}(X)}$ is large: 

\begin{proposition}\label{p.few-HT-criterion} Let $X=(M,\omega)$ be a translation surface of genus $g\geq 3$ such that $\overline{\Gamma_{\perp}(X)} = \textrm{Sp}(H^1(X,\mathbb{R})^{\perp})$. Then, the sole Hodge-Teichm\"uller plane of $X$ is its tautological plane. 
\end{proposition}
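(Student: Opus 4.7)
The plan is to combine Proposition \ref{p.HT-mon-invariance}, asserting that the set $\mathcal{P}$ of Hodge-Teichm\"uller planes of $X$ is a $\overline{\Gamma(X)}$-invariant subvariety of $\textrm{Gr}(2,H^1(X,\mathbb{R}))$, with the hypothesis $\overline{\Gamma_{\perp}(X)}=\textrm{Sp}(H^1(X,\mathbb{R})^{\perp})$. Writing $\mathbb{L}_{st}$ for the tautological plane, the strategy is first to exhibit a large subgroup of $\overline{\Gamma(X)}$ that fixes $\mathbb{L}_{st}$ pointwise and acts via the full symplectic group on its complement, and then to exclude every $P\in\mathcal{P}$ with $P\neq\mathbb{L}_{st}$ by orbit-dimension counts together with the Hodge-Teichm\"uller condition.

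For the first step I shall invoke Goursat's lemma applied to the embedding $\overline{\Gamma(X)}\subset SL(\mathbb{L}_{st})\times\textrm{Sp}(H^1(X,\mathbb{R})^{\perp})$ coming from the fact that $\Gamma(X)$ preserves the symplectic decomposition $H^1(X,\mathbb{R})=\mathbb{L}_{st}\oplus H^1(X,\mathbb{R})^{\perp}$. By hypothesis the projection to the second factor is surjective, so Goursat identifies $\textrm{Sp}(H^1(X,\mathbb{R})^{\perp})/N$ with a quotient of a subgroup of $SL(\mathbb{L}_{st})$, where $N:=\{B:(I,B)\in\overline{\Gamma(X)}\}$ is a Zariski closed normal subgroup of $\textrm{Sp}(H^1(X,\mathbb{R})^{\perp})$. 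Since $\dim SL(\mathbb{L}_{st})=3$ while $\dim\textrm{Sp}(H^1(X,\mathbb{R})^{\perp})=(2g-2)(2g-1)\geq 20$ for $g\geq 3$, we deduce $\dim N\geq(2g-2)(2g-1)-3>0$; the almost simplicity of $\textrm{Sp}(H^1(X,\mathbb{R})^{\perp})$ then forces $N=\textrm{Sp}(H^1(X,\mathbb{R})^{\perp})$, i.e.\ $\{I\}\times\textrm{Sp}(H^1(X,\mathbb{R})^{\perp})\subset\overline{\Gamma(X)}$. By Proposition \ref{p.HT-mon-invariance}, $\mathcal{P}$ is stable under the resulting action of $\textrm{Sp}(H^1(X,\mathbb{R})^{\perp})$, which is trivial on $\mathbb{L}_{st}$.

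Suppose for contradiction that $P\in\mathcal{P}$ satisfies $P\neq\mathbb{L}_{st}$. Since $\mathcal{P}$ is contained in the variety of ``Hodge planes'' of $X$ (those $P'\subset H^1(X,\mathbb{R})$ with $P'_{\mathbb{C}}\cap H^{1,0}(X)\neq 0$), parametrized by $\mathbb{P}(H^{1,0}(X))\cong\mathbb{CP}^{g-1}$ of real dimension $2g-2$, the $\textrm{Sp}(H^1(X,\mathbb{R})^{\perp})$-orbit of $P$ has dimension at most $2g-2$. When $P\subset H^1(X,\mathbb{R})^{\perp}$ the Hodge-Teichm\"uller property rules out $P$ being Lagrangian, because $\eta\in P_{\mathbb{C}}\cap H^{1,0}(X)$ and $\bar\eta\in P_{\mathbb{C}}$ would yield $\int\eta\wedge\bar\eta\neq 0$, contradicting the vanishing of the symplectic form on $P$; hence $P$ is symplectic and its $\textrm{Sp}$-orbit is open in $\textrm{Gr}(2,H^1(X,\mathbb{R})^{\perp})$ of dimension $4g-8$, exceeding the dimension $2g-4$ of Hodge planes inside $H^1(X,\mathbb{R})^{\perp}$ for $g\geq 3$. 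When $P\cap\mathbb{L}_{st}=0$ but $P\not\subset H^1(X,\mathbb{R})^{\perp}$, choosing suitable generators and using the transitivity of $\textrm{Sp}$ on linearly independent pairs with prescribed symplectic product in $H^1(X,\mathbb{R})^{\perp}$, the $\textrm{Sp}$-orbit of $P$ has dimension at least $4g-6>2g-2$ for $g\geq 3$, again contradicting the dimensional bound.

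The main obstacle is the remaining case, in which $P\cap\mathbb{L}_{st}$ is a line $\ell$ (but $P\neq\mathbb{L}_{st}$): here the $\textrm{Sp}$-orbit has dimension at most $2g-2$, and a mere dimension count does not suffice. To handle it I will exploit the full Hodge-Teichm\"uller condition, requiring $P_{\mathbb{C}}\cap H^{1,0}(hX)\neq 0$ for \emph{every} $h\in SL(2,\mathbb{R})$. Picking a non-zero $v_0\in\ell$ and writing a generator of $P$ outside $\ell$ as $(u_1,u_2)$ with $u_1\in\mathbb{L}_{st}$ and $u_2\in H^1(X,\mathbb{R})^{\perp}\setminus\{0\}$, the condition becomes the existence, for each $h$, of $(\alpha,\beta)\neq(0,0)$ with $\alpha v_0^{0,1}(h)+\beta(u_1,u_2)^{0,1}(h)=0$. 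The key observation is that $v_0^{0,1}(h)$ and $u_1^{0,1}(h)$ lie in the one-dimensional subspace $\mathbb{L}_{st,\mathbb{C}}\cap H^{0,1}(hX)$, while a symplectic orthogonality calculation using that $(0,u_2)\in H^1(X,\mathbb{R})^{\perp}$ pairs trivially with the tautological $(1,0)$-form of $hX$ and its conjugate shows that the $H^{0,1}(hX)$-component of $(0,u_2)$ lies in the complementary subspace $H^1(X,\mathbb{R})^{\perp}_{\mathbb{C}}\cap H^{0,1}(hX)$. Projecting the vanishing equation onto these two complementary subspaces forces $\beta\cdot(0,u_2)^{0,1}(h)=0$, and hence (since $\beta=0$ together with $v_0\neq 0$ would force $\alpha=0$) $(0,u_2)^{0,1}(h)=0$ for every $h$. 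Thus $(0,u_2)\in H^{1,0}(hX)$ for every $h$; combined with $(0,u_2)$ being real this gives $(0,u_2)\in H^{1,0}(hX)\cap H^{0,1}(hX)=\{0\}$, contradicting $u_2\neq 0$ and thereby establishing $\mathcal{P}=\{\mathbb{L}_{st}\}$.
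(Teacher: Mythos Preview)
Your argument is correct but considerably more elaborate than the paper's. The paper's single observation is that the symplectic-orthogonal projection $\pi:H^1(X,\mathbb{R})\to H^1(X,\mathbb{R})^\perp$ is a morphism of Hodge structures for every $h\in SL(2,\mathbb{R})$ (since the tautological plane of $hX$ equals $\mathbb{L}_{st}$); hence if $P_0\neq\mathbb{L}_{st}$ is Hodge--Teichm\"uller then $\pi(P_0)$ is a \emph{two-dimensional} Hodge--Teichm\"uller plane inside $H^1(X,\mathbb{R})^\perp$. This instantly rules out your Case~3 and reduces your Case~2 to Case~1, after which one finishes by noting that the set of Hodge--Teichm\"uller planes inside $H^1(X,\mathbb{R})^\perp$ is $\Gamma_\perp(X)$-invariant (tautologically --- no Goursat needed) and hence $\textrm{Sp}$-invariant, while $\textrm{Sp}$ is transitive on symplectic planes and not every symplectic plane meets $H^{1,0}$ when $g\geq 3$. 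In fact your Case~3 computation \emph{is} the verification that $\dim\pi(P_0)=2$: you show that $(0,u_2)^{0,1}(h)=0$ for all $h$ forces $u_2=0$, which amounts to saying a nonzero $(1,0)$-class in $P_{0,\mathbb{C}}$ cannot project to zero. So you discovered the key lemma but did not exploit it to collapse the case split; your route trades that observation for a Goursat argument plus orbit-dimension bookkeeping in Case~2.
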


\begin{proof} By contradiction, suppose that $X$ has a Hodge-Teichm\"uller plane $P_0$ which is not the tautological plane. Since the tautological plane is precisely the kernel of the natural projection $\pi: H^1(X,\mathbb{R})\to H^1(X,\mathbb{R})^{\perp}$, we have that $P:=\pi(P_0)$ is dimension $\geq 1$ and, moreover, the complexification $P_{\mathbb{C}}:=P\otimes\mathbb{C}$ intersects $H^{1,0}(X)$ or $H^{0,1}(X)$ (because the complexification of $\pi$ respects $H^{1,0}(X)$ and $H^{0,1}(X)$). 

Taking into account that $P$ is a real subspace and $H^{0,1}(X)$ is the complex conjugate of $H^{1,0}(X)$, we have that $P_{\mathbb{C}}$ intersects both $H^{1,0}(X)$ and $H^{0,1}(X)$ and, hence, $P$ has dimension two. Note that the same argument applies to $hP$ for all $h\in SL(2,\mathbb{R})$. This means that $P=\pi(P_0)$ is a Hodge-Teichm\"uller plane whenever $P_0$ is a non-tautological Hodge-Teichm\"uller plane. 

By Proposition \ref{p.HT-mon-invariance}, our hypothesis $\overline{\Gamma_{\perp}(X)} = \textrm{Sp}(H^1(X,\mathbb{R}))^{\perp}$ implies that $\gamma(P)$ is a Hodge-Teichm\"uller plane for all $\gamma\in \textrm{Sp}(H^1(X,\mathbb{R})^{\perp})$. 

This is a contradiction because $\textrm{Sp}(H^1(X,\mathbb{R})^{\perp})$ acts transitively on the set of symplectic planes in $H^1(X,\mathbb{R})^{\perp}$, but there are\footnote{For instance, the set of symplectic planes is open in the Grassmannian of planes while the set of planes whose complexification intersects $H^{1,0}$ has positive codimension when $g\geq 3$.} symplectic planes which are not Hodge-Teichm\"uller when $g\geq 3$. 
\end{proof}

This proposition allows us to establish some low-genus cases of Theorem \ref{t.MW-C}: 

\begin{proposition}\label{p.MW-C-g3} Let $\mathcal{C}$ be a connected component of $\mathcal{H}(4)$. Then, there exists a square-tiled surface $M_{\mathcal{C}} \in \mathcal{C}$ such that $\overline{\Gamma_{\perp}(M_{\mathcal{C}})}\simeq \textrm{Sp}(4,\mathbb{R})$. In particular, $M_{\mathcal{C}}$ has only one Hodge-Teichm\"uller plane. 
\end{proposition}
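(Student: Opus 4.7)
By Proposition \ref{p.few-HT-criterion}, it suffices to exhibit, for each of the two connected components of $\mathcal{H}(4)$ (the hyperelliptic component $\mathcal{H}(4)^{\textrm{hyp}}$ and the odd-spin component $\mathcal{H}(4)^{\textrm{odd}}$), a square-tiled surface $M_{\mathcal{C}}$ such that the Zariski closure $\overline{\Gamma_{\perp}(M_{\mathcal{C}})}$ of the cohomological action of $\textrm{Aff}(M_{\mathcal{C}})$ on the $4$-dimensional symplectic complement $H^1(M_{\mathcal{C}},\mathbb{R})^{\perp}$ of the tautological plane equals $\textrm{Sp}(H^1(M_{\mathcal{C}},\mathbb{R})^{\perp}) \simeq \textrm{Sp}(4,\mathbb{R})$. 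The plan is then to treat the two components separately, each by producing one explicit example.

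For each component I would search among origamis with a small number of squares, chosen so as to lie in the prescribed component (which is detected by the hyperelliptic involution, respectively by the parity of the spin structure) and to admit transverse horizontal and vertical cylinder decompositions whose moduli are commensurable. The associated Dehn multitwists are affine homeomorphisms with parabolic linear parts in the Veech group, and their cohomological action on a fixed integral basis of $H_1(M_{\mathcal{C}},\mathbb{Z})$ can be computed combinatorially in exactly the way the matrix $(T_2)_*$ was computed in the introduction for the $L$-shaped origami. Projecting these integer symplectic matrices onto $H^1(M_{\mathcal{C}},\mathbb{R})^{\perp}$ then yields explicit generators $A_1, \dots, A_r \in \textrm{Sp}(4,\mathbb{Z})$ of a finite-index subgroup of $\Gamma_{\perp}(M_{\mathcal{C}})$.

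To pass from an explicit list of generators to Zariski density in $\textrm{Sp}(4,\mathbb{R})$, I would proceed in two steps. First, verify directly that $A_1,\dots,A_r$ act irreducibly on $H^1(M_{\mathcal{C}},\mathbb{C})^{\perp}$ by checking that no nonzero proper subspace is common to the kernels of the $A_i - \lambda\cdot\textrm{id}$. Second, produce a word $w$ in the $A_i$ whose characteristic polynomial is irreducible over $\mathbb{Q}$ and whose Galois group is the full Weyl group of type $C_2$ (the dihedral group of order $8$, permuting the two pairs of reciprocal roots). Once such a \emph{Galois-pinching} element is found, the criterion developed in \cite{MMY} (based on Prasad--Rapinchuk) implies, together with the irreducibility, that the Zariski closure must be the whole ambient symplectic group, since none of the connected proper reductive subgroups of $\textrm{Sp}(4,\mathbb{R})$ (products $\textrm{SL}_2\times\textrm{SL}_2$, diagonally embedded $\textrm{SL}_2$, and the standard Levi factors) can contain an element with eigenvalues having maximal Galois symmetry while preserving no subspace. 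The infinite order of the $A_i$ (they are parabolic-induced) handles the connectedness of the closure, and the ``in particular'' clause follows immediately from Proposition \ref{p.few-HT-criterion}.

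The principal obstacle is the existence part: one must actually locate origamis $M_{\mathcal{C}}$ in each of $\mathcal{H}(4)^{\textrm{hyp}}$ and $\mathcal{H}(4)^{\textrm{odd}}$ for which a short word in the Dehn-twist generators realizes a Galois-pinching element. While generic elements of $\textrm{Sp}(4,\mathbb{Z})$ do have this property, there is no a priori guarantee that the monodromy group of a given small origami is ``generic enough'', and simpler origamis may even yield reducible actions on $H^1(M_{\mathcal{C}},\mathbb{R})^{\perp}$. I therefore expect the proof to proceed by a computer-assisted search over square-tiled surfaces of small combinatorial complexity in each component, followed by the (finite but tedious) verification of irreducibility and of the Galois condition on the characteristic polynomial of an explicit word in the generators.
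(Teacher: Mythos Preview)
Your overall plan---pick an explicit origami in each component, compute the action of a few Dehn multitwists on $H_1^{\perp}$, and then certify Zariski density---is exactly the paper's strategy, and the ``in particular'' clause via Proposition~\ref{p.few-HT-criterion} is correct. The difference lies in how Zariski density is certified, and in the fact that the paper actually carries out the computation rather than leaving it as a search.

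The paper exhibits concrete $6$-square origamis $M_{\ast}\in\mathcal{H}(4)^{\textrm{odd}}$ and $M_{\ast\ast}\in\mathcal{H}(4)^{\textrm{hyp}}$ (given by explicit pairs of permutations), takes Dehn multitwists $A,B,C$ in three directions (horizontal, vertical, slope $1$), and writes down their restrictions $A_\ast,B_\ast,C_\ast\in\textrm{Sp}(4,\mathbb{Z})$ to $H_1^{\perp}$. The Zariski density check is then pure linear algebra: since $A_\ast$ is unipotent, $\log A_\ast$ lies in the Lie algebra $\mathfrak{g}$ of the Zariski closure, and so do its conjugates by words in $A_\ast,B_\ast,C_\ast$. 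One simply checks that $\log A_\ast$ together with nine explicit conjugates are linearly independent; since $\dim\mathfrak{sp}(4,\mathbb{R})=10$, this forces $\mathfrak{g}=\mathfrak{sp}(4,\mathbb{R})$. No algebraic-group classification, no Galois theory, no Prasad--Rapinchuk.

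Your proposed route via a Galois-pinching element plus irreducibility is a legitimate alternative, but two remarks. First, the criterion you attribute to \cite{MMY} is a simplicity criterion for Lyapunov exponents (pinching and twisting feeding into Avila--Viana), not a Zariski-density criterion; the statement ``Galois-pinching $+$ irreducible $\Rightarrow$ Zariski dense in $\textrm{Sp}$'' is correct but comes from the algebraic-group side (Prasad--Rapinchuk, Rivin), so the citation and the informal justification via ``none of the proper reductive subgroups\dots'' should be tightened. Second, your argument that infinite-order generators ``handle connectedness'' is not quite right: infinite order does not by itself force the Zariski closure to be connected. What the paper's Lie-algebra argument buys is that all of this is bypassed---one lands directly on the full Lie algebra, so connectedness and the subgroup classification never enter.
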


\begin{proof}
The minimal stratum $\mathcal{H}(4)$ of the moduli space of translation surfaces of genus $3$ has two connected components $\mathcal{H}(4)^{\textrm{hyp}}$ and $\mathcal{H}(4)^{\textrm{odd}}$. As it is explained in \cite{KZ}, these connected components are distinguished by the \emph{parity of the spin structure}: $M\in\mathcal{H}(4)^{\textrm{hyp}}$, resp. $\mathcal{H}(4)^{\textrm{odd}}$, if and only if $\Phi(M)=0$, resp. $1$, where $\Phi(M)\in\mathbb{Z}/2\mathbb{Z}$ is the so-called \emph{Arf invariant}\footnote{Recall that if $\{\alpha_i,\beta_i: i=1,\dots, g\}$ is a canonical symplectic basis on a genus $g$ translation surface $(M,\omega)$, then $\Phi(M):=\sum\limits_{i=1}^g(\textrm{ind}_{\omega}(\alpha_i)+1)(\textrm{ind}_{\omega}(\beta_i)+1)$ where $\textrm{ind}_{\omega}(\gamma)$ is the degree of the Gauss map associated to the tangents of a curve $\gamma$ not intersecting the set $\textrm{div}(\omega)$ of zeroes of $\omega$.}/parity of spin structure. 

From a direct inspection of the definitions, one can check that:
\begin{itemize} 
\item the square-tiled surface $M_{\ast}$ associated to the permutations $h_{\ast}=(1)(2,3)(4,5,6)$, $v_{\ast} = (1,4,2)(3,5)(6)$ belongs to $\mathcal{H}(4)^{\textrm{odd}}$, and 
\item the square-tiled surface $M_{\ast\ast}$ associated to the permutations $h_{\ast\ast}=(1)(2,3)(4,5,6)$, $v_{\ast\ast} = (1,2)(3,4)(5)(6)$ belongs to $\mathcal{H}(4)^{\textrm{hyp}}$.
\end{itemize}

We affirm that $\overline{\Gamma_{\perp}(M_{\ast})} \simeq \overline{\Gamma_{\perp}(M_{\ast\ast})} \simeq \textrm{Sp}(4,\mathbb{R})$. For the sake of exposition, we treat only the case of $M_{\ast}$ (while referring to \cite[Lemmas 4.6 and 4.7]{MW} for the case of $M_{\ast\ast}$). 

By Poincar\'e duality, our task is equivalent to show that $\textrm{Aff}(M_{\ast})$ acts on the annihilator $H_1^{\perp}(M_{\ast},\mathbb{R})\subset H_1(M,\mathbb{R})$ of the tautological plane in $H^1(M_{\ast},\mathbb{R})$ through a Zariski dense subgroup of $\textrm{Sp}(H_1^{\perp}(M_{\ast},\mathbb{R}))$. In this direction, we shall compute the action of some elements of $\textrm{Aff}(M_{\ast})$ and we will prove that they generate a Zariski dense group. 

Note that $M_{\ast}$ decomposes into three horizontal, resp. vertical, cylinders whose waist curves $\sigma_0$, $\sigma_1$, $\sigma_2$, resp. $\zeta_0$, $\zeta_1$, $\zeta_2$, have lengths $1$, $2$ and $3$. Also, $M_{\ast}$ decomposes into two cylinders in the slope $1$ direction whose waist curves $\delta_1$ and $\delta_2$ are given by the property that $\delta_1$ intersects $\sigma_0$ and $\delta_2$ intersects $\zeta_2$. See Figure \ref{f.M-ast}. 

\begin{figure}[h!]
\begin{center}
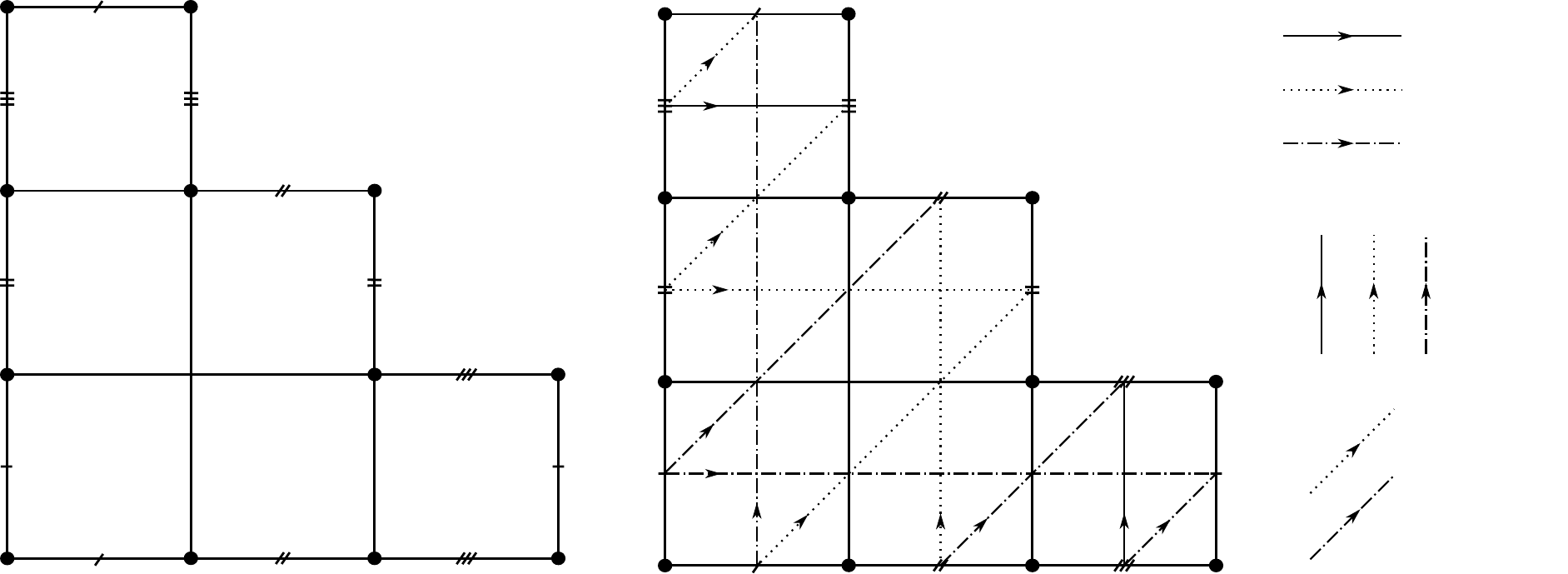
\end{center}
\caption{The geometry of the translation surface $M_{\ast}$.}\label{f.M-ast}
\end{figure}
Let us consider Dehn multitwists $A, B, C\in\textrm{Aff}(M_{\ast})$ in the horizontal, vertical and slope $1$ directions with linear parts 
$$dA=\left(\begin{array}{cc} 1 & 6 \\ 0 & 1 \end{array}\right), \quad dB=\left(\begin{array}{cc} 1 & 0 \\ 6 & 1 \end{array}\right), \quad dC=\left(\begin{array}{cc} -2 & 3 \\ -3 & 4 \end{array}\right)$$ 

It is not hard to see that the actions of $A$, $B$ and $C$ on $H_1(M_{\ast},\mathbb{R})$ are given by: 
$$A(\sigma_i) = \sigma_i \,\,\forall\,i = 1, 2, 3, \quad A(\zeta_0)=\zeta_0+2\sigma_2, \quad A(\zeta_1)=\zeta_1+3\sigma_1+2\sigma_2, \quad A(\zeta_2) = \zeta_2+3\sigma_1+2\sigma_2+6\sigma_0,$$ 
$$B(\sigma_0)=\sigma_0+2\zeta_2, \quad B(\sigma_1) = \sigma_1 + 3\zeta_1 + 2\zeta_2, \quad B(\sigma_2) = \sigma_2 + 3\zeta_1+2\zeta_2+6\zeta_0, \quad B(\zeta_i)=\zeta_i \,\,\forall\,i = 1, 2, 3,$$
$$C(\sigma_0) = \sigma_0-\delta_1 \quad C(\sigma_1) = \sigma_1-\delta_1-\delta_2 \quad 
C(\sigma_2) = \sigma_2-\delta_1-2\delta_2,$$
$$C(\zeta_0)=\zeta_0+\delta_2 \quad C(\zeta_1)=\zeta_1+\delta_1+\delta_2 \quad C(\zeta_2)=\zeta_2+2\delta_1+\delta_2$$

Thus, we can get matrices for the actions of $A$, $B$ and $C$ on $H_1^{\perp}(M_{\ast},\mathbb{R})$ after we fix a basis of this vector space. In this direction, we observe that $H_1^{\perp}(M_{\ast},\mathbb{R})$ is the subspace of $H_1(M_{\ast},\mathbb{R})$ consisting of cycles with trivial intersection with $\sigma:=\sigma_0+\sigma_1+\sigma_2$ and $\zeta:=\zeta_0+\zeta_1+\zeta_2$. In particular, the cycles $\overline{\sigma_1}:=\sigma_1-2\sigma_0$, $\overline{\sigma_2}:=\sigma_2-3\sigma_0$, $\overline{\zeta_1}:=\zeta_1-2\zeta_0$, $\overline{\zeta_2}:=\zeta_2-3\zeta_0$ form a basis of $H_1^{\perp}(M_{\ast},\mathbb{R})$ because these cycles are linearly independent, $H_1^{\perp}(M_{\ast},\mathbb{R})$ has dimension $2g-2$ and $M_{\ast}$ has genus $g=3$. 

Since $\delta_1=\sigma_1+\sigma_0+\zeta_2$ and $\delta_2=\sigma_2+\zeta_1+\zeta_0$, we conclude that the matrices $A_{\ast}$, $B_{\ast}$, $C_{\ast}$ of $A$, $B$, $C$ with respect to the basis $\{\overline{\sigma_1}, \overline{\sigma_2}, \overline{\zeta_1}, \overline{\zeta_2}\}$ of $H_1^{\perp}(M_{\ast},\mathbb{R})$ are: 
$$A_{\ast} = \left(\begin{array}{cccc} 1 & 0 & 3 & 3 \\ 0 & 1 & -2 & -4 \\ 0 & 0 & 1 & 0 \\ 0 & 0 & 0 & 1 \end{array}\right), \quad 
B_{\ast}=\left(\begin{array}{cccc} 1 & 0 & 0 & 0 \\ 0 & 1 & 0 & 0 \\ 3 & 3 & 1 & 0 \\ -2 & -4 & 0 & 1 \end{array}\right), \quad 
C_{\ast} = \left(\begin{array}{cccc} 2 & 2 & 1 & 2 \\ -1 & -1 & -1 & -2 \\ -1 & -2 & 0 & -2 \\ 1 & 2 & 1 & 3 \end{array}\right)$$

Once we computed these matrices, it suffices to check the Zariski closure $G$ of the group $\langle A_{\ast}, B_{\ast}, C_{\ast} \rangle$ is $\textrm{Sp}(4,\mathbb{R})$. As it turns out, this fact can be proved as follows. The Lie algebra $\mathfrak{g}$ of $G$ contains 
$$\log A_{\ast}=\left(\begin{array}{cccc} 0 & 0 & 3 & 3 \\ 0 & 0 & -2 & -4 \\ 0 & 0 & 0 & 0 \\ 0 & 0 & 0 & 0 \end{array}\right)\in\mathfrak{g}$$ and, \emph{a fortiori}, $\mathfrak{g}$ also contains the nine conjugates of $\log A_{\ast}$ by the matrices 
$$B_{\ast}, \quad B_{\ast}^2, \quad A_{\ast} B_{\ast}, \quad A_{\ast}^2 B_{\ast}, \quad 
B_{\ast} A_{\ast} B_{\ast}, \quad C_{\ast}, \quad C_{\ast}^2, \quad A_{\ast} C_{\ast}, \quad 
B_{\ast} C_{\ast}$$
On the other hand, a direct computation reveals that $\log A_{\ast}$ and these nine conjugates are linearly independent. Since $\textrm{Sp}(4,\mathbb{R})$ has dimension 10, this shows that $G=\textrm{Sp}(4,\mathbb{R})$. 

In summary, we showed that $\overline{\Gamma_{\perp}(M_{\ast})} = \textrm{Sp}(H^1(M_{\ast},\mathbb{R})^{\perp})\simeq \textrm{Sp}(4,\mathbb{R})$. In particular, $M_{\ast}$ has only one Hodge-Teichm\"uller plane (by Proposition \ref{p.few-HT-criterion}). This completes the proof of the proposition. 
\end{proof}

At this point, the idea of the proof of Theorem \ref{t.MW-C} can be explained as follows. If $\mathcal{C}$ were a connected component of a stratum of the moduli space of translation surfaces of genus $g\geq 3$ such that all $M\in\mathcal{C}$ has $(g-1)$ Hodge-Teichm\"uller planes, then all translation surfaces in all ``adjacent'' strata to $\mathcal{C}$ would have ``many'' Hodge-Teichm\"uller planes thanks to a ``continuity argument''. However, this is impossible because $\mathcal{C}$ is ``adjacent'' to a connected component of $\mathcal{H}(4)$, but  Proposition \ref{p.MW-C-g3} says that all connected components of $\mathcal{H}(4)$ contain some translation surfaces with few Hodge-Teichm\"uller planes.  

More concretely, we formalize this idea in \cite{MW} in two steps. First, we use an \emph{elementary} continuity argument (similar to Proposition \ref{p.HT-continuity}) and the notion of \emph{adjacency}\footnote{More precisely, we need the fact stated in \cite[Corollary 4]{KZ} that the boundary of any connected component $\mathcal{C}$ of any stratum of the moduli space $\mathcal{H}_g$ of translation surfaces of genus $g$ contains a connected component $\mathcal{C}'$ of the minimal stratum $\mathcal{H}(2g-2)$.} of strata from \cite{KZ} to establish the following result (cf. \cite[Proposition 5.1]{MW}): 

\begin{proposition}\label{p.MW-prop-5-1} Let $\mathcal{C}$ be a connected component of $\mathcal{H}(k_1,\dots,k_s)$, $s>1$, $\sum\limits_{l=1}^s k_l = 2g-2$. Suppose that all translation surfaces in $\mathcal{C}$ possess $m\geq 1$ symplectically orthogonal Hodge-Teichm\"uller planes. Then, there exists a connected component $\mathcal{C}'$ of the minimal stratum $\mathcal{H}(2g-2)$ such that all translation surfaces in $\mathcal{C}'$ also possess $m\geq 1$ symplectically orthogonal Hodge-Teichm\"uller planes. 
\end{proposition}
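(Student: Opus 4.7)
The plan is to combine two ingredients: the adjacency result of Kontsevich--Zorich (\cite[Corollary 4]{KZ}), which guarantees that the boundary of any connected component $\mathcal{C}$ of any stratum of $\mathcal{H}_g$ contains some connected component $\mathcal{C}'$ of the minimal stratum $\mathcal{H}(2g-2)$; and a degenerating version of the continuity argument of Proposition \ref{p.HT-continuity}, adapted so that zeroes of the Abelian differential are allowed to collide in the limit while the underlying smooth surface stays fixed.

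The first step is to pick, via the adjacency theorem, a connected component $\mathcal{C}'$ of $\mathcal{H}(2g-2)$ lying in $\overline{\mathcal{C}}$, and to observe that adjacency is constructive in the sense that any $X=(M,\omega)\in\mathcal{C}'$ can be written as a limit $X_n\to X$ with $X_n=(M,\omega_n)\in\mathcal{C}$: one just perturbs $\omega$ in period coordinates so as to split the multiple zero of $\omega$ into zeroes of the orders prescribed by $(k_1,\dots,k_s)$, keeping the smooth surface $M$ unchanged. Under this convergence the cohomology $H^1(M,\mathbb{R})$ is literally constant, and by hypothesis each $X_n$ carries $m$ symplectically orthogonal Hodge--Teichm\"uller planes $P_n^{(1)},\dots, P_n^{(m)}$. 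Passing to a subsequence, these planes converge in the Grassmannian to $m$ planes $P^{(1)},\dots, P^{(m)}\subset H^1(M,\mathbb{R})$, which are mutually symplectically orthogonal by continuity of the intersection form.

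The key step is to check that each $P^{(j)}$ is a Hodge--Teichm\"uller plane of $X$. Fix $h\in SL(2,\mathbb{R})$; applying the matrix $h$ to the sequence $X_n\to X$ yields $hX_n\to hX$ in the same weak sense (smooth surface fixed, Abelian differential converging). The Riemann surface structure of $hX_n$ converges to that of $hX$, so by the standard continuity of Hodge decompositions in families of smooth compact Riemann surfaces (see e.g.\ \cite{Voisin}), the subspace $H^{1,0}(hX_n)\subset H^1(M,\mathbb{C})$ converges to $H^{1,0}(hX)$. Since $(hP_n^{(j)}\otimes\mathbb{C})\cap H^{1,0}(hX_n)\neq\{0\}$ for every $n$, taking limits gives $(hP^{(j)}\otimes\mathbb{C})\cap H^{1,0}(hX)\neq\{0\}$, so $P^{(j)}$ is Hodge--Teichm\"uller. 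Finally, the planes $P^{(j)}$ are pairwise distinct by exactly the argument used in Proposition \ref{p.HT-continuity}: Hodge--Teichm\"uller planes are Hodge-star invariant, so symplectic orthogonality for them coincides with orthogonality with respect to the (continuous) Hodge inner product, which forces them to remain distinct in the limit.

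The main obstacle is verifying the continuity of the Hodge filtration at the degeneration $X_n\to X$ where zeroes collide. This is not covered verbatim by Proposition \ref{p.HT-continuity}, which is stated inside a single stratum. However, the degeneration is mild: the underlying compact Riemann surface does not pinch or acquire nodes, only the divisor of $\omega$ jumps. In this setting the family $\{hX_n\}$ extends to a smooth family of compact Riemann surfaces over the closed disk, for which the continuous variation of $H^{1,0}$ is classical. Once this point is secured, the rest is the routine Grassmannian compactness and continuity argument sketched above, and the conclusion follows: every $X\in\mathcal{C}'$ carries $m$ symplectically orthogonal Hodge--Teichm\"uller planes.
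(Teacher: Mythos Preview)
Your proposal is correct and follows essentially the same approach as the paper: the paper only sketches this proposition, invoking the Kontsevich--Zorich adjacency result \cite[Corollary~4]{KZ} together with ``an elementary continuity argument (similar to Proposition~\ref{p.HT-continuity})'', and this is precisely what you carry out. Your observation that the collision of zeroes does not degenerate the underlying compact Riemann surface (so that the continuity of $H^{1,0}$ used in Proposition~\ref{p.HT-continuity} still applies) is exactly the point that makes the argument go through across strata.
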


Secondly, we use a \emph{sophisticated} version of the previous continuity argument to move Hodge-Teichm\"uller planes across minimal strata (cf. \cite[Proposition 5.3]{MW}): 

\begin{proposition}\label{p.MW-prop-5-3} Let $\mathcal{C}$ be a connected component of $\mathcal{H}(2g-2)$. Suppose that every translation surface in $\mathcal{C}$ has $m\geq 1$ symplectically orthogonal Hodge-Teichm\"uller planes. Then, there exists a connected component $\mathcal{C}'$ of $\mathcal{H}(2g-4)$ such that every translation surface in $\mathcal{C}'$ has $(m-1)$ symplectically orthogonal Hodge-Teichm\"uller planes.
\end{proposition}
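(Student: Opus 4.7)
The plan is to adapt the continuity argument behind Proposition \ref{p.HT-continuity} to a setting where the underlying Riemann surface is allowed to degenerate. Concretely, I would fix a translation surface $M = (X,\omega) \in \mathcal{H}(2g-4)$ of genus $g-1$ and construct, via a plumbing/surgery at the zero, a sequence $M_n = (X_n,\omega_n) \in \mathcal{C} \subset \mathcal{H}(2g-2)$ that ``shrinks a non-separating loop'' back to $M$. The surgery opens a thin handle of period $\varepsilon_n \to 0$, producing a genus $g$ surface with a single zero of order $2g-2$; by the freedom in the plumbing parameters and by the connectedness of $\mathcal{C}$, one can arrange that $M_n$ lies in the prescribed component $\mathcal{C}$ and that every $M \in \mathcal{H}(2g-4)$ arises as such a limit for some component $\mathcal{C}'$ (depending only on $\mathcal{C}$).

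The next step is to control how $H^1(M_n,\mathbb{R})$ specializes. Let $\gamma_n \subset M_n$ denote the vanishing cycle; a Mayer-Vietoris/Clemens-Schmid-type analysis (or, more concretely, an explicit plumbing coordinate computation) identifies
\[
H^1(M_n,\mathbb{R}) \simeq H^1(M,\mathbb{R}) \oplus V_n,
\]
where $V_n$ is a symplectic $2$-plane carrying $[\gamma_n^*]$ and its symplectic dual. The symplectic form on $H^1(M_n,\mathbb{R})$ restricts, in the limit, to the symplectic form on $H^1(M,\mathbb{R})$, and the Hodge filtration $H^{1,0}(X_n)$ converges, on the $H^1(M,\mathbb{R})$-factor, to $H^{1,0}(X)$ (Schmid's nilpotent orbit theorem, or an ad hoc estimate since here we are in a very controlled plumbing situation).

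Given $m$ symplectically orthogonal Hodge-Teichmüller planes $P_n^{(1)}, \dots, P_n^{(m)}$ on $M_n$ (with $P_n^{(1)}$ the tautological plane), after extracting a subsequence they converge in the Grassmannian to planes $P^{(1)}, \dots, P^{(m)}$ in $H^1(M_n,\mathbb{R}) \simeq H^1(M,\mathbb{R}) \oplus V_n$. Since $V_n$ has dimension only $2$ and the planes $P_n^{(i)}$ are mutually symplectically orthogonal, a dimension count together with the non-degeneracy of the symplectic form on $V_n$ forces at least $m-1$ of the limit planes to lie entirely in $H^1(M,\mathbb{R})$. For these $m-1$ planes, the argument of Proposition \ref{p.HT-continuity} applies verbatim using the continuity of $H^{1,0}$ along the smooth factor: they are Hodge-Teichmüller planes on $M$, still mutually symplectically orthogonal (the symplectic form passes to the limit and the Hodge-star/orthogonality reasoning from Proposition \ref{p.HT-continuity} goes through).

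The main obstacle is the second step: making precise the identification of $H^1(M_n,\mathbb{R})$ and the convergence of the Hodge filtration through a genuine degeneration, rather than the purely topological convergence used in Proposition \ref{p.HT-continuity}. This is where one needs either the plumbing coordinates on the boundary of the stratum or, more invariantly, the degeneration theory of variations of Hodge structure; the delicate point is to check that the ``dropped'' two cohomological dimensions are absorbed by exactly one of the $P_n^{(i)}$ (so that $m-1$ planes genuinely survive), which is ensured by the symplectic orthogonality of the $P_n^{(i)}$ and the non-degeneracy of the symplectic form restricted to $V_n$.
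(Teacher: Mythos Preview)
Your overall strategy matches the paper's, but the specific degeneration you describe is the wrong one, and this is not cosmetic. You pinch a \emph{non-separating} loop $\gamma_n$ (so $[\gamma_n]\neq 0$ and $V_n$ is spanned by $[\gamma_n^*]$ and a symplectic dual). The paper instead bubbles a \emph{square torus} $Y$ onto $M'$ and then shrinks it: the vanishing cycle is the \emph{separating} curve around the slit, the limit in Deligne--Mumford is the reducible nodal curve $M'\cup_{\mathrm{node}} Y$ of compact type, and the $2$-plane that may absorb a lost Hodge--Teichm\"uller plane is $H^1(Y,\mathbb{R})$, not the span of a Picard--Lefschetz pair. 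Two concrete things go wrong with the non-separating version. First, pinching a non-separating loop with $\int_{\gamma_n}\omega_n\to 0$ on a surface in $\mathcal{H}(2g-2)$ does not produce a nonzero holomorphic differential in $\mathcal{H}(2g-4)$: the single zero of order $2g-2$ is carried intact to the genus $g-1$ normalization, where $\deg(\mathrm{div}\,\omega')=2g-4$, forcing $\omega'\equiv 0$. The surgery that genuinely links the two minimal strata is Kontsevich--Zorich's ``bubbling a handle'' (split the zero, glue a torus along the slit), and its inverse degeneration is of compact type. Second, for a non-separating node the monodromy is unipotent with $N\neq 0$ (Picard--Lefschetz: $N\delta=\gamma$), so $H^{1,0}(X_n)$ does \emph{not} converge in the Grassmannian --- only $e^{-z_nN}H^{1,0}(X_n)$ does --- and your assertion that ``the Hodge filtration converges on the $H^1(M,\mathbb{R})$-factor to $H^{1,0}(X)$'' is not what Schmid's theorem provides in this setting; the twist interacts with any small $\delta$-component of $P_n$ in a way that must be controlled separately.

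In the paper's compact-type limit the monodromy is trivial, the limiting Hodge structure is the pure direct sum $H^{1,0}(M')\oplus H^{1,0}(Y)$, and the continuity argument of Proposition~\ref{p.HT-continuity} carries over essentially unchanged; this is precisely why the paper insists the handle be a \emph{square} torus (its conformal class does not degenerate, so $Y$ survives as an honest component of the stable limit carrying its own weight-$1$ Hodge structure). The gap in your sketch is therefore not the ``delicate point'' you flag at the end, but the choice of degeneration itself.
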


The basic idea behind the proof of this proposition is not difficult, but a complete argument (provided in Sections 5 and 6 of \cite{MW}) is somewhat technical partly because it requires a discussion of the so-called \emph{Deligne-Mumford compactification}. For this reason, we will content ourselves with the outline of proof of this proposition.

\begin{proof}[Sketch of proof of Proposition \ref{p.MW-prop-5-3}] In their study of connected components of strata of the moduli space of translation surfaces, Kontsevich and Zorich \cite{KZ} introduced a local surgery of Abelian differentials called \emph{bubbling a handle}. This surgery increases the genus by one and it is defined in two steps, namely \emph{splitting a zero} and \emph{gluing a torus}. 

Roughly speaking, one splits a zero of order $m$ by a certain (local) cutting and pasting operation which produces a pair of zeroes of orders $m'$ and $m''$ with $m'+m''=m$ joined by a saddle connection with holonomy $v\in\mathbb{R}^2$. After splitting a zero, one can cut the saddle connection to obtain a slit. Then, one can \emph{bubble a handle} by gluing a cylinder/torus/handle into this slit. In what follows, we will be interested in gluing a \emph{square} torus/handle to the slit. See Figures \ref{f.split-zero} and \ref{f.square-slit} for an illustration of these procedures. 

\begin{figure}[htb!]
\includegraphics[scale=0.5]{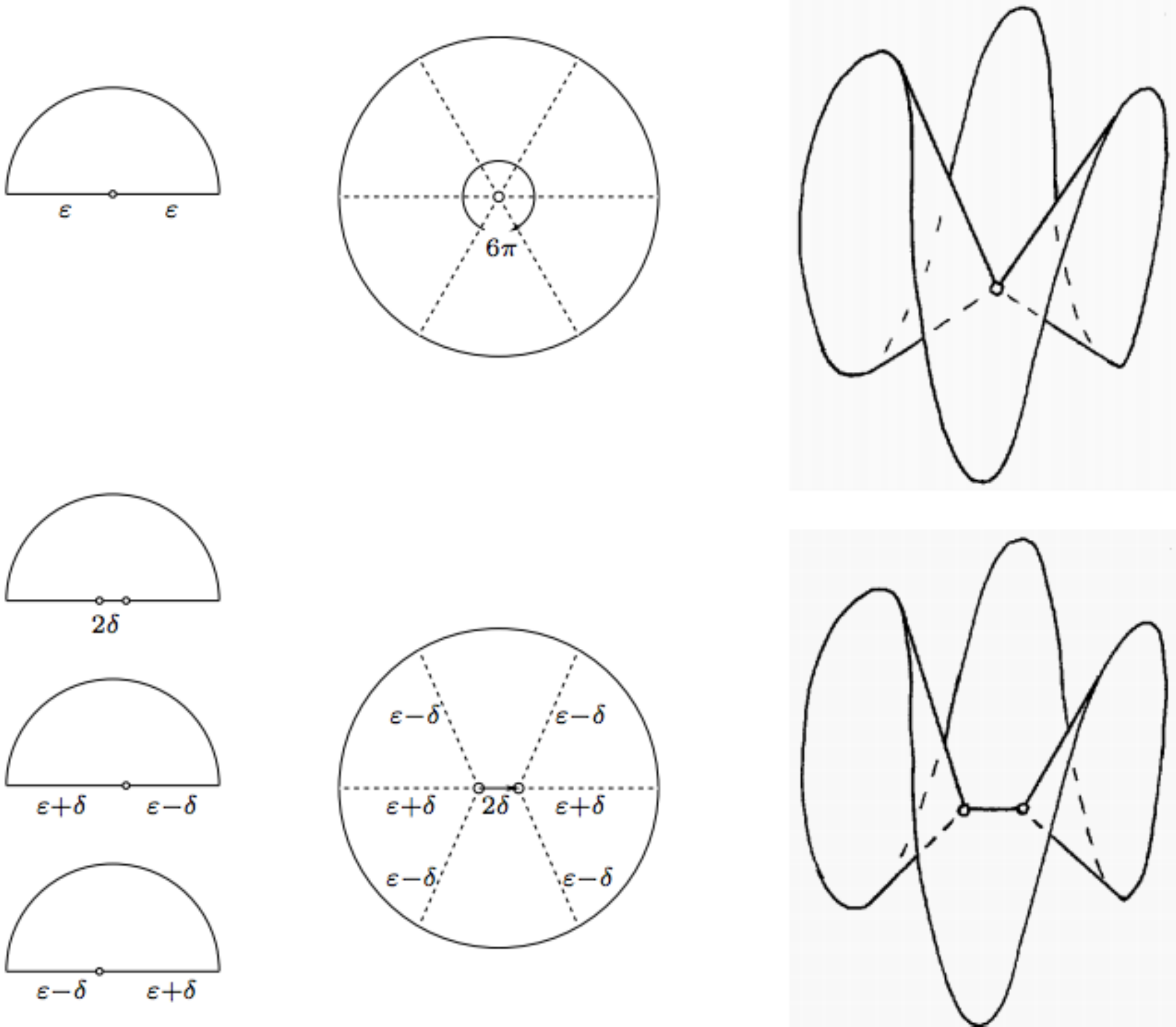}
\caption{Splitting a zero of an Abelian differential (after Eskin-Masur-Zorich).}\label{f.split-zero}
\end{figure}

\begin{figure}[htb!]
\includegraphics[scale=0.5]{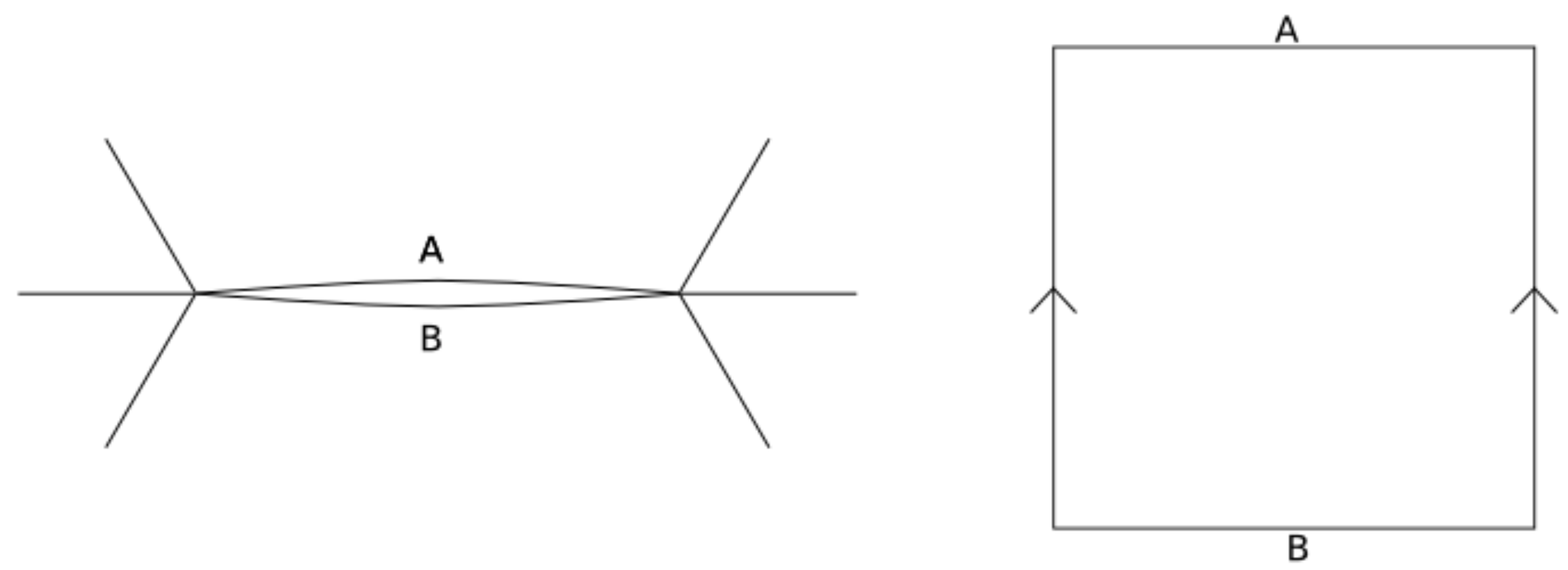}
\caption{Bubbling a square handle.}\label{f.square-slit}
\end{figure}

The operation of bubbling a handle allows to understand the \emph{adjacencies} of strata. For example, we claim that if $\mathcal{C}$ is a connected component of $\mathcal{H}(2g-2)$, then there exists a connected component $\mathcal{C}'\subset \mathcal{H}(2g-4)$ such that we can bubble a handle on every translation surface in $\mathcal{C}'$ in order to obtain a translation surface in $\mathcal{C}$ (compare with \cite[Lemma 14]{KZ}). 

In fact, Lemma 20 in \cite{KZ} says that $\mathcal{C}$ contains a translation surface $X$ given by the suspension of an interval exchange transformation associated to a \emph{good} \emph{standard} permutation $\pi$, i.e., a permutation of the form $\pi=\left(\begin{array}{ccc} A & \pi'_t & Z \\ Z & \pi'_b & A \end{array}\right)$ such that the permutation $\pi'=\left(\begin{array}{c} \pi'_t \\ \pi'_b \end{array}\right)$ derived from $\pi$ by erasing the letters $A$ and $Z$ is \emph{irreducible}. Concretely, the irreducibility of $\pi'$ means that we can use it to build (through suspension) a translation surface $X'$ belonging to $\mathcal{H}(2g-4)$ (in this context). See Figure \ref{f.MW-pre-bdry} for an illustration of $X$ together with a ``copy'' of $X'$ inside it. 

\begin{figure}[h!]
\begin{center}
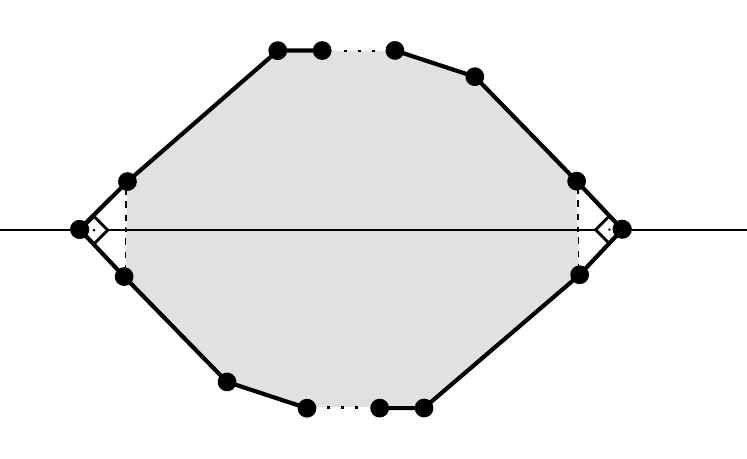
\end{center}
\caption{The surface $X$ and the subsurface $X'$.}\label{f.MW-pre-bdry}
\end{figure}

Denote by $\mathcal{C}'$ the connected component of $\mathcal{H}(2g-4)$ containing $X'$. Observe that, by definition, $X\in\mathcal{C}$ is obtained from $X'\in\mathcal{C}'$ by bubbling a square handle. Since the operation of bubbling a handle can be performed in a \emph{continuous} way (cf. \cite[Lemma 10]{KZ}), we obtain the desired claim that one can bubble a (square) handle in any $M'\in\mathcal{C}'$ to obtain a translation surface in $M\in\mathcal{C}$. 

By exploiting this relation between $\mathcal{C}\subset\mathcal{H}(2g-2)$ and $\mathcal{C}'\subset \mathcal{H}(2g-4)$ through the operation of bubbling a square handle, one can complete the proof of the proposition along the following lines (cf. Propositions 5.5 and 5.6 in \cite{MW}). Suppose that every $M\in\mathcal{C}$ has $m$ symplectically orthogonal Hodge-Teichm\"uller planes. Given $M'\in\mathcal{C}'$, let us bubble a square handle to obtain $M\in\mathcal{C}$. By degenerating the square handle (i.e., by letting the length of the sizes of the square tend to zero) in $M$ to reach $M'$, we have\footnote{Here, the fact that the angles of the torus do \emph{not} degenerate (because it is always a square) is crucial.} that $M$ converges to a \emph{stable nodal curve}\footnote{I.e., a point in the boundary of Deligne-Mumford compactification of the moduli space of curves.} of the form depicted in Figure \ref{f.MW-bdry}. 

\begin{figure}[htb!]
\includegraphics[scale=0.5]{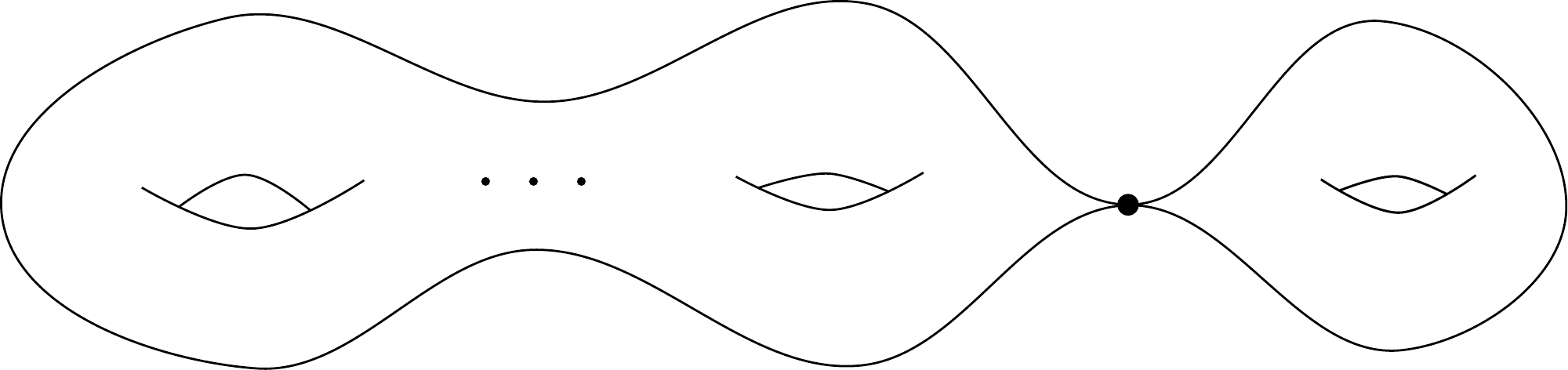}
\caption{The surface $M'$ (on the left) is attached to a torus (on the right) at a node.}\label{f.MW-bdry}
\end{figure}

In this process, one of the $m$ Hodge-Teichm\"uller planes of $M$ might be ``lost'' (in case it converges to the plane $H^1(Y,\mathbb{R})$ associated to the degenerating square handle), but at least $(m-1)$ Hodge-Teichm\"uller planes of $M$ will converge to $(m-1)$ (symplectically orthogonal) Hodge-Teichm\"uller planes of $M'$. This completes our sketch of proof of the proposition. 
\end{proof}

At this point, Theorem \ref{t.MW-C} is a simple consequence of the following argument: by contradiction, suppose that $\mathcal{C}$ is a connected component of stratum of the moduli space of translation surfaces of genus $g\geq 3$ such that every $M\in\mathcal{C}$ has $(g-1)$ symplectically orthogonal Hodge-Teichm\"uller planes. By Proposition \ref{p.MW-prop-5-1}, there would be a connected component $\mathcal{C}'$ of $\mathcal{H}(2g-2)$ such that every $M\in\mathcal{C}'$ would also possess $(g-1)$ symplectically orthogonal Hodge-Teichm\"uller planes. By applying $(g-3)$ times Proposition \ref{p.MW-prop-5-3}, we would find a connected component $\mathcal{C}''$ of 
$\mathcal{H}(4)$ such that \emph{every} $M\in\mathcal{C}''$ would have $2$ Hodge-Teichm\"uller planes, a contradiction with Proposition \ref{p.MW-C-g3}. 

Closing this subsection, let us give an \emph{elementary}\footnote{Here, by ``elementary'' we mean that, instead of proving the existence of $M_{\mathcal{C}}$ by indirect methods (including the use of Deligne-Mumford compactification), we will build $M_{\mathcal{C}}$ directly for certain $\mathcal{C}$'s.} proof of the following \emph{particular}\footnote{Our discussion of these particular cases follows an argument described in a blog post entitled ``\emph{Hodge-Teichm\"uller planes and finiteness results for Teichm\"uller curves}'' in my mathematical blog \cite{DM}.} cases of  Theorem \ref{t.MW-C}: 

\begin{proposition}\label{p.partial-MW-C-cover} For each $d\geq 1$ odd, there exists a translation surface $M_{\ast}(d)$ in the odd connected component $\mathcal{H}(5d-1)^{\textrm{odd}}$ of the minimal stratum $\mathcal{H}(5d-1)$ which does not possess $(5d-1)/2$ symplectically orthogonal Hodge-Teichm\"uller planes.
\end{proposition}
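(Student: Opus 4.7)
The plan is to construct $M_{\ast}(d)$ as a $d$-fold branched cover of the surface $M_{\ast}$ from Proposition \ref{p.MW-C-g3}, reducing the question about Hodge-Teichm\"uller planes on $M_{\ast}(d)$ to the fact that $M_{\ast}$ has only one such plane.

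First, for $d \geq 1$ odd I would exhibit a monodromy representation $\rho \colon \pi_1(M_{\ast} \setminus \{p_{\ast}\}) \to S_d$ sending the meridian around the unique zero $p_{\ast}$ of $\omega_{\ast}$ to a $d$-cycle. Such a representation exists precisely when the $d$-cycle belongs to $[S_d, S_d] = A_d$, i.e.\ when $d$ is odd; for $d = 3$ one uses $[(1,2), (1,3)] = (1,2,3)$, while for $d \geq 5$ Ore's theorem produces a single commutator. The associated connected $d$-fold cover $\pi \colon M_{\ast}(d) \to M_{\ast}$ is ramified only over $p_{\ast}$, with a single preimage $\tilde p$ of full ramification index $d$. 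A local computation ($z = w^d$) shows that $\omega := \pi^{\ast} \omega_{\ast}$ has a unique zero of order $5d - 1$ at $\tilde p$, and Riemann-Hurwitz gives genus $g' = (5d + 1)/2$; hence $M_{\ast}(d) \in \mathcal{H}(5d - 1)$. Membership in the odd spin connected component $\mathcal{H}(5d-1)^{\textrm{odd}}$ would follow from a direct Arf-invariant computation on a canonical symplectic $\mathbb{Z}/2$-basis of $H_1(M_{\ast}(d))$ built from lifts of a basis of $H_1(M_{\ast}, \mathbb{Z}/2)$, using $\Phi(M_{\ast}) = 1$ granted by Proposition \ref{p.MW-C-g3}.

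The core of the argument is showing that $M_{\ast}(d)$ does not admit $g' - 1 = (5d - 1)/2$ symplectically orthogonal Hodge-Teichm\"uller planes. The key structural input is the symplectic decomposition $H^1(M_{\ast}(d), \mathbb{R}) = \pi^{\ast} H^1(M_{\ast}, \mathbb{R}) \oplus V$, where $V := \ker \pi_{\ast}$ is the symplectic orthogonal of $\pi^{\ast} H^1(M_{\ast}, \mathbb{R})$; both summands are sub-Hodge-structures (as $\pi_{\ast}$ is a morphism of Hodge structures) and are preserved by the Teichm\"uller dynamics. The claim I would establish is: every Hodge-Teichm\"uller plane $P$ of $M_{\ast}(d)$ that is symplectically orthogonal to the tautological plane is contained in $V$. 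Granting this, $V$ is a symplectic subspace of even dimension $5d - 5$ (using that $d$ odd forces $d - 1$ even), so it admits at most $(5d - 5)/2$ mutually symplectically orthogonal symplectic $2$-planes (recall that Hodge-Teichm\"uller planes are automatically symplectic, since $\omega_s(u, \bar u) > 0$ for $0 \neq u \in P^{1,0}$). A supposed family of $(5d - 1)/2$ mutually symplectically orthogonal Hodge-Teichm\"uller planes would consist of the tautological plane together with $(5d - 3)/2$ further planes in $V$; since $(5d - 3)/2 > (5d - 5)/2$, this is impossible.

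To prove the claim, for each $h \in SL(2, \mathbb{R})$ pick a nonzero $u(h) \in (hP) \otimes \mathbb{C} \cap H^{1,0}(hM_{\ast}(d))$, and decompose $u(h) = u_1(h) + u_2(h)$ along the splitting, with $u_1(h) \in h \pi^{\ast} H^{1,0}(M_{\ast})$ and $u_2(h) \in h V^{1,0}$. If $u_1(h_0) = 0$ for some $h_0$, then $u(h_0)$ and $\overline{u(h_0)}$ are $\mathbb{C}$-linearly independent elements of $h_0 (V \cap P) \otimes \mathbb{C}$ (one of type $(1,0)$, the other of type $(0,1)$), forcing $\dim(V \cap P) \geq 2$ and hence $P \subset V$. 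Otherwise $u_1(h) \neq 0$ for all $h$, and the projection $p_1 \colon H^1(M_{\ast}(d), \mathbb{R}) \to \pi^{\ast} H^1(M_{\ast}, \mathbb{R})$ along $V$ satisfies $h p_1(P) \otimes \mathbb{C} \cap H^{1,0}(h M_{\ast}) \neq 0$ for every $h$. The case $\dim p_1(P) = 1$ is excluded because a complex line containing both a nonzero $(1,0)$-vector and its $(0,1)$-conjugate would lie in $H^{1,0} \cap H^{0,1} = 0$; thus $\dim p_1(P) = 2$, and $p_1(P) \subset H^1(M_{\ast}, \mathbb{R})^{\perp}$ is then a Hodge-Teichm\"uller plane of $M_{\ast}$ symplectically orthogonal to the tautological plane, contradicting Proposition \ref{p.MW-C-g3}. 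The main obstacle lies in this claim: the case analysis depends on the decomposition of the Hodge structure respecting the cover splitting uniformly across the whole $SL(2,\mathbb{R})$-orbit (i.e.\ on $\pi_{\ast}$ being a morphism of variations of Hodge structures), as well as on the subtle interplay between the projection $p_1$, the varying Hodge filtrations $H^{1,0}(hM_{\ast}(d))$, and the symplectic form.
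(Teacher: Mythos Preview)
Your approach is the same as the paper's: build a degree-$d$ cover of $M_\ast$ totally ramified over the unique zero, then use that $M_\ast$ has a single Hodge--Teichm\"uller plane to bound the number on the cover. The paper takes an explicit square-tiled cover $\overline{M}_\ast(d)$ (given by concrete permutation pairs) and carries out the Arf-invariant computation by hand on an explicit symplectic basis, proving inductively that $\Phi(\overline{M}_\ast(d+2))=\Phi(\overline{M}_\ast(d))$. Your abstract construction via commutators in $S_d$ works for producing \emph{some} cover in $\mathcal H(5d-1)$, but it is not uniquely determined by your description, and the parity of the spin structure genuinely depends on which monodromy you choose; so ``a direct Arf-invariant computation \ldots\ using $\Phi(M_\ast)=1$'' hides a real choice and a real calculation that the paper does not avoid either.

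There is, however, a genuine gap in your counting step. Your claim --- that every Hodge--Teichm\"uller plane orthogonal to the tautological plane lies in $V$ --- is correct and your Case~1/Case~2 argument for it is fine. But from this you jump to: ``a supposed family of $(5d-1)/2$ mutually symplectically orthogonal Hodge--Teichm\"uller planes would consist of the tautological plane together with $(5d-3)/2$ further planes in $V$''. Nothing forces the tautological plane to belong to such a family, nor forces the members to be orthogonal to it, so your claim does not apply to them as stated. The fix is to run the same Case~1/Case~2 analysis \emph{without} the orthogonality hypothesis: in Case~2 you then only conclude that $p_1(P)$ is some Hodge--Teichm\"uller plane of $M_\ast$, hence (by Proposition~\ref{p.MW-C-g3}) equal to the tautological plane, so $P\subset p_1^{-1}(\textrm{taut})=\textrm{taut}\oplus V$. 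Combined with Case~1 this shows \emph{every} Hodge--Teichm\"uller plane of $M_\ast(d)$ lies in $\textrm{taut}\oplus V$, a subspace of dimension $5d-3$; since $(5d-1)/2$ symplectically orthogonal planes would span $5d-1>5d-3$ dimensions, this gives the contradiction directly. (The paper's own sentence here --- ``two of them would be contained in $E$'' --- is itself only a sketch, so once corrected your argument is in fact more detailed than the paper's.)
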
 

\begin{proof} We affirm that a square-tiled surface $N$ of genus $g$ covering the square-tiled surface $M_{\ast}$ constructed in the proof of Proposition \ref{p.MW-C-g3} can not have $(g-1)$ symplectically orthogonal Hodge-Teichm\"uller planes. In fact, if we denote by $p:N\to M_{\ast}$ the translation covering defining the translation structure of $N$, then $H^1(N,\mathbb{R}) = E\oplus H^1(N,\mathbb{R})^{(p)}$
where $H_1(N,\mathbb{R})^{(p)}$ consists of all cycles projecting to zero under $p$ and $E\simeq H_1(M_{\ast},\mathbb{R})$ is the symplectic orthogonal of $H_1(N,\mathbb{R})^{(p)}$. By appplying an argument similar to the proof of Proposition \ref{p.few-HT-criterion}, one can check that if $N$ had $(g-1)$ symplectically orthogonal Hodge-Teichm\"uller planes, then two of them would be contained in $E\simeq H^1(M_{\ast},\mathbb{R})$ and, \emph{a fortiori}, $M_{\ast}$ would have two Hodge-Teichm\"uller planes, a contradiction with Proposition \ref{p.MW-C-g3}. 

In view of the discussion in the previous paragraph, the proof of the proposition will be complete once we construct a degree $d$ branched cover $M_{\ast}(d)\in\mathcal{H}(5d-1)^{\textrm{odd}}$ of $M_{\ast}$. 

For technical reasons, it is desirable to ``simplify'' the geometry of $M_{\ast}$ before studying its covers. In this direction, let us replace $M_{\ast}$ by the square-tiled surface $\overline{M}_{\ast}$ associated to the permutations $\overline{h}_{\ast}=(1,2,3,4,5,6)$, 
$\overline{v}_{\ast}=(1)(2,5,4)(3,6)$: there is no harm in doing so because $\overline{M}_{\ast}$ belongs to the $SL(2,\mathbb{Z})$-\emph{orbit} of $M_{\ast}$ (indeed, $\overline{M}_{\ast}=J\cdot T^2(M_{\ast})$ where 
$J=\left(\begin{array}{cc} 0 & -1 \\ 1 & 0 \end{array}\right)$ and $T=\left(\begin{array}{cc} 1 & 1 \\ 0 & 1 \end{array}\right)$). 

\begin{figure}[h!]
\begin{center}
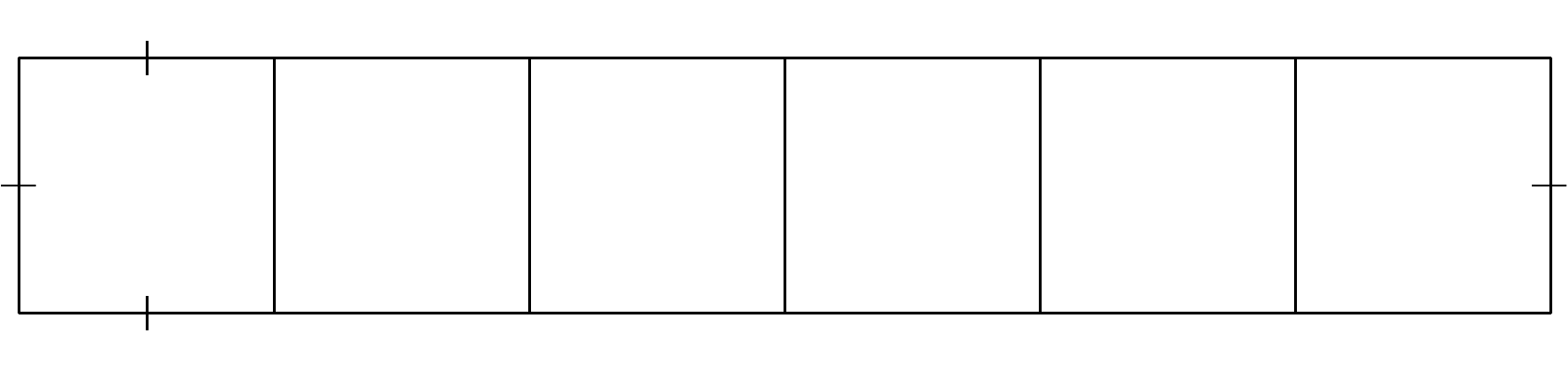
\end{center}
\caption{The one-cylinder surface $\overline{M}_{\ast}$ in $SL(2,\mathbb{Z})\cdot M_{\ast}$.}
\end{figure}

Given $d\geq 3$ an odd integer, let $\overline{M}_{\ast}(d)$ be the square-tiled surface\footnote{The ``shape'' of the covering $\overline{M}_{\ast}(d)$ was ``guessed'' with the help of the computer program Sage. In fact, I tried a few simple-minded finite coverings of $\overline{M}_{\ast}$ (including $\overline{M}_{\ast}(d)$ for $d=3, 5, \dots, 13$) and I asked Sage to determine their connected components. Then, once we got the ``correct'' connected components (in minimal strata), I looked at the permutations corresponding to these square-tiled surfaces and I found the ``partner'' leading to the expressions for the permutations $\overline{h}_{\ast}(d)$ and $\overline{v}_{\ast}(d)$.} associated to the pair of permutations
$$\overline{h}_{\ast}(d):=(1_1,2_1,3_1,4_1,5_1,6_1)(1_2,2_2,3_2,4_2,5_2,6_2,1_3,2_3,3_3,4_3,5_3,6_3)\dots$$
$$(1_{d-1},2_{d-1},3_{d-1},4_{d-1},5_{d-1},6_{d-1},1_d,2_d,3_d,4_d,5_d,6_d),$$
$$\overline{v}_{\ast}(d):=(1_1,1_2,\dots,1_d)(2_1,5_1,4_1)(3_1,6_1)\dots(2_d,5_d,4_d)(3_d,6_d)$$
By definition, $\overline{M}_{\ast}(d)$ is a degree $d$ cover of $\overline{M}_{\ast}$ belonging to the stratum $\mathcal{H}(5d-1)$. Therefore, it remains only to verify that $\overline{M}_{\ast}(d)\in \mathcal{H}(5d-1)^{\textrm{odd}}$ in order to complete the proof of the proposition. 

Since $\overline{M}_{\ast}(d)$ is not hyperelliptic, we have that $\overline{M}_{\ast}(d)\in \mathcal{H}(5d-1)^{\textrm{odd}}$ if and only if $\Phi(\overline{M}_{\ast}(d))=1$, where the parity $\Phi(\overline{M}_{\ast}(d))\in\mathbb{Z}_2$ of the spin structure of $\overline{M}_{\ast}(d)$ is defined as follows (see \cite{KZ} for more details). Let $a_1, b_1, a_2, b_2, \dots, a_g, b_g$ (where $g=(5d+1)/2$) be a canonical symplectic basis of $H_1(\overline{M}_{\ast}(d),\mathbb{Z}_2)$. Then, 
$$\Phi(\overline{M}_{\ast}(d))=\sum\limits_{i=1}^{(5d+1)/2} \phi(a_i)\phi(b_i) \textrm{ (mod } 2),$$
where $\phi:H_1(\overline{M}_{\ast}(d),\mathbb{Z}_2)\to\mathbb{Z}_2$ satisfies the following properties:
\begin{itemize}
\item $\phi(\gamma)=\textrm{ind}(\gamma)+1$ whenever $\gamma$ is a simple smooth closed curve in $\overline{M}_{\ast}(d)$ not passing through singular points whose Gauss map has degree $\textrm{ind}(\gamma)$;
\item $\phi$ is a quadratic form representing the intersection form $(.,.)$ in the sense that $\phi(\alpha+\beta)=\phi(\alpha)+\phi(\beta)+(\alpha,\beta)$.
\end{itemize} 

In other words, we need to build a canonical symplectic basis of $H_1(\overline{M}_{\ast}(d),\mathbb{R})$ in order to compute $\Phi(\overline{M}_{\ast}(d))$. For this sake, we begin by fixing a basis of $H_1(\overline{M}_{\ast}(d),\mathbb{R})$. We think of it as $(d+1)/2$ horizontal cylinders determined by the permutation $\overline{h}_{\ast}(d)$ whose top and bottom boundaries are glued accordingly to the permutation $\overline{v}_{\ast}(d)$. Using this geometrical representation, we define the following cycles in $H_1(\overline{M}_{\ast}(d),\mathbb{Z})$ (see Figure \ref{f.M*d-cycles}). 
\begin{itemize}
\item for each $i=1,\dots,d$, let $c_{23}^{(i)}$, $c_4^{(i)}$, $c_5^{(i)}$ and $c_6^{(i)}$ be the homology classes of the vertical cycles within the horizontal cylinders connecting the middle of the bottom side of the squares $5_i$, $2_i$, $4_i$ and $3_i$ (resp.) to the top side of the squares $2_i$, $4_i$, $5_i$ and $6_i$ (resp.);
\item for each $j=2l$, $l=1,\dots, (d-1)/2$, let $c_1^{(j)}$ be the homology classes of the vertical cycles within the horizontal cylinders connecting the middles of the bottom side of the square $1_{j+1}$ to the top side of the squares $1_{j}$; 
\item let $c_1^{(1)}$ be the homology class of the concatenation of the vertical cycles within the horizontal cylinders connecting the middles of the bottom side of the square $1_{2l}$ to the top side of the square $1_{2l+1}$ for $l=1,\dots, (d-1)/2$, the bottom side of the square $1_2$ to the left side of the square $1_2$, and the right side of the square $6_d$ to the top side of the square $1_d$;
\item $\sigma_1$ is the horizontal cycle connecting the left vertical side of the square $1_1$ to the right vertical side of the square $6_1$, and, for $j=2l$, $l=1,\dots, (d-1)/2$, $\sigma_{j}$ is the horizontal cycle connecting the left vertical side of the square $1_{j}$ to the right vertical side of the square $6_{j+1}$.  
\end{itemize}

\begin{figure}[htb!]
\begin{center}
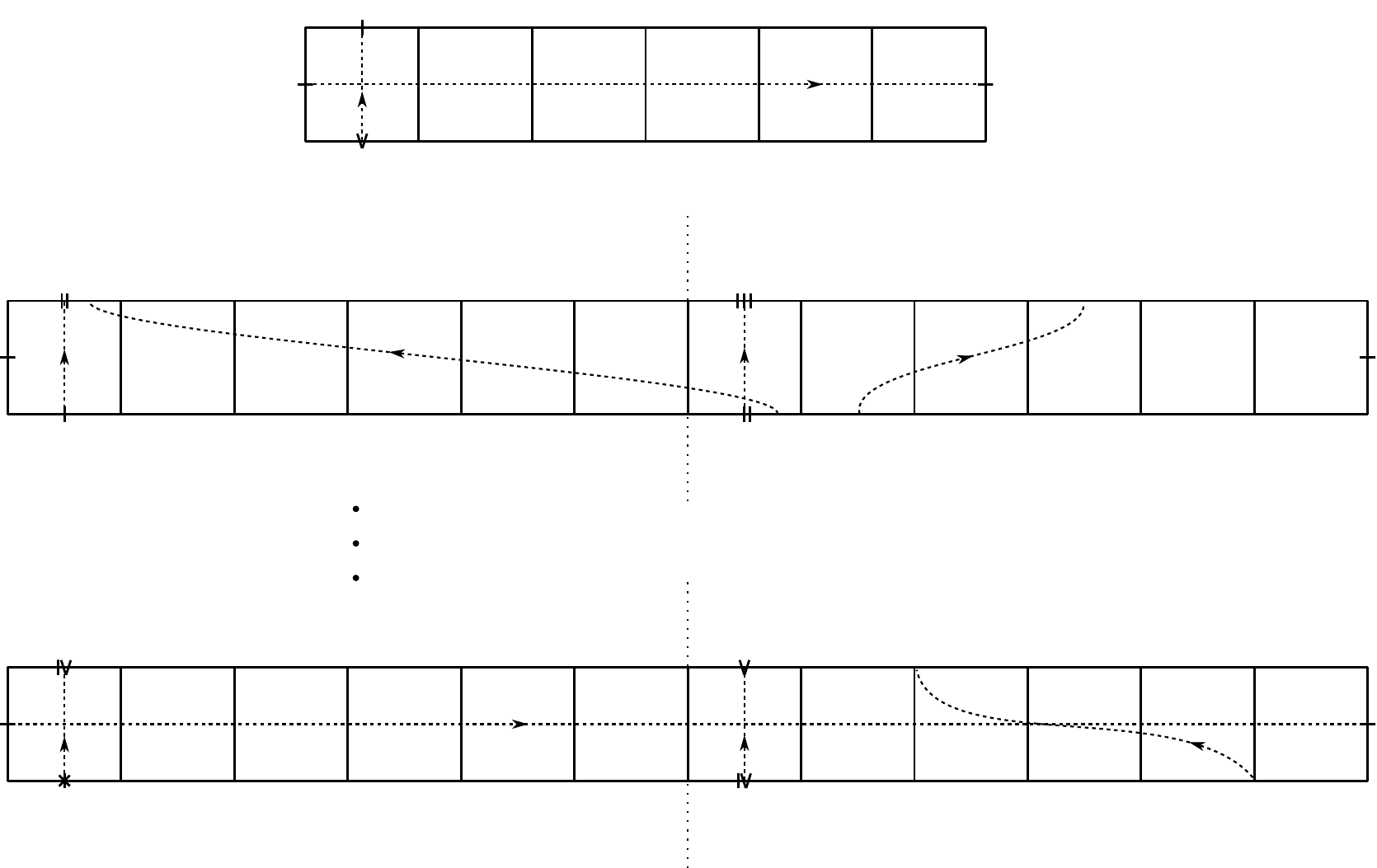
\end{center}
\caption{Some homology cycles in $\overline{M}_{\ast}(d)$}\label{f.M*d-cycles}
\end{figure}

It is not hard to check that these $5d+1$ cycles form a basis of $H_1(\overline{M}_{\ast}(d),\mathbb{Z})$. From this basis, we can produce a canonical basis of homology cycles of
$\overline{M}_{\ast}(d)$ using the orthogonalization procedure\footnote{A Gram-Schmidt orthogonalization process to produce canonical basis of homology modulo $2$.} described in \cite[Appendix C]{Z08}. More precisely, we start with the cycles $a_1:=\sigma_2$ and $b_1:=c_1^{(2)}$. Then, by induction, we use the cycles $a_i$, $b_i$ to successively render the cycles 
$$c_{23}^{(2)}, c_4^{(2)}, c_5^{(2)}, c_6^{(2)}, \dots, \sigma_1, c_1^{(1)}, c_{23}^{(1)}, c_4^{(1)}, c_5^{(1)}, c_6^{(1)}$$ into a canonical basis $a_1, b_1, a_2, b_2, \dots, a_g, b_g$ (where $g=(5d+1)/2$). 

Using these cycles, we are ready to compute $\Phi(\overline{M}_{\ast}(d))$ inductively. More concretely, we affirm that $\Phi(\overline{M}_{\ast}(d+2))=\Phi(\overline{M}_{\ast}(d))$ for each $d\geq 1$ odd. Indeed, we note that the $10$ cycles 
$$\sigma_{d+1}, c_1^{(d+1)}, c_{23}^{(d+1)}, c_4^{(d+1)}, c_5^{(d+1)}, c_6^{(d+1)}, c_{23}^{(d+2)}, c_4^{(d+2)}, c_{5}^{(d+2)}, c_6^{(d+2)}$$
of $\overline{M}_{\ast}(d+2)$ do not intersect the other $5d-1$ cycles of $\overline{M}_{\ast}(d+2)$ defined above except for $c_1^{(1)}$, and, moreover, they have trivial intersection with the cycle 
\begin{eqnarray*}
\widetilde{c}_{1}^{(1)}&:=& c_1^{(1)}-c_1^{(d+1)}+c_{23}^{(d+1)}-c_4^{(d+1)}+2c_5^{(d+1)}-2c_6^{(d+1)} \\ &+& c_4^{(d+2)}-c_{23}^{(d+2)}+c_6^{(d+2)}-c_5^{(d+2)}
\end{eqnarray*}
satisfying $\phi(\widetilde{c}_{1}^{(1)})=\phi(c_1^{(1)})=1$. Thus, by replacing $c_1^{(1)}$ by $\widetilde{c}_1^{(1)}$ and by applying the orthogonalization procedure to these two sets of $10$ and $5d-1$ cycles in an independent way, we deduce that 
$\Phi(\overline{M}_{\ast}(d+2))$ differs from $\Phi(\overline{M}_{\ast}(d))$ by a term of the form 
$$\Phi(\overline{M}_{\ast}(d+2))-\Phi(\overline{M}_{\ast}(d))=\sum\limits_{i=1}^{5}\phi(a_i^{(d+1)})\phi(b_i^{(d+1)})$$
where $a_i^{(d+1)}, b_i^{(d+1)}$, $i=1,\dots, 5$ is an orthogonalization of the $10$ cycles 
$$\sigma_{d+1}, c_1^{(d+1)}, c_{23}^{(d+1)}, c_4^{(d+1)}, c_5^{(d+1)}, c_6^{(d+1)}, c_{23}^{(d+2)}, c_4^{(d+2)}, c_{5}^{(d+2)}, c_6^{(d+2)}.$$ 
On the other hand, by a direct computation, one can check that 
\begin{itemize}
\item $a_1^{(d+1)}=\sigma_{d+1}$, $b_1^{(d+1)}=c_{1}^{(d+1)}$, 
\item $a_2^{(d+1)}=c_4^{(d+1)}-a_1^{(d+1)}-b_1^{(d+1)}$, $b_2^{(d+1)} = c_{23}^{(d+1)}-a_1^{(d+1)}-b_1^{(d+1)}$, 
\item $a_3^{(d+1)}=c_6^{(d+1)}-a_1^{(d+1)}-b_1^{(d+1)}+a_2^{(d+1)}$, $b_3^{(d+1)}=c_5^{(d+1)}-a_1^{(d+1)}-b_1^{(d+1)}+a_2^{(d+1)}$, 
\item $a_4^{(d+1)}=c_4^{(d+2)}-b_1^{(d+1)}+b_2^{(d+1)}-a_2^{(d+1)}+2b_3^{(d+1)}-2a_3^{(d+1)}$, $b_4^{(d+1)}=c_{23}^{(d+2)}-b_1^{(d+1)}+b_2^{(d+1)}-a_2^{(d+1)}+2b_3^{(d+1)}-2a_3^{(d+1)}$, 
\item $a_5^{(d+1)}=c_5^{(d+2)}-c_4^{(d+2)}$, $b_5^{(d+1)}=c_6^{(d+2)}-c_4^{(d+2)}$
\end{itemize}
is an orthogonalization of the $10$ cycles 
$$\sigma_{d+1}, c_1^{(d+1)}, c_{23}^{(d+1)}, c_4^{(d+1)}, c_5^{(d+1)}, c_6^{(d+1)}, c_{23}^{(d+2)}, c_4^{(d+2)}, c_{5}^{(d+2)}, c_6^{(d+2)}.$$
Moreover, $\phi(a_i^{(d+1)})=\phi(b_i^{(d+1)})=1$ for $i=1,4$, and 
$\phi(a_j^{(d+1)})=\phi(b_j^{(d+1)})=0$ for $j=2,3,5$. Hence,  
$$\Phi(\overline{M}_{\ast}(d+2))-\Phi(\overline{M}_{\ast}(d))=\sum\limits_{i=1}^{5}\phi(a_i^{(d+1)})\phi(b_i^{(d+1)}) = 0,$$
as it was claimed. Therefore, $\Phi(\overline{M}_{\ast}(d))=\Phi(\overline{M}_{\ast}(1)):=\Phi(\overline{M}_{\ast})=1\in\mathbb{Z}_2$.  
\end{proof}

\newpage

\begin{centering}
\rule{\textwidth}{1.6pt}\vspace*{-\baselineskip}\vspace*{2.5pt}
\rule{\textwidth}{0.4pt}

\section{Simplicity of Lyapunov exponents of arithmetic Teichm\"uller curves}\label{s.MMY}

\rule{\textwidth}{0.4pt}\vspace*{-\baselineskip}\vspace{3.2pt}
\rule{\textwidth}{1.6pt}
\end{centering}\\

The circle of ideas developed for the study of Lyapunov exponents of the Kontsevich-Zorich cocycle over the $SL(2,\mathbb{R})$-action on moduli spaces of translation surfaces was fruitfully used in many contexts:
\begin{itemize}
\item Zorich \cite{Z97} and Forni \cite{F02} related the Lyapunov exponents of the KZ cocycle with respect to Masur-Veech measures to obtain a complete description of the deviations of ergodic averages of typical interval exchange transformations and translation flows; 
\item Avila and Forni \cite{AF} used the positivity of the second largest Lyapunov exponent of the KZ cocycle with respect to Masur-Veech measures to establish the weak mixing property for typical interval exchange transformations (not of rotation type) and translation flows (on higher genus surfaces); 
\item Delecroix, Hubert and Leli\`evre \cite{DHL} exploited the precise values of the Lyapunov exponents of the KZ cocycle over a certain closed $SL(2,\mathbb{R})$-invariant locus of translation surfaces of genus five to confirm a conjecture of Hardy and Weber on the abnormal rate of diffusion of typical trajectories in $\mathbb{Z}^2$-periodic Ehrenfest wind-tree models of Lorenz gases; 
\item Kappes and M\"oller \cite{KM} employed some invariants (inspired of the Lyapunov exponents of the KZ cocycle) the exact number (nine) of commensurability classes of the non-arithmetic lattices of $PU(1,n)$ constructed by Deligne and Mostow. 
\end{itemize} 

\subsection{Kontsevich-Zorich conjecture and Veech's question} 

In some applications of the Lyapunov exponents of the KZ cocycles, it is important to know whether they have a qualitative property called \emph{simplicity}, that is, if all of them appear with multiplicity one. For instance, the most complete picture for the deviations of ergodic averages of typical interval exchange transformations and translation flows depends on the simplicity of the Lyapunov exponents of the KZ cocycle with respect to Masur-Veech measures (see \cite{Z97} and \cite{F02}).  

In the case of Masur-Veech measures, the simplicity of the Lyapunov exponents of the KZ cocycle was conjectured by Kontsevich and Zorich after several numerical experiments. This conjecture was fully confirmed in a celebrated work of Avila and Viana \cite{AV} after an important partial result of Forni \cite{F02}. 

In the case of other $SL(2,\mathbb{R})$-invariant probability measures on moduli spaces of translation surfaces, Veech asked\footnote{Veech knew that the analog of his question for arbitrary Teichm\"uller flow invariant probability measures was false: for example, theKZ cocycle might have zero Lyapunov exponents along certain periodic orbits of the Teichm\"uller flow in $\mathcal{H}(2)$ (see \cite[Appendix B]{FM}).} if the Lyapunov exponents of the KZ cocycle are always non-zero and/or simple. This question was negatively answered by Forni and the author with two examples called \emph{Eierlegende Wollmilchsau} and \emph{Ornithorynque} (see, e.g., \cite{FM}): loosely speaking, these are examples of arithmetic Teichm\"uller curves such that the KZ cocycle over them has many zero Lyapunov exponents. 

Of course, this answer to Veech's question motivates the problem of finding criteria for the simplicity of the Lyapunov exponents of the KZ cocycle with respect to $SL(2,\mathbb{R})$-invariant probability measures in moduli spaces of translation surfaces. 

In the remainder of this section, we shall offer an answer to this problem in the context of the KZ cocycle over Teichm\"uller curves. 

\subsection{Lyapunov exponents of Teichm\"uller curves and random products of matrices}

In this subsection, we will show that the Lyapunov exponents of the KZ cocycle with respect to $SL(2,\mathbb{R})$-invariant probability measures supported on Teichm\"uller curves can be computed via random products of matrices. The relevance of this fact for our purposes is explained by the vast literature (cf. Furstenberg \cite{Fu63}, Guivarc'h-Raugi \cite{GR}, \cite{GR2},  Goldsheid-Margulis \cite{GM}, Avila-Viana \cite{AV}, etc.) on the simplicity of Lyapunov exponents of random products of matrices. 

\begin{remark} The analog of this statement for Masur-Veech measures is discussed in Avila-Viana paper \cite{AV} in their reduction of the Kontsevich-Zorich conjecture to the study of ``random'' products of matrices in Rauzy-Veech monoids. 
\end{remark}

More concretely, let $X=(M,\omega)$ be a translation surface whose $SL(2,\mathbb{R})$-orbit is closed in the moduli space of translation surfaces. Recall that, by a result of Smillie (see, e.g., \cite{SW}), $SL(2,\mathbb{R}) X$ is closed if and only if $X$ is a \emph{Veech surface}, i.e., the stabilizer $SL(X)$ of $X$ is a lattice of $SL(2,\mathbb{R})$. 

We have a short exact sequence $\textrm{Aut}(X)\to \textrm{Aff}(X)\to SL(X)$ where $\textrm{Aut}(X)$ is the group of automorphisms of $X$ and $\textrm{Aff}(X)$ is the group of affine homeomorphisms of $X$. 

For the sake of exposition, we will assume that the group of automorphisms of $X$ is trivial: $\textrm{Aut}(X)=\{\textrm{Id}\}$. In this case, the groups $\textrm{Aff}(X)$ and $SL(X)$ are isomorphic. 

Since the KZ cocycle $G_t^{KZ}$ over the Teichm\"uller flow $g_t=\textrm{diag}(e^t, e^{-t})\in SL(2,\mathbb{R})$ is the quotient of the trivial cocycle 
$$SL(2,\mathbb{R})X\times H_1(X,\mathbb{R})\to SL(2,\mathbb{R})X\times H_1(X,\mathbb{R})$$
$$(\eta,[c])\mapsto (g_t(\eta), [c])$$
by the diagonal action of the mapping class group and the stabilizer of $SL(2,\mathbb{R})X$ in the mapping class group is precisely $\textrm{Aff}(X)$, we have that the KZ cocycle is given by the actions on homology of the elements of $\textrm{Aff}(X)$ appearing\footnote{I.e., the elements $\phi_n\in\textrm{Aff}(X)$ used to bring $g_t(\eta)$ close to $\eta$.} along the orbits of $g_t$. 

In this setting, Eskin and the author \cite{EM} proved that the Lyapunov exponents of the KZ cocycle are seen by random products of the matrices of actions on homology of affine homeomorphisms:

\begin{theorem}\label{t.EM} Let $X$ be a Veech surface of genus $g\geq 1$. Denote by 
$$1=\lambda_1>\lambda_2\geq\dots\geq\lambda_g \geq-\lambda_g\geq\dots\geq -\lambda_2>-\lambda_1=-1$$ 
the Lyapunov exponents of the KZ cocycle with respect to the unique $SL(2,\mathbb{R})$-invariant probability measure on $SL(2,\mathbb{R})X$. 

Then, there exists a probability measure on (the countable group) $\textrm{Aff}(X)$ assigning non-zero mass to every element and a constant $\lambda>0$ such that the Lyapunov exponents of random products of matrices in $Sp(2g,\mathbb{Z})$ with respect to the law $\rho(\nu)$ where $\rho:\textrm{Aff}(X)\to Sp(2g,\mathbb{Z})$ is the representation induced by the action of affine homeomorphisms on $H_1(X,\mathbb{Z})$ are precisely 
$$\lambda>\lambda\cdot\lambda_2\geq\dots\geq\lambda\cdot\lambda_g \geq-\lambda\cdot\lambda_g\geq\dots\geq -\lambda\cdot\lambda_2>-\lambda$$ 
\end{theorem}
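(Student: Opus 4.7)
The plan is to transfer the Lyapunov spectrum of the Kontsevich--Zorich cocycle, viewed as a cocycle over the geodesic flow on the hyperbolic orbifold $\mathbb{H}/SL(X)$, to a random-walk cocycle on $SL(X)\simeq\textrm{Aff}(X)$ via a cross-section argument. First, I would invoke Smillie's theorem to identify $SL(2,\mathbb{R})\cdot X$ with $\Gamma\backslash SL(2,\mathbb{R})$ where $\Gamma:=SL(X)$, so that the Teichm\"uller flow $g_t$ becomes the right translation by $\textrm{diag}(e^t,e^{-t})$ on this homogeneous space; under the hypothesis $\textrm{Aut}(X)=\{\textrm{Id}\}$, the description of $G_t^{KZ}$ given in Subsection~\ref{ss.Veech-surfaces} exhibits the KZ cocycle as the suspension, by the monodromy $\rho:\textrm{Aff}(X)\to Sp(2g,\mathbb{Z})$, of the trivial $H_1(X,\mathbb{R})$-bundle over $\Gamma\backslash SL(2,\mathbb{R})$.

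Second, I would fix a relatively compact Borel fundamental domain $\widetilde{K}\subset SL(2,\mathbb{R})$ for the right action of $\Gamma$, project it to $K\subset\Gamma\backslash SL(2,\mathbb{R})$, and consider the first-return map $R_K$ of $g_t$ to $K$. Kac's lemma applied to the unique $SL(2,\mathbb{R})$-invariant probability measure on the Teichm\"uller curve gives a finite mean return time $T:=\int r_K\,d\mu_K<\infty$. Lifting a trajectory $g_t(x)$ to $SL(2,\mathbb{R})$, each return of the orbit to $\widetilde{K}$ forces the application of a unique element $\gamma_n(x)\in\Gamma$ to bring the point back into $\widetilde{K}$; the law of $\gamma_1$ under the normalized invariant measure on $K$ defines a probability measure $\nu$ on $\textrm{Aff}(X)$. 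By construction, the cocycle matrix $\rho(\gamma_n\cdots\gamma_1)$ differs from $G_{T_n(x)}^{KZ}$, where $T_n(x):=r_K(x)+\cdots+r_K(R_K^{n-1}x)$, by a uniformly bounded ``boundary error'' coming from parallel transport inside $\widetilde{K}$; here one uses that the Hodge norm is controlled on the compact set $\widetilde{K}$, and that in the (non-compact) cusps of the Teichm\"uller curve the KZ cocycle has integrable log-norm by Masur--Veech and Forni.

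Third, I would conclude via Oseledets and Birkhoff as follows. Birkhoff's theorem applied to the ergodic first-return map $R_K$ yields $T_n(x)/n\to T$ a.e.; applying Oseledets to $G_t^{KZ}$ one obtains
\[
\lim_{n\to\infty}\frac{1}{n}\log\|\rho(\gamma_n(x)\cdots\gamma_1(x))v\|=T\cdot\lambda_i
\]
for every $v$ in the corresponding Oseledets subspace of $H_1(X,\mathbb{R})$. By the Furstenberg--Kesten / Kingman theorem applied to the i.i.d. random walk on $SL(X)$ governed by $\nu$, combined with the fact that the return sequence $(\gamma_n)$ has the law $\nu^{\otimes\mathbb{N}}$ modulo the standard Kakutani-type identification of the natural extension of $R_K$ with a Bernoulli shift, one concludes that the Lyapunov exponents of $\rho(\nu)$ are precisely $\lambda\cdot\lambda_i$ with $\lambda:=T$; the symmetry of the spectrum is automatic because $\rho$ takes values in $Sp(2g,\mathbb{Z})$.

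The main obstacle, in my view, is two-fold. One must ensure that the resulting $\nu$ assigns strictly positive mass to \emph{every} element of $\textrm{Aff}(X)$: this I would address by first noting that the support of $\nu$ generates $SL(X)$ (because $g_t$ is ergodic and $\Gamma$ is a lattice), and then, if necessary, convolving with a geometric series of convolution powers of an auxiliary finitely supported symmetric probability measure on $SL(X)$, without changing the Lyapunov spectrum. The more serious technical issue is the independence of the successive returns $\gamma_n$: a priori they are only stationary under $R_K$, not i.i.d., so one has to either replace $\nu^{*n}$ by the distribution of $\gamma_n\cdots\gamma_1$ and show that the two have matching top exponents via a subadditive comparison, or invoke a Markovian version of the multiplicative ergodic theorem. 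Handling this comparison cleanly, and doing so uniformly down to the cusps of $\mathbb{H}/SL(X)$, is what makes the argument delicate.
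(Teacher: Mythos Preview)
Your cross-section approach has a genuine gap at precisely the point you flag yourself. The sequence of return elements $(\gamma_n)$ produced by the first-return map $R_K$ is \emph{not} i.i.d.\ with law $\nu$; it is only a stationary sequence driven by the (ergodic, but highly non-Bernoulli) return dynamics. There is no ``standard Kakutani-type identification'' of the natural extension of $R_K$ with a Bernoulli shift here --- the geodesic flow on $\Gamma\backslash SL(2,\mathbb{R})$ restricted to a cross-section has a complicated symbolic model, not a full shift on countably many symbols with product measure. Your proposed fixes do not close this gap: passing to the law of the products $\gamma_n\cdots\gamma_1$ abandons the i.i.d.\ structure the theorem demands, and a Markovian Oseledets theorem only tells you about the stationary cocycle, not about the i.i.d.\ walk with one-step law $\nu$. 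Convolving $\nu$ with auxiliary measures will in general change the Lyapunov spectrum, not just the support.

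The paper takes a completely different route that sidesteps this obstacle. Rather than building $\nu$ from a Poincar\'e section, it invokes Furstenberg's theorem on the discretization of semisimple Lie groups: for any lattice $\Gamma\subset SL(2,\mathbb{R})$ there exists a fully supported probability measure $\nu$ on $\Gamma$ whose Poisson boundary is $(SO(2,\mathbb{R}),\textrm{Lebesgue})$. This guarantees that $\nu^{\mathbb{N}}$-almost every random walk trajectory $\gamma_n\cdots\gamma_1\cdot i$ is tracked by a \emph{Haar-typical} geodesic ray $g_{\lambda n}r_\theta\cdot i$ up to sublinear error, where $\lambda>0$ is the drift. Then Forni's global Lipschitz estimate $\frac{d}{dt}\log\|G_t^{KZ}\|_{\textrm{Hodge}}\leq 1$ converts the sublinear tracking in $\mathbb{H}$ into $\log\|\rho(\gamma_n\cdots\gamma_1)\,G_{\lambda n}^{KZ}(r_\theta\omega)^{-1}\|=o(n)$, which immediately gives $\lambda_{\rho(\nu)}(w)=\lambda\cdot\lambda_{KZ}(w)$ for every $w\neq 0$. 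The two key inputs you are missing are thus Furstenberg's boundary theorem (which produces a genuinely i.i.d.\ $\nu$ with full support and the right harmonic measure) and Forni's norm estimate (which replaces your ``bounded boundary error on $\widetilde{K}$'' by a global sublinear bound valid even through the cusps).
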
 

\begin{proof} Before discussing Lyapunov exponents, we need to ``replace'' the typical orbits of the geodesic flow $g_t$ on the hyperbolic plane $\mathbb{H}$ by certain random walks in $SL(X)\cdot i$. 

In general, the random walk on $SL(X)\cdot i$ with a law $\nu$ of full support on $SL(X)$ is tracked by a geodesic ray in $\mathbb{H}$ up to a sublinear error: for some $\lambda>0$ and for $\nu^{\mathbb{N}}$-almost all sequences $(\gamma_n)_{n\in\mathbb{N}}\subset SL(X)$, there exists a unit-speed geodesic ray $\{\alpha(t):t\in\mathbb{R}\}\subset\mathbb{H}$ such that 
$$d_{\mathbb{H}}(\gamma_n\dots\gamma_1\cdot i, \alpha(\lambda n))=o(n)$$
as $n\to \infty$. Indeed, this is a direct consequence of Oseledets theorem saying that the random product $\gamma_n\dots\gamma_1$ in $SL(2,\mathbb{R})$ is ``close'' to the matrix $g_{\lambda t} r_{\theta(\overline{\gamma})}$, where $\lambda>0$ is the top Lyapunov exponent associated to $\nu$ and $\theta(\underline{\gamma})\in [0,2\pi)$ is an angle depending on $\underline{\gamma}=(\gamma_1,\gamma_2,\dots)\in SL(X)^{\mathbb{N}}$. See, e.g., Lemma 4.1 of \cite{CE} for more details. 

The distribution of the angles $\theta(\underline{\gamma})$ \emph{depend} on $\nu$. In particular, it is \emph{far from obvious} that one can choose $\nu$ in such a way that a typical geodesic rays in $\mathbb{H}$ are tracked by  random walks with law $\nu$ up to sublinear error: in concrete terms, this basically amounts to choose $\nu$ so that the distribution of angles $\theta(\underline{\gamma})\in [0,2\pi)$ is given by the Lebesgue measure. 

Fortunately, a profound theorem of Furstenberg \cite{Fu71} ensures the existence of a probability measure $\nu$ on the lattice $SL(X)$ with full support and the desired (Lebesgue) distribution of angles\footnote{Actually, Furstenberg proves that the so-called \emph{Poisson boundary} of $(SL(X),\nu)$ is $(SO(2,\mathbb{R}), \textrm{Lebesgue})$}. 

Once we know that a typical orbit of the geodesic flow is tracked by a random walk with sublinear error, let us come back to the discussion of Lyapunov exponents. Consider a typical geodesic ray $g_t r_{\theta}(\omega)$ tracked by a random walk $(\gamma_n\dots\gamma_1)_{n\in\mathbb{N}}$ with sublinear error. 

Forni \cite[\S 2]{F02} proved that the following general growth estimate for the KZ cocycle:  
$$\frac{d}{dt}\log\|G_t^{KZ}(\eta)\|\leq 1$$
for all translation surface $\eta$, where $\|.\|$ denotes the \emph{Hodge norm}. From this estimate, we deduce: 
$$\log\|(\rho(\gamma_n\dots\gamma_1)\cdot G_{\lambda n}^{KZ}(r_{\theta}\omega)^{-1})^{\pm1}\|\leq d_{\mathbb{H}}(\gamma_n\dots\gamma_1\cdot i, g_{\lambda n}(r_{\theta}\omega)) = o(n)$$ 

Hence,  
$$\lim\limits_{n\to\infty}\frac{1}{n}\log\frac{\|\rho(\gamma_n\dots \gamma_1)(w)\|}{\|G_{\lambda n}^{KZ}(w)\|}=0$$
for any vector $w\in H_1(X,\mathbb{R})-\{0\}$. By definition of Lyapunov exponents, this equality means that 
$$\lambda_{\rho(\nu)}(w) = \lambda\cdot \lambda_{KZ}(w)$$ 
where $\lambda_{\rho(\nu)}(w)$, resp. $\lambda_{KZ}(w)$, is the Lyapunov exponent of $w$ with respect to random products of matrices in $Sp(2g,\mathbb{Z})$ with law $\rho(\nu)$, resp. KZ cocycle $G_t^{KZ}$. This proves the theorem. 
\end{proof}

This theorem together with Avila-Viana criterion \cite{AV} for the simplicity of Lyapunov exponents of random products of matrices yield the following statement (cf. \cite[Theorem 1]{EM}):

\begin{corollary}\label{c.EM} Let $X=(M,\omega)$ be a Veech surface of genus $g\geq 1$. Suppose that $\textrm{Aff}(X)$ contains two elements $\phi$ and $\psi$ whose actions on the annihilator $H_1^{(0)}(X,\mathbb{R})$ of the tautological plane 
$$\mathbb{R}\cdot\textrm{Re}(\omega) \oplus \mathbb{R}\cdot\textrm{Im}(\omega) \subset H^1(X,\mathbb{R})$$ 
are given by two matrices $A$ and $B$ (in $Sp(H_1^{(0)}(X,\mathbb{R}))\simeq Sp(2g-2,\mathbb{R})$) such that:
\begin{itemize}
\item[(i)] $A$ is pinching, i.e., all eigenvalues of $A$ are real, simple and their moduli are distinct; 
\item[(ii)] $B$ is twisting with respect to $A$, i.e., for any $1\leq k\leq g-1$, any isotropic $A$-invariant $k$-plane $F$ and any coisotropic $A$-invariant $(2g-2-k)$-plane $F'$, we have $B(F)\cap F' = \{0\}$.
\end{itemize}
Then, the Lyapunov exponents of the KZ cocycle with respect to the Lebesgue measure on $SL(2,\mathbb{R}) X$ are simple, i.e., the Lyapunov spectrum has the form 
$$1=\lambda_1>\lambda_2>\dots>\lambda_g>-\lambda_g>\dots>-\lambda_2>-\lambda_1=-1$$
\end{corollary}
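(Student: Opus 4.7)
The plan is to combine Theorem \ref{t.EM} with the Avila--Viana criterion \cite{AV} for the simplicity of Lyapunov exponents of random matrix products. First I would observe that since the tautological plane $\mathbb{R}\cdot\textrm{Re}(\omega)\oplus\mathbb{R}\cdot\textrm{Im}(\omega)$ is $SL(2,\mathbb{R})$-invariant and its Hodge-orthogonal complement on $H^1(M,\mathbb{R})$ is $H^1_{(0)}(M,\mathbb{R})$ (identified, up to Poincar\'e duality, with $H_1^{(0)}(X,\mathbb{R})$), the KZ cocycle splits as a direct sum. The tautological factor always contributes the Lyapunov exponents $\pm 1=\pm\lambda_1$, so the problem reduces to showing that the remaining $2g-2$ Lyapunov exponents (associated with $H_1^{(0)}(X,\mathbb{R})$) are pairwise distinct.

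Next, I would invoke Theorem \ref{t.EM} to transfer this simplicity question to a random matrix product setting. Concretely, the theorem produces a probability measure $\nu$ on $\textrm{Aff}(X)$ assigning positive mass to \emph{every} element and a constant $\lambda>0$ such that the Lyapunov exponents of $\rho(\nu)$-random products in $\textrm{Sp}(2g,\mathbb{Z})$ equal $\lambda$ times the KZ exponents. Restricting to the invariant symplectic subspace $H_1^{(0)}(X,\mathbb{R})$ yields a random walk on $\textrm{Sp}(H_1^{(0)}(X,\mathbb{R}))\simeq \textrm{Sp}(2g-2,\mathbb{R})$, and the simplicity of its Lyapunov spectrum is equivalent to that of the KZ spectrum on this subspace. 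Since $\nu$ has full support on $\textrm{Aff}(X)$, the pushforward $\rho(\nu)$ has both $A$ and $B$ in its support.

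At this point I would apply the Avila--Viana criterion: if a probability measure on $\textrm{Sp}(2n,\mathbb{R})$ has in its support a pair $(A,B)$ with $A$ pinching and $B$ twisting with respect to $A$, then all its Lyapunov exponents are simple. Since $A$ and $B$ satisfy hypotheses (i) and (ii) precisely in the sense of Avila--Viana, this criterion directly applies to give the simplicity of the $2g-2$ non-tautological Lyapunov exponents. Combining this with the tautological $\pm 1$ exponents produces the claimed spectrum
$$1=\lambda_1>\lambda_2>\dots>\lambda_g>-\lambda_g>\dots>-\lambda_2>-\lambda_1=-1.$$

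The main obstacle is really just verifying that the Avila--Viana criterion may be applied verbatim in this context; this is essentially bookkeeping, since the symplectic structure on $H_1^{(0)}(X,\mathbb{R})$ is preserved by $\rho(\textrm{Aff}(X))$ and the pinching/twisting hypotheses are exactly those demanded by \cite{AV}. The genuinely nontrivial input is Theorem \ref{t.EM}, whose proof was sketched above via Furstenberg's theorem on the Poisson boundary of lattices in $SL(2,\mathbb{R})$ together with Forni's Hodge-norm derivative estimate, and this is what allows the whole question to be recast as a problem about random products of matrices.
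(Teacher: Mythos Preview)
Your proposal is correct and follows essentially the same approach as the paper: reduce via Theorem~\ref{t.EM} to a random product of matrices in $Sp(H_1^{(0)}(X,\mathbb{R}))$, observe that $A$ and $B$ lie in the support of $\rho(\nu)$ because $\nu$ has full support on $\textrm{Aff}(X)$, and then apply the Avila--Viana pinching/twisting criterion. Your additional remarks on the tautological splitting are correct elaborations that the paper's proof takes for granted.
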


\begin{proof} By Theorem \ref{t.EM}, our task is equivalent to establish the simplicity of the Lyapunov spectra of the random products with law $\rho_0(\nu)$ of matrices in $Sp(H_1^{(0)}(X,\mathbb{R}))$, where $\rho_0:\textrm{Aff}(X)\to H_1^{(0)}(X,\mathbb{R})$ is the natural representation. 

By the simplicity criterion\footnote{See also Theorem 2.17 in \cite{MMY}.} of Avila-Viana \cite{AV}, a random product with law $\theta$ of matrices in $Sp(2d,\mathbb{R})$ has simple Lyapunov spectrum whenever the support of $\eta$ contains pinching and twisting matrices (in the sense of items (i) and (ii) above). 

Since $\rho_0(\nu)$ gives positive mass to $A$ and $B$ (because $\nu$ assigns positive masses to $\phi$ and $\psi$), the proof of the corollary is complete. 
\end{proof}

\begin{remark}
Theorem \ref{t.EM}, Corollary \ref{c.EM} and its variants were applied by Eskin and the author to study the Lyapunov spectra of certain Teichm\"uller curves in genus four (of Prym type) and certain variations of Hodge structures of weight three associated to $14$ families of Calabi-Yau threefolds (including \emph{mirror quintics}). See Sections 3 and 4 of \cite{EM} for more explanations. 
\end{remark}

\subsection{Galois-theoretical criterion for simplicity of exponents of origamis} 

From the practical point of view, Corollary \ref{c.EM} is not quite easy to apply. Indeed, the verification of the pinching and twisting properties might be tricky: for example, Avila-Viana \cite{AV} performed a somewhat long inductive procedure in order to establish the pinching and twisting properties in their context (of Rauzy-Veech monoids). 

As it turns out, M\"oller, Yoccoz and the author \cite{MMY} found an effective version of Corollary \ref{c.EM} in the case of square-tiled surfaces thanks to some combinatorial arguments involving Galois theory. 

In the sequel, we will state and prove a Galois-theoretical simplicity criteria and we will discuss its applications to square-tiled surfaces of genus three. 

\subsubsection{Galois-pinching matrices} 

Let $A\in Sp(2d,\mathbb{R})$ be a $2d\times 2d$ symplectic matrix. The characteristic polynomial of $A$ is a monic reciprocal polynomial $P$ of degree $2d$. Denote by $\widetilde{R}=\{\lambda_i, \lambda_i^{-1}: 1\leq i\leq d\}=P^{-1}(0)$ the set of roots of $P$ and let $R=p(\widetilde{R})$ where $p(\lambda)=\lambda+\lambda^{-1}$. 

We say that a matrix $A\in Sp(2d,\mathbb{Z})$ is \emph{Galois-pinching} if its characteristic polynomial $P$ is irreducible over $\mathbb{Q}$, its eigenvalues are real ($\widetilde{R}\subset \mathbb{R}$), and the Galois group $Gal$ of $P$ is the largest possible, i.e., $Gal\simeq S_d \rtimes (\mathbb{Z}/2\mathbb{Z})^d$ acts by the full permutation group on $R=p(\widetilde{R})$ and the subgroup fixing $R$ pointwise acts by independent transpositions of each of the $d$ pairs $\{\lambda_i, \lambda_i^{-1}\}$. 

For each $\lambda\in\widetilde{R}$, let us choose an eigenvector $v_{\lambda}$ of $A$ associated to $\lambda$ with coordinates in the field $\mathbb{Q}(\lambda)$ in such a way that $v_{g.\lambda} = g.v_{\lambda}$ for all $g\in Gal$. 

The nomenclature ``Galois-pinching'' is justified by the following proposition:

\begin{proposition}\label{p.Galois-pinching} A Galois-pinching matrix is pinching. 
\end{proposition}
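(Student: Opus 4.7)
The plan is to verify the three components of the pinching definition in turn, disposing of the first two by general algebra and reducing the third (distinct moduli) to a purely Galois-theoretic statement about the quantities $\mu_i:=\lambda_i+\lambda_i^{-1}$. Reality of the eigenvalues is already part of the Galois-pinching hypothesis, and simplicity is immediate: $P$ is irreducible over $\mathbb{Q}$, hence separable in characteristic zero, so all $2d$ roots are distinct. In particular the $d$ reciprocal pairs are pairwise disjoint, so different indices $i\ne j$ genuinely index different pairs.

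Next I would translate the distinct-moduli condition. Since $P$ has degree $2d\ge 2$ and is irreducible over $\mathbb{Q}$, it admits no rational roots, so $\lambda_i\ne\pm 1$, which gives $|\lambda_i|\ne|\lambda_i^{-1}|$ for every $i$. For $i\ne j$, the equality $|\lambda_i|=|\lambda_j|$ forces $\lambda_j=\pm\lambda_i$; the $+$ case would put $i,j$ in the same pair and is excluded, so $\lambda_j=-\lambda_i$ and $\mu_j=-\mu_i$. Similarly $|\lambda_i|=|\lambda_j^{-1}|$ forces $\lambda_i\lambda_j=\pm 1$; the $+$ case puts $\lambda_i$ and $\lambda_j$ in the same reciprocal pair, and the $-$ case again yields $\mu_j=-\mu_i$. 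Hence the job reduces to ruling out the relation $\mu_i+\mu_j=0$ for any $i\ne j$.

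The main argument uses the Galois-pinching hypothesis that $\mathrm{Gal}$ acts on $R=\{\mu_1,\dots,\mu_d\}$ as the full symmetric group $S_d$ (this is exactly the quotient of $\mathrm{Gal}\simeq S_d\rtimes(\mathbb{Z}/2\mathbb{Z})^d$ by the sign-change subgroup that fixes each $\mu_i$). Suppose for contradiction that $\mu_i+\mu_j=0$ for some $i\ne j$. The set of unordered pairs $\{k,\ell\}\subset\{1,\dots,d\}$ with $\mu_k+\mu_\ell=0$ is $\mathrm{Gal}$-stable; for $d\ge 3$, $S_d$ acts transitively on pairs, so this set is all of $\binom{[d]}{2}$, and picking two pairs sharing an index (e.g.\ $\mu_1+\mu_2=\mu_1+\mu_3=0$) yields $\mu_2=\mu_3$, contradicting the separability of $P$ (which forces the $\mu_i$ to be pairwise distinct). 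The case $d=1$ is trivial, since then there is only one reciprocal pair and the argument of the previous paragraph already concludes.

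The main obstacle will be the residual case $d=2$, in which the above transitivity argument degenerates because $\binom{[d]}{2}$ has just one element; here I would finish by computing the degree of the splitting field. Under the assumption $\mu_1+\mu_2=0$, the elementary symmetric functions of $\{\mu_1,\mu_2\}$ give $\mu_1^2\in\mathbb{Q}$, so $[\mathbb{Q}(\mu_1):\mathbb{Q}]\le 2$; combined with the quadratic $\lambda_1^2-\mu_1\lambda_1+1=0$ and the irreducibility of $P$ this forces $[\mathbb{Q}(\lambda_1):\mathbb{Q}]=4$. Moreover, $\sqrt{\mu_1^2-4}=2\lambda_1-\mu_1\in\mathbb{Q}(\lambda_1)$, so the roots $\lambda_2,\lambda_2^{-1}$ of $x^2+\mu_1 x+1$ lie in $\mathbb{Q}(\lambda_1)$, making $\mathbb{Q}(\lambda_1)$ the full splitting field of $P$ and giving $|\mathrm{Gal}|=4$. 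This contradicts the Galois-pinching hypothesis that $|\mathrm{Gal}|=2!\cdot 2^2=8$, and completes the proof.
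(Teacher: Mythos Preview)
Your argument is correct, but it is considerably more elaborate than the paper's, which handles all $d$ at once with a single stabilizer observation. The paper simply notes that if $\lambda$ and $-\lambda$ were both roots of $P$, then every Galois automorphism fixing $\lambda$ would automatically fix $-\lambda$ (field automorphisms commute with negation); since $\lambda$ is real, $-\lambda\ne\lambda^{\pm1}$, so $-\lambda$ lies in a different reciprocal pair, and the hypothesis that $\mathrm{Gal}\simeq S_d\rtimes(\mathbb{Z}/2\mathbb{Z})^d$ visibly furnishes elements fixing $\lambda$ while moving that other pair. This disposes of the distinct-moduli condition in two lines, with no case split on $d$.

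Your route---passing to the $\mu_i=\lambda_i+\lambda_i^{-1}$, reducing to $\mu_i+\mu_j\ne 0$, and then invoking transitivity of $S_d$ on $2$-subsets for $d\ge 3$ together with a splitting-field degree count for $d=2$---is a legitimate alternative and everything you wrote checks out (the implicit fact that the elementary symmetric functions of the $\mu_i$ are rational is standard, coming from the depressed polynomial $Q$ with $P(x)=x^dQ(x+x^{-1})$). The trade-off is that you lose uniformity: the $d=2$ case requires its own argument, whereas the paper's stabilizer trick treats it on the same footing as every other $d$. If you want to streamline, you could replace the entire $\mu_i$ discussion by the stabilizer observation above.
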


\begin{proof} By definition, all eigenvalues of a Galois-pinching matrix $A$ are real and simple. Hence, our task is to show these eigenvalues have distinct moduli. 

Suppose that $\lambda$ and $-\lambda$ are eigenvalues of $A$. An element of the Galois group $Gal$ fixing $\lambda$ must also fix $-\lambda$, a contradiction with the fact that $Gal$ is the largest possible. 
\end{proof}

An important point about Galois-pinching matrices is the fact that they can be detected in an \emph{effective} way. 

In order to illustrate this, let us consider the prototype $P$ of characteristic polynomial of a matrix in $Sp(4,\mathbb{Z})$, i.e., $P(x)=x^4+ax^3+bx^2+ax+1$ is a monic reciprocal integral polynomial of degree four. 

The following elementary proposition characterizes the polynomials $P$ with real, simple and positive roots (and, \emph{a fortiori}, of distinct moduli).

\begin{proposition} The polynomial $P(x)=x^4+ax^3+bx^2+ax+1$ has real, simple and positive roots if and only if 
$$\Delta_1:=a^2-4b+8 > 0, \quad t:=-a-4>0 \quad \textrm{ and } \quad d:=b+2+2a>0$$
\end{proposition}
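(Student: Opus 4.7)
The plan is to exploit the reciprocal (palindromic) nature of $P$ to reduce the degree via the classical substitution $y=x+x^{-1}$. Since $P(x)/x^2 = (x+x^{-1})^2 + a(x+x^{-1}) + (b-2)$, the roots of $P$ come in pairs $\{\lambda,\lambda^{-1}\}$ corresponding to roots of the quadratic
$$Q(y) := y^2 + a y + (b-2).$$
So $P$ has four real positive simple roots if and only if $Q$ has two distinct real roots $y_1,y_2$, and for each $y_i$ the quadratic $x^2 - y_i x + 1 = 0$ has two distinct real positive roots.

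First, I would observe that the discriminant condition for $x^2 - y_i x + 1$ to have two real roots is $y_i^2 > 4$, and since the product of those roots equals $1>0$ and the sum equals $y_i$, positivity of both roots is equivalent to $y_i>0$. Combining, the requirement on $y_i$ is simply $y_i > 2$. Conversely, $y_i>2$ forces $y_i^2-4>0$, so the two $x$-roots are distinct and positive, and the pairs coming from different $y_i$ are automatically disjoint (they lie on different fibers of $x\mapsto x+x^{-1}$). Thus the problem reduces to: \emph{$Q(y)$ has two distinct real roots, both strictly greater than $2$}.

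Next I would translate this into the three inequalities of the statement. ``Two distinct real roots'' is precisely ``$\operatorname{disc}(Q)>0$'', i.e., $a^2 - 4(b-2) = a^2 - 4b + 8 > 0$, which is $\Delta_1 > 0$. ``Both roots exceed $2$'' for a monic quadratic is the standard triple condition: $Q(2)>0$ (so $2$ lies strictly to the left of both roots), and the vertex abscissa $-a/2$ satisfies $-a/2 > 2$ (so $2$ is to the left of the midpoint, ensuring both roots are on the same, right-hand, side). Computing: $Q(2) = 4 + 2a + (b-2) = b + 2a + 2 = d$, and $-a/2 > 2 \iff -a - 4 > 0$, which is $t > 0$. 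The discriminant condition is automatically implied by $Q(2)>0$ combined with the vertex lying at $y=-a/2>2$ only when $Q(2)$ would be $>0$ with two real roots; to be safe I would keep $\Delta_1>0$ as an explicit condition and check that the three conditions $\Delta_1>0$, $t>0$, $d>0$ are jointly equivalent to the ``both roots $>2$'' characterization (the implication ``both roots $>2$ $\Rightarrow$ $\Delta_1>0$, $d>0$, $t>0$'' and its converse are both immediate from the monic quadratic sign analysis).

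This is essentially a one-page verification with no serious obstacle; the only point requiring a moment of care is the reduction step establishing that real positivity for the four roots of $P$ is equivalent to both roots of $Q$ being $>2$ (not merely $\geq 2$, so that simplicity of the roots of $P$ is preserved, and not merely real, so that the positivity of each $x$-pair is ensured by the positivity of $y_i$ together with the product being $+1$). Once this reduction is in place, the conditions $\Delta_1>0$, $t>0$, $d>0$ are literally the textbook criterion for a real monic quadratic to have two distinct real roots strictly greater than the threshold $2$.
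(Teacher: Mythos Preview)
Your proof is correct and follows essentially the same route as the paper: reduce via the palindromic substitution to a quadratic and read off the three conditions. The only cosmetic difference is that the paper uses the shifted variable $\mu = \lambda + \lambda^{-1} - 2$, so that the resulting quadratic is exactly $Q(y)=y^2 - ty + d$ and the criterion becomes ``$Q$ has real, simple, \emph{positive} roots'' rather than your ``both roots of $y^2+ay+(b-2)$ exceed $2$''; the translations are identical.
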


\begin{proof} A simple calculation shows that $\lambda$ is a root of $P$ if and only if $\mu:=\lambda+\lambda^{-1}-2$ is a root of the quadratic polynomial
$$Q(y):=y^2-ty+d$$
of discriminant $t^2-4d=a^2-4b+8:=\Delta_1$.

Since the roots $\lambda$ of $P$ are real, simple and positive if and only if the roots $\mu$ of $Q$ have the same properties, the desired proposition follows. 
\end{proof}

The next two propositions provide a criterion for the irreducibility of $P$ over $\mathbb{Q}$. 

\begin{proposition} The polynomial $P(x)=x^4+ax^3+bx^2+ax+1\in\mathbb{Z}[x]$ is a product of two reciprocal quadratic rational polynomials if and only if $\sqrt{\Delta_1}\in\mathbb{Q}$. 
\end{proposition}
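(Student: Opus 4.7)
The plan is to reduce the factorization hypothesis to a single resolvent quadratic whose discriminant is exactly $\Delta_1$, making both implications transparent. First I would normalize: any reciprocal quadratic in $\mathbb{Q}[x]$ is a rational scalar multiple of a monic reciprocal quadratic, and a monic reciprocal quadratic has the form $x^2 + cx + 1$. Since $P$ is monic with constant term $1$, any factorization of $P$ into two reciprocal rational quadratics can be renormalized as $P(x) = (x^2 + c_1 x + 1)(x^2 + c_2 x + 1)$ with $c_1, c_2 \in \mathbb{Q}$.

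Next, I would expand this product and compare coefficients with $P(x) = x^4 + ax^3 + bx^2 + ax + 1$, obtaining the system
\[
c_1 + c_2 = a, \qquad c_1 c_2 = b - 2.
\]
Thus $c_1$ and $c_2$ are the roots of the resolvent $y^2 - ay + (b-2) = 0$, whose discriminant is $a^2 - 4(b-2) = a^2 - 4b + 8 = \Delta_1$.

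Both directions follow immediately. If $P$ admits such a factorization, then $c_1, c_2 \in \mathbb{Q}$ solve the resolvent, so $\sqrt{\Delta_1} \in \mathbb{Q}$. Conversely, if $\sqrt{\Delta_1} \in \mathbb{Q}$, the numbers $c_{1,2} := (a \pm \sqrt{\Delta_1})/2$ lie in $\mathbb{Q}$, and the expansion done above verifies directly that $P(x) = (x^2 + c_1 x + 1)(x^2 + c_2 x + 1)$. The whole argument is a routine coefficient comparison, and I do not expect any serious obstacle; the only mild point to be careful about is the initial normalization of the reciprocal factors, which uses that $P$ has leading coefficient and constant term both equal to $1$.
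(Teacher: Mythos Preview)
Your proof is correct and follows essentially the same route as the paper. The paper invokes the substitution $\mu=\lambda+\lambda^{-1}-2$ set up in the preceding proposition, yielding the resolvent $Q(y)=y^2-ty+d$ with discriminant $\Delta_1$; your resolvent $y^2-ay+(b-2)$ is the same quadratic after the affine change $c=-\mu-2$, so the two arguments coincide up to bookkeeping.
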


\begin{proof} The quadratic polynomial $Q(y)=y^2-ty+d$ with roots $\mu$ related to the roots $\lambda$ of $P$ via the formula $\mu=\lambda+\lambda^{-1}-2$ is reducible over $\mathbb{Q}$ if and only if $\sqrt{\Delta_1}\in\mathbb{Z}$.  
\end{proof}

\begin{proposition} Let $P(x)=x^4+ax^3+bx^2+ax+1$ be a monic reciprocal integral polynomial of degree four. Suppose that $\Delta_1:=a^2-4b+8$ is not a square (i.e., $\sqrt{\Delta_1}\notin\mathbb{Z}$) and $P$ is reducible over $\mathbb{Q}$. Then, 
$$\Delta_2:=(b+2-2a)(b+2+2a)$$
is a square, i.e., $\sqrt{\Delta_2}\in\mathbb{Z}$. 
\end{proposition}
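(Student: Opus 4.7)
The plan is to split the reducibility of $P$ into two cases according to whether $P$ has a linear factor over $\mathbb{Q}$ or factors as a product of two irreducible rational quadratics. By Gauss's lemma, any such factorization may be taken in $\mathbb{Z}[x]$ with monic factors.

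First I would dispose of the linear-factor case. Since $P$ is monic with constant term $1$, any rational root must lie in $\{\pm 1\}$, and a direct evaluation gives $P(\pm 1) = (b+2) \pm 2a$. The existence of a rational root is therefore equivalent to the vanishing of one of the two factors of $\Delta_2 = (b+2-2a)(b+2+2a)$, in which case $\Delta_2 = 0$ is trivially a square.

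The substantive case is $P = Q_1 Q_2$ with $Q_1, Q_2$ monic integer irreducible quadratics. Here I would exploit the reciprocity of $P$ via the involution $Q \mapsto Q^\ast(x) := x^2 Q(1/x)$, which gives the second factorization $P = Q_1^\ast Q_2^\ast$. Unique factorization in $\mathbb{Q}[x]$ forces the multiset $\{Q_1,Q_2\}$ to agree with $\{Q_1^\ast, Q_2^\ast\}$ up to scalars, and comparing constant terms yields $Q_1(0) Q_2(0) = 1$, so $Q_i(0) \in \{+1\}$ for both $i$ or $Q_i(0) \in \{-1\}$ for both $i$. A short case analysis shows that the $+1$ branch forces at least one $Q_i$ to be self-reciprocal (and then so is the other), which splits $P$ into two reciprocal rational quadratics; by the preceding proposition this forces $\sqrt{\Delta_1} \in \mathbb{Z}$ and is excluded by hypothesis. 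Hence $Q_1(0) = Q_2(0) = -1$, and in this branch the ``diagonal'' matching $Q_i = -Q_i^\ast$ collapses $Q_i$ to $x^2 - 1 = (x-1)(x+1)$, contradicting its irreducibility. Only the ``swap'' $Q_2 = -Q_1^\ast$ survives, giving
$$Q_1(x) = x^2 + \alpha x - 1, \qquad Q_2(x) = x^2 - \alpha x - 1$$
for some nonzero $\alpha \in \mathbb{Z}$. Multiplying out $(x^2-1)^2 - \alpha^2 x^2$ and comparing with $P$ yields $a = 0$ and $b + 2 = -\alpha^2$, and therefore
$$\Delta_2 = (b+2)^2 - 4a^2 = \alpha^4,$$
which is a perfect square.

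The main obstacle, I expect, is the bookkeeping in the quadratic-factor case: one has to match $\{Q_1,Q_2\}$ with $\{Q_1^\ast, Q_2^\ast\}$ in all possible ways and check that every configuration other than the single surviving one collapses either to the reciprocal factorization already excluded by the previous proposition or to a reducible quadratic factor, contradicting irreducibility.
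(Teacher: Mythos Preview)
Your proof is correct and takes a genuinely different route from the paper's. The paper first argues that $P$ has no rational root (implicitly using that a rational root would force a reciprocal quadratic factor, hence $\sqrt{\Delta_1}\in\mathbb{Z}$), then labels the roots as $\lambda_1,\lambda_2,\lambda_1^{-1},\lambda_2^{-1}$ with $P'(x)=(x-\lambda_1)(x-\lambda_2)\in\mathbb{Q}[x]$, and uses symmetric-function manipulations in the splitting field to show that $(\lambda_1-\lambda_1^{-1})(\lambda_2-\lambda_2^{-1})\in\mathbb{Q}$; since $\Delta_2$ equals the square of this quantity, the result follows. Your argument instead stays entirely in $\mathbb{Z}[x]$: you classify the monic integer quadratic factors via the involution $Q\mapsto Q^\ast$ and unique factorization, and you obtain the sharper conclusion that in the nontrivial case one must have $a=0$ and $\Delta_2=\alpha^4$ is a perfect fourth power. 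Your linear-factor case is also handled more directly than in the paper, via $\Delta_2=P(1)P(-1)$. The paper's approach is shorter and ties naturally into the root expressions $(\lambda_i-\lambda_i^{-1})$ used later in the Galois analysis; your approach is more elementary, avoids the splitting field, and makes the integrality and the special shape of the factorization completely explicit. One small cosmetic point: in the $+1$ branch you could note immediately that any monic quadratic with constant term $1$ is automatically self-reciprocal, which shortcuts the ``case analysis'' you allude to.
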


\begin{proof} Since $\Delta_1$ is not a square, $P$ has no rational root. Because $P$ is reducible over $\mathbb{Q}$ (by assumption), the previous proposition implies that $P=P' P''$ where $P', P''\in \mathbb{Q}[x]$ are monic irreducible quadratic polynomials which are not reciprocal. Thus, we can relabel the roots of $P$ in such a way that 
$$P'(x) = (x-\lambda_1)(x-\lambda_2), \quad P''(x) = (x-\lambda_1^{-1})(x-\lambda_2^{-1})$$
Note that $P'\in\mathbb{Q}[x]$ implies that $\lambda_1\lambda_2, \lambda_1+\lambda_2\in\mathbb{Q}$ and, \emph{a fortiori}, $\lambda_1^2+\lambda_2^2\in\mathbb{Q}$. Therefore, 
$$(\lambda_1-\lambda_1^{-1})(\lambda_2-\lambda_2^{-1}) = \lambda_1\lambda_2 - \frac{1-\lambda_1^2-\lambda_2^2}{\lambda_1\lambda_2}\in\mathbb{Q}$$
It follows that $\Delta_2=(b+2-2a)(b+2+2a) = (\lambda_1-\lambda_1^{-1})^2(\lambda_2-\lambda_2^{-1})^2$ is a square. 
\end{proof}

Furthermore, it is not hard to decide whether an irreducible monic reciprocal integral polynomial of degree four has the largest possible Galois group: 

\begin{proposition}\label{p.Galois-degree4} Let $P(x)=x^4+ax^3+bx^2+ax+1\in\mathbb{Z}[x]$ be irreducible over $\mathbb{Q}$. The Galois group $Gal$ of $P$ is the largest possible (i.e., $Gal\simeq S_2\rtimes(\mathbb{Z}/2\mathbb{Z})^2$ has order eight) if and only if 
$$\sqrt{\Delta_1}, \sqrt{\Delta_2}, \sqrt{\Delta_1\Delta_2}\notin\mathbb{Z},$$ 
(where $\Delta_1:=a^2-4b+8$ and $\Delta_2:=(b+2-2a)(b+2+2a)$). 

Moreover, in this case we have that the splitting field of $P$ contains exactly three quadratic subfields, namely, $\mathbb{Q}(\sqrt{\Delta_1})$, $\mathbb{Q}(\sqrt{\Delta_2})$, $\mathbb{Q}(\sqrt{\Delta_1\Delta_2})$. 
\end{proposition}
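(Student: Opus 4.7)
The plan is to apply Galois theory to the wreath product $W := S_2 \wr S_2 \simeq D_4$ of order eight. Since $P$ is reciprocal, its roots come in pairs $\{\lambda_i, \lambda_i^{-1}\}$, $i=1,2$, and any $\mathbb{Q}$-automorphism of the splitting field of $P$ respects this pairing; hence $Gal \subseteq W$. Moreover, the irreducibility of $P$ forces $Gal$ to act transitively on the four roots, and a direct inspection of the subgroups of $D_4$ shows that its transitive subgroups are exactly $D_4$ itself, the Klein four-group $V_4 = \{e, (1,2)(3,4), (1,3)(2,4), (1,4)(2,3)\}$, and the cyclic group $C_4 = \{e, (1,2)(3,4), (1,3,2,4), (1,4,2,3)\}$ (the ``diagonal'' subgroup $(\mathbb{Z}/2)^2 = \langle(1,2),(3,4)\rangle$ is not transitive, nor is any subgroup of order two).

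Next, I would exhibit the three index-two subgroups of $W$ as stabilizers of the three radicals $\sqrt{\Delta_1}$, $\sqrt{\Delta_2}$, $\sqrt{\Delta_1 \Delta_2}$. Writing $\mu_i = \lambda_i + \lambda_i^{-1} - 2$, one has $\sqrt{\Delta_1} = \pm(\mu_1 - \mu_2)$ by definition of the resolvent quadratic $Q$, and the key explicit computation -- using $(\lambda - \lambda^{-1})^2 = (\mu+2)^2 - 4 = \mu^2 + 4\mu$ together with $\mu_1+\mu_2 = t = -a-4$ and $\mu_1\mu_2 = d = b+2+2a$ -- yields
$$\bigl((\lambda_1 - \lambda_1^{-1})(\lambda_2 - \lambda_2^{-1})\bigr)^2 = d(d + 4t + 16) = (b+2+2a)(b+2-2a) = \Delta_2.$$
A direct check on the action of each of the eight elements of $W$ on these two invariants then gives: $\operatorname{Stab}(\sqrt{\Delta_1})$ is the ``diagonal'' $(\mathbb{Z}/2)^2$ (which preserves each pair setwise), $\operatorname{Stab}(\sqrt{\Delta_2})$ is the Klein subgroup $V_4$ of double-transpositions, and $\operatorname{Stab}(\sqrt{\Delta_1\Delta_2})$ is the cyclic rotation subgroup $C_4$ -- which are precisely the three index-two subgroups of $D_4$.

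With these stabilizers in hand, the proposition will follow by the Galois correspondence: $Gal = W$ iff $Gal$ is contained in none of the three proper index-two subgroups, iff none of $\sqrt{\Delta_1}, \sqrt{\Delta_2}, \sqrt{\Delta_1\Delta_2}$ lies in $\mathbb{Q}$. (Observe that under the irreducibility hypothesis one automatically has $\sqrt{\Delta_1}\notin\mathbb{Q}$, by the preceding proposition on reducibility into reciprocal quadratics.) Finally, when $Gal = D_4$, the three distinct index-two subgroups correspond via Galois to three distinct quadratic subfields of the splitting field, which are exactly $\mathbb{Q}(\sqrt{\Delta_1})$, $\mathbb{Q}(\sqrt{\Delta_2})$, $\mathbb{Q}(\sqrt{\Delta_1\Delta_2})$; these exhaust the quadratic subfields because $D_4$ has exactly three subgroups of index two. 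The hard part is the explicit identification of $\Delta_2$ as the square of $(\lambda_1 - \lambda_1^{-1})(\lambda_2 - \lambda_2^{-1})$: once this symmetric-function computation is in place, the remainder reduces to routine bookkeeping on the dihedral group $D_4$ and its subgroup lattice.
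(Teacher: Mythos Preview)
Your argument is correct and follows essentially the same route as the paper: both identify $Gal$ as a transitive subgroup of the dihedral group $D_4$, compute that $(\lambda_1-\lambda_1^{-1})(\lambda_2-\lambda_2^{-1})$ squares to $\Delta_2$ and that its product with $\mu_1-\mu_2$ squares to $\Delta_1\Delta_2$, and then distinguish the full group from its two transitive index-four subgroups $V_4$ and $C_4$ by checking which of these invariants is rational. Your framing via the three index-two stabilizers and the Galois correspondence is a slightly cleaner packaging of the same case analysis the paper carries out, and it also makes the ``moreover'' clause about the three quadratic subfields immediate.
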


\begin{proof} The solution of this elementary exercise in Galois theory is explained in Lemmas 6.12 and 6.13 of \cite{MMY}. For the sake of completeness, let us sketch the proof of this proposition. 

Since $P$ is irreducible, $Gal$ acts transitively on the roots $\lambda_1, \lambda_1^{-1}, \lambda_2, \lambda_2^{-1}$ of $P$. Thus, if $Gal$ can permute $\lambda_i$ and $\lambda_i^{-1}$ independently for $i=1, 2$, then $Gal$ has order eight and, \emph{a fortiori}, $Gal$ is the largest possible.

This reduces our task to show that if $Gal$ can \emph{not} permute $\lambda_i$ and $\lambda_i^{-1}$ independently for $i=1, 2$, then either $\sqrt{\Delta_2}$ or $\sqrt{\Delta_1\Delta_2}$ is an integer. 

For this sake, we observe that if if $Gal$ doesn't permute $\lambda_i$ and $\lambda_i^{-1}$ independently for $i=1, 2$, then $Gal$ permutes simultaneously $\lambda_1, \lambda_1^{-1}$ and $\lambda_2, \lambda_2^{-1}$. In this case, there are two possibilities: 
\begin{itemize}
\item[(a)] either $Gal$ is generated by the permutations $(\lambda_1,\lambda_2)(\lambda_1^{-1},\lambda_2^{-1})$ and $(\lambda_1,\lambda_1^{-1})(\lambda_2,\lambda_2^{-1})$, 
\item[(b)] or $Gal$ is generated by the four cycle $(\lambda_1,\lambda_2,\lambda_1^{-1},\lambda_2^{-1})$.
\end{itemize}

These cases can be distinguished as follows. The expression $(\lambda_1-\lambda_1^{-1})(\lambda_2-\lambda_2^{-1})$ is invariant in case (a) (i.e., $Gal \simeq \mathbb{Z}/2\mathbb{Z}\times \mathbb{Z}/2\mathbb{Z}$ is a Klein group) but not in case (b) (i.e., $Gal\simeq \mathbb{Z}/4\mathbb{Z}$ is a cyclic group). Similarly, the expression $(\lambda_1+\lambda_1^{-1}-\lambda_2-\lambda_2^{-1}) (\lambda_1-\lambda_1^{-1}) (\lambda_2-\lambda_2^{-1})$ is invariant in case (b) but not in case (a). Therefore: 
\begin{itemize}
\item (a) occurs if and only if $(\lambda_1-\lambda_1^{-1})(\lambda_2-\lambda_2^{-1})\in\mathbb{Q}$;
\item (b) occurs if and only if $(\lambda_1+\lambda_1^{-1}-\lambda_2-\lambda_2^{-1}) (\lambda_1-\lambda_1^{-1}) (\lambda_2-\lambda_2^{-1})\in\mathbb{Q}$.
\end{itemize}

Since $(\lambda_1-\lambda_1^{-1})^2(\lambda_2-\lambda_2^{-1})^2=\Delta_2$ and $(\lambda_1+\lambda_1^{-1}-\lambda_2-\lambda_2^{-1})^2 (\lambda_1-\lambda_1^{-1})^2 (\lambda_2-\lambda_2^{-1})^2 =\Delta_1\Delta_2$, we conclude that 
\begin{itemize}
\item (a) occurs if and only if $\sqrt{\Delta_2}\in\mathbb{Z}$;
\item (b) occurs if and only if $\sqrt{\Delta_1\Delta_2}\in\mathbb{Z}$.
\end{itemize}
In any case, we show that either $\sqrt{\Delta_2}$ or $\sqrt{\Delta_1\Delta_2}$ is an integer when $Gal$ can not permute $\lambda_i$ and $\lambda_i^{-1}$ independently for $i=1, 2$. This completes our sketch of proof. 
\end{proof}

In summary, the previous four propositions allow us to test whether a matrix $A\in Sp(4,\mathbb{Z})$ is  Galois-pinching by studying three discriminants $\Delta_1$, $\Delta_2$ and $\Delta_1\Delta_2$ naturally attached to its characteristic polynomial. 

\subsubsection{Twisting with respect to Galois-pinching matrices I: statements of results} 

After discussing the Galois-pinching property, let us study the twisting property with respect to Galois-pinching matrices. Our main result in this direction is:

\begin{theorem}\label{t.MMY-twisting} Let $A\in Sp(2d,\mathbb{Z})$ be a Galois-pinching matrix. Suppose that $B\in Sp(2d,\mathbb{Z})$ has the property that $A$ and $B^2$ share no common proper invariant subspace. Then, there exists $m\geq 1$ and, for any $\ell^{\ast}$, there are integers $\ell_i\geq\ell^{\ast}$, $1\leq i\leq m-1$, such that the product 
$$B A^{\ell_1} \dots B A^{\ell_{m-1}} B$$ 
is twisting with respect to $A$, i.e., for all $1\leq k\leq d$, for any $A$-invariant isotropic subspace $F$ of dimension $k$ and for any $A$-invariant coisotropic subspace $F'$ of dimension $2d-k$, we have $B A^{\ell_1} \dots B A^{\ell_{m-1}} B(F)\cap F'=\{0\}$. 
\end{theorem}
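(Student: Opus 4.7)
The plan is to work in a basis of eigenvectors of $A$, encode the twisting condition as the non-vanishing of certain minors, and exploit Galois-pinching to rule out degeneracies. By Proposition~\ref{p.Galois-pinching}, $A$ is pinching, so $A$ admits a real eigenbasis $(v_\lambda)_{\lambda\in\widetilde{R}}$; I would choose it compatibly with the Galois action on $\widetilde{R}$. Every $A$-invariant subspace is then a sum $\bigoplus_{\lambda\in S}\mathbb{R}\,v_\lambda$, and since the symplectic form pairs the eigenline of $\lambda$ with that of $\lambda^{-1}$, such a subspace is isotropic (resp.\ coisotropic) iff $S$ meets each dual pair $\{\lambda,\lambda^{-1}\}$ in at most (resp.\ at least) one element. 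For each $k$ this gives a finite parametrization of $A$-invariant isotropic $k$-planes $F=F_S$ and coisotropic $(2d-k)$-planes $F'=F_{T'}$, and the twisting condition $C(F)\cap F'=\{0\}$ translates into the non-vanishing of an explicit $k\times k$ minor $\Delta_{T,S}(C)$ of $C$ in the eigenbasis, where $T=\widetilde{R}\setminus T'$.

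Writing $B=(B_{\lambda\mu})$ in the eigenbasis, the entries of $C=BA^{\ell_1}B\cdots BA^{\ell_{m-1}}B$ are the multilinear expressions
\[ C_{\lambda\mu}=\sum_{\mu_1,\dots,\mu_{m-1}\in\widetilde{R}}B_{\lambda\mu_{m-1}}\,\mu_{m-1}^{\ell_{m-1}}\,B_{\mu_{m-1}\mu_{m-2}}\cdots\mu_1^{\ell_1}\,B_{\mu_1\mu}, \]
so $\Delta_{T,S}(C)$ is a Laurent polynomial in the variables $x_{\mu,j}:=\mu^{\ell_j}$ whose coefficients are polynomial in the entries $B_{\lambda\mu}$. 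My strategy is: (i) show that for some $m=m(T,S)$ and every admissible pair $(T,S)$, the minor $\Delta_{T,S}$ is \emph{not identically zero} as a Laurent polynomial; (ii) use Galois-pinching to conclude that the eigenvalues are multiplicatively independent on the relevant lattice, so that the set of tuples $(\ell_1,\dots,\ell_{m-1})\in\mathbb{N}^{m-1}$ on which any one of these finitely many Laurent polynomials specializes to zero has density zero; (iii) fix $m$ to be the maximum of the $m(T,S)$'s and choose $\ell_j\geq\ell^*$ outside the union of these bad sets.

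The main obstacle is step (i). I would argue by contradiction: assuming $\Delta_{T,S}\equiv 0$ no matter how $m$ is chosen, one isolates monomials in the $\ell_j$'s (their exponent vectors being distinct as functions of the paths $\mu_0\to\mu_1\to\cdots\to\mu_{m-1}$) to extract a family of polynomial identities among the entries $B_{\lambda\mu}$, indexed by admissible paths and signed as in a determinant expansion. These identities are equivariant for the Galois group, which by Galois-pinching acts on $\widetilde{R}$ as the full hyperoctahedral group $S_d\rtimes(\mathbb{Z}/2\mathbb{Z})^d$; propagating the relations across a full orbit and grouping them according to their endpoints, the combinatorics forces them to assemble into the statement that a certain proper subset of $\widetilde{R}$ generates an $A$-invariant subspace preserved by a word of even length in $B$. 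Taking $m$ large enough and unwinding the reduction, this even-length invariance factors through $B^2$, contradicting the hypothesis that $A$ and $B^2$ have no common proper invariant subspace. Making precise this passage --- from vanishing minors, through Galois orbits, to a genuine invariant subspace for $B^2$ --- is the delicate combinatorial heart of the argument, and it is where the appearance of $B^2$ (rather than $B$) in the hypothesis becomes essential, reflecting the symmetric role played by the first and last occurrences of $B$ in the word defining $C$.
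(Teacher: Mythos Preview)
Your framework---eigenbasis of $A$, translating twisting into non-vanishing of $k\times k$ minors $\Delta_{T,S}(C)$, and expanding $C=BA^{\ell_1}\cdots BA^{\ell_{m-1}}B$ as a Laurent polynomial in the eigenvalue powers---is exactly the setup the paper uses. Your steps (ii) and (iii) also match the paper closely: Proposition~\ref{p.mixing/twisting} proves precisely that once the relevant ``graph'' is mixing, generic $\ell_j$'s make the minor nonzero, and the key input is that distinct paths $\gamma$ give distinct linear forms $L_\gamma$ (a consequence of Galois-pinching, though not full multiplicative independence as you suggest).

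The genuine gap is step (i), and your proposed contradiction argument does not work as written. For $k=1$ the non-vanishing of $\Delta_{\{t\},\{s\}}$ as a Laurent polynomial is equivalent to the existence of a length-$m$ path from $s$ to $t$ in the directed graph $\Gamma_1(B)$ on $\widetilde{R}$ (arrows $\lambda\to\lambda'$ when $B_{\lambda'\lambda}\neq 0$); finding a single $m$ that works for all pairs is exactly the \emph{mixing} property of this graph. The paper proves $\Gamma_1(B)$ is mixing (Lemma~\ref{l.step1}), and this is where the $B^2$ hypothesis is actually used: if $\Gamma_1(B)$ fails to be mixing, a direct case analysis shows $B$ either preserves some $\mathbb{R}v_\lambda\oplus\mathbb{R}v_{\lambda^{-1}}$ or swaps two such planes, and in the swap case $B^2$ preserves them. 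Your claim that ``even-length invariance factors through $B^2$'' gestures at this but does not pin it down.

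For $k\geq 2$ the situation is substantially harder than your sketch acknowledges. The minor $\Delta_{T,S}$ is a determinant, a \emph{signed} sum over tuples of paths; distinct tuples can contribute the same monomial and cancel, so mere existence of paths in $\Gamma_1(B)$ is insufficient. The paper handles this by passing to graphs $\Gamma_k(C)$ on $\widehat{R}_k$ (arrows given by nonzero minors of $\wedge^k C$) and bootstrapping: first make $\Gamma_1$ complete, then show completeness of $\Gamma_1(C)$ forces $\Gamma_k(C)$ to be mixing for $2\leq k<d$ (Lemma~\ref{l.step2i}, via a classification of Galois-orbits on $\widehat{R}_k\times\widehat{R}_k$ and explicit minor manipulations), then handle $k=d$ separately (Lemma~\ref{l.step2ii}). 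The case $d=2$ requires an additional auxiliary graph $\Gamma_2^*(C)$ built on the $5$-dimensional kernel of the symplectic form on $\wedge^2\mathbb{R}^4$ (Lemma~\ref{l.step3}). None of this structure is visible in your contradiction argument, and I do not see how ``propagating relations across a full Galois orbit'' would recover it: the obstruction to mixing of $\Gamma_k$ is not an invariant subspace for $B^2$ but a more delicate combinatorial configuration that has to be ruled out by hand.
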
 

This theorem constitutes the main ingredient in the simplicity criteria in \cite{MMY}. Before starting its somewhat long proof, let us make some comments on its statement and applicability. 

First, we observe that Theorem \ref{t.MMY-twisting} becomes false if $B^2$ is replaced by $B$ in the hypothesis ``$A$ and $B^2$ share no common proper invariant subspace'': for example, 
$A=\left(\begin{array}{cc} 2 & 1 \\ 1 & 1 \end{array}\right)\in SL(2,\mathbb{Z})$ is Galois-pinching, $B=\left(\begin{array}{cc} 0 & -1 \\ 1 & 0 \end{array}\right)\in SL(2,\mathbb{Z})$ has no invariant subspaces, but $B A^{\ell_1} \dots B A^{\ell_{m-1}} B$ is
\emph{never} twisting with respect to $A$ (because $B$ permutes the eigenspaces of $A$).\footnote{Logically, there is no contradiction to Theorem \ref{t.MMY-twisting} here: $A$ and $B$ don't fit the assumptions of this theorem since $A$ and $B^2 = -\textrm{Id}$ share two common proper invariant subspaces, namely, the eigenspaces of $A$.}

Second, the condition ``$A$ and $B^2$ have no common proper invariant subspace'' might not be easy to check in general. For this reason, the following two propositions (cf. Proposition 4.7, Lemma 5.1 and Lemma 5.5 in \cite{MMY}) are useful in certain applications of Theorem \ref{t.MMY-twisting}.

\begin{proposition}\label{p.unipotent-twisting} Let $A\in Sp(2d,\mathbb{Z})$ be a Galois-pinching matrix. Suppose that $B\in Sp(2d,\mathbb{Z})$ is unipotent and $B\neq\textrm{Id}$. If $A$ and $B^2$ share a common proper invariant subspace, then $(B-\textrm{Id})(\mathbb{R}^{2d})$ is a Lagrangian
subspace of $\mathbb{R}^{2d}$.
\end{proposition}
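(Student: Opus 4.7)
The strategy is to reduce the Lagrangian conclusion to two algebraic identities and derive both via Galois analysis in the eigenbasis of $A$. First I would verify the purely symplectic identity $\textrm{im}(B-\textrm{Id}) = \ker(B-\textrm{Id})^{\perp}$ by polarizing $\Omega(Bv,Bw) = \Omega(v,w)$ and applying nondegeneracy of $\Omega$. This makes it immediate that $\textrm{im}(B-\textrm{Id})$ is Lagrangian if and only if $\dim\ker(B-\textrm{Id}) = d$ and $(B-\textrm{Id})^2 = 0$. Since $B+\textrm{Id} = 2\textrm{Id} + (B-\textrm{Id})$ is invertible (being a nilpotent perturbation of a scalar), one has $\textrm{im}(B-\textrm{Id}) = \textrm{im}(B^2-\textrm{Id})$ and $\ker(B-\textrm{Id}) = \ker(B^2-\textrm{Id})$, so it suffices to prove these two identities for $C := B^2$.

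Next I would let $K \subset \mathbb{R}$ be the splitting field of the characteristic polynomial of $A$ and fix a Galois-equivariant eigenbasis $\{v_\lambda\}_{\lambda \in \widetilde{R}}$ of $A$ with $g(v_\lambda) = v_{g\lambda}$. Every $A$-invariant real subspace of $\mathbb{R}^{2d}$ has the form $W_S := \bigoplus_{\lambda \in S} \mathbb{R} v_\lambda$, so by hypothesis there is a proper nonempty $S \subset \widetilde{R}$ with $C W_S = W_S$. Expanding $C v_\mu = \sum_\lambda c_{\lambda\mu} v_\lambda$, the invariance reads $c_{\lambda\mu} = 0$ whenever $\lambda \notin S$, $\mu \in S$; and the $\mathbb{Q}$-rationality of $C$ gives the equivariance $g(c_{\lambda\mu}) = c_{g\lambda, g\mu}$, so this vanishing propagates to all pairs $(\lambda,\mu) \in \bigcup_{g\in\textrm{Gal}}((gS)^c \times gS)$. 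Using that $\textrm{Gal} \simeq S_d \rtimes (\mathbb{Z}/2\mathbb{Z})^d$ acts as the full group of permutations of the $d$ pairs $\{\lambda_i,\lambda_i^{-1}\}$ together with independent flips within each pair, I would then check: if some pair meets $S$ in exactly one element, this orbit contains every $(\lambda,\mu)$ with $\lambda \neq \mu$, so $C$ is diagonal in the eigenbasis, and since $C$ is unipotent this forces $C = \textrm{Id}$ and $B = \textrm{Id}$, contradicting the assumption $B \neq \textrm{Id}$. Therefore $S$ is a union of whole pairs, and the same argument applied to pairs of roots belonging to distinct $\{\lambda_i,\lambda_i^{-1}\}$ shows that $C$ preserves each two-plane $P_i := \mathbb{R}v_{\lambda_i} \oplus \mathbb{R}v_{\lambda_i^{-1}}$.

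To conclude, since $\mathbb{R}^{2d} = \bigoplus_{i=1}^{d} P_i$ and each $C|_{P_i}$ is a $2\times 2$ unipotent, we have $(C-\textrm{Id})^2|_{P_i} = 0$ for every $i$, hence $(C-\textrm{Id})^2 = 0$. It remains to show $\dim\ker(C-\textrm{Id}) = d$, equivalently that $C|_{P_i} \neq \textrm{Id}_{P_i}$ for all $i$: otherwise some $v_{\lambda_{i_0}}$ is $C$-fixed, and the transitivity of $\textrm{Gal}$ on $\widetilde{R}$ together with the equivariance $Cg(v_\lambda) = g(Cv_\lambda)$ yields $Cv_\lambda = v_\lambda$ for every $\lambda$, again forcing $C = \textrm{Id}$ and $B = \textrm{Id}$. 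The main obstacle is the combinatorial case analysis in the previous paragraph: one must carefully exploit the precise wreath-product structure of the Galois group to force the vanishing of the off-diagonal entries $c_{\lambda\mu}$ on both ``inverse-pair'' and ``different-pair'' types of root pairs simultaneously.
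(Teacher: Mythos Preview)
Your proof is correct and follows essentially the same route as the paper: both use Galois analysis in the eigenbasis of $A$ to show that $B$ (equivalently $B^2$) preserves each two-plane $P_i = \mathbb{R}v_{\lambda_i}\oplus\mathbb{R}v_{\lambda_i^{-1}}$, and then deduce the Lagrangian conclusion from the unipotent action on these planes. The paper's implementation is slicker in two spots --- it reduces from $B^2$ to $B$ at the very start by observing that a unipotent $B$ is a polynomial in $B^2-\textrm{Id}$ (truncated binomial series), and it replaces your entry-by-entry orbit chase with a short minimality argument (pick $R_B\subset\widetilde{R}$ of minimal cardinality with $E(R_B)$ invariant; Galois translates of $R_B$ are then equal or disjoint, forcing $R_B=\{\lambda\}$ or $\{\lambda,\lambda^{-1}\}$) --- while your use of the identity $\textrm{im}(B-\textrm{Id})=\ker(B-\textrm{Id})^{\perp}$ gives a pleasant alternative to the paper's direct appeal to the symplectic orthogonality of the $P_i$.
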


\begin{proof} We begin by noticing that our assumptions imply that any subspace which is invariant under $B^m$ for some $m>0$ is also invariant under $B$. Indeed, if we write $B=\textrm{Id}+N$ and $B^m=\textrm{Id}+N'$, then the binomial formula says that $N'$ is nilpotent whenever $N$ is nilpotent. Hence, $B=(\textrm{Id}+N')^{1/m}$ can be calculated by truncating the binomial series\footnote{I.e., since $N'$ is nilpotent, the formal binomial series $(I+N')^a=\sum\limits_{k=0}^{\infty}\binom{a}{k}(N')^k$ (where $a\in\mathbb{C}$ and $\binom{a}{k}:=a(a-1)\dots(a-k+1)/k!$) can be interpreted as a polynomial function
of $N'$.} and, therefore, $B$ is a polynomial function of $N'=B^m-\textrm{Id}$. In particular, any subspace invariant under $B^m$ is also invariant under $B$.  

In particular, our assumption that $A$ and $B^2$ share a common proper invariant subspace imply (in our current setting) that $A$ and $B$ share a common proper invariant subspace. 

Since $A$ is Galois-pinching, any $A$-invariant subspace is spanned by eigenvectors. Denote by $P$ the characteristic polynomial of $A$, let $\widetilde{R}=P^{-1}(0)$ and, for each $\lambda\in\widetilde{R}$, take $v_{\lambda}$ an eigenvector of $A$ with eigenvalue $\lambda$ whose coordinates belong to $\mathbb{Q}(\lambda)$ in such a way that $v_{g.\lambda}=g.v_{\lambda}$ for all $g$ in the Galois group $Gal$ of $P$. 

Next, let us fix $R_B\subset\widetilde{R}$ a proper subset of \emph{minimal} cardinality such that the subspace $E(R_B)$ spanned by the vectors $\{v_{\lambda}:\lambda\in R_B\}$ is $B$-invariant. Because $B$ has integral entries, for all $\sigma\in Gal$ the subspaces $E(\sigma(R_B))$ are also $B$-invariant. Thus, the minimality of the cardinality of $R_B$ implies that, for each $\sigma\in Gal$, either $\sigma(R_B)\cap R_B=\emptyset$ or $\sigma(R_B)=R_B$. This property together with the fact that $Gal$ is the largest possible is a severe constraint on the proper subset $R_B\subset \widetilde{R}$:
\begin{itemize}
\item either $R_B=\{\lambda\}$ has cardinality one,  
\item or $R_B$ has the form $R_B=\{\lambda,\lambda^{-1}\}$
\end{itemize}

The first possibility does not occur in our context: if $B(v_{\lambda})=c v_{\lambda}$, then $c=1$ (because $B$ is unipotent); hence, using the action of $Gal$, we would deduce that $B$ fixes all eigenvectors of $A$, so that $B=\textrm{Id}$, a contradiction. 

Therefore, $B$ preserves \emph{some} subspace $E(\lambda,\lambda^{-1})$ of the form $E(\lambda,\lambda^{-1}) = \mathbb{R} v_{\lambda} \oplus \mathbb{R} v_{\lambda^{-1}}$. Using again the action of $Gal$, we deduce that $B$ preserves \emph{all} subspaces $E(\lambda,\lambda^{-1})$, $\lambda\in\widetilde{R}$, and the restrictions of $B$ to such planes are Galois-conjugates. As $B\neq\textrm{Id}$ is unipotent, $(B-\textrm{Id})E(\lambda,\lambda^{-1})$ is an one-dimensional subspace  of $E(\lambda,\lambda^{-1})$ for all $\lambda\in\widetilde{R}$. Since the planes $E(\lambda,\lambda^{-1})$ are mutually symplectically orthogonal, it follows that $(B-\textrm{Id})(\mathbb{R}^{2d})$ is a Lagrangian subspace. 
\end{proof}

\begin{proposition}\label{p.Galois-twisting} Let $A\in Sp(2d,\mathbb{Z})$ be a Galois-pinching matrix. Suppose that $B\in Sp(2d,\mathbb{Z})$ has minimal polynomial of degree $>2$ with no irreducible factor of even degree and a splitting field disjoint from the splitting field of the characteristic polynomial of $A$. Then, $A$ and $B^2$ do not share a common proper invariant subspace.  
\end{proposition}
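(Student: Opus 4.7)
The plan is to argue by contradiction: assume that $V\subset\mathbb{R}^{2d}$ is a proper subspace invariant under both $A$ and $B^2$, and show that this forces the minimal polynomial of $B$ to have degree at most $2$ or an irreducible factor of even degree, contradicting the hypotheses. The three key tools will be (i) the minimality argument from the proof of Proposition~\ref{p.unipotent-twisting}, which uses the Galois-pinching of $A$ and the large Galois group $S_d\rtimes(\mathbb{Z}/2\mathbb{Z})^d$ to reduce to the case $V=\mathbb{R} v_\lambda$ or $V=\mathbb{R} v_\lambda\oplus\mathbb{R} v_{\lambda^{-1}}$ for some eigenvalue $\lambda$ of $A$; (ii) the disjointness $K\cap L_B=\mathbb{Q}$, where $K$ is the splitting field of the characteristic polynomial of $A$ and $L_B$ is the splitting field of the minimal polynomial of $B$ (equivalently, of its characteristic polynomial, since the two share the same set of roots); and (iii) the odd-degree hypothesis on the irreducible factors of the minimal polynomial of $B$.

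In the one-dimensional case, the eigenvalue $c_\lambda$ defined by $B^2 v_\lambda=c_\lambda v_\lambda$ lies in $\mathbb{Q}(\lambda)\subset K$ and is simultaneously an eigenvalue of $B^2$, hence in $L_B$ (eigenvalues of $B^2$ are squares of eigenvalues of $B$). Disjointness forces $c_\lambda\in\mathbb{Q}$, and Galois invariance upgrades the identity to $B^2 v_{\sigma\lambda}=c_\lambda v_{\sigma\lambda}$ for every $\sigma\in Gal$, so $B^2=c_\lambda I$. The constraint $\det B^2=1$ then yields $c_\lambda=\pm 1$, giving either $B^2=I$ (minimal polynomial divides $x^2-1$, contradicting (i)) or $B^2=-I$ (minimal polynomial is the irreducible $x^2+1$ of even degree, contradicting (ii)).

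For the two-dimensional case, $V=E(\lambda,\lambda^{-1})$ is a symplectic plane and $B^2|_V$ has characteristic polynomial $x^2-tx+1$ with $t\in\mathbb{Q}(\lambda)\subset K$; writing $\mu,\mu^{-1}$ for its eigenvalues, $t=\mu+\mu^{-1}\in\mathbb{Q}(\mu)\subset L_B$, so disjointness gives $t\in\mathbb{Q}$. Any eigenvalue $\alpha$ of $B$ with $\alpha^2=\mu$ satisfies $[\mathbb{Q}(\alpha):\mathbb{Q}(\mu)]\leq 2$, and the odd-degree hypothesis on its minimal polynomial forces $[\mathbb{Q}(\mu):\mathbb{Q}]$ to be odd; since $\mu$ satisfies a rational quadratic, this gives $\mu\in\mathbb{Q}$. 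Integrality and $\mu\mu^{-1}=1$ pin $\mu$ down to $\pm 1$, and $\mu=-1$ produces the eigenvalue $\alpha=\pm i$ of $B$ with minimal polynomial $x^2+1$ of even degree, contradicting (ii); so $\mu=1$. If $B^2|_V=I$ we fall back to the one-dimensional case applied to $v_\lambda$, and the same contradiction follows.

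The main obstacle is thus the remaining subcase, in which $B^2|_V$ is a non-trivial unipotent on every Galois conjugate $\sigma V$. Galois conjugation then propagates $(B^2-I)^2=0$ to all of $\mathbb{R}^{2d}$, so the minimal polynomial of $B$ divides $(x^2-1)^2$. A Jordan-structure count using $\textrm{rank}(B^2-I)=d$ together with the nilpotent-square-zero constraint $\textrm{rank}(N_\pm)\leq\dim E_\pm/2$ on $B|_{E_\pm}=\pm I+N_\pm$ forces $\dim E_\pm$ to be even with $\textrm{rank}(N_\pm)=\dim E_\pm/2$, and narrows the minimal polynomial of $B$ down to $(x-1)^2(x+1)^2$. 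I expect the final contradiction to come from analyzing the $\mathbb{Q}$-rational Lagrangian $W=\ker(B^2-I)=\textrm{Im}(B^2-I)$: its Galois-equivariant decomposition $W=\bigoplus_\sigma(\sigma V\cap W)$ into $1$-dimensional pieces, none of which is an $A$-eigenline (else the one-dimensional case applies), should yield $\mathbb{Q}$-linear relations among the $v_{\sigma\lambda^{\pm 1}}$ that are incompatible with the genericity of the $A$-eigenvectors guaranteed by the Galois-pinching hypothesis.
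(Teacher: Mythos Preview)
Your argument has a genuine gap in the final subcase. You correctly reduce to the situation where every plane $E(\lambda,\lambda^{-1})$ is $B^2$-invariant with $B^2|_{E(\lambda,\lambda^{-1})}$ a nontrivial unipotent, and you correctly deduce that the minimal polynomial of $B$ is $(x-1)^2(x+1)^2$. But as you yourself observe, this polynomial satisfies all the stated hypotheses (degree $4>2$, irreducible factors $x\pm 1$ of odd degree, splitting field $\mathbb{Q}$), so no contradiction comes from checking hypotheses. Your proposed endgame via the Lagrangian $W=\ker(B^2-I)$ is only a sketch: the Galois-equivariant line decomposition $W=\bigoplus\ell_\lambda$ with $\ell_\lambda\subset E(\lambda,\lambda^{-1})$ not an $A$-eigenline is a perfectly consistent configuration (a $\mathbb{Q}$-rational Lagrangian meeting each symplectic $2$-plane in a generic line), and you give no mechanism that would turn it into a contradiction.

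The paper avoids this case analysis entirely by proving a preliminary lemma: \emph{every $B^2$-invariant subspace is already $B$-invariant}. The point is that the odd-degree hypothesis prevents $\lambda$ and $-\lambda$ from both being eigenvalues of $B$, so that $B$ and $B^2$ have the same generalized eigenspaces; on each such eigenspace $B$ is a scalar times a unipotent, hence (by the same truncated-binomial trick you know from Proposition~\ref{p.unipotent-twisting}) a polynomial in $B^2$. Once this reduction is in hand, the argument is immediate: the minimality analysis gives that every plane $E(\lambda,\lambda^{-1})$ is $B$-invariant (not just $B^2$-invariant), the trace and determinant of $B|_{E(\lambda,\lambda^{-1})}$ lie in $\mathbb{Q}(\lambda+\lambda^{-1})\cap L_B=\mathbb{Q}$ by disjointness, Galois-invariance forces them to be the same rational numbers on every plane, and hence $B$ satisfies a single rational quadratic globally, contradicting $\deg>2$. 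Your problematic subcase (eigenvalues $\{1,-1\}$) is precisely the configuration that this $B^2\Rightarrow B$ reduction is designed to preempt; you should attack that reduction directly rather than trying to finish the $W$-analysis.
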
 

\begin{proof} We affirm that a $B^2$-invariant subspace is also $B$-invariant. In fact, $B$ and $B^2$ have the same characteristic subspaces because $\lambda$ and $-\lambda$ can't be both eigenvalues of $B$ (thanks to our assumption that the minimal polynomial of $B$ has no irreducible factor of even degree). Since an invariant subspace is the sum of its intersections with characteristic subspaces, it suffices to check that a $B^2$-invariant subspace contained in a characteristic subspace of $B$ is also $B$-invariant. Since $B$ is unipotent up to scalar factors in its characteristic subspaces, the desired fact follows from the argument used in the beginning of the proof of Proposition \ref{p.unipotent-twisting}. 

Let us now assume by contradiction that $A$ and $B^2$ share a common proper invariant subspace. From the discussion of the previous paragraph, this means that $A$ and $B$ share a common proper invariant subspace. By repeating the analysis in the proof of Proposition \ref{p.unipotent-twisting}, we have that all planes $E(\lambda,\lambda^{-1}) = \mathbb{R} v_{\lambda} \oplus \mathbb{R} v_{\lambda^{-1}}$, $\lambda\in\widetilde{R}$, are invariant under $A$ and $B$. 

Observe that $E(\lambda,\lambda^{-1})$ is defined over $\mathbb{Q}(\lambda+\lambda^{-1})$ and the trace and determinant of $B|_{E(\lambda,\lambda^{-1})}$. From our hypothesis of disjointness of the splitting fields of $A$ and $B$, the trace and determinant of $B|_{E(\lambda,\lambda^{-1})}$ are rational. Thus, the minimal polynomial of $B$ has degree $\leq 2$, a contradiction with our hypotheses. 
\end{proof}

After these comments on the statement of Theorem \ref{t.MMY-twisting}, let us now prove this result. 

\subsubsection{Twisting with respect to Galois-pinching matrices II: proof of Theorem \ref{t.MMY-twisting}}

Consider the Galois-pinching matrix $A$ and denote by $\widetilde{R}$ the set of its eigenvalues. For each $\lambda\in\widetilde{R}$, we select eigenvectors $v_{\lambda}$ of $A$ associated to $\lambda$ with coordinates in the field $\mathbb{Q}(\lambda)$ behaving coherently with respect to the Galois group $Gal$ of the characteristic polynomial of $A$, i.e., $v_{g\lambda}=gv_{\lambda}$ for all $g\in Gal$. 

The twisting propertyfor a matrix $C$ with respect to $A$ can be translated in terms of combinatorial properties of certain graphs naturally attached to its exterior powers $\bigwedge^k C$. 

More concretely, for each $1\leq k\leq d$, let $\widetilde{R}_k$ be the collection of all subsets of $\widetilde{R}$ with cardinality $k$. Note that, for each $\underline{\lambda}=\{\lambda_1<\dots<\lambda_k\}\in\widetilde{R}_k$, we can associated a multivector $v_{\underline{\lambda}} = v_{\lambda_1}\wedge\dots\wedge v_{\lambda_k}\in\bigwedge^k\mathbb{R}^{2d}$. By definition, the set $\{v_{\underline{\lambda}}\vert \underline{\lambda}\in\widetilde{R}_k\}$ is a basis of $\bigwedge^k\mathbb{R}^{2d}$ diagonalizing $\bigwedge^k A$: indeed, $(\bigwedge^k A)(v_{\underline{\lambda}}) = N(\underline{\lambda})v_{\underline{\lambda}}$ where $N(\underline{\lambda}):=\prod\limits_{\lambda_i\in\underline{\lambda}}\lambda_i$.

In this setting, the $A$-invariant isotropic subspaces are easy to characterize. If $p(\lambda)=\lambda+\lambda^{-1}$ and $\widehat{R}_k$ is the collection of $\underline{\lambda}\in\widetilde{R}_k$ such that $p|_{\underline{\lambda}}$ is injective, then  the subspace generated by $v_{\lambda_1}, \dots, v_{\lambda_k}$ is \emph{isotropic} if and only if $\underline{\lambda}=\{\lambda_1,\dots,\lambda_k\}\in\widehat{R}_k$. Using this fact and an elementary (linear algebra) computation (cf. \cite[Lemma 4.8]{MMY}), it is not hard to check that:
\begin{lemma} A matrix $C$ is twisting with respect to $A$ if and only if for every $1\leq k\leq d$ the coefficients $C^{(k)}_{\underline{\lambda}, \underline{\lambda}'}$
of the matrix $\bigwedge^k C$ in the basis $\{v_{\underline{\lambda}}\}$ satisfy the condition
\begin{equation}\label{e.condition-k}
C^{(k)}_{\underline{\lambda}, \underline{\lambda}'}\neq 0\, \quad \forall\, 
\underline{\lambda}, \underline{\lambda}'\in\widehat{R}_k\,.
\end{equation}
In other words, if $\Gamma_k(C)$ is oriented graph with set of vertices $\textrm{Vert}(\Gamma_k(C))=\widehat{R}_k$ and set of arrows $\{\underline{\lambda}_0\to\underline{\lambda}_1: C^{(k)}_{\underline{\lambda}_0,\underline{\lambda}_1}\neq 0\}$, then $C$ is twisting with respect to $A$ if and only if $\Gamma_k(C)$ is a complete graph for each $1\leq k\leq d$. 
\end{lemma}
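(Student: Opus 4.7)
The plan is to translate the geometric twisting condition into an algebraic condition on the matrix entries of $\bigwedge^k C$ by exploiting the diagonalization of $\bigwedge^k A$ in the basis $\{v_{\underline{\lambda}}\}_{\underline{\lambda}\in\widetilde{R}_k}$. The main observation (already used implicitly in the excerpt) is that, because $A$ is symplectic, the eigenvectors $v_\lambda$ and $v_\mu$ pair non-trivially under the symplectic form $\omega$ only when $\lambda\mu=1$: indeed $\omega(v_\lambda,v_\mu)=\omega(Av_\lambda,Av_\mu)=\lambda\mu\,\omega(v_\lambda,v_\mu)$. From this I would first classify the relevant $A$-invariant subspaces. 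Any $A$-invariant subspace of $\mathbb{R}^{2d}$ is a direct sum of eigenlines, hence of the form $E(S):=\mathrm{span}(v_\nu:\nu\in S)$ for some $S\subset\widetilde{R}$; moreover $E(S)^\perp=E(\widetilde{R}\setminus S^{-1})$, where $S^{-1}:=\{\nu^{-1}:\nu\in S\}$. Thus the $A$-invariant isotropic subspaces of dimension $k$ are exactly the $E(\underline{\lambda})$ with $\underline{\lambda}\in\widehat{R}_k$ (since isotropy $E(S)\subset E(S)^\perp$ becomes $S\cap S^{-1}=\emptyset$, i.e.\ $p|_S$ injective), and the $A$-invariant coisotropic subspaces of dimension $2d-k$ are their symplectic orthogonals, i.e.\ $E(\widetilde{R}\setminus\underline{\lambda}'^{-1})$ with $\underline{\lambda}'\in\widehat{R}_k$.

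Next I would reduce the transversality condition $C(F)\cap F'=\{0\}$ (with $\dim C(F)+\dim F'=2d$) to a wedge-product non-vanishing. Concretely, for $F=E(\underline{\lambda})$ and $F'=E(\widetilde{R}\setminus\underline{\lambda}'^{-1})$, we have $C(F)\cap F'=\{0\}$ if and only if the vector
\[
\bigl(\textstyle{\bigwedge^k C}\bigr)(v_{\underline{\lambda}})\;\wedge\; v_{\widetilde{R}\setminus\underline{\lambda}'^{-1}} \;\in\; \textstyle{\bigwedge^{2d}}\mathbb{R}^{2d}
\]
is non-zero. Expanding $(\bigwedge^k C)(v_{\underline{\lambda}})=\sum_{\underline{\nu}\in\widetilde{R}_k}c_{\underline{\nu}}\,v_{\underline{\nu}}$ in the basis $\{v_{\underline{\nu}}\}$ of $\bigwedge^k\mathbb{R}^{2d}$, the term $v_{\underline{\nu}}\wedge v_{\widetilde{R}\setminus\underline{\lambda}'^{-1}}$ vanishes unless $\underline{\nu}\subset\underline{\lambda}'^{-1}$, and the equality of cardinalities forces $\underline{\nu}=\underline{\lambda}'^{-1}$. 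Hence the whole wedge is, up to a non-zero sign, equal to $c_{\underline{\lambda}'^{-1}}\,v_{\widetilde{R}}$, and so $C(F)\cap F'=\{0\}$ is equivalent to the non-vanishing of the coefficient of $v_{\underline{\lambda}'^{-1}}$ in $(\bigwedge^k C)(v_{\underline{\lambda}})$, that is, of the matrix entry $C^{(k)}_{\underline{\lambda},\,\underline{\lambda}'^{-1}}$.

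Finally, I would note that $\widehat{R}_k$ is stable under coordinate-wise inversion $\underline{\lambda}'\mapsto\underline{\lambda}'^{-1}$, so as $\underline{\lambda}'$ ranges over $\widehat{R}_k$ the index $\underline{\lambda}'^{-1}$ also ranges over $\widehat{R}_k$. Therefore $C$ is twisting with respect to $A$ precisely when $C^{(k)}_{\underline{\lambda},\underline{\lambda}''}\neq 0$ for every $1\le k\le d$ and every $\underline{\lambda},\underline{\lambda}''\in\widehat{R}_k$, which is the matrix condition \eqref{e.condition-k}; restating this in terms of edges of the oriented graph $\Gamma_k(C)$ yields the graph-theoretic reformulation. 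The entire argument is essentially linear-algebraic bookkeeping, and the only subtle point (likely the main place to be careful) is ensuring the correct matching between the labelling of isotropic/coisotropic subspaces and the row/column indices of $\bigwedge^k C$, which hinges on the identity $E(S)^\perp=E(\widetilde{R}\setminus S^{-1})$ coming from the symplectic pairing relation $\omega(v_\lambda,v_\mu)=0$ for $\lambda\mu\neq 1$.
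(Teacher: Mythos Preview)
Your proposal is correct and is precisely the elementary linear-algebra computation the paper alludes to (the paper does not spell out the proof here, simply citing \cite[Lemma 4.8]{MMY}). The only cosmetic point is bookkeeping: you pass through the relabelling $\underline{\lambda}'' = \underline{\lambda}'^{-1}$, whereas the lemma's statement already indexes the coisotropic $F'$ directly by the $\underline{\lambda}'\in\widehat{R}_k$ appearing as the column index of $\bigwedge^k C$; since $\widehat{R}_k$ is stable under inversion (as you note), this is harmless.
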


In general, it is not easy to apply this lemma because the verification of the completeness of  
$\Gamma_k(C)$ might be tricky. For this reason, we introduce the following (classical) notion:
\begin{definition}\label{d.mixing} The graph $\Gamma_k(C)$ is \emph{mixing} if there exists $m\geq 1$ such that for all $\underline{\lambda}_0,\underline{\lambda}_1\in\widehat{R}_k$ we can find an oriented path in $\Gamma_k(C)$ of length $m$ going from $\underline{\lambda}_0$ to $\underline{\lambda}_1$.
\end{definition} 

\begin{remark} 
In this definition, it is important to connect two arbitrary vertices by a path of length \emph{exactly} $m$ (and not only of length $\leq m$). For instance, the graph in Figure \ref{f.no-mixing} is not mixing because all paths connecting $A$ to $B$ have \emph{odd} length while all paths connecting $A$ to $C$ have \emph{even} length. 
\end{remark}

\begin{figure}[htb!]
\includegraphics[scale=0.7]{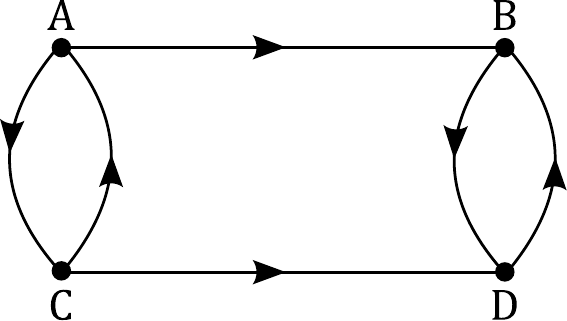}
\caption{A non-mixing oriented graph}\label{f.no-mixing}
\end{figure}

The relevance of this notion is explained by the following proposition (saying that if $\Gamma_k(C)$ is mixing, then $\Gamma_k(D)$ is complete for certain products $D$ of the matrices $C$ and $A$).   
\begin{proposition}\label{p.mixing/twisting} Let us assume that the graph $\Gamma_k(C)$ is mixing with respect to an integer $m\geq 1$.  Then there exists a finite family of hyperplanes $V_1,\dots,V_t$ of 
$\mathbb{R}^{m-1}$ such that the following holds. For any $\underline{\ell}=(\ell_1,\dots,\ell_{m-1})\in \mathbb{Z}^{m-1}-(V_1\cup\dots\cup V_{m-1})$, the graph $\Gamma_k(D(n))$ associated to the matrix 
$$D(n):=C A^{n\ell_1}\dots C A^{n\ell_{m-1}} C$$
is complete for all sufficiently large $n\in \mathbb N$.
\end{proposition}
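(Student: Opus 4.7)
My plan is to expand an arbitrary entry of $\bigwedge^k D(n)$ in the eigenbasis $\{v_{\underline{\lambda}}\}_{\underline{\lambda}\in\widehat{R}_k}$ of $\bigwedge^k A$. Since $\bigwedge^k A$ is diagonal with eigenvalues $N(\underline{\lambda})=\prod_{\lambda\in\underline{\lambda}}\lambda$, inserting the factor $\bigwedge^k A^{n\ell_i}$ between two consecutive copies of $\bigwedge^k C$ contributes $N(\underline{\lambda}_i)^{n\ell_i}$ for each running intermediate index. This yields
\begin{equation*}
D(n)^{(k)}_{\underline{\lambda}_0,\underline{\lambda}_m}
\;=\; \sum_{\gamma} c_\gamma \prod_{i=1}^{m-1} N(\underline{\lambda}_i)^{n\ell_i},
\qquad c_\gamma := \prod_{i=0}^{m-1} C^{(k)}_{\underline{\lambda}_i,\underline{\lambda}_{i+1}},
\end{equation*}
the sum being over all sequences $\gamma=(\underline{\lambda}_0,\underline{\lambda}_1,\dots,\underline{\lambda}_m)$ in $\widehat{R}_k$. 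By definition of $\Gamma_k(C)$, only length-$m$ edge-paths contribute nonzero terms, and the mixing hypothesis guarantees that at least one such path joins every prescribed pair $(\underline{\lambda}_0,\underline{\lambda}_m)$.

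I then take $V_1,\dots,V_t$ to be the (finite) family of hyperplanes
\begin{equation*}
V_{\gamma,\gamma'} := \Bigl\{\underline{\ell}\in\mathbb{R}^{m-1} : \sum_{i=1}^{m-1} \ell_i \log\bigl|N(\underline{\lambda}_i^{(\gamma)})/N(\underline{\lambda}_i^{(\gamma')})\bigr|=0\Bigr\}
\end{equation*}
indexed by unordered pairs of distinct length-$m$ edge-paths $\gamma\neq\gamma'$ in $\Gamma_k(C)$. Granting that each $V_{\gamma,\gamma'}$ is a \emph{proper} hyperplane, any $\underline{\ell}\in\mathbb{Z}^{m-1}\setminus(V_1\cup\cdots\cup V_t)$ makes the moduli $|\mu_\gamma|:=\bigl|\prod_i N(\underline{\lambda}_i)^{\ell_i}\bigr|$ pairwise distinct; in particular the real numbers $\mu_\gamma$ themselves are distinct, so the entry of interest is an exponential polynomial $n\mapsto\sum_\gamma c_\gamma \mu_\gamma^n$ with pairwise distinct bases and at least one nonzero coefficient. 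A Vandermonde-style argument (equivalently, domination by the term of largest $|\mu_\gamma|$) then shows such a polynomial can vanish at only finitely many $n$, and taking the maximum over the finitely many pairs $(\underline{\lambda}_0,\underline{\lambda}_m)$ gives the required $n_0$.

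The technical core of the argument, and the step I expect to be the main obstacle, is verifying that each $V_{\gamma,\gamma'}$ with $\gamma\neq\gamma'$ really is a proper hyperplane, i.e.\ that $\underline{\lambda}\mapsto|N(\underline{\lambda})|$ is injective on $\widehat{R}_k$. Here the Galois-pinching hypothesis on $A$ enters essentially: any coincidence $|N(\underline{\lambda})|=|N(\underline{\lambda}')|$ would yield a nontrivial multiplicative relation $\prod_j\lambda_j^{a_j}=\pm 1$ among the eigenvalues, and then applying the involutions $\lambda_j\leftrightarrow\lambda_j^{-1}$ belonging to the maximal Galois group $S_d\rtimes(\mathbb{Z}/2\mathbb{Z})^d$ and dividing out forces $\lambda_j^{\pm 2a_j}=1$, contradicting the fact that Galois-pinching forces $|\lambda_j|\neq 1$. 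A secondary minor point is that distinguishing $\mu_\gamma$ from $-\mu_{\gamma'}$ (same magnitude, opposite sign) may require adjoining a few extra integer sub-hyperplanes imposing a parity condition on the $\ell_i$'s, but this is a routine enlargement of the family $\{V_j\}$ and does not affect the structure of the argument.
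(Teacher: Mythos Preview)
Your proof is correct and follows essentially the same route as the paper. Both arguments expand $D(n)^{(k)}_{\underline{\lambda}_0,\underline{\lambda}_m}$ as an exponential sum over length-$m$ paths, introduce the linear forms $L_\gamma(\underline{\ell})=\sum_i \ell_i\log|N(\underline{\lambda}_i)|$, define the hyperplanes $\{L_\gamma=L_{\gamma'}\}$, and reduce the properness of these hyperplanes to the Galois-theoretic fact that $\underline{\lambda}\mapsto |N(\underline{\lambda})|$ separates elements of $\widehat{R}_k$; your use of the involutions $\lambda_j\leftrightarrow\lambda_j^{-1}$ to force $\lambda_j^{2a_j}=1$ is exactly the paper's maneuver with the element $g\in Gal$ fixing all roots except the pair $\{\lambda(0),\lambda(0)^{-1}\}$. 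Your ``secondary minor point'' about signs is unnecessary, since you already ensure the \emph{moduli} $|\mu_\gamma|$ are pairwise distinct, which a fortiori rules out $\mu_\gamma=-\mu_{\gamma'}$.
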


\begin{proof} Let us denote $D:=D(n)$. By definition, 
$$
D^{(k)}_{\underline{\lambda}_0,\underline{\lambda}_m} =
\sum_{ \substack{\gamma \textrm{ path of length }m \\  \textrm{ in } \Gamma_k(C) \textrm{ from } \underline{\lambda}_0  \textrm{ to }\underline{\lambda}_m}} 
C^{(k)}_{\underline{\lambda}_0,\underline{\lambda}_1}N(\underline{\lambda}_1)^{n\ell_1} C^{(k)}_{\underline{\lambda}_1,\underline{\lambda}_2}\dots 
N(\underline{\lambda}_{m-1})^{n\ell_{m-1}} C^{(k)}_{\underline{\lambda}_{m-1},
\underline{\lambda}_m} \,.
$$
Consider the linear forms:
$$
L_\gamma(\underline{\ell}) = \sum\limits_{i=1}^{n-1} \ell_i\left(\sum\limits_{\lambda\in\underline{\lambda}_i} \log|\lambda|\right)\,.
$$
Our assumption on $C$ ensures that there are coefficients $c_\gamma \not =0$ such that
$$
D^{(k)}_{\underline{\lambda}_0,\underline{\lambda}_m}  = \sum\limits_{\gamma} c_{\gamma} \exp(n L_{\gamma}(\underline{\ell}))\,.
$$
We want to prove that $D^{(k)}_{\underline{\lambda}_0,\underline{\lambda}_m}\neq0$. Since  $D^{(k)}_{\underline{\lambda}_0,\underline{\lambda}_m}$ is a sum of exponentials with non-vanishing coefficients, our task is to show that $\underline{\ell}$ can be taken so that potential cancelations among these terms can be avoided. 

Here, the key idea is to prove that, for $\gamma\neq\gamma'$, the linear forms $L_{\gamma}$ and $L_{\gamma'}$ are \emph{distinct}. Indeed, suppose that this is the case and let us define $V(\gamma,\gamma')=\{\underline{\ell}: L_{\gamma}(\underline{\ell}) = L_{\gamma'}(\underline{\ell})\}$. By hypothesis, each $V(\gamma,\gamma')$ is a \emph{hyperplane}. Since there are only finitely many paths 
$\gamma,\gamma'$ of length $m$ on $\Gamma_k(C)$, the collection of $V(\gamma,\gamma')$ corresponds to a finite family of hyperplanes $V_1,\dots,V_t$.
Hence, if we take $\underline{\ell}\notin V_1\cup\dots\cup V_t$, then 
$$D^{(k)}_{\underline{\lambda}_0,\underline{\lambda}_m}=\sum\limits_{\gamma} c_{\gamma} \exp(n L_{\gamma}(\underline{\ell}))\neq 0$$
for $n\to\infty$ sufficiently large (because the coefficients $L_{\gamma}(\underline{\ell})$ are mutually distinct).  

Let us now complete the proof of the proposition by showing that $L_{\gamma}\neq L_{\gamma'}$ for $\gamma\neq\gamma'$. Given $\underline{\lambda}\in\widehat{R}_k$ and $\underline{\lambda}'\in\widetilde{R}_k$, $\underline{\lambda}'\neq\underline{\lambda}$, we claim that the following coefficients of $L_{\gamma}$ and $L_{\gamma'}$ differ:
$$\sum\limits_{\lambda\in\underline{\lambda}} \log|\lambda| \neq \sum\limits_{\lambda'\in\underline{\lambda}'} \log|\lambda'|\,.$$
In fact, an equality between these coefficients would imply a relation 
$$\prod\limits_{\lambda\in\underline{\lambda}}\lambda = \pm\prod\limits_{\lambda'\in\underline{\lambda}'}\lambda' :=\phi\,.$$
On the other hand, if $\lambda\in\underline{\lambda}$ then $\lambda^{-1}\notin\underline{\lambda}$ (since $\underline{\lambda}\in\widehat{R}_k$). So, if we take  $\lambda(0)\in\underline{\lambda}-\underline{\lambda}'$ and $g\in Gal$ with $g(\lambda(0))=\lambda(0)^{-1}$ and $g(\lambda)=\lambda$ otherwise, then 
$$\lambda(0)^{-2}\phi = \prod\limits_{\lambda\in\underline{\lambda}}g\lambda = g\phi  = \pm\prod\limits_{\lambda'\in\underline{\lambda}'}g\lambda'=\pm\left\{\begin{array}{cc}\lambda(0)^2\phi, & \textrm{ if } \lambda(0)^{-1}\in\underline{\lambda}'  \, \\ \phi, & \textrm{ otherwise}\end{array}\right. .$$ 
Thus, $\lambda(0)^{-2}\phi=\pm\lambda(0)^2\phi$ or $\pm\phi$, a contradiction in any case (because $A$ Galois-pinching implies that $\lambda(0)\in\mathbb{R}-\{\pm 1\}$).
\end{proof}

This proposition suggests the following strategy of proof of Theorem~\ref{t.MMY-twisting}:

\begin{itemize}
\item{\em Step 0}: Show that the graphs $\Gamma_k(C)$ are always non-trivial, i.e., there is at least one arrow starting at each of its vertices.
\item {\em Step 1:} Starting from $A$ and $B$ in the statement of Theorem \ref{t.MMY-twisting}, we will show that $\Gamma_1(B)$ is mixing. By Proposition~\ref{p.mixing/twisting}, there is a product $C$ of powers of $A$ and $B$ such that $\Gamma_1(C)$ is complete. In particular, this settles the case $d=1$ of Theorem \ref{t.MMY-twisting}. 
\item Let us now consider the cases $d\geq 2$ of Theorem \ref{t.MMY-twisting}. Unfortunately, there is no ``unified'' argument to deal with all cases and we are obliged to separate the case $d=2$ from $d\geq 3$.
\item {\em Step 2:} In the case $d\geq 3$, we will show that $\Gamma_k(C)$ (with $C$ as in Step 1) is mixing for all $2\leq k<d$. By Proposition~\ref{p.mixing/twisting}, there is a product $D$ of powers of $A$ and $C$ such that $\Gamma_k(D)$ is complete for all $1\leq k<d$. Using this information, we will prove that $\Gamma_d(D)$ is mixing. By 
Proposition~\ref{p.mixing/twisting}, a certain product $E$ of powers of $A$ and $D$ is twisting with respect to $A$, so that this completes the argument in this case.
\item {\em Step 3:} In the special case $d=2$, we will show that either $\Gamma_2(C)$ or a closely related graph $\Gamma_2^*(C)$ is mixing and we will see that this is sufficient to construct $D$ such that $\Gamma_2(D)$ is complete.
\end{itemize} 

During the implementation of this strategy, the following easy remarks will be repeatedly used:

\begin{remark}\label{r.galois-invariance} If $C\in Sp(2d,\mathbb{Z})$, then $\Gamma_k(C)$ is invariant under the action of Galois group $Gal$ on the set $\widehat{R}_k\times \widehat{R}_k$ (parametrizing all possible arrows of $\Gamma_k(C)$). In particular, since the Galois group $Gal$ is the largest possible, whenever an arrow 
$\underline{\lambda}\to\underline{\lambda}'$ belongs to $\Gamma_k(C)$, the inverse arrow $\underline{\lambda}'\to\underline{\lambda}$ also belongs to $\Gamma_k(C)$. Consequently, $\Gamma_k(C)$ always contains loop of even length. 
\end{remark}

\begin{remark}\label{r.mixing} A connected graph $\Gamma$ is not mixing if and only if there exists an integer $m\geq 2$ such that the lengths of all of its loops 
are multiples of $m$.
\end{remark}

The next lemma deals with Step 0 of the strategy of proof of Theorem \ref{t.MMY-twisting}: 

\begin{lemma}\label{l.step0} Let $C\in Sp(2d,\mathbb{R})$. Then, each $\underline{\lambda}\in\widehat{R}_k$ is the start of at least one arrow of $\Gamma_k(C)$.
\end{lemma}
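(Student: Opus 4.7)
The plan is to interpret the lemma geometrically: the matrix coefficient $C^{(k)}_{\underline{\lambda},\underline{\mu}}$ is, up to sign, the $\underline{\mu}$-Plücker coordinate of the $k$-plane $W:=C(L_{\underline{\lambda}})$, where $L_{\underline{\lambda}}:=\textrm{span}(v_\lambda:\lambda\in\underline{\lambda})$. Since $\underline{\lambda}\in\widehat{R}_k$ makes $L_{\underline{\lambda}}$ isotropic and $C$ is symplectic, $W$ is itself an isotropic $k$-plane. So the lemma reduces to the purely linear-algebraic claim that every isotropic $k$-plane $W\subset\mathbb{R}^{2d}$ admits at least one nonzero Plücker coordinate $p_{\underline{\mu}}(W)$ with $\underline{\mu}\in\widehat{R}_k$.

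To attack this, I would argue by contradiction: suppose all $p_{\underline{\mu}}(W)=0$ for $\underline{\mu}\in\widehat{R}_k$, so that a nonzero representing $k$-vector $w$ of $W$ lies in $\textrm{span}(v_{\underline{\mu}}:\underline{\mu}\notin\widehat{R}_k)$, i.e., is supported only on $k$-subsets containing at least one hyperbolic pair $\{\rho,\rho^{-1}\}$. The Plücker (decomposability) relations combined with the isotropy equations $\iota_\omega w=0$ (where $\iota_\omega\colon\bigwedge^k V\to\bigwedge^{k-2}V$ is the contraction by the symplectic form) would then force $w=0$, a contradiction. The template is the Lagrangian two-dimensional case: if $p_{\{a,b\}}=p_{\{a,b^{-1}\}}=p_{\{a^{-1},b\}}=p_{\{a^{-1},b^{-1}\}}=0$, the classical Plücker identity $p_{12}p_{34}-p_{13}p_{24}+p_{14}p_{23}=0$ forces $p_{\{a,a^{-1}\}}p_{\{b,b^{-1}\}}=0$, so $W$ would coincide with a hyperbolic plane $\textrm{span}(v_a,v_{a^{-1}})$ or $\textrm{span}(v_b,v_{b^{-1}})$, neither of which is isotropic.

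A cleaner alternative I would try is to first settle the Lagrangian case ($k=d$) and then deduce the general case via transversality. Extend $W$ to a Lagrangian $L\supset W$ (possible since $W$ is isotropic of dimension $\leq d$); the Lagrangian case provides $\underline{\nu}\in\widehat{R}_d$ with $L\cap L_{\underline{\nu}^{-1}}=0$, so the projection $\pi_{\underline{\nu}}\colon V\to L_{\underline{\nu}}$ is an isomorphism on $L$ and hence an injection on $W$. A standard rank argument then produces a $k$-subset $\underline{\alpha}\subset\underline{\nu}$ with $p_{\underline{\alpha}}(\pi_{\underline{\nu}}(W))\neq 0$; since $\underline{\alpha}\subset\underline{\nu}\in\widehat{R}_d$ automatically $\underline{\alpha}\in\widehat{R}_k$, and the factorization $\pi_{\underline{\alpha}}|_W=\pi'\circ\pi_{\underline{\nu}}|_W$ (with $\pi'\colon L_{\underline{\nu}}\to L_{\underline{\alpha}}$ the natural coordinate projection) yields $p_{\underline{\alpha}}(W)\neq 0$ in $V$.

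The main obstacle in either approach is the Lagrangian case itself---proving in arbitrary dimension that the $2^d$ standard isotropic Plücker charts of the Lagrangian Grassmannian do cover it (equivalently, that the Plücker relations cannot all be satisfied while all isotropic Plücker coordinates vanish once the isotropy constraint is imposed). The $d=2$ case follows from a single Plücker identity combined with the Lagrangian relation $c_a\,p_{\{a,a^{-1}\}}+c_b\,p_{\{b,b^{-1}\}}=0$; the general case requires a careful induction coupling the quadratic Plücker and isotropy relations to peel off the ``hyperbolic-pair heavy'' coefficients one by one, which is the technical heart of the argument.
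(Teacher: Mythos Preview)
Your geometric reformulation is correct: the lemma is equivalent to saying that any isotropic $k$-plane $W$ has a nonzero Pl\"ucker coordinate indexed by some $\underline{\mu}\in\widehat{R}_k$. But you have explicitly left the key step unproven. You concede that the Lagrangian covering statement (or the general ``peeling off'' via Pl\"ucker and isotropy relations) is ``the technical heart of the argument'' and needs a ``careful induction'' that you do not carry out. The $d=2$ computation you give is fine but does not indicate how to organise the higher-dimensional case, and the contraction condition $\iota_\omega w=0$ is not an identity satisfied by an arbitrary isotropic decomposable $k$-vector, so your first route would need reformulation before it could even start.

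The paper avoids both the reduction to Lagrangians and any abstract Pl\"ucker machinery. Starting from any $\underline{\lambda}'\in\widetilde{R}_k$ with $C^{(k)}_{\underline{\lambda},\underline{\lambda}'}\neq 0$ (which exists by invertibility of $C$), if $\underline{\lambda}'$ contains a hyperbolic pair, say $\lambda_1'\lambda_2'=1$, one picks a basis $w_1,\dots,w_k$ of $L_{\underline{\lambda}}$ making the $(\underline{\lambda},\underline{\lambda}')$-minor of $C$ the identity, so that $C(w_i)=v_{\lambda_i'}+\sum_{\lambda\notin\underline{\lambda}'}C^*_{i\lambda}v_\lambda$. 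Isotropy of $L_{\underline{\lambda}}$ and symplecticity of $C$ give $0=\{w_1,w_2\}=\{C(w_1),C(w_2)\}$; expanding and using $\{v_{\lambda_1'},v_{\lambda_2'}\}\neq 0$ forces some coefficient $C^*_{1\lambda'}\neq 0$ with $\lambda'\notin\underline{\lambda}'$. Replacing $\lambda_1'$ by $\lambda'$ yields $\underline{\lambda}''$ with $\#p(\underline{\lambda}'')=\#p(\underline{\lambda}')+1$ and $C^{(k)}_{\underline{\lambda},\underline{\lambda}''}=C^*_{1\lambda'}\neq 0$; iterate until you land in $\widehat{R}_k$. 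This is exactly the inductive step you were looking for, obtained from a single symplectic pairing rather than from the quadratic Pl\"ucker relations.
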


\begin{remark} The symplecticity of $C$ is really used in this lemma: the analogous statement for general invertible (i.e., $GL$) matrices is false.
\end{remark}

\begin{proof} Every $1$-dimensional subspace is isotropic. Hence, $\widehat{R}_1=\widetilde{R}$ and the lemma follows in the case $k=1$ from the invertibility of $C$. 

So, we can assume that $k\geq 2$. The invertibility of $C$ ensures that, for each $\underline{\lambda}\in\widehat{R}_k$, there exists $\underline{\lambda}'\in\widetilde{R}_k$ with $C^{(k)}_{\underline{\lambda}, \underline{\lambda}'}\neq 0$.
If $\underline{\lambda}'\in\widehat{R}_k$, we are done. If $\underline{\lambda}'\in\widetilde{R}_k - \widehat{R}_k$, i.e., $\# p(\underline{\lambda}')<k$, our task is to ``convert'' $\underline{\lambda}'$ into some $\underline{\lambda}''\in\widehat{R}_k$ with $C^{(k)}_{\underline{\lambda}, \underline{\lambda}''}\neq 0$. For this sake, it suffices to prove that if $\#p(\underline{\lambda}')<k$ and $C^{(k)}_{\underline{\lambda}, \underline{\lambda}'}\neq 0$, then there exists $\underline{\lambda}''$ with 
$C^{(k)}_{\underline{\lambda}, \underline{\lambda}''}\neq 0$ and $\#p(\underline{\lambda}'')=\#p(\underline{\lambda}')+1$. 

Keeping this goal in mind, note that if $\underline{\lambda}'\notin\widehat{R}_k$, then we can write $\underline{\lambda}'=\{\lambda_1',\lambda_2',\dots,\lambda_k'\}$ with $\lambda_1'\cdot \lambda_2'=1$. Also, $C^{(k)}_{\underline{\lambda}, \underline{\lambda}'}\neq0$ if and only if the $k\times k$ minor of $C$ associated to $\underline{\lambda}$ and $\underline{\lambda}'$ is invertible. 

Hence, we can choose bases to convert invertible minors of $C$ into the $k\times k$ identity matrix: if $\underline{\lambda}=\{\lambda_1,\lambda_2,\dots,\lambda_k\}$, then we can find $w_1,\dots, w_k\in\mathbb{R}^{2d}$ such that $\textrm{span}\{w_1,\dots,w_k\}=\textrm{span}\{v_{\lambda_1},\dots, v_{\lambda_k}\}$ and 
$$
C(w_i)=v_{\lambda_i'}+\sum\limits_{\lambda\notin\underline{\lambda}'}C_{i\lambda}^* v_{\lambda}\,.
$$
 
Denote by $\{,.\}$ the symplectic form. Observe that $\{w_1, w_2\}=0$ because $w_1$ and $w_2$ belong to the span $v_{\lambda_i}$ (an isotropic subspace as $\underline{\lambda}\in\widehat{R}_k$). Thus, the simplecticity of $C$ implies that 
$$0=\{w_1,w_2\}=\{C(w_1), C(w_2)\} = \{v_{\lambda_1'}, v_{\lambda_2'}\} + \sum_{\substack{\lambda', \lambda''\notin \underline{\lambda}' \\ \lambda'\cdot\lambda''=1}} C_{1\lambda'}^* C_{2\lambda''}^* \{v_{\lambda'}, v_{\lambda''}\}\,.$$
Since $\{v_{\lambda_1'}, v_{\lambda_2'}\}\neq 0$ (as $\lambda_1'\cdot\lambda_2'=1$), it follows that  $C^*_{1\lambda'}\neq 0$ and $C_{2\lambda''}^*\neq 0$ for some $\lambda', \lambda''\notin \underline{\lambda}'$. 

This allows us to define $\underline{\lambda}'':=(\underline{\lambda}'-\{\lambda_1'\})\cup\{\lambda'\}$. Note that $\#p(\underline{\lambda}'')=\#p(\underline{\lambda}')+1$ and the minor $C[\underline{\lambda}, \underline{\lambda}'']$ of $C$ associated to $\underline{\lambda}$ and $\underline{\lambda}''$ is obtained from the minor $C[\underline{\lambda}, \underline{\lambda}']$ by replacing the line associated to $v_{\lambda_1'}$ with the line associated to $v_{\lambda'}$. In the basis $w_1,\dots, w_k$, the minor $C[\underline{\lambda}, \underline{\lambda}'']$ differs from the identity minor $C[\underline{\lambda}, \underline{\lambda}']$ precisely by the replacement of the line associated to $v_{\lambda_1'}$ by the line associated to $v_{\lambda'}$, that is, one of the entries $1$ of  $C[\underline{\lambda}, \underline{\lambda}']$ was replaced by the coefficient $C^*_{1\lambda'}\neq 0$. Thus, the determinant $C^{(k)}_{\underline{\lambda}, \underline{\lambda}''}$ of the the minor $C[\underline{\lambda}, \underline{\lambda}'']$ is
$$C^{(k)}_{\underline{\lambda}, \underline{\lambda}''} = C^*_{1\lambda'}\neq 0\,.$$
Therefore, $\underline{\lambda}''$ has the desired properties. This completes the proof of the lemma.
\end{proof}

Let us now discuss Step 1 in the strategy of proof of Theorem \ref{t.MMY-twisting}.  

\begin{lemma}\label{l.step1} Let $A\in Sp(2d,\mathbb{Z})$ be a Galois-pinching matrix. Suppose that $B\in Sp(2d,\mathbb{Z})$ has the property that $A$ and $B^2$ share no common proper invariant subspace. Then, $\Gamma_1(B)$ is mixing.
\end{lemma}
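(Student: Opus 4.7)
My plan is to argue by contradiction, showing that if $\Gamma_1(B)$ fails to be mixing then there exists a proper subspace of $\mathbb{R}^{2d}$ that is simultaneously $A$-invariant and $B^2$-invariant, contradicting the hypothesis. Remark~\ref{r.mixing} splits the non-mixing regime into two cases: either $\Gamma_1(B)$ is disconnected, or it is connected with all loop lengths divisible by some integer $m \geq 2$. I will treat these two cases separately.

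In the disconnected case, I would pick any proper connected component $S \subsetneq \widetilde{R}$ and exhibit $E(S) := \mathrm{span}\{v_\lambda : \lambda \in S\}$ as the offending common invariant subspace. The $A$-invariance is automatic since $E(S)$ is a sum of $A$-eigenlines, while the $B$-invariance (and hence $B^2$-invariance) follows directly from the fact that, by the very definition of the arrows of $\Gamma_1(B)$, no arrow leaves $S$ when $S$ is a connected component.

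In the connected-but-not-mixing case, the central observation will be that Remark~\ref{r.galois-invariance} produces, for every arrow $\lambda \to \lambda'$ of $\Gamma_1(B)$, the reverse arrow $\lambda' \to \lambda$. Combining this with Lemma~\ref{l.step0}, which gives at least one outgoing arrow at every vertex, forces the existence of a loop of length at most $2$: either a self-loop (which would give $m \mid 1$, a contradiction) or, when self-loops are absent, a two-cycle $\lambda \to \lambda' \to \lambda$. The common divisor $m$ must therefore satisfy $m \mid 2$, so $m = 2$ and $\Gamma_1(B)$ is bipartite with respect to a partition $\widetilde{R} = V_0 \sqcup V_1$ into two nonempty classes, with every arrow crossing the partition. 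This bipartite structure immediately yields $B\,E(V_i) \subseteq E(V_{1-i})$ and therefore $B^2\,E(V_i) \subseteq E(V_i)$ for $i = 0, 1$; each $E(V_i)$ is a proper $A$-invariant subspace (being a sum of $A$-eigenlines) that is now also $B^2$-invariant, again contradicting the hypothesis.

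The main potential obstacle in this scheme is the connected-but-not-mixing case, where without extra structure one could imagine $m \geq 3$ producing cyclic orbits on $\widetilde{R}$ that do not translate into $B^2$-invariant subspaces. The Galois-pinching hypothesis on $A$ is precisely the input that prevents this: it makes the Galois group of the characteristic polynomial of $A$ as large as possible, which in turn forces the arrow-reversal symmetry of $\Gamma_1(B)$ and thus collapses $m$ down to $2$, reducing the problem to the clean bipartite analysis above.
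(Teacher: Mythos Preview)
Your proof is correct and, in fact, cleaner than the argument the paper gives. A couple of minor points and then a comparison.

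In Case 1 you implicitly use the arrow-reversal symmetry from Remark~\ref{r.galois-invariance} to ensure that a connected component is closed under outgoing arrows; for an arbitrary directed graph this would fail, but here the Galois invariance makes weakly and strongly connected components coincide, so your claim that ``no arrow leaves $S$'' is legitimate. (Alternatively, without reversal one can always choose $S$ to be a \emph{sink} strongly connected component.) In Case 2, your reduction to $m=2$ via the $2$-cycle $\lambda\to\lambda'\to\lambda$ is exactly right, and the bipartite conclusion $B\,E(V_i)\subseteq E(V_{1-i})$, hence $B^2\,E(V_i)\subseteq E(V_i)$, is a clean way to produce the forbidden common invariant subspace.

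The paper takes a more explicit, case-by-case route: it treats $d=1$ by enumerating the three possible Galois-invariant graphs on two vertices; for $d\geq 2$ it first rules out the situation where all arrows are of the form $\lambda\to\lambda^{\pm1}$ (producing a $B$-invariant $2$-plane), and then, once an arrow $\lambda\to\lambda'$ with $\lambda'\neq\lambda^{\pm1}$ is available, it uses Galois invariance to get \emph{all} such arrows and builds an explicit loop of length $3$ --- trivially for $d\geq 3$, and for $d=2$ only after excluding a specific bipartite graph via the same $B^2$-swapping argument you use. Your approach unifies all of this: the disconnected case absorbs the ``only $\lambda\to\lambda^{\pm1}$'' situation (the graph then has $d\geq 2$ components), and the bipartite analysis handles every connected non-mixing graph at once, without distinguishing small $d$ from large $d$. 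The paper's approach has the virtue of producing explicit odd loops (useful elsewhere as a template), while yours makes transparent exactly why the hypothesis is on $B^2$ rather than $B$: the only obstruction to mixing in a connected reversible graph is bipartiteness, and bipartiteness is precisely what $B^2$ (but not $B$) annihilates.
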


\begin{proof} In the sequel, our figures are drawn with the convention that two points inside the same ellipse represent a pair of eigenvalues of $A$ of the form $\lambda$, $\lambda^{-1}$.

For $d=1$, the set $\widehat{R}_1$ consists of exactly one pair $\widehat{R}_1=\{\lambda,\lambda^{-1}\}$. Hence, all possible {\em Galois invariant} graphs are described in Figure \ref{f.spcs18}.

\begin{figure}[htb!]
\includegraphics[scale=0.8]{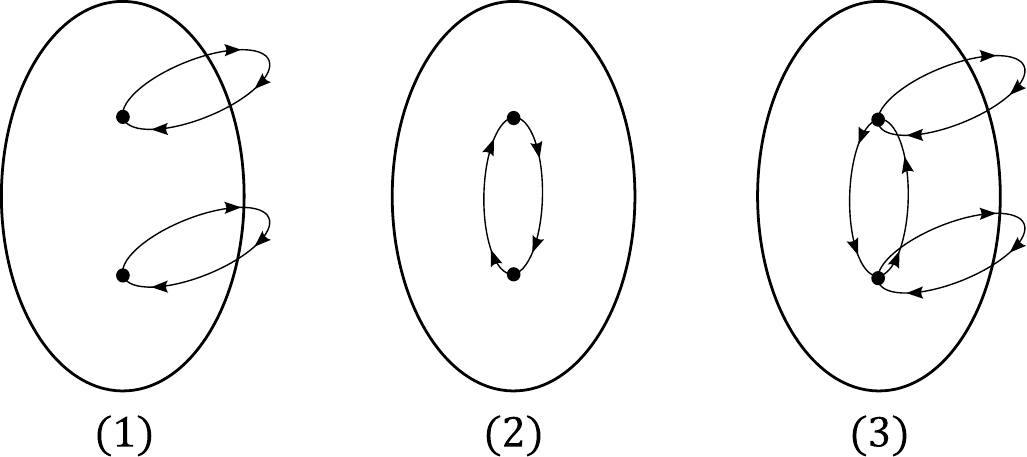}
\caption{All Galois-invariant graphs in the case $d=1$, $k=1$.}\label{f.spcs18}
\end{figure}

In the first situation, by definition, we have that $B(\mathbb{R}v_{\lambda})=\mathbb{R}v_{\lambda}$ (and $B(\mathbb{R}v_{\lambda^{-1}}) = \mathbb{R}v_{\lambda^{-1}}$). Thus,  
$B$ and $A$ share a common invariant subspace, a contradiction with our assumptions. 

In the second situation, by definition, we have that $B(\mathbb{R}v_{\lambda})=\mathbb{R}v_{\lambda^{-1}}$ and $B(\mathbb{R}v_{\lambda^{-1}}) = \mathbb{R}v_{\lambda}$. So,  $B^2(\mathbb{R}v_{\lambda})=\mathbb{R}v_{\lambda}$ and thus $B^2$ and $A$ share a common invariant subspace, a contradiction with our standing hypothesis. 

In the third situation, we have that the graph $\Gamma_1(B)$ is {\em complete}, and, \emph{a fortiori}, mixing. 

This establishes the case $d=1$ of the lemma. After this ``warm up'', let us investigate the general case $d\geq 2$. 

First, let us assume that {\em all} arrows in $\Gamma_1(B)$ have the form $\lambda\to\lambda^{\pm1}$. Then, 
$B(\mathbb{R}v_{\lambda}\oplus\mathbb{R}v_{\lambda^{-1}}) = \mathbb{R}v_{\lambda}\oplus\mathbb{R}v_{\lambda^{-1}}$, and, since\footnote{Of course, this arguments breaks up for $d=1$ and this is why we had a separate argument for this case.} $d\geq 2$, the subspace 
$\mathbb{R}v_{\lambda}\oplus\mathbb{R}v_{\lambda^{-1}}$ is {\em non-trivial}. Thus, in this case, $B$ and $A$ share a common non-trivial subspace, a contradiction with our assumptions. 

Therefore, we can assume (without loss of generality) that $\Gamma_1(B)$ contains an arrow 
$\lambda\to\lambda'$ with $\lambda'\neq\lambda^{\pm1}$. By Remarks~\ref{r.galois-invariance} and~\ref{r.mixing}, this implies that {\em all} arrows of this type belong to $\Gamma_1(B)$ and, moreover, $\Gamma_1(B)$ is mixing whenever it contains a loop of {\em odd} length. Hence, our task is reduced to exhibit loops of odd length in $\Gamma_1(B)$.

It is not hard to construct a loop of length $3$ in $\Gamma_1(B)$ when $d\geq 3$: indeed, this follows immediately from the presence of all arrows $\lambda\to\lambda'$ with $\lambda'\neq\lambda^{\pm1}$ in $\Gamma_1(B)$ (cf. Figure \ref{f.spcs19}).
\begin{figure}[htb!]
\includegraphics[scale=0.7]{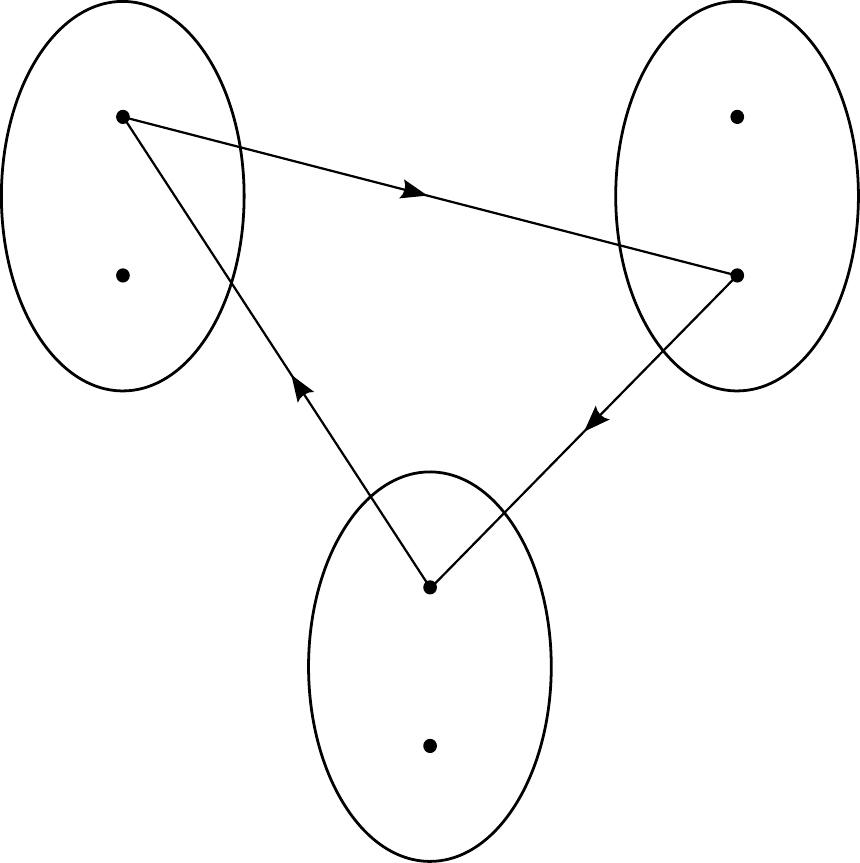}
\caption{A loop of length three in $\Gamma_1(B)$ in the case $d\geq 3$.}\label{f.spcs19}
\end{figure}

On the other hand, for $d=2$, we need a different argument. If $\Gamma_1(B)$ were the non-mixing graph Galois-invariant depicted in Figure \ref{f.spcs20}, we would have $B(\mathbb{R}v_{\lambda_1}\oplus \mathbb{R}v_{\lambda_1}^{-1}) = \mathbb{R}v_{\lambda_2}\oplus \mathbb{R}v_{\lambda_2}^{-1}$ and $B(\mathbb{R}v_{\lambda_2}\oplus \mathbb{R}v_{\lambda_2}^{-1}) = \mathbb{R}v_{\lambda_1}\oplus \mathbb{R}v_{\lambda_1}^{-1}$. So, $B^2$ and $A$ would share a common invariant subspace, a contradiction with our hypothesis.
\begin{figure}[htb!]
\includegraphics[scale=0.75]{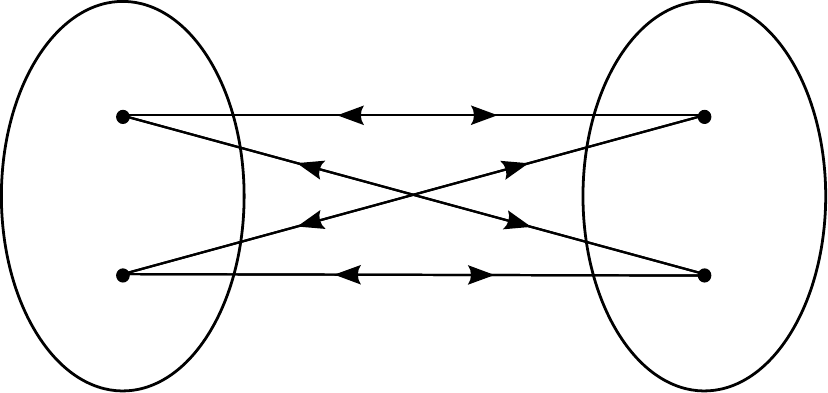}
\caption{Non-mixing Galois-invariant graph $\Gamma_1(B)$ in the case $d=2$.}\label{f.spcs20}
\end{figure}

Thus, $\Gamma_1(B)$ must contain the graph of Figure \ref{f.spcs20} \emph{and} an extra arrow. In this situation, it is not hard to build up loops of lenght $3$. 

In any case, we proved that, under our assumptions, $\Gamma_1(B)$ contains a loop of length $3$ whenever $d\geq 2$. This completes the proof of the lemma.
\end{proof}

We are now ready to implement Step 2 of the strategy of proof of Theorem \ref{t.MMY-twisting}. The first part of this step is implemented by the following lemma: 

\begin{lemma}\label{l.step2i} Let $A\in Sp(2d,\mathbb{Z})$ be a Galois-pinching matrix. Suppose that $d\geq 3$ and $C\in Sp(2d,\mathbb{Z})$ is a matrix such that $\Gamma_1(C)$ is complete. Then, $\Gamma_k(C)$ is mixing for all  $2\leq k<d$. 
\end{lemma}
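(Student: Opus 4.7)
My plan is to establish the mixing property of $\Gamma_k(C)$ by showing that the graph is connected and contains loops of coprime lengths, which implies mixing by Remark \ref{r.mixing}. The main tools available are the Galois-invariance of $\Gamma_k(C)$ (Remark \ref{r.galois-invariance}), the guaranteed existence of at least one outgoing arrow at every vertex (Lemma \ref{l.step0}), and the room provided by the assumption $k < d$: every $\underline{\lambda} \in \widehat{R}_k$ omits at least one pair $\{\lambda_i, \lambda_i^{-1}\}$ of eigenvalues of $A$, and in fact for $d \geq 3$ and $k < d$ it can omit up to $d-k \geq 1$ pairs.

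The first step is to classify the Galois orbits on $\widehat{R}_k \times \widehat{R}_k$. Since the Galois group $Gal \simeq S_d \rtimes (\mathbb{Z}/2\mathbb{Z})^d$ acts with maximal symmetry, such an orbit is parametrized by a ``combinatorial type'' $(a, b, c)$, where $a = |\underline{\lambda}_0 \cap \underline{\lambda}_1|$ counts common eigenvalues, $b = \#\{\lambda \in \underline{\lambda}_0 : \lambda^{-1} \in \underline{\lambda}_1\}$ counts matched inverse pairs, and $c = k - a - b$ counts genuinely new eigenvalue pairs. Galois-invariance of $\Gamma_k(C)$ makes arrow types the fundamental unit of analysis: if $\Gamma_k(C)$ contains a single arrow of type $(a,b,c)$, it contains every arrow of this type.

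Second, I would combine Lemma \ref{l.step0} with Galois-invariance to extract at least one full orbit of arrows in $\Gamma_k(C)$. The heart of the argument is then to show that multiple arrow types are present. The hypothesis ``$\Gamma_1(C)$ complete'' means that every entry of $C$ in the eigenbasis is nonzero; by Laplace expansion, the $k \times k$ minors $C^{(k)}_{\underline{\lambda}_0,\underline{\lambda}_1}$ are polynomial combinations of these nonzero entries, and a case-by-case inspection using the symplecticity of $C$ (which introduces nontrivial relations among entries in symplectically paired rows/columns) should rule out the degenerate scenario where nonvanishing $k$-minors are confined to a single orbit. Once two types are available, connectivity of $\Gamma_k(C)$ follows directly: arrows of ``small distance'' type (such as $(k-1, 0, 1)$ or $(k-1, 1, 0)$, which swap a single eigenvalue) let one pass between any two vertices of $\widehat{R}_k$, using the freedom from $k < d$. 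Moreover, loops of length 2 are automatic (Remark \ref{r.galois-invariance} gives reversibility), while a triangle $\underline{\lambda}_0 \to \underline{\lambda}_1 \to \underline{\lambda}_2 \to \underline{\lambda}_0$ of length $3$ can be constructed by chaining three ``elementary swap'' arrows through the $d - k \geq 1$ spare pairs available; with $d \geq 3$ there is enough room to close the loop with an odd number of elementary steps, yielding $\gcd(2,3) = 1$ and hence mixing.

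The main obstacle is the rigorous verification that more than one Galois type of arrow appears, since \emph{a priori} the completeness of $\Gamma_1(C)$ only controls $1\times 1$ minors. I expect this step will occupy most of the work, and that its proof will exploit a delicate combination of the symplecticity of $C$ together with the fact that the Galois-pinching hypothesis on $A$ forbids the kind of algebraic coincidences among $k$-minors that would be needed to concentrate all nonvanishing minors in a single orbit. Once this is in place, both the connectivity assertion and the odd-loop construction of Step~4 are essentially combinatorial consequences of $d \geq 3$ and $k < d$.
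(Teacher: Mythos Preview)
Your overall framework---Galois-orbit decomposition of the arrow set, then connectivity plus an odd loop---matches the paper's, and your $(a,b,c)$ parametrization is equivalent to the paper's $(\widetilde\ell,\ell)$ via $\widetilde\ell=a$, $\ell=a+b$. But there is a real gap in how you identify the obstruction. You assert that once ``more than one Galois type of arrow appears,'' connectivity and mixing follow. This is not true. The paper's characterization (citing \cite[Proposition~4.19]{MMY}) is that $\Gamma_k(\widetilde J)$ fails to be mixing precisely when every orbit in $\widetilde J$ has $\ell=k$ (with one extra orbit $(0,0)$ allowed when $k=d/2$). In your coordinates this is the condition $c=0$, i.e.\ $p(\underline\lambda)=p(\underline\lambda')$ for every arrow. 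That bad set contains \emph{many} distinct orbit types (all values $0\le\widetilde\ell\le k$), and the resulting graph is not even connected, since every arrow preserves the $p$-class of the vertex. So the actual task is not ``find two types'' but ``find an arrow with $c\ge1$.''

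Your proposed route to this---Laplace expansion plus symplecticity---does not target it, and in fact symplecticity plays no role in the paper's argument for this lemma. The paper proceeds by contradiction using the minor trick from Lemma~\ref{l.step0}: assuming every arrow has $\ell=k$, fix one such arrow $\underline\lambda\to\underline\lambda'$ and choose $w_1,\dots,w_k$ spanning $\mathrm{span}(v_{\lambda_1},\dots,v_{\lambda_k})$ so that $C(w_i)=v_{\lambda_i'}+\sum_{\lambda\notin\underline\lambda'}C^*_{i\lambda}v_\lambda$. If some $C^*_{i\lambda}\neq0$ with $\lambda,\lambda^{-1}\notin\underline\lambda'$, then replacing the $i$-th row yields an arrow to $(\underline\lambda'\setminus\{\lambda_i'\})\cup\{\lambda\}$, which has $\ell=k-1$---contradiction. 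Hence every $C(w_i)$, and therefore $C(v_{\lambda_1})$, lies in the span of the eigenvectors over $p^{-1}(p(\underline\lambda'))$, a proper subspace since $k<d$; so some entry of $C$ in the eigenbasis vanishes, contradicting completeness of $\Gamma_1(C)$. This targeted minor-replacement argument is the missing idea in your proposal.
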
 

\begin{proof} Since $\Gamma_k(C)$ is invariant under the Galois group $Gal$ (see Remark~\ref{r.galois-invariance}), it consists of a certain number of orbits of the action of $Gal$ on $\widehat{R}_k\times \widehat{R}_k$. 

As it turns out, it is not hard to check that all $Gal$-orbits on $\widehat{R}_k\times \widehat{R}_k$ have the form 
$$\mathcal{O}_{\widetilde{\ell}, \ell} = \{(\underline{\lambda}, \underline{\lambda}')\in \widehat{R}_k\times \widehat{R}_k: \#(\underline{\lambda}\cap \underline{\lambda}')=\widetilde{\ell}, \#(p(\underline{\lambda}) \cap p(\underline{\lambda}'))=\ell\}\,,$$
where $0\leq\widetilde{\ell}\leq\ell\leq k$ and $\ell\geq 2k-d$. In particular, the $Gal$-orbits on $\widehat{R}_k\times \widehat{R}_k$ are naturally parametrized by the set 
$$\widetilde{I}=\{(\widetilde{\ell}, \ell): 0\leq\widetilde{\ell}\leq\ell\leq k \textrm{ and } \ell\geq 2k-d \}.$$

It follows that $\Gamma_k(C)=\Gamma_k(\widetilde{J})$ for some $\widetilde{J}:=\widetilde{J}(C)\subset\widetilde{I}$, where $\Gamma_k(\widetilde{J})$ is the graph whose vertices are $\widehat{R}_k$ and whose arrows are 
$$\bigcup\limits_{(\widetilde{\ell}, \ell)\in\widetilde{J}} \mathcal{O}_{\widetilde{\ell}, \ell}\,.$$

We affirm that $\Gamma_k(\widetilde{J})$ is not mixing if and only if 
\begin{itemize}
\item either $k\neq d/2$ and $\widetilde{J}\subset\{(\widetilde{\ell},k): 0\leq \widetilde{\ell}\leq k\}$\,,
\item or $k=d/2$ and $\widetilde{J}\subset\{(\widetilde{\ell},k): 0\leq \widetilde{\ell}\leq k\}\cup\{(0,0)\}$\,.
\end{itemize}
Indeed, suppose that $\widetilde{J}\subset\{(\widetilde{\ell},k): 0\leq\widetilde{\ell}\leq k\}$ for $k\neq d/2$ or $\widetilde{J}\subset\{(\widetilde{\ell},k): 0\leq\widetilde{\ell}\leq k\}\cup\{(0,0)\}$ for $k=d/2$. Then, {\em since} $k<d$, one can show that $\Gamma_k(\widetilde{J})$ is not mixing simply because it is not {\em connected}! For the reciprocal statement, one proceeds as follows (cf. \cite[Proposition 4.19]{MMY} for more details): first, one converts pairs $\{\lambda,\lambda^{-1}\}$ into a single point $p(\lambda)=p(\lambda^{-1})=\lambda+\lambda^{-1}$, so that $\Gamma_k(\widetilde{J})$ becomes a new graph $\overline{\Gamma}_k(\widetilde{J})$; second, one proves that $\overline{\Gamma}_k(\widetilde{J})$ is connected whenever $\widetilde{J}\not\subset \{(\widetilde{\ell},k): 0\leq \widetilde{\ell}\leq k\}\cup\{(0,0)\}$; from the connectedness of $\overline{\Gamma}_k(\widetilde{J})$ one can prove that $\Gamma_k(\widetilde{J})$ is connected; since the connectedness of $\Gamma_k(\widetilde{J})$ allows us to  construct loops of odd length, one obtains (from Remarks \ref{r.galois-invariance} and \ref{r.mixing}) that $\Gamma_k(\widetilde{J})$ is mixing whenever $\widetilde{J}\not\subset \{(\widetilde{\ell},k): 0\leq \widetilde{\ell}\leq k\}\cup\{(0,0)\}$ and this concludes the argument. 

Once we dispose of this characterization of the mixing property for $\Gamma_k(C)$, we can complete the proof of the lemma as follows. Suppose that $\Gamma_1(C)$ is complete but $\Gamma_k(C)$ is not mixing for some $2\leq k<d$ (where $d\geq 3$). The discussion of the previous paragraph implies that 
$$\Gamma_k(C) = \Gamma_k(\widetilde{J})$$ 
for some $\widetilde{J}\subset \{(\widetilde{\ell}, k): 0\leq \widetilde{\ell}\leq k\}$ for $k\neq d/2$ or $\widetilde{J}\subset\{(\widetilde{\ell},k): 0\leq\widetilde{\ell}\leq k\}\cup\{(0,0)\}$ for $k=d/2$.  For the sake of concreteness, we shall discuss only\footnote{The particular case 
$\widetilde{J}=\{(0,0)\}$ when $k=d/2$ is left as an exercise to the reader (see also \cite{MMY}).} the case $\widetilde{J}\subset\{(\widetilde{\ell},k): 0\leq\widetilde{\ell}\leq k\}\cup\{(0,0)\}$. In this situation, there is an arrow $\{\lambda_1, \dots, \lambda_k\}=\underline{\lambda}\to\underline{\lambda}'=\{\lambda_1', \dots, \lambda_k'\}$ of $\Gamma_k(C)$ with $p(\underline{\lambda})=p(\underline{\lambda}')$. This means that $C^{(k)}_{\underline{\lambda}, \underline{\lambda}'}\neq 0$, and hence we can find $w_1, \dots, w_k$ such that $\textrm{span}\{w_1,\dots,w_k\} = \textrm{span}\{v_{\lambda_1}, \dots, v_{\lambda_k}\}$ and 
$$C(w_i)=v_{\lambda_i'}+\sum\limits_{\lambda\notin\underline{\lambda}'} 
C_{i\lambda}^* v_{\lambda}\,.$$
In other words, as we also did in Step 0 (cf. the proof of Lemma \ref{l.step0}), we can use $w_1,\dots, w_k$ to ``convert'' the minor of $C$ associated to $\underline{\lambda}, \underline{\lambda}'$ into the identity. 

We claim that if $\lambda, \lambda^{-1}\notin \underline{\lambda}'$, then $C_{i\lambda}^*=0$ for all $i=1,\dots,k$. Indeed, the same arguments with minors and replacement of lines yield that if this were not true, say $C_{i\lambda}^*\neq 0$, then there would be an arrow from $\underline{\lambda}$ to $\underline{\lambda}''=(\underline{\lambda}'-\{\lambda_i'\})\cup\{\lambda\}$. Since $p(\underline{\lambda})=p(\underline{\lambda}')$, we would have $\#(p(\underline{\lambda})\cap p(\underline{\lambda}''))=k-1$, and, thus, $(\widetilde{\ell}_0, k-1)\in \widetilde{J}\subset \{(\widetilde{\ell}, k): 0\leq \widetilde{\ell}\leq k\}$ for some $\widetilde{\ell}_0$, a contradiction (proving the claim). 

This claim permits to complete the proof of the lemma: in fact, it implies that $C(v_{\lambda_1})$ is a linear combination of $v_{\lambda_i'}$, $1\leq i\leq k<d$, a contradiction with the completeness hypothesis on $\Gamma_1(C)$.
\end{proof}

The second part of Step 2 of the strategy of proof of Theorem \ref{t.MMY-twisting} is the following lemma:

\begin{lemma}\label{l.step2ii} Let $A\in Sp(2d,\mathbb{Z})$ be a Galois-pinching matrix. Suppose that $d\geq 3$ and $D\in Sp(2d,\mathbb{Z})$ is a matrix such that $\Gamma_k(D)$ is complete for all $1\leq k<d$. Then, $\Gamma_d(D)$ is mixing.
\end{lemma}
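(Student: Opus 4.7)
The plan is to show that each non-mixing shape of $\Gamma_d(D)$ is incompatible with the completeness of $\Gamma_k(D)$ for some $k<d$. As in the proof of Lemma~\ref{l.step2i}, the orbits of the Galois group $Gal\simeq S_d\rtimes(\mathbb Z/2\mathbb Z)^d$ on $\widehat R_d\times\widehat R_d$ are parametrized solely by $\widetilde\ell=\#(\underline\lambda\cap\underline\lambda')\in\{0,1,\dots,d\}$ (since for Lagrangians $\underline\lambda,\underline\lambda'$ the equality $p(\underline\lambda)=p(\underline\lambda')$ is automatic). Hence $\Gamma_d(D)=\Gamma_d(\widetilde J)$ for some $\widetilde J\subset\{0,\dots,d\}$. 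After fixing one element in each pair $\{\lambda,\lambda^{-1}\}$, the set $\widehat R_d$ becomes a torsor for $(\mathbb Z/2\mathbb Z)^d$, and $\Gamma_d(\widetilde J)$ becomes a Cayley-type graph whose generating set is $\{v\in(\mathbb Z/2\mathbb Z)^d:|v|\in J\}$ with $J:=\{d-\widetilde\ell:\widetilde\ell\in\widetilde J\}$. A direct check shows $\Gamma_d(\widetilde J)$ is mixing if and only if $J$ is non-empty, $J\neq\{0\}$, and $J$ contains elements of \emph{both} parities; in the remaining non-mixing cases the graph is either totally disconnected ($J=\{0\}$), bipartite with one component ($J\subset 2\mathbb Z+1$), or bipartite with two components ($J\subset 2\mathbb Z$, $J\neq\{0\}$).

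Lemma~\ref{l.step0} rules out $J=\emptyset$. To rule out $J=\{0\}$ and $J\subset 2\mathbb Z$, $J\neq\{0\}$, note that in either case $D^{(d)}_{\underline\lambda,\underline\lambda'}=0$ whenever $\underline\lambda'$ is a Hamming-one neighbour of $\underline\lambda$. The Laplace expansion of this vanishing minor along any row $\lambda_0\in\underline\lambda$ yields
\begin{equation*}
\sum_{\lambda'\in\underline\lambda'}(-1)^{\sigma(\lambda_0,\lambda')}\,D^{(1)}_{\lambda_0,\lambda'}\,D^{(d-1)}_{\underline\lambda\setminus\{\lambda_0\},\,\underline\lambda'\setminus\{\lambda'\}}=0,
\end{equation*}
where \emph{every} factor appearing is non-zero by the completeness of $\Gamma_1(D)$ and $\Gamma_{d-1}(D)$. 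Varying $\underline\lambda'$ among the $d$ distinct Hamming-one neighbours of $\underline\lambda$ and varying the pivot $\lambda_0$ over the $d$ rows produces a system of linear relations among the Galois-conjugate families $D^{(1)}_{\cdot,\cdot}$ and $D^{(d-1)}_{\cdot,\cdot}$. Exploiting that these families lie in a small number of $Gal$-orbits (and hence their entries are forced to satisfy the same algebraic relations up to Galois conjugation), the system becomes over-determined and must force some $D^{(1)}_{\lambda,\lambda'}$ or some $D^{(d-1)}_{\underline\mu,\underline\mu'}$ to vanish, contradicting our hypothesis.

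The case $J\subset 2\mathbb Z+1$ is reduced to the preceding one by replacing $D$ with $D^2$: the set $\widetilde J(D^2)$ then lies in $2\mathbb Z$ (two successive parity-swapping moves preserve parity), while completeness of $\Gamma_k(D^2)$ for $1\leq k<d$ is inherited from that of $\Gamma_k(D)$ via the observation that $(D^2)^{(k)}_{\underline\mu,\underline\mu'}=\sum_{\underline\nu}D^{(k)}_{\underline\mu,\underline\nu}\,D^{(k)}_{\underline\nu,\underline\mu'}$ is a sum of products of non-zero quantities whose total can be prevented from vanishing by replacing $D$ by a suitable power, in the same spirit as Proposition~\ref{p.mixing/twisting}. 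This closes the argument.

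The hard part, and the main obstacle, is the over-determination claim in the middle paragraph: extracting from the Laplace-expansion relations a genuine contradiction with the non-vanishing of all $D^{(1)}$ and $D^{(d-1)}$ coefficients requires a careful bookkeeping of how $Gal$ acts on the entries of these matrices, and an explicit count of the resulting algebraic dependencies. This is also the place where $d\geq 3$ is truly used: for $d=2$ there are only two pivots and two Hamming-one neighbours, which is not enough to force any cancellation---consistent with the genuine failure of the statement for $d=2$, which must instead be handled by the separate Step 3 of the overall strategy.
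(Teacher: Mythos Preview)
Your setup is correct: the $Gal$-orbits on $\widehat R_d\times\widehat R_d$ are indeed parametrized by $\widetilde\ell\in\{0,\dots,d\}$, the identification of $\widehat R_d$ with $(\mathbb Z/2\mathbb Z)^d$ is the right way to think about it, and your parity criterion for mixing is accurate. But the proof has two genuine gaps.

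First, the ``over-determination claim'' is the entire content of the lemma, and you have not proved it; you say so yourself. Writing down Laplace expansions along rows gives relations, but nothing in your argument isolates a single coefficient and forces it to vanish. The $Gal$-equivariance constrains the \emph{pattern} of zero/non-zero entries, not the actual algebraic values, so ``the entries satisfy the same algebraic relations up to Galois conjugation'' does not by itself produce a contradiction: conjugate but non-zero quantities can perfectly well satisfy a linear relation. Second, the reduction of the odd-parity case to $D^2$ does not work as stated. The identity $(D^2)^{(k)}_{\underline\mu,\underline\mu'}=\sum_{\underline\nu\in\widetilde R_k}D^{(k)}_{\underline\mu,\underline\nu}D^{(k)}_{\underline\nu,\underline\mu'}$ is a sum that can certainly vanish; and the vague appeal to ``replacing $D$ by a suitable power, in the spirit of Proposition~\ref{p.mixing/twisting}'' is illegitimate here, since that proposition manufactures new matrices by interleaving powers of $A$, which destroys the hypothesis that $\Gamma_k(\,\cdot\,)$ is complete for the resulting matrix.

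The paper's route is both shorter and avoids these traps. Instead of excluding each non-mixing $J$ separately, it proves positively, by the same minor-replacement trick used in Lemma~\ref{l.step0} and Lemma~\ref{l.step2i}, that $J$ must contain two \emph{consecutive} integers $\widetilde\ell,\widetilde\ell+1$. Consecutive integers have opposite parities, so your own criterion then gives mixing immediately; the paper also writes out the explicit connectedness and odd-loop arguments. The point you are missing is that the minor argument, applied to an arrow $\underline\lambda\to\underline\lambda'$ in $\Gamma_d(D)$ together with the completeness of $\Gamma_{d-1}(D)$, lets one swap a single eigenvector in $\underline\lambda'$ and thereby produce an arrow into a vertex at intersection $\widetilde\ell\pm1$; this is a local construction, not a global linear-algebra contradiction.
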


\begin{proof}
Recall that the $Gal$-orbits on $\widehat{R}_k\times\widehat{R}_k$ are 
$$\mathcal{O}_{\widetilde{\ell},\ell} = \{(\underline{\lambda}, \underline{\lambda}')\in \widehat{R}_k\times \widehat{R}_k: \#(\underline{\lambda}\cap \underline{\lambda}')=\widetilde{\ell}, \#(p(\underline{\lambda}) \cap p(\underline{\lambda}'))=\ell\}$$
with $\ell\leq k$ and $\ell\geq 2k-d$. 

Hence, in the case $k=d$, these orbits are parametrized by the set $I=\{0\leq\widetilde{\ell}\leq d\}$. For the sake of simplicity, let us denote the $Gal$-orbits on $\widehat{R}_d\times\widehat{R}_d$ by 
$$\mathcal{O}(\widetilde{\ell})=\{(\underline{\lambda}, \underline{\lambda}')\in \widehat{R}_d\times \widehat{R}_d: 
\#(\underline{\lambda}\cap \underline{\lambda}')=\widetilde{\ell}\}$$
and let us write 
$$\Gamma_d(D)=\Gamma_d(J)=\bigcup\limits_{\widetilde{\ell}\in J}\mathcal{O}(\widetilde{\ell})\,,$$
where $J=J(D)\subset I=\{0\leq\widetilde{\ell}\leq d\}$.

It is possible to show\footnote{Again by the arguments with ``minors'' described above.} that if $\Gamma_k(D)$ is complete for each $1\leq k < d$, then $J$ contains two consecutive 
integers, say $\widetilde{\ell}$ and $\widetilde{\ell}+1$ (see \cite{MMY} for more details).  

Thus, our task is reduced to prove that $\Gamma_d(D)=\Gamma_d(J)$ is mixing when $J\supset\{\widetilde{\ell},  \widetilde{\ell}+1\}$. 

In this direction, we establish first the \emph{connectedness} of $\Gamma_d(J)$. For this sake, note that it suffices\footnote{The general case of two general vertices $\underline{\lambda}$ and $\underline{\lambda}'$ follows by producing a series of vertices $\underline{\lambda}=\underline{\lambda}_0$, $\underline{\lambda}_1$, $\dots$, $\underline{\lambda}_a=\underline{\lambda}'$ with $\#(\underline{\lambda}_i\cap \underline{\lambda}_{i+1})=d-1$ for $i=0,\dots, a-1$.} to connect two vertices $\underline{\lambda}_0$ and $\underline{\lambda}_1$ with $\#(\underline{\lambda}_0\cap\underline{\lambda}_1)=d-1$. Given such $\underline{\lambda}_0$ and $\underline{\lambda}_1$, let us select $\underline{\lambda}'\subset \underline{\lambda}_0\cap \underline{\lambda}_1$ with $\#\underline{\lambda}'=d-\widetilde{\ell}-1$, and let us consider $\underline{\lambda}''$ obtained 
from $\underline{\lambda}_0$ by replacing the elements of $\underline{\lambda}'$ by their inverses. Since $\#(\underline{\lambda}_0\cap\underline{\lambda}_1)=d-1$, we have  $\#(\underline{\lambda}''\cap\underline{\lambda}_0) = \widetilde{\ell}+1$ and $\#(\underline{\lambda}''\cap\underline{\lambda}_1)=\widetilde{\ell}$. 
From our assumption $J\supset\{\widetilde{\ell}, \widetilde{\ell}+1\}$, we infer the presence of the arrows $\underline{\lambda}_0\to\underline{\lambda}''$ and 
$\underline{\lambda}''\to\underline{\lambda}_1$ in $\Gamma_d(J)$. This proves the connectedness of $\Gamma_d(J)$. 

Next, we show that $\Gamma_d(J)$ is mixing. The Galois invariance of $\Gamma_d(J)$ guarantees that it contains loops of length $2$ (see Remark~\ref{r.galois-invariance}). So, it suffices to exhibit some loop of {\em odd} length in $\Gamma_d(J)$ (see Remark~\ref{r.mixing}). For this sake, let us fix an arrow 
$\underline{\lambda}\to\underline{\lambda}'\in\mathcal{O}(\widetilde{\ell})$ of $\Gamma_d(J)$. The construction ``$\underline{\lambda}_0\to\underline{\lambda}''\to\underline{\lambda}_1$ for $\#(\underline{\lambda}_0\cap\underline{\lambda}_1)=d-1$'' performed in the previous paragraph allows us to connect $\underline{\lambda}'$ to $\underline{\lambda}$ by a path of length $2\widetilde{\ell}$ in $\Gamma_d(J)$. In this way, 
we get a loop (based on $\underline{\lambda}_0$) in $\Gamma_d(J)$ of length $2\widetilde{\ell}+1$. This proves the lemma. 
\end{proof}

Finally, let us comment on Step 3 of the strategy of proof of Theorem \ref{t.MMY-twisting}, i.e, the special case $d=2$ of this theorem. Consider a symplectic form $\{.,.\}:\bigwedge^2\mathbb{R}^4\to\mathbb{R}$. Since $\bigwedge^2\mathbb{R}^4$ has dimension $6$ and $\{.,.\}$ is non-degenerate, 
$K:=\textrm{Ker}\{.,.\}$ has dimension $5$. 

By denoting by $\lambda_1> \lambda_2>\lambda_2^{-1}> \lambda_1^{-1}$ the eigenvalues of a Galois-pinching matrix $A$, we have the following basis of $K$
\begin{itemize}
\item $v_{\lambda_1}\wedge v_{\lambda_2}$, $v_{\lambda_1}\wedge v_{\lambda_2^{-1}}$, $v_{\lambda_1^{-1}}\wedge v_{\lambda_2}$, 
$v_{\lambda_1^{-1}}\wedge v_{\lambda_2^{-1}}$;
\item $v_*=\frac{v_{\lambda_1}\wedge v_{\lambda_1^{-1}}}{\omega_1} - \frac{v_{\lambda_2}\wedge v_{\lambda_2^{-1}}}{\omega_2}$ where $\omega_i = \{v_{\lambda_i}, v_{\lambda_i^{-1}}\}\neq 0$.
\end{itemize}

In general, given $C\in Sp(4,\mathbb{Z})$, we can use $\bigwedge^2C|_K$ to construct a graph $\Gamma_2^*(C)$ whose vertices are 
$\widehat{R}_2\simeq \{ v_{\lambda_1}\wedge v_{\lambda_2}, \dots, v_{\lambda_1^{-1}}\wedge v_{\lambda_2^{-1}}\}$ and $v_*$, and whose arrows connect vertices associated to non-zero entries of $\bigwedge^2C|_K$. 

\begin{remark}\label{r.step3} By definition, $\bigwedge^2A(v_*)=v_*$, so that $1$ is an eigenvalue of $\bigwedge^2A|_K$. In principle, this poses a problem to derive the analog of  Proposition~\ref{p.mixing/twisting} with $\Gamma_2(C)$ replaced by $\Gamma_2^*(C)$. But, as it turns out, the \emph{simplicity} of the eigenvalue $1$ of $\wedge^2A|_K$ can be exploited to rework the proof of Proposition~\ref{p.mixing/twisting} to check that if $\Gamma_2^*(C)$ is mixing, then there are adequate products $D$ of $C$ and powers of $A$ such that $\Gamma_2(D)$ is complete (cf. Proposition 4.26 in \cite{MMY}). 
\end{remark}

In this setting, the third step of our strategy of proof of Theorem \ref{t.MMY-twisting} amounts to show that:
\begin{lemma}\label{l.step3} Let $A\in Sp(4,\mathbb{Z})$ be a Galois-pinching matrix. Suppose that $C\in Sp(4,\mathbb{Z})$ is a matrix such that $\Gamma_1(C)$ is complete. Then, either $\Gamma_2(C)$ or $\Gamma_2^*(C)$ is mixing. 
\end{lemma}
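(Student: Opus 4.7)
The plan is to carry out a finite case analysis on the Galois-invariant structure of $\Gamma_2(C)$, and, in each case in which this graph is not mixing, to recover mixing in $\Gamma_2^*(C)$ by exploiting the extra arrows to and from $v_*$ that completeness of $\Gamma_1(C)$ will force. First, I would classify the Galois-invariant graphs on $\widehat{R}_2$. For $d=2$ the constraint $\ell \geq 2k-d$ used in the proof of Lemma \ref{l.step2i} forces $\ell=2$, so the $Gal$-orbits on $\widehat{R}_2 \times \widehat{R}_2$ are parametrized by the single integer $\widetilde{\ell} \in \{0,1,2\}$. Writing $\Gamma_2(C)=\Gamma_2(\widetilde{J})$ for some nonempty (by Lemma \ref{l.step0}) $\widetilde{J} \subset \{0,1,2\}$, a direct inspection of the combinatorics of the three orbits (the orbit $\widetilde{\ell}=1$ is a symmetric bipartite $4$-cycle on $\widehat{R}_2$, $\widetilde{\ell}=0$ consists of two disjoint $2$-cycles, $\widetilde{\ell}=2$ produces loops at each vertex) shows, via Remark \ref{r.mixing}, that $\Gamma_2(\widetilde{J})$ is mixing exactly when $\widetilde{J} \in \{\{0,1\},\{1,2\},\{0,1,2\}\}$. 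This settles three of the seven possibilities.

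For the remaining cases $\widetilde{J} \in \{\{0\},\{1\},\{2\},\{0,2\}\}$, I would analyze $\Gamma_2^*(C)$ by computing the entries of $\bigwedge^2 C|_K$ in positions involving $v_*$. Using
$$(\bigwedge^2 C)(v_*) = \frac{Cv_{\lambda_1}\wedge Cv_{\lambda_1^{-1}}}{\omega_1}-\frac{Cv_{\lambda_2}\wedge Cv_{\lambda_2^{-1}}}{\omega_2},$$
and its dual expansion, the $v_*$-row and $v_*$-column entries are linear combinations of $2\times 2$ minors of $C$ of the form $C[\underline{\mu},\{\lambda_i,\lambda_i^{-1}\}]$ with one non-isotropic index set $\{\lambda_i,\lambda_i^{-1}\}\in \widetilde{R}_2 \smallsetminus \widehat{R}_2$. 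On the other hand, the hypothesis $\Gamma_2(C)=\Gamma_2(\widetilde{J})$ for a restricted $\widetilde{J}$ imposes the vanishing of many $2\times 2$ minors $C[\underline{\mu},\underline{\mu}']$ with $\underline{\mu}'\in\widehat{R}_2$, yielding polynomial relations among the entries $C_{\mu\nu}$. The completeness of $\Gamma_1(C)$ asserts that each individual entry $C_{\mu\nu}$ is nonzero, and together with the symplecticity relations for $C \in Sp(4,\mathbb{Z})$ this should suffice to prevent all the non-isotropic minors appearing on the $v_*$-row and $v_*$-column from simultaneously vanishing. Galois invariance then propagates any nonzero arrow from $v_*$ to its full $Gal$-orbit, producing a symmetric ``fan'' of arrows between $v_*$ and the four vertices of $\widehat{R}_2$.

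The main obstacle is this final step: producing arrows incident to $v_*$ only yields the connectedness of $\Gamma_2^*(C)$ and does not immediately imply mixing, because $v_*$ is a fixed eigenvector of $\bigwedge^2 A$ (cf.\ Remark \ref{r.step3}) and so loops through $v_*$ could still all have even length in the bipartite configurations $\widetilde{J}=\{1\}$. One therefore needs to verify, case by case, both that $v_*$ is connected to $\widehat{R}_2$ in both directions and that the enlarged graph contains loops of lengths whose greatest common divisor is one. I would carry this out by exhibiting, in each non-mixing configuration, a concrete odd cycle through $v_*$ obtained by concatenating an arrow $v_* \to v_{\underline{\mu}}$, an arrow in $\Gamma_2(\widetilde{J})$ moving $v_{\underline{\mu}}$ to a Galois-conjugate $v_{\underline{\mu}'}$, and a return arrow $v_{\underline{\mu}'} \to v_*$; the nonvanishing of the relevant minors reduces to polynomial identities in the $C_{\mu\nu}$ that are incompatible with the simultaneous constraints from the $\Gamma_2(\widetilde{J})$-structure and the nonvanishing of every $C_{\mu\nu}$.
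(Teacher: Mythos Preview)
Your case analysis on $\widetilde{J}\subset\{0,1,2\}$ and the identification of the four non-mixing configurations $\{\{0\},\{1\},\{2\},\{0,2\}\}$ is correct and matches the paper. However, the paper's treatment diverges from yours in two essential ways, and your proposal is vague precisely where the real work lies.

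First, the paper does \emph{not} try to show $\Gamma_2^*(C)$ is mixing in all four cases. Instead it shows that the cases $\widetilde{J}=\{2\}$ and (symmetrically) $\widetilde{J}=\{0\}$ are \emph{impossible}. Concretely, for $\widetilde{J}=\{2\}$ one picks the arrow $\underline{\lambda}\to\underline{\lambda}$ with $\underline{\lambda}=\{\lambda_1,\lambda_2\}$, chooses $w_1,w_2$ converting the associated minor to the identity, and then the absence of the $\widetilde{\ell}\in\{0,1\}$ arrows together with the symplectic relation $\omega_1 C^*_{21}-\omega_2 C^*_{12}=0$ forces all cross-terms $C^*_{ij}$ to vanish, so $C$ preserves $\mathrm{span}(v_{\lambda_1},v_{\lambda_2})$, contradicting completeness of $\Gamma_1(C)$. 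This eliminates half of your remaining cases outright.

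Second, for $\widetilde{J}=\{1\}$ (and symmetrically $\{0,2\}$), your plan to deduce nonvanishing of the $v_*$-column entries from ``$C_{\mu\nu}\neq 0$ plus symplecticity'' is where the real content sits, and you do not supply the argument. The paper packages this step as a geometric lemma: for a symplectic $2$-plane $H\subset\mathbb{R}^4$, set $V(H)=e\wedge f-g\wedge h$ where $\{e,f\}$, $\{g,h\}$ are unit-area bases of $H$, $H^\perp$; then $V(H')\in\mathbb{R}\,V(H)$ if and only if $H'\in\{H,H^\perp\}$. Since $v_*=V(\mathrm{span}(v_{\lambda_1},v_{\lambda_1^{-1}}))$, the absence of arrows $v_*\to\widehat{R}_2$ means $\bigwedge^2 C(v_*)\in\mathbb{R}\,v_*$, hence $C$ sends $\mathrm{span}(v_{\lambda_1},v_{\lambda_1^{-1}})$ to itself or to $\mathrm{span}(v_{\lambda_2},v_{\lambda_2^{-1}})$, again contradicting completeness of $\Gamma_1(C)$. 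Arrows $\widehat{R}_2\to v_*$ are handled by the usual minors argument.

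Finally, your worry about odd cycles is overcomplicated. Once some arrow $v_*\to\widehat{R}_2$ and some arrow $\widehat{R}_2\to v_*$ exist, Galois-invariance yields \emph{all} eight such arrows. Then $\Gamma_2^*(C)$ contains length-$2$ loops (inside $\widehat{R}_2$, by Remark~\ref{r.galois-invariance}) and length-$3$ loops of the form $v_*\to\underline{\mu}\to\underline{\mu}'\to v_*$ using an arrow $\underline{\mu}\to\underline{\mu}'$ already present in $\Gamma_2(\widetilde{J})$; mixing follows immediately from Remark~\ref{r.mixing}.
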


\begin{proof} We write $\Gamma_2(C)=\Gamma_2(J)$ with $J\subset\{0, 1, 2\}$. If $J$ contains two consecutive integers, then the arguments from the proof of Lemma \ref{l.step2ii} can be used to show that  $\Gamma_2(C)$ is mixing.

Hence, we can assume (without loss of generality) that $J$ does not contain two consecutive integers. Since $J\neq\emptyset$ (see Lemma \ref{l.step0}, i.e., Step 0), this means that $J=\{0\}$, $\{2\}, \{1\}$ or $\{0,2\}$. As it turns out, the cases $J=\{0\}, \{2\}$ are ``symmetric'', as well as the cases $J=\{1\}, \{0,2\}$. 

For the sake of exposition, we will deal only\footnote{The  ``symmetric'' cases are left as an exercise for the reader.} with the cases $J=\{2\}$ and $J=\{1\}$. We will show that $J=\{2\}$ is impossible, while $J=\{1\}$ implies that $\Gamma_2^*(C)$ is mixing. 

We begin by $J=\{2\}$. This means that we have an arrow $\underline{\lambda}\to\underline{\lambda}$ with $\underline{\lambda}= \{\lambda_1,\lambda_2\}$. So, we can find $w_1, w_2$ with $\textrm{span}\{w_1,w_2\}=\textrm{span}\{v_{\lambda_1}, v_{\lambda_2}\}$ and 
$$C(w_1) = v_{\lambda_1}+ C^*_{11} v_{\lambda_1^{-1}} + C^*_{12} v_{\lambda_2^{-1}}\,, $$
$$C(w_2) = v_{\lambda_2}+ C^*_{21} v_{\lambda_1^{-1}} + C^*_{22} v_{\lambda_2^{-1}}\,.$$
Since $J=\{2\}$, the arrows $\underline{\lambda}\to\{\lambda_1^{-1}, \lambda_2\}$, $\underline{\lambda}\to\{\lambda_1, \lambda_2^{-1}\}$, and 
$\underline{\lambda}\to\{\lambda_1^{-1}, \lambda_2^{-1}\}$ do not belong $\Gamma_2(J)$. Thus, $C_{11}^*=C_{22}^*=0=C_{12}^*C_{21}^*$. On the other hand, because $C$ is symplectic, $\omega_1 C_{21}^* - \omega_2 C_{12}^* = 0$ (with $\omega_1,\omega_2\neq 0$). It follows that $C^*_{ij}=0$ for all $1\leq i,j\leq 2$, that is, $C$ preserves 
the $A$-invariant subspace $\textrm{span}\{v_{\lambda_1}, v_{\lambda_2}\}$. By exploiting this fact\footnote{As explained in the proof of Proposition 4.27 in \cite{MMY}.}\label{footnote-twisting}, one reaches a contradiction with our assumption of completeness of $\Gamma_1(C)$. Thus, $J=\{2\}$ is impossible. 

Suppose now that $J=\{1\}$ and let us show that $\Gamma_2^*(C)$ is mixing. For this sake, we claim that, in this situation, it suffices to construct arrows from the vertex $v_*$ to $\widehat{R}_2$ {\em and}\footnote{Notice that the action of the Galois group can \emph{not} be used to revert arrows of $\Gamma_2^*(C)$ involving the vertex $v_*$, so that the two previous statements are \emph{independent}.} vice-versa. Assuming the claim, we can use the Galois action to see that once $\Gamma_2^*(C)$ 
contains {\em some} arrows from $v_*$ and {\em some} arrows to $v_*$, it contains {\em all such arrows}. In other words, if the claim is true, we have the situation depicted in Figure \ref{f.spcs22}.
\begin{figure}[htb!]
\includegraphics[scale=0.8]{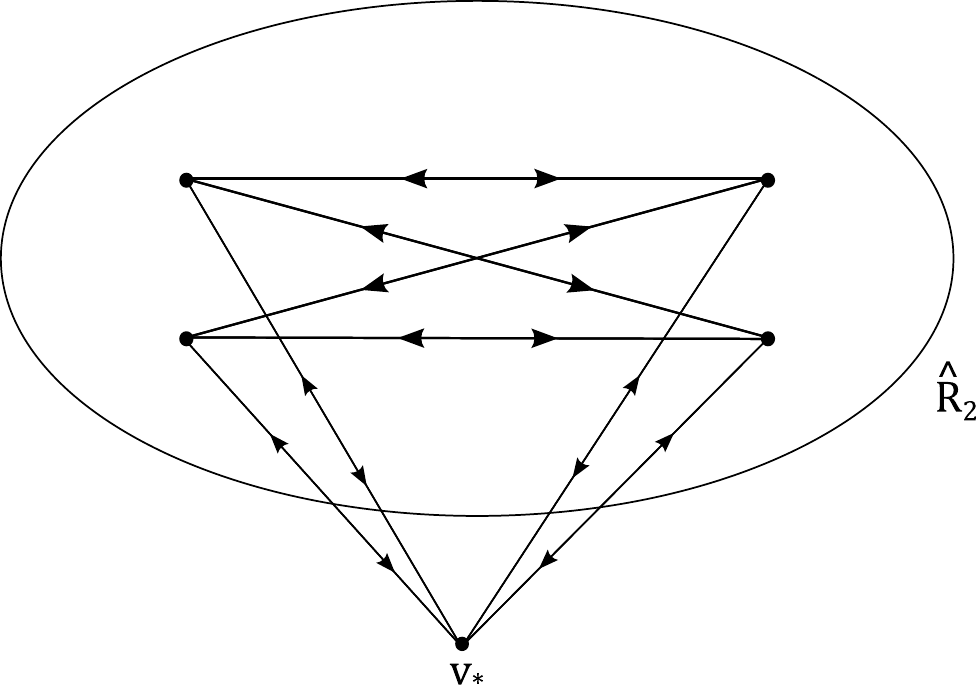}
\caption{The graph $\Gamma_2^*(C)$ when $J=\{1\}$.}\label{f.spcs22}
\end{figure}
Thus, we have loops of length $2$ (in $\widehat{R}_2$), and also loops of length $3$ (based on $v_*$), so that $\Gamma_2^*(C)$ is mixing (cf. Remark \ref{r.mixing}). Hence, it remains only to show the claim. 

The existence of arrows from $\widehat{R}_2$ to $v_*$ follows from the same kind of arguments involving ``minors'' above (i.e., selecting $w_1, w_2$ as above, etc.) and we will not repeat it here. 

Instead, we focus on showing that there are arrows from $v_*$ to $\widehat{R}_2$. The proof is by contradiction: otherwise, one would have $\bigwedge^2C(v_*)\in\mathbb{R}v_*$. 

We want to use this information to determine the image of $\textrm{span}(v_{\lambda_1}, v_{\lambda_1^{-1}})$ under $C$. Here, the following elementary (linear algebra) fact is useful. Given $H$ is a symplectic $2$-plane on $\mathbb{R}^4$ equipped with the standard symplectic form $\{.,.\}$, let $V(H):=e\wedge f - g\wedge h\in\bigwedge^2\mathbb{R}^4$ where $e, f$ is a basis of $H$ and $g,h$ is a basis of the symplectic orthogonal $H^{\perp}$ such that $\{e,f\}=\{g,h\}=1$. Then, $V(H)$ is well-defined (independently on the choices) and $V(H)$ is collinear to $V(H')$ if and only if $H'=H$ or $H^{\perp}$. (See, e.g., \cite[Proposition 4.28]{MMY} for a proof of this fact.)  

Since $v_*:=V(\textrm{span}(v_{\lambda_1}, v_{\lambda_1^{-1}}))$, the fact stated above says that if $\bigwedge^2C(v_*)\in\mathbb{R}v_*$, then 
$$C(\textrm{span}(v_{\lambda_1}, v_{\lambda_1^{-1}})) = \textrm{span}(v_{\lambda_1}, 
v_{\lambda_1^{-1}}) \quad \textrm{ or } \quad \textrm{span}(v_{\lambda_2}, v_{\lambda_2^{-1}}).$$ 
Once again, an argument using ``minors'' says that this is a contradiction with our assumption that $\Gamma_1(C)$ is complete. This completes the proof of the lemma. 
\end{proof}

At this point, the proof of Theorem \ref{t.MMY-twisting} is complete (compare with Steps 0, 1, 2 and 3 of the strategy of proof outlined above). Indeed, the case $d=1$ follows from Lemma \ref{l.step1} and Proposition \ref{p.mixing/twisting}. The generic case $d\geq 3$ follows from Lemmas \ref{l.step1}, \ref{l.step2i}, \ref{l.step2ii} and Proposition \ref{p.mixing/twisting}. Finally, the special case $d=2$ follows from Lemmas \ref{l.step1}, \ref{l.step3}, Proposition \ref{p.mixing/twisting} and Remark \ref{r.step3}.

\subsubsection{Two simplicity criteria for the Lyapunov exponents of origamis}

We are now in position to discuss the simplicity of Lyapunov exponents of square-tiled surfaces. 

Let $\pi: X=(M,\omega)\to (\mathbb{T}^2, dz)$ be a reduced\footnote{This means that the group of relative periods of $X$ is $\mathbb{Z}\oplus i\mathbb{Z}$.} square-tiled surface of genus $g\geq 1$ with a trivial group $\textrm{Aut}(X)=\{\textrm{Id}\}$ of automorphisms. In this case, the group $\textrm{Aff}(X)$ of affine homeomorphisms of $X$ is naturally isomorphic to a finite-index subgroup of $SL(2,\mathbb{Z})$, namely, the Veech group $SL(X)$ of $X$ (i.e., the stabilizer of $X$ in $SL(2,\mathbb{R})$). 

The action of $\textrm{Aff}(X)$ on $H_1(X,\mathbb{R})$ respects the decomposition 
$$H_1(X,\mathbb{R}) = H_1^{st}(X,\mathbb{R})\oplus H_1^{(0)}(X,\mathbb{R})$$
where $H_1^{(0)}(X,\mathbb{R})=\textrm{ker}(\pi_*)$ is the annihilator of $\mathbb{R}\cdot\textrm{Re}(\omega) \oplus \mathbb{R}\cdot \textrm{Im}(\omega)\subset H^1(X,\mathbb{R})$ and $H_1^{st}(X,\mathbb{R})$ is the symplectic orthogonal of $H_1^{(0)}(X,\mathbb{R})$. Note that this decomposition is defined over $\mathbb{Q}$ (because $X$ is a square-tiled surface) and, hence, the elements of $\textrm{Aff}(X)$ act on $H_1^{(0)}(X,\mathbb{R})$ via symplectic matrices in $Sp(H_1^{(0)}(X,\mathbb{Z}))\simeq Sp(2g-2,\mathbb{Z})$. 

Our discussion of the Galois-pinching and twisting properties give the following two simplicity criteria for the Lyapunov exponents of square-tiled surfaces (cf. Theorems 1.1 and 5.4 in \cite{MMY}).

\begin{theorem}\label{t.MMY-1} Let $X=(M,\omega)$ be a reduced square-tiled surface of genus $g\geq 1$ with trivial group of automorphisms. Suppose that $\phi$ and $\psi$ are affine homeomorphisms of $X$ whose actions on $H_1^{(0)}(X,\mathbb{R})$ are given by matrices $A$ and $B$ in $Sp(2g-2,\mathbb{Z})$ such that: 
\begin{itemize} 
\item $A$ is Galois-pinching, and 
\item $B\neq\textrm{Id}$ is unipotent and $(B-\textrm{Id})(H_1^{(0)}(X,\mathbb{R}))$ is not a Lagrangian subspace.
\end{itemize}
Then, the Lyapunov exponents of the KZ cocycle with respect to the unique $SL(2,\mathbb{R})$-invariant probability measure support on $SL(2,\mathbb{R})X$ are simple.
\end{theorem}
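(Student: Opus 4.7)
The plan is to reduce Theorem \ref{t.MMY-1} to Corollary \ref{c.EM} by producing an element of $\textrm{Aff}(X)$ whose action on $H_1^{(0)}(X,\mathbb{R})$ is twisting with respect to $A$, so that the pinching/twisting pair needed by the Avila--Viana criterion is realized inside $\textrm{Aff}(X)$.

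The first step is to observe that $A$ is pinching in the usual sense thanks to Proposition \ref{p.Galois-pinching}. The second step is to verify the hypothesis of Theorem \ref{t.MMY-twisting}, namely that $A$ and $B^{2}$ share no common proper invariant subspace. Here I would argue by contrapositive: if $A$ and $B^{2}$ did share such a subspace, then Proposition \ref{p.unipotent-twisting} (applied to the Galois-pinching matrix $A$ and the unipotent matrix $B \neq \textrm{Id}$) would force $(B-\textrm{Id})(H_{1}^{(0)}(X,\mathbb{R}))$ to be Lagrangian, contradicting the second hypothesis of the theorem.

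With that in hand, Theorem \ref{t.MMY-twisting} provides an integer $m \geq 1$ and integers $\ell_{1},\dots,\ell_{m-1}$ such that the matrix
\[
C := B A^{\ell_{1}} B A^{\ell_{2}} \cdots B A^{\ell_{m-1}} B \in Sp(2g-2,\mathbb{Z})
\]
is twisting with respect to $A$. The key point is then that $C$ is the image in $Sp(H_{1}^{(0)}(X,\mathbb{Z}))$ of the affine homeomorphism $\widetilde{\psi} := \psi \phi^{\ell_{1}} \psi \phi^{\ell_{2}} \cdots \psi \phi^{\ell_{m-1}} \psi \in \textrm{Aff}(X)$, since the natural representation $\textrm{Aff}(X) \to Sp(H_{1}^{(0)}(X,\mathbb{Z}))$ is a group homomorphism. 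Thus the pair $(\phi, \widetilde{\psi}) \in \textrm{Aff}(X)$ realizes, on $H_{1}^{(0)}(X,\mathbb{R})$, a pinching matrix $A$ and a matrix $C$ that is twisting with respect to $A$.

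At this stage Corollary \ref{c.EM} applies directly and yields the simplicity of the Lyapunov spectrum of the KZ cocycle over $SL(2,\mathbb{R}) \cdot X$. I do not expect any serious obstacle in this chain of implications: the substantive work is precisely packaged in Theorem \ref{t.MMY-twisting} and Proposition \ref{p.unipotent-twisting}, and the only thing to be slightly careful about is that the product giving $C$ is indeed the image of a bona fide affine homeomorphism of $X$ (which follows from the homomorphism property), and that using $\widetilde{\psi}$ in place of $\psi$ does not alter the setting in which Corollary \ref{c.EM} is formulated, since $\textrm{Aut}(X) = \{\textrm{Id}\}$ identifies $\textrm{Aff}(X)$ with its image in $Sp(H_{1}^{(0)}(X,\mathbb{Z}))$.
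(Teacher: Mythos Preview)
Your proposal is correct and follows precisely the paper's own proof: Proposition~\ref{p.Galois-pinching} gives pinching, Proposition~\ref{p.unipotent-twisting} (by contrapositive) verifies the hypothesis of Theorem~\ref{t.MMY-twisting}, and the resulting twisting word $\widetilde{\psi}$ together with $\phi$ feeds into Corollary~\ref{c.EM}. You have in fact spelled out a bit more detail than the paper does (the explicit contrapositive and the homomorphism remark), and the only slightly imprecise comment is the last parenthetical---$\textrm{Aut}(X)=\{\textrm{Id}\}$ identifies $\textrm{Aff}(X)$ with $SL(X)$ rather than with its image in $Sp(H_1^{(0)})$---but this plays no role in the argument.
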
 

\begin{proof}
By Proposition \ref{p.Galois-pinching}, the matrix $A=\phi|_{H_1^{(0)}(X,\mathbb{R})}$ is pinching. By Proposition \ref{p.unipotent-twisting} and Theorem \ref{t.MMY-twisting}, there is a product $\widetilde{\psi}$ of powers of $\phi$ and $\psi$ such that the matrix $C=\widetilde{\psi}|_{H_1^{(0)}(X,\mathbb{R})}$ is twisting with respect to $A$. Therefore, the desired theorem follows from Corollary \ref{c.EM}. 
\end{proof}

\begin{theorem}\label{t.MMY-2} Let $X=(M,\omega)$ be a reduced square-tiled surface of genus $g\geq 1$ with trivial group of automorphisms. Suppose that $\phi$ and $\psi$ are affine homeomorphisms of $X$ whose actions on $H_1^{(0)}(X,\mathbb{R})$ are given by matrices $A$ and $B$ in $Sp(2g-2,\mathbb{Z})$ such that: 
\begin{itemize} 
\item $A$ is Galois-pinching, and 
\item the minimal polynomial of $B$ has degree $>2$ with no irreducible factor of even degree and a splitting field disjoint from the splitting field of the characteristic polynomial of $A$.
\end{itemize}
Then, the Lyapunov exponents of the KZ cocycle with respect to the unique $SL(2,\mathbb{R})$-invariant probability measure support on $SL(2,\mathbb{R})X$ are simple.
\end{theorem}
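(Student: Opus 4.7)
The plan is to mirror the argument used for Theorem \ref{t.MMY-1}, but with Proposition \ref{p.Galois-twisting} replacing Proposition \ref{p.unipotent-twisting} in order to verify the hypotheses of Theorem \ref{t.MMY-twisting}. First, Proposition \ref{p.Galois-pinching} immediately yields that $A$ is pinching in the sense of Corollary \ref{c.EM} (since Galois-pinching forces real eigenvalues with pairwise distinct moduli). So all the ingredients for the Avila--Viana criterion are in place except for the production of a twisting partner.

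Next, I would verify that $A$ and $B^{2}$ have no common proper invariant subspace, which is exactly the input required by Theorem \ref{t.MMY-twisting}. This is where the arithmetic hypotheses on $B$ come in: since the minimal polynomial of $B$ has no irreducible factor of even degree, the eigenvalues of $B$ do not come in pairs $\pm\mu$, hence the characteristic subspaces of $B$ and $B^{2}$ coincide, so that every $B^{2}$-invariant subspace is automatically $B$-invariant (as in the opening paragraph of the proof of Proposition \ref{p.Galois-twisting}). Combined with the disjointness of the splitting fields of the characteristic polynomials of $A$ and $B$ and the condition $\deg(\text{min.~poly of }B)>2$, Proposition \ref{p.Galois-twisting} applies verbatim and yields the desired non-sharing property.

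Once this is established, Theorem \ref{t.MMY-twisting} produces integers $m\ge 1$ and $\ell_{1},\dots,\ell_{m-1}$ (as large as we wish) such that the matrix
\[
C := B\,A^{\ell_{1}}\cdots B\,A^{\ell_{m-1}}\,B \in Sp(H_{1}^{(0)}(X,\mathbb{Z}))
\]
is twisting with respect to $A$. Because $A$ and $B$ come from actual affine homeomorphisms $\phi,\psi\in \mathrm{Aff}(X)$, the matrix $C$ is itself the action on $H_{1}^{(0)}(X,\mathbb{R})$ of the affine homeomorphism $\widetilde{\psi}:=\psi\phi^{\ell_{1}}\cdots\psi\phi^{\ell_{m-1}}\psi$. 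Thus $\mathrm{Aff}(X)$ contains two elements $\phi$ and $\widetilde{\psi}$ whose restrictions to $H_{1}^{(0)}(X,\mathbb{R})$ are, respectively, pinching and twisting with respect to the first one.

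At this point Corollary \ref{c.EM} applies directly and delivers the simplicity of the Lyapunov spectrum of the Kontsevich--Zorich cocycle over the arithmetic Teichm\"uller curve $SL(2,\mathbb{R})\cdot X$. The main (and really only non-formal) obstacle is therefore the verification of the ``no common invariant subspace'' hypothesis of Theorem \ref{t.MMY-twisting}; everything else is a direct citation. I expect this step to be genuinely the delicate one because the $A$-side uses Galois-pinching in an essential way while the $B$-side requires the two algebraic conditions on its minimal polynomial (odd-degree irreducible factors, plus disjointness of splitting fields) precisely to force $B$-invariance from $B^{2}$-invariance and to prevent $B$ from stabilizing the two-dimensional $A$-invariant planes $E(\lambda,\lambda^{-1})$ defined over $\mathbb{Q}(\lambda+\lambda^{-1})$.
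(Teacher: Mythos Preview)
Your proposal is correct and follows exactly the same approach as the paper: apply Proposition~\ref{p.Galois-pinching} to get pinching, Proposition~\ref{p.Galois-twisting} to verify the no-common-invariant-subspace hypothesis of Theorem~\ref{t.MMY-twisting}, deduce the existence of a twisting word in $A$ and $B$, and conclude via Corollary~\ref{c.EM}. The paper's proof is the one-line citation of these same four results, so your write-up is simply a more detailed unpacking of that citation.
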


\begin{proof} By Propositions \ref{p.Galois-pinching}, \ref{p.Galois-twisting} and Theorem \ref{t.MMY-twisting}, we can apply Corollary \ref{c.EM} to get the result. 
\end{proof}

From the practical point of view, it is sometimes more convenient to apply the following corollary of Theorem \ref{t.MMY-1}. 

\begin{corollary}\label{c.MMY-1} Let $X=(M,\omega)$ be a reduced square-tiled surface of genus $g\geq 1$ with trivial group of automorphisms. Suppose that $\phi\in\textrm{Aff}(X)$ acts on $H_1^{(0)}(X,\mathbb{R})$ by a Galois-pinching matrix $A$ in $Sp(2g-2,\mathbb{Z})$ and there exists a rational direction such that $X$ decomposes into a finite union of cylinders whose waist curves generate a subspace $E$ of $H_1(X,\mathbb{Q})$ of dimension $1<\textrm{dim}(E)<g$.

Then, the Lyapunov exponents of the KZ cocycle with respect to the unique $SL(2,\mathbb{R})$-invariant probability measure support on $SL(2,\mathbb{R})X$ are simple.
\end{corollary}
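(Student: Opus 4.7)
The plan is to apply Theorem~\ref{t.MMY-1}. Since the Galois-pinching matrix $A$ is already furnished by the hypothesis on $\phi$, the whole task reduces to exhibiting an affine homeomorphism $\psi\in\textrm{Aff}(X)$ whose action $B$ on $H_1^{(0)}(X,\mathbb{R})$ is a nontrivial unipotent matrix with $(B-\textrm{Id})(H_1^{(0)}(X,\mathbb{R}))$ not Lagrangian.

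I would take $\psi$ to be a Dehn multitwist along the rational direction provided by the hypothesis. Indeed, since $X$ is square-tiled, the cylinders $C_1,\dots,C_k$ in that direction have commensurable rational moduli, so some parabolic element of the Veech group $SL(X)$ lifts to an affine homeomorphism $\psi$ which Dehn-twists each $C_i$ with multiplicity $n_i\in\mathbb{Z}\setminus\{0\}$. The Picard--Lefschetz formula then gives
\begin{equation*}
\psi_*(c)=c+\sum_{i=1}^{k} n_i\,\langle c,\gamma_i\rangle\,\gamma_i,
\end{equation*}
where $\gamma_1,\dots,\gamma_k$ are the waist curves of $C_1,\dots,C_k$. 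Since these curves are parallel and pairwise disjoint, $E=\mathrm{span}(\gamma_1,\dots,\gamma_k)$ is isotropic and $(\psi_*-\textrm{Id})^2=0$, so $B=\psi_*|_{H_1^{(0)}(X,\mathbb{R})}$ is unipotent with $\mathrm{Im}(B-\textrm{Id})\subseteq E\cap H_1^{(0)}(X,\mathbb{R})$.

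The core of the proof will be the following dimension count. Under the torus covering $\pi\colon X\to\mathbb{T}^2$, every waist curve satisfies $\pi_*(\gamma_i)=w_i v\in H_1(\mathbb{T}^2,\mathbb{Z})$, where $v\neq 0$ is the primitive integer class of the rational direction and $w_i>0$ is the width of $C_i$. Consequently $\pi_*(E)\subset H_1(\mathbb{T}^2,\mathbb{R})$ is one-dimensional, so $\dim(E\cap H_1^{(0)}(X,\mathbb{R}))=\dim E-1$. Inserting the hypothesis $1<\dim E<g$ gives
\begin{equation*}
\dim\mathrm{Im}(B-\textrm{Id})\le\dim E-1\le g-2<g-1=\tfrac{1}{2}\dim H_1^{(0)}(X,\mathbb{R}),
\end{equation*}
which is strictly less than half the dimension of the symplectic space $H_1^{(0)}(X,\mathbb{R})$ and therefore cannot be Lagrangian.

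The one remaining point is to check that $B\neq\textrm{Id}$, and I expect this to be the only mildly delicate step. Were $B$ trivial, then $H_1^{(0)}(X,\mathbb{R})$ would sit inside $\ker(\psi_*-\textrm{Id})$; a short linear-algebra verification (using that the $n_i$ can be chosen so that $\ker(\psi_*-\textrm{Id})$ equals $E^{\perp}$, the symplectic orthogonal of $E$) would then force $E\subseteq H_1^{st}(X,\mathbb{R})$ by symplectic duality. This is impossible since $E$ is isotropic of dimension $\ge 2$ while $H_1^{st}(X,\mathbb{R})$ is a two-dimensional non-degenerate symplectic plane. With $\psi$ in hand, the pair $(\phi,\psi)$ fulfills the hypotheses of Theorem~\ref{t.MMY-1}, and the simplicity of the Kontsevich--Zorich Lyapunov spectrum on $SL(2,\mathbb{R})X$ follows.
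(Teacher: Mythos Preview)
Your proof follows the same overall strategy as the paper: apply Theorem~\ref{t.MMY-1} by taking $\psi$ to be a Dehn multitwist in the given rational direction, and verify that $B=\psi_*|_{H_1^{(0)}}$ is unipotent, nontrivial, with $(B-\textrm{Id})$-image too small to be Lagrangian. The computation that $\textrm{Im}(B-\textrm{Id})\subset E\cap H_1^{(0)}$ has dimension at most $\dim E-1<g-1$ matches the paper essentially verbatim.

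The one genuine difference is the verification that $B\neq\textrm{Id}$. The paper argues combinatorially: it builds an oriented graph whose vertices are the components of $M\setminus\bigcup_C C$ and whose edges correspond to the cylinders, uses $\dim E>1$ to find two distinct simple oriented loops in this graph, lifts them to cycles $\delta_1,\delta_2$ on $X$, and compares the intersection numbers $I_{ij}$ of $(B-\textrm{Id})(\delta_i)$ with $\delta_j$ to produce an explicit class in $H_1^{(0)}$ on which $B-\textrm{Id}$ acts nontrivially. Your route is shorter and purely symplectic: if $B=\textrm{Id}$ then $H_1^{(0)}\subset\ker(\psi_*-\textrm{Id})=E^\perp$, whence $E\subset H_1^{st}$, impossible since $E$ is isotropic of dimension $\geq 2$ inside a $2$-dimensional symplectic plane. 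This buys you a cleaner argument at the cost of being less explicit about which cycle witnesses $B\neq\textrm{Id}$. One small imprecision worth tightening: you write that the $n_i$ ``can be chosen'' so that $\ker(\psi_*-\textrm{Id})=E^\perp$, but in fact the $n_i$ are fixed by the geometry of $\psi$; the point is rather that they are all \emph{positive}, and then pairing $(\psi_*-\textrm{Id})(c)=\sum_i n_i\langle c,\gamma_i\rangle\gamma_i$ symplectically with $c$ gives $-\sum_i n_i\langle c,\gamma_i\rangle^2$, forcing $\langle c,\gamma_i\rangle=0$ for all $i$ whenever $(\psi_*-\textrm{Id})(c)=0$.
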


\begin{proof} In view of Theorem \ref{t.MMY-1}, it suffices to construct $\psi\in\textrm{Aff}(X)$ acting on $H_1^{(0)}(X,\mathbb{R})$ by an unipotent matrix $B\neq\textrm{Id}$ such that $(B-\textrm{Id})(H_1^{(0)}(X,\mathbb{R}))$ is not Lagrangian. 

We may assume that $X$ decomposes into a collection $\mathcal{C}$ of horizontal cylinders $C$ whose waist curves $\sigma_C$ generate an isotropic subspace $E$ of $H_1(X,\mathbb{Q})$ of dimension $1<\textrm{dim}(E)<g$. Note that 
the image of $E$ under $\pi_*:H_1(X,\mathbb{R})\to H_1(\mathbb{T}^2,\mathbb{R})\simeq H_1^{st}(X,\mathbb{Q})$ is one-dimensiona (because $E$ is isotropic), so that 
$$0<\textrm{dim}(E\cap H_1^{(0)}(X,\mathbb{Q}))<g-1$$

Let $K\in\mathbb{N}$ be an integer such that $D\psi:=\left(\begin{array}{cc} 1 & K \\ 0 & 1 \end{array}\right)\in SL(X)$ and the affine homeomorphism $\psi\in\textrm{Aff}(X)$ with linear part $D\psi$ fixes all horizontal separatrices at all points of the set $\Sigma$ of conical singularities of $(M,\omega)$. 

We affirm that $\psi$ is the desired affine homeomorphism. Indeed, for each horizontal cylinder $C$ of $X$, let us fix $v_C\in H_1(X,\Sigma,\mathbb{Z})$ a relative cycle crossing $C$ upwards. By definition, the matrix $B$ of the action of $\psi$ on $H_1(X,\Sigma,\mathbb{Z})$ satisfies 
$$B(v_C) - v_C = m_C \sigma_C$$
for some $m_C\in\mathbb{Z}$. Since $B$ fixes each $\sigma_C$, we have that $B-\textrm{Id}$ is a nilpotent operator of degree two on $H_1(X,\Sigma,\mathbb{Z})$. The image of $H_1^{(0)}(X,\mathbb{R})$ under $B-\textrm{Id}$ is contained in $E\cap H_1^{(0)}(X,\mathbb{R})$, so that 
$$\textrm{dim}((B-\textrm{Id})(H_1^{(0)}(X,\mathbb{R})))\leq \textrm{dim}(E\cap H_1^{(0)}(X,\mathbb{Q}))<g-1$$
In particular, $(B-\textrm{Id})(H_1^{(0)}(X,\mathbb{R}))$ is not Lagrangian. 

It remains only to prove that $B|_{H_1^{(0)}(X,\mathbb{R})}\neq\textrm{Id}$. For this sake, consider the oriented graph $\Gamma$ whose vertices are the connected components of $M-\bigcup\limits_{C\in\mathcal{C}} C$ and whose edges $e_C$, $C\in\mathcal{C}$, go from the component $b(C)$ of $M-\bigcup\limits_{C\in\mathcal{C}} C$ containing the bottom boundary of $C$ to the component $t(C)$ of $M-\bigcup\limits_{C\in\mathcal{C}} C$ containing the top boundary of $C$. 

Note that, for each vertex $v$ of $\Gamma$, we have the relation
$$\sum\limits_{b(C)=v}\sigma_C = \sum\limits_{t(C)=v}\sigma_C$$
because these are the homology classes of the two components of a small neighborhood of $v$ in $M$.

The orbit of the vertical flow on $X$ through a generic point of $C$ provides a way to construct a simple oriented loop in $\Gamma$ containing the edge $e_C$. 

On the other hand, $\Gamma$ contains two distinct simple oriented loops at least: otherwise, $\Gamma$ would consist of a single loop, so that the previous relations would say that all classes $\sigma_C$ are equal, a contradiction with our assumption that the span $E$ of the classes $\sigma_C$ has dimension $\textrm{dim}(E)>1$. 

Let us fix two distinct simple oriented loops $\gamma_1$ and $\gamma_2$ in $\Gamma$. By definition, there is an edge of $\gamma_i$ not contained in $\gamma_{3-i}$ for $i=1, 2$. Consider a loop $\delta_i$ on $X$ obtained by concatenation of $v_C$ with $e_C\in\gamma_i$ and some horizontal saddle connections. Observe that 
$$B(\delta_i)-\delta_i=\sum\limits_{e_C\in\gamma_i} m_C \sigma_C$$ 
because $B$ fixes horizontal saddle connections. 

Denote by $I_{ij}$ the homological intersection between $B(\delta_i)-\delta_i$ and $\delta_j$. By definition, 
$$I_{ii} = \sum\limits_{e_C\in\gamma_i} m_C \quad \textrm{ and } I_{12} = \sum\limits_{e_C\in\gamma_1\cap\gamma_2} m_C = I_{21}$$
Since $m_C>0$ for all $C\in\mathcal{C}$ and we can find edges in $\gamma_i$ not contained in $\gamma_j$ for $i\neq j$, we get that 
$$\min\{I_{11}, I_{22}\}>I_{12}=I_{21}$$
Thus, $(B-\textrm{Id})(c_1\delta_1+c_2\delta_2)$ intersects non-trivially some $\delta_j$ for all $(c_1,c_2)\in\mathbb{R}^2-\{(0,0)\}$. 

In particular, if we choose $(c_1,c_2)\neq (0,0)$ and $\sigma\in E$ such that $c_1\delta_1+c_2\delta_2+\sigma\in H_1^{(0)}(X,\mathbb{R})$, then we obtain a cycle whose image under $B-\textrm{Id}$ is not zero\footnote{Because $(B-\textrm{Id})(c_1\delta_1+c_2\delta+\sigma)=(B-\textrm{Id})(c_1\delta_1+c_2\delta_2)$ intersects some $\delta_j$ in a non-trivial way.}. Hence, $B|_{H_1^{(0)}(X,\mathbb{R})}\neq\textrm{Id}$. 

This completes the proof of the corollary. 
\end{proof}

The simplicity criteria in Corollary \ref{c.MMY-1} was originally applied in our joint paper \cite{MMY} with M\"oller and Yoccoz to study the Lyapunov exponents of square-tiled surfaces in the minimal stratum $\mathcal{H}(4)$ of the moduli space of translation surfaces of genus $3$. 

More precisely, we exhibited many  infinite families of square-tiled surfaces in both connected components of $\mathcal{H}(4)$ fitting the assumptions of Corollary \ref{c.MMY-1} (cf. Theorem 1.3 in \cite{MMY}), and, \emph{conditionally} on a conjecture of Delecroix and Leli\`evre on the classification of $SL(2,\mathbb{Z})$-orbits of square-tiled surfaces in $\mathcal{H}(4)$, we showed that \emph{all but finitely many} square-tiled surfaces in $\mathcal{H}(4)$ have simple Lyapunov spectrum. 

In a nutshell, the idea of the proof of these facts goes as follows. Given a family $X_n=(M_n,\omega_n)\in\mathcal{H}(4)$, $n\in\mathbb{N}$, of square-tiled surfaces and a family of affine homeomorphisms $\phi_n\in\textrm{Aff}(X_n)$, let $A_n\in Sp(4,\mathbb{Z})$ be the matrices of $\phi_n|_{H_1^{(0)}(X_n,\mathbb{R})}$ and denote by $P_n(x)=x^4+a_nx^3+b_n^2+a_nx+1$ the characteristic polynomial of $A_n$. In our paper \cite{MMY}, we identify several\footnote{If the conjecture of Delecroix-Leli\`evre is true, then the families described in \cite{MMY} include all but finitely many $SL(2,\mathbb{Z})$-orbits of square-tiled surfaces in $\mathcal{H}(4)$.} such families with the property that $P_n(x)$ is irreducible and the discriminants $\Delta_1(P_n)$, $\Delta_2(P_n)$ and $\Delta_3(P_n):=\Delta_1(P_n)\Delta_2(P_n)$ introduced in Proposition \ref{p.Galois-degree4} are polynomial functions of high ``reduced'' degree of the parameter $n\in\mathbb{N}$. By Siegel's theorem (on integral points in algebraic curves of positive genus), we have that the discriminants $\Delta_1(P_n)$, $\Delta_2(P_n)$ and $\Delta_3(n)$ are not squares for all $n$ sufficiently large, i.e., for all $n\geq n_0$ where $n_0$ is an \emph{effective} (but \emph{doubly} exponential) function of the coefficients of $\Delta_1(P_n)$, $\Delta_2(P_n)$ and $\Delta_3(P_n)$. By Proposition \ref{p.Galois-degree4}, it follows that $A_n$ is Galois-pinching for all $n\geq n_0$. Since these families $X_n$ are built in such a way that the cylinders in some rational direction span a subspace of dimension two in homology, we can conclude thanks to Corollary \ref{c.MMY-1}.

\begin{remark} After the publication of \cite{MMY}, Bonatti, Eskin and Wilkinson announced an \emph{unconditional} proof of the simplicity of the Lyapunov exponents of all but finitely many square-tiled surfaces in $\mathcal{H}(4)$. Their arguments are based on the continuity of the Lyapunov spectra of $SL(2,\mathbb{R})$-invariant probability measures in moduli spaces of translation surfaces (and, for this reason, the conclusions of Bonatti, Eskin and Wilkinson are \emph{not} effective in the sense explained above).
\end{remark} 

For the sake of brevity, we shall not detail here the application of Corollary \ref{c.MMY-1} metionned above. Instead, we offer in the next subsection an application (due to Delecroix and the author \cite{DeMa}) of this corollary to the construction of a counterexample to the converse of a theorem of Forni. 

\subsection{A counterexample to an informal conjecture of Forni} 

In his paper \cite{Fo11}, Forni obtained a geometrical criterion for the non-uniform hyperbolicity of the KZ cocycle with respect to a given $SL(2,\mathbb{R})$-invariant probability measure $\mu$ on the moduli space of translation surfaces. In particular, Forni showed that if the support of $\mu$ contains a translation surface $X=(M,\omega)$ decomposing completely into parallel cylinders whose waist curves generate a Lagrangian subspace of $H_1(M,\mathbb{R})$, then all Lyapunov exponents of the KZ cocycle with respect to $\mu$ are non-zero. 

During some informal conversations with the author, Forni conjectured that the converse statement to his theorem could be true: if the Lyapunov exponents of the KZ cocycle with respect to $\mu$ are all non-zero, then the support of $\mu$ contains a translation surface completely decomposing into parallel cylinders whose waist curves generate a Lagrangian subspace in homology. 

In our joint work \cite{DeMa} with Delecroix, we found two counterexamples to this informal conjecture. For the sake of exposition, we present only one of them. 

\begin{theorem}\label{t.DeMa} The $SL(2,\mathbb{R})$-orbit of the square-tiled surface $X$ of genus $3$ associated to the pair of permutations $h=(1, 2, 3, 4)(5, 6, 7, 8)$ and $v=(1, 2, 3, 5)(4, 8, 7, 6)$ supports an unique $SL(2,\mathbb{R})$-invariant probability measure $\mu$ such that all Lyapunov exponents of the KZ cocycle with respect to $\mu$ are non-zero but the subspaces generated by complete decompositions in parallel cylinders of surfaces in $SL(2,\mathbb{R})X$ are never Lagrangian. 
\end{theorem}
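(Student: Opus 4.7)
The plan is to decouple the two assertions: the non-vanishing of Lyapunov exponents will follow from the Galois-theoretic simplicity criterion of Corollary \ref{c.MMY-1}, while the failure of Lagrangian cylinder decompositions throughout $SL(2,\mathbb{R})X$ will be verified by a finite combinatorial analysis made possible by the fact that $SL(2,\mathbb{Z})X$ is a finite set of square-tiled surfaces.

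First I would establish the non-vanishing of the exponents. Since $X$ has $g=3$, the relevant subspace $H_1^{(0)}(X,\mathbb{R})$ has dimension $2g-2=4$, so Proposition \ref{p.Galois-degree4} gives a three-discriminant test for Galois-pinching. The horizontal permutation $h=(1,2,3,4)(5,6,7,8)$ already exhibits a complete decomposition of $X$ into two horizontal cylinders $C_1, C_2$ of equal width $4$ and height $1$. Both waist curves $\sigma_1,\sigma_2$ project under $\pi:X\to\mathbb{T}^2$ to $4$ times the horizontal generator, so $\sigma_1+\sigma_2\notin H_1^{(0)}$ while $\sigma_1-\sigma_2\in H_1^{(0)}$, showing that the waist-curve span $E$ satisfies $\dim E=2$, which lies strictly between $1$ and $g=3$. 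It then suffices to produce \emph{some} $\phi\in\textrm{Aff}(X)$ acting on $H_1^{(0)}(X,\mathbb{R})$ by a Galois-pinching matrix $A\in\mathrm{Sp}(4,\mathbb{Z})$. I would look for $\phi$ as a composition of the horizontal Dehn multitwist $T_h$ (with $DT_h=\bigl(\begin{smallmatrix}1&4\\0&1\end{smallmatrix}\bigr)$) and the vertical one $T_v$ arising from $v=(1,2,3,5)(4,8,7,6)$, compute the $4\times 4$ action on $H_1^{(0)}(X,\mathbb{Z})$ in an explicit basis adapted to the cylinder decomposition, and apply the discriminant criteria: check that the characteristic polynomial $P(x)=x^4+ax^3+bx^2+ax+1$ has real positive roots (signs of $\Delta_1,t,d$), then that $\sqrt{\Delta_1},\sqrt{\Delta_2},\sqrt{\Delta_1\Delta_2}\notin\mathbb{Z}$ so that the Galois group is $S_2\ltimes(\mathbb{Z}/2\mathbb{Z})^2$. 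Once Galois-pinching is confirmed, Corollary \ref{c.MMY-1} yields simplicity of the full Lyapunov spectrum on $SL(2,\mathbb{R})X$, and in particular the vanishing of no exponent.

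Next I would turn to the cylinder-span statement. Because $X$ is a square-tiled surface, $SL(2,\mathbb{Z})X$ is a finite $SL(2,\mathbb{Z})$-orbit of square-tiled surfaces, computable by iterating the action $T(h,v)=(h,vh^{-1})$, $S(h,v)=(hv^{-1},v)$ starting from the given pair. Every surface $Y\in SL(2,\mathbb{R})X$ is $SL(2,\mathbb{R})$-equivalent to some square-tiled $Y_0\in SL(2,\mathbb{Z})X$, and a complete parallel cylinder decomposition of $Y$ corresponds to a complete cylinder decomposition of $Y_0$ in a rational (not necessarily horizontal) direction; moreover, the span of the waist curves in $H_1$ is an $SL(2,\mathbb{R})$-equivariant invariant, so the Lagrangian property depends only on $Y_0$ and its chosen cylinder direction. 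For each of the finitely many $Y_0\in SL(2,\mathbb{Z})X$, the periodic directions realized as $SL(2,\mathbb{Z})$-translates of the horizontal direction form a finite list (up to the stabilizer), and for each direction one lists the cylinders and computes the $\mathbb{Q}$-span of their waist curves in $H_1(Y_0,\mathbb{Q})$. The claim is that this span has dimension at most $2$ (hence strictly less than $g=3$) in every case, which I would verify case by case; structurally, I expect this to be controlled by the fact that on the horizontal direction only $2$ cylinders already appear, so after $SL(2,\mathbb{Z})$-action the number of cylinders and the $\mathbb{Z}$-rank of their homological projection stays uniformly bounded by $2$.

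The main obstacle is the second part: unlike the simplicity argument, which is reduced by Corollary \ref{c.MMY-1} to two finite algebraic checks, the non-Lagrangian statement requires a uniform control over all periodic directions on every surface in $SL(2,\mathbb{R})X$. Enumerating $SL(2,\mathbb{Z})X$ and the associated cylinder decompositions is elementary but combinatorially delicate, and the whole argument hinges on a clean structural explanation for why the horizontal pattern of ``two cylinders of equal area whose waist curves have proportional images in $H_1(\mathbb{T}^2)$'' persists under the $SL(2,\mathbb{Z})$-action. I would expect this to come from an affine/arithmetic symmetry of $X$ (for instance a translation automorphism of the torus lifting to $X$) that forces the homological span of any complete cylinder decomposition to sit inside a codimension-$1$ subspace transverse to a distinguished Lagrangian, so that Lagrangian spans are excluded by a single invariant rather than by exhaustive case-checking.
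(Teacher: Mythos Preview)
Your approach to the Lyapunov exponents via Corollary \ref{c.MMY-1} is exactly the paper's: verify that some rational cylinder direction gives waist-curve span of dimension strictly between $1$ and $g=3$, then exhibit a single affine element acting on $H_1^{(0)}(X,\mathbb{R})$ by a Galois-pinching matrix. The paper carries this out with the element $\phi=L^4RLR$ (where $DL=\bigl(\begin{smallmatrix}1&2\\0&1\end{smallmatrix}\bigr)$, $DR=\bigl(\begin{smallmatrix}1&0\\2&1\end{smallmatrix}\bigr)$), whose characteristic polynomial on $H_1^{(0)}$ is $x^4-2x^3-30x^2-2x+1$ with discriminants $\Delta_1=2^2\cdot 3\cdot 11$, $\Delta_2=2^8\cdot 3$, $\Delta_1\Delta_2=2^{10}\cdot 3^2\cdot 11$, none a perfect square.

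Where your proposal diverges from the paper is the non-Lagrangian part, and there you are making it much harder than it is. You correctly note that any cylinder decomposition of any $Y\in SL(2,\mathbb{R})X$ corresponds to a rational-direction decomposition of some $Y_0\in SL(2,\mathbb{Z})X$. But you then propose to enumerate, for each $Y_0$, a finite list of periodic directions up to the stabilizer and check them individually. The redundancy here is that a decomposition of $Y_0$ in a primitive direction $(p,q)$ is, after applying $g\in SL(2,\mathbb{Z})$ with $g(p,q)=(1,0)$, precisely the \emph{horizontal} decomposition of $gY_0$, which is again in $SL(2,\mathbb{Z})X$. So it suffices to check \emph{only the horizontal direction} on each element of $SL(2,\mathbb{Z})X$. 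The paper computes that $SL(2,\mathbb{Z})X=\{X_0,X_1,X_2\}$ has exactly three elements, and in each case the horizontal waist curves span a $2$-dimensional subspace; since a Lagrangian in $H_1$ of a genus-$3$ surface has dimension $3$, this finishes the argument in three lines. There is no structural symmetry to uncover and no ``main obstacle'': the entire non-Lagrangian claim reduces to inspecting three pairs of permutations.
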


\begin{proof} 
An elementary calculation shows that $SL(2,\mathbb{Z})$-orbit of $X$ has exactly three square-tiled surfaces $X=X_0$, $X_1$ and $X_2$ (cf. Proposition 2.3 of \cite{DeMa}). A direct inspection of these square-tiled surfaces reveals that the subspace $E_i$ generated by the waists curves of the horizontal cylinders of $X_i$ has dimension $2$ for all $i\in\{0, 1, 2\}$ (cf. Proposition 1.2 of \cite{DeMa}). In particular, this means that the subspaces generated by complete decompositions in parallel cylinders of surfaces in $SL(2,\mathbb{R})X$ are never Lagrangian (because $X$ has genus $3$). 

Therefore, the proof of the theorem will be complete once we show that the Lyapunov exponents of the KZ cocycle with respect to $\mu$ are all non-zero. In this direction, we will actually use Corollary \ref{c.MMY-1} to prove a \emph{stronger} property, namely, the simplicity of these Lyapunov exponents. More concretely, the discussion of the previous paragraph (about the subspaces generated by waist curves of cylinders) says that we can apply Corollary \ref{c.MMY-1} once we exhibit an affine homeomorphism $\phi\in\textrm{Aff}(X)$ acting on $H_1^{(0)}(X,\mathbb{R})$ through a Galois-pinching matrix. Fortunately, this goal is not hard to accomplish: first, one shows that the affine homeomorphisms $L$ and $R$ of $X$ with linear parts $DL=\left(\begin{array}{cc}1&2\\0&1\end{array}\right)$ and $DR = \left(\begin{array}{cc}1&0\\2&1\end{array}\right)$
act on $H_1^{(0)}(X,\mathbb{R})$ (equipped with a certain basis) via the matrices 
$$L|_{H_1^{(0)}(X,\mathbb{R})} = 
\left(\begin{array}{cccc} 
-1 & 0 & -1 & -1 \\ 0 & 1 & 0 & 0 \\ 0 & 0 & 0 & -1 \\ 0 & 0 & -1 & 0 
\end{array}\right) \quad \textrm{ and } \quad 
R|_{H_1^{(0)}(X,\mathbb{R})} = 
\left(\begin{array}{cccc}
0 & 1 & 0 & 0 \\ 1 & 0 & 0 & 0 \\ -1 & 1 & -1 & 0 \\ 0 & 0 & 0 & 1 
\end{array}\right)$$
(cf. Lemma 2.4 in \cite{DeMa}). A straightforward computation reveals that the affine homeomorphism $\phi = L^4 R L R \in\textrm{Aff}(X)$ acts on $H_1^{(0)}(X,\mathbb{R})$ via a matrix with characteristic polynomial 
$$P(x)=x^4 -2 x^3 -30 x^2 -2x +1.$$ 
In particular, $\phi|_{H_1^{(0)}(X,\mathbb{R})}$ is Galois-pinching: indeed, this is a consequence of Proposition \ref{p.Galois-degree4} because $P$ is an irreducible polynomial such that the discriminants $$\Delta_1 = 4 -4 (-30)+8 = 2^2\times 3\times 11, \quad \Delta_2 = (-30+2-2(-2))(-30+2+2(-2)) = 2^8\times 3,$$ 
and $\Delta_1\cdot\Delta_2= 2^{10}\times 3^2\times 11$ are not perfect squares. 

This proves the desired theorem. 
\end{proof}

\newpage

\begin{centering}
\rule{\textwidth}{1.6pt}\vspace*{-\baselineskip}\vspace*{2.5pt}
\rule{\textwidth}{0.4pt}

\section{An example of quaternionic Kontsevich-Zorich monodromy group}\label{s.FFM}

\rule{\textwidth}{0.4pt}\vspace*{-\baselineskip}\vspace{3.2pt}
\rule{\textwidth}{1.6pt}
\end{centering}\\

The features of the $SL(2,\mathbb{R})$-action on the moduli spaces of translation surfaces (and its applications to the study of interval exchange transformations, translation flows and billiards) are intimately related to the properties of the so-called Kontsevich-Zorich (KZ) cocycle. 

In particular, it is not surprising that the KZ cocycle is one of the main actors in the recent groundbreaking work of Eskin and Mirzakhani \cite{EsMi} towards the classification of $SL(2,\mathbb{R})$-invariant measures on moduli spaces of translation surfaces. 

Partly motivated by this scenario, some authors decided to investigate the possible \emph{monodromies} of the KZ cocycle, i.e., the potential Zariski closures of the corresponding groups of matrices. 

\subsection{Filip's classification of Kontsevich-Zorich monodromy groups}

By extending a previous work of M\"oller \cite{Mo06} on the KZ cocycle over Teichm\"uller curves, Filip \cite{Fi13} showed that a version of the so-called \emph{Deligne's semisimplicity theorem} holds for the KZ cocycle in general. 

In plain terms, this means that the KZ cocycle can be completely decomposed into $SL(2,\mathbb{R})$-irreducible pieces, and, furthermore, each piece respects the Hodge structure coming from the Hodge bundle. Equivalently, the Kontsevich-Zorich cocycle is always diagonalizable by blocks and its restriction to each block is related to a variation of Hodge structures of weight $1$.

The previous paragraph seems abstract at first sight, but, as it turns out, it gives geometrical constraints on the possible groups of matrices obtained from the KZ cocycle. 

More precisely, by exploiting the known tables\footnote{See \cite[\S 3.2]{Fi14}.} for monodromy representations coming from variations of Hodge structures of weight $1$ over quasiprojective varieties, Filip \cite{Fi14} classified (up to compact and finite-index factors) the possible Zariski closures of the groups of matrices associated to restrictions of the KZ cocycle to an irreducible piece. In particular, there are at most five types of possible Zariski closures for blocks of the KZ cocycle (cf. Theorems 1.1 and 1.2 in \cite{Fi14}):

\begin{itemize}
\item[(i)] the symplectic group $Sp(2d,\mathbb{R})$ in its standard representation;
\item[(ii)] the (generalized) unitary group $SU_{\mathbb{C}}(p,q)$ in its standard representation;
\item[(iii)] $SU_{\mathbb{C}}(p,1)$ in an exterior power representation;
\item[(iv)] the quaternionic orthogonal group $SO^*(2n)$ (sometimes called $U_{\mathbb{H}}^*(n)$, $SU^*(2n)$ or $SL_n(\mathbb{H})$) of matrices on $\mathbb{C}^{2n}$ respecting a quaternionic structure and an Hermitian (complex) form of signature $(n,n)$ in its standard representation\footnote{In concrete terms, $SO^*(2n)$ is the group of $n\times n$ matrices $A$ with coefficients in the quaternions $\textbf{H}$ such that $A^{\#} A = \textrm{Id}$ where $A^{\#}$ is the transpose of $\sigma(A)$ and $\sigma(a+bi+cj+dk)=a-bi+cj+dk$ is a reversion on $\textbf{H}$.}; 
\item[(v)] the indefinite orthogonal group $SO_{\mathbb{R}}(p,2)$ in a spin representation.
\end{itemize}

Moreover, it is not hard to check that each of these items can be realized as an \emph{abstract} variation of Hodge structures of weight $1$ over abstract curves and/or Abelian varieties. 

\begin{remark} This classification of Kontsevich-Zorich monodromy groups allowed Filip to confirm a conjecture of Forni, Zorich and the author \cite{FMZ} saying that all zero Lyapunov exponents of the KZ cocycle are ``explained'' by its monodromy: see \cite{Fi14} for more details. 
\end{remark}

\subsection{Realizability problem for Kontsevich-Zorich monodromy groups} It is worth to stress out that Filip's classification of the possible blocks of the KZ cocycle comes from a \emph{general} study of variations of Hodge structures of weight $1$. 

Thus, it is \emph{not} clear whether all items above can actually be realized as a block of the KZ cocycle over the closure of some $SL(2,\mathbb{R})$-orbit in the moduli spaces of translations surfaces.

In fact, it was previously known in the literature that (all groups listed in) items (i) and (ii) appear as blocks of the KZ cocycle over closures of $SL(2,\mathbb{R})$-orbits of translation surfaces given by certain cyclic cover constructions. 

On the other hand, it is not obvious that the other 3 items occur in the context of the KZ cocycle: indeed, this realizability question was explicitly posed by Filip in \cite[Question 5.5]{Fi14} (see also \S B.2 in Appendix B of Delecroix-Zorich paper \cite{DZ}).

Filip, Forni and myself \cite{FFM} gave a partial answer to this question by showing that the case $SO^*(6)$ of item (iv) is realizable as a block of the KZ cocycle: 

\begin{theorem}\label{t.FFM} There exists a square-tiled surface $\widetilde{L}$ of genus 11 such that the restriction of the KZ cocycle over $SL(2,\mathbb{R})\cdot\widetilde{L}$ to a certain $SL(2,\mathbb{R})$-irreducible piece acts through a Zariski dense subgroup of $SO^*(6)$ in its standard representation (modulo finite-index subgroups). 
\end{theorem}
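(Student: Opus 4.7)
The plan is to exhibit the surface $\widetilde{L}$ as a regular cover of the L-shaped square-tiled surface of genus two (from Figure \ref{f.L-shaped}) whose deck group is the quaternion group $Q_8=\{\pm 1,\pm i,\pm j,\pm k\}$. The choice of $Q_8$ is dictated by the target group $SO^*(6)$: among the irreducible representations of $Q_8$ there is a unique two-dimensional complex irreducible representation $\rho_{\mathbb{H}}$ whose endomorphism ring is the quaternions, and it is precisely such representations that give rise to Hodge-theoretic blocks of type $SO^*(2n)$. Concretely, I would fix a surjection $\pi_1(L-\Sigma)\twoheadrightarrow Q_8$ (where $\Sigma$ is the unique conical singularity) and build the associated regular cover $\widetilde{L}\to L$; the ramification data should be chosen so that Riemann--Hurwitz yields genus $g(\widetilde{L})=11$, which is consistent with a ramification divisor of suitable total degree over the zero of $L$.

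Once $\widetilde{L}$ is constructed, the $\mathbb{Q}[Q_8]$-module structure on $H^1(\widetilde{L},\mathbb{Q})$ produces a canonical isotypical decomposition. After quotienting out the ``trivial'' summand $H^1(L,\mathbb{R})$ (which carries the tautological plane and exponents $\pm 1$) and the summands corresponding to the four one-dimensional characters of $Q_8$, the remaining piece $H^1_{\mathbb{H}}(\widetilde{L},\mathbb{R})$ is the $\rho_{\mathbb{H}}$-isotypical component. A dimension count from the character theory of $Q_8$ together with the genus-11 computation should give $\dim_{\mathbb{C}}H^1_{\mathbb{H}}=12$, but the quaternionic structure cuts this dimension in half when we pass to the natural $SO^*(6)$-representation, giving complex rank six as required. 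Since the affine group $\mathrm{Aff}(\widetilde{L})$ commutes with the $Q_8$-action (up to outer automorphisms that I would arrange to be inessential), the KZ cocycle respects this decomposition, and by Filip's general classification, Deligne semisimplicity forces the monodromy on $H^1_{\mathbb{H}}$ to land in $SO^*(6)$ modulo compact and finite-index factors.

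The main work is to show that the image is actually Zariski dense in $SO^*(6)$. For this I would imitate the Galois-pinching strategy of Section \ref{s.MMY}: exhibit two explicit affine homeomorphisms $\phi,\psi\in\mathrm{Aff}(\widetilde{L})$ (for instance, Dehn multitwists in a horizontal and a slope-one cylinder decomposition, pulled back from carefully chosen elements of the Veech group of $L$) whose restrictions $A,B$ to $H^1_{\mathbb{H}}$ have characteristic polynomials that, viewed over the appropriate quaternionic structure, realize a \emph{pinching plus twisting} pair inside $SO^*(6)$. The Galois group of the characteristic polynomial of $A$ should be chosen as large as possible among Galois groups compatible with the $SO^*(6)$-structure (which replaces the ambient $S_n\rtimes(\mathbb{Z}/2\mathbb{Z})^n$ of the symplectic case by a suitable subgroup reflecting the quaternionic involution), and the verification would follow the three irreducibility criteria reminiscent of Proposition \ref{p.Galois-degree4}, adapted so that no nontrivial $SO^*(6)$-subgroup can contain both $A$ and a product of the form $BA^{\ell_1}\cdots BA^{\ell_{m-1}}B$.

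The hard part will certainly be this last verification: once $\widetilde{L}$ is fixed, one must compute explicit integer matrices for $\phi|_{H^1_{\mathbb{H}}}$ and $\psi|_{H^1_{\mathbb{H}}}$, factor their characteristic polynomials over $\mathbb{Q}$ (and over the quaternion algebra controlling $\rho_{\mathbb{H}}$), and certify that no proper reductive subgroup of $SO^*(6)$ on Filip's list can contain both. In practice this will likely be a computer-assisted calculation along the lines of \cite{MMY}, where the algebraic constraints reduce to checking that certain discriminants attached to the pair $(A,B)$ are nonsquares in the relevant number fields; ruling out the intermediate possibilities (compact forms, smaller $SU(p,q)$, etc.) is where most of the effort goes. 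If every other step succeeds, this density check will both pin down $\widetilde{L}$ uniquely and complete the proof of Theorem \ref{t.FFM}.
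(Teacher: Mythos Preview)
Your construction of $\widetilde{L}$ as a $Q_8$-cover of the $L$-shaped origami and the isotypical decomposition of $H_1(\widetilde{L},\mathbb{R})$ are exactly what the paper does; the $12$-dimensional quaternionic piece $W\simeq 3\chi_2$ is the correct target, and Filip's classification does force the monodromy on $W$ into $SO^*(2n)$ for some $n\leq 3$.

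Where your plan diverges from the paper is in the endgame. You propose to adapt the Galois-pinching/twisting machinery of Section~\ref{s.MMY} to $SO^*(6)$: find explicit affine elements whose characteristic polynomials have maximal Galois group compatible with the quaternionic structure, then run the graph-mixing argument. This would work in principle, but the adaptation of Theorem~\ref{t.MMY-twisting} from $Sp(2d)$ to $SO^*(2n)$ is nontrivial (the combinatorics of isotropic flags changes, and the Galois-group model $S_d\rtimes(\mathbb{Z}/2\mathbb{Z})^d$ must be replaced), and you would be building heavy machinery to distinguish among only three cases. The paper instead exploits Filip's trichotomy directly: since $W=3\chi_2$, the only possibilities are (a') $SO^*(6)$, (b') $SO^*(4)\times SO^*(2)$, or (c') $SO^*(2)^3$. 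Case~(c') is killed in one stroke by the Eskin--Kontsevich--Zorich formula: the sum of Lyapunov exponents of $\widetilde{L}$ comes out to $3$, and subtracting the contributions of $H_1^{st}\oplus E_1\oplus E_i\oplus E_j\oplus E_k$ (computed via Bainbridge and Chen--M\"oller) leaves a nonzero exponent $\lambda=1/6$ on $W$, which is incompatible with the compact group $SO^*(2)^3$. Case~(b') is then ruled out by a single linear-algebra observation: if $W=U\oplus V$ with $V\simeq\chi_2$ invariant, then $V$ must be the central eigenspace of \emph{every} element acting with quaternionic-simple spectrum; the paper exhibits two explicit Dehn multitwist products $A\cdot B$ and $C\cdot B$ whose central eigenspaces are distinct.

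So your approach is not wrong, but it misses the shortcut: the Lyapunov-exponent computation via the EKZ formula (which you do not mention) disposes of the compact alternative for free, and after that the remaining obstruction is a four-dimensional invariant subspace, which is cheaper to exclude by comparing two eigenspaces than by building a full pinching/twisting theory for $SO^*$.
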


\begin{remark}\label{r.SO*6-isomorphism} Thanks to an exceptional isomorphism between the real Lie algebra $\mathfrak{so}^*(6)$ in its standard representation and the second exterior power representation of the real Lie algebra $\mathfrak{su}(3,1)$, this theorem also says that the case of $\wedge^2 SU(3,1)$ of item (iii) is realized. 
\end{remark}

\begin{remark}\label{r.MYZ-examples} The examples constructed by Yoccoz, Zmiaikou and myself \cite{MYZ} of regular origamis associated to the groups $SL(2,\mathbb{F}_p)$ of Lie type might lead to the realizability of all groups $SO^*(2n)$ in item (iv). In fact, what prevents us to show that this is the case is the absence of a systematic method to prove that the natural candidates to blocks of the KZ cocycle over these examples are actually irreducible pieces. 
\end{remark} 

\begin{remark} The realizability of some and/or all groups in items (iii) and (v) might be a delicate problem: indeed, contrary to our previous remark about the realization of all groups in item (iv), we are \emph{not} aware of examples of translation surfaces which could solve this question.
\end{remark}

The remainder of this section is dedicated to the proof of Theorem \ref{t.FFM}. 

\subsection{A quaternionic cover of a L-shaped orgami}

The starting point of the square-tiled surface $\widetilde{L}$ in Theorem \ref{t.FFM} is the following observation. The group $SO^*(2n)$ is related to quaternionic structures on vector spaces. In particular, it is natural to look for translation surfaces possessing an automorphism group admitting representations of quaternionic type.

Note that automorphism groups of translation surfaces (of genus $\geq 2$) are always finite\footnote{E.g., by Hurwitz’s theorem saying that a Riemann surface of genus $g\geq 2$ has $84(g-1)$ automorphisms at most.} and the simplest finite group with representations of quaternionic type is the quaternion group
$$Q:=\{1,-1,i,-i,j,-j,k,-k\}$$
where $i^2=j^2=k^2=-1$, $ij=k$, $jk=i$ and $ki=j$.

This indicates that we should look for translation surfaces whose group of automorphisms is isomorphic to $Q$. A concrete way of building such translation surfaces $S$ is to consider ramified covers $S\rightarrow C$ of ``simple'' translation surfaces $C$ such that the group of deck transformations of $S\rightarrow C$ is isomorphic to $Q$.

The first natural attempt is to take $C=\mathbb{R}^2/\mathbb{Z}^2$ the flat torus, and define $S$ as the translation surface obtained as follows. We let $C_g$, $g\in Q$, be copies of the flat torus $C$. Then, we glue by translation the rightmost vertical, resp. topmost horizontal side, of $C_g$ with the leftmost vertical, resp. bottommost horizontal side, of $C_{gi}$, resp. $C_{gj}$ for each $g\in Q$. In this way, we obtain a translation surface $S$ tiled by eight squares $C_{g}$, $g\in Q$, such that the natural projection $S\rightarrow C$ is a ramified cover (branched only at the origin of $C$) whose group of automorphisms is isomorphic to $Q$ (namely, an element $h\in Q$ acts by translating $C_g$ to $C_{hg}$ for all $g\in Q$).

The translation surface $S$ constructed above is a well-known square-tiled surface: it is the so-called  \emph{Eierlegende Wollmilchsau} in the literature (see e.g. \cite{Fo06} and \cite{HS}).

Unfortunately, the Eierlegende Wollmilchsau is \emph{not} a good example for our current purposes. Indeed, it is known that the KZ cocycle over the $SL(2,\mathbb{R})$-orbit of the Eierlegende Wollmilchsau acts through a \emph{finite} group of matrices: see, e.g., the work \cite{MY} of Yoccoz and the author. In particular, this provides \emph{no} meaningful information towards realizing $SO^*(2n)$ monodromy groups because in Filip's list one always \emph{ignores} compact and/or finite-index factors.

After this frustrated attempt, we are led to look at other translation surfaces distinct from the flat torus. In this direction, it is natural to consider the simplest L-shaped square-tiled surface $L$ in genus $2$ described in Figure \ref{f.L-shaped} above.

Next, we build a ramified cover $\widetilde{L}$ of $L$ in a similar way to the construction of the Eierlegende Wollmilchsau: we take copies $L_g$, $g\in Q$, of this L-shaped square-tiled surface, and we glue by translations the corresponding vertical, resp. horizontal, sides of $L_g$ and $L_{gi}$, resp. $L_{gj}$. Equivalently, we label the sides of $L_g$ as indicated in Figure \ref{f.FFM-1}   
\begin{figure}[htb!] 
\begin{center}
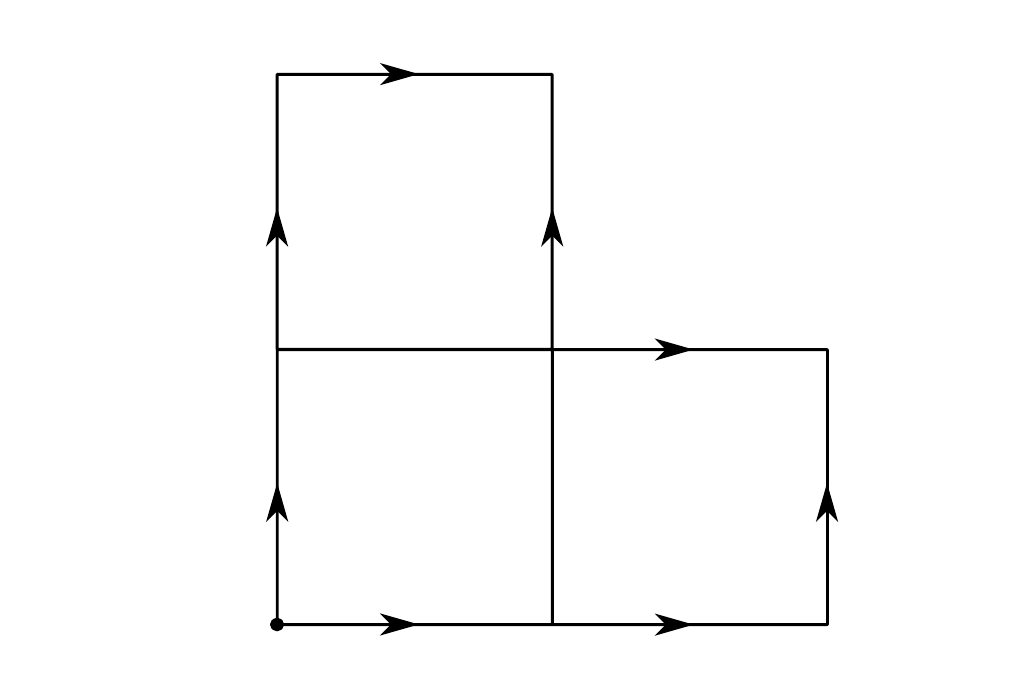
\caption{Construction of $\widetilde{L}$.}\label{f.FFM-1}
\end{center}
\end{figure}
and we glue by translations the pairs of sides with the same labels.

The natural projection $\widetilde{L}\rightarrow L$ is a ramified cover branched only at the unique conical singularity of $L$. Also, the automorphism group of $\widetilde{L}$ is isomorphic to $Q$ and each $h\in Q$ acts on $\widetilde{L}$ by translating each $L_g$ to $L_{hg}$ for all $g\in Q$.

A direct inspection reveals that $\widetilde{L}$ has four conical singularities (at the points labelled $\underline{1}$, $\underline{i}$, $\underline{j}$ and $\underline{k}$ in Figure \ref{f.FFM-1}) whose cone angles are $12\pi$. In particular, $\widetilde{L}\in\mathcal{H}(5,5,5,5)$ is a genus $11$ surface. 

In this setting, the KZ cocycle over $SL(2,\mathbb{R})\widetilde{L}$ is simply the action on $H_1(\widetilde{L},\mathbb{R})$ of the group $\textrm{Aff}(\widetilde{L})$ of affine homeomorphisms of $\widetilde{L}$.

\subsection{Block decomposition of the KZ cocycle over $SL(2,\mathbb{R})\cdot\widetilde{L}$}

Similarly to the so-called \emph{wind-tree models} studied by Delecroix, Hubert and Leli\`evre [REF], the translation surface $\widetilde{L}$ has a rich group of symmetries allowing us to decompose the KZ cocycle into blocks.

More precisely, by taking the quotient of $\widetilde{L}$ by the center $Z=\{1,-1\}$ of its automorphism group $Q$, we obtain a translation surface $M=\widetilde{L}/Z$ of genus $5$ with four conical singularities whose cone angles are $6\pi$. Moreover, by taking the quotient of $S$ by the subgroups $Z\cup\{i,-i\}$, $Z\cup\{j,-j\}$ and $Z\cup \{k,-k\}$ of its automorphism group $Q$, we obtain three genus $3$ surfaces $N_i$, $N_j$ and $N_k$ each having two conical singularities whose cone angles are $6\pi$. In summary, we have intermediate covers $\widetilde{L}\rightarrow M\rightarrow L$ and $M\rightarrow N_{\ast}\rightarrow L$ for $\ast=i, j, k$ such that $M\in\mathcal{H}(2,2,2,2)$ and $N_{\ast}\in\mathcal{H}(2,2)^{\textrm{odd}}$.

These intermediate covers lead us towards a natural candidate for blocks of the KZ cocycle over $SL(2,\mathbb{R})\widetilde{L}$. More concretely, a translation cover $p:S\to C$ induces a decomposition 
$$H_1(S,\mathbb{R}) = \textrm{Ker}(p_*)^{\perp}\oplus \textrm{Ker}(p_*)$$ 
where $\textrm{Ker}(p_*)^{\perp}\simeq H_1(C,\mathbb{R})$ is the symplectic orthogonal of the vector space $\textrm{Ker}(p_*)$ of cycles on $S$ projecting to zero under $p$. Thus, if we take $\textrm{Aff}_{0}(\widetilde{L})$ an adequate finite-index subgroup of $\textrm{Aff}(\widetilde{L})$ whose elements \emph{commute}\footnote{I.e., up to finite-index, the KZ cocycle commutes with the action of $Q$ on $H_1(\widetilde{L},\mathbb{R})$.} with the automorphisms of $\widetilde{L}$, then we obtain that the action of $\textrm{Aff}(\widetilde{L})$ can be virtually diagonalized by (symplectically orthogonal) blocks 
$$H_1(S,\mathbb{R}) = H_1^{st}\oplus E_1\oplus E_i\oplus E_j\oplus E_k\oplus W$$
where: 
\begin{itemize}
\item $H_1^{st}$ is the subspace generated by $\sigma:=\sum\limits_{g\in Q} (\sigma_g+\mu_g)$ and $\zeta:=\sum\limits_{g\in Q}(\zeta_g+\nu_g)$, 
\item $H_1^{st}\oplus E_1\simeq H_1(L,\mathbb{R})$ is induced by the cover $\widetilde{L}\to L$, 
\item $H_1^{st}\oplus E_1\oplus E_{\ast}\simeq H_1(N_{\ast},\mathbb{R})$ is induced by the cover $\widetilde{L}\to N_{\ast}$ for each $\ast=i, j, k$, and 
\item $W$ is the symplectic orthogonal of the direct sum of the other subspaces.
\end{itemize}
See Subsection 5.3 of \cite{FFM} for more details. 

These subspaces have the structure of $Q$-modules, and, by a quick comparison with the character table of $Q$, one can show that $E_1$, $E_i$, $E_j$, $E_k$ and $W$ (resp.) are the isotypical components of the trivial, $i$-kernel, $j$-kernel, $k$-kernel and the unique four-dimensional faithful irreducible representation $\chi_2$ of $Q$ (resp.): for example, $W$ is the isotypical component of $\chi_2$ because $-1\in Q$ acts as $-\textrm{id}$ on $W$ and the sole character of $Q$ to take a negative value at $-1$ is precisely $\chi_2$.

Furthermore, $W$ is $12$-dimensional because $\widetilde{L}$ and $M$ have genera $11$ and $5$ (so that $H_1(\widetilde{L},\mathbb{R})$ and $H_1(M,\mathbb{R})$ have dimensions $22$ and $10$), and $W$ is the symplectic orthogonal of the symplectic subspace $H_1^{st}\oplus E_1\oplus E_i\oplus E_j\oplus E_k\simeq H_1(M,\mathbb{R})$. Hence, $W=3\chi_2$ as a $Q$-module. 

\subsection{Some constraints on Kontsevich-Zorich monodromy group of $W$}

Note that $\textrm{Aff}_0(\widetilde{L})$ acts via symplectic automorphisms of the $Q$-module $W$ (because the actions of $\textrm{Aff}_0(\widetilde{L})$ and the automorphism group $Q$ on $H_1(\widetilde{L},\mathbb{R})$ commute), and $W=3\chi_2$ carries a quaternionic structure. In particular, we are almost in position to apply Filip's classification results to determine the group of matrices through which $\textrm{Aff}_0(\widetilde{L})$ acts on $W$.

Indeed, if we have that $\textrm{Aff}_0(\widetilde{L})$ acts \emph{irreducibly} on $W$, then Filip's list of possible monodromy groups says that $\textrm{Aff}_0(\widetilde{L})$ acts through a (virtually Zariski dense) subgroup of $SO^*(6)$ (because $\textrm{Aff}_0(\widetilde{L})$ preserves a quaternionic structure on $W$).

However, there is \emph{no} reason for the action of the affine homeomorphisms on an isotypical component of the automorphism group to be irreducible in general (as far as we know). Nevertheless, the semi-simplicity theorems of M\"oller \cite{Mo06} and Filip \cite{Fi13} mentioned above tells us that $W$ can split into irreducible pieces in one of the following three ways:
\begin{itemize}
\item[(a)] $W$ is irreducible, i.e., it does not decompose further;
\item[(b)] $W=U\oplus V$ where $U=2\chi_2$ and $V=\chi_2$ are irreducible pieces;
\item[(c)] $W=V'\oplus V''\oplus V'''$ where $V'$, $V''$, $V'''$ are irreducible pieces isomorphic to $\chi_2$.
\end{itemize}

By applying Filip's classification to each of these items, we find that (up to compact and finite-index factors) there are just three possibilities:
\begin{itemize}
\item[(a')] if $W$ is $\textrm{Aff}_0(\widetilde{L})$-irreducible, then $\textrm{Aff}_0(\widetilde{L})$ acts through a Zariski-dense subgroup of $SO^*(6)$;
\item[(b')] if $W=U\oplus V$ with $U=2\chi_2$ and $V=\chi_2$ irreducible pieces, then $\textrm{Aff}_0(\widetilde{L})$ acts through a subgroup of $SO^*(4)\times SO^*(2)$;
\item[(c')] if $W=V'\oplus V''\oplus V'''$ with $V'$, $V''$, $V'''$ irreducible pieces isomorphic to $\chi_2$, then $\textrm{Aff}_0(\widetilde{L})$ acts through a subgroup of $SO^*(2)\times SO^*(2) \times SO^*(2)$.
\end{itemize}

At this point, we reduced the proof of Theorem \ref{t.FFM} to show that we are in situation (a') or, equivalently, the situations (b') and (c') can't occur.

In the next two subsections, we shall rule out (c') and (b') respectively. 

\subsection{Ruling out $SO^*(2)\times SO^*(2) \times SO^*(2)$ monodromy on $W$}

Suppose that $\textrm{Aff}_0(\widetilde{L})$ acts on $W$ as described in item (c'). In this situation, the nature\footnote{A product of three copies of the \emph{compact} group $SO^*(2)$.} of $SO^*(2)\times SO^*(2)\times SO^*(2)$ would force \emph{all} Lyapunov exponents of the restriction of the KZ cocycle to $W$ to \emph{vanish}. Therefore, we can rule out this situation by showing the following proposition:

\begin{proposition}\label{p.EKZ-W} Some Lyapunov exponents of KZ cocycle on $W$ are \emph{not} zero. 
\end{proposition}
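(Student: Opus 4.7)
The plan is to apply the Eskin--Kontsevich--Zorich formula recalled in Section~\ref{s.AMY} in order to compute the sum of the non-negative Lyapunov exponents of the KZ cocycle restricted to $W$, and to show that this sum is strictly positive. Since the restriction of the KZ cocycle to $W$ commutes with the quaternionic structure coming from the $Q$-action through the representation $\chi_2$, the non-negative exponents on the $12$-dimensional block $W$ occur with even multiplicity, of the form $\lambda_1,\lambda_1,\lambda_2,\lambda_2,\lambda_3,\lambda_3$ with $\lambda_i\geq 0$; ruling out case (c') amounts to showing that $\lambda_1+\lambda_2+\lambda_3>0$.

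First I would apply EKZ to the unique $SL(2,\mathbb{R})$-invariant probability measure $\mu_{\widetilde{L}}$ on $SL(2,\mathbb{R})\cdot\widetilde{L}$. Since $\widetilde{L}\in\mathcal{H}(5,5,5,5)$, the combinatorial term evaluates to $\frac{1}{12}\cdot 4\cdot\frac{5\cdot 7}{6}=\frac{35}{18}$, so that the sum of the eleven non-negative KZ exponents on $H^1(\widetilde{L},\mathbb{R})$ equals $\frac{35}{18}+c_{\mathrm{SV}}(\mu_{\widetilde{L}})$, where $c_{\mathrm{SV}}$ denotes the Siegel--Veech contribution. Next I would apply EKZ to the unique $SL(2,\mathbb{R})$-invariant probability measure $\mu_M$ on $SL(2,\mathbb{R})\cdot M$: since the $Q$-equivariant decomposition identifies the $Z$-invariant piece $H_1^{\mathrm{st}}\oplus E_1\oplus E_i\oplus E_j\oplus E_k$ with $H^1(M,\mathbb{R})$, and since $M\in\mathcal{H}(2,2,2,2)$, the sum of the five non-negative KZ exponents attached to this complementary piece equals $\frac{1}{12}\cdot 4\cdot\frac{2\cdot 4}{3}+c_{\mathrm{SV}}(\mu_M)=\frac{8}{9}+c_{\mathrm{SV}}(\mu_M)$. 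Subtracting and using that the six non-negative exponents on $W$ sum to $2(\lambda_1+\lambda_2+\lambda_3)$, one would obtain
$$2(\lambda_1+\lambda_2+\lambda_3)\;=\;\frac{19}{18}+\bigl(c_{\mathrm{SV}}(\mu_{\widetilde{L}})-c_{\mathrm{SV}}(\mu_M)\bigr).$$

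The hard part will be to control the sign of the Siegel--Veech difference $c_{\mathrm{SV}}(\mu_{\widetilde{L}})-c_{\mathrm{SV}}(\mu_M)$. Both surfaces are square-tiled, so the relevant Siegel--Veech constants are finite sums over the cusps of the corresponding arithmetic Teichm\"uller curves, and can therefore be evaluated by enumerating cylinder decompositions in each rational direction. For the degree-two cover $\widetilde{L}\to M$, every cylinder of $M$ lifts either to a single cylinder in $\widetilde{L}$ of twice the length or to two isometric disjoint cylinders, so a direction-by-direction accounting should establish either that $c_{\mathrm{SV}}(\mu_{\widetilde{L}})\geq c_{\mathrm{SV}}(\mu_M)$, which gives the desired positivity immediately, or else that any deficit is strictly bounded by the gap $\frac{19}{18}$.

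As an alternative route bypassing the Siegel--Veech bookkeeping, I would consider Forni's geometric criterion \cite{Fo11}: if one exhibits a rational direction on $\widetilde{L}$ in which the waist curves of the resulting cylinder decomposition span a Lagrangian subspace of $H_1(\widetilde{L},\mathbb{R})$, then every KZ Lyapunov exponent is non-zero, including those restricted to $W$. The rich symmetry group $Q$ of $\widetilde{L}$ and the explicit combinatorics of the cover $\widetilde{L}\to L$ make it plausible that such a direction can be found by a systematic search among the $SL(2,\mathbb{Z})$-images of the horizontal direction, thereby yielding the proposition without entering the Siegel--Veech computation at all.
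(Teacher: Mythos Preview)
Your proposal has a genuine gap: neither of the two routes you sketch is actually carried out, and both are less tractable than you suggest.

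For the Siegel--Veech route, you correctly flag that ``the hard part'' is controlling $c_{\mathrm{SV}}(\mu_{\widetilde{L}})-c_{\mathrm{SV}}(\mu_M)$, but you do not do so. There is no general monotonicity of the area Siegel--Veech constant under finite covers, and your heuristic about cylinders of $M$ lifting ``either to a single cylinder of twice the length or to two isometric disjoint cylinders'' does not by itself produce an inequality of the required sign; a direction-by-direction accounting would in any case amount to the same finite combinatorial computation you are trying to avoid. The alternative via Forni's criterion is likewise only asserted to be ``plausible'': you would have to actually exhibit a completely periodic direction on $\widetilde{L}$ whose waist curves span a Lagrangian subspace, and there is no a priori reason such a direction exists (cf.\ Theorem~\ref{t.DeMa}, where it never does).

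The paper's proof sidesteps both difficulties. Rather than comparing Siegel--Veech constants at two levels, it applies the explicit combinatorial form of the EKZ formula for square-tiled surfaces---where the Siegel--Veech term is a finite sum of reciprocals of horizontal-cylinder lengths over the $SL(2,\mathbb{Z})$-orbit---directly to $\widetilde{L}$; enumerating the twelve surfaces in $SL(2,\mathbb{Z})\cdot\widetilde{L}$ yields the total sum of non-negative exponents equal to $3$. On the complementary piece it does not invoke EKZ for $M$ at all, but uses Bainbridge's result for $L\in\mathcal{H}(2)$ (exponents $1,\,1/3$) together with Chen--M\"oller's non-varying result for $N_\ast\in\mathcal{H}(2,2)^{\mathrm{odd}}$ (sum $5/3$) to obtain the sum on $H_1^{st}\oplus E_1\oplus E_i\oplus E_j\oplus E_k\simeq H_1(M,\mathbb{R})$ as $7/3$. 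The remainder $3-7/3=2/3$ is the contribution of $W$, giving $\lambda=1/6>0$.

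One further correction: since the Oseledets subspaces in $W$ are $Q$-submodules of $W=3\chi_2$, each Lyapunov exponent occurs with multiplicity a multiple of $\dim\chi_2=4$, not merely even multiplicity. The spectrum on $W$ is therefore $\lambda,\lambda,\lambda,\lambda,0,0,0,0,-\lambda,-\lambda,-\lambda,-\lambda$, and the sum of the non-negative exponents is $4\lambda$, not $2(\lambda_1+\lambda_2+\lambda_3)$.
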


The proof of this proposition relies crucially on the formulas of Bainbridge \cite{Ba}, Chen and M\"oller \cite{CM}, and Eskin-Kontsevich-Zorich \cite{EKZ} for the sum of non-negative Lyapunov exponents for the square-tiled surfaces. More concretely, we can derive this proposition as follows.  

First, since $L\in\mathcal{H}(2)$, Bainbridge work \cite{Ba} ensures that the non-negative Lyapunov exponents associated to $H_1^{st}\oplus E_1\simeq H_1(L,\mathbb{R})$ are $1$ and $1/3$. 

Secondly, since $N_{\ast}\in\mathcal{H}(2,2)^{odd}$, $\ast=i, j, k$, the work of Chen and M\"oller \cite{CM} says that the sum of non-negative Lyapunov exponents associated to $H_1^{st}\oplus E_1\oplus E_{\ast}$ is $5/3$. Since we already know that the non-negative Lyapunov exponents of $H_1^{st}\oplus E_1$ are $1$ and $1/3$, we conclude that the non-negative Lyapunov exponents associated to 
$$H_1^{st}\oplus E_1\oplus E_i\oplus E_j\oplus E_k\simeq H_1(M,\mathbb{R})$$ 
are $1$ with multiplicity one and $1/3$ with multiplicity four. 

Thirdly, the fact that $W$ has quaternionic nature forces each of its Lyapunov exponent to appear with multiplicity four (at least). Since $W\simeq 3\chi_2$ is $12$-dimensional, the Lyapunov exponents associated to $W$ have the form 
$$\lambda=\lambda=\lambda=\lambda\geq 0=0=0=0\geq -\lambda=-\lambda=-\lambda=-\lambda$$ 

It follows that the sum of the non-negative Lyapunov exponents of $\widetilde{L}$ is 
$$\frac{7}{3}+4\lambda$$ 

At this point, we will prove that $\lambda$ is \emph{not} zero by using Eskin-Kontsevich-Zorich formula \cite{EKZ} saying that the sum of the non-negative Lyapunov exponents $1=\theta_1>\theta_2\geq\dots\geq\theta_g\geq 0$ of a square-tiled surface $X\in\mathcal{H}(k_1,\dots, k_s)$ of genus $g$ is given by the following expression: 
$$\theta_1+\dots+\theta_g= \frac{1}{12}\sum\limits_{l=1}^s\frac{k_l(k_l+2)}{k_l+1} + 
\frac{1}{\# SL(2,\mathbb{Z})X} \sum\limits_{\substack{Y\in SL(2,\mathbb{Z})X \\ c \textrm{ cycle of } h_Y}} \frac{1}{\textrm{length of } c}$$
where $(h_Y, v_Y)$ is a pair of permutations associated to $Y$. 

In our case, $\widetilde{L}\in\mathcal{H}(5,5,5,5)$ and its $SL(2,\mathbb{Z})\cdot\widetilde{L}$ contains twelve square-tiled surfaces whose flat geometries are explicitely described in \cite[Subsection 5.4]{FFM}. From this, we can show that 
$$\frac{7}{3}+4\lambda = \frac{1}{12}\left(4\times\frac{5\times 7}{6}\right) + \frac{1}{12}\sum\limits_{\substack{Y\in SL(2,\mathbb{Z})\widetilde{L} \\ c \textrm{ cycle of } h_Y}} \frac{1}{\textrm{length of } c} = 3$$ 

This means that $\lambda=1/6\neq 0$, and, thus, the proof of Proposition \ref{p.EKZ-W} is complete. 

\begin{remark} We have just proved that we are in situation (a') or (b'). Hence, we already know that the KZ  cocycle over $SL(2,\mathbb{R})\widetilde{L}$ acts on a irreducible piece $W$ through a Zariski dense subgroup of $SO^*(6)$ or $SO^*(4)$ (modulo compact and/or finite-index factors). Of course, this suffices to deduce that we can realize a non-trivial case ($n=3$ or $2$) of item (iv) in Filip's list, but, for the sake of completeness, we will show in the next subsection how to rule out the situation (b'). 
\end{remark}

\subsection{Ruling out $SO^*(4)\times SO^*(2)$ monodromy on $W$} 

Suppose that $\textrm{Aff}_0(\widetilde{L})$ acts on $W$ as described in item (b'), i.e., we have a $\textrm{Aff}_0(\widetilde{L})$-invariant decomposition $W=U\oplus V$ with $U=2\chi_2$ and $V=\chi_2$. 

In this case, the sole possibility for the subspace $V$ is to be the central subspace of \emph{any} matrix of any element of $\textrm{Aff}_0(\widetilde{L})$ acting on $W=3\chi_2$ with ``simple spectrum'' in the quaternionic sense\footnote{I.e., the matrix has an unstable (modulus $>1$) eigenvalue, a central (modulus $=1$) eigenvalue, and an stable (modulus $<1$) eigenvalue, all of them with multiplicity four.} 

Therefore, we can contradict the existence of $V$ by exhibiting two matrices of the action of $\textrm{Aff}_0(\widetilde{L})$ on $W=3\chi_2$ with ``simple spectrum'' whose central spaces are distinct. 

Unfortunately, we do \emph{not} have an abstract method to produce two matrices with the properties above (compare with Remark \ref{r.MYZ-examples} above). Thus, we are obliged to compute by hands the action of some elements of $\textrm{Aff}_0(\widetilde{L})$. 

In this direction, we observe that $-1\in Q$ acts on $W$ via $-\textrm{Id}$. From this, it is not hard to check  that a basis $\mathcal{B}$ of $W$ is given by the following twelve (absolute) cycles 
$$\{\widehat{\sigma}_1, \widehat{\sigma}_i, \widehat{\sigma}_j, \widehat{\sigma}_k, \widehat{\zeta}_1, \widehat{\zeta}_i, \widehat{\zeta}_j, \widehat{\zeta}_k, \widehat{\mu}_1, \widehat{\mu}_i, \widehat{\nu}_1, \widehat{\nu}_j\}$$ 
where $\widehat{\sigma}_g := \sigma_{g}-\sigma_{-g}$, $\widehat{\zeta}_g := \zeta_{g}-\zeta_{-g}$, $\widehat{\mu}_g := \mu_{g}-\mu_{-g}$, $\widehat{\nu}_g := \nu_{g}-\nu_{-g}$, and $\sigma_g$, $\zeta_g$, $\mu_g$ and $\nu_g$ are the (relative) cycles in Figure \ref{f.FFM-1}. 

Next, we consider the affine homeomorphisms $\underline{A}, \underline{B}, \underline{C}\in\textrm{Aff}(\widetilde{L})$ with linear parts 
$$d\underline{A} = \left(\begin{array}{cc} 4 & -3 \\ 3 & -2 \end{array}\right), \quad d\underline{B} = \left(\begin{array}{cc} 10 & 27 \\ -3 & -8 \end{array}\right), \quad d\underline{C} = \left(\begin{array}{cc} -8 & -3 \\ 27 & 10 \end{array}\right)\in SL(2,\mathbb{Z}),$$
and fixing (pointwise) the conical singularities of $\widetilde{L}$. Geometrically, $\underline{A}, \underline{B}, \underline{C}$ are Dehn multitwists of $\widetilde{L}$ along the cylinders in directions $(1,1), (3,-1), (-1,3)$. 

A straightforward calculation shows that the actions of $\underline{A}, \underline{B}, \underline{C}$ on $W$ with respect to the basis $\mathcal{B}$ are given by the following $12\times 12$ matrices $A, B, C$:

\begin{equation*}
A=\left(\begin{array}{cccccccccccc}
2 & -1 & -1 & 0 & 1 & -1 & -1 & 0 & -1 & 0 & -1 & 0 \\
1 & 0 & 0 & 1 & 1 & -1 & 0 & 1 & 0 & 1 & -1 & -1 \\
0 & 1 & 1 & 0 & 0 & 1 & 0 & 0 & 0 & 0 & 1 & 0 \\
-1 & 0 & 0 & 1 & -1 & 0 & 0 & 0 & 0 & 0 & 0 & 0 \\
-1 & 1 & 1 & 0 & 0 & 1 & 1 & 0 & 1 & 0 & 1 & 0 \\
0 & 0 & -1 & 0 & 0 & 1 & -1 & 0 & -1 & 0 & 0 & 0 \\
-1 & 0 & 1 & 1 & -1 & 0 & 2 & 1 & 1 & 1 & 0 & -1 \\
-1 & 0 & 0 & 0 & -1 & 0 & 0 & 1 & 0 & 0 & 0 & 0 \\
1 & -1 & -1 & -1 & 1 & -1 & -1 & -1 & 0 & -1 & -1 & 1 \\
1 & -1 & 1 & 1 & 1 & -1 & 1 & 1 & 1 & 2 & -1 & -1 \\
-1 & 1 & 1 & -1 & -1 & 1 & 1 & -1 & 1 & -1 & 2 & 1 \\
-1 & -1 & 1 & 1 & -1 & -1 & 1 & 1 & 1 & 1 & -1 & 0
\end{array}\right),
\end{equation*} 

\begin{equation*}
B=\left(\begin{array}{cccccccccccc}
2 & 1 & 1 & 0 & -1 & -1 & -1 & 0 & -1 & 0 & 0 & 1 \\
-1 & 0 & 0 & -1 & 1 & 1 & 0 & 1 & 0 & 1 & 0 & 0 \\
0 & 1 & 1 & 0 & 0 & -1 & 0 & 0 & 0 & 0 & 0 & -1 \\
-1 & 0 & 0 & 1 & 1 & 0 & 0 & 0 & 0 & 0 & -1 & -1 \\
-1 & 1 & -1 & 0 & 2 & -1 & 1 & 0 & 1 & 0 & 0 & -3 \\
0 & 0 & 1 & 0 & 0 & 1 & -1 & 0 & -1 & 0 & -1 & 1 \\
-1 & -2 & -1 & -1 & 1 & 2 & 2 & 1 & 1 & 1 & 1 & 0 \\
1 & 0 & 0 & 0 & -1 & 0 & 0 & 1 & 0 & 0 & 1 & 1 \\
1 & 1 & 1 & -1 & -1 & -1 & -1 & 1 & 0 & 1 & 1 & 1 \\
-1 & -1 & 1 & -1 & 1 & 1 & -1 & 1 & -1 & 2 & -1 & 1 \\
-1 & 1 & -1 & 1 & 1 & -1 & 1 & -1 & 1 & -1 & 0 & -3 \\
-1 & -1 & -1 & -1 & 1 & 1 & 1 & 1 & 1 & 1 & 1 & 0
\end{array}\right),  
\end{equation*}

and 

\begin{equation*}
C=\left(\begin{array}{cccccccccccc}
2 & 1 & -1 & 0 & -1 & -1 & 1 & 0 & 0 & -3 & 1 & 0 \\
1 & 2 & 2 & -1 & -1 & -1 & -2 & 1 & 1 & 0 & 1 & 1 \\
0 & -1 & 1 & 0 & 0 & 1 & 0 & 0 & -1 & 1 & -1 & 0 \\
1 & 0 & 0 & 1 & -1 & 0 & 0 & 0 & -1 & -1 & 0 & 0 \\
-1 & -1 & -1 & 0 & 2 & 1 & 1 & 0 & 0 & 1 & -1 & 0 \\
0 & 0 & -1 & 0 & 0 & 1 & 1 & 0 & 0 & -1 & 0 & 0 \\
1 & 0 & 1 & -1 & -1 & 0 & 0 & 1 & 0 & 0 & 0 & 1 \\
-1 & 0 & 0 & 0 & 1 & 0 & 0 & 1 & 1 & 1 & 0 & 0 \\
1 & 1 & -1 & 1 & -1 & -1 & 1 & -1 & 0 & -3 & 1 & -1 \\
1 & 1 & 1 & -1 & -1 & -1 & -1 & 1 & 1 & 0 & 1 & 1 \\
-1 & -1 & -1 & -1 & 1 & 1 & 1 & 1 & 1 & 1 & 0 & 1 \\
1 & -1 & 1 & -1 & -1 & 1 & -1 & 1 & -1 & 1 & -1 & 2
\end{array}\right) 
\end{equation*}

See Subsections 6.2, 6.3 and 6.4 of \cite{FFM} for more details. 

From these formulas, we can exhibit two elements of $\textrm{Aff}_0(\widetilde{L})$ acting on $W$ through matrices with ``simple spectrum'' and distinct central subspaces. In fact, let us fix $k, l\in\mathbb{N}$ such that $(\underline{A}\circ\underline{B})^k$ and $(\underline{C}\circ\underline{B})^l$ belong to $\textrm{Aff}_0(\widetilde{L})$: this is possible because $\textrm{Aff}_0(\widetilde{L})$ has finite-index in $\textrm{Aff}(\widetilde{L})$. Note that these elements act on $W$ by matrices with ``simple spectrum'': indeed, a computation reveals that $A.B$ and $C.B$ have characteristic polynomials 
\begin{eqnarray*}
P_{A.B}(x) &=& x^{12} - 28 x^{11} + 322 x^{10} -  1964 x^9 + 6895 x^8 - 14392 x^7 \\ &+& 18332 x^6  -14392 x^5 +6895 x^4 -1964 x^3 +322 x^2 -28 x + 1 \\ &=&(x-1)^4 (x^2-6x+1)^2
\end{eqnarray*}
and 
\begin{eqnarray*}
P_{C.B}(x) &=& x^{12} - 44 x^{11} + 770 x^{10} - 6780 x^9 + 31471 x^8 - 76120 x^7 \\ &+& 101404 x^6  - 76120 x^5 + 31471 x^4 - 6780 x^3 + 770 x^2 - 44 x + 1 \\ &=& (x-1)^4 (x^2-10x+1)^2
\end{eqnarray*}
In particular, $A.B$ has three eigenvalues (each of them with multiplicity four), namely,  $3+2\sqrt{2}$, $1$ and $3-2\sqrt{2}$, and $C.B$ has three eigenvalues (each of them with multiplicity four), namely,  $5+2\sqrt{6}$, $1$ and $5-2\sqrt{6}$, so that $(A.B)^k$ and $(C.B)^l$ have ``simple spectrum''.  

Furthermore, it is not hard to see that the central eigenspace $V_{AB}$  of $A.B$ (associated to the eigenvalue $1$) is spanned by the following four vectors:
$$v_{AB}^{(1)} = (-1, 1, -1, 1, 1, -1, 1, 1, 0, 0, 0, 2), \quad v_{AB}^{(2)} = (-1, -1, 1, 1, 1, -1, -1, -1, 0, 0, 2, 0),$$
$$v_{AB}^{(3)} = (0, 0, 0, 0, 0, 0, 0, -1, 0, 1, 0, 0), \quad v_{AB}^{(4)} = (0, 0, 0, 0, 0, 0, -1, 0, 1, 0, 0, 0),$$
and the central eigenspace $V_{CB}$  of $C.B$ (associated to the eigenvalue $1$) is spanned by the following four vectors:
$$v_{CB}^{(1)} = (0, 0, 0, 1, 0, 0, 0, 1, 0, 0, 0, 0), \quad v_{CB}^{(2)} = (0, 0, 1, 0, 0, 0, 1, 0, 0, 0, 0, 0),$$
$$v_{CB}^{(3)} = (0, 1, 0, 0, 0, 1, 0, 0, 0, 0, 0, 0), \quad v_{CB}^{(4)} = (1, 0, 0, 0, 1, 0, 0, 0, 0, 0, 0, 0).$$
Thus, $V_{AB}$ and $V_{CB}$ are distinct\footnote{Actually, $\{v_{AB}^{(n)}, v_{CB}^{(m)}\}_{1\leq n, m\leq 4}$ span a $8$-dimensional vector space}. Since $V_{AB}$, resp. $V_{CB}$, is also the central space of $(A.B)^k$, resp. $(C.B)^l$, this means that we are not in situation (b'), so that the proof of Theorem \ref{t.FFM} is complete. 





\bibliographystyle{amsplain}

\end{document}